\title{Study of the collision phenomenon}
\newtheorem{lemm}{Lemma}[section]
\newtheorem{theo}[lemm]{Theorem}
\newtheorem{coro}[lemm]{Corollary}
\newtheorem{propo}[lemm]{Proposition}
\newtheorem{claim}[lemm]{Claim}
\newtheorem{assump}[lemm]{Assumption}
\newtheorem{rema}[lemm]{Remark}
\numberwithin{equation}{section}
\newcommand{\bj}{\mathbf{j}}
\newcommand{\bx}{\mathbf{x}}
\newcommand{\by}{\mathbf{y}}
\newcommand{\bomega}{\boldsymbol{\omega}}
\newcommand{\bz}{\mathbf{z}}
\newcommand{\n}{\overrightarrow{n}}
\newcommand{\mAi}{\overrightarrow{m_{i}}}
\newcommand{\mAone}{\overrightarrow{m_{1}}}
\newcommand{\mAtwo}{\overrightarrow{m_{2}}}
\newcommand{\pAi}{\overrightarrow{p_{i}}}
\newcommand{\pAone}{\overrightarrow{p_{1}}}
\newcommand{\pAtwo}{\overrightarrow{p_{2}}}
\newcommand{\C}{\mathcal{C}}
\newcommand{\E}{\mathcal{E}}
\newcommand{\MQ}{\overrightarrow{M Q}}
\newcommand{\MRi}{\overrightarrow{M R_i}}
\newcommand{\MRone}{\overrightarrow{M R_1}}
\newcommand{\MRtwo}{\overrightarrow{M R_2}}
\newcommand{\MRti}{\overrightarrow{M \Rti}}
\newcommand{\NQ}{\overrightarrow{N Q}}
\newcommand{\NRti}{\overrightarrow{N \Rti}}
\newcommand{\NRi}{\overrightarrow{N R_i}}
\newcommand{\NRtone}{\overrightarrow{N \Rtone}}
\newcommand{\NRttwo}{\overrightarrow{N \Rttwo}}
\newcommand{\Y}{\mathcal{Y}}
\newcommand{\Rtone}{\tilde{R}_1}
\newcommand{\Rttwo}{\tilde{R}_2}
\newcommand{\Rti}{\tilde{R}_i}
\newcommand{\red}[1]{\color{red}{#1}\color{black}}
\newcommand{\blue}[1]{\color{blue}{#1}\color{black}}
\keywords{Zakharov-Kuznetsov equation; soliton; multi-soliton; collision}
\subjclass[2020]{Primary: 37K40; 37K45; Secondary: 35Q53; 35B40; 35C08; 35Q60}
\begin{document}

\title[Collision of two nearly equal solitary waves for the ZK equation]{Dynamics of the collision of two nearly equal solitary waves for the Zakharov-Kuznetsov equation}

\author[D. Pilod]{Didier Pilod}
\address{Department of Mathematics, University of Bergen, Postbox 7800, 5020 Bergen, Norway}
\email{Didier.Pilod@uib.no}

\author[F. Valet]{Fr\'ed\'eric Valet}
\address{CY Cergy Paris Universit\'e. Laboratoire de recherche Analyse, G\'eom\'etrie, Mod\'elisation (UMR CNRS 8088), 2 avenue Adolphe Chauvin, 95302 Cergy-Pontoise Cedex, France}
\email{Frederic.Valet@cyu.fr}

\date{\today}

\maketitle

\begin{abstract}
		We study the dynamics of the collision of two solitary waves for the Zakharov-Kuznetsov equation in dimension $2$ and $3$. We describe the evolution of the solution behaving as a sum of $2$-solitary waves of nearly equal speeds at time $t=-\infty$ up to time $t=+\infty$. We show that this solution behaves as the sum of two modulated solitary waves and an error term which is small in $H^1$ for all time $t \in \mathbb R$. Finally, we also prove the stability of this solution for large times around the collision. 

  The proofs are a non-trivial extension of the ones of Martel and Merle for the quartic generalized Korteweg-de Vries equation to higher dimensions. First, despite the non-explicit nature of the solitary wave, we construct an approximate solution in an intrinsic way by canceling the error to the equation only in the natural directions of scaling and translation. Then, to control the difference between a solution and the approximate solution, we use a modified energy functional and a refined modulation estimate in the transverse variable. Moreover, we rely on the hamiltonian structure of the ODE governing the distance between the waves, which cannot be approximated by explicit solutions, to close the bootstrap estimates on the parameters. We hope that the techniques introduced here are robust and will prove useful in studying the collision phenomena for other focusing non-linear dispersive equations with non-explicit solitary waves.
	\end{abstract}

%%%%%%%%%%%%%%%%%%%%%%%%%%%%%%%%%%%%%%%%%%%%%%%%%%%%%%%%
%%%%% Introduction
%%%%%%%%%%%%%%%%%%%%%%%%%%%%%%%%%%%%%%%%%%%%%%%%%%%%%%%%

\section{Introduction}

\subsection{The Zakharov-Kuznetsov equation}

The Zakharov-Kuznetsov (ZK) equation 
\begin{equation} \label{ZK}
\partial_tu+\partial_x \left( \Delta u+u^2 \right)=0 .
\end{equation}
where $u=u(t,\bx)$ is a real-valued function, $\bx=(x,\by) \in \mathbb R \times \mathbb R^{d-1}$, $d \ge 2$, $t \in \mathbb R$ and $\Delta=\partial_x^2+\Delta_{\by}^2$ denotes the Laplacian in $\mathbb R^d$, is a natural high dimensional generalisation of the Korteweg-de Vries (KdV) equation, which corresponds to the case $d=1$. 
In dimensions $2$ and $3$, this equation has been introduced by Zakharov and Kuznetsov \cite{ZK74} (see also \cite{KRZ86,Bellan}) to describe the propagation of ionic-acoustic waves in a uniformly magnetized cold plasma. The physical derivation of ZK from the Euler-Poisson system was performed rigorously by Lannes, Linares and Saut \cite{LaLiSa13} (see also \cite{Pu13}). The ZK equation was also rigorously derived by Han-Kwan \cite{HanKwan13} from the Vlasov-Poisson system in the presence of an external magnetic field. 

Unlike the Korteweg-de Vries equation (KdV) in dimension $1$ or the Kadomtsev-Petviashvili equation (KP) in dimension $2$, ZK is not completely integrable in dimensions $2$ or $3$.
Nevertheless, it has a Hamiltonian structure and possesses the following conserved quantities:
\begin{align}\label{conserved_quantities}
    \int_{\mathbb{R}^2} u(t, \bx) d\bx, \quad M(u(t))=\int_{\mathbb{R}^2} u(t, \bx)^2 d\bx \quad \text{and} \quad E(u(t)) = \int_{\mathbb{R}^2} \frac{\vert \nabla u(t,\bx) \vert^2}{2} - \frac{u^3(t,\bx)}{3} d\bx.
\end{align}
Moreover, the solutions of ZK are invariant under scaling and translation; more precisely, if $u$ is a solution of \eqref{ZK}, then, for any $c>0$, $\bz_0 \in \mathbb R^d$, 
\begin{equation} \label{scaling}
u_{c,\bz_0}(t,\bx)=cu(c^{\frac32}t,c^{\frac12}(\bx-\bz_0))
\end{equation}
is also a solution of \eqref{ZK}. Here, we will focus on the two and three dimensional cases which are $L^2$-subcritical. 

The initial value problem (IVP) associated to the ZK equation has been shown to be globally well-posed in the energy space $H^1(\mathbb R^d)$, in dimension $d=2$ by Faminskii \cite{Fam95} and in dimension $d=3$ by Herr and Kinoshita \cite{HeKin23}. The result in dimension $d=2$ was improved in \cite{LiPa09,GruHe14,MoPi15} culminating with the well-posedness in $H^s(\mathbb R^2)$, $s>-\frac14$, in \cite{Kino21}. The local well-posedness result of Herr and Kinoshita \cite{HeKin23} holds in $H^s(\mathbb R^3)$, for $s>-\frac12$, which is optimal from a scaling point of view. We also refer to \cite{LiSa09,RiVe12,MoPi15} for former well-posedness results in the $3$-dimensional case. 

Below, we summarize the global well-posedness results of \cite{Fam95,HeKin23} in the energy space $H^1(\mathbb R^d)$, $d=2,3$: for any $u_0 \in H^1(\mathbb R^d)$, there exists a unique (in some sense) solution to \eqref{ZK}
in $C(\mathbb R : H^1(\mathbb R^d))$. Moreover, $M(u)(t)=M(u_0)$ and $E(u)(t)=E(u_0)$, for all $t \in \mathbb R$.

\subsection{Solitary waves} 
The ZK equation admits a family of special solutions propagating at constant speed in the first direction. These solutions, which play an important role in the ZK dynamics, are called \emph{solitary wave} solutions. They are of the form 
\begin{equation} \label{def:solwave}
u(t,\bx)=u(t,x,\by)=Q_c(x-ct,\by), \quad \text{with} \quad Q_c(\bx) \underset{|\bx| \to \infty}{\longrightarrow} 0 ,
\end{equation}
where $c>0$, $Q_c(\bx) = c Q(\sqrt{c} \bx)$ and $Q$ is the \emph{ground state} of
\begin{align}\label{eq:Q}
    -\Delta Q + Q -Q^2=0.
\end{align}
Indeed, the elliptic equation \eqref{eq:Q} has a unique smooth positive radial solution, called \emph{ground state}. We refer for example to \cite{Str77,BLP81} for the existence and to \cite{Kwo89} for the uniqueness. Even though there is no explicit formula for $Q$ as for the generalised KdV equation (gKdV) in dimension $1$, it is well-known that $Q$ decays exponentially at infinity. These classical results are gathered in the proposition below. 

\begin{propo}\label{propo:Q} The following assertions hold true. 
\begin{itemize}
    \item[(i)] There exists a smooth, positive, radial solution of \eqref{eq:Q}, which is exponentially decreasing at infinity. This solution, denoted by $Q$, is called a ground state of \eqref{eq:Q}.
    \item[(ii)] The ground-state $Q$ of \eqref{eq:Q} is unique.
    \item[(iii)] The first order asymptotic expansion of $Q$ at infinity is given by
    \begin{align}\label{eq:bound_Q_first_order}
        \exists \mu>0, \quad \lim_{r \rightarrow +\infty} r^{\frac{d-1}{2}}e^{r}Q(r) =\mu.
    \end{align}
\end{itemize}
\end{propo}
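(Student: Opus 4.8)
My plan is to recall the classical arguments underlying each assertion, since all three are well documented and the references cited in the statement already point to the key sources.

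For the existence in (i), I would proceed variationally. Minimize $J(u)=\int_{\mathbb R^d}\big(|\nabla u|^2+u^2\big)\,d\bx$ over the constraint $\int_{\mathbb R^d}(u_+)^3\,d\bx=1$ (equivalently, on the Nehari manifold, or via the constrained scheme of Berestycki--Lions--Peletier \cite{BLP81}). Schwarz symmetrization of a minimizing sequence lowers $J$ while preserving the constraint, so one may assume the sequence is radial and radially nonincreasing; since $d=2,3$ is $L^2$-subcritical, the embedding of radial $H^1$ into $L^3$ is compact, which produces a nonnegative radial minimizer. The Euler--Lagrange equation together with a rescaling gives a solution of \eqref{eq:Q}; elliptic bootstrap on $-\Delta Q=Q^2-Q$ yields smoothness, and the strong maximum principle applied to $-\Delta Q+Q=Q^2\ge0$ gives $Q>0$. (Alternatively one may run the ODE shooting argument of Strauss \cite{Str77} and \cite{BLP81} directly on the radial profile.) For the exponential decay, since $Q(r)\to0$ one has $Q^2\le\varepsilon Q$ for $r$ large, hence $-\Delta Q+(1-\varepsilon)Q\le0$ there; a comparison with the radial solution of $-\Delta\phi+(1-\varepsilon)\phi=0$, which decays like $e^{-\sqrt{1-\varepsilon}\,r}$, gives $Q(r)\lesssim_\varepsilon e^{-(1-\varepsilon)r}$ for every $\varepsilon\in(0,1)$, and similarly for $\nabla Q$ after differentiating the equation.

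Assertion (ii) I would simply invoke: any ground state is radial about some point by the moving-plane method of Gidas--Ni--Nirenberg, and uniqueness of the radial profile is Kwong's theorem \cite{Kwo89}, established through a careful study of the ODE $Q''+\tfrac{d-1}{r}Q'-Q+Q^2=0$, combining Sturm-type comparison with the behaviour of the linearized operator as the shooting value varies. This is the genuinely hard input of the proposition; I do not expect a short self-contained argument and would rely on \cite{Kwo89}.

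For the sharp asymptotics (iii), I would work on the radial equation $Q''+\tfrac{d-1}{r}Q'-Q=-Q^2$ for $r>R$ and set $w(r)=r^{\frac{d-1}{2}}Q(r)$, which removes the drift term and yields
\[
w''-\Big(1+\frac{(d-1)(d-3)}{4r^2}\Big)w=-r^{-\frac{d-1}{2}}w^2 .
\]
Using the rough bound from (i), the right-hand side and the $r^{-2}$-potential are integrable perturbations of $w''-w=0$; a variation-of-parameters (Liouville--Green) analysis on the fundamental system $\{e^{-r},e^{r}\}$ then shows that (a) the coefficient of the growing mode $e^{r}$ vanishes because $w$, hence $Q$, decays, (b) the $r^{-2}$-potential contributes only a multiplicative correction of the form $1+O(1/r)$, and (c) the quadratic term contributes an $O(e^{-2r})$ error. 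Hence $e^{r}w(r)$ converges to a finite limit $\mu$, i.e. $r^{\frac{d-1}{2}}e^{r}Q(r)\to\mu$, with $\mu\ge0$ since $Q\ge0$ and $\mu>0$ because a nontrivial decaying solution cannot have $\mu=0$. Equivalently, one can compare $Q$ directly with the decaying radial solution $r^{-\frac{d-2}{2}}K_{\frac{d-2}{2}}(r)$ of $-\Delta\phi+\phi=0$, which behaves like $\sqrt{\pi/2}\,r^{-\frac{d-1}{2}}e^{-r}$ at infinity. The only delicate point here is ruling out the exponentially growing mode, which is handled by the decay established in (i); the main obstacle overall remains the uniqueness statement (ii), for which I defer entirely to \cite{Kwo89}.
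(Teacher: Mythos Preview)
Your proposal is correct and matches the paper's treatment: items (i) and (ii) are deferred to the standard references (Strauss and Berestycki--Lions--Peletier for existence, Kwong for uniqueness), and for (iii) the paper cites Gidas--Ni--Nirenberg while supplying an ODE alternative in Appendix~\ref{app:asymptotic_Q} for $d=2$. Your ODE route via the Liouville substitution $w=r^{(d-1)/2}Q$ and Liouville--Green analysis differs in implementation from the paper's --- the appendix instead studies the ratio $Q/K_0$, shows from the equation that $rK_0^2\,(Q/K_0)'$ is monotone with limit $0$, hence $Q/K_0$ is increasing and bounded, converging to some $\kappa>0$ --- but the two are closely related and equally standard. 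The only point where the paper's variant buys a little more is the positivity of the limit: monotonicity of $Q/K_0$ together with $Q,K_0>0$ gives $\kappa>0$ for free, whereas your assertion ``$\mu>0$ because a nontrivial decaying solution cannot have $\mu=0$'' needs one more line --- for instance the maximum-principle comparison $Q\ge\phi$ on $\{r>R\}$, with $\phi$ the decaying radial solution of $(-\Delta+1)\phi=0$ matching $Q$ at $r=R$, which yields the lower bound $Q\gtrsim r^{-(d-1)/2}e^{-r}$.
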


\begin{rema}
Whereas  first investigated in \cite{Str77}, the asymptotic expansion at infinity of $Q$ given in (iii) is a result from \cite{GNN81}. Its proof is based on the maximum principle and Hopf Lemma. We give in Appendix \ref{app:asymptotic_Q} an alternative proof in dimension $d=2$ relying on ODE arguments. 
\end{rema}

From now on, we will work with the ground state $Q$ associated to \eqref{eq:Q}. By abuse of notations, we sometimes consider $Q$ as a function defined on $\mathbb{R}_+$, with the identification $Q(\bx)=Q(\vert \bx \vert)$. In polar coordinates and in dimension $d=2$, $Q$ satisfies the equation
\begin{align*}
    -Q''(r) -\frac{d-1}{r}  Q'(r) + Q(r) - Q^2(r)=0.
\end{align*}
In this work, it will be important to have a sharper asymptotic of $Q$ at infinity as the one given in \eqref{eq:bound_Q_first_order}. The next proposition, whose proof is given in Appendix \ref{app:asymptotic_Q}, shows that in dimension $2$, the modified Bessel function of second kind $K_0$ is a good approximation of $Q$ at infinity. We refer for example to \cite{AS64,AS61} for the definition and the properties of $K_0$, (see also Appendix \ref{app:asymptotic_Q}). This result seems to be new and may be useful in the study of other nonlinear dispersive equations such as the nonlinear Schr\"odinger equation. 
\begin{propo}\label{propo:Q_K_0}
In dimension $d=2$, there exists $\kappa>0$ such that
\begin{align}\label{Q:asymptotic_d=2}
    \forall r>1, \quad \left\vert Q(r)- \kappa K_0(r) \right\vert \lesssim \frac{e^{-2r}}{r}.
\end{align}
In dimension $d=3$, there exists $\kappa>0$ such that
\begin{align}\label{asymp_Q_d3}
    \forall r>1, \quad \left\vert Q(r) - \kappa \frac{e^{-r}}{r} \right\vert \lesssim \frac{e^{-r}}{r^2}. 
\end{align}
\end{propo}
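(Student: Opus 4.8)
The plan is to derive a second-order linear ODE for $Q$ in the radial variable, identify $K_0$ (resp. $e^{-r}/r$) as the solution of the corresponding linearized problem at infinity, and then estimate the difference via a Duhamel/variation-of-parameters argument, treating the nonlinearity $Q^2$ as a rapidly decaying source. In dimension $d=2$, after writing $-Q''-\frac1rQ'+Q=Q^2$, I would recall that the modified Bessel function $K_0$ solves exactly $-K_0''-\frac1rK_0'+K_0=0$, with the known asymptotics $K_0(r)\sim\sqrt{\pi/(2r)}\,e^{-r}$ matching \eqref{eq:bound_Q_first_order} after choosing $\kappa$ so that $\kappa\sqrt{\pi/2}=\mu$. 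In dimension $d=3$, the substitution $v=rQ$ turns the radial equation into $-v''+v=rQ^2$, whose homogeneous decaying solution is exactly $e^{-r}$, so $e^{-r}/r$ plays the analogous role. The point in both cases is that $Q$ and the comparison function satisfy the same homogeneous equation up to the source term $Q^2$ (resp. $rQ^2$), which by Proposition \ref{propo:Q} decays like $e^{-2r}$ (resp. $r\,e^{-2r}$), i.e. strictly faster than the homogeneous solutions.

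The key steps, in order, would be: (1) set up the exact radial ODE and the modified/reduced version (via $v=r^{(d-1)/2}Q$ or the $d=3$ substitution $v=rQ$) so that the principal part is a constant-coefficient operator $-\partial_r^2+1$ plus lower-order terms that are $O(r^{-2})$; (2) solve the homogeneous equation explicitly, obtaining a basis $\{e^{-r},e^{r}\}$ (up to algebraic corrections), and note that $Q$ must coincide with the decaying branch to leading order by \eqref{eq:bound_Q_first_order}; (3) write the variation-of-parameters formula expressing $Q(r)$ (or $v(r)$) on $[r,\infty)$ as the decaying homogeneous solution plus a Duhamel integral of the source $Q^2$ against the Green's kernel; (4) bound that integral using the exponential decay of $Q$ from Proposition \ref{propo:Q}, obtaining the error $e^{-2r}/r$ in $d=2$ and $e^{-r}/r^2$ in $d=3$; (5) fix the constant $\kappa$ by matching the coefficient of the decaying homogeneous solution, which is finite and nonzero again thanks to \eqref{eq:bound_Q_first_order}. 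In $d=2$ the lower-order term is the $\frac1rQ'$ term: one must be careful that $-K_0''-\frac1rK_0'+K_0=0$ is exact, so there is no "extra" error from the Bessel side, and the subleading correction to $K_0$'s asymptotics, of size $r^{-1}$ relative to the leading term, is what produces the stated $e^{-2r}/r$ bound rather than something sharper.

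The main obstacle, and the place requiring genuine care, is step (3)–(4) in dimension $2$: the reduction of the radial equation to a constant-coefficient form is only approximate (the curvature term $\frac{d-1}{r}Q'$ cannot be absorbed exactly into a clean $-\partial_r^2+1$ operator without generating a potential term decaying like $r^{-2}$), so one works directly with $K_0$ as the exact homogeneous solution and must control a Wronskian-type integral $\int_r^\infty K_0(s)\big(\text{something}\big)(s)\,ds$ where the "something" combines $Q^2$ and the second, growing Bessel solution $I_0$. Since $I_0(s)\sim e^{s}/\sqrt{s}$ grows, one needs the decay $Q^2(s)\lesssim e^{-2s}/s$ to beat it and still get a convergent, appropriately small integral; verifying the bookkeeping of these competing exponentials — and checking that the contribution is genuinely $O(e^{-2r}/r)$ and not merely $O(e^{-2r})$ — is the delicate computation. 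In $d=3$ the analogous step is cleaner because the reduced equation $-v''+v=rQ^2$ is exactly constant-coefficient, so the Green's function is the elementary $\frac12 e^{-|r-s|}$ (adapted to the half-line with decay conditions) and the estimate $\int_r^\infty e^{-(s-r)}\,s\,e^{-2s}\,ds\lesssim r\,e^{-2r}$ immediately gives, after dividing by $r$, the claimed $e^{-r}/r^2$ remainder; I would present $d=3$ first as the model case and then adapt to $d=2$ with the Bessel-function substitutes. I would also remark that the same scheme, being purely ODE-based, is exactly what underlies the alternative proof of Proposition \ref{propo:Q}(iii) promised in Appendix \ref{app:asymptotic_Q}.
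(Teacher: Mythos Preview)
Your approach is correct and is morally the same as the paper's: both are ODE arguments that treat the nonlinearity $Q^2$ as a rapidly decaying source for the linear modified Bessel equation (resp.\ the reduced equation $-v''+v=rQ^2$ in $d=3$), and both identify $\kappa$ as the coefficient of the decaying homogeneous solution. The technical implementations differ slightly. You propose direct variation of parameters against the fundamental pair $\{K_0,I_0\}$ and estimate the two Duhamel integrals; the paper instead works with the ratio $Q/K_0$, derives the reduced equation
\[
\partial_r\!\left(rK_0^2\,\partial_r\!\left(\tfrac{Q}{K_0}\right)\right)=-rK_0\,Q^2,
\]
and integrates twice, showing the boundary constant vanishes and that $Q/K_0$ has a finite positive limit $\kappa$. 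These are equivalent formulations of the same linear-algebraic fact (reduction of order is variation of parameters in disguise), and your exponent bookkeeping for the Duhamel terms is exactly right: both integrals contribute $O(e^{-2r}/r)$.

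The one substantive difference is in the starting hypothesis. You invoke the sharp first-order asymptotic \eqref{eq:bound_Q_first_order} (attributed to Gidas--Ni--Nirenberg) and get the $e^{-2r}/r$ remainder in one pass. The paper chooses to be self-contained: it starts only from the rough bound $Q\le Ce^{-\delta r}$ for some $\delta\in(0,1)$, first upgrades this by a monotonicity argument to $Q\le Ce^{-r}/r^{1/4}$, then runs the ratio argument to obtain $|Q-\kappa K_0|\lesssim e^{-2r}/\sqrt{r}$, and finally bootstraps once more (re-running the same argument with the newly obtained $Q\lesssim e^{-r}/\sqrt{r}$) to reach $e^{-2r}/r$. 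Your route is shorter and entirely legitimate given that \eqref{eq:bound_Q_first_order} is stated as known; the paper's route has the advantage of reproving \eqref{eq:bound_Q_first_order} along the way by pure ODE methods, which is the ``alternative proof'' advertised after Proposition~\ref{propo:Q}.
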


\begin{rema} We will not work directly with \eqref{Q:asymptotic_d=2}. Instead, we use an approximation of $K_0$  by a finite series (see Proposition \ref{propo:asymp_Q} for more details). 
\end{rema} 

The solitary wave solutions of ZK defined in \eqref{def:solwave} are known to be orbitally stable in the energy space $H^1(\mathbb R^d)$, for $d=2$ and $d=3$, since the work de Bouard \cite{deB96} in the $90's$. More recently, the asymptotic stability of the family $\{Q_c\}_{c>0}$ has been proved in dimension $d=2$ by C\^ote, Mu\~noz, Simpson and the first authors in dimension $d=2$ \cite{CMP16} and by Farah, Holmer, Roudenko and Yang in dimension $d=3$ \cite{FHRY23}. The proofs in \cite{CMP16,FHRY23} are a nontrivial extensions of the arguments of Martel and Merle for the gKdV equation in one dimension \cite{MM01,MM05,MM08} to higher dimensions. They are based on monotonicity arguments on oblique hyper-planes and rigidity results classifying the solutions around the solitary waves (Liouville properties). It is worth noting that the virial estimates, which are key ingredient in proving the Liouville properties, rely on numerical arguments (the sign of a scalar product in dimension $2$ and the positivity of a linear operator in dimension $3$ under suitable orthogonality conditions). This is somehow due to the non-explicit character of the ground state $Q$.

Finally, using new virial estimates, Mendez, Mu\~ noz, Poblete and Pozo \cite{MMPP21} prove that the solutions of ZK vanish in some large time-dependent regions that do not contain solitary waves.

\subsection{Collision phenomena for nonlinear dispersive equations} 

Discovered in the second half of the $19^{th}$ century, solitary waves propagating in nonlinear dispersive equations have long remained of interest to only a few mathematicians. It was not until until the $60s$ that these special solutions began to regain attention. To explain some unexpected numerical observations of Fermi, Pasta, Ulam and Tsingou at Los Alamos \cite{FPUT55}, Zabruski, Kruskal \cite{ZK65} performed some numerical simulations on the KdV equation and observed two striking phenomena. First, the solutions of the equation decompose eventually into a  finite sum of solitary waves and a dispersive radiation.  Secondly, two solitary waves evolving at different speed would \lq\lq pass through one another without losing their identity (size or shape)\rq\rq. The solitary waves of the KdV equation were then renamed \emph{solitons} to emphasize their particle-like character. 

These discoveries led to an intense activity to understand the mathematical properties of the KdV equation, culminating in the development of the \emph{inverse scattering method} \cite{gardner1967,Lax68}. In particular, the behaviour of solutions behaving as the sum of two solitary waves at time $-\infty$ and $+\infty$ was investigated by Lax in \cite{Lax68}. We also mention the explicit formulas for multi-solitons of the KdV equation discovered by Hirota in \cite{hirota1971}. These works confirmed the \emph{elasticity} of the collision of two solitary waves of the KdV equation observed numerically in \cite{ZK65}: the  waves remain unchanged in shape and size after the collision. Later on, still relying on the complete integrability theory, Eckhaus and Schuur \cite{EckhausSchuur83} showed the \emph{soliton resolution property} for solutions of KdV evolving from smooth and decaying initial data. 

We are interested here in the collision of two solitary waves with nearly equal speeds. The following phenomenon was observed numerically in \cite{ZK65}: as soon as the highest wave gets reasonably close to the smallest wave, the highest wave begins to shrink, transferring some of its mass to the smallest one until the two waves exchange roles and then separate. LeVeque \cite{LeVeque87} studied the evolution of the explicit solution $u_{c_1,c_2}$ of KdV behaving at time $\pm \infty$ as the sum of two solitons with nearly equal speeds $c_1$ and $c_2$ (with $c_2>c_1$).  If $\mu_0=(c_2-c_1)/(c_1+c_2)$ is small enough, then the solution $u_{c_1,c_2}$ remains close to a sum of two modulated solitary waves for all times with an error of order $\mu_0^2$. Moreover, the distance between the centers of the modulated solitary waves  is always larger than $2|\ln \mu_0|$ going at infinity when $\mu_0$ tends to $0$.

It is worth noting that all the results described above rely heavily on the complete integrability machinery for the KdV equation. The situation for non-integrable equations is far more delicate since there is no explicit formula to describe the evolution of solutions behaving at infinity as the sum of two solitary waves.  For this reason, there are only a few theoritical results that describe the non-integrable collision of two solitary waves. For the generalized KdV equation with quartic power in the nonlinearity, Mizumachi \cite{Mizu03} proved that the solution evolving from an intial datum close in an exponentially weighted space to a sum of two solitary waves of nearly equal speed, the highest one being on the left, will remain close to a sum of two solitary waves for all positive times, and that these waves do not cross each other; in other words, the interaction is \emph{repulsive} as in the integrable case.

The overtaking collision of two copropagating solitary waves of almost equal size, with the leading wave being the smallest, was then investigated for the water wave system by Craig, Guyenne, Hammack, Henderson and Sulem in \cite{CGHHS06} (see figure 14). It is shown numerically and experimentally that the trajectories of the two solitary waves do not cross, the evolution of their heights is monotonic and the collision amplifies the largest of the solitary wave. Moreover, the presence of a residual term after the collision emphasizes its \emph{inelastic} character. 

Some years later,  Martel and Merle \cite{MM11} were able to give a much more precise description of the interaction of two nearly equal solitary waves for the quartic gKdV equation. By starting with a pure $2$-solitary wave $u_{c_1^-,c_2^-}$ at time $-\infty$ satisfying 
\begin{equation*}
\lim_{t\rightarrow - \infty} \left\| u_{c_1^-,c_2^-}(t,\cdot)-Q_{c_2^-} \left( \cdot - c_2^-t-x_2^- \right)-Q_{c_1^-}\left( \cdot - c_1^- t - x_1^-\right)  \right\|_{H^1} =0,
\end{equation*}
for $c_2^->c_1^-$ with $\mu_0=(c_2^--c_1^-)/(c_2^-+c_1^-)>0$ small enough, they proved that the solution $u_{c_1^-,c_2^-}$ decomposes as 
\begin{equation*}
u_{c_1^-,c_2^-}(t,\cdot)=Q_{c_1(t)}(\cdot-z_1(t))+Q_{c_2(t)}(\cdot-z_2(t))+\eta(t),
\end{equation*}
where $\|\eta(t)\|_{H^1} \le C\mu_0^2 |\ln \mu_0|^{\frac12}$, $z_1(t)-z_2(t) \ge c |\ln \mu_0|$, for all $t \in \mathbb R$, $\liminf_{t \to +\infty}\|\eta(t)\|_{H^1} \geq c\mu_0^3$. As in the integrable case, there is a transfer of mass from the highest wave to the smallest one in the interaction region. Note however that in contrast with the integrable case, there is also a loss of mass and energy by radiation, and that the size $c_1^+=\lim_{t \to +\infty}c_1(t)$ of the largest wave at infinity is greater than $c_2^-$, reciprocally $c_2^+=\lim_{t \to +\infty}c_2(t)<c_1^-$. This result thus emphasises the \emph{inelastic} nature of the collision of two solitary waves of nearly equal speeds for the quartic gKdV equation. Moreover, the stability in the energy space $H^1$ of the collision is also proved in \cite{MM11}.  

The situation of the collision of one large and one small solitary waves for the quartic gKdV equation was investigated by Martel and Merle \cite{MMannals11}. In this setting, the largest wave crosses the smallest, but again the authors were able to prove the inelasticity of the collision. These results have been extended for the Benjamin-Bona-Mahony equation (BBM) in \cite{MMM10}, and also for the gKdV equation with a general smooth nonlinearity $f(u)$ in \cite{MMcmp09,MunIMRN10}. In particular, Mu\~noz proved in \cite{MunIMRN10} that if $f(u)$ is analytic in $0$, then the collision is elastic if and only if $f(u)=u^2$, $u^3$ or $u^3-cu^2$ which corresponds to the three known integrable cases in the gKdV setting (the Korteweg-de Vries equation, the modified Korteweg-de Vries equation and the Gardner equation). In this sense, one could argue that the integrability of a focusing nonlinear dispersive equation is related to the elasticity of the collision of two solitary waves. It is worth noting that, as far as we know, there is no result of this kind for KdV-type equations in higher dimensions. We refer nevertheless to \cite{KRS212d,KRS21} for interesting numerical simulations for the Zakharov-Kuznetsov equation in $2$ and $3$-dimensions, displaying in particular the inelasticity of the collision.

There are also a few theoritical results on the collision for non-integrable dispersive equations which are not of KdV type. Perelman \cite{PerelAIHP11} studied the collision of a large and a small solitary waves for a perturbation of the cubic $1$-dimensional nonlinear Schr\"odinger (NLS) equation and proved that  after the collision the smallest wave splits into two outgoing waves that are controlled on large time by the cubic NLS dynamics. In \cite{MMInv18}, Martel and Merle proved the inelasticity of the collision for the $5$-dimensional energy critical wave equation. Moutinho considered the $\Phi^6$-model \cite{MouCMP23,Mou23} and proved the almost elasticity of the collision. Finally, let us mention that the study of the collision of solitary waves is an important tool in proving the soliton resolution property for nonlinear dispersive equations. We refer for example to \cite{DKMacta23,JL23_soliton,CDKM22} and the references therein in the framework of the nonlinear wave equation.

\begin{toexclude}
\smallskip
\red{Citation de Strauss \cite{Str77} : "Solitary waves have also been called "solitons" but, properly speaking, the latter word should be reserved for those special solitary waves which exactly preserve their shapes after interaction."}
\end{toexclude}

\subsection{Statement of the results} 
The main objective of this paper is to study the dynamics of the collision of two solitary waves of nearly equal speeds for the Zakharov-Kuznetsov equation. We are able to describe the evolution of the solution behaving as a sum of $2$-solitary waves of nearly equal speeds at time $t=-\infty$ up to time $t=+\infty$. We show that the solution behaves as a sum of two modulated solitary waves and an error term which is small in $H^1$. Finally, we also prove the stability of this solution for large times around the collision.

Let $0<c_1^-<c_2^-$ be the given speeds at time $-\infty$ and let $\bx_1^-=(x_1^-,\by_1^-)$ and $\bx_2^-=(x_2^-,\by_2^-)$ denote the centers of the solitary waves. We consider the solution $u$ of \eqref{ZK} behaving asymptotically at time $-\infty$ as the sum of these two solitary waves, \textit{i.e.} 
\begin{align} \label{multi_sol:-infty:u}
    \lim_{t\rightarrow - \infty} \left\| u(t,\cdot)-Q_{c_2^-} \left( \cdot - c_2^-t-x_2^-, \cdot -\by_2^- \right)-Q_{c_1^-}\left( \cdot - c_1^- t - x_1^-, \cdot -\by_1^-\right)  \right\|_{H^1} =0.
\end{align}
The existence and uniqueness of such solution was proved by the second author in \cite{Val21}. 

In order to study the evolution of $u$ from time $-\infty$ to $+\infty$, we first symmetrize the problem. Let 
\begin{align*}
& \mu_0=\frac{c_2^--c_1^-}{c_1^-+c_2^-}, \quad c_0=\frac{c_1^-+c_2^-}{2}, \quad \bomega_0=c_0^{\frac12}\frac{\by_2^--\by_1^-}2, \quad x_0=\frac{x_1^-+x_2^-}2,\\
& \by_0=\frac{\by_1^-+\by_2^-}2, \quad t_0=\frac{lc_0^{-\frac12}+x_2^--x_1^-}{c_2^--c_1^-}
\end{align*}
where $l=l(\mu_0)$ is a real number defined below (see Lemma \ref{lemm:pointwise_Z}). We define the new function $v$ by
\begin{equation*} 
u(t,x,y)=c_0v\big(c_0^{\frac32}(t+t_0),c_0^{\frac12}(x-c_0t-x_0),c_0^{\frac12}(\by-\by_0))
\end{equation*}
Then $v$ is the unique solution to 
\begin{equation} \label{ZK:sym}
\partial_tv+\partial_x \left( \Delta v-v+v^2 \right)=0 .
\end{equation}
satisfying 
\begin{equation} \label{multi_sol:-infty}
\lim_{t \to -\infty} \left\| v(t,\cdot) -Q_{1+\mu_0}(\cdot-(\mu_0t-\frac{l}2),\cdot+\frac12 \bomega_0) -Q_{1-\mu_0}(\cdot+\mu_0t-\frac{l}2,\cdot- \frac12 \bomega_0) \right\|_{H^1}=0 .
\end{equation}
Therefore, the study of the solution $u$ of \eqref{ZK} satisfying \eqref{multi_sol:-infty:u} for arbitrary parameters $0<c_1^-<c_2^-$, $x_1^-$, $x_2^-$, $\by_1^-$, $\by_2^-$ is equivalent to the study of the solution $v$ of \eqref{ZK:sym} satisfying the more symmetric asymptotic behaviour \eqref{multi_sol:-infty} at time $-\infty$. 

The dynamics of the difference of the centers of the two solitary waves will be roughly given by the even solution $Z$ of the ODE 
\begin{align} \label{eq:Z_intro}
    \Ddot{Z}(t)= \frac{2}{\langle \Lambda Q, Q \rangle} \int_{\mathbb{R}^d} Q(x + Z(t),\by) \partial_x (Q^2) (x,\by) dxd\by,
\end{align}
satisfying $\displaystyle \lim_{t \to -\infty}(Z(t),\dot{Z}(t))=(+\infty,-2\mu_0)$. We denote by $Z_0:=Z(0)$. If $\mu_0$ is chosen small enough, then $\mu_0 \sim Z_0^{-\frac{d-1}4}e^{-\frac12Z_0}$. We refer to Appendix \ref{app:Z} for a detailed study of the solutions to \eqref{eq:Z_intro}.

\begin{figure}[ht]
    \centering
    \begin{tikzpicture}
        %%%%%  Left picture
            %%%%% Axis
            \draw[->,samples=\Num] (-5,0) -- (-1,0); 
            \draw (-1,0) node[right] {$t$};
            \draw[->,samples=\Num] (-3,-0.5) -- (-3,3);
            \draw (-3,3) node[right] {$Z(t)$};

            %%%%% Curve
            \draw [domain=-2:2,samples=\Num] plot [variable=\t] (\t-3,{(1.5*\t*\t)/(1+abs(\t))+1});
            \draw (-3,0.7) node[left] {$Z_0$};
    
        %%%%%  Right picture
            %%%%% Axis
            \draw[->,samples=\Num] (1,1.5) -- (5,1.5); 
            \draw (5,1.5) node[right] {$t$};
            \draw[->,samples=\Num] (3,-0.5) -- (3,3);
            \draw (3,3) node[right] {$\dot{Z}(t)$};

            %%%%%% Curve
            \draw [domain=-0.5:0.5,samples=\Num] plot [variable=\t] (4*\t+3,{1.1*\t/(abs(\t)+0.1)+1.5});
            \draw (3,0.5) node[right] {$-2\mu_0$};
            \draw[dashed,red] (1,0.5) -- (3,0.5);
            \draw (3,2.5) node[left] {$2\mu_0$};
            \draw[dashed,red] (3,2.5) -- (5,2.5);
    \end{tikzpicture}
    \caption{Graph of the functions $Z$ and $\dot{Z}$}
\end{figure}
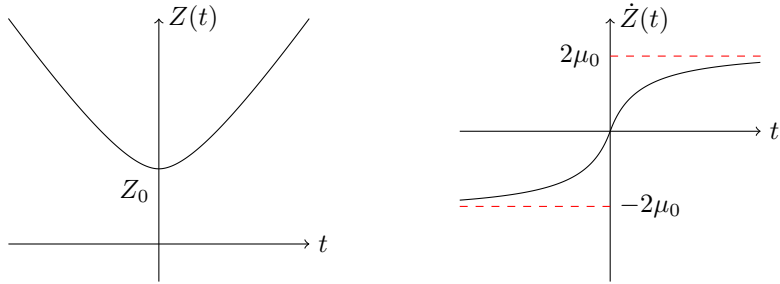

In this setting, we state our main result.

\begin{theo}[Dynamics of the interaction of two solitary waves]\label{maintheo}Let $d=2$ or $3$. There exist positive constants $C$ and $\nu^\star$ such that the following is true. Let $0<\mu_0<\nu^\star$, $\vert \bomega_0 \vert <\mu_0$ and let $v$ be the unique solution to \eqref{ZK:sym} satisfying \eqref{multi_sol:-infty}. Then, there exist $(z_1,z_2,\bomega_1,\bomega_2,\mu_1,\mu_2) \in C^1(\mathbb R : \mathbb R^{2d+2})$ such that 
\begin{equation} \label{def:eta}
\eta(t,\cdot)=v(t,\cdot)-Q_{1+\mu_2(t)}(\cdot-z_2(t),\cdot-\bomega_2(t))-Q_{1+\mu_1(t)}(\cdot-z_1(t),\cdot-\bomega_1(t))
\end{equation}
satisfies, for all $t \in \mathbb R$,
\begin{align} 
    &\|\eta(t)\|_{H^1} + \sum_{j=1}^2 \left( \left|\dot{z}_j(t)-\mu_j(t) \right|+\left\vert \dot{\bomega}_j(t) \right\vert+ \left| \mu_j(t)+\frac{(-1)^j}2 \dot{Z}(t) \right| \right) \le C \mu_0^{\frac32} ; \label{maintheo.1} \\
    & z_1(t)-z_2(t) \ge \frac12 Z_0.\label{maintheo.2} %z_1(t)-z_2(t) \ge Z_0 - C Z_0^2 \mu_0^{\frac34}. 
\end{align}
Moreover, there exist $\mu_1^+=\lim_{t \to +\infty}\mu_1(t)$ and $\mu_2^+=\lim_{t \to +\infty}\mu_2(t)$ such that 
\begin{align} 
&\lim_{t \to +\infty} \|\eta(t)\|_{H^1(x>-\frac{99}{100}t)}=0; \label{maintheo.4} \\
& 0 \le  \mu_1^+ -\mu_0\le C\mu_0^2 ;\label{maintheo.5} \\ 
& 0\le -\mu_2^+-\mu_0 \le C \mu_0^2 .
\label{maintheo.6}
\end{align}
\end{theo}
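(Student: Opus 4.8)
The strategy is to adapt the approach of Martel and Merle for the quartic gKdV equation \cite{MM11} to the higher-dimensional, non-integrable and non-explicit setting of \eqref{ZK:sym}, running a bootstrap forward from $t=-\infty$. \textbf{Decomposition and modulation.} For a solution $v$ that stays $H^1$-close to a sum of two well-separated solitary waves, the implicit function theorem---using the non-degeneracy of the linearised operator $L:=-\Delta+1-2Q$, whose $L^2(\mathbb R^d)$-kernel is $\mathrm{span}\{\partial_xQ,\partial_{y_1}Q,\dots,\partial_{y_{d-1}}Q\}$ and which satisfies $L\Lambda Q=-Q$---produces $C^1$ parameters $(z_j,\bomega_j,\mu_j)_{j=1,2}$ and a remainder $\eta$ as in \eqref{def:eta} subject to $2d+2$ orthogonality conditions of the form $\langle\eta,\partial_xQ_j\rangle=\langle\eta,\partial_{y_i}Q_j\rangle=\langle\eta,\Lambda Q_j\rangle=0$, where $Q_j$ denotes the $j$-th modulated wave. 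Differentiating the orthogonality conditions along the flow of \eqref{ZK:sym} and projecting the equation satisfied by $\eta$ yields the modulation system, in which $\dot z_j-\mu_j$, $\dot\bomega_j$ and $\dot\mu_j-F_j$ (with $F_j$ the interaction force) are controlled by $\|\eta\|_{H^1}^2$ plus the size of the overlap of the two waves, itself governed through the asymptotics of $Q$ by the distance $Z:=z_1-z_2$.

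\textbf{Intrinsic approximate solution.} Since there is no closed form for $Q$, I would construct a refined approximate two-soliton by cancelling the residual of \eqref{ZK:sym} only in the scaling and translation directions. Writing $V=Q_{1+\mu_1}(\cdot-z_1,\cdot-\bomega_1)+Q_{1+\mu_2}(\cdot-z_2,\cdot-\bomega_2)+\sum_j(a_jA_j+b_jB_j)$ with scalar coefficients $a_j=O(\mu_0^2)$ and $b_j$ slaved to $\dot\bomega_j$, and with $A_j,B_j$ solutions of elliptic equations $LA_j=(\text{source coming from the product of the two waves})$ obtained by linearising \eqref{eq:Q}: solvability of these equations requires the sources to be orthogonal to $\ker L$, a constraint that is absorbed by letting the modulation parameters carry the scaling and translation components of the error, which is exactly the mechanism producing the reduced ODE \eqref{eq:Z_intro}. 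The decay of $A_j,B_j$ at infinity, needed to bound the residual $\mathcal E(V):=\partial_tV+\partial_x(\Delta V-V+V^2)$ by $\lesssim\mu_0^2e^{-Z/2}$ in a suitable exponentially weighted norm, is extracted from the sharp asymptotics of $Q$ in Proposition \ref{propo:Q_K_0} and its finite-series refinement.

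\textbf{Bootstrap, modified energy, and the Hamiltonian ODE.} On a maximal interval $(-\infty,T^\star]$ I would run a bootstrap with hypotheses $\|\eta(t)\|_{H^1}\le C_0\mu_0^{3/2}$, $\sum_j(|\dot z_j-\mu_j|+|\dot\bomega_j|+|\mu_j+\tfrac{(-1)^j}2\dot Z|)\le C_0\mu_0^{3/2}$ and $Z(t)\ge\tfrac12Z_0$. To improve the bound on $\eta$, I would build a localised modulated energy--mass functional $\mathcal F(t)=E(v)+\sum_j(1+\mu_j)M_j(v)+(\text{quadratic correction in }\eta\text{ and the }A_j)$, with cut-offs separating the two waves along oblique hyperplanes as in \cite{CMP16,FHRY23}, show it is almost conserved ($|\mathcal F(t)-\mathcal F(s)|\lesssim\mu_0^{4}$) and coercive ($\mathcal F(t)\ge c\|\eta(t)\|_{H^1}^2-C\mu_0^{4}$), the coercivity combining the spectral properties of $L$, the orthogonality conditions, and a refined modulation estimate in the transverse variable $\by$ which has no one-dimensional analogue. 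This recovers $\|\eta(t)\|_{H^1}^2\lesssim\mu_0^3$, improving the constant. To close the parameter estimates, I would subtract the two equations for $\dot\mu_j$ and combine with $\ddot z_j=\dot\mu_j+\tfrac{d}{dt}(\dot z_j-\mu_j)$ to show that $Z$ solves \eqref{eq:Z_intro} up to an error $\lesssim\mu_0^{5/2}$. Because \eqref{eq:Z_intro} is Hamiltonian, the quantity $\mathcal H(Z,\dot Z)=\tfrac12\dot Z^2+\mathcal V(Z)$, with $\mathcal V'(Z)=-\tfrac{2}{\langle\Lambda Q,Q\rangle}\int Q(x+Z,\by)\partial_x(Q^2)(x,\by)\,dxd\by$ and $\mathcal V(+\infty)=0$, is almost conserved along $v$; using $\mathcal H\to2\mu_0^2$ as $t\to-\infty$ (from \eqref{multi_sol:-infty} and the matching with the ODE solution $Z$ of Appendix \ref{app:Z}) together with the properties of $\mathcal V$ proved there---in particular $\mathcal V$ strictly decreasing and $\mathcal V(Z_0)=2\mu_0^2$---one deduces $Z(t)\ge\tfrac12Z_0$ and $|\dot Z(t)|\le2\mu_0+C\mu_0^{3/2}$ for all $t$, hence $\mu_j=-\tfrac{(-1)^j}2\dot Z+O(\mu_0^{3/2})$; this closes the bootstrap and, combined with the existence--uniqueness result of \cite{Val21}, gives \eqref{maintheo.1}--\eqref{maintheo.2} on all of $\mathbb R$.

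\textbf{Asymptotics at $+\infty$ and main obstacle.} For large positive times the two waves are genuinely separated and move apart since $\dot Z\to2\mu_0>0$; the monotonicity estimates on oblique half-spaces and the asymptotic-stability arguments of \cite{CMP16} (and \cite{FHRY23} for $d=3$), applied to each wave in the decomposition, show $\|\eta(t)\|_{H^1(x>-\frac{99}{100}t)}\to0$ and that $\mu_j(t)\to\mu_j^+$, which is \eqref{maintheo.4}. Writing $v(t)=Q_{1+\mu_1^+}(\cdot-z_1,\cdot-\bomega_1)+Q_{1+\mu_2^+}(\cdot-z_2,\cdot-\bomega_2)+\eta^+(t)+o(1)$ and inserting this into the conservation of $M$ and $E$ ($M(v)=M(Q_{1+\mu_0})+M(Q_{1-\mu_0})$, $E(v)=E(Q_{1+\mu_0})+E(Q_{1-\mu_0})$), the nonnegativity of $\lim\|\eta^+(t)\|_{L^2}^2$ and the strict convexity of the curve $c\mapsto(M(Q_c),E(Q_c))$ force $\mu_1^+-\mu_0$ and $-\mu_2^+-\mu_0$ to be nonnegative and $O(\mu_0^2)$, which is \eqref{maintheo.5}--\eqref{maintheo.6}. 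The main obstacle is the parameter bootstrap: unlike for gKdV, neither $Q$ nor the reduced ODE \eqref{eq:Z_intro} is explicit, so the inter-wave distance cannot be tracked by comparison with a known function and must be controlled solely through the almost-conservation of $\mathcal H$; this demands very sharp error estimates in the modulation system, a precise qualitative description of $\mathcal V$ extracted from Proposition \ref{propo:Q_K_0}, and the new transverse modulation refinement feeding the coercivity of $\mathcal F$.
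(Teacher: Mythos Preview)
Your overall architecture---modulation, refined approximate solution, bootstrap with a modified energy, Hamiltonian control of the reduced ODE, then asymptotic stability and conservation laws---is the paper's. But two of your technical choices diverge from the paper in ways that would prevent the argument from closing.

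\textbf{Approximate solution.} You state the correct principle (cancel the residual only along the scaling and translation directions) yet then propose to solve elliptic problems $LA_j=\text{source}$. The paper explains precisely why that one-dimensional recipe fails here: in $d\ge 2$ the expansion of $Q(\cdot+z,\by)$ near each wave proceeds in powers of $z^{-1}$, not $e^{-z}$, so iterated local solves do not gain enough. The paper instead takes $V=R_1+R_2+F+\sum_i\pAi\cdot\NRti$ with $F=(-\Delta+1)^{-1}(2\Rtone\Rttwo)$ (needed because the raw interaction $2\partial_x(R_1R_2)$ is a non-localised plateau, a difficulty absent in the quartic gKdV setting) and with the \emph{intrinsic} directions $\NRti$ built from $(-\Delta+1)^{-1}$ and $\partial_x^{-1}$ applied to $\Rti,\partial_y\Rti,\Lambda\Rti$. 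Crucially one does \emph{not} make $\E(V)$ small in norm---its main piece $S$ remains of size $z^{-1/2}e^{-z}\sim\mu_0^2$---but only forces $\langle S,\MRi\rangle$ to be of order roughly $e^{-3z/2}$ by tuning $\pAi$. Your claimed residual bound $\lesssim\mu_0^2e^{-Z/2}$ is neither achieved nor needed. Because $S$ stays large, the energy functional must carry the extra linear term $-\int S\epsilon$ (this is the paper's $\mathcal F_\pm$); your ``quadratic corrections in $\eta$ and the $A_j$'' would not absorb the $\partial_xS$ contribution.

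\textbf{Time decomposition and large time.} The paper does not run one bootstrap from $-\infty$. It first uses a quantified orbital-stability result (Proposition~\ref{propo:quantified_orbital_stability}) on $(-\infty,-T_1]$ to reach the collision region with controlled constants; then on $[-T_1,T_1]$ it runs the bootstrap (Proposition~\ref{propo:bootstrap_-T_T}) with a further subdivision $[-T_1,-T_3]$, $[-T_3,-T_2]$, $[-T_2,T_2]$, $[T_2,T_4]$, $[T_4,T_1]$, alternating between $\mathcal F_-$ and $\mathcal F_+$ and between direct integration and the Hamiltonian argument; the refined transverse control comes from the localised functionals $\mathcal K_i(t)=\int\epsilon\,K_i$ with $K_i=(\int_{-\infty}^x\partial_yR_i)\chi_{\mu_0}$, whose time derivative upgrades the $\dot\omega_i$ estimate. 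For $t\ge T_1$ the paper invokes the multi-soliton orbital and asymptotic stability of \cite{PV23}, not the single-soliton results of \cite{CMP16,FHRY23} applied separately to each wave. Your use of conservation of $M$ and $E$ for \eqref{maintheo.5}--\eqref{maintheo.6} matches the paper's Lemma~\ref{lemm:bound_limits_velocities}.
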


\begin{rema}
As in the one dimensional case, it follows from \eqref{maintheo.2} that the distance between the two solitary waves is always large and is tending to $\infty$ as $\mu_0$ goes to $0$. Moreover, it will be clear from the proof that the more precise estimate 
\begin{align*}
    \left\vert \min_{t \in \mathbb{R}} \left( (z_1,\bomega_1)(t) - (z_2,\bomega_2) (t) \right) -(Z_0,\bomega_0) \right\vert \leq C Z_0^2 \mu_0^{\frac34},
\end{align*}
holds, (see (1.17) in \cite{MM11}  in the one dimensional case).
\end{rema}

In the later computations, we will make use of a small positive constant $\rho<\frac1{32}$ independent of $\mu_0$ and define the unique time $T_1>0$ satisfying $Z(T_1)=\rho^{-1} Z_0$. Notice that at time $T_1$, the solution $v$ as defined in Theorem \ref{maintheo} is still close to the sum of the two decoupled solitary waves. We define the time interval $[-T_1,T_1]$ as the \emph{collision region}.

\begin{theo}[Stability in the region of collision] \label{theo:stability}
   Under the notations of Theorem \ref{maintheo} and the definition of $T_1$ above, if a function $w$ satisfies
   \begin{align} \label{cond:theo:stability}
       \left\| w(-T_1) - v(-T_1) \right\|_{H^1} \leq Z_0^{-5}\mu_0^{\frac74},
   \end{align}
   then it holds for any $t\in [-T_1,T_1]$
   \begin{align} \label{est:theo:stability}
       \left\| w(t)-v(t) \right\|_{H^1} \lesssim \mu_0^{\frac74}.
   \end{align}
\end{theo}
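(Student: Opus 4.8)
The plan is to run a bootstrap/modulation argument on the difference $w-v$ over the finite interval $[-T_1,T_1]$, treating $v$ as the reference solution constructed in Theorem \ref{maintheo} rather than comparing against an abstract sum of solitary waves. First I would note that on $[-T_1,T_1]$ the solution $v$ stays close to a sum of two modulated solitary waves $Q_{1+\mu_1}(\cdot-z_1,\cdot-\bomega_1)+Q_{1+\mu_2}(\cdot-z_2,\cdot-\bomega_2)$ with the parameter bounds \eqref{maintheo.1}--\eqref{maintheo.2}; in particular the two bumps are separated by at least $\frac12 Z_0$, and the time length $2T_1$ is comparable to $Z_0 \mu_0^{-1}$ up to logarithms (from Appendix \ref{app:Z}, $T_1 \sim \mu_0^{-1}Z_0$, since $\dot Z \approx -2\mu_0$ until $Z$ reaches $\rho^{-1}Z_0$). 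Writing $\zeta := w-v$, $\zeta$ solves the equation linearized around $v$ plus a quadratic term: $\partial_t \zeta + \partial_x(\Delta \zeta - \zeta + 2v\zeta + \zeta^2)=0$. The strategy is to decompose $w$ itself via the implicit function theorem into modulated solitary waves $Q_{1+\tilde\mu_j}(\cdot - \tilde z_j,\cdot-\tilde\bomega_j)$ plus a remainder $\tilde\eta$, impose the usual orthogonality conditions (against $\Lambda Q$, $\partial_x Q$, $\partial_{y_k}Q$ translated to each wave), and control $\|\tilde\eta\|_{H^1}$ together with the differences of parameters $|\tilde z_j - z_j|$, $|\tilde\bomega_j-\bomega_j|$, $|\tilde\mu_j-\mu_j|$.

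The core estimate is a coercivity/energy-functional argument: I would use the same modified energy functional $\mathcal F$ built from $E$, $M$ and the localized mass/momentum quantities already employed in the proof of Theorem \ref{maintheo} (with cutoffs separating the two waves along oblique hyperplanes adapted to the ZK geometry, as in \cite{CMP16,FHRY23}), now evaluated on the \emph{difference} rather than on $\eta$. The key points are: (i) $\mathcal F$ is almost conserved, with $\frac{d}{dt}\mathcal F \lesssim Z_0^{-N}(\|\tilde\eta\|_{H^1}^2 + \sum_j(\cdots)) + (\text{interaction and cutoff errors})$, where the interaction errors are exponentially small in $Z_0$ (hence $\lesssim \mu_0^{100}$) because the waves are far apart; (ii) under the orthogonality conditions, $\mathcal F$ is coercive: $\mathcal F \gtrsim \|\tilde\eta\|_{H^1}^2 - (\text{parameter corrections})$; and (iii) the parameter differences are controlled back in terms of $\|\tilde\eta\|_{H^1}$ via the modulation ODEs and the Hamiltonian structure of the $Z$-equation \eqref{eq:Z_intro}, exactly as in Theorem \ref{maintheo}. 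Integrating in time from $-T_1$: $\|\zeta(t)\|_{H^1}^2 \lesssim \mathcal F(t) + (\cdots) \lesssim \mathcal F(-T_1) + \int_{-T_1}^{t}\big(Z_0^{-N}\|\tilde\eta\|^2 + \mu_0^{100}\big)\,ds$. Since $\mathcal F(-T_1) \lesssim \|\zeta(-T_1)\|_{H^1}^2 \le Z_0^{-10}\mu_0^{7/2}$ by the hypothesis \eqref{cond:theo:stability}, and since $T_1 \lesssim \mu_0^{-1}Z_0$, a Gronwall argument gives $\|\tilde\eta(t)\|_{H^1}^2 \lesssim Z_0^{-10}\mu_0^{7/2}\, e^{C Z_0^{-N} T_1} + T_1 \mu_0^{100} \lesssim \mu_0^{7/2}$, provided $Z_0^{-N}T_1 \lesssim Z_0^{-N+1}\mu_0^{-1}$ stays bounded — which is where the margin $Z_0^{-5}$ in the hypothesis is spent, absorbing the polynomial-in-$Z_0$ losses incurred over the collision time. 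Combining $\|\tilde\eta(t)\|_{H^1}\lesssim \mu_0^{7/4}$ with the parameter difference bounds and $\|\eta(t)\|_{H^1}\lesssim\mu_0^{3/2}$ (from Theorem \ref{maintheo}, but the \emph{differences} of parameters are the $\mu_0^{7/4}$-order quantities here) yields $\|w(t)-v(t)\|_{H^1}\lesssim\mu_0^{7/4}$, which is \eqref{est:theo:stability}.

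The main obstacle I anticipate is the interplay between the length of the collision interval and the weakness of the almost-conservation: because $2T_1$ grows like $\mu_0^{-1}$ (times a power of $Z_0$), any error term in $\frac{d}{dt}\mathcal F$ that is not gained back with a power of $\mu_0$ small enough, or not exponentially small in $Z_0$, will be fatal after integration. Concretely, one must check that the localization/cutoff errors and the cross terms $\int v \tilde\eta^2$-type contributions produced by the modified energy are controlled by $Z_0^{-N}\|\tilde\eta\|_{H^1}^2$ with $N$ large enough that $Z_0^{-N}T_1 \to 0$ (using $\mu_0 \sim Z_0^{-(d-1)/4}e^{-Z_0/2}$, so $\mu_0^{-1}$ is exponentially large in $Z_0$ — this is the real tension, and it forces one to extract genuine exponential smallness $e^{-Z_0/2}$, not merely polynomial smallness, from the wave separation). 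This is precisely why the hypothesis \eqref{cond:theo:stability} is posed with the prefactor $Z_0^{-5}$ and the exponent $\mu_0^{7/4}$ (so that $\|\zeta(-T_1)\|^2 \sim Z_0^{-10}\mu_0^{7/2}$ sits comfortably below $\mu_0^{7/2}$): the extra room is consumed by the Gronwall growth and by the modulation corrections. A secondary technical point is that the orthogonality conditions for $w$ must be set up relative to the \emph{same} oblique hyperplane decomposition used for $v$, so that the coercivity of $\mathcal F$ transfers without new loss; this is routine given the monotonicity estimates already established in the paper, but it must be done carefully near $t=\pm T_1$ where the modulation parameters of $v$ are themselves only controlled to order $\mu_0^{3/2}$.
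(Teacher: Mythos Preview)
Your approach differs fundamentally from the paper's, and it contains a gap that you yourself identify but do not close. The paper does \emph{not} run an energy/Gronwall argument on the difference $w-v$. Instead, the proof is almost immediate once Proposition~\ref{propo:bootstrap_-T_T} is in place: from Corollary~\ref{coro:quantified_orbital_stability} one knows that $v(-T_1)$ satisfies the initial-data hypotheses \eqref{propo:bootstrap_-T_T:ini} of that proposition with a comfortable margin, and the assumption \eqref{cond:theo:stability} together with the triangle inequality then shows that $w(-T_1)$ satisfies them as well. Proposition~\ref{propo:bootstrap_-T_T} is applied directly to $w$, yielding $w=V(\Gamma_w)+\epsilon_w$ with $\|\epsilon_w\|_{H^1}\le C\mu_0^{7/4}$ on $[-T_1,T_1]$; the same is already known for $v$ from Proposition~\ref{mainpropo}, and the conclusion is obtained by triangle inequality. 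All the hard analysis (energy functionals $\mathcal F_\pm$, refined transverse modulation via $\mathcal K_i$, Hamiltonian comparison for the $Z$-ODE) lives inside Proposition~\ref{propo:bootstrap_-T_T} and is reused wholesale, not redone for a difference.

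The specific step in your plan that fails is the Gronwall estimate. You write $\frac{d}{dt}\mathcal F\lesssim Z_0^{-N}\|\tilde\eta\|_{H^1}^2+\mu_0^{100}$ and then argue that $e^{CZ_0^{-N}T_1}$ stays bounded ``provided $Z_0^{-N}T_1\lesssim Z_0^{-N+1}\mu_0^{-1}$ stays bounded''. But as you immediately note, $\mu_0^{-1}\sim Z_0^{(d-1)/4}e^{Z_0/2}$ is exponentially large in $Z_0$, so $Z_0^{-N+1}\mu_0^{-1}\to\infty$ for every fixed $N$; no polynomial-in-$Z_0$ smallness of the rate can survive integration over $[-T_1,T_1]$. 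The paper's bootstrap avoids this trap precisely because it does not integrate a linear-in-$\|\epsilon\|^2$ rate over time: the key time integrations in Section~\ref{sec:collision} are performed after multiplying by $|\dot Z(t)|\mu_0^{-1}\gtrsim 1$, which converts $\int\,dt$ into an integral in $Z$ over the bounded range $[Z_0,\rho^{-1}Z_0]$, and the energy functionals $\mathcal F_\pm$ have genuine monotonicity (not merely slow growth) modulo error terms of size $\mu_0^{11/4}e^{-\frac{15}{16}Z}$. A direct difference argument linearized around $v$ does not inherit this structure, and you give no mechanism to extract the exponential smallness $e^{-Z_0/2}$ in the Gronwall rate that would be required. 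In short, your outline would need to essentially re-derive Proposition~\ref{propo:bootstrap_-T_T} for $w$; once you do that, you are back to the paper's two-line proof.
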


\begin{rema}\label{rema:stability}
By using similar arguments as in the proof of Proposition \ref{propo:bootstrap_-T_T}, it is enough to assume that, for some $\mathcal{T} \in [-T_1,T_1]$,
   \begin{align}\label{rema:cond:stability}
       \left\| w(\mathcal{T}) - v(\mathcal{T}) \right\|_{H^1} \leq Z_0^{-5}\mu_0^{\frac74},
   \end{align}
   to obtain \eqref{est:theo:stability}.
\end{rema}

\begin{rema} After the time $T_1$ (and before the time $-T_1$), one can use the stability result of well-prepared multi solitary waves in \cite{PV23} to deduce that the solution $w$ remains close in the energy space $H^1$ to the sum of two modulated solitary waves for all time $t \in \mathbb R$. Even though the speeds of these waves are close to the ones of $v$ in the decomposition in Theorem \ref{maintheo}, the translation parameters $(z_i,\bomega_i)$ may not be close to the ones of $v$, and thus, it is not clear that the solution $w$ remains close to the solution $v$ for all times. 
\end{rema}

\subsection{Strategy of the proofs}

The proofs of Theorems \ref{maintheo} and \ref{theo:stability} extend the road map introduced by Martel and Merle in \cite{MM11} (see also \cite{Mizu03}) to higher dimensions. Below we explain the key ingredients of the proofs and highlight the main differences and new challenges that arise in higher dimensions. For the sake of simplicity, we focus on the $2$-dimensional case in the rest of the paper, since the proofs for the $3$-dimensional case are similar. 

\smallskip \noindent \emph{Step $1$: constructing a suitable approximate solution.}
The first step consists of constructing an approximate solution $V$ to \eqref{ZK:sym} with a good control on the error $\E(V) = \partial_tV - \partial_x \left( -\Delta V + V -V^2 \right)$. At first order, $V$ should be given by a sum of two modulated solitary waves $V_0=R_1+R_2$, where $R_i(t,\bx)=Q_{1+\mu_i(t)}(\bx-\bz_i(t))$, $i=1,2$, for a set of well chosen $C^1$ translation and dilation parameters $(\bz_1,\bz_2, \mu_1,\mu_2)=(z_1,z_2,\omega_1,\omega_2,\mu_1,\mu_2)$. Then, the error is of the form 
\begin{equation*}
\E(V_0)=\sum_{i=1}^2 \overrightarrow{\widetilde{m}_i} \cdot \MRi  +2\partial_x(R_1R_2), \quad \text{where} \quad   \overrightarrow{\widetilde{m}_i} = \begin{pmatrix} -\dot{z}_i + \mu_i   \\ -\dot{\omega}_i  \\ \dot{\mu}_i \end{pmatrix} , \quad \MRi = \begin{pmatrix} \partial_x R_i \\ \partial_y R_i \\ \Lambda R_i \end{pmatrix} .
\end{equation*}
Here, $\MRi$ represents the natural directions associated with the translation and scaling\footnote{We refer to \eqref{def:LambdaR_i} for a precise definition of the scaling operator $\Lambda R_i$.} invariants of the equation (see \eqref{scaling}),  $ \overrightarrow{\widetilde{m}}_i$ is the rough evolution of the dynamical parameters $\Gamma$. Moreover, the  interaction term $2\partial_x(R_1R_2)$ is a plateau of size $z^{-\frac12}e^{-z}$ between $R_1$ and $R_2$, where $z=z_1-z_2$ denotes the distance between the two solitary waves in the tangential spatial coordinate $x$ (see \eqref{eq:bound_Q_first_order}). Thus, it is not localized around each solitary wave as for the quartic gKdV equation in \cite{MM11} . It is worth noting that this first new difficulty is related to the quadratic power in the non-linearity of \eqref{ZK} and not to the higher dimension of the problem. A first new idea is to add the term $F=\left(-\Delta+1\right)^{-1}(2R_1R_2)$ \footnote{In the definition of $F$ in \eqref{def:VA}, we are actually working with $\Rtone$ and $\Rttwo$, which are obtained from  $R_1$ and $R_2$ by rescaling around the size $1$ and translating in the transverse direction so that the centers of the waves are located on the $x$-axis.} to the approximate solution $V_0$. Then, a direct computation shows that 
\begin{equation*}
\E(V_0+F)=\sum_{i=1}^2 \vec{\widetilde{m}}_i \cdot \MRi+2\partial_x((R_1+R_2)F)+\partial_x(F^2)+\partial_t F .
\end{equation*}
Even though the main error term $2\partial_x((R_1+R_2)F)$ is still of size $z^{-\frac12}e^{-z}$, it is now localised around the two solitary waves $R_1$ and $R_2$. 

In the $1$-dimensional case of the quartic gKdV equation, the solitary wave $Q$ is explicit and $Q(\cdot+z)$ has an asymptotic expansion in powers of $e^{-z}$ at infinity. This allows the authors in \cite{MM11} to improve the order of the error around each solitary wave coming from the expansion of $Q(\cdot+z)$ by a factor $e^{-z}$ by solving the remainder terms locally around each waves. The situation is much more delicate in higher dimensions. In dimension $2$, $Q(\cdot+z, \cdot)$ has an asymptotic development for $\vert \bx \vert$ small compared to $z$ of the form\footnote{We refer to Proposition \ref{propo:Q_n_app_gen} for a precise statement.}
\begin{equation*}
\forall \, n \in \mathbb N, \quad Q(x+z,y) \underset{z \to +\infty}{\sim} q_0(\bx) z^{-\frac12}e^{-z}+q_1(\bx) z^{-\frac32}e^{-z}+\cdots+q_n(\bx) z^{-n-\frac{1}2}e^{-z}.
\end{equation*}

Hence, even solving these asymptotic expansions locally around each solitary wave would not improve the error sufficiently to close the estimates.  To bypass this difficulty, we choose to make the error small only in the three natural directions of $\MRi$. Another major new difficulty that arises in higher dimension is that the solitary waves no longer have an explicit formulation. For this reason, we define our approximate solution $V$ by
\begin{equation*}
V = R_1+R_2+ F + \sum_{i=1}^2  \pAi \cdot \NRi \quad \text{with} \quad \pAi(z) := \begin{pmatrix} \alpha_{i}(z) \\ \beta_{i}(z) \\ \gamma_i(z) \end{pmatrix} , \quad \NRi= \begin{pmatrix}  -(-\Delta+1)^{-1} R_i \\ -\partial_x^{-1} (-\Delta+1)^{-1}\partial_y R_i \\ -\partial_x^{-1} (-\Delta+1)^{-1}\Lambda R_i \end{pmatrix} ,
\end{equation*}
where the directions in $\NRi$ are defined in an intrinsic way\footnote{In the definition of $V$ in \eqref{def:VA}, we are actually working with $\NRti$ which corresponds to the directions $\NRi$ rescaled around the size $1$ and whose centers are retranslated in the transverse direction onto the $x$ axis.}. By using $\partial_x(-\Delta+1)\NRi=\MRi$, one can rewrite the error of $V$ on the form 
\begin{equation} \label{EV:intro}
\E(V)=  \sum_{i=1}^2 \mAi \cdot \MRi + T +\partial_xS, \quad \text{where} \quad \mAi := \begin{pmatrix} -\dot{z}_i + \mu_i + \alpha_i  \\ -\dot{\omega}_i +\beta_i \\ \dot{\mu}_i + \gamma_i \end{pmatrix} .
\end{equation}

While the main term in the error $\partial_xS$ is still of order $z^{-\frac12}e^{-z}$, we adjust the parameters in $ \pAi$ so that the scalar products $\langle S, \MRi \rangle $ have smaller order of approximately $e^{-\frac32 z}$ (see \eqref{est:ortho_S} for a precise statement). This choice induces a refined dynamical system on the geometrical parameters given by $\mAi$. Here, we can compute explicitly 
\begin{equation*}
\gamma_1(z)=-\gamma_2(z)=- \langle \Lambda Q,Q\rangle^{-1} \int_{\mathbb R^2} Q(x+z,y)\partial_x(Q^2)(x,y)dxdy .
\end{equation*}
so that the distance $z$ between the two solitary waves can be approximated  by the solution $Z$ of the second order nonlinear ODE 
\begin{equation} \label{ODE:Z:intro}
 \Ddot{Z}(t)= \frac{2}{\langle \Lambda Q, Q \rangle} \int_{\mathbb{R}^2} Q(x + Z(t),y) \partial_x (Q^2) (x,y) dxdy \approx Z(t)^{-\frac12}e^{-Z(t)} ,
\end{equation}
satisfying $\displaystyle \lim_{t \to -\infty}(Z(t),\dot{Z}(t))=(+\infty,-2\mu_0)$. Observe that the factor $Z^{-\frac12}$ is not present in the $1$-dimensional case (see (1.24) in \cite{MM11}). Moreover, it is worth noting that since $\gamma_1(z)=-\gamma_2(z)$ and $\beta_1(z)=\beta_2(z)$, the approximate solution $V$ is actually a sum of the two solitary waves $R_1$ and $R_2$ and a plateau of size $z^{-\frac12}e^{-z}$ localised between the waves (see Figure \ref{Figure:W}).

\smallskip \noindent \emph{Step $2$: modulation theory and energy estimates.}
Once the approximate solution $V$ is constructed, we look at a solution $w$ of \eqref{ZK:sym} close to $V$. By using modulation theory, we adjust the geometrical parameters $z_i$, $\omega_i$ and $\mu_i$ to ensure some orthogonality conditions on the error $\epsilon=w-V$ in the natural directions $\partial_xR_i$, $\partial_yR_i$ and $R_i$, $i=1,2$. This implies directly some bounds for $|\mAi|$ in terms of $\|\epsilon\|_{H^1}$ and the error terms in $\E(V)$. To control $\|\epsilon\|_{H^1}$, we use a virial-energy functional inspired from \cite{MM11}, but modified by adding the term $\int S \epsilon$, where $S$ is the main term in the error $\E(V)$ in \eqref{EV:intro}. This modification of the energy is necessary because, as explained above, we cannot improve the error $\E(V)$ in a significant way as in the one dimensional case. This is the second new ingredient that we have to introduce to deal with the $2$-dimensional case. We refer to \cite{MN20} for a similar idea to construct strong interactions of solitary waves for a Schr\"odinger system with cubic nonlinearity in one dimension. 

\smallskip \noindent \emph{Step $3$: bootstrap estimates.}
With these estimates in hand, we are able to prove Theorem \ref{theo:stability}. We study a solution $w$ close to the approximate solution $V$ in a large time region $[-T_1,T_1]$ around the collision. We prove that the distance between the solitary waves $z(t)$ is well approximated by the solution of the ODE $Z(t)$ to \eqref{ODE:Z:intro} satisfying $\displaystyle \lim_{t \to -\infty}(Z(t),\dot{Z}(t))=(+\infty,-2\mu_0)$ (see Proposition \ref{propo:bootstrap_-T_T}). To this aim, we perform a series of bootstrap estimates on the sizes of the geometrical  parameters and $\|\epsilon\|_{H^1}$. Two new difficulties arise in the two dimensional case. Firstly, the control on the transverse translation parameter given by the modulation estimates is not good enough to close the estimates. To address this issue, we introduce new linear quantities in $\epsilon$ which allow to obtain sharper estimates on the evolution of the transverse parameters. These quantities correspond to scalar products of $\epsilon$ with some non-localised directions. Similar quantities were used in \cite{MM11} in the tangential direction. Note however that in order to use these quantities for a general solution $w$, we have to localise them in a suitable way (see the definition of $\mathcal{K}_i$ in \eqref{def:Kj} and Remark \ref{rema:Ki} for more detail). We refer to \cite{MP17} for a similar idea in the framework of the modified Benjamin-Ono equation. Secondly, the second order nonlinear ODE governing the evolution of the distance between the  two solitary waves cannot be approximated by explicit solutions as in the one dimensional case. Moreover, even an approximation of the ODE \eqref{ODE:Z:intro} by $\Ddot{Z}(t)= Z(t)^{-\frac12}e^{-Z(t)} $ would not provide sufficiently sharp estimates on the quantities $\vert \dot{\mu}_i(t) -  \gamma_i(t) \vert$. For this reason, we perform in Appendix \ref{app:Z} a detailed study of the ODE \eqref{ODE:Z:intro} by using a phase portrait and rely on its Hamiltonian structure to obtain more refined estimates on $|z(t)-Z(t)|$ (see Proposition \ref{propo:H_positive_times} for a precise statement). 

\smallskip \noindent \emph{Step $4$: proof of the main theorem.}
Now, let us give some details of how to prove Theorem \ref{maintheo}. We start with the well-prepared $2$-solitary waves solution of \eqref{ZK:sym} at time $-\infty$, $v$ satisfying \eqref{multi_sol:-infty} and constructed in \cite{Val21}. First, we prove a quantitative orbital stability result around the sum of two solitary waves as long as their centers remain far enough away from each other. It is worth noting that the distance between the solution and the sum of the two solitary waves at a given and potentially large negative time $-t_0$ has to be carefully quantified as a function $\kappa(-t_0)$ which decays faster than $t_0^{-1}$ at infinity. This allows us to bring our solution $v$ close to the collision time $-T_1$. We then apply the stability result through the collision region $[-T_1,T_1]$. At this point the waves have transferred their mass and the highest one is on the right. Finally, we conclude by applying the asymptotic stability result proved in \cite{PV23}. 

\begin{toexclude}
\bigskip

\blue{\begin{itemize}
\item the second order nonlinear ODE governing the evolution of the distance between the  two solitary waves does not have explicit solutions as in the one dimensional case  $\rightarrow$ we study the solutions of this equation by using a phase portrait, $+$ in the second bootstrap, we replace $\mu_0^2$ by $\ddot{Z}$ to integrate in time $+$ $\cdots$
\end{itemize}}

\red{\begin{itemize}
    \item(Accuracy of the ODE governing the distance between the two solitary waves) The solution of the ODE satisfied by $z$ is a level set of the hamiltonian $H$, involving nonlinear terms. Another possibility to obtain at the main order this ODE is to approximate the nonlinear term by a finite linear combination of terms $Z^{-\frac12-k} e^{-Z}$ with $k\in \mathbb{N}$. However, the bound on $\vert \dot{\mu}(t) - 2 \gamma_1(t) \vert$ in \eqref{eq:H'-T_1} would be $Z^{-\frac12 - K}e^{-Z}$ instead of $e^{-\frac32 Z}$, which implies instead of \eqref{eq:H_-T_1_-T_2} and \eqref{eq:z_Z_-T_1_Tstar}
    \begin{align*}
        \left\vert H(t)-2\mu_0^2 \right\vert \leq C Z_0^{-K} Z(t)^{-\frac12 } e^{- Z(t)} \quad \text{and} \quad \left\vert z(t)-Z(t) \right\vert \leq Z_0^{-K}.
    \end{align*}
    We thus obtain a much higher precision on the function $z$ using the nonlinear term in the ODE.
\end{itemize}}
\end{toexclude}

\smallskip \noindent \emph{Summary.}
 We extend the techniques introduced in \cite{MM11} for the quartic generalized Korteweg-de Vries equation to higher dimensions. First, despite the non-explicit nature of the solitary wave $Q$, we are able to construct an approximate solution in an intrinsic way by canceling the error to the equation only in a few natural directions. Then, to control the difference between a solution and the approximate solution, we use modified energy functional and a refined modulation estimate in the transverse variable. Moreover, we rely on the hamiltonian structure of the ODE governing the distance between the waves, which cannot be approximated by explicit solutions, to close the bootstrap estimates on the parameters. It is worth noting that we do not use any numerical method, except for the asymptotic stability result in \cite{PV23}, which relies on the virial estimates proved in \cite{CMP16} and \cite{FHRY23}. We hope that the techniques introduced here are robust and will prove useful in studying the collision phenomena for other focusing non-linear dispersive equations with non-explicit solitary waves. In a future work, we plan to refine our construction and prove the inelasticity of the collision of ZK in dimensions $2$ and $3$.

\bigskip

The paper is organised as follows: in Section \ref{sec:construction_V}, we construct the approximate solution $V$ and derive several estimates on the error $\E(V)$. In Section \ref{sec:est_eps}, we perform the modulation on the geometrical parameters and prove the energy estimates on the error between the solution and the approximate solution. We use these estimates in Section \ref{sec:collision} to prove the main stability result in the collision region. Then, we prove Theorems \ref{maintheo} and \ref{theo:stability} in Section \ref{sec:dynamics}. Finally, we state and derive several useful results in the appendices: we give some properties of the Bessel potential $(-\Delta+1)^{-1}$ in Appendix \ref{App:Bessel}; we give some precise asymptotics of the ground state $Q$ in Appendix \ref{app:asymptotic_Q}; we derive some technical estimates in Appendix \ref{Append:Est} and we study the dynamical system \eqref{ODE:Z:intro} in Appendix \ref{app:Z}.

%%%%%%%%%%%%%%%%%%%%%%%%%%%%%%%%%%%%%%%%%%%%%%%%%%%%%%%%
%%%%% Construction of profiles
%%%%%%%%%%%%%%%%%%%%%%%%%%%%%%%%%%%%%%%%%%%%%%%%%%%%%%%%

\subsection*{Notations}

\begin{itemize}

\item Throughout the paper, we will denote by $C$ a positive constant which may change from line to line. The notation $a \lesssim b$ means that $a \le C b$.

\item We use the standard notation for the sets of integers $\mathbb{N}= \left\{ 0,1, \cdots \right\}$ and $\mathbb{N}^{\ast}=\mathbb{N}\setminus \{0\}$.

\item In general, we will denote $\bx =(x,y) \in \mathbb R^2$. Then, $\vert \bx \vert=(x^2+y^2)^{\frac12}$ and $\langle \bx \rangle=(1+\vert \bx \vert^2)^{\frac12}.$

\item We will work with real-valued function $f=f(\bx)=f(x,y): \mathbb R^2 \to \mathbb R$. For $1 \le p \le +\infty$ and $s \in \mathbb R$, $L^p(\mathbb R^2)$, respectively $W^{s,p}(\mathbb R^2)$, denotes the standard Lebesgue spaces, respectively Sobolev spaces. We also write $H^s(\mathbb R^2)=W^{s,2}(\mathbb R^2)$. 
For $f,g \in L^2(\mathbb R^2)$ two real-valued functions, we denote 
by $\langle f,g \rangle =\int_{\mathbb R^2} f(\bx)g(\bx) d\bx$ their scalar product\footnote{Note that there is no risk of confusion with the japanese bracket defined above for an element $\bx \in \mathbb R^2$.}. 

\item We define the function spaces 
\begin{align} \label{def:Y}
\Y := \left\{ f \in \C^\infty (\mathbb{R}^2) : \forall \alpha \in \mathbb{N}^2, \exists n \in \mathbb{N}, \forall \, \bx \in \mathbb R^2 \ \vert \partial_\bx^\alpha f(\bx) \vert \lesssim \langle \bx \rangle^n e^{-\vert \bx \vert} \right\}.
\end{align}
and 
\begin{equation} \label{def:Z}
\mathcal{Z} := \left\{
f \in \C^\infty (\mathbb{R}^2) \cap L^{\infty}(\mathbb R^2) : \  \begin{aligned} & \partial_xf \in \mathcal{Y} \ \text{and} \ \forall k \in \mathbb N,  \forall \bx \in \mathbb R^2 \,  \\ 
    &\text{with} \, x \ge 0, \  \vert \partial_y^k f(\bx) \vert \lesssim  e^{-\frac{15}{16}\vert \bx \vert} 
\end{aligned} \right\}.
\end{equation}

%Let $\bz_i= (z_i,\omega_i)$ denote the centers of the ground states. Sometimes, we also denote by $\tilde{\bz}_i=(z_i,0)$ the recentered variables $\bz_i$ on the axis $\{y=0\}$. Moreover, we denote $\bz=\bz_1-\bz_2$, $\tilde{\bz}=\tilde{\bz}_1-\tilde{\bz}_2$, $z=z_1-z_2$ and $\omega=\omega_1-\omega_2$.

\item For $c>0$, we define the generator of the scaling symmetry  $\Lambda_c$ and its iterated  $\Lambda^2_c$ by
\begin{align}
	& \Lambda_c (f)(\bx) = \Lambda_c f(\bx) := \frac{d}{d\tilde{c}} \left( \tilde{c} f(\sqrt{\tilde{c}} \cdot) \right)_{\vert \tilde{c}=c}(\bx) = \left( \left(1+\frac{1}{2}\bx \cdot \nabla\right)  f\right)\left( \sqrt{c} \bx \right), \label{defi:Lambda}\\
	& \Lambda^2_c f(\bx) := \frac{d^2}{d\tilde{c}^2} \left(\tilde{c}f (\sqrt{\tilde{c}} \cdot) \right)_{\tilde{c}= c}(\bx) = \frac{1}{c} \left( \frac{1}{2} \bx \cdot \nabla \left( 1+\frac{1}{2} \bx \cdot \nabla \right) f \right)\left( \sqrt{c}\bx\right). \label{defi:Lambda_2} 
\end{align}
In the case $c=1$, we denote 
\begin{equation} \label{defi:Lambda_Q}
\Lambda Q=\Lambda_1 Q  \quad \text{and} \quad  \Lambda^2 Q=\Lambda_1^2 Q . 
\end{equation} 

%\begin{rema}  $\Lambda_c f$ corresponds to the operator $\Lambda_c$ applied to $f$, whereas $\Lambda Q$ is the definition of a function: $\Lambda Q=\Lambda_1Q_1$. 
%\end{rema}

%Notations relative to the solitary waves. Three different parameters of modulation, the scaling and the translations in two different directions:
%\begin{equation}\label{defi:R_i}
%	\left\{ \begin{aligned}
%	\Rti(t,\bx) & := Q (x- z_i(t),y) \\
%	R_i(t,\bx) & := Q_{1+\mu_i(t)}(\bx- \bz_i(t))
%	\end{aligned} \right. ,
%\quad 
%\left\{ \begin{aligned}
%		\Lambda \Rti(t,\bx) & := \Lambda Q (x - z_i(t),y) \\
%	\Lambda R_i(t,\bx) & := \left(\Lambda_{1+\mu_i} Q \right)  (\bx -\bz_i(t)).   
%\end{aligned} \right. 
%\end{equation}
%
%
%
%Operators (\red{attention, $L_{R_i}$ ne correspond pas au linéarisé}).

\item We will denote by $(-\Delta+1)^{-1}: L^2(\mathbb R^2) \to H^2(\mathbb R^2)$ the Bessel potential of order $2$, which is defined as a Fourier multiplier by $(1+|{\bf \xi}|^2)^{-1}$. We refer to Appendix \ref{App:Bessel} for several important properties related to $(-\Delta+1)^{-1}$.

\item We define the operator $-\partial_x^{-1}$ on $\mathcal{Y}$ by
\begin{align} \label{defi:d_x_-1}
-\partial_x^{-1}: \mathcal{Y} \to \mathcal{Z}, \ f \mapsto -\partial_x^{-1} f := \int_x^{+\infty} f(\tilde{x},y) d\tilde{x}. 
\end{align}
Observe in particular that, for all $f \in \mathcal{Y},$ 
\begin{equation}  \label{prop:d_x_-1}
 \partial_x (-\partial_x^{-1}) f = -f = (- \partial_x^{-1}) \partial_x f.
\end{equation}

\end{itemize}

\section{Construction of the approximate solution}\label{sec:construction_V}

\subsection{Properties of the linearized operator around the ground state}

The linearized operator $L$ around the ground state is defined by
\begin{align}\label{defi:L}
    L:= -\Delta+1-2Q.
\end{align}
The following proposition gathers some classical results on $L$.

\begin{propo}\label{theo:L} The operator $L: H^2(\mathbb R^2) \subseteq L^2(\mathbb R^2) \to L^2(\mathbb R^2)$ satisfies the following properties.
\begin{enumerate}
    \item[(i)] \emph{Essential spectrum:} $L$ is a self-adjoint operator and its essential spectrum is given by $ \sigma_{ess}(L) = [1,+\infty)$. 
    \item[(ii)] \emph{Non-degeneracy of the spectrum:} $ ker(L) = \text{span} \, \{\partial_x Q, \partial_y Q\}$.
    \item[(iii)] \emph{Negative eigenvalue:} $L$ has a unique negative eigenvalue $-\lambda_0<0$, which is simple, and there exists a radially symmetric positive eigenfunction $\chi_0$associated to $-\lambda_0$. Without loss of generality, $\chi_0$ is chosen of $L^2$-norm equal to $1$. Furthermore, there exists $\kappa_0>0$ such that:
    \begin{align}\label{eq:chi_0}
        \forall r>1, \quad \left\vert \chi_0(r) - \kappa_0 K_0\left( \sqrt{1+\lambda_0}r \right) \right\vert \lesssim r^{-1} e^{-2(1+\lambda_0)^{\frac{1}{2}}r}.
    \end{align}
    \item[(iv)] \emph{Coercivity:} there exists $C>0$ such that, for any $f\in H^2$ in $L^2$ satisfying $\langle f, \partial_xQ \rangle=\langle f, \partial_yQ \rangle=\langle f, Q \rangle =0$:
    \begin{align*}
        \left\langle L f,f \right\rangle \geq C \| f \|_{L^2}^2.
    \end{align*}
    \item[(v)]\emph{Invertibility:} for any function $h \in L^2(\mathbb R^2)$ orthogonal to $\text{span} \, \{\partial_x Q, \partial_y Q\}$, there exists a unique function $f \in H^2(\mathbb R^2)$ orthogonal to $\text{span} \, \{\partial_x Q, \partial_y Q\}$ such that $Lf=h$.  Moreover, if $h$ is radial, then $f$ is also radial and if $h$ is even (resp. odd) with respect to one of its two variables, then $f$ is even (resp. odd) with respect to the same variable.  
    \item[(vi)] \emph{Regularity:} for $h\in L^2$, if $f \in H^2(\mathbb R^2)$ is such that $h=Lf \in \mathcal{Y}$, then $f \in \mathcal{Y}$. 
    \item[(vii)]\emph{Scaling:} $L\Lambda Q=-Q$. 
\end{enumerate}
\end{propo}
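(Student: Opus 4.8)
The statement to prove is Proposition~\ref{theo:L}, which collects the standard spectral properties of the linearized operator $L = -\Delta + 1 - 2Q$ acting on $L^2(\mathbb{R}^2)$. These are classical facts, so the plan is essentially to assemble known results and cite the appropriate literature while spelling out the short arguments.

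\textbf{Approach and key steps.} I would proceed property by property, in the order listed. For (i), self-adjointness follows since $L$ is a bounded perturbation (by the bounded multiplication operator $-2Q \in L^\infty$) of the self-adjoint operator $-\Delta + 1$ with domain $H^2(\mathbb{R}^2)$; the essential spectrum $\sigma_{\mathrm{ess}}(L) = [1,+\infty)$ then follows from Weyl's theorem, because $Q$ decays exponentially (Proposition~\ref{propo:Q}), so multiplication by $Q$ is relatively compact with respect to $-\Delta$. For (ii), the non-degeneracy $\ker(L) = \mathrm{span}\{\partial_x Q, \partial_y Q\}$: differentiating the elliptic equation \eqref{eq:Q} in $x$ and $y$ shows $\partial_x Q, \partial_y Q \in \ker L$; the reverse inclusion is exactly the non-degeneracy result for the ground state, which one cites from Kwong~\cite{Kwo89} (or the general framework of Weinstein / Chang--Gustafson--Nakanishi--Tsai); since $Q$ is radial, decomposing into spherical harmonics reduces the kernel computation to ODE analysis on each angular mode. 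For (iii), the existence of a unique simple negative eigenvalue $-\lambda_0 < 0$ with positive radial eigenfunction $\chi_0$ follows from a Perron--Frobenius / ground-state-of-a-Schr\"odinger-operator argument: $\langle L Q, Q\rangle = \langle (-\Delta + 1 - 2Q)Q, Q\rangle = \langle Q^2 - 2Q^2 \cdot \text{\dots}\rangle$; more precisely, using $-\Delta Q + Q = Q^2$ one gets $\langle LQ, Q\rangle = -\int Q^3 < 0$, so $L$ has at least one negative eigenvalue, and the bottom of the spectrum of a Schr\"odinger operator is a simple eigenvalue with a sign-definite eigenfunction when it is below the essential spectrum. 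The pointwise asymptotics \eqref{eq:chi_0} for $\chi_0$ is proved exactly as the asymptotics for $Q$ in Proposition~\ref{propo:Q_K_0}, i.e.\ by the ODE/Bessel-function argument of Appendix~\ref{app:asymptotic_Q} applied to the radial equation $-\chi_0'' - \frac{1}{r}\chi_0' + (1+\lambda_0)\chi_0 = 2Q\chi_0$, treating the right-hand side as an exponentially small source.

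\textbf{Coercivity and invertibility.} For (iv), the coercivity of $\langle Lf, f\rangle$ on the subspace orthogonal to $\partial_x Q$, $\partial_y Q$ and $Q$ is the standard Weinstein-type estimate: $L$ has exactly one negative direction (spanned by $\chi_0$, equivalently controlled by orthogonality to $Q$ via $\langle L^{-1}Q, Q\rangle < 0$, which holds since $L\Lambda Q = -Q$ gives $\langle L^{-1}Q, Q\rangle = -\langle \Lambda Q, Q\rangle$ and $\langle \Lambda Q, Q\rangle = \frac{2-d}{2}\cdot\frac{\dots}{}$ — in dimension $d=2$ one must be a little careful here, and I would instead invoke the $L^2$-subcritical virial identity $\frac{d}{dc}\int Q_c^2 > 0$, which is equivalent to $\langle \Lambda Q, Q\rangle > 0$ and gives the needed sign), plus the kernel from (ii); then a standard spectral-decomposition argument (as in de Bouard~\cite{deB96} or \cite{CMP16}) upgrades non-negativity on the orthogonal complement to strict coercivity using that $L$ has a spectral gap above $0$. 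For (v), invertibility on $\{\partial_x Q, \partial_y Q\}^\perp$: this is Fredholm theory — $L$ restricted to the orthogonal complement of its kernel is invertible because $0$ is an isolated eigenvalue with finite multiplicity (its only point in $[0, \text{gap})$ besides possibly arising from orthogonality), and the statements about radial/even/odd symmetry follow from uniqueness of the solution together with the invariance of $L$ under rotations and reflections (if $h$ is even in $y$, so is the solution $f$, since $f(x,-y)$ solves the same equation and lies in the same orthogonal complement). For (vi), the regularity bootstrap: if $Lf = h \in \mathcal{Y}$, then $(-\Delta+1)f = h + 2Qf$; elliptic regularity plus Sobolev embedding gives $f \in C^\infty$, and the exponential decay with polynomial-weight loss follows by a comparison-principle / convolution argument with the Bessel kernel (Appendix~\ref{App:Bessel}), exploiting that $h$ and $Qf$ both decay like $e^{-|\bx|}$ up to polynomial factors. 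Finally (vii), $L\Lambda Q = -Q$, is a direct computation: differentiate the rescaled equation $-\Delta Q_c + Q_c - Q_c^2 = 0$ with respect to $c$ at $c=1$, using $\frac{d}{dc}Q_c|_{c=1} = \Lambda Q$, to obtain $-\Delta \Lambda Q + \Lambda Q - 2Q\Lambda Q = -Q$, i.e.\ $L\Lambda Q = -Q$.

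\textbf{Main obstacle.} None of these steps is genuinely hard, but the point requiring the most care is the coercivity estimate (iv) and the sign $\langle \Lambda Q, Q\rangle > 0$ that underlies it, since $d=2$ sits at a dimension where naive scaling identities degenerate; the clean route is to invoke the $L^2$-subcriticality of ZK in dimensions $2$ and $3$ (equivalently, the stability criterion of Grillakis--Shatah--Strauss verified in de Bouard~\cite{deB96}), which is precisely where the restriction to $d \in \{2,3\}$ enters. The other mildly technical point is the pointwise asymptotic \eqref{eq:chi_0} for the negative eigenfunction, which I would not reprove in detail but reduce to the already-established Appendix~\ref{app:asymptotic_Q} machinery for $Q$, since the radial ODE for $\chi_0$ has the same structure with an exponentially decaying forcing term.
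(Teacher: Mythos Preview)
Your proposal is correct and follows essentially the same route as the paper: cite standard references for (i)--(iv), use Fredholm theory for (v), bootstrap via $f = (-\Delta+1)^{-1}(2Qf + h)$ and Lemma~\ref{Bessel:Y} for (vi), and differentiate the scaled equation for (vii). The paper's Fredholm argument for (v) is slightly more concrete than yours: it factors $L = (\mathrm{id} - T)(-\Delta+1)$ with $T = 2Q(-\Delta+1)^{-1}$ compact, notes $\ker(\mathrm{id}-T^{\ast}) = \ker L$, and applies the Fredholm alternative.

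Two small corrections. First, your concern in (iv) that $d=2$ is a borderline case for the sign of $\langle \Lambda Q, Q\rangle$ is misplaced: in dimension $2$ one has $\int Q_c^2 = c\int Q^2$, so $\langle \Lambda Q, Q\rangle = \tfrac{1}{2}\int Q^2 > 0$ directly (the degeneracy occurs at the $L^2$-critical dimension $d=4$); no detour through Grillakis--Shatah--Strauss is needed. Second, in (vii) the equation satisfied by $Q_c$ is $-\Delta Q_c + cQ_c - Q_c^2 = 0$, not $-\Delta Q_c + Q_c - Q_c^2 = 0$; differentiating the correct equation at $c=1$ (which produces the extra $+Q$ term) is what gives $L\Lambda Q = -Q$.
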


\begin{toexclude}
To study the operator $L$, we proceed as follow. $L$, defined as an operator from $L^2$ to $L^2$, satisfies:
\begin{itemize}
    \item The domain of $L$ is $H^2$. On this domain, it is self-adjoint, thus the spectrum is real: $\sigma(L)\subset \mathbb{R}$.
    \item We continue with the essential spectrum $\sigma_{ess}$. The operator $-\Delta$ is self-adjoint from $H^2$ to $L^2$, thus its spectrum is real. However, by the Fourier transform, for any $\lambda>0$, $-\Delta+\lambda$ is a bijection from $H^2$ to $L^2$, thus $\sigma(-\Delta) \subset [0,+\infty)$. Equivalently, $\sigma(-\Delta+1) \subset [1+\infty)$. Since $-3Q^2$ is smooth with an exponential decay, the operator $g \mapsto -3Q^2 g$ is a compact operator from $H^2$ to $L^2$. By a compact perturbation argument, we have $\sigma_{ess}(L) = \sigma_{ess}(-\Delta+1) \subset [1,+\infty)$. 
    \item To prove the equality in the previous spectrum, we prove that the $-\Delta-\lambda$ is not surjective onto $L^2$. Indeed, one can find a function in $L^2$ that is not in the range. This construction is done in the Lecture Notes of Didier, with Rafael, by using the Fourier transform, and finding a function emphasising the pole.
    \item Next, we study the discrete spectrum of $L$. First, we know that $\nabla Q$ is in the kernel of $L$. By the min-max principal, and $\langle Q, LQ \rangle <0$, $L$ has at least one negative eigenvalue, called $-\lambda_0$.
    \item The argument of \cite{Wei85}, Proposition 2.8, asserts that the space of eigenfunctions associated to negative eigenvalue is of dimension at most $1$. Indeed, it is proved that:
    \begin{align*}
        \inf_{f\in H^2, f \perp Q} \langle L f, f \rangle \geq 0.
    \end{align*}
    By this statement, an eigenfunction associated to a negative eigenvalue is orthogonal to $Q$. Then, suppose that $\chi_0$ and $\chi_1$ are two eigenfunctions associated to two negative eigenvalues, thus orthogonal, then there exists a non-trivial linear combination such that $\alpha \chi_0+\beta \chi_1 \perp Q$. This function belies the statement above, thus at least one function among $\chi_0$ and $\chi_1$, and the space of eigenfunctions associated to negative eigenvalues is of dimension $1$.
    \item The eigenfunction associated to $\lambda_0$ can be chosen positive (Perron-Frobenius arguments, see Frank and Lenzmann) and radially symmetric (or use of Strum-Liouville to determine one eigenvalue; see \cite{CGN08}). We denote by $\chi_0$ the positive one with $L^2$-norm equal to $1$.
    \item Coercivity. By the arguments of Weinstein, we have the following inequality:
    \begin{align*}
        \exists C>0, \forall f \in H^1, \quad \int f \chi_0 = \int f \partial_x Q =\int f\partial_y Q =0 \quad \Rightarrow \langle L f, f \rangle \geq C \langle f,f \rangle. 
    \end{align*}
    \item Inverse on the adequate space, 2nd version. Let us denote by $V$ the space of $H^1$ functions satisfying \eqref{cond:orthogonality}, and by $W$ the set of $H^{-1}$ functions satisfying \eqref{cond:orthogonality}. $L$ is bijective from $V$ to $W$. By the Lax-Milgram theorem, $L$ is a bijection from $V$ to $W$. In particular, for any $f\in L^2$ orthogonal to the kernel, there exists a unique function $g\in H^1$ orthogonal to the kernel such that $Lg=f$.
\end{itemize}
\end{toexclude}

\begin{proof}
Assertion (i) follows from classical perturbation arguments such as the Kato-Rellich theorem. The proof of (ii) is more delicate (see \cite{Wei85,CGN08}). We refer to \cite{Wei85} for the proof of (iii) and (iv). Note however, that the proof of \eqref{eq:chi_0} is given in Appendix \ref{app:asymptotic_Q}. 

We now give some details of the proof of (v). The uniqueness part follows directly from (ii). For the existence part, it suffices to prove that any $h \in L^2(\mathbb R^2)$ orthogonal to $\text{Span} \, \{\partial_x Q, \partial_y Q\}$ is in the range of $L$. Equivalently, given $h \in L^2(\mathbb R^2)$ orthogonal to $\text{Span} \, \{\partial_x Q, \partial_y Q\}$, we look for $f\in H^2(\mathbb R^2)$ such that
\begin{align} \label{Prop:L:v}
    \left( id_{L^2} - 2Q\left( -\Delta+1 \right)^{-1} \right) (-\Delta+1)f = h.
\end{align}
Let us denote by $T$ the operator $2Q (-\Delta+1)^{-1} : L^2 \rightarrow L^2$. This operator is compact and $T^\star = (-\Delta+1)^{-1}(2Q \cdot)$. Moreover, observe that $\text{Ker}(id - T^\star)=\text{Ker}(L)$. Thus, it follows from the Fredholm alternative that $h \in R(id-T)=\text{Ker}(id - T^\star)^{\perp}$, so that there exists $\widetilde{f} \in L^2(\mathbb R^2)$ such that $(id-T)\widetilde{f}=h$. Then, $f=(-\Delta+1)^{-1}\widetilde{f} \in H^2(\mathbb R^2)$ and satisfies \eqref{Prop:L:v}, which concludes the existence part of (v).  

Assume now that $h$ is radial. Let $\theta\in [0, 2\pi]$ and $R_\theta$ be the rotation on $\mathbb{R}^2$ of angle $\theta$. Since $Q$ and $\Delta$ are invariant under rotation, we have $L \left( f \circ R_\theta \right)=h$, so that $f-f\circ R_\theta \in \text{Ker}(L)$. Moreover, $f$ is orthogonal to $\text{span}\left\{ \partial_x Q, \partial_y Q \right\}$, so  that $f\circ R_\theta$ is orthogonal to $\text{Span}\left\{ \partial_x Q\circ R_\theta, \partial_y Q\circ R_\theta \right\}=\text{Span}\left\{ \partial_x Q, \partial_y Q \right\}$. Hence, $f=f\circ R_\theta$, and we conclude that $f$ is radial. The proof of the parity with respect to one of the variables follows similarly.

To prove (vi), assume that $h \in \mathcal{Y} \subseteq H^{\infty}(\mathbb R^2)$ and let  $f \in H^2(\mathbb R^2)$ be a solution of $Lf=h$. First, it follows from a standard bootstrap argument that $f \in H^{\infty}(\mathbb R^2)$. To prove the decay properties of $f$, we rewrite $f$ as $f=(-\Delta+1)^{-1}(2Qf+h)$. It is clear from \eqref{eq:bound_Q_first_order} and the Sobolev embedding that $Qf \in \mathcal{Y}$. Therefore, we conclude from Lemma \ref{Bessel:Y} that $f \in \mathcal{Y}$, which finishes the proof of (vi). 

Finally, the proof of $(vii)$ follows directly by differentiating the equation satisfied by $Q_c$ with respect to $c$ and using \eqref{defi:Lambda_Q}.

\end{proof}

Next we introduce some useful functions related to the ground state.  We define  
\begin{equation} \label{def:XYW}
X=-(-\Delta+1)^{-1} Q, \quad Y=-\partial_x^{-1}(-\Delta+1)^{-1} \partial_yQ \quad \text{and} \quad W=-\partial_x^{-1}(-\Delta+1)^{-1} \Lambda Q .
\end{equation} 
Observe that $X \in \mathcal{Y}$ and $Y, \, W \in \mathcal{Z}$ (see Lemma \ref{parity}). To work in a more condensed form, we also introduce the vectors $\MQ$ and $\NQ$ defined by 
\begin{equation} \label{def:NQMQ}
\MQ := \begin{pmatrix} \partial_x Q \\ \partial_y Q \\ \Lambda Q \end{pmatrix} \quad \text{and} \quad \NQ := \begin{pmatrix} X \\ Y \\ W \end{pmatrix} .
\end{equation}

\begin{toexclude}
Below, we derive an invertibility result for $L$, which will be an important tool in the construction of the local profiles around each solitary waves. 

\begin{lemm}\label{lemm:antecedent_1}
	Let $f\in \mathcal{Y}$. There exists a unique $(\beta, \gamma)\in \mathbb{R}^2$ and a unique $\hat{A}\in \Y$ orthogonal to $\{\partial_x Q$, $\partial_y Q\}$ such that
	\begin{align} \label{lemm:antecedant_1}
		L (\hat{A}) =2 Q\partial_x^{-1}(-\Delta+1)^{-1}\left( \beta \partial_y Q +\gamma \Lambda Q \right) +f, 
	\end{align}
The coefficients $(\beta,\gamma)$ are given by
\begin{align} \label{lemm:antecedant_1_ortho}
	\beta= \frac{-1}{\langle \partial_x^{-1} \partial_y Q, \partial_y Q\rangle} \langle f, \partial_yQ \rangle, \quad \text{and} \quad \gamma= \frac{-1}{\langle \Lambda Q, Q \rangle} \langle f, \partial_x Q \rangle.
\end{align}
Furthermore, if $f =f(x,y)$ is even in $y$, then $\hat{A}=\hat{A}(x,y)$ is even in $y$ and $\beta=0$.
\end{lemm}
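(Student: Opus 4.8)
The plan is to view \eqref{lemm:antecedant_1} as a linear equation $L\hat A=h$ with right-hand side
\[
h:=2Q\,\partial_x^{-1}(-\Delta+1)^{-1}\big(\beta\,\partial_yQ+\gamma\,\Lambda Q\big)+f ,
\]
and to exploit the invertibility statement of Proposition~\ref{theo:L}(v). Since $\partial_xQ,\partial_yQ\in\ker L$ and $L$ is self-adjoint, the equation $L\hat A=h$ has a solution $\hat A\in H^2(\mathbb R^2)$ if and only if $h$ is orthogonal to $\partial_xQ$ and $\partial_yQ$, in which case there is a unique such $\hat A$ orthogonal to $\mathrm{span}\{\partial_xQ,\partial_yQ\}$. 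Hence the whole statement reduces to showing that the two conditions $\langle h,\partial_xQ\rangle=\langle h,\partial_yQ\rangle=0$ determine $(\beta,\gamma)$ uniquely, with the values in \eqref{lemm:antecedant_1_ortho}.

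To evaluate these scalar products, I would write $2Q\,\partial_jQ=\partial_j(Q^2)$, use the elliptic identity $(-\Delta+1)Q=Q^2$ coming from \eqref{eq:Q} to get $\partial_j(Q^2)=(-\Delta+1)\partial_jQ$, and then move $(-\Delta+1)$ onto $g:=\partial_x^{-1}(-\Delta+1)^{-1}(\beta\partial_yQ+\gamma\Lambda Q)$ using the self-adjointness of $(-\Delta+1)^{-1}$ and the commutation of $(-\Delta+1)$ with $\partial_x^{-1}$. Combined with the antiderivative identity \eqref{prop:d_x_-1}, this collapses each pairing $\langle 2Qg,\partial_jQ\rangle$ into a finite combination of $\langle\partial_x^{-1}\partial_yQ,\partial_xQ\rangle$, $\langle\partial_x^{-1}\Lambda Q,\partial_xQ\rangle$, $\langle\partial_x^{-1}\partial_yQ,\partial_yQ\rangle$, $\langle\partial_x^{-1}\Lambda Q,\partial_yQ\rangle$. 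The two ``cross'' terms vanish: $\langle\partial_x^{-1}\Lambda Q,\partial_yQ\rangle=0$ because $\partial_x^{-1}\Lambda Q$ is even in $y$ (as $\Lambda Q$ is radial) while $\partial_yQ$ is odd in $y$, and $\langle\partial_x^{-1}\partial_yQ,\partial_xQ\rangle=0$ after one integration by parts in $x$, since it reduces to $\int\partial_yQ\cdot Q=\tfrac12\int\partial_y(Q^2)=0$. The two ``diagonal'' terms are $\langle\Lambda Q,Q\rangle$, which is nonzero (equal to $\tfrac12\|Q\|_{L^2}^2$ by the scaling computation, reflecting $L^2$-subcriticality), and $\langle\partial_x^{-1}\partial_yQ,\partial_yQ\rangle$, which is nonzero because $Q$ is radially decreasing, so $\partial_x^{-1}\partial_yQ$ and $\partial_yQ$ have the same strict sign off $\{y=0\}$. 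Thus the linear system for $(\beta,\gamma)$ is diagonal with nonvanishing diagonal, and solving it yields exactly \eqref{lemm:antecedant_1_ortho}.

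With $(\beta,\gamma)$ fixed as above, $h$ is orthogonal to $\partial_xQ$ and $\partial_yQ$, so Proposition~\ref{theo:L}(v) produces the unique $\hat A\in H^2$ orthogonal to $\mathrm{span}\{\partial_xQ,\partial_yQ\}$ with $L\hat A=h$. To see $\hat A\in\mathcal{Y}$, I would first check $h\in\mathcal{Y}$: by Lemma~\ref{Bessel:Y}, $(-\Delta+1)^{-1}(\beta\partial_yQ+\gamma\Lambda Q)\in\mathcal{Y}$; then $\partial_x^{-1}$ of it lies in $\mathcal{Z}$ by \eqref{defi:d_x_-1}, and multiplying such a $\mathcal{Z}$-function (bounded, with $x$-derivative back in $\mathcal{Y}$) by $Q$ lands in $\mathcal{Y}$, so adding $f\in\mathcal{Y}$ keeps $h\in\mathcal{Y}$; Proposition~\ref{theo:L}(vi) then upgrades $\hat A$ to $\mathcal{Y}$. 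For the parity statement: if $f$ is even in $y$ then $\langle f,\partial_yQ\rangle=0$ (the integrand is odd in $y$), hence $\beta=0$ by \eqref{lemm:antecedant_1_ortho}; consequently $h=2Q\,\partial_x^{-1}(-\Delta+1)^{-1}(\gamma\Lambda Q)+f$ is even in $y$, and the parity part of Proposition~\ref{theo:L}(v) forces $\hat A$ to be even in $y$.

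The main obstacle is the bookkeeping in the middle step: one has to combine the elliptic identity $(-\Delta+1)Q=Q^2$, self-adjointness, and the rule \eqref{prop:d_x_-1} in the right order to recognize that the a priori infinite-dimensional pairings reduce to a $2\times 2$ \emph{diagonal} linear system, and then verify that both diagonal coefficients are nonzero (the second one via the sign argument from radial monotonicity of $Q$). The remaining ingredients are direct applications of Proposition~\ref{theo:L} and the mapping properties of $(-\Delta+1)^{-1}$ and $\partial_x^{-1}$.
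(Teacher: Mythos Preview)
Your proof is correct and follows essentially the same route as the paper's: reduce to the two orthogonality conditions $\langle h,\partial_xQ\rangle=\langle h,\partial_yQ\rangle=0$, compute the resulting $2\times 2$ system using $(-\Delta+1)Q=Q^2$ and parity to see it is diagonal with nonzero entries, then invoke Proposition~\ref{theo:L}(v)--(vi) for existence, uniqueness, regularity, and parity of $\hat A$. The only cosmetic difference is that the paper quotes the precomputed scalar products \eqref{eq:ineq2_Q}, \eqref{eq:ineq2.1_Q}, \eqref{eq:ineq3.1_Q}, \eqref{eq:ineq3_Q} from Lemma~\ref{propo:id_Q} rather than rederiving them inline; note also that with the paper's sign convention for $\partial_x^{-1}$ one actually has $\langle\partial_x^{-1}\partial_yQ,\partial_yQ\rangle<0$ (the two factors have \emph{opposite} signs off $\{y=0\}$), but your nonvanishing conclusion---which is all that is needed---is unaffected.
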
 

\begin{proof}
First, we adjust the coefficients $(\beta,\gamma)$ to obtain the orthogonality of the right-hand side of \eqref{lemm:antecedant_1} with respect to $\{\partial_xQ,\partial_yQ\}$. With the notation \eqref{def:XYW} in hand and thanks to \eqref{eq:ineq2_Q}, \eqref{eq:ineq1_Q}, \eqref{eq:ineq3.1_Q} and \eqref{eq:ineq3_Q}, these orthogonality conditions are equivalent to 
\begin{equation*}
\begin{cases}
\beta  \langle 2QY, \partial_yQ \rangle+\gamma \langle 2QW, \partial_yQ \rangle+\langle f, \partial_yQ \rangle=0,\\
 \beta  \langle 2QY, \partial_xQ \rangle+\gamma \langle 2QW, \partial_xQ \rangle+\langle f, \partial_xQ \rangle=0,
\end{cases}
 \quad \Leftrightarrow \quad 
 \begin{cases}
\beta \langle \partial_x^{-1} \partial_y Q, \partial_y Q\rangle +\langle f, \partial_yQ \rangle=0,\\
 \gamma \langle\Lambda Q, Q \rangle+\langle f, \partial_xQ \rangle=0.
\end{cases}
\end{equation*}
Thus, the choice of $(\beta,\gamma)$ in \eqref{lemm:antecedant_1_ortho} and Proposition \ref{theo:L} (v) ensure the existence of a unique $\hat{A}\in H^2(\mathbb R^2)$ orthogonal to $\{\partial_x Q$, $\partial_y Q\}$ solution of \eqref{lemm:antecedant_1}. 

Moreover, it follows from Lemma \ref{parity} that $2 Q\partial_x^{-1}(-\Delta+1)^{-1}\left( \beta \partial_y Q +\gamma \Lambda Q \right) +f \in \mathcal{Y}$. We deduce then from Proposition \ref{theo:L} (vi) that $\hat{A} \in \mathcal{Y}$.

Finally, if $f$ is even in $y$, then it follows from Proposition \ref{theo:L} (v) that $\hat{A}$ is also even in $y$, so that $\beta=0$ thanks to \eqref{lemm:antecedant_1_ortho}.
\end{proof}
\end{toexclude}

The following lemma explains how to choose some coefficients in order to ensure important orthogonality conditions. 

\begin{lemm}\label{lemm:antecedent_2}
Let $f\in \Y$. There exists a unique vector $\overrightarrow{p}_0= \left( \alpha_0, \beta_0, \gamma_0 \right)^{T} \in \mathbb R^3$ such that
\begin{align} \label{lemm:antecedent_2_ortho}
    2Q \overrightarrow{p}_0 \cdot \NQ +f \perp  \{\partial_x Q, \partial_y Q, \Lambda Q\}.
\end{align}
The coefficients $\alpha_0$, $\beta_0$ and $\gamma_0$ are given by
\begin{align}
    & \alpha_0= \frac{1}{\langle Q,\Lambda Q \rangle +\langle(-\Delta+1)^{-1}Q,Q\rangle} \left( \langle f, \Lambda Q \rangle- \frac{\langle 2Q W, \Lambda Q\rangle}{\langle\Lambda Q, Q \rangle} \langle f, \partial_x Q \rangle \right), \label{def:alpha0}\\
    & \beta_0= \frac{1}{\langle \partial_x^{-1} \partial_y Q, \partial_y Q\rangle} \langle f, \partial_yQ \rangle, \quad \gamma_0= \frac{-1}{\langle \Lambda Q, Q \rangle} \langle f, \partial_x Q \rangle. \label{def:gamma0}
\end{align}
\begin{toexclude}
Moreover, $\alpha_0$, $\beta_0$ and $\gamma_0$ are smooth in $f$ with the topology of $\Y$  
\blue{(why with the topology of $\mathcal{Y}$? Isn't it sufficient in $L^2$? This would be given directly by the next formula)}.
\end{toexclude}
Furthermore, 
\begin{align} \label{lemm:antecedent_2_bound}
    \left\vert \alpha_0 \right\vert + \left\vert \beta_0 \right\vert + \left\vert \gamma_0 \right\vert \lesssim \| f \|_{L^2}.
\end{align}
\end{lemm}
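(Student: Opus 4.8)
The statement is a linear-algebra computation: we must solve the orthogonality system $\langle 2Q\,\overrightarrow{p}_0\cdot\NQ + f,\ \phi\rangle = 0$ for $\phi\in\{\partial_xQ,\partial_yQ,\Lambda Q\}$, which is a $3\times 3$ linear system in the unknowns $(\alpha_0,\beta_0,\gamma_0)$. The plan is to compute the $3\times 3$ Gram-type matrix with entries $\langle 2QX,\phi\rangle$, $\langle 2QY,\phi\rangle$, $\langle 2QW,\phi\rangle$ for $\phi$ running over the three directions, show it is invertible (in fact block triangular after using parity), and read off the solution. The key simplification is to exploit the parity of the functions involved: $Q$, $\partial_xQ$, $\Lambda Q$, $X$ and $W$ are even in $y$, while $\partial_yQ$ and $Y$ are odd in $y$; similarly $Q$, $\partial_yQ$, $\Lambda Q$, $Y$, $W$ are even in $x$ while $\partial_xQ$ and $X$ are odd in $x$.

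First I would record the consequences of parity. Since $2QX$ and $2QW$ are even in $y$ and $\partial_yQ$ is odd in $y$, the inner products $\langle 2QX,\partial_yQ\rangle$ and $\langle 2QW,\partial_yQ\rangle$ vanish; since $2QY$ is odd in $y$ it pairs to zero against $\partial_xQ$ and $\Lambda Q$ which are even in $y$. Thus the system decouples: the $\partial_yQ$-equation involves only $\beta_0$, namely $\beta_0\langle 2QY,\partial_yQ\rangle + \langle f,\partial_yQ\rangle = 0$, and using the identity $\langle 2QY,\partial_yQ\rangle = \langle\partial_x^{-1}\partial_yQ,\partial_yQ\rangle$ (which follows from $Y=-\partial_x^{-1}(-\Delta+1)^{-1}\partial_yQ$, integration by parts moving $2Q=(-\Delta+1)(-X)$... more directly from $-\partial_x(-\Delta+1)Y = \partial_yQ$ and $-(-\Delta+1)X=Q$, pairing appropriately) one gets the stated formula for $\beta_0$. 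The remaining two equations, against $\partial_xQ$ and $\Lambda Q$, form a $2\times 2$ system in $(\alpha_0,\gamma_0)$. I would next observe it is lower triangular: pairing against $\partial_xQ$, the term $\langle 2QX,\partial_xQ\rangle$ vanishes because $2QX$ is even in $x$ and $\partial_xQ$ is odd in $x$, leaving $\gamma_0\langle 2QW,\partial_xQ\rangle + \langle f,\partial_xQ\rangle = 0$ with $\langle 2QW,\partial_xQ\rangle = -\langle 2QW,\partial_x... \rangle$; using $-\partial_x(-\Delta+1)W=\Lambda Q$ together with $L\Lambda Q=-Q$ one computes $\langle 2QW,\partial_xQ\rangle = \langle \Lambda Q, Q\rangle$ (up to sign bookkeeping), giving $\gamma_0 = -\langle f,\partial_xQ\rangle/\langle\Lambda Q,Q\rangle$. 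Finally the $\Lambda Q$-equation reads $\alpha_0\langle 2QX,\Lambda Q\rangle + \gamma_0\langle 2QW,\Lambda Q\rangle + \langle f,\Lambda Q\rangle = 0$; the coefficient of $\alpha_0$ is $\langle 2QX,\Lambda Q\rangle = \langle Q,\Lambda Q\rangle + \langle(-\Delta+1)^{-1}Q,Q\rangle$ (write $2QX = -2Q(-\Delta+1)^{-1}Q$, then use $2Q = (-\Delta+1) - L$ and $L\Lambda Q=-Q$), and solving for $\alpha_0$ after substituting the value of $\gamma_0$ yields \eqref{def:alpha0}.

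The one genuinely substantive point — the "main obstacle" — is verifying that the diagonal coefficients $\langle\partial_x^{-1}\partial_yQ,\partial_yQ\rangle$, $\langle\Lambda Q,Q\rangle$ and $\langle Q,\Lambda Q\rangle + \langle(-\Delta+1)^{-1}Q,Q\rangle$ are all nonzero, so that the triangular system is genuinely solvable. For $\langle\Lambda Q,Q\rangle$: from $L\Lambda Q=-Q$ and the Pohozaev/virial identities for $Q$ one has $\langle\Lambda Q,Q\rangle = -\langle L\Lambda Q,\Lambda Q\rangle^{-1}\cdots$; more concretely, $2\langle\Lambda Q,Q\rangle = \frac{d}{dc}\|Q_c\|_{L^2}^2|_{c=1}$, which is strictly positive in the $L^2$-subcritical regime $d=2,3$ (this is exactly the stability/Weinstein condition), hence nonzero. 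For the quantity $\langle Q,\Lambda Q\rangle+\langle(-\Delta+1)^{-1}Q,Q\rangle$: the second summand $\langle(-\Delta+1)^{-1}Q,Q\rangle = \|(-\Delta+1)^{-1/2}Q\|_{L^2}^2 > 0$ is strictly positive, and combined with positivity of the first summand the sum is strictly positive, hence nonzero. For $\langle\partial_x^{-1}\partial_yQ,\partial_yQ\rangle$: integrating by parts, this equals $-\langle \partial_x^{-1}(\partial_yQ),\ \partial_x\partial_x^{-1}\partial_yQ\rangle$... cleaner is to note $\langle\partial_x^{-1}\partial_yQ,\partial_yQ\rangle = \langle\partial_y\partial_x^{-1}\partial_yQ,Q\rangle$... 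I would instead argue it equals a multiple of $\|\partial_yQ\|$-type quantity by a change of variables exploiting radial symmetry and the definition \eqref{defi:d_x_-1} of $-\partial_x^{-1}$, and that it is nonzero because $\partial_yQ\not\equiv 0$. Once the three diagonal coefficients are known nonzero, the explicit formulas \eqref{def:alpha0}--\eqref{def:gamma0} follow by back-substitution, and \eqref{lemm:antecedent_2_bound} is immediate from Cauchy-Schwarz: each of $\alpha_0,\beta_0,\gamma_0$ is a fixed linear combination of $\langle f,\partial_xQ\rangle,\langle f,\partial_yQ\rangle,\langle f,\Lambda Q\rangle$ with coefficients depending only on $Q$, so $|\alpha_0|+|\beta_0|+|\gamma_0|\lesssim \|f\|_{L^2}(\|\partial_xQ\|_{L^2}+\|\partial_yQ\|_{L^2}+\|\Lambda Q\|_{L^2})\lesssim\|f\|_{L^2}$. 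The fact that $2Q\,\overrightarrow{p}_0\cdot\NQ + f\in\Y$ (needed so the orthogonality makes sense against the at-most-polynomially-growing-times-exponential class, and for later use) follows from $X\in\Y$, $Y,W\in\mathcal{Z}$, $\partial_xY,\partial_xW\in\Y$, the exponential decay of $Q$, and Leibniz; this I would mention but not belabor.
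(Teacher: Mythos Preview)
Your approach is correct and essentially identical to the paper's: both exploit parity to reduce the $3\times 3$ system to a triangular one, invoke the identities \eqref{eq:ineq1.1_Q}--\eqref{eq:ineq3_Q} of Lemma~\ref{propo:id_Q} for the entries, check the diagonal coefficients are nonzero, and finish with Cauchy--Schwarz. Two small points: you have sign slips in the intermediate identities (the paper gets $\langle 2QY,\partial_yQ\rangle = -\langle\partial_x^{-1}\partial_yQ,\partial_yQ\rangle$ and $\langle 2QX,\Lambda Q\rangle = -\langle Q,\Lambda Q\rangle - \langle(-\Delta+1)^{-1}Q,Q\rangle$), and your argument for $\langle\partial_x^{-1}\partial_yQ,\partial_yQ\rangle\neq 0$ is vaguer than necessary---the paper's clean one-liner is $-\langle\partial_x^{-1}\partial_yQ,\partial_yQ\rangle = \tfrac12\int_{\mathbb R}\bigl(\int_{\mathbb R}\partial_yQ\,dx\bigr)^2 dy>0$.
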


\begin{proof}
First, observe by using the identities in Lemma \ref{propo:id_Q} that 
\begin{align*}
 \langle 2Q \overrightarrow{p}_0 \cdot \NQ +f, \partial_x Q \rangle & = \langle\Lambda Q, Q \rangle \gamma_0+\langle f, \partial_xQ \rangle ; \\ 
 \langle 2Q \overrightarrow{p}_0 \cdot \NQ +f, \partial_y Q \rangle &= -\langle \partial_x^{-1}\partial_y Q, \partial_y Q \rangle \beta_0+\langle f, \partial_yQ \rangle ;\\ 
 \langle 2Q \overrightarrow{p}_0 \cdot \NQ +f, \Lambda Q \rangle &= -\left(\langle Q,\Lambda Q \rangle +\langle(-\Delta+1)^{-1}Q,Q\rangle \right) \alpha_0+\langle 2Q W, \Lambda Q\rangle \gamma_0+\langle f, \Lambda Q \rangle .
\end{align*}
Note from Lemma \ref{propo:id_Q} that $\langle\Lambda Q, Q \rangle$, $ -\langle \partial_x^{-1}\partial_y Q, \partial_y Q \rangle $ and $\langle Q,\Lambda Q \rangle +\langle(-\Delta+1)^{-1}Q,Q\rangle$ are positive quantities. Thus, the choice of $\alpha_0$, $\beta_0$ and $\gamma_0$ as in the statement of the lemma is the unique possible choice to ensure the desired orthogonality relations.  

Finally, estimate \eqref{lemm:antecedent_2_bound} follows from the Cauchy-Schwarz inequality. 
\end{proof}

\subsection{Specific functions to describe the soliton interaction}\label{sec:interaction_notation}
In this subsection, we introduce some important notations and specific functions, which will be useful to construct the ansatz of the interaction of two solitary waves in the next subsections. 
\smallskip

For $i=1,2$, let $\bz_i(t)= (z_i(t),\omega_i(t)) \in \mathbb R^2$, respectively $\mu_i(t) \in \mathbb R$, be $C^1$ functions denoting the centers of the solitons, respectively the sizes of the modulated solitons. We also denote by $\tilde{\bz}_i(t)=(z_i(t),0)$ the recentered variables $\bz_i$ on the axis $\{y=0\}$. Moreover, we denote 
\begin{align}
        & z(t):=z_1(t) - z_2(t), \quad \Bar{z}(t):= z_1(t) +z_2(t), \label{defi:z_zbar} \\
        & \omega(t):= \omega_1(t) - \omega_2(t), \quad \Bar{\omega}(t):= \omega_1(t) + \omega_2(t), \label{defi:omega_omegabar} \\
        & \mu(t):=\mu_1(t) - \mu_2(t), \quad \Bar{\mu}(t):= \mu_1(t) + \mu_2(t). \label{defi:mu_mubar}
\end{align}
Sometimes, when there is no risk of confusion, we drop the variable $t$ of these functions.

\smallskip
We first  introduce a compact notation for the two modulated solitons $R_i$, and their rescaled and recentered versions $\Rti$. More precisely, for $i=1,2$, we denote
\begin{equation}\label{def:R_i}
 R_i(t,\bx)  := Q_{1+\mu_i(t)}(\bx- \bz_i(t)) \quad \text{and} \quad 
 \Rti(t,\bx) := Q (\bx-\tilde{\bz}_i(t)) 
\end{equation}
and 
\begin{equation} \label{def:LambdaR_i}
\Lambda R_i(t,\bx)  := \left(\Lambda_{1+\mu_i(t)} Q \right)  (\bx -\bz_i(t)) 
\quad \text{and} \quad \Lambda \Rti(t,\bx)  := \Lambda Q (\bx-\tilde{\bz}_i(t)) .
\end{equation}
%\begin{rema}
%    Notice the second abuse of notations with the definitions on the letter $\Lambda$: with an index, $\Lambda_c$ is the definition of an operator, whereas $\Lambda Q$, $\Lambda \tilde{R}_i$ and $\Lambda R_i$ are definitions of functions.
%\end{rema}
Note in particular from the equation of $Q$ in \eqref{eq:Q} that 
\begin{align}
      &-\Delta R_i + (1+\mu_i) R_i - R_i^2=0, \label{eq:R_i}\\ 
    &-\Delta \tilde{R}_i + \tilde{R}_i - \tilde{R}_i^2=0. \label{eq:R_i_tilde}
\end{align}
We also define the linearized operator around $\tilde{R}_i$ and $R_i$ by
\begin{equation} \label{def:LRi}
 L_{\tilde{R}_i} := -\Delta +1 -2 \tilde{R}_i \quad \text{and} \quad L_{R_i}=-\Delta+(1+\mu_i)-2R_i . 
\end{equation}
Then, it follows from Proposition \ref{theo:L} (viii)  that 
\begin{align} \label{eq:L_Lambda_Q}
	 L_{\tilde{R}_i} (\Lambda \tilde{R}_i) = - \tilde{R}_i \quad \text{and} \quad
  \left(-\Delta+1-2R_i\right) \Lambda R_i=-R_i-\mu_i \Lambda R_i .
\end{align}
\smallskip

Finally, we introduce some useful functions $X_i=X_i(t,\bx)$, $Y_i=Y_i(t,\bx)$ and $W_i=W_i(t,\bx)$ to understand the interactions of the two solitons. For $i=1,2$, we define 
\begin{equation} \label{def:XYW:i}
	X_i := - (-\Delta+1)^{-1} \tilde{R}_i, \quad 
	Y_i  := -\partial_x^{-1} (-\Delta+1)^{-1} \partial_y \tilde{R}_i \ \text{and} \
	W_i  := -\partial_x^{-1} (-\Delta+1)^{-1}\Lambda \tilde{R}_i .
\end{equation}
Observe that the functions $X_i$, $Y_i$ and $W_i$ are the translated of the functions $X$, $Y$ and $W$ introduced in \eqref{def:XYW} by $z_i(t)$ in the first variable. Hence, $X_i \in \mathcal{Y}$ and $Y_i, \, W_i \in \mathcal{Z}$ (see Lemma \ref{parity}). However, for any fixed $y \in \mathbb R$, the limit when $x \to -\infty$ of $Y(x,y)$ and $W(x,y)$ is non zero. For this reason, we define 
\begin{equation} \label{def:hl}
	 h(y) := \int_{-\infty}^{+\infty} (-\Delta+1)^{-1} \partial_y Q(x,y) \, dx, \quad l(y) := \int_{-\infty}^{+\infty} (-\Delta+1)^{-1} \left( \Lambda Q \right)(x,y) \, dx. 
\end{equation}

The particular structure studied in this article consists of almost same sizes and symmetric solitary waves interacting through a plateau that can be approximated by
\begin{align}\label{defi:P}
    P(t,\bx) := W_1(t,\bx) - W_2(t,\bx).
\end{align}

\begin{figure}[ht]
\begin{tikzpicture}[
    %%%%%%%%% Definition of functions
      declare function={
        R1(\x) = 2*exp(-3*(\x-3)^2);  %%%% Function R1
        R2(\x) = 2*exp(-3*(\x+3)^2);  %%%% Function R2
        W1(\x) = (3*rad(-atan(exp(2*(\x-3))))+3*pi/2)/2; %%%% Function W1
        W2(\x) = 3*rad(atan(exp(-2*(\x+3))))/2; %%%% Function W2
        P(\x) = (W1(\x)-W2(\x))*9/10; %%%% Function W2
      }]

    %%%%%%%%% Axis of coordinates
    \draw[->] (-6.5,0) -- (6.5,0);
    \draw[->] (0,-0.3) -- (0,3);

    %%%%%%%% The two solitary waves
    \draw (3.7,1.5) node {$R_1$};
    \draw [domain=-6.5:6.5, samples=\Numbis] plot(\x,{R1(\x)});
    \draw (-2.2,1.2) node {$R_2$};
    \draw [domain=-6.5:6.5, samples=\Numbis] plot(\x,{R2(\x)});
    
    %%%%%%%% The two functions W_1 and W_2
    \draw (-4.2,1.9) node {$W_2$};
    \draw [domain=-6.5:6.5, samples=\Numbis,color=red] plot(\x,{W2(\x)});
    \draw (2.3,2.5) node {$W_1$};
    \draw [domain=-6.5:6.5, samples=\Numbis,color=red] plot(\x,{W1(\x)});
    \draw (0.5,1.8) node {$P$};
    \draw [domain=-6.5:6.5, samples=\Numbis,color=blue] plot(\x,{P(\x)});
\end{tikzpicture}
    \caption{Schematic reperesentation of the two solitary waves $R_1$ and $R_2$, of the functions $W_1$ and $W_2$, and of the function $P$ with a plateau between the solitary waves} \label{Figure:W}
\end{figure}
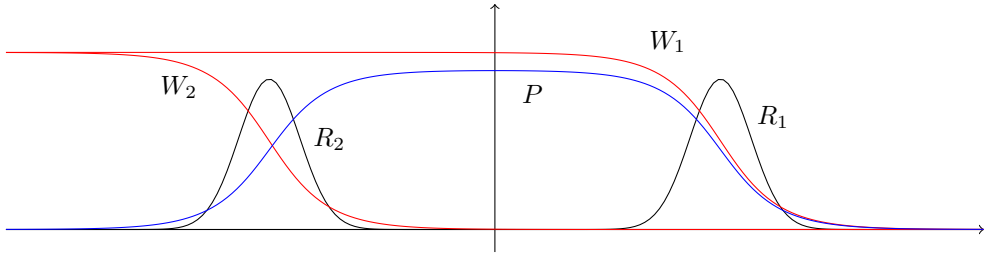

To work in a more condensed form, we introduce the vectors 
\begin{align}\label{defi:MiNi}
   \MRi := \begin{pmatrix} \partial_x R_i \\ \partial_y R_i \\ \Lambda R_i \end{pmatrix}, \quad \MRti:= \begin{pmatrix} \partial_x \Rti \\ \partial_y \Rti \\ \Lambda \Rti \end{pmatrix}, \quad \NRti:= \begin{pmatrix} X_i \\ Y_i \\ W_i \end{pmatrix} \text{ and } \n(y) := \begin{pmatrix} 0 \\ h(y) \\ l(y) \end{pmatrix}.
\end{align}
Then, we observe from the definitions of $\MRti$ and $\NRti$ that, for $i=1,2$,
\begin{align}\label{eq:link_M_N}
     \partial_x (-\Delta+1) \NRti = - \MRti, 
\end{align}
and with \eqref{def:LRi} and the relation \eqref{eq:L_Lambda_Q}
\begin{align}\label{eq:L_MRti}
L_{\Rti}\MRti= \begin{pmatrix} 0 \\ 0 \\ -\Rti \end{pmatrix} \quad \text{and} \quad (-\Delta+1 -2R_i) \MRi = \begin{pmatrix} -\mu_i \partial_x R_i \\ -\mu_i \partial_y R_i \\ -\mu_i \Lambda R_i -R_i \end{pmatrix}
\end{align}

\subsection{Definition and properties of the approximate solution}\label{sec:interaction_profiles}
The goal of the next subsections is to construct a suitable ansatz $V$ for the interaction of the two solitary waves. 

With the notations of Subsection \ref{sec:interaction_notation}, $V=V(t,\bx)$ should be on the form 
\begin{equation} \label{def:V}
V=R_1+R_2+V_A
\end{equation}
and be a good approximation of \eqref{ZK:sym}. To be more precise, we define the error to \eqref{ZK:sym} by
\begin{equation} \label{def:EV}
\E(V) := \partial_tV - \partial_x \left( -\Delta V + V -V^2 \right) .
\end{equation}

The approximate solution $V$ will depend on time through the set of geometrical parameters $\Gamma = (\mu_1, \mu_2, z_1,z_2,\omega_1, \omega_2)$ appearing in the definition of $R_1$ and $R_2$ in \eqref{def:R_i}. These parameters will be adjusted in Section \ref{sec:modulation} by modulation theory. For the moment, we will assume that they satisfy the following rough estimates. 
\begin{assump} \label{hyp:coeff}
Let $I$ be a time interval,  $Z_0^\star \ge 1$ and  $0<\nu_0 \le \nu_0^\star \le 1$, where $Z_0^{*}$ will be chosen large enough and $\nu_0^\star$ will be chosen small enough. Let $\Gamma = (\mu_1, \mu_2, z_1,z_2,\omega_1, \omega_2):I \to \mathbb R^6$ be a $C^1$ function and let $z$ be the real-valued function defined in \eqref{defi:z_zbar}. We assume that, for all $t \in I$,
\begin{align}
   &  z(t)=z_1(t)-z_2(t) \ge Z_0^{\star} \label{hyp:z} \\
    & \vert \omega_1 \vert \leq  \nu_0, \   \vert \omega_2  \vert \leq  \nu_0, \ \vert \mu_1 \vert \leq \nu_0, \ \vert \mu_2 \vert \leq \nu_0  \label{hyp:w1+w2} \\
   &  - z(t) \leq z_2(t) \leq -\frac{1}{4} z(t), \quad  \frac{1}{4} z(t) \leq z_1(t ) \leq  z(t), \label{hyp:z1_z2}
\end{align}
\end{assump}

To gain some insight on how to define the function $V_A$, we start by looking at  $\E(R_1+R_2)$. By differentiating and using \eqref{eq:R_i}, we find
\begin{align} \label{eq:E_R1R2}
\E(R_1+R_2)
	%& = \sum_{i=1}^2 \big( \dot{\mu}_i \Lambda R_i  - \dot{\bz}_i \cdot \nabla R_i - \partial_x \left( -\Delta R_i +R_i - R_i^2  \right) \big) + \partial_x \left( (R_1 + R_2)^2 -R_1^2 -R_2^2\right) \\
	& = \sum_{i=1}^2  \begin{pmatrix} -\dot{z}_i+\mu_i \\ -\dot{\omega}_i \\ \dot{\mu}_i \end{pmatrix} \cdot \MRi + \partial_x \left( 2R_1 R_2\right) .
\end{align}
To cancel out the main contribution of the interaction term  $\partial_x \left( 2R_1 R_2\right)$, we introduce the correction term $V_A$. 

For $i=1,2$, let $\pAi(z) \in \mathbb R^3$ be a $C^1$ vector-valued function defined by
\begin{align}\label{defi:p_q}
   \pAi(z) := \begin{pmatrix} \alpha_{i}(z) \\ \beta_{i}(z) \\ \gamma_i(z) \end{pmatrix} .
\end{align}
Then, we define $V_A$ by
\begin{align} \label{def:VA}
V_A := F_{} + \sum_{i=1}^2  \pAi \cdot \NRti, \quad \text{with} \quad F_{} := (-\Delta +1)^{-1}(2\tilde{R}_1 \tilde{R}_2),
\end{align}
where the vector $\NRti$ is defined in \eqref{defi:MiNi}. We also denote by $V_{A,1}$, respectively $V_{A,2}$, the contributions of $V_A$ around $R_1$, respectively around $R_2$. They are defined by
\begin{align} 
V_{A,1} & := F_{} +  \pAone \cdot \NRtone ; \label{def:VA1} \\ 
V_{A,2} & := F_{} +  \pAtwo \cdot \NRttwo+ \pAone \cdot \n . \label{def:VA2}
\end{align}

The main results in this section are gathered in the following proposition.
\begin{propo}\label{theo:V_A}
Under Assumption \ref{hyp:coeff}, there exist $Z_0^\star \ge 1$ large enough and $0<\nu_0^\star \le 1$ small enough such that the following is true. For $i=1,2$, there exists a $C^1$ vector-valued function  $\pAi=\pAi(z) \in \mathbb R^3$ such that the following properties hold for $t \in I$.
\begin{itemize}
\item[(i)] \emph{Estimate for $V_A$.} $V_A \in C^1(I: H^2(\mathbb R^2) \cap W^{1,\infty}(\mathbb R^2))$ with, for all $t\in I$, $V_A(t,\bx) \to 0$, when $|\bx| \to \infty$, and
\begin{align} 
&\|V_A \|_{L^{\infty}}+\|\nabla V_A \|_{L^{\infty}}  \lesssim z^{-\frac12}e^{-z} ; \label{est:VA:point} \\
& \|V_A \|_{H^2} \lesssim e^{-\frac{15}{16} z} ; \label{est:VA:H2} \\ 
& \|\partial_t V_A  \|_{H^2}  \lesssim e^{-\frac{15}{16} z} \sum_{i=1}^2 |\dot{z}_i|; \label{est:dVAdt} 
\end{align}
\item[(ii)] \emph{Estimate for $V$.} For $i=1,2$,
\begin{align} 
\left\| \vert \MRi \vert (V-R_i) \right\|_{L^2} \lesssim  e^{-\frac{15}{16} z} .\label{est:V-Ri}
\end{align}
\item[(iii)] \emph{Estimates on the parameters.} For $i=1,2$, the vector $\pAi(z)$ defined in \eqref{defi:p_q} satisfies
    \begin{align} 
    &\alpha_2(z)=\alpha_1(z), \quad \beta_2(z)=\beta_1(z)=0, \quad \gamma_2(z)=-\gamma_1(z)\label{p1p2} \\
    &\gamma_i(z)= \frac{-1}{\langle \Lambda Q, Q \rangle} \left\langle Q\left(\cdot-(-1)^i\tilde{\bz}\right), \partial_x(Q^2) \right\rangle \label{Def:gammai} \\
    & \vert \alpha_i(z) \vert  + \vert  \gamma_i(z) \vert\lesssim  z^{-\frac{1}{2}}e^{-z} , \label{Est:piqi} 
    \end{align}
\item[(v)] \emph{Equation of the ansatz $V$.} The error $\E_V=\E(V)$ can be decomposed into
    \begin{align} \label{eq:error_R1_R2_VA}
       \E_V=\E(V)= \E(R_1+R_2+V_A) = \sum_{i=1}^2 \mAi \cdot \MRi + T_{} +\partial_xS_{},
    \end{align}
    where, for $i=1,2$, the directions $\MRi$ are defined in \eqref{defi:MiNi}, the modulation equations $\mAi$ are defined by
    \begin{align}\label{defi:mAi}
    \mAi := \begin{pmatrix} -\dot{z}_i + \mu_i + \alpha_i  \\ -\dot{\omega}_i +\beta_i \\ \dot{\mu}_i + \gamma_i \end{pmatrix} ,
\end{align}
and the error terms $S_{}$ and $T_{}$ satisfy 
\begin{equation} \label{decomp:S}
S_{}=\Sigma_{}+\Psi_{\Sigma_{}}+\Psi_{S_{}}
\end{equation} 
with
\begin{align}
        & \| \Sigma_{}\|_{H^1} \lesssim z^{-\frac1{2}}e^{-z}; \label{est:Sigma} \\
         & \| \Psi_\Sigma \|_{H^1} \lesssim 
         e^{-\frac{15}{16} z}\sum_{i=1}^2 \left( |\mu_i|+|\omega_i| \right); \label{est:PsiSigma} \\
        & \| \Psi_{S_{}}\|_{H^1} \lesssim  e^{-\frac{31}{16}z} +e^{-\frac{15}{16} z} \sum_{i=1}^2\left(\mu_i^2+\omega_i^2 \right) ; \label{est:PsiS} \\
        & \| T_{}  \|_{H^1} \lesssim e^{-\frac{15}{16} z} \sum_{i=1}^2 |\dot{z}_i|+z^{-\frac12}e^{-z}\sum_{i=1}^2 \left( |\mu_i|+|\omega_i| \right) ,\label{est:T} \\
        & \left\| \partial_t S \right\|_{H^1} \lesssim z^{-\frac12} e^{-z} \sum_{i=1}^2 \vert \dot{z}_i \vert + e^{-\frac{15}{16} z}  \sum_{i=1}^2 \left( \vert \dot{\mu}_i \vert + \vert \dot{\omega}_i \vert \right)  + e^{-\frac{15}{16} z} \left( \sum_{i=1}^2 \vert \dot{z}_i \vert \right)  \sum_{i=1}^2\left( \vert \mu_i \vert+ \vert \omega_i \vert \right). \label{est:dt_S}
\end{align}
    \item[(vi)] \emph{Orthogonality estimates for $\Sigma_{}$ and $S$.} For $i=1,2$, the quantity $\Sigma_{}$ satisfies 
    \begin{align}
        & \left\vert \langle \Sigma_{}, \partial_x \Rti \rangle \right\vert +\left\vert \langle \Sigma_{}, \partial_y \Rti \rangle \right\vert +\left\vert \langle \Sigma_{}, \Lambda \Rti \rangle \right\vert \lesssim e^{-\frac{31}{16}z} , \label{est:ortho_Sigma}\\
        & \left\vert \langle S, \partial_x R_i \rangle \right\vert +\left\vert \langle S, \partial_y R_i \rangle \right\vert +\left\vert \langle S, \Lambda R_i \rangle \right\vert \lesssim e^{-\frac{31}{16} z} + e^{- \frac{15}{16} z} \sum_{i=1}^2 \left( \vert \mu_i \vert + \vert \omega_i \vert \right).\label{est:ortho_S}
    \end{align}
\end{itemize}
\end{propo}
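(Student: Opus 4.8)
\textbf{Proof strategy for Proposition \ref{theo:V_A}.}

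The plan is to build $V_A$ in two layers, first canceling the leading interaction $\partial_x(2R_1R_2)$ with the Bessel term $F=(-\Delta+1)^{-1}(2\tilde R_1\tilde R_2)$, then adjusting the coefficient vectors $\pAi$ so that the residual $S$ is orthogonal to the three directions $\MRi$ at the required order. First I would record the exact algebraic identity for $\E(R_1+R_2+F)$: differentiating, using \eqref{eq:R_i}, the resolvent identity $(-\Delta+1)F=2\tilde R_1\tilde R_2$, and $\partial_t F$, one gets $\E(R_1+R_2)+\partial_x\big(2(R_1+R_2)F+F^2\big)+\partial_t F$ up to lower order terms coming from the mismatch between $R_i$ and $\tilde R_i$ (the rescaling around size $1$ and the recentering onto $\{y=0\}$), which are controlled via Assumption \ref{hyp:coeff} and the exponential decay in $\mathcal Y$, $\mathcal Z$. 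Then using \eqref{eq:link_M_N}, namely $\partial_x(-\Delta+1)\NRti=-\MRti$, the contribution of $\sum_i \pAi\cdot\NRti$ to $\E$ produces exactly $-\sum_i\pAi\cdot\MRi$ plus a term $\partial_x(2(R_1+R_2)\sum_i\pAi\cdot\NRti)$ of the same size $z^{-1/2}e^{-z}$ and lower-order time-derivative terms; collecting everything yields the decomposition \eqref{eq:error_R1_R2_VA} with $\mAi$ as in \eqref{defi:mAi}, and identifies $S$ as $2(R_1+R_2)F + 2(R_1+R_2)\sum_i\pAi\cdot\NRti + F^2 + (\text{corrections})$ and $T$ as the $\partial_t$-type and mismatch terms.

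Next I would fix the coefficients. The key point is that $S$ naturally splits as $S=\Sigma+\Psi_\Sigma+\Psi_S$ where $\Sigma$ is the part of order $z^{-1/2}e^{-z}$ that survives when $\mu_i=\omega_i=0$, $\Psi_\Sigma$ collects terms linear in $(\mu_i,\omega_i)$, and $\Psi_S$ the quadratic and $e^{-31z/16}$ remainders; this gives \eqref{decomp:S}--\eqref{est:PsiS} directly from the pointwise bounds on $F$ (a plateau of size $z^{-1/2}e^{-z}$, cf. \eqref{eq:bound_Q_first_order}) and on $\NRti$. To get \eqref{est:ortho_Sigma} I would impose that, localized around each wave, $\langle\Sigma,\MRti\rangle=0$ up to $e^{-31z/16}$: expanding $F$ and $\NRti$ near $\tilde{\bz}_i$ and using $\partial_x(-\Delta+1)^{-1}=-\partial_x^{-1}+\cdots$, the three scalar products become, to leading order, linear expressions in $(\alpha_i,\beta_i,\gamma_i)$ whose coefficient matrix is exactly the invertible one appearing in Lemma \ref{lemm:antecedent_2} (the quantities $\langle\Lambda Q,Q\rangle$, $-\langle\partial_x^{-1}\partial_yQ,\partial_yQ\rangle$, $\langle Q,\Lambda Q\rangle+\langle(-\Delta+1)^{-1}Q,Q\rangle$ being positive). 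Solving gives formulas \eqref{def:alpha0}--\eqref{def:gamma0} shifted by $z$, hence \eqref{Def:gammai} and the bound \eqref{Est:piqi} via $|\langle Q(\cdot-\tilde{\bz}),\partial_x(Q^2)\rangle|\lesssim z^{-1/2}e^{-z}$; the symmetry relations \eqref{p1p2} follow from the evenness in $y$ of $\tilde R_1\tilde R_2$ (forcing $\beta_i=0$) and from the reflection $x\mapsto -x$, $\bz_i\mapsto -\bz_{3-i}$ that exchanges the two waves and sends $\gamma_1\mapsto-\gamma_2$, $\alpha_1\mapsto\alpha_2$. Once $\pAi$ is pinned down, \eqref{est:VA:point}, \eqref{est:VA:H2}, \eqref{est:dVAdt} come from Appendix \ref{App:Bessel} bounds on $(-\Delta+1)^{-1}$ together with $\|\tilde R_1\tilde R_2\|_{H^k}\lesssim e^{-\frac{15}{16}z}$ (the loss from $e^{-z}$ to $e^{-15z/16}$ absorbing the polynomial factor $z^{-1/2}$ and the $\mathcal Z$-decay $e^{-15|\bx|/16}$), and \eqref{est:V-Ri} follows by noting $V-R_i=R_{3-i}+V_A$ is exponentially small where $\MRi$ is not; \eqref{est:T}, \eqref{est:dt_S} are then just the chain rule applied to the $\partial_t$ terms, using $|\dot{\mu}_i|,|\dot\omega_i|$ only through the already-established structure.

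The main obstacle is \eqref{est:ortho_S}, the orthogonality estimate for the full $S$ against $\MRi$ (the true modulated directions, not the recentered $\MRti$). Passing from $\langle\Sigma,\MRti\rangle$ to $\langle S,\MRi\rangle$ costs two things: the difference $\MRi-\MRti$, which is $O(|\mu_i|+|\omega_i|)$ in the relevant weighted norms and pairs with the $O(z^{-1/2}e^{-z})$ part of $S$ to give the $e^{-15z/16}\sum(|\mu_i|+|\omega_i|)$ term; and the pieces $\Psi_\Sigma,\Psi_S$ of $S$ themselves. Controlling the first requires a careful Taylor expansion of $Q_{1+\mu_i}(\cdot-\bz_i)$ in $\mu_i$ and of the transverse shift, keeping track that the plateau $F$ is \emph{not} localized, so cross terms between the far wave and $\MRi$ must be estimated using the exponential separation $z\ge Z_0^\star$ — this is precisely the new difficulty flagged in the introduction (the quadratic nonlinearity makes $2R_1R_2$ a plateau rather than a localized bump). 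I expect that choosing $Z_0^\star$ large and $\nu_0^\star$ small, and exploiting that $F$ and $P=W_1-W_2$ have the explicit plateau profile from \eqref{def:hl}, makes all these cross terms fit under $e^{-31z/16}+e^{-15z/16}\sum(|\mu_i|+|\omega_i|)$; the bookkeeping of which error lands in $\Psi_\Sigma$ versus $\Psi_S$ versus $T$ is the delicate part.
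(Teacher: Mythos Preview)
Your strategy is correct and matches the paper's approach: the paper carries out exactly this decomposition of $S$ into $\Sigma_1+\Sigma_2+\Psi_\Sigma+\Psi_S$ with $\Sigma_i=2\tilde R_iV_{A,i}$ (where $V_{A,2}$ includes the plateau correction $\pAone\cdot\n$), imposes \emph{exact} orthogonality $\langle\Sigma_i,\MRti\rangle=0$ via Lemma~\ref{lemm:antecedent_2} (the $e^{-31z/16}$ in \eqref{est:ortho_Sigma} then arising solely from the cross terms $\langle\Sigma_1,\MRttwo\rangle$), and obtains the symmetry relations \eqref{p1p2} through the parity and reflection arguments you describe. One minor recalibration: \eqref{est:ortho_S} is not the main obstacle but an immediate consequence of \eqref{est:ortho_Sigma}, \eqref{est:Sigma}--\eqref{est:PsiS}, and $\|\MRi-\MRti\|_{H^1}\lesssim|\mu_i|+|\omega_i|$, so the paper dispatches it in one line.
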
 

\begin{toexclude}
\blue{\begin{rema}
The distinction between $\Psi_S$ and $\Psi_{\Sigma}$ is not useful in this paper. However, it will be useful to prove the inelasticity in paper $\#3$.
\end{rema}}
\end{toexclude}

The rest of this section is devoted to the proof of Proposition \ref{theo:V_A}.

\subsection{Decomposition of the error \texorpdfstring{$\E(V)$}{EV}}
We compute the error associated to this new approximation
\begin{align*}
\E(R_1+R_2+V_A)= \E(R_1+R_2) +\E(V_A) + \partial_x \left(2(R_1+R_2) V_A \right).
\end{align*}

To compute $\E(V_A)$, we observe by using \eqref{eq:link_M_N} that
\begin{align*}
    -\partial_x \left((-\Delta +1) V_A \right) = - \partial_x \left( 2\Rtone \Rttwo \right)+\sum_{i=1}^2  \pAi \cdot \MRti .
\end{align*}
Hence, we deduce that
\begin{align*}
	\E(V_A)
		& =\partial_tV_A - \partial_x \left( 2\Rtone \Rttwo \right) + \sum_{i=1}^2 \pAi \cdot \MRti  +\partial_x \left( V_A^2 \right) .
\end{align*}
Therefore, by using \eqref{eq:E_R1R2}, we rearrange the error $\E(V)$ in the more concise form \eqref{eq:error_R1_R2_VA}, where
\begin{equation}  \label{def:SA}
    S_{} :=2(R_1 R_2 - \Rtone \Rttwo ) + 2(R_1+R_2) V_A  +V_A^2 
\end{equation}
and    
\begin{equation} \label{def:TA}
    T_{} := \partial_tV_A+\sum_{i=1}^2 \pAi \cdot \left( \MRti-\MRi \right)  
\end{equation}
Now we rearrange the term $S_{}$. First, with the notations \eqref{def:VA1}-\eqref{def:VA2}, we decompose the non-linear term $2(R_1+R_2)V_A$ in the following way
\begin{align*}
    2(R_1+R_2)V_A & =  2 (R_1 -\Rtone + R_2 - \Rttwo) V_A+2 \Rtone V_{A,1} +2 \Rttwo V_{A,2} \\
        & \quad + 2\Rtone \left( \pAtwo \cdot \NRttwo  \right) + 2 \Rttwo \left(\pAone \cdot \left(\NRtone- \n \right) \right) .
\end{align*}
Thus  $S_{}$ can be written as in \eqref{decomp:S} where
 $\Sigma_{}= \Sigma_{1} + \Sigma_{2}$
and
\begin{align}
    \Sigma_{1} & := 2\Rtone \pAone \cdot \NRtone + 2 \Rtone F_{}=2\Rtone V_{A,1}, \label{defi:Sigma1} \\
    \Sigma_{2} & := 2\Rttwo \pAtwo \cdot \NRttwo + 2 \Rttwo F_{} + 2\Rttwo\pAone \cdot \n=2\Rttwo V_{A,2}, \label{defi:Sigma2}\\
    \Psi_{\Sigma} & := 2\left( \mu_1 \Lambda \Rtone + \omega_1 \partial_y \Rtone \right) V_{A,1}+2\left( \mu_2 \Lambda \Rttwo + \omega_2 \partial_y \Rttwo \right)V_{A,2}\noindent  \nonumber \\ & \quad +2\left(\mu_1\Lambda \Rtone+\omega_1\partial_y \Rtone\right)\Rttwo+2\Rtone\left(\mu_2\Lambda \Rttwo+\omega_2\partial_y \Rttwo\right)\label{defi:Psi_Sigma} \\
    \Psi_{S} & := 2\Rtone \left( \pAtwo \cdot \NRttwo  \right) + 2 \Rttwo \left(\pAone \cdot \left(\NRtone- \n(y) \right) \right)+2(R_1 R_2 - \Rtone \Rttwo )+V_A^2 \nonumber\\
        & \quad + 2 (R_1 -\Rtone + R_2 - \Rttwo) V_A - \Psi_{\Sigma} .\label{defi:Psi_S}
\end{align}

\subsection{Choice of the coefficients \texorpdfstring{$\protect\pAi$}{pAi}}

We adjust the coefficients  $\pAi$ such that the contributions $\Sigma_i$ satisfy an adequate orthogonality relation.

\begin{lemm}\label{lemma:Sigma_i}
Under Assumption \ref{hyp:coeff}  with $Z^\star_0>0$ large enough, for $i=1,2$, there exists a $C^1$ vector-valued function $\pAi=\pAi(z)=(\alpha_i(z), \beta_i(z), \gamma_i(z))^T \in \mathbb R^3$ such that, for $t \in I$, 
\begin{align}
    &\langle \Sigma_{i} ,\partial_x \Rti \rangle = \langle \Sigma_{i} , \partial_y \Rti \rangle = \langle \Sigma_{i} , \Lambda \Rti \rangle = 0, \label{ortho:Sigmai}\\
    & \beta_i(z)=0, \quad \gamma_i(z)= \frac{-1}{\langle \Lambda Q, Q \rangle} \left\langle Q\left(\cdot-(-1)^i\tilde{\bz}\right), \partial_x(Q^2) \right\rangle \label{def:gamma_beta}\\ 
    & \vert \alpha_i(z) \vert +\left\vert \alpha'_i(z) \right\vert+ \vert \beta_i(z) \vert +\left\vert \beta'_i(z) \right\vert +\vert \gamma_i(z) \vert+\vert \gamma_i'(z) \vert \lesssim z^{-\frac12}e^{-z}. \label{est:pi}
\end{align}
As a consequence, $\gamma_2(z)=-\gamma_1(z)$ and $\alpha_2(z)=\alpha_1(z)$.
\end{lemm}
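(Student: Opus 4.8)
The plan is to determine the coefficients $\pAi(z) = (\alpha_i(z), \beta_i(z), \gamma_i(z))^T$ by imposing the three orthogonality conditions in \eqref{ortho:Sigmai} and then to verify the explicit formulas and bounds. The key observation is that $\Sigma_i = 2\Rti V_{A,i}$, where $V_{A,i}$ is (up to the translated $F$ and the $\pAone \cdot \n$ correction for $i=2$) a linear combination $\pAi \cdot \NRti$ of the intrinsic directions plus the Bessel-potential interaction term $F = (-\Delta+1)^{-1}(2\Rtone\Rttwo)$. So up to a translation by $\tilde{\bz}_i$, the system \eqref{ortho:Sigmai} for $\Sigma_i$ is exactly the abstract linear system solved in Lemma \ref{lemm:antecedent_2}: we seek $\overrightarrow{p}_0$ such that $2Q\, \overrightarrow{p}_0 \cdot \NQ + f \perp \{\partial_x Q, \partial_y Q, \Lambda Q\}$, with the role of the inhomogeneity $f$ played by the translated interaction term $2\Rti F$ (plus the $2\Rttwo(\pAone \cdot \n)$ piece when $i=2$, which contributes only through $\partial_y Q, \Lambda Q$ and a known correction). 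First I would recenter: write $\Sigma_i(\cdot) = (2Q\, \pAi \cdot \NQ + f_i)(\cdot - \tilde{\bz}_i)$ with $f_i$ the recentered version of $2\Rti F$, so that \eqref{ortho:Sigmai} is equivalent to the orthogonality of $2Q\,\pAi \cdot \NQ + f_i$ against $\{\partial_x Q, \partial_y Q, \Lambda Q\}$; then invoke Lemma \ref{lemm:antecedent_2} directly.

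Next I would extract the explicit formulas. From the formulas \eqref{def:alpha0}--\eqref{def:gamma0} of Lemma \ref{lemm:antecedent_2}, one gets $\beta_i(z) = \frac{1}{\langle \partial_x^{-1}\partial_y Q, \partial_y Q\rangle}\langle f_i, \partial_y Q\rangle$ and $\gamma_i(z) = \frac{-1}{\langle \Lambda Q, Q\rangle}\langle f_i, \partial_x Q\rangle$, and $\alpha_i(z)$ via the more complicated formula for $\alpha_0$. For the parity claims: since $\Rtone$, $\Rttwo$ and hence $F$ are even in $y$ (the waves are centered on the $x$-axis), $f_i$ is even in $y$, so by the last sentence of Lemma \ref{lemm:antecedent_2} (or rather by the parity statements of Proposition \ref{theo:L}(v)) we get $\langle f_i, \partial_y Q\rangle = 0$, hence $\beta_i(z) = 0$. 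For the formula for $\gamma_i$, I would compute $\langle f_i, \partial_x Q\rangle = \langle 2\Rti F, \partial_x \Rti\rangle$ recentered. Using $F = (-\Delta+1)^{-1}(2\Rtone\Rttwo)$ and integration by parts / the self-adjointness of $(-\Delta+1)^{-1}$, together with the identity $-\Delta \Rti + \Rti = \Rti^2$, one rewrites $\langle 2\Rti F, \partial_x\Rti\rangle = \langle \partial_x(\Rti^2), F\rangle = \langle \partial_x(\Rti^2), (-\Delta+1)^{-1}(2\Rtone\Rttwo)\rangle$; pairing the Bessel potential with the other factor and using $-\Delta \Rtj + \Rtj = \Rtj^2$ again yields the clean expression $\langle Q(\cdot - (-1)^i \tilde{\bz}), \partial_x(Q^2)\rangle$ after recentering, which is \eqref{def:gamma_beta}. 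The sign $(-1)^i$ comes from whether we recenter at $\tilde{\bz}_1$ or $\tilde{\bz}_2$, and the antisymmetry $\gamma_2 = -\gamma_1$ follows from the change of variable $x \mapsto -x$ in the integral (using that $Q$ is radial and $\partial_x(Q^2)$ is odd in $x$); similarly $\alpha_2 = \alpha_1$ follows because the $\alpha$-formula only involves $\langle f_i, \Lambda Q\rangle$ and $\langle f_i, \partial_x Q\rangle$, the first being even under $x \mapsto -x$ and the second combination conspiring to be symmetric.

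For the quantitative bounds \eqref{est:pi}, the main input is the pointwise decay of $F$ and of the interaction integrals. The term $\langle f_i, \partial_x Q\rangle$ (and similarly the $\Lambda Q$ and the $\n$-pieces) is controlled by the overlap integral $\int Q(x + z, y) \partial_x(Q^2)(x,y)\,dx\,dy$, which by the sharp asymptotics of $Q$ in Proposition \ref{propo:Q} (the $r^{-(d-1)/2}e^{-r}$ decay, so in $d=2$ a $z^{-1/2}e^{-z}$ factor) is of size $z^{-\frac12}e^{-z}$; this gives $|\alpha_i| + |\gamma_i| \lesssim z^{-\frac12}e^{-z}$ by \eqref{lemm:antecedent_2_bound} after bounding $\|f_i\|_{L^2}$. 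The derivative bounds $|\alpha_i'(z)| + |\gamma_i'(z)| \lesssim z^{-\frac12}e^{-z}$ follow by differentiating the explicit integral formulas in $z$: $\partial_z$ acting on the translate $Q(\cdot - (-1)^i\tilde{\bz})$ brings down a $\partial_x Q$ which has the same exponential decay rate, and since $K_0'$ (resp. the $d=3$ profile) has the same leading asymptotics up to lower-order corrections, no loss occurs in the exponential rate — one only needs that the polynomial prefactor derivative is harmless. I expect the main obstacle to be the careful bookkeeping of the recentering and of the $\pAone \cdot \n$ correction term in $\Sigma_2$: one must check that feeding $\beta_1 = 0$ and the known $\gamma_1$ into the $i = 2$ system still lands in the hypotheses of Lemma \ref{lemm:antecedent_2} (i.e., that the modified inhomogeneity $f_2 = 2\Rttwo F + 2\Rttwo(\pAone \cdot \n)$ is still in $\Y$ and even in $y$ — which holds since $\n$ has vanishing first component and $h$, $l$ are even, respectively the relevant parities match), and that the resulting formulas still simplify to \eqref{def:gamma_beta}. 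The smoothness in $z$ (the $C^1$ claim) is then immediate from the explicit integral representations and dominated convergence.
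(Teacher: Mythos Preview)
Your proposal is correct and follows essentially the same route as the paper: recenter $\Sigma_i$ at $\tilde{\bz}_i$, apply Lemma~\ref{lemm:antecedent_2} with $f_i$ the translated interaction term, read off $\beta_i=0$ by parity and $\gamma_i$ via the identity $(-\Delta+1)\partial_xQ=\partial_x(Q^2)$, and bound everything by the pointwise estimate \eqref{est.decay.QzQ}. The one place where you are vague but the paper is explicit is the claim $\alpha_2=\alpha_1$: the extra term $2\gamma_1\langle Ql(y),\Lambda Q\rangle$ in $f_2$ does not simply vanish by parity, and the paper disposes of it via the identity $W(x,y)-l(y)=-W(-x,y)$ (an immediate consequence of the evenness in $x$ of $(-\Delta+1)^{-1}\Lambda Q$), which turns $\langle 2Q(W-l),\Lambda Q\rangle$ into $-\langle 2QW,\Lambda Q\rangle$ and makes the two $\alpha$-formulas coincide.
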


\begin{proof}
We start with the case $i=1$.  We translate the equation of $\Sigma_1$ in \eqref{defi:Sigma1} by $-\tilde{\bz}_1=-(z_1,0)$. Then, we will work on the quantity
\begin{align}\label{eq:Sigma1_translated}
	\widehat{\Sigma}_1(\cdot,z):=\Sigma_{1}(\cdot + \tilde{\bz}_1) = 2 Q \pAone \cdot \NQ + 2Q \left(-\Delta+1 \right)^{-1} \left( 2QQ(\cdot +\tilde{\bz})\right).
\end{align}
By Lemma \ref{lemm:antecedent_2}, there exists a unique  $\overrightarrow{p_{1}} =(\alpha_1, \beta_1, \gamma_1)^T  \in \mathbb R^3$ such that 
\begin{align*}
    2Q \overrightarrow{p_{1}} \cdot \NQ + 2Q(-\Delta+1)^{-1}\left( 2Q  Q(\cdot+\tilde{\bz}) \right) \ \perp \
    \left\{ \partial_xQ, \partial_y Q, \Lambda Q \right\} .
\end{align*}
Moreover, it follows from \eqref{eq:Q} and \eqref{def:alpha0}-\eqref{def:gamma0} that the coefficients $\overrightarrow{p_{1}}(z) =(\alpha_1(z), \beta_1(z), \gamma_1(z))^T$ are given by 
\begin{align*}
\alpha_1(z) & = c \left( \langle  2Q(-\Delta+1)^{-1}\left( 2Q  Q(\cdot+\tilde{\bz}) \right), \Lambda Q \rangle- \frac{\langle 2Q W, \Lambda Q\rangle}{\langle\Lambda Q, Q \rangle} \left\langle Q(\cdot+\tilde{\bz}), \partial_x (Q^2)\right\rangle  \right),\\ 
\beta_1(z) &= \left(\langle \partial_x^{-1}\partial_yQ,\partial_yQ \rangle\right)^{-1} \left\langle Q(\cdot+\tilde{\bz}), \partial_y (Q^2)\right\rangle,\\
    \gamma_1(z) 
        & = -\left(\langle \Lambda Q,Q \rangle\right)^{-1} \left\langle Q(\cdot+\tilde{\bz}), \partial_x (Q^2)\right\rangle ,
\end{align*}
with $c=\left(\langle Q,\Lambda Q \rangle +\langle(-\Delta+1)^{-1}Q,Q\rangle\right)^{-1}$.
Observe by parity that $\beta_1(z)=0$.

\smallskip

Now, we deal with the case $i=2$.  We translate the equation of $\Sigma_2$ in \eqref{defi:Sigma2} by $-\tilde{\bz}_2=-(z_2,0)$, so that we will focus on the quantity
\begin{align} \label{eq:Sigma2_translated}
\widehat{\Sigma}_2(\cdot,z):=\Sigma_{2}(\cdot+\tilde{\bz}_2) & := 2Q \pAtwo \cdot \NQ + 2 Q (-\Delta+1)^{-1}\left( 2Q Q(\cdot - \tilde{\bz}) \right) + 2Q\pAone \cdot \n . 
\end{align}
As in the case $i=1$, we want to ensure the orthogonality relations \eqref{ortho:Sigmai} for $i=2$. By Lemma \ref{lemm:antecedent_2}, there exists a unique $\overrightarrow{p_{2}}=(\alpha_2, \beta_2^,\gamma_2)^T \in \mathbb R^3$ such that
\begin{align*}
    2Q \overrightarrow{p_{2}} \cdot \NQ +2 Q \left( -\Delta+1\right)^{-1} \left( 2Q  Q(\cdot-\tilde{\bz})\right) +2Q \overrightarrow{p_{1}} \cdot \n \ \perp \ \left\{ \partial_xQ, \partial_y Q,\Lambda Q \right\}
\end{align*}
Since $\beta_1(z)=0$, $2Q\pAone \cdot \n=2\gamma_1(z)Q l(y) $. Moreover, note from \eqref{def:hl} that $l=l(y)$ is even.
Thus, the coefficients  $\overrightarrow{p_{2}}(z) =(\alpha_2(z), \beta_2(z), \gamma_2(z))^T$ are given by
\begin{align*}
\alpha_2(z) & = c \left( \langle  2Q(-\Delta+1)^{-1}\left( 2Q  Q(\cdot-\tilde{\bz}) \right), \Lambda Q \rangle+2\gamma_1(z) \langle Ql(y),\Lambda Q \rangle \right. \\
    & \quad \quad \left. - \frac{\langle 2Q W, \Lambda Q\rangle}{\langle\Lambda Q, Q \rangle} \left\langle Q(\cdot-\tilde{\bz}), \partial_x (Q^2)\right\rangle  \right),\\ 
\beta_2(z) &= \left(\langle \partial_x^{-1}\partial_yQ,\partial_yQ \rangle\right)^{-1} \left\langle Q(\cdot-\tilde{\bz}), \partial_y (Q^2)\right\rangle,\\
    \gamma_2(z) 
        & = -\left(\langle \Lambda Q,Q \rangle\right)^{-1} \left\langle Q(\cdot-\tilde{\bz}), \partial_x (Q^2)\right\rangle .
\end{align*}
with $c=\left(\langle Q,\Lambda Q \rangle +\langle(-\Delta+1)^{-1}Q,Q\rangle\right)^{-1}$.
Since $Q$ is radial, we have $Q(\bx-\tilde{\bz})=Q(\tilde{\bz}-\bx)$. Hence, it follows by parity that $\beta_2(z)=0$,  $\gamma_2(z)=-\gamma_1(z)$ and 
\begin{align*}
\alpha_2(z) & = c \left( \langle  2Q(-\Delta+1)^{-1}\left( 2Q  Q(\cdot+\tilde{\bz}) \right), \Lambda Q \rangle+ \frac{\langle 2Q (W-l), \Lambda Q\rangle}{\langle\Lambda Q, Q \rangle} \left\langle Q(\cdot+\tilde{\bz}), \partial_x (Q^2)\right\rangle  \right)=\alpha_1(z),
\end{align*}
where we used on the last step that $W(x,y)-l(y)=-W(-x,y)$.

Finally, estimate
\eqref{est:pi}  follows from  the Cauchy-Schwarz inequality and estimate \eqref{est.decay.QzQ}.
This concludes the proof of Lemma \ref{lemma:Sigma_i}.
\end{proof}

\subsection{Estimates for the components of the approximate solution}
Since $\alpha_1(z)=\alpha_2(z)$ and $\gamma_1(z)=-\gamma_2(z)$ (see Lemma \ref{lemma:Sigma_i}), we can rewrite $V_A$ as 
\begin{equation*} 
V_A=F+\alpha_1(z)\sum_{i=1}^2X_i +\gamma_1(z) P ,
\end{equation*}
where $X_1$, $X_2$ and $P$ are defined in \eqref{def:XYW:i} and \eqref{defi:P}, so that $V_A(\bx) \to 0$ as $|\bx| \to \infty$. Finally estimates \eqref{est:VA:point}--\eqref{est:dVAdt} are a consequence of the following claims combined with \eqref{est:pi}.

\begin{claim} \label{claim:P}
Let $P$ be defined as in \eqref{defi:P}. Then $P \in C^1(I:H^{2}(\mathbb R^2) \cap W^{1,\infty}(\mathbb R^2))$. Moreover, we have, for all $t \in I$ and $\bx \in \mathbb R^2$, 
\begin{align} 
& \left\vert P(t,\bx) \right\vert+\left\vert \nabla P(t,\bx) \right\vert \lesssim 1 ; \label{est:P:point}\\
&\left\| P(t,\cdot) \right\|_{H^2} \lesssim z^{\frac32} ; \label{est:P:H2.1}\\ 
&\left\| \partial_tP(t,\cdot) \right\|_{H^2} \lesssim |\dot{z}_1|+|\dot{z}_2|.\label{est:P:H2.2}
\end{align}
\end{claim}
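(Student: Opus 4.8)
\emph{Proof strategy.} The plan is to exploit the plateau structure of $P=W_1-W_2=W(\cdot-\tilde{\bz}_1)-W(\cdot-\tilde{\bz}_2)$, where $W$ is the profile from \eqref{def:XYW}. Recall that $W\in\mathcal{Z}$ (Lemma~\ref{parity}), that $\partial_xW=-(-\Delta+1)^{-1}\Lambda Q\in\Y$ by \eqref{prop:d_x_-1}, and the identity $W(x,y)-l(y)=-W(-x,y)$ used in the proof of Lemma~\ref{lemma:Sigma_i}. The latter shows $W(x,y)\to0$ as $x\to+\infty$ and $W(x,y)\to l(y)$ as $x\to-\infty$, with exponentially small remainder coming from the decay of $W$ on $\{x\ge0\}$. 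Since $z=z_1-z_2\ge Z_0^\star>0$ by \eqref{hyp:z}, these limiting values cancel in $P$: the only region where $|P|$ is not exponentially small is the strip $z_2\lesssim x\lesssim z_1$, of width $\sim z$, on which $P\approx l(y)$.

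First I would settle the pointwise bound \eqref{est:P:point}: $|P|\le 2\|W\|_{L^\infty}\lesssim1$, and for the gradient $\partial_xW\in\Y\hookrightarrow L^\infty$, while $\partial_yW(x,y)=\int_x^{+\infty}\partial_y[(-\Delta+1)^{-1}\Lambda Q](\tilde x,y)\,d\tilde x$ has an integrand in $\Y$, so that $|\partial_yW(x,y)|\lesssim\int_{\mathbb{R}}\langle(\tilde x,y)\rangle^n e^{-|(\tilde x,y)|}\,d\tilde x\lesssim e^{-|y|/2}$; hence $\|\nabla W\|_{L^\infty}\lesssim1$ and \eqref{est:P:point} follows since $P$ is a difference of translates of $W$. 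The time-derivative bound \eqref{est:P:H2.2} is similarly quick: since $\tilde{\bz}_i=(z_i,0)$ we have $\partial_tP=-\dot z_1(\partial_xW)(\cdot-\tilde{\bz}_1)+\dot z_2(\partial_xW)(\cdot-\tilde{\bz}_2)$, and $\partial_xW\in\Y\hookrightarrow H^2$ gives $\|\partial_tP\|_{H^2}\le(|\dot z_1|+|\dot z_2|)\|\partial_xW\|_{H^2}\lesssim|\dot z_1|+|\dot z_2|$. The claimed $C^1(I:H^2\cap W^{1,\infty})$ regularity then follows from the $C^1$ dependence of the $z_i$ and continuity of translation, the difference structure again accounting for the fact that $W\notin L^2(\mathbb{R}^2)$.

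The crux is the $H^2$ bound \eqref{est:P:H2.1}, which cannot be obtained from $\|W_1\|_{H^2}+\|W_2\|_{H^2}$ because $W\notin L^2(\mathbb{R}^2)$. Writing $\partial^\alpha P=(\partial^\alpha W)(\cdot-\tilde{\bz}_1)-(\partial^\alpha W)(\cdot-\tilde{\bz}_2)$ for $|\alpha|\le2$: when $\alpha$ contains at least one $x$-derivative, $\partial^\alpha W$ is a derivative of $-(-\Delta+1)^{-1}\Lambda Q\in\Y\hookrightarrow L^2$, so $\|\partial^\alpha P\|_{L^2}\lesssim1$. It remains to treat the pure $y$-derivatives $\partial_y^kW$, $k=0,1,2$, which do not decay as $x\to-\infty$. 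For these I would split $\mathbb{R}^2$ into $\{x\le z_2\}$, $\{z_2\le x\le z_1\}$ and $\{x\ge z_1\}$ and use the differentiated reflection identity $\partial_y^kW(x,y)=l^{(k)}(y)-(\partial_y^kW)(-x,y)$ to replace, in each region, the argument with negative first coordinate by one with nonnegative first coordinate, where $|\partial_y^kW(\cdot)|\lesssim e^{-\frac{15}{16}|\cdot|}$ holds. On $\{x\le z_2\}$ and $\{x\ge z_1\}$ the $l^{(k)}$ terms cancel and $|\partial_y^kP|$ is dominated by $e^{-\frac{15}{16}\sqrt{(z_2-x)^2+y^2}}$, resp.\ $e^{-\frac{15}{16}\sqrt{(x-z_1)^2+y^2}}$, whose $L^2(\mathbb{R}^2)$ norms are $O(1)$. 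On the plateau $\{z_2\le x\le z_1\}$ one obtains $|\partial_y^kP(x,y)|\lesssim|l^{(k)}(y)|+e^{-\frac{15}{16}\sqrt{(z_1-x)^2+y^2}}+e^{-\frac{15}{16}\sqrt{(x-z_2)^2+y^2}}$, and since $(-\Delta+1)^{-1}\Lambda Q\in\Y$ gives $|l^{(k)}(y)|\lesssim e^{-|y|/2}$, we get $\int_{\mathbb{R}}|\partial_y^kP(x,y)|^2\,dy\lesssim1$ uniformly in $x$; integrating over the interval $[z_2,z_1]$ of length $z$ then contributes $\lesssim z$. Altogether $\|P\|_{H^2}\lesssim z^{1/2}\le z^{3/2}$ (using $z\ge Z_0^\star\ge1$), which gives \eqref{est:P:H2.1}.

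The main obstacle is this last step: since $W$ fails to be square-integrable, the $H^2$ estimate is not a matter of translation invariance but forces one to use the plateau/cancellation structure — the reflection identity together with the decay of $W$ and its $y$-derivatives on $\{x\ge0\}$ — to localize the non-$L^2$ part of $P$ to a strip of width $\sim z$ and to notice that even there the profile $l(y)$ decays in $y$. Everything else, including the continuity statements, reduces to the single fact that $\partial_xW$ and all derivatives of $W$ involving $\partial_x$ lie in $\Y$ and hence are controlled in every norm.
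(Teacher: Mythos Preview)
Your proof is correct, and for \eqref{est:P:point} and \eqref{est:P:H2.2} it is essentially the paper's argument (the paper phrases it via the integral representation $P=(-\Delta+1)^{-1}\int_{x-z_1}^{x-z_2}\Lambda Q(\tilde x,y)\,d\tilde x$, but this is the same computation).

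For \eqref{est:P:H2.1} you take a genuinely different route. The paper writes $P=(-\Delta+1)^{-1}g$ with $g(x,y)=\int_{x-z_1}^{x-z_2}\Lambda Q(\tilde x,y)\,d\tilde x$, bounds $|g|\lesssim e^{-|y|/2}\bigl(z\,\mathbf{1}_{2z_2<x<2z_1}+e^{-|x|/4}\bigr)$ by the crude estimate $|\Lambda Q(\tilde x,y)|\lesssim e^{-|y|/2}$ on the strip, and then invokes the $L^2\to H^2$ boundedness of $(-\Delta+1)^{-1}$, which yields exactly $z^{3/2}$. Your argument instead treats $\partial^\alpha P$ directly, observing that every $\partial^\alpha$ containing an $x$-derivative lands in $\Y$ and is $O(1)$ in $L^2$, while for pure $y$-derivatives the reflection identity $\partial_y^kW(x,y)=l^{(k)}(y)-\partial_y^kW(-x,y)$ together with the $\mathcal{Z}$-decay of $W$ on $\{x\ge0\}$ localises the non-$L^2$ part to the plateau $[z_2,z_1]$ with a $y$-integrable profile $l^{(k)}(y)$. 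This gives the sharper bound $\|P\|_{H^2}\lesssim z^{1/2}$, which of course implies \eqref{est:P:H2.1}. The paper's approach is shorter because the resolvent handles all derivatives at once; yours is more hands-on but extracts the actual size of $P$.
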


\begin{proof}
First, thanks to the assumptions on $z_1$ and $z_2$ in \eqref{hyp:coeff} and Lemma \ref{comm:dx-1}, we rewrite $P$ as  
\begin{equation*}
P(t,\bx)=\int_{x-z_1}^{x-z_2} \left(-\Delta+1\right)^{-1} \Lambda Q(\tilde{x},y) d\tilde{x}=\left(-\Delta+1\right)^{-1}\int_{x-z_1}^{x-z_2}  \Lambda Q(\tilde{x},y) d\tilde{x} .
\end{equation*}
Since $\left(-\Delta+1\right)^{-1} \Lambda Q(\tilde{x},y) \in \mathcal{Y}$ (see Lemma \ref{Bessel:Y}), we have $\left(-\Delta+1\right)^{-1} \Lambda Q(\tilde{x},y) \lesssim e^{-\frac{|y|}2}e^{-\frac{|\tilde{x}|}2}$, so that 
\begin{equation*}
\left|P(t,\bx) \right| \lesssim \int_{-\infty}^{+\infty}e^{-\frac{|\tilde{x}|}2} d\tilde{x} \lesssim 1, 
\end{equation*}
which proves the first estimate in \eqref{est:P:point}. The proof of the second estimate in \eqref{est:P:point} follows arguing in a similar way.  

Next, we claim that  
\begin{equation} \label{Claim:P.1}
\left\vert \int_{x-z_1}^{x-z_2} \Lambda Q(\tilde{x},y) d\tilde{x}\right\vert \lesssim e^{-\frac{|y|}2} \left(z{\bf 1}_{2z_2<x<2z_1} +e^{-\frac{|x|}4} \right) .
\end{equation}
 which implies \eqref{est:P:H2.1} by using that $(-\Delta+1)^{-1}: L ^2(\mathbb R^2) \to H^2(\mathbb R^2)$ is a bounded operator. 
To prove \eqref{Claim:P.1}, we first observe that $\left|\Lambda Q(\tilde{x},y)\right| \lesssim e^{-\frac{|\tilde{x}|}2}e^{-\frac{|y|}2}$ since $\Lambda Q \in \mathcal{Y}$. On the one hand, in the case  $2z_2<\tilde{x}<2z_1$, we have 
\begin{equation*} 
\left\vert \int_{x-z_1}^{x-z_2} \Lambda Q(\tilde{x},y) d\tilde{x}\right\vert \lesssim e^{-\frac{|y|}2} \int_{x-z_1}^{x-z_2} d\tilde{x}=ze^{-\frac{|y|}2} .
\end{equation*}
On the other hand, in the case where $x>2z_1>0$, we have $x-z_1>x/2$, so that 
\begin{equation*} 
\left\vert \int_{x-z_1}^{x-z_2} \Lambda Q(\tilde{x},y) d\tilde{x}\right\vert \lesssim e^{-\frac{|y|}2} \int_{x/2}^{+\infty} e^{-\frac{|\tilde{x}|}2}d\tilde{x}\lesssim e^{-\frac{|x|}4}e^{-\frac{|y|}2} .
\end{equation*}
The estimate is similar in the case where $x<2z_2<0$.

Finally, observe that 
\begin{equation*}
\partial_tP(t,\bx)=\dot{z}_1 \left(-\Delta+1\right)^{-1}\Lambda \tilde{R}_1-\dot{z}_2 \left(-\Delta+1\right)^{-1}\Lambda \tilde{R}_2 .
\end{equation*}
This, combined with the fact that $\Lambda Q \in \mathcal{Y}$ and $(-\Delta+1)^{-1}: L ^2(\mathbb R^2) \to H^2(\mathbb R^2)$ is a bounded operator concludes the proof of \eqref{est:P:H2.2}.
\end{proof}

\begin{claim} \label{claim:F}
Let $F$ be defined as in \eqref{def:VA}. Then $F \in C^1(I:H^{2}(\mathbb R^2) \cap L^{\infty}(\mathbb R^2))$. Moreover, we have, for all $t \in I$ and $\bx \in \mathbb R^2$, 
\begin{align} 
&\left\vert F(t,\bx) \right\vert +\left\vert \nabla F(t,\bx) \right\vert\lesssim z^{-\frac12}e^{-z} ; \label{est:F:point} \\
&\left\vert \partial_t F(t,\bx) \right\vert +\left\vert \nabla \partial_t F(t,\bx) \right\vert\lesssim \left( \vert \dot{z}_1 \vert + \vert \dot{z}_2 \vert \right) z^{-\frac12}e^{-z} ; \label{est:dtF:point} \\
&\left\| F(t,\cdot) \right\|_{H^2} \lesssim e^{-\frac{15}{16} z} ; \label{est:F:H2.1}\\ 
&\left\| \partial_tF(t,\cdot) \right\|_{H^2} \lesssim \left(|\dot{z}_1|+|\dot{z}_2|\right)e^{-\frac{15}{16} z}.\label{est:F:H2.2}
\end{align}
\end{claim}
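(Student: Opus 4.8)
\textbf{Proof proposal for Claim \ref{claim:F}.}

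The plan is to treat $F=(-\Delta+1)^{-1}(2\tilde R_1\tilde R_2)$ purely as the Bessel potential applied to the source term $g:=2\tilde R_1\tilde R_2=2Q(\cdot-\tilde{\bz}_1)Q(\cdot-\tilde{\bz}_2)$, and to reduce every assertion to a single ingredient: a quantitative bound on the Sobolev norms of $g$ and of $\partial_t g$ in terms of $z=z_1-z_2$. First I would record that, since $Q\in\Y$ with the sharp asymptotics \eqref{eq:bound_Q_first_order}, the product/interaction estimate of the type \eqref{est.decay.QzQ} already used in the proof of Lemma \ref{lemma:Sigma_i} gives, for every $k\in\mathbb N$ and uniformly for $t$ in the interval $I$ of Assumption \ref{hyp:coeff},
\[
\|g(t,\cdot)\|_{H^k}\lesssim z^{-\frac12}e^{-z},\qquad \|\partial_t g(t,\cdot)\|_{H^k}\lesssim\big(|\dot z_1|+|\dot z_2|\big)\,z^{-\frac12}e^{-z},
\]
where for the second bound one uses that $\tilde R_i$ depends on $t$ only through $z_i(t)$, so that $\partial_t g=-2\dot z_1(\partial_x\tilde R_1)\tilde R_2-2\dot z_2\tilde R_1(\partial_x\tilde R_2)$. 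The gain of the prefactor $z^{-1/2}$ over the naive bound $e^{-z}$ comes from integrating $Q(\bx-\tilde{\bz}_1)^2Q(\bx-\tilde{\bz}_2)^2$ and combining the triangle inequality $|\bx-\tilde{\bz}_1|+|\bx-\tilde{\bz}_2|\ge|\tilde{\bz}_1-\tilde{\bz}_2|=z$ with the polynomial weight in \eqref{eq:bound_Q_first_order} (this is exactly the two-dimensional analogue of the one-dimensional interaction integrals).

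Granting this, the $H^2$ bounds \eqref{est:F:H2.1}--\eqref{est:F:H2.2} are immediate: since $(-\Delta+1)^{-1}:L^2(\mathbb R^2)\to H^2(\mathbb R^2)$ is bounded (Appendix \ref{App:Bessel}), we get $\|F\|_{H^2}\lesssim\|g\|_{L^2}\lesssim z^{-\frac12}e^{-z}\lesssim e^{-\frac{15}{16}z}$ for $z\ge Z_0^\star$ large enough, and likewise $\|\partial_t F\|_{H^2}\lesssim\|\partial_t g\|_{L^2}\lesssim(|\dot z_1|+|\dot z_2|)e^{-\frac{15}{16}z}$. The regularity $F\in C^1(I:H^2\cap L^\infty)$ follows because $z\mapsto Q(\cdot-(z,0))$ is $C^1$ into every $H^s$ (translation acts smoothly on elements of $\Y$) and $(-\Delta+1)^{-1}$ is a bounded linear operator.

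For the pointwise bounds \eqref{est:F:point}--\eqref{est:dtF:point} I would simply upgrade the regularity: $(-\Delta+1)^{-1}:H^2\to H^4$ is bounded and $H^4(\mathbb R^2)\hookrightarrow W^{1,\infty}(\mathbb R^2)$, so $\|F\|_{W^{1,\infty}}\lesssim\|F\|_{H^4}\lesssim\|g\|_{H^2}\lesssim z^{-\frac12}e^{-z}$, and identically for $\partial_t F$ with the extra factor $|\dot z_1|+|\dot z_2|$. (Alternatively, one may convolve with the Bessel kernel, which lies in $L^2(\mathbb R^2)$ in dimension two, and use $\|F\|_{L^\infty}\le\|\mathcal K\|_{L^2}\|g\|_{L^2}$, $\|\nabla F\|_{L^\infty}=\|\mathcal K*\nabla g\|_{L^\infty}\le\|\mathcal K\|_{L^2}\|\nabla g\|_{L^2}$.) Finally, $F(t,\bx)\to0$ as $|\bx|\to\infty$ follows from Lemma \ref{Bessel:Y}, since $g(t,\cdot)\in\Y$ as a product of translates of $Q$, hence $F(t,\cdot)\in\Y$.

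The only point that really requires care is the sharp power $z^{-1/2}$ in the estimate for $\|g\|_{H^k}$: obtaining $z^{-\frac12}e^{-z}$ rather than the cheaper $e^{-z}$ (and not the over-optimistic $z^{-1}e^{-z}$) is what makes \eqref{est:F:point} match the size of the plateau $P$ in Claim \ref{claim:P}; everything else is bookkeeping with the mapping properties of $(-\Delta+1)^{-1}$ and Sobolev embedding.
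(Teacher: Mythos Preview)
Your argument for the $H^2$ bounds \eqref{est:F:H2.1}--\eqref{est:F:H2.2} is correct and is essentially the paper's: boundedness of $(-\Delta+1)^{-1}:L^2\to H^2$ together with the interaction estimate \eqref{est:fg:L2} for $g=2\Rtone\Rttwo$ and its time derivative.

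There is, however, a genuine gap in your route to the \emph{pointwise} bounds \eqref{est:F:point}--\eqref{est:dtF:point}. Your key input $\|g\|_{H^k}\lesssim z^{-1/2}e^{-z}$ is not true: one only has $\|g\|_{L^2}\sim z^{-1/4}e^{-z}$. Indeed, while $\|g\|_{L^\infty}\lesssim z^{-1/2}e^{-z}$ is correct (this is \eqref{est.decay.QzQ}, attained near each center $\tilde{\bz}_i$), the product $\Rtone\Rttwo$ is of size about $[s(z-s)]^{-1/2}e^{-z}$ along the whole segment joining the centers, and integrating $|\Rtone\Rttwo|^2$ in a tube around this segment already contributes $\sim z^{-1/2}e^{-2z}$ to $\|g\|_{L^2}^2$. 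Hence both your Sobolev-embedding argument $\|F\|_{W^{1,\infty}}\lesssim\|g\|_{H^2}$ and your alternative $\|F\|_{L^\infty}\le\|G_2\|_{L^2}\|g\|_{L^2}$ yield only $z^{-1/4}e^{-z}$, which falls short of \eqref{est:F:point}. The remedy is to trade the $L^2$ input for the $L^\infty$ one: since the Bessel kernel $G_2\in L^1(\mathbb R^2)$, Young's inequality gives
\[
\|F\|_{L^\infty}\le \|G_2\|_{L^1}\,\|g\|_{L^\infty}\lesssim z^{-1/2}e^{-z},
\]
and likewise for $\nabla F=G_2*\nabla g$ and for $\partial_t F$. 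This is exactly what the paper does, invoking \eqref{est.decay.QzQ} and Corollary~\ref{coro:G_2}\,(i) (which encodes the $L^1$--$L^\infty$ mapping property of $(-\Delta+1)^{-1}$).
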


\begin{proof}
Estimate \eqref{est:F:point} and \eqref{est:dtF:point} are consequences of \eqref{est.decay.QzQ}, \eqref{coro:G_2.1} and of 
\begin{equation*} 
\partial_tF=-2\dot{z}_1 \left(-\Delta+1\right)^{-1}\left((\partial_x\tilde{R}_1)\tilde{R}_2 \right)-2\dot{z}_2 \left(-\Delta+1\right)^{-1}\left(\tilde{R}_1(\partial_x\tilde{R}_2) \right) .
\end{equation*}
Thus, estimates \eqref{est:F:H2.1} and \eqref{est:F:H2.2} follow combining the fact that $(-\Delta+1)^{-1}: L ^2(\mathbb R^2) \to H^2(\mathbb R^2)$ is a bounded operator with \eqref{est:fg:L2}. 
\end{proof}

Finally, we prove \eqref{est:V-Ri}.
Without loss of generality, we assume that $i=1$. We deduce from the definition of $V$ in \eqref{def:V} and estimates \eqref{est:VA:H2} and \eqref{est:R_1R_2} that 
\begin{equation*} 
\left\| \vert \MRone \vert (V-R_1) \right\|_{L^2} \lesssim \left\| \vert \MRone \vert R_2 \right\|_{L^2}+\|V_{A}\|_{L^2} \lesssim e^{-\frac{15}{16} z} .
\end{equation*}

\subsection{Estimates for the error terms}

\noindent
\emph{Estimate for $\Sigma$.} It follows from the definition of $\Sigma=\Sigma_1+\Sigma_2$ (see \eqref{defi:Sigma1}-\eqref{defi:Sigma2}) that 
\begin{equation*} 
\|\Sigma\|_{H^1} \lesssim \sum_{i=1}^2 \left( |\alpha_i(z)|+|\gamma_i(z)| \right) +\|Q\|_{H^1}\left( \|F\|_{L^{\infty}} +\|\nabla F\|_{L^{\infty}} \right), 
\end{equation*}
which, combined to \eqref{Est:piqi} and \eqref{est:F:point} implies \eqref{est:Sigma}. 
%\blue{(We could also write  the bounds in $H^2$? It costs almost nothing... the advantage is that we get a bound for $\|\Sigma\|_{L^{\infty}}$ for free. Check with the rest of the paper)}

\smallskip
\noindent
\emph{Estimate for $\Psi_{\Sigma}$.} It follows from the definition of $\Psi_{\Sigma}$ in \eqref{defi:Psi_Sigma} that 
\begin{align*} 
\|\Psi_{\Sigma}\|_{H^1} &\lesssim \left(\sum_{i=1}^2 \left( |\alpha_i(z)|+|\gamma_i(z)| \right) + \|F\|_{L^{\infty}}+\|\nabla F\|_{L^{\infty}}\right)\sum_{i=1}^2 \left( |\mu_i|+|\omega_i| \right) \\ 
& \quad + \left( \|\Lambda \Rtone \Rttwo\|_{H^1}+ \| \Rtone \Lambda\Rttwo\|_{H^1}+ \|\partial_y \Rtone \Rttwo\|_{H^1}+\| \Rtone \partial_y\Rttwo\|_{H^1}\right) \sum_{i=1}^2\left( |\mu_i|+|\omega_i| \right),
\end{align*}
which, combined to \eqref{Est:piqi}, \eqref{est:F:point} and \eqref{est:R_1R_2}, implies \eqref{est:PsiSigma}.

\smallskip
\noindent
\emph{Estimate for $\Psi_{S}$.} By using \eqref{defi:Psi_S}, we rewrite $\Psi_S$ as 
\begin{equation} \label{defi:Psi_S.2}
\begin{split}
\Psi_S&=2\Rtone \left( \pAtwo \cdot \NRttwo  \right)+ 2 \Rttwo \left(\pAone \cdot \left(\NRtone- \n(y) \right) \right) +2\sum_{i=1}^2\left(R_i -\Rti -\mu_i \Lambda \Rti-\omega_i\partial_y\Rti\right) V_{A,i} \\
& \quad +V_A^2 +2\left(R_1R_2-\Rtone\Rttwo-\left(\mu_1\Lambda \Rtone+\omega_1\partial_y \Rtone\right)\Rttwo-\Rtone\left(\mu_2\Lambda \Rttwo+\omega_2\partial_y \Rttwo\right)\right) \\  
        &\quad +2 \left(R_1 -\Rtone\right) \left( \pAtwo \cdot \NRttwo  \right)+ 2 \left(R_2 -\Rttwo\right) \left(\pAone \cdot \left(\NRtone- \n \right) \right)
\end{split}
\end{equation}
We estimate separately each term in  \eqref{defi:Psi_S.2}. These estimates are gathered in the next claim, which concludes the proof of \eqref{est:PsiS}. 

\begin{claim} \label{claim:Psi_S} The following estimates hold.
\begin{align} 
&\left\|\Rtone \left( \pAtwo \cdot \NRttwo  \right) \right\|_{H^1}+\left\|\Rttwo \left(\pAone \cdot \left(\NRtone- \n \right) \right) \right\|_{H^1} \lesssim e^{-\frac{31}{16}z} ; \label{est:RipiNi} \\
& \left\|V_A^2\right\|_{H^1} \lesssim e^{-\frac{31}{16}z} ; \label{est:Va2} \\
&\left\|R_1R_2-\Rtone\Rttwo-\left(\mu_1\Lambda \Rtone+\omega_1\partial_y \Rtone\right)\Rttwo-\Rtone\left(\mu_2\Lambda \Rttwo+\omega_2\partial_y \Rttwo\right)\right\|_{H^1}\lesssim e^{-\frac{15}{16}z} \sum_{i=1}^2  \left( \mu_i^2+\omega_i^2 \right) ; 
\label{est:RtR2-tR1tR2} \\ 
&\sum_{i=1}^2\left\|\left(R_i -\Rti -\mu_i \Lambda \Rti-\omega_i\partial_y\Rti\right) V_{A,i}\right\|_{H^1} \lesssim z^{-\frac12}e^{-z}\sum_{i=1}^2 \left(\mu_i^2+\omega_i^2 \right) ; \label{est:Ri-tildeRi_VAi} \\ 
&\left\|(R_1-\Rtone) \left( \pAtwo \cdot \NRttwo  \right) \right\|_{H^1}+\left\|(R_2-\Rttwo) \left(\pAone \cdot \left(\NRtone- \n \right) \right) \right\|_{H^1} \lesssim e^{-\frac{31}{16}z}\sum_{i=1}^2\left( |\mu_i|+|\omega_i|\right) . \label{est:RipiNi_refined} 
\end{align}
\end{claim}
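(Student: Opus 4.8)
The plan is to estimate each of the five terms in Claim \ref{claim:Psi_S} separately, relying on the exponential decay of $Q$ (Proposition \ref{propo:Q}), on the bounds \eqref{est:pi} for the coefficients $\pAi$, on the pointwise and $H^2$ bounds \eqref{est:VA:point}--\eqref{est:VA:H2} for $V_A$ already proved, and on the product-decay estimates collected in Appendix \ref{Append:Est} (the estimates labelled \eqref{est.decay.QzQ}, \eqref{est:fg:L2}, \eqref{est:R_1R_2}, etc.) together with the $L^2\to H^2$ boundedness of $(-\Delta+1)^{-1}$. The common mechanism throughout is that a product of two functions, each exponentially localized around a center separated by distance $z$, has $H^1$ (indeed $H^2$) norm of order $e^{-z}$ with at most an algebraic loss in $z$; multiplying by coefficients $\pAi = O(z^{-1/2}e^{-z})$ gains another factor $e^{-z}$, which is how the gain from $e^{-z}$ to $e^{-\frac{31}{16}z}$ (and to $e^{-\frac{15}{16}z}$ after the Taylor remainders) is obtained.

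Concretely, for \eqref{est:RipiNi} I would write $\Rtone(\pAtwo\cdot\NRttwo)$ and observe that $\Rtone$ decays like $e^{-|\bx - \tilde{\bz}_1|}$ while $\NRttwo$ is bounded (its components $X_2,Y_2,W_2$ lie in $\mathcal Y$ or $\mathcal Z$, hence are $O(1)$ near $\tilde{\bz}_1$), and that the region where $\Rtone$ is not already smaller than $e^{-\frac{15}{16}z}$ forces $X_2$-type factors to be evaluated at distance $\gtrsim z$ from $\tilde{\bz}_2$; combined with $|\pAtwo| \lesssim z^{-1/2}e^{-z}$ from \eqref{est:pi} this yields $e^{-\frac{31}{16}z}$ after absorbing the polynomial factor into the exponent (using $\frac{31}{16} < 2 - \frac{1}{16}$). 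The term $2\Rttwo(\pAone\cdot(\NRtone - \n))$ is handled symmetrically, using that $\NRtone - \n$ decays (the subtraction of $\n$ removes exactly the non-decaying tail of $Y_1, W_1$ as $x\to -\infty$, so that $\NRtone - \n$ is localized around $\tilde{\bz}_1$, at distance $\gtrsim z$ from the support of $\Rttwo$). For \eqref{est:Va2}, I would use $\|V_A^2\|_{H^1} \lesssim \|V_A\|_{L^\infty}\|V_A\|_{H^1} + \|\nabla V_A\|_{L^\infty}\|V_A\|_{H^1} \lesssim (z^{-1/2}e^{-z})\cdot e^{-\frac{15}{16}z}$ by \eqref{est:VA:point}--\eqref{est:VA:H2}, and $z^{-1/2}e^{-\frac{31}{16}z}\lesssim e^{-\frac{31}{16}z}$ absorbing the polynomial loss.

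For the two Taylor-remainder estimates \eqref{est:RtR2-tR1tR2} and \eqref{est:Ri-tildeRi_VAi}, the key point is that $R_i - \Rti - \mu_i\Lambda\Rti - \omega_i\partial_y\Rti$ is the second-order remainder in the expansion of $Q_{1+\mu_i}(\bx-\bz_i)$ around $Q(\bx-\tilde{\bz}_i)$ in the parameters $(\mu_i,\omega_i)$, hence is $O(\mu_i^2+\omega_i^2)$ in $H^2$ (with constants uniform under Assumption \ref{hyp:coeff}, since $|\mu_i|,|\omega_i|\le\nu_0$), while retaining the exponential localization of $Q$ around $\tilde{\bz}_i$; multiplying by $\Rttwo$ (separated by distance $z$) gives $e^{-\frac{15}{16}z}$ after the usual polynomial absorption, and multiplying by $V_{A,i}$ which is $O(z^{-1/2}e^{-z})$ near $\tilde{\bz}_i$ gives the sharper $z^{-1/2}e^{-z}$ in \eqref{est:Ri-tildeRi_VAi}. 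Finally, \eqref{est:RipiNi_refined} combines the $R_i - \Rti = O(|\mu_i|+|\omega_i|)$ bound (here only the first-order term needs to be extracted) with the argument already used for \eqref{est:RipiNi}. The main obstacle, such as it is, is bookkeeping: tracking precisely which factor is evaluated far from which center so that the $e^{-z}$ from the separation and the $e^{-z}$ from the coefficients combine to beat $\frac{31}{16}z$ (resp. $\frac{15}{16}z$), and making sure every polynomial-in-$z$ prefactor is genuinely absorbable into the strictly larger exponent — there is no conceptual difficulty, only the need to invoke the right decay lemma from Appendix \ref{Append:Est} at each step.
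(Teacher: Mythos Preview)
Your proposal is correct and follows essentially the same approach as the paper. You correctly identify all the key ingredients: the coefficient bounds \eqref{est:pi}, the $L^\infty$ and $H^2$ bounds \eqref{est:VA:point}--\eqref{est:VA:H2}, the Taylor-remainder estimate \eqref{exp:Ri_H1}, and the separated-product lemmas of Appendix~\ref{Append:Est} (specifically Lemma~\ref{est:tildeR_NR} for \eqref{est:RipiNi} and Remark~\ref{est:tildeR_NR_refined} for \eqref{est:RipiNi_refined}); the paper's proof is nothing more than invoking exactly these in the order you describe. One trivial slip: your parenthetical ``$\frac{31}{16} < 2 - \frac{1}{16}$'' is actually an equality, but it is irrelevant since the polynomial prefactor $z^{-1/2}$ is a decay and can simply be dropped.
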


\begin{proof} 
We have from the definition of $\pAi$ in \eqref{defi:p_q} and the ones of $\NRti$ and $\n$ in \eqref{defi:MiNi} that
\begin{align*} 
&\left\|\Rtone \left( \pAtwo \cdot \NRttwo  \right) \right\|_{H^1} \le |\alpha_2(z)| \left\|\Rtone X_2\right\|_{H^1}+|\gamma_2(z)|\left\|\Rtone W_2\right\|_{H^1} ; \\ 
& \left\|\Rttwo \left(\pAone \cdot \left(\NRtone- \n(y) \right) \right) \right\|_{H^1} \le |\alpha_1(z)| \left\|\Rttwo X_1\right\|_{H^1}+|\gamma_1(z)|\left\|\Rttwo \left(W_1-l\right)\right\|_{H^1}.
\end{align*}
Therefore, we conclude the proof of \eqref{est:RipiNi}  by gathering \eqref{est:pi}, \eqref{est:tildeR_X}, \eqref{est:tildeR1_W2} and \eqref{est:tildeR2_W1-l}. 

Estimate \eqref{est:Va2} follows from \eqref{est:VA:point}-\eqref{est:VA:H2}, while estimate \eqref{est:RtR2-tR1tR2} is proved in Lemma \ref{lemm:R_1_R_2}.

Next, we deduce from the definition of $V_{A,1}$ in \eqref{def:VA1} that 
\begin{align*}
\sum_{i=1}^2&\left\|\left(R_i -\Rti -\mu_i \Lambda \Rti-\omega_i\partial_y\Rti\right) V_{A,i}\right\|_{H^1} \\ & 
\lesssim \left(\sum_{i=1}^2\left\|\left(R_i -\Rti -\mu_i \Lambda \Rti-\omega_i\partial_y\Rti\right)\right\|_{H^1}\right)\left(\sum_{i=1}^2 \left(|\alpha_i|+|\gamma_i|\right)+\|F\|_{L^{\infty}}+\|\nabla F\|_{L^{\infty}}\right).
\end{align*}
Thus, we conclude the proof of \eqref{est:Ri-tildeRi_VAi} by using \eqref{est:pi}, \eqref{est:F:point} and \eqref{exp:Ri_H1}.

Finally, the proof of \eqref{est:RipiNi_refined} follows as the one of \eqref{est:RipiNi} by using Remark \ref{est:tildeR_NR_refined} instead of Lemma \ref{est:tildeR_NR}.
\end{proof}
\smallskip
\noindent
\emph{Estimate for $T$.} We deduce from the definition of $T$ in \eqref{def:TA} that 
\begin{equation*} 
\|T\|_{H^1} \lesssim \|\partial_tV_A\|_{H^1}+\sum_{i=1}^2\left(|\alpha_i(z)|\|\partial_xR_i-\partial_x\Rti\|_{H^1}+|\gamma_i(z)|\|\Lambda R_i-\Lambda \Rti\|_{H^1} \right) .
\end{equation*}
Therefore, we conclude the proof of \eqref{est:T} by combining this estimate with \eqref{est:dVAdt}, \eqref{Est:piqi} and \eqref{exp:MRi_H1}.

\smallskip
\noindent
\emph{Estimate for $\partial_t S$.}
The time derivative of $S$ decomposed as in \eqref{decomp:S} gives
\begin{align*}
    \partial_t S & = \partial_t \Sigma + \partial_t \Psi_\Sigma + \partial_t \Psi_S.
\end{align*}
For the first term, from \eqref{defi:Sigma1}, \eqref{defi:p_q}, \eqref{est:pi} and \eqref{est:F:point}-\eqref{est:dtF:point}, it holds
\begin{align*}
    \| \partial_t \Sigma_1 \|_{H^1} \lesssim \left\vert \dot{z} \partial_z \pAone (z) \right\vert + \left\vert  \dot{z}_1 \right\vert \left( \left\vert \pAone \right\vert + \left\| F \right\|_{L^\infty} \right) + \left\| \partial_t F \right\|_{L^\infty} \lesssim \left( \vert \dot{z}_1 \vert + \vert \dot{z}_2 \vert \right) z^{-\frac12} e^{-z}.
\end{align*}
The same estimate holds for $\partial_t \Sigma_2$. Proceeding similarly for $\Psi_\Sigma$ and $\Psi_S$, we get
\begin{align*}
    \left\| \partial_t \Psi_{\Sigma} \right\|_{H^1} & \lesssim \sum_{i=1}^2 \left( \vert \dot{\mu}_i \vert + \vert \dot{\omega}_i \vert \right) e^{-\frac{15}{16}z} + \left( \vert \dot{z}_1 \vert + \vert \dot{z}_2 \vert \right) \sum_{i=1}^{2} \left( \vert \mu_i \vert + \vert \omega_i \vert \right) e^{-\frac{15}{16} z}, \\
    \left\| \partial_t \Psi_S \right\|_{H^1} & \lesssim \left( \vert \dot{z}_1 \vert + \vert \dot{z}_2 \vert \right) \left( e^{-\frac{31}{16}z} + e^{-\frac{15}{16} z} \sum_{i=1}^2 \left( \mu_i^2 + \omega_i^2 \right) \right) + e^{-\frac{15}{16} z} \sum_{i=1}^2 \left( \vert \dot{\mu}_i \vert+ \vert \dot{\omega}_i \vert\right)\sum_{i=1}^2 \left( \vert \mu_i \vert+ \vert \omega_i \vert\right).
\end{align*}
Gathering the three previous inequalities proves \eqref{est:dt_S}.

\smallskip
\noindent
\emph{Estimate for $\langle \Sigma, \MRti \rangle$.} We focus on the term $\langle \Sigma, \partial_x \Rttwo \rangle$. By using the definition of $\Sigma=\Sigma_1+\Sigma_2$ in \eqref{defi:Sigma1}-\eqref{defi:Sigma1} and the orthogonality relation \eqref{ortho:Sigmai}, we have
\begin{align*}
\left|\langle \Sigma, \partial_x \Rttwo \rangle\right|=\left|\langle \Sigma_1, \partial_x \Rttwo \rangle\right|&\le 2\left|\langle \Rtone \pAone \cdot \NRtone,  \partial_x \Rttwo \rangle\right| + 2\left| \langle \Rtone F_{}, \partial_x \Rttwo \rangle\right| \\
& \lesssim \left(|\alpha_1(z)|+|\gamma_1(z)|+\|F\|_{L^{\infty}} \right) \|\Rtone \partial_x\Rttwo\|_{L^1} .
\end{align*}
Hence, we conclude the proof of \eqref{est:ortho_Sigma} by combining this estimate with \eqref{Est:piqi}, \eqref{est:F:point} and \eqref{est:R_1R_2}.

\smallskip
\noindent
\emph{Estimate for $\langle S, \MRi \rangle$.}
The proof of \eqref{est:ortho_S} follows by gathering \eqref{est:ortho_Sigma} with \eqref{est:Sigma}, \eqref{est:PsiSigma}, \eqref{est:PsiS} and \eqref{exp:MRi_H1}.

%%%%%%%%%%%%%%%%%%%%%%%%%%%%%%%%%%%%%%%%%%%%%%%%%%%%%%%%
%%%%% Properties of a solution approximated by $V$
%%%%%%%%%%%%%%%%%%%%%%%%%%%%%%%%%%%%%%%%%%%%%%%%%%%%%%%%

\section{Decomposition of a solution close to the approximate solution} \label{sec:est_eps}

Consider a solution $v$ of \eqref{ZK:sym} that is close to the approximate solution $V$ constructed in Section \ref{sec:construction_V} on a certain time interval $I \subset \mathbb R$. Recall that is $V$ is on the form $V=R_1+R_2+V_A$ (see \eqref{def:V}), where $R_1$ and $R_2$ are defined in \eqref{def:R_i}. Thus, $V$ depends on time through the set of geometric parameters $\Gamma=(\mu_1,\mu_2,z_1,z_2,\omega_1,\omega_2)$, \textit{i.e.} $V(t,\bx)=V(\bx ;\Gamma(t))$. 

In all this section, we assume that the geometric parameters satisfy the rough estimates in Assumption \ref{hyp:coeff} and that $V_A(t,\bx)=V_A(\bx;\Gamma(t))$ satisfies \eqref{est:VA:point}, \eqref{est:VA:H2}, \eqref{est:dVAdt}, \eqref{est:V-Ri}. We also assume that the error $\E_V$ decomposes as in \eqref{eq:error_R1_R2_VA}, where $\mAi$ is defined in \eqref{defi:mAi}, and that the parameters $\alpha_i$, $\beta_i$ and $\gamma_i$ satisfy \eqref{est:pi}. 

We begin this section by adjusting the geometrical parameters to ensure some important orthogonality relations for the difference $\epsilon$ between the solution $v$ and the approximate solution $V$ with respect to some natural directions. We then state and prove some estimates for the dynamical systems satisfied by the geometric parameters and for the $H^1$ norm of the difference $\epsilon$.

\subsection{Modulation close to the approximate solution}\label{sec:modulation}

For any $Z^0>1$, $\mu^0>0$ and $\eta>0$, we define the set of parameters $\Xi_{Z^0,\mu^0}$ and the tube $\mathcal{U}_{Z^{0}, \mu^0,\eta}$ around the approximate solution $V(\cdot;\Gamma)$ by
\begin{equation*}
    \Xi_{Z^0, \mu^0} := \left\{ \begin{aligned} & \Gamma =(z_1,z_2,\omega_1,\omega_2, \mu_1, \mu_2) \in \mathbb{R}^6 \, : \,  \vert z_1 - z_2 \vert >  Z^0;  \\ &  \vert \mu_1 \vert < \mu^0; \ \vert \mu_2 \vert < \mu^0; \ \vert \omega_1 \vert < \mu^0; \ \vert \omega_2 \vert < \mu^0 \end{aligned} \right\}, 
\end{equation*}

\begin{equation*}
    \mathcal{U}_{Z^{0}, \mu^0,\sigma}:= \left\{ w \in H^1(\mathbb R^2) \, : \, \inf_{\Gamma \in \Xi_{Z^0, \mu^0}} \left\|w -V(\Gamma) \right\|_{H^1} < \sigma  \right\}.
\end{equation*}

\begin{lemm}\label{propo:modulation}
There exists $C>0$, $\sigma^\star>0$, $Z_1^\star>Z_0^\star$ and $\nu_1^\star\in (0, \nu_0^\star)$ such that the following holds. For any $\sigma < \sigma^\star$, $Z^0>Z_1^\star$ and $\mu^0 <\nu_1^\star$, let $w \in C(I:H^1(\mathbb R^2))$ be a solution to \eqref{ZK:sym} such that $w(t) \in \mathcal{U}_{Z^0,\mu^0, \sigma}$ for all $t \in I$. Then there exists a unique $\mathcal{C}^1$ function $\Gamma(t)= \left( z_1(t),z_2(t),\omega_1(t), \omega_2(t), \mu_1(t), \mu_2(t)\right)$ from $I$ to $\Xi_{Z^0,\mu^0}$ such that by defining 
\begin{equation} \label{def:epsilon}
\epsilon(t) := w(t) -V(\Gamma(t)) ,
\end{equation}
we have for all $t \in I$ and for $i=1,2$,
\begin{align}
    & \int \epsilon(t) \partial_x R_i(t) = \int  \epsilon(t) \partial_y R_i(t) = \int \epsilon(t) R_i(t) =0, \label{eps:ortho} \\
    & \vert z_1(t)-z_2(t) \vert > Z^0 -C \sigma, \quad  \vert \omega_i(t) \vert < \mu^0+C\sigma, \quad \vert \mu_i(t) \vert < \mu^0 + C\sigma, \label{est:mod:param} \\
    &  \left\| \epsilon(t) \right\|_{H^1} \leq C \sigma. \label{est:mod:eps} 
\end{align}
Moreover, $\epsilon$ satisfies on $I$ 
\begin{equation} \label{eq:epsilon}
\partial_t\epsilon+\partial_x(\Delta \epsilon-\epsilon +(V+\epsilon)^2-V^2)=-\E_V ,
\end{equation}
where $\E_V$ is defined in \eqref{def:EV}. 
\end{lemm}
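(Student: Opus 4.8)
The plan is to prove this via the implicit function theorem, which is the standard modulation argument, but the key point is uniformity of the constants over the tube $\mathcal{U}_{Z^0,\mu^0,\sigma}$. I would first set up the map whose zeros encode the orthogonality conditions \eqref{eps:ortho}. For a fixed function $w$ and a candidate parameter vector $\Gamma = (z_1,z_2,\omega_1,\omega_2,\mu_1,\mu_2)$, define
\begin{align*}
\Phi(w,\Gamma) := \Big( & \langle w - V(\Gamma), \partial_x R_1(\Gamma)\rangle, \langle w-V(\Gamma), \partial_y R_1(\Gamma)\rangle, \langle w-V(\Gamma), R_1(\Gamma)\rangle, \\
& \langle w-V(\Gamma),\partial_x R_2(\Gamma)\rangle, \langle w-V(\Gamma),\partial_y R_2(\Gamma)\rangle, \langle w-V(\Gamma), R_2(\Gamma)\rangle \Big) \in \mathbb{R}^6,
\end{align*}
where $R_i(\Gamma)$ are the modulated solitary waves associated with $\Gamma$. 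When $w = V(\widehat\Gamma)$ for some $\widehat\Gamma \in \Xi_{Z^0,\mu^0}$, one has $\Phi(V(\widehat\Gamma),\widehat\Gamma) = 0$ since $V(\widehat\Gamma) - V(\widehat\Gamma) = 0$. The goal is to solve $\Phi(w,\Gamma) = 0$ for $\Gamma = \Gamma(w)$ near $\widehat\Gamma$ when $w$ is $H^1$-close to $V(\widehat\Gamma)$, and then patch these local solutions together along $I$ using continuity in $t$ and uniqueness.

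The heart of the argument is the invertibility of the Jacobian $D_\Gamma \Phi$ at points $(V(\widehat\Gamma),\widehat\Gamma)$, uniformly in $\widehat\Gamma \in \Xi_{Z^0,\mu^0}$. Differentiating, $\partial_{\Gamma_k}\Phi_j = -\langle \partial_{\Gamma_k} V(\Gamma), D_j(\Gamma)\rangle + \langle w - V(\Gamma), \partial_{\Gamma_k} D_j(\Gamma)\rangle$, where $D_j$ runs through $\partial_x R_1, \partial_y R_1, R_1, \partial_x R_2, \partial_y R_2, R_2$. At $w = V(\widehat\Gamma)$ with $\Gamma = \widehat\Gamma$ the second term vanishes, so the Jacobian equals $-\langle \partial_{\Gamma_k} V, D_j\rangle$. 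Now $\partial_{\Gamma_k} V = \partial_{\Gamma_k}(R_1 + R_2 + V_A)$, and the leading contribution of $\partial_{z_i} V$, $\partial_{\omega_i} V$, $\partial_{\mu_i} V$ is respectively $-\partial_x R_i$, $-\partial_y R_i$, $\Lambda R_i$ (up to the $V_A$ terms which are $O(z^{-1/2}e^{-z})$ by \eqref{est:VA:point}, and the cross-soliton scalar products $\langle \partial_x R_1, \partial_y R_2\rangle$ etc.\ which are exponentially small in $z$ since $R_1, R_2$ are centered at distance $z \geq Z^0$ apart). Hence the Jacobian is, up to an error that is $O(Z_0^{-1/2}e^{-Z^0}) + O(\sigma)$ in operator norm, block diagonal with diagonal blocks
\begin{equation*}
\begin{pmatrix} -\|\partial_x Q\|_{L^2}^2 & 0 & 0 \\ 0 & -\|\partial_y Q\|_{L^2}^2 & 0 \\ 0 & 0 & -\langle \Lambda Q, Q\rangle \end{pmatrix}
\end{equation*}
(using $\langle \partial_x Q, Q\rangle = \langle \partial_y Q, Q\rangle = \langle \partial_x Q, \Lambda Q\rangle = \langle \partial_y Q, \Lambda Q\rangle = 0$ by parity, and $\langle \Lambda Q, Q\rangle > 0$ from Lemma \ref{propo:id_Q}). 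This matrix is invertible with inverse bounded independently of $\widehat\Gamma$, so for $Z_1^\star$ large and $\nu_1^\star$, $\sigma^\star$ small the full Jacobian is uniformly invertible. The quantitative implicit function theorem then yields, for each $t$, a unique $\Gamma(t)$ in a fixed-size neighborhood of any $\widehat\Gamma(t)$ realizing the infimum in the definition of $\mathcal{U}_{Z^0,\mu^0,\sigma}$, with $|\Gamma(t) - \widehat\Gamma(t)| \lesssim \|w(t) - V(\widehat\Gamma(t))\|_{H^1} \lesssim \sigma$; this gives \eqref{est:mod:param} and, combined with the triangle inequality and Lipschitz dependence of $V$ on $\Gamma$, also \eqref{est:mod:eps}.

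The remaining points are bookkeeping. For the $C^1$ regularity in $t$: the map $t \mapsto w(t) \in H^1$ is continuous (indeed $w \in C(I:H^1)$), and by the local uniqueness from the implicit function theorem the function $\Gamma(t)$ is determined and continuous; to upgrade to $C^1$ one differentiates the identity $\Phi(w(t),\Gamma(t)) = 0$ in $t$, which requires knowing $\partial_t w$ weakly — here I would use the equation \eqref{ZK:sym} to express $\partial_t w = -\partial_x(\Delta w - w + w^2) \in H^{-2}$ and note that the functionals $\langle \cdot, D_j\rangle$ pair $H^{-2}$ against the smooth rapidly decaying $D_j$, so $t \mapsto \Phi(w(t),\Gamma)$ is $C^1$; then solve the resulting linear system for $\dot\Gamma(t)$ using the invertible Jacobian, which is the standard way these modulation equations are derived. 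Finally, \eqref{eq:epsilon} is a direct computation: subtract $\partial_t V - \partial_x(\Delta V - V + V^2) = \E_V$ (definition \eqref{def:EV}) from the equation $\partial_t w - \partial_x(\Delta w - w + w^2) = 0$ satisfied by $w$, set $\epsilon = w - V$, and expand $(V+\epsilon)^2 - V^2$; this gives exactly \eqref{eq:epsilon}. I expect the main obstacle to be making all the error estimates in the Jacobian computation \emph{uniform} over the tube — in particular tracking that the off-block and $V_A$ contributions are genuinely controlled by the smallness parameters $Z^0, \mu^0, \sigma$ and not merely ``small'' — and correctly handling the $C^1$-in-time regularity, since $\partial_t w$ only lives in a negative Sobolev space and one must check the pairings are well-defined and depend continuously on $t$.
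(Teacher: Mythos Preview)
Your proposal is correct and follows essentially the same route as the paper: define the orthogonality map $\Phi$, show its $\Gamma$-Jacobian is a small perturbation of a block-diagonal invertible matrix (with entries $\|\partial_x Q\|^2$, $\|\partial_y Q\|^2$, $\langle \Lambda Q,Q\rangle$), and apply a quantitative implicit function theorem. The only minor difference is in the $C^1$-in-$t$ step: the paper invokes the Cauchy--Lipschitz theorem (citing \cite{Eyc22}) rather than your direct pairing of $\partial_t w\in H^{-2}$ against the smooth directions, but both arguments work and lead to the same modulation equations.
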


\begin{rema}\label{rema:modulation_pointwise}
If we assume moreover that for some $\Gamma^0=(z_1^0,z_2^0,\omega_1^0,\omega_2^0,\mu_1^0,\mu_2^0) \in \Xi_{Z^0, \mu^0}$ and for some $t_0 \in I$, we have $\| w(t_0)-V(\Gamma^0)\|_{H^1} < \sigma$, then in addition to \eqref{eps:ortho}-\eqref{est:mod:eps}, we have for $i=1,2$, that 
\begin{equation} \label{est:mod:param_refined} 
\sum_{i=1}^2\left( \vert z_i(t_0)-z_i^0 \vert +\vert \mu_i(t_0)-\mu_i^0 \vert + \vert \omega_i(t_0) -\omega_i^0 \vert \right) \le C \sigma .
\end{equation}
\end{rema}

\begin{proof} The proof of the existence, uniqueness and continuity of $\Gamma$ relies on classical arguments based on a qualitative version of the implicit function theorem for functions independent of time (see for example Lemma 3.1 in \cite{MM11} or Proposition 2.2 in \cite{Eyc22} and the references therein). 

More precisely, if we define 
\begin{equation*}
\Phi:\mathcal{U}_{Z^{0},\mu^0,\sigma} \times \Xi_{Z^0, \mu^0} \to \mathbb R^6, \ (w,\Gamma) \mapsto \left( \int \epsilon \partial_x R_1,\int  \epsilon \partial_y R_1,\int \epsilon R_1, \int \epsilon \partial_x R_2,\int  \epsilon \partial_y R_2,\int \epsilon R_2 \right),
\end{equation*}
the main point consists in verifying that $d_\Gamma \Phi (w,\Gamma)$ is invertible. This is ensured by decomposing $d_\Gamma \Phi (w,\Gamma)=D+A$, where 
\begin{align*}
    D=\text{diag}\left(-\int (\partial_xQ)^2,-\int (\partial_yQ)^2,\int Q \Lambda Q,-\int (\partial_xQ)^2,-\int (\partial_yQ)^2,\int Q \Lambda Q  \right)
\end{align*}
and by using \eqref{eq:id2_Q} and $\|A\|_{L^{\infty}}\le e^{-\frac12 Z^0}+\mu^0$ thanks to \eqref{est:VA:point}, Lemma \ref{lemm:decompo_rescaled_Q} and Lemma \ref{lemm:fg}. Thus, it follows by taking $Z^0$ large enough, and $\mu^0$ and $\sigma$ small enough that there exist a unique $C^1$ function $\Gamma:\mathcal{U}_{Z^{0},\mu^0,\sigma} \to \Xi_{Z^0, \mu^0}$ such that $\Phi(w,\Gamma(w))=0$, for any $w \in \mathcal{U}_{Z^{0},\mu^0,\sigma}$. 

Now let $w \in C(I:H^1(\mathbb R^2))$ be a solution to \eqref{ZK:sym} such that $w(t) \in \mathcal{U}_{Z^0,\mu^0, \sigma}$ for all $t \in I$. By abuse of notation, we define $\Gamma(t):=\Gamma(v(t))$, so that $\Gamma \in C^0(I)$. The fact that $\Gamma$ is of class $C^1$ follows then from the Cauchy-Lipschitz theorem (see for example Proposition 2.2 in \cite{Eyc22}). Moreover, \eqref{eq:epsilon} is deduced from \eqref{ZK:sym} and \eqref{def:EV}. 

Finally, Remark \ref{rema:modulation_pointwise} follows directly from the uniqueness of the function $\Gamma$ in the neighborhood of $V(\Gamma^0)$. 
\end{proof}

\subsection{Control of the time derivative of the geometric parameters} \label{Subsec:mod:est}
We derive estimates for the differential system satisfied by the geometric parameters $\Gamma=(\mu_1,\mu_2,z_1,z_2,\omega_1,\omega_2)$. 

\begin{propo}\label{propo:dynamical_system}
We have, for all $t \in I$, 
\begin{align}
    & \sum_{i=1}^2 \left(\left\vert -\dot{z}_i +\mu_i + \alpha_i \right\vert + \left\vert \dot{\omega}_i - \beta_i \right\vert \right)  \lesssim \| \partial_xS \|_{L^2} + \| T \|_{L^2} + \| \epsilon \|_{L^2}, \label{eq:ortho_translation}\\
    & \sum_{i=1}^2 \left\vert \dot{\mu}_i + \gamma_i 
    \right\vert \lesssim  \sum_{i=1}^2\left\vert \int S \partial_x R_i \right\vert + \| T \|_{L^2} + \left( e^{-\frac{15}{16} z} + \| \epsilon \|_{L^2} \right)\| \epsilon\|_{L^2}. \label{eq:ortho_velocity}
\end{align}
\end{propo}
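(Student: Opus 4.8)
### Proof plan

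\textbf{Overview.} The strategy is the standard modulation-estimate computation: differentiate the orthogonality conditions \eqref{eps:ortho} in time, use the equation \eqref{eq:epsilon} for $\epsilon$, integrate by parts to move derivatives onto the (smooth, localized) directions $\partial_x R_i$, $\partial_y R_i$, $R_i$, and read off a linear system for the modulation parameters $\vec{\widetilde m}_i=(-\dot z_i+\mu_i+\alpha_i,-\dot\omega_i+\beta_i,\dot\mu_i+\gamma_i)$ whose matrix is a small perturbation of an invertible diagonal matrix. The only subtlety — and the reason the two estimates \eqref{eq:ortho_translation} and \eqref{eq:ortho_velocity} have different right-hand sides — is the handling of the error term $\partial_x S$: against $\partial_x R_i$ and $\partial_y R_i$ one integrates by parts to gain $\|\partial_x S\|_{L^2}$ (or one just keeps $\|\partial_x S\|_{L^2}$), but against $R_i$ one integrates by parts to turn $\langle \partial_x S, R_i\rangle$ into $-\langle S, \partial_x R_i\rangle$, which by the refined orthogonality estimate \eqref{est:ortho_S} is much smaller than $\|\partial_x S\|_{L^2}$ itself. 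That is exactly why $\langle S,\partial_x R_i\rangle$ rather than $\|\partial_x S\|_{L^2}$ appears in \eqref{eq:ortho_velocity}.

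\textbf{Step 1: differentiate the orthogonality relations.} Fix $i\in\{1,2\}$ and one of the three directions, generically $D_i\in\{\partial_xR_i,\partial_yR_i,R_i\}$; note $\partial_t D_i$ is a linear combination of second-order-type derivatives of $R_i$ with coefficients $\dot z_i,\dot\omega_i,\dot\mu_i$, all lying in the span of natural directions around $R_i$ up to exponentially small errors (via \eqref{def:LambdaR_i} and the identities in \eqref{eq:L_MRti}). From $\frac{d}{dt}\langle\epsilon,D_i\rangle=0$ we get $\langle\partial_t\epsilon,D_i\rangle=-\langle\epsilon,\partial_tD_i\rangle$. Substitute \eqref{eq:epsilon}: $\partial_t\epsilon=-\partial_x(\Delta\epsilon-\epsilon+(V+\epsilon)^2-V^2)-\E_V$. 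Writing $(V+\epsilon)^2-V^2=2V\epsilon+\epsilon^2$ and using $\E_V=\sum_j \vec m_j\cdot\overrightarrow{MR_j}+T+\partial_xS$ from \eqref{eq:error_R1_R2_VA}, this produces, after integrating by parts to move $\partial_x$ and $\Delta$ onto $D_i$ (all boundary terms vanish by decay), an identity of the schematic form
\[
\sum_{j=1}^2 \vec m_j\cdot\big\langle \overrightarrow{MR_j},D_i\big\rangle \;=\; \langle \epsilon,\text{(smooth localized)}\rangle+\langle T,D_i\rangle+\langle \partial_xS,D_i\rangle+O(\|\epsilon\|_{L^2}^2)+(\text{cross terms }R_1R_2).
\]
The cross terms and the $\partial_tD_i$ contribution are $O(e^{-\frac{15}{16}z})$ by \eqref{est:V-Ri} and Assumption \ref{hyp:coeff}.

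\textbf{Step 2: invert the linear system.} The matrix $\big(\langle\overrightarrow{MR_j},D_i\rangle\big)$ decomposes as $D+A$ where $D$ is block-diagonal with entries $-\int(\partial_xQ)^2$, $-\int(\partial_yQ)^2$, $\int Q\Lambda Q$ (nonzero by Lemma \ref{propo:id_Q}) and $\|A\|\lesssim e^{-\frac12 Z_0^\star}+\nu_0$ is small — this is the same computation already used in the proof of Lemma \ref{propo:modulation}. Hence the system is invertible with uniformly bounded inverse for $Z_0^\star$ large and $\nu_0^\star$ small, and each $|\vec m_j|$ is controlled by the right-hand side, with an absorbable $O(\|\epsilon\|_{L^2})\sum|\vec m_j|$ contribution from the $2V\epsilon$ term moved to the left. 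For the \emph{translation} components $D_i=\partial_xR_i,\partial_yR_i$ we simply bound $|\langle\partial_xS,D_i\rangle|\le\|\partial_xS\|_{L^2}\|D_i\|_{L^2}$, giving \eqref{eq:ortho_translation}. For the \emph{scaling} component $D_i=R_i$ we instead integrate by parts: $\langle\partial_xS,R_i\rangle=-\langle S,\partial_xR_i\rangle$, and similarly the $\Delta$ and $\partial_x$ terms acting on $R_i$ in Step 1 are handled so that only $\langle S,\partial_xR_i\rangle$ (and not $\|\partial_xS\|_{L^2}$) appears; also the contribution of $2V\epsilon$ paired with $R_i$ is genuinely $O(e^{-\frac{15}{16}z}\|\epsilon\|_{L^2})$ rather than $O(\|\epsilon\|_{L^2})$ because $\langle\epsilon,\partial_xR_i\cdot R_i\rangle$-type quantities are small after using the orthogonality \eqref{eps:ortho}. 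This yields \eqref{eq:ortho_velocity}.

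\textbf{Main obstacle.} The delicate point is the $R_i$-equation: one must check carefully that every occurrence of $\partial_xS$ and of the quadratic/linear-in-$\epsilon$ terms, when paired against $R_i$, can be rearranged (by integration by parts and by invoking the orthogonality conditions \eqref{eps:ortho}) so that the right-hand side contains only $\sum_i|\langle S,\partial_xR_i\rangle|$, $\|T\|_{L^2}$, and $(e^{-\frac{15}{16}z}+\|\epsilon\|_{L^2})\|\epsilon\|_{L^2}$ — crucially \emph{not} a bare $\|\epsilon\|_{L^2}$ or $\|\partial_xS\|_{L^2}$ term, since those would be too large to close the bootstrap later. Tracking which terms land in which estimate, and confirming the gain from integration by parts plus the refined bound \eqref{est:ortho_S}, is the heart of the argument; the rest is routine pairing estimates using \eqref{est:T}, Assumption \ref{hyp:coeff}, and the exponential localization of $R_1R_2$ and $V_A$.
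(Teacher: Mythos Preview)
Your overall approach matches the paper's: differentiate the six orthogonality relations, use \eqref{eq:epsilon} and the decomposition \eqref{eq:error_R1_R2_VA}, and for the $R_i$-equation integrate by parts to replace $\langle\partial_xS,R_i\rangle$ by $-\langle S,\partial_xR_i\rangle$. Your identification of this last point as the reason the two estimates have different right-hand sides is exactly right, and your treatment of the $2V\epsilon$ term against $R_i$ is correct in spirit (the precise mechanism is the identity $(-\Delta+1-2R_i)\partial_xR_i=-\mu_i\partial_xR_i$ from \eqref{eq:L_MRti} together with $\langle\epsilon,\partial_xR_i\rangle=0$, which kills the main linear term completely).

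There is one genuine gap. You assert that ``the $\partial_tD_i$ contribution is $O(e^{-\frac{15}{16}z})$'', but this is false: $\langle\epsilon,\partial_tD_i\rangle$ is of size $(|\dot z_i|+|\dot\omega_i|+|\dot\mu_i|)\|\epsilon\|_{L^2}$ (or $|\dot\mu_i|\,\|\epsilon\|_{L^2}$ for $D_i=R_i$ after using \eqref{eps:ortho}), and the derivatives $\dot z_i,\dot\omega_i,\dot\mu_i$ are \emph{not} known a priori to be small or even bounded. The paper closes this circularity by a two-step bootstrap: first it records intermediate estimates (its \eqref{mod:trans.1}--\eqref{mod:dil.1}) that keep these derivative factors explicit, deduces the rough bound $\sum_i(|\dot z_i|+|\dot\omega_i|+|\dot\mu_i|)\lesssim 1$, and reinjects to obtain \eqref{eq:ortho_translation}; then it uses \eqref{eq:ortho_translation} together with \eqref{est:Sigma}--\eqref{est:ortho_Sigma} to get $\sum_i|\dot\mu_i|\lesssim e^{-\frac78 z}$, and reinjects once more into the $R_i$-equation to obtain \eqref{eq:ortho_velocity}. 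Without this bootstrap your $R_i$-equation would carry a bare $|\dot\mu_i|\,\|\epsilon\|_{L^2}$ term on the right, which is too large for \eqref{eq:ortho_velocity} since that estimate must \emph{not} contain an unweighted $\|\epsilon\|_{L^2}$. This is the missing idea; the rest of your outline is sound.
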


\begin{rema}
Estimates \eqref{eq:ortho_translation} and \eqref{eq:ortho_velocity} still hold if one replace $\|T\|_{L^2}$ by $\int |T| \left(|R_1|+|R_2| \right)$ and $\|\partial_x S\|_{L^2}$ by $\int |\partial_xS| \left(|R_1|+|R_2| \right)$ on the right-hand side. 
\end{rema}

\begin{rema} We deduce from Proposition \ref{propo:dynamical_system} some rough bounds on the time derivative of the geometrical parameters $z_i$, $\omega_i$, $\gamma_i$. More precisely, it holds for all $t \in I$, 
 \begin{align}
    & \sum_{i=1}^2 \left(\left\vert \dot{z}_i \right\vert + \left\vert \dot{\omega}_i \right\vert \right)  \lesssim  \sum_{i=1}^2\vert \mu_i \vert +\frac{e^{-z}}{\sqrt{z}}+ \| \partial_xS \|_{L^2} + \| T \|_{L^2} + \| \epsilon \|_{L^2}, \label{est:dot_zi_omegai}\\
    & \sum_{i=1}^2 \left\vert \dot{\mu}_i 
    \right\vert \lesssim  \frac{e^{-z}}{\sqrt{z}}+\sum_{i=1}^2\left\vert \int S \partial_x R_i \right\vert + \| T \|_{L^2} + \left( e^{-\frac{15}{16} z} + \| \epsilon \|_{L^2}   \right)\| \epsilon\|_{L^2}. \label{est:dot_mui}
\end{align}
\end{rema}

\begin{proof}
We first claim that 
\begin{align}
    & \sum_{i=1}^2 \left\vert -\dot{z}_i + \mu_i + \alpha_i \right\vert + \sum_{i=1}^2 \left\vert -\dot{\omega}_i + \beta_i \right\vert \nonumber \\
    & \quad \quad \lesssim e^{-\frac{15}{16} z}\sum_{i=1}^2\left\vert \mAi \right\vert + \left(1+\sum_{i=1} \left( \vert \dot{\mu}_i \vert +\vert \dot{z}_i \vert  + \vert \dot{\omega}_i \vert \right)\right)\| \epsilon \|_{L^2} + \| \partial_xS \|_{L^2} + \| T \|_{L^2}  ; \label{mod:trans.1} \\ 
    & \sum_{i=1}^2\left\vert \dot{\mu}_i + \gamma_i \right\vert \lesssim e^{-\frac{15}{16}z}\sum_{i=1}^2\left\vert \mAi \right\vert +\left( \sum_{i=1}^2\vert \dot{\mu}_i \vert + e^{-\frac{15}{16} z}+\| \epsilon \|_{L^2} \right) \| \epsilon \|_{L^2}   +\sum_{i=1}^2\left\vert \int S \partial_x R_i \right\vert + \| T \|_{L^2}. \label{mod:dil.1}
\end{align}

To prove \eqref{mod:trans.1}, we compute the time derivative of the orthogonality relations in \eqref{eps:ortho}. Let us detail the computations for the first orthogonality relation with $i=1$. By using \eqref{eq:epsilon} and \eqref{eq:error_R1_R2_VA}, we infer that
\begin{align*}
    0   %=\frac{d}{dt}\int \epsilon \partial_x R_1 
    =  \int \epsilon \frac{d}{dt} \partial_x R_1 - \int \partial_x \left( \Delta \epsilon -\epsilon + 2V \epsilon + \epsilon^2 \right) \partial_x R_1 - \int \sum_{i=1}^2 \mAi \cdot \MRi \partial_x R_1 - \int \left( \partial_x S + T \right) \partial_x R_1 .
\end{align*}
We handle each terms on the right-hand side of the above identity separately. First, we have
\begin{align*}
    \left\vert \int \epsilon \frac{d}{dt} \partial_x R_1 \right\vert \lesssim  \left( \vert \dot{\mu}_1 \vert +\vert \dot{z}_1 \vert  + \vert \dot{\omega}_1 \vert \right) \left\| \epsilon \right\|_{L^2}. 
\end{align*}
By integrating by parts and using $\| V \|_{L^\infty} + \| \partial_x R_1 \|_{L^\infty} \lesssim 1$, we obtain
\begin{align*}
    \left\vert \int \partial_x \left( \Delta \epsilon - \epsilon +2 V \epsilon + \epsilon^2 \right) \partial_x R_1 \right\vert \lesssim \| \epsilon \|_{L^2} + \| \epsilon \|_{L^2}^2.
\end{align*}
On the one hand, we compute by using the orthogonality relations \eqref{eq:id4_Q}
\begin{align*}
    \int  \mAone \cdot \MRone \partial_x R_1 = \left( -\dot{z}_1 + \mu_1 + \alpha_1 + \lambda_1 \right) \left(1+\mu_1 \right) \| \partial_x Q \|_{L^2}^2 .
\end{align*}
On the other hand, it follows from Lemma \ref{lemm:R_1_R_2} that 
\begin{align*}
    \left\vert \int \mAtwo \cdot \MRtwo \partial_x R_1\right\vert \lesssim \left\vert \mAtwo \right\vert e^{-\frac{15}{16} z}.
\end{align*}
Finally, the Cauchy-Schwarz inequality yields
\begin{align*}
    \left\vert \int \left( \partial_x S + T \right) \partial_x R_1 \right\vert \lesssim \| \partial_xS \|_{L^2}+\| T \|_{L^2} .
\end{align*}
Therefore, we conclude gathering these estimates, and choosing $\sigma^\star$ and $\nu_1^\star$ small enough that
\begin{align*}
    \left\vert -\dot{z}_1 + \mu_1 + \alpha_1 \right\vert \lesssim \left\vert \mAtwo \right\vert e^{-\frac{15}{16} z} + \| \partial_xS \|_{L^2} + \| T \|_{L^2} + \left(1+ \vert \dot{\mu}_1 \vert +\vert \dot{z}_1 \vert  + \vert \dot{\omega}_1 \vert \right) \| \epsilon \|_{L^2} .
\end{align*}
Similar computations with the time derivative of the orthogonality relations $\int \epsilon \partial_y R_1=\int \epsilon \partial_x R_2=\int \epsilon \partial_y R_2=0$ yield the estimates on $\dot{z}_2$, $\dot{\omega}_1$ and $\dot{\omega_2}$ in \eqref{mod:trans.1}.

Next, by taking the time derivative of the orthogonality relation $\int \epsilon R_1=0$, and by using \eqref{eq:epsilon} and \eqref{eq:error_R1_R2_VA}, we find that
\begin{align*}
    0  =  \int \epsilon \frac{d}{dt} R_1 - \int \partial_x \left( \Delta \epsilon -\epsilon + 2V \epsilon + \epsilon^2 \right)  R_1 - \int \sum_{i=1}^2 \mAi \cdot \MRi  R_1 - \int \partial_x S  R_1 - \int T  R_1 .
\end{align*}
By using  the orthogonality conditions $\int \epsilon \partial_x R_1 = \int \epsilon \partial_y R_1=0$ (see \eqref{eps:ortho}), we obtain
\begin{align*} 
\left| \int \epsilon \frac{d}{dt} R_1 \right| = \left| \dot{\mu}_1 \int \epsilon   \Lambda R_1 \right| \lesssim  \vert \dot{\mu}_ 1 \vert \|\epsilon\|_{L^2}.
\end{align*}
 Integration by parts, the second  identity \eqref{eq:L_MRti} and the orthogonality relation $\int \epsilon \partial_x R_1=0$ implies that 
 \begin{equation*} 
\int \partial_x \left( \Delta \epsilon -\epsilon + 2R_1 \epsilon  \right)R_1=-\mu_1 \int \epsilon \partial_xR_1=0 .
 \end{equation*}
 By using \eqref{est:V-Ri}, it follows that
\begin{align*}
    \left\vert \int 2 \epsilon (V-R_1) \partial_xR_1 \right\vert+\left\vert \int \epsilon^2 \partial_x R_1 \right\vert \lesssim e^{-\frac{15}{16} z} \| \epsilon \|_{L^2}+\|\epsilon \|_{L^2}^2.
\end{align*}
On the one hand, the orthogonality relations \eqref{eq:id4_Q} imply
\begin{align*}
    \int \mAone \cdot \MRone R_1 = \left( \dot{\mu}_1 +\gamma_1 \right) \int \Lambda R_1 R_1 = \left( \dot{\mu}_1 + \gamma_1 \right)(1+\mu_1) \int Q \Lambda Q.
\end{align*}
On the other hand, it follows from Lemma \ref{lemm:R_1_R_2} that 
\begin{align*}
    \left\vert \int \mAtwo \cdot \MRtwo R_1 \right\vert \lesssim \vert \mAtwo \vert e^{-\frac{15}{16} z}.
\end{align*}
Gathering the previous inequalities and arguing similarly with the time derivative of the orthogonality relation $\int \epsilon R_2=0$ yield \eqref{mod:dil.1}.

Next, we deduce from \eqref{mod:trans.1}-\eqref{mod:dil.1} and the estimates in Proposition \ref{theo:V_A} the rough bound $\sum_{i=1} \left( \vert \dot{\mu}_i \vert +\vert \dot{z}_i \vert  + \vert \dot{\omega}_i \vert \right) \lesssim 1$. By bootstraping this estimate into \eqref{mod:trans.1}-\eqref{mod:dil.1} and taking $Z_1^\star$ large enough, we deduce that 
\begin{equation} \label{est:mod.1}
\sum_{i=1}^2\left\vert \mAi \right\vert \lesssim \|\epsilon\|_{L^2} +\|\partial_xS\|_{L^2}+\|T\|_{L^2} ,
\end{equation}
which implies \eqref{eq:ortho_translation}. 

Finally, we obtain that $\sum_{i=1}^2\vert \dot{\mu}_i \vert \lesssim e^{-\frac78 z}$, by combining the bounds \eqref{mod:dil.1} and \eqref{est:mod.1} with \eqref{est:Sigma}-\eqref{est:ortho_Sigma}. Therefore, we conclude the proof of \eqref{eq:ortho_velocity} by reinjecting this estimate together with \eqref{est:mod.1} into \eqref{mod:dil.1}. 
\end{proof}

\subsection{Energy estimates}\label{sec:energy_estimates}

In order to study the evolution of $\epsilon$, we follow the approach in \cite{MM11} based on monotonicity formulas for localized portions of the mass and the energy. To this end, we use adequate weight functions $\psi_+$ and $\psi_{-,m}$ and $\psi_{-,e}$ defined as follows. For a fixed $0<\rho<\frac1{32}$, we set 
\begin{align} \label{def:psi}
    \psi(x) := \frac{2}{\pi} \arctan \left( e^{8\rho x}\right),
\end{align}
so that $\displaystyle \lim_{x \to -\infty} \psi(x)=0$, $\displaystyle \lim_{x \to+\infty}\psi(x)=1$, and , for all $x \in \mathbb R$,
\begin{align}
&\psi'(x)=\frac{8 \rho}{\pi \cosh(8\rho x)}>0, \quad 1-\psi(x)=\psi(-x), \label{prop:psi.1} \\
&|\psi''(x)| \le 8\rho \psi'(x), \quad |\psi'''(x)| \le (8\rho)^2 \psi'(x). \label{prop:psi.2}
\end{align}

We define the three weight functions
\begin{align}
    & \psi_+(t,x) := \mu_1(t) \psi(x) + \mu_2(t) \left( 1-\psi (x)\right) \label{defi:psi_+}\\
    & \psi_{-,e}(t,x):= \frac{\psi(x)}{\left(1+ \mu_1(t)\right)^2} + \frac{1-\psi(x)}{\left(1+\mu_2(t) \right)^2},  \label{defi:psi_-e} \\
    & \psi_{-,m}(t,x) := \frac{\mu_1(t)}{\left(1+ \mu_1(t)\right)^2}\psi(x) + \frac{\mu_2(t)}{\left(1+\mu_2(t) \right)^2}\left(1-\psi(x)\right) \label{defi:psi_-m}
\end{align}

The following properties of the weight functions are deduced from \eqref{def:psi}-\eqref{prop:psi.2}.
\begin{claim}
\begin{itemize} 
\item[(i)] Assume $t \in I$ is such that $\mu_1(t) \ge \mu_2(t)$. Then we have for all $x \in \mathbb R$, 
\begin{equation} \label{prop:psi+}
\partial_x\psi_{+}(t,x)=\mu(t) \psi'(x) \ge 0; \quad |\partial_x^2\psi_+(t,x)| \le \frac14 \partial_x\psi_+(t,x); \quad \vert \partial_x^3\psi_+(t,x) \vert \le \frac1{16} \partial_x\psi_+(t,x) .
\end{equation}
\item[(ii)] Assume $t \in I$ is such that $\mu_2(t) \ge \mu_1(t)$. Then we have for all $x \in \mathbb R$,
\begin{align} 
&\partial_x\psi_{-,e}(t,x)=-\mu(t)\frac{2+\mu_1(t)+\mu_2(t)}{(1+\mu_1(t))^2(1+\mu_2(t))^2} \psi'(x) \ge 0,  \label{prop:psi-e.1} 
\\ 
& 1-\psi_{-,e}(t,x)=\mu_1(t) \frac{2+\mu_1(t)}{(1+\mu_1(t))^2}\psi(x)+\mu_2(t) \frac{2+\mu_2(t)}{(1+\mu_2(t))^2}(1-\psi(x)) , \label{prop:psi-e.3}\\
& |\partial_x^2\psi_{-,e}(t,x)| \le \frac14 \partial_x\psi_{-,e}(t,x) \quad \text{and} \quad \vert \partial_x^3\psi_{-,e}(t,x) \vert \le \frac1{16} \partial_x\psi_{-,e}(t,x).  \label{prop:psi-e.2}  \\ 
& \partial_x \psi_{-,m}(t,x)=\mu(t) \frac{1-\mu_1(t)\mu_2(t)}{(1+\mu_1(t))^2(1+\mu_2(t))^2} \psi'(x) \le 0 ; \label{prop:psi-m.1}
\end{align}
\end{itemize}
\end{claim}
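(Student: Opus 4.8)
The plan is to establish every identity and inequality in the claim by direct computation, using only the elementary properties \eqref{prop:psi.1}--\eqref{prop:psi.2} of the scalar profile $\psi$, the sign hypothesis on $\mu(t)=\mu_1(t)-\mu_2(t)$ made in each case, the smallness $|\mu_1|,|\mu_2|\le\nu_0$ from Assumption \ref{hyp:coeff}, and the constraint $\rho<\frac1{32}$, which yields $8\rho<\frac14$ and $(8\rho)^2<\frac1{16}$.

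\emph{Part (i).} First I would differentiate \eqref{defi:psi_+} in $x$: since $\partial_x(1-\psi)=-\psi'$, one gets $\partial_x\psi_+=(\mu_1-\mu_2)\psi'=\mu\,\psi'$, which is $\ge0$ because $\mu\ge0$ in this regime and $\psi'>0$ by \eqref{prop:psi.1}. Differentiating once and twice more gives $\partial_x^2\psi_+=\mu\,\psi''$ and $\partial_x^3\psi_+=\mu\,\psi'''$; multiplying the bounds $|\psi''|\le8\rho\,\psi'$ and $|\psi'''|\le(8\rho)^2\psi'$ from \eqref{prop:psi.2} by $\mu\ge0$ yields $|\partial_x^2\psi_+|\le8\rho\,\partial_x\psi_+\le\frac14\partial_x\psi_+$ and $|\partial_x^3\psi_+|\le(8\rho)^2\partial_x\psi_+\le\frac1{16}\partial_x\psi_+$, which is \eqref{prop:psi+}.

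\emph{Part (ii).} For $\psi_{-,e}$ I would differentiate \eqref{defi:psi_-e}, obtaining $\partial_x\psi_{-,e}=\psi'\big((1+\mu_1)^{-2}-(1+\mu_2)^{-2}\big)$, and then put the difference over a common denominator, which after factoring gives $(1+\mu_1)^{-2}-(1+\mu_2)^{-2}=-\mu(2+\mu_1+\mu_2)\big[(1+\mu_1)(1+\mu_2)\big]^{-2}$; this is \eqref{prop:psi-e.1}, and it is $\ge0$ since $\mu\le0$ here while $2+\mu_1+\mu_2>0$ by smallness. Identity \eqref{prop:psi-e.3} follows by writing $1=\psi+(1-\psi)$ in \eqref{defi:psi_-e} and using $1-(1+\mu_i)^{-2}=\mu_i(2+\mu_i)(1+\mu_i)^{-2}$. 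Since $\partial_x^2\psi_{-,e}$ and $\partial_x^3\psi_{-,e}$ are obtained from $\partial_x\psi_{-,e}$ by replacing $\psi'$ with $\psi''$, resp. $\psi'''$, and the constant prefactor is nonnegative, the bounds \eqref{prop:psi-e.2} follow at once from \eqref{prop:psi.2} and $\rho<\frac1{32}$, exactly as in Part (i). Finally, for $\psi_{-,m}$ I would differentiate \eqref{defi:psi_-m} to get $\partial_x\psi_{-,m}=\psi'\big(\mu_1(1+\mu_1)^{-2}-\mu_2(1+\mu_2)^{-2}\big)$, and expand the numerator of the combined fraction: $\mu_1(1+\mu_2)^2-\mu_2(1+\mu_1)^2=(\mu_1-\mu_2)(1-\mu_1\mu_2)$, so that $\partial_x\psi_{-,m}=\mu(1-\mu_1\mu_2)\big[(1+\mu_1)(1+\mu_2)\big]^{-2}\psi'\le0$, using $\mu\le0$ and $1-\mu_1\mu_2>0$ by smallness; this is \eqref{prop:psi-m.1}.

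There is no genuine obstacle here; the computation is routine. The only points requiring a little care are the two algebraic simplifications producing the factors $2+\mu_1+\mu_2$ and $1-\mu_1\mu_2$, and the observation that in $\partial_x\psi_+$ and $\partial_x\psi_{-,e}$ the prefactor multiplying $\psi',\psi'',\psi'''$ is the same and nonnegative (once the sign of $\mu$ is absorbed), so that the ratio bounds $|\psi''|/\psi'\le8\rho$ and $|\psi'''|/\psi'\le(8\rho)^2$ transfer verbatim. One should also record that the smallness of $\mu_1,\mu_2$ used for the positivity statements is exactly what \eqref{hyp:w1+w2} provides.
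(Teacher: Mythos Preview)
Your proof is correct and follows exactly the approach the paper indicates: the claim is stated there as ``deduced from \eqref{def:psi}--\eqref{prop:psi.2}'' with no further detail, and your direct computation from the definitions \eqref{defi:psi_+}--\eqref{defi:psi_-m} together with the bounds \eqref{prop:psi.2} and $\rho<\tfrac1{32}$ is precisely what is intended.
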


We define the localized mass-energy functionals
\begin{align}\label{defi:F_+}
    \mathcal{F}_+ & := \int \left( \frac{\vert \nabla \epsilon \vert^2}{2} + \frac{\epsilon ^2}{2} - \frac{1}{3}\left( (V+\epsilon)^3 - V^3 - 3V^2 \epsilon \right) \right) + \int \frac{\epsilon^2}{2} \psi_+ - \int S \epsilon \\
    \mathcal{F}_- & := \int \left( \frac{\vert \nabla \epsilon \vert^2}{2} + \frac{\epsilon ^2}{2} - \frac{1}{3}\left( (V+\epsilon)^3 - V^3 - 3V^2 \epsilon \right) \right) \psi_{-,e} + \int \frac{\epsilon^2}{2} \psi_{-,m} - \int S \epsilon, \label{defi:F_-}
\end{align}
where $S$ is given in the decomposition of $\E_V$ in \eqref{eq:error_R1_R2_VA}. 

\begin{rema} Following \cite{MM11}, the mass terms $\int \frac{\epsilon^2}{2} \psi_+$, respectively  $\int \frac{\epsilon^2}{2} \psi_{-,m}$ are introduced in $\mathcal{F}_+$, respectively $\mathcal{F}_-$, to cancel out some bad contributions involving the time derivative $\partial_t V$ of the approximate solution $V$. In the case where $\mu_2 \ge \mu_1$, $\partial_x \psi_{-,m}$ has a bad sign (see \eqref{prop:psi-m.1}), and therefore the energy needs also to be localised to compensate some bad contributions coming from the localised mass. 
\end{rema}

\begin{rema}
Compared to \cite{MM11}, the energy functional is now defined with the addition of the term $\int S \epsilon$, which will help to cancel out the main part of the contribution coming from the error term $\E_V$ when computing the time derivative of $\mathcal{F}_+$ and $\mathcal{F}_-$. A similar idea was used by Martel and Nguyen in \cite{MN20} to study strong interactions for a $1$-dimensional system of cubic Schr\"odinger equations (see the term $\mathbf{S}$ in Section $5$ of \cite{MN20}).
\end{rema}

Next, we state some useful estimates involving the approximate solution $V$ and the weight functions, and whose proof is given in Appendix \ref{Append:Est}.  

\begin{claim}  \label{claim:Vpsi}
For $i=1,2$, the following estimates hold:
\begin{align} 
  &\|  \vert \MRi \vert \psi'\|_{L^2} \lesssim e^{-\rho z} ; \label{est:MRipsi'} \\
  &   \| \MRone (\psi -1) \|_{L^2} +  \| \MRtwo \psi\|_{L^2} \lesssim e^{-\rho z} ;\label{est:MRipsi}
  \\
  & 
   \| V  \psi' \|_{H^1}+\| V  \psi' \|_{L^{\infty}}+\| \partial_xV  \psi'\|_{L^{\infty}} \lesssim e^{-\rho z}; \label{est:V dxpsi+} \\ & \| \partial_t V + \partial_x V \psi_+ \|_{H^1} \lesssim \sum_{i=1}^2\left( \vert\dot{z}_i-\mu_i \vert+ |\dot{\mu}_i|+|\dot{\omega}_i| \right)+\left(\sum_{i=1}^2|\mu_i|\right) e^{-\rho z} +e^{-\frac{15}{16}z} \sum_{i=1}^2 |\dot{z}_i| ;\label{est:Vt-ViPsi+} \\
   & \| \partial_t V \psi_{-,e} + \partial_x V \psi_{-,m} \|_{H^1} \lesssim \sum_{i=1}^2\left( \vert\dot{z}_i-\mu_i \vert+ |\dot{\mu}_i|+|\dot{\omega}_i| \right)+e^{-\rho z} \left(\sum_{i=1}^2|\mu_i|+|\dot{z}_i|\right).\label{est:Vtpsi-e-Vpsi-m}
\end{align}
\end{claim}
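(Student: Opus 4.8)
The plan is to prove each of the five estimates \eqref{est:MRipsi'}--\eqref{est:Vtpsi-e-Vpsi-m} by reducing everything to pointwise and exponential-decay bounds on $Q$, $\Lambda Q$, $\partial_xQ$, $\partial_yQ$ (which lie in $\mathcal{Y}$), on the interaction terms (via the estimates of Appendix~\ref{Append:Est}, in particular the analogues of \eqref{est.decay.QzQ}, \eqref{est:R_1R_2}), and on the properties of the weight $\psi$ collected in \eqref{prop:psi.1}--\eqref{prop:psi.2}. The key geometric input is Assumption~\ref{hyp:coeff}, which forces $z_1\geq \tfrac14 z$ and $z_2\leq -\tfrac14 z$, so that the center $z_i$ of each $R_i$ is at distance $\geq \tfrac14 z$ from the origin, where $\psi'$ (and $\psi-1$, resp.\ $\psi$) is exponentially small. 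More precisely, since $\psi'(x)=\tfrac{8\rho}{\pi\cosh(8\rho x)}\lesssim e^{-8\rho|x|}$ and $\psi(-x)=1-\psi(x)\lesssim e^{-8\rho x}$ for $x\geq 0$, multiplying by a function concentrated (exponentially) near $x=z_i$ produces a factor $e^{-8\rho|z_i|}\lesssim e^{-2\rho z}$, and after absorbing the loss from the polynomial weights in the definition of $\mathcal{Y}$ one gets $e^{-\rho z}$ with room to spare.

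First I would treat \eqref{est:MRipsi'} and \eqref{est:MRipsi}. Writing $\MRone(t,\bx)=\MQ_{1+\mu_1}(\bx-\bz_1)$ with each component bounded pointwise (together with one derivative, needed for the $L^2$ bound after noting $\psi'$ and its derivatives are comparable) by $C\langle\bx-\bz_1\rangle^{n}e^{-\frac12|\bx-\bz_1|}$ for some $n$, and using $|\psi'(x)|\lesssim e^{-8\rho|x|}$ with $\rho<\tfrac1{32}$, the product is bounded by $e^{-8\rho|z_1|}$ times an $L^2$-integrable profile; since $|z_1|\geq\tfrac14 z$ this gives $\lesssim e^{-2\rho z}\lesssim e^{-\rho z}$. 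Identically for $\MRtwo\psi'$ using $|z_2|\geq\tfrac14 z$. For \eqref{est:MRipsi} one uses instead $|\psi(x)-1|=\psi(-x)\lesssim e^{8\rho x}$ and that $\partial_xR_1$ etc.\ are supported (up to exponential tails) near $x=z_1\geq\tfrac14 z>0$, where $\psi-1$ is of size $e^{-8\rho z_1}\lesssim e^{-2\rho z}$; symmetrically for $\MRtwo\psi$ near $x=z_2<0$ where $\psi(x)\lesssim e^{8\rho z_2}\lesssim e^{-2\rho z}$. One must be a little careful with the $H^1$ (rather than $L^2$) norm in the later estimates: differentiating moves at most one derivative onto $\psi'$, and by \eqref{prop:psi.2} $|\psi''|,|\psi'''|\lesssim\psi'$, so the same bound survives.

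Next, \eqref{est:V dxpsi+}: recall $V=R_1+R_2+V_A$, with $\|V_A\|_{L^\infty}+\|\nabla V_A\|_{L^\infty}\lesssim z^{-1/2}e^{-z}$ by \eqref{est:VA:point} and $\|V_A\|_{H^2}\lesssim e^{-\frac{15}{16}z}$ by \eqref{est:VA:H2}; thus $\|V_A\psi'\|_{H^1}\lesssim\|V_A\|_{H^2}\lesssim e^{-\frac{15}{16}z}\lesssim e^{-\rho z}$ and $\|V_A\psi'\|_{L^\infty}+\|\partial_xV_A\,\psi'\|_{L^\infty}\lesssim z^{-1/2}e^{-z}\lesssim e^{-\rho z}$, while $\|R_i\psi'\|_{H^1}+\|R_i\psi'\|_{L^\infty}+\|\partial_xR_i\,\psi'\|_{L^\infty}\lesssim e^{-\rho z}$ by the argument above (with $Q\in\mathcal{Y}$). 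The last two estimates \eqref{est:Vt-ViPsi+} and \eqref{est:Vtpsi-e-Vpsi-m} are the main obstacle and require the identity structure of $\partial_tV$. The plan is to compute
\begin{align*}
\partial_tV=\sum_{i=1}^2\left(-\dot z_i\,\partial_xR_i-\dot\omega_i\,\partial_yR_i+\dot\mu_i\,\Lambda R_i\right)+\partial_tV_A,
\end{align*}
and $\partial_xV=\sum_i\partial_xR_i+\partial_xV_A$, so that, using $\psi_+=\mu_1\psi+\mu_2(1-\psi)$,
\begin{align*}
\partial_tV+\partial_xV\,\psi_+=\sum_{i=1}^2\left((-\dot z_i+\mu_i)\partial_xR_i-\dot\omega_i\partial_yR_i+\dot\mu_i\Lambda R_i\right)+\mu_1\partial_xR_1(\psi-1)+\mu_2\partial_xR_2\,\psi+\mathcal{R},
\end{align*}
where $\mathcal{R}$ collects cross terms $\mu_j\partial_xR_i(\cdots)$ with $i\neq j$ (each exponentially small by \eqref{est:R_1R_2}-type bounds), the term $\partial_tV_A$ (bounded by $e^{-\frac{15}{16}z}\sum|\dot z_i|$ via \eqref{est:dVAdt}), and $\partial_xV_A\,\psi_+$ (bounded by $(\sum|\mu_i|)\,z^{-1/2}e^{-z}$ via \eqref{est:VA:point}, \eqref{est:VA:H2}). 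The first sum is $\lesssim\sum_i(|\dot z_i-\mu_i|+|\dot\mu_i|+|\dot\omega_i|)$ since $\|\partial_xR_i\|_{H^1}+\|\partial_yR_i\|_{H^1}+\|\Lambda R_i\|_{H^1}\lesssim1$; the terms $\mu_1\partial_xR_1(\psi-1)$ and $\mu_2\partial_xR_2\psi$ are $\lesssim(\sum|\mu_i|)e^{-\rho z}$ by \eqref{est:MRipsi}; and the remaining pieces feed into the stated right-hand side. For \eqref{est:Vtpsi-e-Vpsi-m} one repeats this with $\psi_{-,e}=\tfrac{\psi}{(1+\mu_1)^2}+\tfrac{1-\psi}{(1+\mu_2)^2}$ and $\psi_{-,m}$: the combination is engineered so that, using $\Lambda R_i$ acts on the size-$(1+\mu_i)$ rescaling, the leading $\partial_xR_i$ contributions again collapse to $\sum_i|\dot z_i-\mu_i|+|\dot\mu_i|+|\dot\omega_i|$ after a Taylor expansion of $(1+\mu_i)^{-2}$, with every remainder either of size $\sum|\mu_i|\,e^{-\rho z}$ (cross/weight terms) or $|\dot z_i|e^{-\rho z}$ (the $\partial_tV_A$, $\partial_xV_A$ terms), which is exactly the claimed bound. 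The delicate bookkeeping is tracking which of the $\sum_i|\mu_i|$ versus $\sum_i|\dot z_i|$ factors multiplies each exponential, and ensuring no term is left with a bare $e^{-\rho z}$ (without a small prefactor) — this is resolved by noting that in $\partial_tV+\partial_xV\psi_\pm$ the non-small $\partial_xR_i$ terms all appear multiplied either by $\dot z_i-\mu_i$ (small by modulation), by $\dot\mu_i,\dot\omega_i$ (small), or by $\psi-1$/$\psi$ evaluated near $z_i$ (exponentially small), with the algebraic cancellation being precisely the reason the weights $\psi_\pm$ were chosen in \eqref{defi:psi_+}--\eqref{defi:psi_-m}.
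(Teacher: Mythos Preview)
Your approach is essentially the same as the paper's: Lemma~\ref{lemm:fpsi} packages the pointwise splitting you describe for \eqref{est:MRipsi'}--\eqref{est:V dxpsi+}, and for \eqref{est:Vt-ViPsi+}--\eqref{est:Vtpsi-e-Vpsi-m} the paper writes out exactly the algebraic decomposition you sketch.

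Two small corrections in your bookkeeping. First, in the decomposition for \eqref{est:Vt-ViPsi+}, the ``cross terms $\mu_j\partial_xR_i(\cdots)$ with $i\neq j$'' that you place in $\mathcal{R}$ are terms such as $\mu_2(1-\psi)\partial_xR_1$ and $\mu_1\psi\,\partial_xR_2$; these contain only one $R_i$, so they are \emph{not} small by \eqref{est:R_1R_2}-type product bounds but rather by the same $\psi$-localization argument you used for the diagonal pieces (i.e.\ \eqref{est:MRipsi}). The paper groups all four such terms as $(\mu_1-\mu_2)\big((\psi-1)\partial_xR_1\pm\psi\,\partial_xR_2\big)$. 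Second, for \eqref{est:Vtpsi-e-Vpsi-m} no Taylor expansion of $(1+\mu_i)^{-2}$ is needed: the paper simply writes
\[
-\dot z_i\,\psi_{-,e}+\psi_{-,m}=\frac{(-\dot z_i+\mu_1)\psi}{(1+\mu_1)^2}+\frac{(-\dot z_i+\mu_2)(1-\psi)}{(1+\mu_2)^2},
\]
so that each $\partial_xR_i$ appears either multiplied by $(-\dot z_i+\mu_i)$ or by $(1-\psi)$/$\psi$ evaluated on the wrong side; the factors $(1+\mu_j)^{-2}$ are just bounded by a constant. Neither of these points affects the final estimate, and otherwise your proof is correct.
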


Before giving some precise estimates for the time derivative of $\mathcal{F}_+$ and $\mathcal{F}_-$, we introduce a time-dependant quantity which will be useful to express these bounds:
\begin{align}\label{defi:theta}
     \Theta := \sum_{i=1}^2 \left\vert \int S \left( \mAi \cdot \MRi \right) \right\vert  + \| S \|_{L^2} \| T \|_{L^2} .
\end{align}
\begin{toexclude}
Before giving some precise estimates for the time derivative of $\mathcal{F}_+$ and $\mathcal{F}_-$, we introduce several time-dependant quantities which will be useful to express these bounds:
\begin{align*}
    & \Theta^0 := \sum_{i=1}^2 \left\vert \int S \left( \mAi \cdot \MRi \right) \right\vert  + \| S \|_{L^2} \| T \|_{L^2};\\
    & \Theta^1_+ :=  \sum_{i=1}^2 \left\vert \mu_i \left( \dot{\mu}_i +\gamma_i \right) \right\vert + e^{-\frac78 z} \sum_{i=1}^2 \vert \mAi \vert  +  \left( \sum_{i=1}^2|\mu_i|\right) \left( e^{-\rho z} \sum_{i=1}^2 \vert \mAi \vert +\left\| S \right\|_{L^2} \right) + \| T \|_{H^1}+ \| \partial_t S \|_{L^2} ;  \\
    & \Theta^2_+ :=  \sum_{i=1}^2 \left(\vert \mAi \vert +\vert \dot{\mu}_i \vert + \vert \dot{\omega}_i \vert + \vert \dot{z}_i-\mu_i \vert \right) +\left(\sum_{i=1}^2|\mu_i|\right) e^{-\rho z} +\|\partial_tV_A\|_{L^2}; 
    \\
    & \Theta^1_- :=  \sum_{i=1}^2 \left\vert \mu_i \left( \dot{\mu}_i +\gamma_i \right) \right\vert + e^{-\frac78 z} \sum_{i=1}^2 \vert \mAi \vert  +  \left( \sum_{i=1}^2|\mu_i|\right) \left( e^{-\rho z} \sum_{i=1}^2 \vert \mAi \vert +\left\| S \right\|_{H^2} \right) + \| T \|_{H^1}+ \| \partial_t S \|_{L^2} ;  \\
    & \Theta^2_- :=  \sum_{i=1}^2 \left(\vert \mAi \vert +\vert \dot{\mu}_i \vert + \vert \dot{\omega}_i \vert +\vert \dot{z}_i-\mu_i \vert \right) +\left(\sum_{i=1}^2|\mu_i|+|\dot{z}_i|\right) e^{-\rho z} +\|\partial_tV_A\|_{L^2} .
\end{align*}
Then 
\begin{align}
   &  \| \epsilon (t) \|_{H^1}^2 \lesssim \mathcal{F}_-(t) +  \| S(t) \|_{H^1}^2, \label{coercivity_F_-}, \\
    & \frac{d}{dt} \mathcal{F}_-(t) \lesssim \Theta^0(t) + \Theta^1_-(t)\| \epsilon(t) \|_{H^1} + \Theta^2_-(t) \| \epsilon(t) \|_{H^1}^2. \label{eq:estimate_F_-}
\end{align}
\end{toexclude}

\begin{propo}\label{propo:functionals}
The following estimates hold true. 
\begin{itemize}
    \item[(i)] \emph{Coercivity}. For all $t \in I$, 
    \begin{equation}
     \| \epsilon  \|_{H^1}^2 \lesssim \mathcal{F}_+ +  \| S \|_{H^1}^2 \quad  \text{and} \quad \| \epsilon  \|_{H^1}^2 \lesssim \mathcal{F}_- +  \| S \|_{H^1}^2 
     \label{coercivity_F}
     \end{equation}
    \item[(ii)] If $t \in I$ is such that $\mu_1(t) \geq \mu_2(t)$, then
\begin{equation} \label{eq:estimate_F_+}
\begin{split}
    \frac{d}{dt} \mathcal{F}_+ &\lesssim \Theta + \|\epsilon\|_{H^1}\left( \left( \sum_{i=1}^2 \vert \mu_i \vert \right) \|S\|_{H^1}+e^{-\frac{15}{16} z} \|\partial_xS \|_{L^2}+ \|T\|_{H^1}+\|\partial_tS\|_{L^2} \right) \\ 
    & \quad + \|\epsilon\|_{H^1}^2 \left(  \left( \sum_{i=1}^2 \vert \mu_i \vert \right) e^{-\rho z}+ e^{-\frac{15}{16} z}+\|\partial_xS\|_{L^2} +\|\epsilon\|_{H^1}\right) .
    \end{split}
\end{equation}
    \item[(iii)] If $t \in I$ is such that $\mu_2(t) \geq \mu_1(t)$, then
\begin{equation} \label{eq:estimate_F_-}
\begin{split}
    \frac{d}{dt} \mathcal{F}_- &\lesssim \Theta + \|\epsilon\|_{H^1}\left( \left( \sum_{i=1}^2 \vert \mu_i \vert \right)\|S\|_{H^2}+e^{-\frac{15}{16} z}\|\partial_xS\|_{L^2} +\|T\|_{H^1}+\|\partial_tS\|_{L^2} \right) \\ 
    & \quad + \|\epsilon\|_{H^1}^2 \left(  \left( \sum_{i=1}^2 \vert \mu_i \vert \right) e^{-\rho z}+ e^{-\frac{15}{16} z}+ \|\partial_xS\|_{L^2}+ \|\epsilon\|_{H^1}\right) .
    \end{split}
\end{equation}
\end{itemize}
\end{propo}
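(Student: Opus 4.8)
The plan is to establish (i) by reducing the functionals to a coercive quadratic form plus harmless remainders, and to establish (ii) and (iii) by differentiating in time, substituting the equations for $\partial_t\epsilon$ and $\E_V$, and carefully tracking two structural cancellations and the signs of the virial-type terms. The proof of (iii) is a variant of (ii), so I describe (ii) in detail.

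\textbf{Coercivity.} Expanding the cubic term via $(V+\epsilon)^3-V^3-3V^2\epsilon=3V\epsilon^2+\epsilon^3$, one gets, with $L_V:=-\Delta+1-2V$,
\[
\mathcal{F}_+ = \tfrac12\langle L_V\epsilon,\epsilon\rangle - \tfrac13\int\epsilon^3 + \tfrac12\int\psi_+\epsilon^2 - \int S\epsilon ,
\]
and similarly for $\mathcal{F}_-$ with a $\psi_{-,e}$-weighted quadratic form. The heart of the matter is $\langle L_V\epsilon,\epsilon\rangle\gtrsim\|\epsilon\|_{H^1}^2$ under the orthogonality conditions \eqref{eps:ortho}. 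I would prove this by a localization argument: pick a smooth partition $1=\varphi_1+\varphi_2$ adapted to the two waves (with $\varphi_1$ supported in $\{x>z_2+\tfrac14 z\}$), decompose $\langle L_V\epsilon,\epsilon\rangle$ as $\sum_i\langle L_{R_i}(\varphi_i^{1/2}\epsilon),\varphi_i^{1/2}\epsilon\rangle$ up to errors exponentially small in $z$ — produced by $V-(R_1+R_2)$ through \eqref{est:V-Ri}, by $R_j$ with $j\neq i$ on the support of $\varphi_i$, and by the commutators $[\varphi_i^{1/2},\nabla]$ — plus $O(\nu_0)$ terms. Since $\varphi_i^{1/2}\epsilon$ is orthogonal to $\partial_xR_i,\partial_yR_i,R_i$ up to exponentially small quantities, Proposition \ref{theo:L}(iv) (after rescaling $R_i$ to $Q$), combined with the elementary bound $\langle Lf,f\rangle\ge\|\nabla f\|_{L^2}^2-C\|f\|_{L^2}^2$, gives $\langle L_V\epsilon,\epsilon\rangle\gtrsim\|\epsilon\|_{H^1}^2$ once $Z_0^\star$ is large and $\nu_0^\star$ small. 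Then $|\int\psi_+\epsilon^2|\lesssim\nu_0\|\epsilon\|_{L^2}^2$, $|\int\epsilon^3|\lesssim\|\epsilon\|_{H^1}^3\le\sigma^\star\|\epsilon\|_{H^1}^2$ by Sobolev in $\mathbb R^2$, and $|\int S\epsilon|\le\delta\|\epsilon\|_{L^2}^2+C_\delta\|S\|_{L^2}^2$; taking $\delta,\nu_0^\star,\sigma^\star$ small and absorbing yields \eqref{coercivity_F}. For $\mathcal{F}_-$ one argues the same way, using that $\psi_{-,e}$ is bounded above and below by positive constants, that $\psi_{-,m}$ is $O(\nu_0)$, and that $\psi'$ is concentrated in a region of width $O(\rho^{-1})$ lying away from both waves, so the contributions of $\psi_{-,e}'$ are exponentially small.

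\textbf{Time derivative of $\mathcal{F}_+$.} Assume $\mu_1(t)\ge\mu_2(t)$, so $\mu:=\mu_1-\mu_2\ge0$, $\partial_x\psi_+=\mu\psi'\ge0$, $|\psi_+''|\le\tfrac14\partial_x\psi_+$, $|\psi_+'''|\le\tfrac1{16}\partial_x\psi_+$ by \eqref{prop:psi+}. Let $w_\epsilon:=-\Delta\epsilon+\epsilon-(V+\epsilon)^2+V^2=-\Delta\epsilon+\epsilon-2V\epsilon-\epsilon^2$. Using $\partial_t\epsilon=\partial_xw_\epsilon-\E_V$ from \eqref{eq:epsilon} and $\partial_V\big((V+\epsilon)^3-V^3-3V^2\epsilon\big)=3\epsilon^2$, a direct differentiation gives
\begin{align*}
\frac{d}{dt}\mathcal{F}_+ &= -\int w_\epsilon\,\E_V - \int\epsilon^2\partial_tV - \int\big(\psi_+\partial_x\epsilon+\psi_+'\epsilon\big)w_\epsilon - \int\psi_+\epsilon\,\E_V \\
&\quad + \tfrac12\int\epsilon^2\partial_t\psi_+ - \int\partial_tS\,\epsilon + \int\partial_xS\,w_\epsilon + \int S\,\E_V .
\end{align*}
Substituting $\E_V=\sum_i\mAi\cdot\MRi+T+\partial_xS$ from \eqref{eq:error_R1_R2_VA}, three points drive the estimate. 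First, $-\int w_\epsilon\,\partial_xS+\int\partial_xS\,w_\epsilon=0$: this is precisely why $-\int S\epsilon$ was put into $\mathcal{F}_+$. What remains, $-\int w_\epsilon(\sum_i\mAi\cdot\MRi+T)$, is handled after integration by parts: $\int(L_V\epsilon)\MRi=O\big((\sum_i|\mu_i|+e^{-\frac{15}{16}z})\|\epsilon\|_{L^2}\big)$ by \eqref{eq:L_MRti}, the orthogonality \eqref{eps:ortho} (which kills the $\partial_xR_i$ component and, through $\langle\epsilon,R_i\rangle=0$, also the $\Lambda R_i$ component), and \eqref{est:V-Ri}; and $-\int w_\epsilon T=O(\|\epsilon\|_{H^1}\|T\|_{H^1}+\|\epsilon\|_{H^1}^2\|T\|_{L^2})$. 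Second, $\int S\,\E_V=\sum_i\int S(\mAi\cdot\MRi)+\int ST$, both part of $\Theta$. Third — the key cancellation — the bad term $-\int\epsilon^2\partial_tV$ combines with the term $-\int\partial_xV\,\psi_+\,\epsilon^2$ extracted by integrating by parts the $-2V\epsilon$ piece inside $-\int\psi_+\partial_x\epsilon\,w_\epsilon$, yielding $-\int\epsilon^2(\partial_tV+\partial_xV\psi_+)$; by Claim \ref{claim:Vpsi}, estimate \eqref{est:Vt-ViPsi+}, this is $O\big(\|\epsilon\|_{H^1}^2\big(\sum_i(|\dot z_i-\mu_i|+|\dot\mu_i|+|\dot\omega_i|)+(\sum_i|\mu_i|)e^{-\rho z}+e^{-\frac{15}{16}z}\sum_i|\dot z_i|\big)\big)$, and each parameter quantity is controlled by $e^{-\rho z}+\|\partial_xS\|_{L^2}+\|T\|_{L^2}+\|\epsilon\|_{H^1}$ via Proposition \ref{propo:dynamical_system}. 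Integrating by parts the $-\Delta\epsilon$, $\epsilon$, and $-\epsilon^2$ parts of $w_\epsilon$ against $\psi_+\partial_x\epsilon+\psi_+'\epsilon$ produces quadratic terms carrying $\psi'$, $\psi''$ or $\psi'''$: those without a $V$ factor assemble, using $\mu\ge0$, $\psi'\ge0$ and $\rho<\tfrac1{32}$, into a nonpositive combination $-c\mu\int\psi'(|\nabla\epsilon|^2+\epsilon^2)+\cdots\le0$ and are discarded, while those with a $V$ factor are $O(e^{-cz}\|\epsilon\|_{H^1}^2)$ since $V\psi'$ is exponentially small. The leftover terms $-\int\psi_+\epsilon\,\E_V$, $\tfrac12\int\epsilon^2\partial_t\psi_+$, $-\int\partial_tS\,\epsilon$ and the cubic remainders are bounded crudely by Cauchy–Schwarz and $H^1(\mathbb R^2)\hookrightarrow L^4(\mathbb R^2)$, using $|\psi_+|\lesssim\sum_i|\mu_i|$, $|\partial_t\psi_+|\lesssim\sum_i|\dot\mu_i|$, $\sum_i|\mAi|\lesssim\|\epsilon\|_{L^2}+\|\partial_xS\|_{L^2}+\|T\|_{L^2}$ from Proposition \ref{propo:dynamical_system}, and \eqref{est:dot_mui} to reinject $\sum_i|\dot\mu_i|$. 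Collecting everything gives \eqref{eq:estimate_F_+}.

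\textbf{Time derivative of $\mathcal{F}_-$ and main obstacle.} For (iii), with $\mu_2(t)\ge\mu_1(t)$, the argument is identical with the unweighted energy-mass density replaced by its $\psi_{-,e}$-localization and $\psi_+$ by $\psi_{-,m}$; the key cancellation is now between the $\partial_tV$-contribution and $\partial_xV\psi_{-,m}$ through \eqref{est:Vtpsi-e-Vpsi-m}. The genuinely new point, which I expect to be the main obstacle, is the sign analysis of the virial-type terms: the mass localization generates a term of the \emph{unfavourable} sign $\partial_x\psi_{-,m}\le0$ (see \eqref{prop:psi-m.1}), and it must be \emph{dominated} — not merely estimated, since an error of size $\nu_0\rho$ would be fatal to a Gronwall argument over the long collision interval $[-T_1,T_1]$ — by the \emph{favourable} term $-c|\mu|\int\psi'(|\nabla\epsilon|^2+\cdots)$ coming from the $\psi_{-,e}$-localization of the energy, which has the sign $\partial_x\psi_{-,e}\ge0$ (see \eqref{prop:psi-e.1}); that this domination holds relies on the precise scaling-adapted weights $(1+\mu_i)^{-2}$ in \eqref{defi:psi_-e}–\eqref{defi:psi_-m}, exactly as in \cite{MM11}. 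One technical cost of localizing the energy density is that the $\partial_xS$-term in $\E_V$ no longer cancels completely against the weighted variational derivative, leaving a residual controlled by $\|S\|_{H^2}$; this is why \eqref{eq:estimate_F_-} carries $\|S\|_{H^2}$ where \eqref{eq:estimate_F_+} carries only $\|S\|_{H^1}$.
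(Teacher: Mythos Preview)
Your approach is the paper's: same two structural cancellations (the $\partial_xS$ term against the $-\int S\epsilon$ correction, and the recombination $\partial_tV+\partial_xV\psi_+$), same virial sign analysis, and the same diagnosis of the $\mathcal F_-$ obstacle (the bad-sign mass virial $\partial_x\psi_{-,m}\le0$ dominated by the good-sign energy virial $\partial_x\psi_{-,e}\ge0$, at the cost of an $\|S\|_{H^2}$). The coercivity sketch is also fine.

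There is one genuine gap: your treatment of $-\int\psi_+\epsilon\,\E_V$. Bounding this ``crudely by Cauchy--Schwarz using $|\psi_+|\lesssim\sum_i|\mu_i|$ and $\sum_i|\mAi|\lesssim\|\epsilon\|+\|\partial_xS\|+\|T\|$'' produces, from the $\mAi\cdot\MRi$ part, a contribution $\big(\sum_i|\mu_i|\big)\|\epsilon\|_{H^1}^2$ which is \emph{not} dominated by the right-hand side of \eqref{eq:estimate_F_+}. This is not cosmetic: in the bootstrap of Section~\ref{sec:collision} one has $\sum_i|\mu_i|\sim\mu_0\gg\|\epsilon\|_{H^1},\|\partial_xS\|_{L^2},e^{-\frac{15}{16}z}$, and after integrating over an interval of length $\sim Z_0\mu_0^{-1}$ the extra term contributes $Z_0\mu_0^{7/2}$ to $\mathcal F_\pm$, destroying the closure \eqref{eq:eps_-T_1_T_star}. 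The paper (its $f_{2,4}^+$) handles this by splitting $\psi_+=\mu_1+\mu(\psi-1)$ near $R_1$ (and $\psi_+=\mu_2+\mu\psi$ near $R_2$): on the constant piece $\mu_1$ the orthogonality \eqref{eps:ortho} kills the translation directions and leaves only $\mu_1(\dot\mu_1+\gamma_1)\int\Lambda R_1\epsilon$, while on the $\mu(\psi-1)$ piece one uses the localization $\|\MRone(\psi-1)\|_{L^2}\lesssim e^{-\rho z}$ from \eqref{est:MRipsi}. The resulting bound $\sum_i|\mu_i(\dot\mu_i+\gamma_i)|\,\|\epsilon\|+|\mu|e^{-\rho z}\sum_i|\mAi|\,\|\epsilon\|+(\sum_i|\mu_i|)(\|S\|_{H^1}+\|T\|)\|\epsilon\|$ does fit into \eqref{eq:estimate_F_+} once \eqref{eq:ortho_velocity} and \eqref{est:mod.1} are reinjected. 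A related minor imprecision in your $f_1^+$ step: orthogonality does not ``kill the $\Lambda R_i$ component'' of $\int(L_V\epsilon)\MRi$; only the $R_i$ piece of $(-\Delta+1-2R_i)\Lambda R_i=-\mu_i\Lambda R_i-R_i$ is killed, and the residual $-\mu_i\int\epsilon\Lambda R_i$ is precisely the $\mu_i(\dot\mu_i+\gamma_i)$ contribution the paper tracks there as well.
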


\begin{proof} 
\smallskip
\noindent \emph{Proof of estimate \eqref{coercivity_F}.}  First, for $\mathcal{F}_-$, we use a localization argument (See Appendix A of \cite{MM02} and Appendix A of \cite{Val21}) to obtain almost orthogonality relations. Next, around each $R_i$, the operator in the first integral of $\mathcal{F}_+$ and $\mathcal{F}_-$ essentially behaves like the linearized operator $L_{\tilde{R}_i}$. From the coercivity property of $L$ under suitable orthogonality relations (see Proposition \ref{theo:L} (iv)), we obtain the existence of a constant $\lambda>0$ such that, for any $\epsilon$ satisfying \eqref{eps:ortho}, it holds 
\begin{align*}
    \lambda \| \epsilon \|_{H^1}^2 \leq  \int \frac{ \vert \nabla \epsilon \vert^2 }{2}+ \frac{\epsilon^2}{2} - R_i \epsilon^2.
\end{align*}
Notice finally that $\vert \psi_+ \vert + \vert \psi_{-,e} \vert \lesssim \vert \mu_1 \vert + \vert \mu_2 \vert$ by \eqref{defi:psi_+}. Decreasing $\lambda$ if necessary, estimates \eqref{coercivity_F} follow by choosing $\sigma^\star \gtrsim \| \epsilon \|_{H^1}$ and $\nu_1^\star > \vert \mu_1 \vert + \vert \mu_2 \vert$ small enough and using Young's inequality.

\smallskip
\noindent \emph{Proof of estimate \eqref{eq:estimate_F_+}}
We decompose the time derivative of $\mathcal{F}_+$ as
\begin{align*}
    \frac{d}{dt} \mathcal{F}_+
        & = f_1^+ +f_2^+ + f_3^+,
\end{align*}
where
\begin{align*}
    f_1^+ &:= \int \partial_t \epsilon \left( - \Delta \epsilon + \epsilon -(V+\epsilon)^2 + V^2 \right) - \int S \partial_t \epsilon , \\ 
    f_2^+ &:= -\int \partial_t V \epsilon^2 + \int (\partial_t \epsilon) \epsilon \psi_+, \\
    f_3^+ &:= \int \frac{\epsilon^2}{2} \partial_t \psi_+ -\int \partial_t S \epsilon .
\end{align*}

\emph{Estimate for $f_1^+$.} We claim that 
\begin{equation} \label{est:f1^+}
\left\vert f_{1}^+ \right\vert \lesssim \Theta+\|\epsilon\|_{H^1} \left( \sum_{i=1}^2 \left\vert \mu_i (\Dot{\mu}_i +\gamma_i )\right\vert+e^{-\frac{15}{16} z}\sum_{i=1}^2 \vert \mAi \vert +\|T\|_{H^1}\right)+ \|\epsilon\|_{H^1}^2 \sum_{i=1}^2 \vert \mAi \vert .
\end{equation}

By the equation satisfied by $\epsilon$ in \eqref{eq:epsilon}, we have
\begin{align*}
   f_1^+ 
        & = -  \sum_{i=1}^2 \int \left( -\Delta +1 -2R_i \right) \left( \mAi \cdot \MRi \right) \epsilon + \sum_{i=1}^2 \int \mAi \cdot \MRi \left( 2(V -R_i) \epsilon  + \epsilon^2\right) \\
        & \quad - \int  T \left( -\Delta \epsilon + \epsilon -(V+\epsilon)^2 + V^2 \right) + \int S \left( \sum_{i=1}^2 \mAi \cdot \MRi + T \right)\\ 
        &=:f^+_{1,1}+f^+_{1,2}+f^+_{1,3}+f^+_{1,4}.
\end{align*}
It follows from the second identity in \eqref{eq:L_MRti} and the orthogonality relations \eqref{eps:ortho} that
\begin{align*}
    |f_{1,1}^+| \lesssim \left\vert \sum_{i=1}^2 \int \left( -\Delta + 1 -2 R_i \right) \left( \mAi \cdot \MRi \right) \epsilon \right\vert \lesssim \|\epsilon\|_{L^2}\sum_{i=1}^2 \left\vert \mu_i (\Dot{\mu}_i +\gamma_i )\right\vert.
\end{align*}
Next, we use \eqref{est:V-Ri} to deduce that
\begin{align*}
    \left\vert f_{1,2}^+ \right\vert \lesssim \sum_{i=1}^2 \vert \mAi \vert \left( e^{-\frac{15}{16} z}\| \epsilon \|_{L^2}+ \| \epsilon \|_{L^2}^2 \right) .
\end{align*}
By using the H\"older inequality and the Sobolev embedding $H^1(\mathbb R^2) \hookrightarrow L^3(\mathbb R^2)$ and recalling that $\| V \|_{L^\infty}\lesssim 1$, we find 
\begin{align*}
    \left\vert f_{1,3}^+ \right\vert \lesssim \| T \|_{H^1} \| \epsilon \|_{H^1} + \| T \|_{L^3} \| \epsilon \|_{L^3}^2 \lesssim \| T \|_{H^1} \left(\| \epsilon \|_{H^1}+\|\epsilon\|_{H^1}^2\right) \lesssim  \| T \|_{H^1} \| \epsilon \|_{H^1}.
\end{align*}
Finally, the Cauchy-Schwarz inequality yields
\begin{equation*}
 \left\vert f_{1,4}^+ \right\vert \lesssim \sum_{i=1}^2 \left\vert \int S \left( \mAi \cdot \MRi \right) \right\vert  + \| S \|_{L^2} \| T \|_{L^2} .
 \end{equation*}
 The proof of \eqref{est:f1^+} follows gathering these estimates and the definition of $\Theta$ in \eqref{defi:theta}.

\smallskip
\noindent \emph{Estimate for $f_2^+$}. We claim that 
\begin{equation} \label{est:f2^+}
\begin{split}
f_{2}^+ & +\frac14 \int \left( ( \partial_x \epsilon )^2 + ( \partial_y \epsilon )^2 +\epsilon^2 \right) \partial_x \psi_+ \\
    & \lesssim  \| \epsilon \|_{L^2} \left(\vert \mu \vert \left( \vert \mAone \vert \| \MRone (\psi-1) \|_{L^2} + \vert \mAtwo \vert \| \MRtwo \psi \|_{L^2} \right) + \sum_{i=1}^2 \left\vert \mu_i \left( \dot{\mu}_i +\gamma_i \right) \right\vert \right) \\
    & \quad + \| \epsilon \|_{L^2}  \left( \| S \|_{L^2} + \| T \|_{L^2} \right) \left( \sum_{i=1}^2|\mu_i| \right) + \| \epsilon \|_{H^1}^2 \left(\| \partial_t V + \partial_x V \psi_+ \|_{H^1}+ \| V \partial_x \psi_+ \|_{H^1} \right).
\end{split}
\end{equation}

Indeed, by using the equation for $\epsilon$ and integrating by parts, we rearrange the term $f_2^+$ as
\begin{align*}
    f_2^+
       % & = -\int \partial_t V \epsilon^2 - \int \partial_x \left( \Delta \epsilon - \epsilon + (V+\epsilon)^2 - V^2 \right) \epsilon \psi_0 - \int \E_V \epsilon \psi_0 \nonumber \\
        & = -\int \left( \partial_t V +\partial_x V \psi_+ \right) \epsilon^2 + \int \epsilon^2 V \partial_x \psi_+- \int \partial_x \left( \Delta \epsilon - \epsilon + \epsilon^2 \right) \epsilon \psi_+  -\int \E_V \epsilon \psi_+ \\ &=:f_{2,1}^++f_{2,2}^++f_{2,3}^++f_{2,4}^+ . 
\end{align*}
First, observe from H\"older's inequality and the Sobolev embedding that 
\begin{align*}
    \left\vert f_{2,1}^+ \right\vert + \left\vert f_{2,2}^+ \right\vert \lesssim \left( \| \partial_t V + \partial_x V \psi_+ \|_{H^1} + \| V \partial_x \psi_+ \|_{H^1} \right) \| \epsilon \|_{H^1}^2.
\end{align*}
Next, after integration by parts, we rewrite
\begin{align*}
   f_{2,3}^+ = - \frac{1}{2}\int \left( 3( \partial_x \epsilon )^2 + ( \partial_y \epsilon )^2 + \epsilon^2 \right) \partial_x \psi_+ + \frac{2}{3} \int \epsilon^3 \partial_x\psi_+ + \frac12 \int \epsilon^2 \partial_x^3\psi_+ . 
\end{align*}
By using H\"older's inequality, the Sobolev embedding $H^1(\mathbb R^2) \hookrightarrow L^3(\mathbb R^2)$ and \eqref{prop:psi+}, we observe that
\begin{equation*}
\int \epsilon^3 \partial_x\psi_+ \lesssim \|\epsilon \|_{L^3}\|\epsilon \sqrt{\partial_x\psi_+}\|_{L^3}^2 \lesssim \|\epsilon \|_{H^1} \int \left( \epsilon^2+ (\partial_x \epsilon )^2+( \partial_y \epsilon )^2  \right)\partial_x\psi_+ .
\end{equation*}
Then, we deduce by using \eqref{prop:psi+} and choosing $\|\epsilon\|_{H^1} \lesssim \sigma^\star$ small enough that 
\begin{equation*} 
f_{2,3}^++\frac14 \int \left( ( \partial_x \epsilon )^2 + ( \partial_y \epsilon )^2 + \epsilon^2 \right) \partial_x \psi_+ \leq 0 .
\end{equation*}
Finally, we deal with the last term of $f_2^+$. From the decomposition of the error $\E_V$ in \eqref{eq:error_R1_R2_VA} and the definition of $\psi_+$ in \eqref{defi:psi_+}, we have
\begin{align*}
\left\vert f_{2,4}^+ \right\vert \leq \sum_{i=1}^2 \left\vert \int \mAi \cdot \MRi \epsilon \psi_+ \right\vert + \left( \| S \|_{H^1} + \| T \|_{L^2} \right) \| \epsilon \|_{L^2} \sum_{i=1}^2 |\mu_i|.
\end{align*}
Using the orthogonality conditions \eqref{eps:ortho} and the second identity in \eqref{eq:L_MRti}, we have
\begin{align*}
    \left\vert \int \mAone \cdot \MRone \epsilon \psi_+ \right\vert
    & \leq \left\vert \int \mAone \cdot \MRone \epsilon \mu (\psi -1) \right\vert + \vert \mu_1(\dot{\mu}_1 + \gamma_1 )\vert \left\vert \int \Lambda R_1 \epsilon \right\vert \\
    & \leq \vert \mu \vert \vert \mAone \vert \| \epsilon \|_{L^2} \| \MRone (\psi-1) \|_{L^2} + \left\vert \mu_1 \right\vert \left\vert \left( \dot{\mu}_1 + \gamma_1 \right) \right\vert \| \epsilon \|_{L^2}
\end{align*}
and thus, we conclude arguing similarly for the term $\left\vert \int \mAtwo \cdot \MRtwo \epsilon \psi_+ \right\vert$ that
\begin{align*}
    \left\vert f_{2,4}^+ \right\vert & \leq \| \epsilon \|_{L^2} \left( \vert \mu \vert \left( \vert \mAone \vert \| \MRone (\psi-1) \|_{L^2} + \vert \mAtwo \vert \| \MRtwo \psi \|_{L^2} \right) + \sum_{i=1}^2 \left\vert \mu_i \left( \dot{\mu}_i +\gamma_i \right) \right\vert \right) \\
        & \quad + \| \epsilon \|_{L^2} \left( \| S \|_{L^2} + \| T \|_{L^2} \right) \sum_{i=1}^2 |\mu_i|.
\end{align*}

Hence, the proof of \eqref{est:f2^+} follows by gathering the above estimates. 

\smallskip
\noindent \emph{Estimate for $f_3^+$}. From the definition of $\psi_+$ and the Cauchy-Schwarz inequality, we infer that
\begin{align} \label{est:f3+}
\left\vert f_3^+ \right\vert \lesssim \| \epsilon \|_{L^2}^2 \sum_{i=1}^2\vert \dot{\mu}_i\vert  +\| \epsilon \|_{L^2} \| \partial_t S \|_{L^2} .
\end{align}

\smallskip
Therefore, we conclude the proof of estimate \eqref{eq:estimate_F_+} by combining \eqref{est:f1^+}, \eqref{est:f2^+}, \eqref{est:f3+} with \eqref{eq:ortho_translation}-\eqref{eq:ortho_velocity}, \eqref{est:dot_mui} and \eqref{est:MRipsi}-\eqref{est:V dxpsi+}-\eqref{est:Vt-ViPsi+}.

\medskip
\noindent \emph{Proof of estimate \eqref{eq:estimate_F_-}}. After integrating by parts, we rearrange the time derivative of $\mathcal{F}_-$ as
\begin{align*}
    \frac{d}{dt} \mathcal{F}_-
        & = f_1^-+f_{2}^- +f_3^- + f_4^- ,
\end{align*}
where
\begin{align*}
f_1^- &:= \int \partial_t \epsilon \left( - \Delta \epsilon + \epsilon -(V+\epsilon)^2 + V^2 \right) - \int S \partial_t \epsilon ; \\
    f_{2}^- & := \int \partial_t \epsilon \left( - \Delta \epsilon + \epsilon -(V+\epsilon)^2 + V^2 \right) (\psi_{-,e}-1)- \int \partial_t \epsilon \partial_x \epsilon  \partial_x \psi_{-,e}  ;\\  
    f_3^- &:= -\int \partial_t V \epsilon^2 \psi_{-,e} + \int (\partial_t \epsilon) \epsilon \psi_{-,m}; \\
    f_4^- & := \int \frac{\epsilon^2}{2} \partial_t \psi_{-,m} -\int \partial_t S \epsilon + \int \left( \frac{\vert \nabla \epsilon \vert^2}{2} + \frac{\epsilon^2}{2} - \frac{1}{3} \left( 3V \epsilon^2 + \epsilon^3\right) \right) \partial_t \psi_{-,e}.
\end{align*}

\smallskip
\noindent \emph{Estimate for $f_1^-$}. Observe that $f_1^-=f_1^+$, so that we deduce from \eqref{est:f1^+} that 
\begin{equation} \label{est:f1^-}
\left\vert f_{1}^- \right\vert \lesssim \Theta+\|\epsilon\|_{H^1} \left( \sum_{i=1}^2 \left\vert \mu_i (\Dot{\mu}_i +\gamma_i )\right\vert+e^{-\frac{15}{16} z}\sum_{i=1}^2 \vert \mAi \vert +\|T\|_{H^1}\right)+\|\epsilon\|_{H^1}^2 \sum_{i=1}^2 \vert \mAi \vert .
\end{equation}

\smallskip
\noindent \emph{Estimate for $f_2^-$}. We claim that 
\begin{equation} \label{est:f2^-}
\begin{split}
 f_{2}^-  &+\frac{3}{8} \int \left( |\partial_x \nabla \epsilon |^2+ 3(\partial_x\epsilon)^2+(\partial_y \epsilon)^2+\epsilon^2\right) \partial_x \psi_{-,e} \\ 
 & \lesssim \| \epsilon \|_{H^1}\left(\sum_{i=1}^2|\mu_i|\right) \left( \sum_{i=1}^2\left( \left\vert \mu_i \left( \dot{\mu}_i +\gamma_i \right) \right\vert+ \vert \mAi \vert e^{- \rho z} \right)+ \|\partial_xS\|_{H^1} + \| T \|_{H^1}   \right) \\ 
 & \quad +\| \epsilon \|_{H^1}^2\left(\sum_{i=1}^2|\mu_i| \right)\left(\sum_{i=1}^2 \vert \mAi \vert +e^{-\rho z}\right) .
 \end{split}
\end{equation}

Indeed, by using the equation \eqref{eq:epsilon} satisfied by $\epsilon$ and performing an integration by parts, we rewrite $f_2^-$ as 
\begin{align*}
f_2^- & = -\frac{1}{2} \int \left( -\Delta \epsilon + \epsilon -(V+ \epsilon)^2+V^2 \right)^2 \partial_x \psi_{-,e} - \int (\partial_x\epsilon)\partial_x\left( -\Delta \epsilon + \epsilon -(V+ \epsilon)^2+V^2 \right) \partial_x \psi_{-,e}\\ & \quad  - \int \left( -\Delta \epsilon + \epsilon -(V+\epsilon)^2 +V^2\right) \E_V (\psi_{-,e}-1)+\int (\partial_x\epsilon) \E_V \partial_x \psi_{-,e} \\ 
&=:f_{2,1}^-+f_{2,2}^-+f_{2,3}^-+f_{2,4}^- .
\end{align*}

First, observe from Young's inequality that 
\begin{equation*}
f_{2,1}^- \le -\frac{7}{16} \int \left( -\Delta \epsilon + \epsilon\right)^2 \partial_x \psi_{-,e}+ C\int \left(V^2\epsilon^2+\epsilon^4 \right) \partial_x\psi_{-,e}. 
\end{equation*}
On the one hand, more integrations by parts yield
\begin{align*}
-\frac{7}{16} \int \left( -\Delta \epsilon + \epsilon\right)^2 \partial_x \psi_{-,e} & =-\frac{7}{16} \int \left( (-\Delta \epsilon)^2 + 2|\nabla \epsilon|^2+\epsilon^2\right) \partial_x \psi_{-,e}+\frac7{16} \int \epsilon^2 \partial_x^3\psi_{-,e}  .
\end{align*}
On the other hand, we deduce from H\"older's inequality, the Sobolev embedding and \eqref{prop:psi-e.2} that 
\begin{align*} 
\int V^2 \epsilon^2\partial_x\psi_{-,e} &\lesssim \|V\partial_x\psi_{-,e}\|_{H^1}\|\epsilon\|_{H^1}^2, \\ 
\int \epsilon^4\partial_x\psi_{-,e} &\lesssim \|\epsilon\|_{H^1}^2\|\epsilon \sqrt{\partial_x \psi_{-,e}}\|_{H^1}^2\lesssim \|\epsilon\|_{H^1}^2\int \left(|\nabla \epsilon|^2+\epsilon^2\right) \partial_x \psi_{-,e} .
\end{align*}
Hence, it follows by using \eqref{prop:psi-e.2} and taking $\|\epsilon\|_{H^1} \lesssim \sigma^\star$ small enough that 
\begin{equation*}
f_{2,1}^- +\frac{3}{8} \int \left( (-\Delta \epsilon)^2 + 2|\nabla \epsilon|^2+\epsilon^2\right) \partial_x \psi_{-,e} \lesssim \|V\partial_x\psi_{-,e}\|_{H^1}\|\epsilon\|_{H^1}^2 .
\end{equation*}

We argue similarly for $f_{2,2}^-$. After some integration by parts, we obtain that 
\begin{align*} 
f_{2,2}^-
    & =-\int \left( (\partial_x^2 \epsilon)^2 +(\partial_{xy}^2 \epsilon)^2 +(\partial_x\epsilon)^2\right) \partial_x \psi_{-,e}+\frac12 \int (\partial_x\epsilon)^2\partial_x^3 \psi_{-,e}+2\int \epsilon (\partial_x\epsilon)^2\partial_x \psi_{-,e} \\
    & \quad +2\int (\partial_x\epsilon) \partial_x(V \epsilon) \partial_x \psi_{-,e} .
\end{align*}
We deduce from H\"older's inequality, the Sobolev embedding and \eqref{prop:psi-e.2} that 
\begin{align*} 
& \left|\int (\partial_x\epsilon) \partial_x(V \epsilon) \partial_x \psi_{-,e} \right| \lesssim \left(\|V\partial_x\psi_{-,e}\|_{L^{\infty}}+\|\partial_xV\partial_x\psi_{-,e}\|_{L^{\infty}}\right)\|\epsilon\|_{H^1}^2, \\ 
& \left|\int \epsilon (\partial_x\epsilon)^2\partial_x \psi_{-,e} \right| \lesssim \|\epsilon\|_{H^1}\|\partial_x\epsilon \sqrt{\partial_x \psi_{-,e}}\|_{H^1}^2 \lesssim \|\epsilon\|_{H^1}\int \left( (\partial_x^2 \epsilon)^2 +(\partial_{xy}^2 \epsilon)^2 +(\partial_x\epsilon)^2\right) \partial_x \psi_{-,e} .
\end{align*}
Hence, it follows by using \eqref{prop:psi-e.2} and taking $\|\epsilon\|_{H^1} \lesssim \sigma^\star$ small enough that
\begin{equation*}
f_{2,2}^- +\frac{1}{2} \int \left( (\partial_x^2 \epsilon)^2 +(\partial_{xy}^2 \epsilon)^2 +(\partial_x\epsilon)^2\right) \partial_x \psi_{-,e} \lesssim \left(\|V\partial_x\psi_{-,e}\|_{L^{\infty}}+\|\partial_xV\partial_x\psi_{-,e}\|_{L^{\infty}}\right)\|\epsilon\|_{H^1}^2 .
\end{equation*}
Next, we deal with the term $f_{2,3}^-$. Recall the notation of $L_{R_i}$ in \eqref{def:LRi}. We infer from H\"older's inequality and the Sobolev embedding that
\begin{align*}
    \left\vert f_{2,3}^- \right\vert 
     & \lesssim \sum_{i=1}^2\left\vert \int (-\Delta \epsilon + \epsilon -2R_i\epsilon) 
     \mAi \cdot \MRi (\psi_{-,e} - 1)\right\vert  + \sum_{i=1}^2 \left\vert \int \left( V-R_i \right) \mAi \cdot \MRi \epsilon \left( \psi_{-,e}-1 \right) \right\vert \\ %\sum_{i=1}^2\| \epsilon \|_{L^2} \left\| (V-R_i) \mAi \cdot \MRi \right\|_{L^2} \| \psi_{-,e}-1 \|_{L^\infty}
        & \quad +\left(\| \E_V \|_{H^1} \| \epsilon \|_{H^1}^2 + \| \epsilon \|_{H^1} \left( \| \partial_xS \|_{H^1} + \| T \|_{H^1} \right)\right) \left(\| \psi_{-,e} -1 \|_{L^\infty}+ \| \partial_x\psi_{-,e}\|_{L^\infty}\right).
\end{align*}
Using the orthogonality conditions \eqref{eps:ortho}, the second identity in \eqref{eq:L_MRti} and \eqref{prop:psi-e.3}, we have that 
\begin{align*} 
\left\vert \int (-\Delta \epsilon \right. & \left. + \epsilon -2R_1\epsilon) \mAone \cdot \MRone (\psi_{-,e} - 1)\right\vert  \\
& \lesssim \left(\sum_{i=1}^2|\mu_i|\right)\left\vert \int (-\Delta \epsilon + \epsilon -2R_1\epsilon) \mAone \cdot \MRone (\psi - 1)\right\vert+\mu_1^2\vert (\dot{\mu}_1 + \gamma_1 )\vert \left\vert \int \Lambda R_1 \epsilon \right\vert
\\ & \lesssim \left(\sum_{i=1}^2|\mu_i|\right)\|\epsilon\|_{H^1}\vert \mAone \vert \left\| \MRone(1-\psi) \right\|_{H^1}+\mu_1^2\vert \left( \dot{\mu}_1 +\gamma_1 \right)\vert \|\epsilon\|_{H^1} .
\end{align*}
We argue similarly for the term involving $\MRtwo$. Therefore, by using  $\|\partial_x \psi_{e,-}\|_{L^{\infty}}+\|\psi_{e,-}-1\|_{L^{\infty}} \le \sum_{i=1}^2|\mu_i|$, we conclude  that  
\begin{align*} 
|f_{2,3}^-| 
    &\lesssim \| \epsilon \|_{H^1}  \left(\sum_{i=1}^2|\mu_i| \right) \left( \vert \mAone \vert \| \MRone (\psi-1) \|_{L^2} + \vert \mAtwo \vert \| \MRtwo \psi \|_{L^2}  + \sum_{i=1}^2 \left\vert \mu_i \left( \dot{\mu}_i +\gamma_i \right) \right\vert \right) \\ 
    & \quad  +\| \epsilon \|_{H^1}  \left(\sum_{i=1}^2|\mu_i| \right) \left( \sum_{i=1}^2 \vert \mAi \vert \left\| (V-R_i) \MRi \right\|_{L^2} + \| \partial_xS \|_{H^1} + \| T \|_{H^1}+ \| \epsilon \|_{H^1} \left\| \E_V \right\|_{H^1}\right).
\end{align*}

Finally, we estimate the term $f_{2,4}^-$ by using the Cauchy-Schwarz inequality and the identity \eqref{prop:psi-e.1}. More precisely, we have 
\begin{equation*} 
|f_{2,4}^-|  \lesssim \|\epsilon\|_{H^1} \left( \sum_{i=1}^2 \vert \mu_i \vert \right)
\left( \sum_{i=1}^2\vert \mAi \vert \| \MRi \psi' \|_{L^2} +\|\partial_xS\|_{L^2}+\|T\|_{L^2}\right) .
\end{equation*}

Therefore, we conclude the proof of \eqref{est:f2^-} by combining these estimates with \eqref{est:V-Ri}, \eqref{est:MRipsi'}, \eqref{est:MRipsi} and \eqref{est:V dxpsi+}.

\smallskip
\noindent \emph{Estimate for $f_3^-$}. We claim that 
\begin{equation} \label{est:f3^-}
\begin{split}
|f_3^-| &\lesssim \frac{5}{16} \int \left( 3 (\partial_x \epsilon)^2 + (\partial_y \epsilon )^2 + \epsilon^2 \right) \partial_x \psi_{-,e} \\ & \quad 
+\| \epsilon \|_{H^1} \left(\left( \sum_{i=1}^2|\mu_i| \right) \left( \sum_{i=1}^2 e^{-\rho z}\vert \mAi \vert +\| S \|_{L^2} + \| T \|_{L^2}\right) + \sum_{i=1}^2 \left\vert \mu_i \left( \dot{\mu}_i +\gamma_i \right) \right\vert \right) \\ & 
\quad +\|\epsilon \|_{H^1}^2\left(\sum_{i=1}^2\left( \vert\dot{z}_i-\mu_i \vert+ |\dot{\mu}_i|+|\dot{\omega}_i| \right)+\left(\sum_{i=1}^2|\mu_i|+|\dot{z}_i|\right) e^{-\rho z}  \right) .
\end{split}
\end{equation}

By using the equation of $\epsilon$ in \eqref{eq:epsilon}, we decompose $f_3^-$ as
\begin{align*}
    f_3^- &= - \int \left( \partial_t V \psi_{-,e} + \partial_x V\psi_{-,m} \right) \epsilon^2+ \int V\epsilon^2 \partial_x \psi_{-,m} - \int \partial_x \left( \Delta \epsilon - \epsilon+\epsilon^2\right) \epsilon \psi_{-,m}  - \int \E_V \epsilon \psi_{-,m} \\ 
    &=: f_{3,1}^-+f_{3,2}^-+f_{3,3}^-+f_{3,4}^- .
\end{align*}
Firstly, it follows from H\"older's inequality and the Sobolev embedding that 
\begin{equation*} 
\vert f_{3,1}^- \vert +\vert f_{3,2}^- \vert\lesssim \left(\| \partial_t V \psi_{-,e} + \partial_x V \psi_{-,m} \|_{H^1}+\|V\partial_x\psi_{-,m}\|_{H^1}\right) \| \epsilon \|_{H^1}^2 .
\end{equation*}
Secondly, we rewrite after integrating by parts 
\begin{equation*} 
f_{3,3}^-=-\frac{1}{2} \int \left( 3 (\partial_x \epsilon)^2 + (\partial_y \epsilon )^2 + \epsilon^2 \right) \partial_x \psi_{-,m} + \frac12 \int \epsilon^2 \partial_x^3 \psi_{-,m} + \frac23\int \epsilon^3 \partial_x \psi_{-,m}
\end{equation*}
Note that the first term on the right-hand side of the above identity has a bad sign since $\partial_x \psi_{-,m} \le 0$. However, it follows from \eqref{prop:psi-e.1} and \eqref{prop:psi-m.1} that $|\partial_x\psi_{-,m}|+|\partial_x^3\psi_{-,m}|\le \frac{9}{16} \partial_x\psi_{-,e}$. Moreover, we deduce from H\"older's inequality and the Sobolev embedding that 
\begin{equation*}
\left| \int \epsilon^3 \partial_x \psi_{-,m} \right| \lesssim \|\epsilon\|_{H^1} \int \left(|\nabla \epsilon|^2+\epsilon^2 \right) \partial_x \psi_{-,e} .
\end{equation*}
Hence, we have, recalling that $\|\epsilon\|_{H^1} \lesssim \sigma^\star$ is chosen small enough, 
\begin{equation*} 
|f_{3,3}^-| \le \frac{5}{16} \int \left( 3 (\partial_x \epsilon)^2 + (\partial_y \epsilon )^2 + \epsilon^2 \right) \partial_x \psi_{-,e} ,
\end{equation*}
so that the contribution of $|f_{3,3}^-|$ will be absorbed by the second term on the left-hand side of \eqref{est:f2^-}. Thirdly, we bound the contribution $|f_{3,4}^-|$ arguing as with $|f_{2,4}^+| $. More precisely, we obtain that 
\begin{align*}
    \left\vert f_{3,4}^- \right\vert 
        & \leq \| \epsilon \|_{H^1} \left( \vert \mu \vert \left( \vert \mAone \vert \| \MRone (\psi-1) \|_{L^2} + \vert \mAtwo \vert \| \MRtwo \psi \|_{L^2} \right) + \sum_{i=1}^2 \left\vert \mu_i \left( \dot{\mu}_i +\gamma_i \right) \right\vert \right)\\
        & \quad \quad + \left\| \epsilon \right\|_{H^1} \left( \| S \|_{L^2} + \| T \|_{L^2} \right) \sum_{i=1}^2 |\mu_i|  .
\end{align*}
Therefore, we conclude the proof of \eqref{est:f3^-} by combining the above estimates with \eqref{est:MRipsi}, \eqref{est:V dxpsi+} and \eqref{est:Vtpsi-e-Vpsi-m}.

\smallskip
\noindent \emph{Estimate for $f_4^-$}.
We infer from H\"older's inequality, the Sobolev embedding and the definitions of $\psi_{-,e}$ and $\psi_{-,m}$ that
\begin{align} \label{est:f4^-}
\left\vert f_4^- \right\vert \lesssim \| \epsilon \|_{H^1}^2 \sum_{i=1}^2\vert \dot{\mu}_i\vert  +\| \epsilon \|_{L^2} \| \partial_t S \|_{L^2} ,
\end{align}
where we also used that $\|\epsilon\|_{H^1} \lesssim \sigma^\star$, where $\sigma^\star$ is chosen small enough. 

\medskip 
Therefore, we conclude the proof of \eqref{eq:estimate_F_-} by combining \eqref{est:f1^-}-\eqref{est:f4^-} with \eqref{eq:ortho_translation}-\eqref{eq:ortho_velocity}, and \eqref{est:dot_zi_omegai}-\eqref{est:dot_mui}.
\end{proof}

%%%%%%%%%%%%%%%%%%%%%%%%%%%%%%%%%%%%%%%%%%%%%%%%%%%%%%%%
%%%%% Stability of the 2-soliton structure
%%%%%%%%%%%%%%%%%%%%%%%%%%%%%%%%%%%%%%%%%%%%%%%%%%%%%%%%

\section{Stability of the collision of two nearly equal solitary waves}\label{sec:collision}

In this section, we study the evolution of a solution close to the approximate solution $V$ constructed in Section \ref{sec:construction_V}. Before studying the stability of two solitary-waves, we introduce in the first subsection a function $Z$ which approximates the distance between the two solitary waves. Then, we state the main stability result in Subsection  \ref{sec:collision_region}, which is proved in the following subsections.

\subsection{The function \texorpdfstring{$Z$}{Z}} \label{sub:sec:Z}
We gather below several properties of the function $Z$ that are proved in Appendix \ref{app:Z}.

\begin{lemm}[Pointwise properties of $Z$]\label{lemm:pointwise_Z}
Let $0<\mu_0 \leq \left( 2 \langle \Lambda Q, Q \rangle^{-1} \| Q \|_{L^3}^3 \right)^\frac12$. There exists a unique even function $Z$ in $\mathcal{C}^2(\mathbb{R})$ satisfying
\begin{align} \label{eq:Z}
    \Ddot{Z}(t)= \frac{2}{\langle \Lambda Q, Q \rangle} \int_{\mathbb{R}^2} Q(x + Z(t),y) \partial_x (Q^2) (x,y) dxdy,
\end{align}
with the condition
\begin{align*}
    \lim_{t\rightarrow -\infty} ( Z(t), \dot{Z}(t) ) = (+\infty,-2\mu_0).
\end{align*}
Furthermore, $Z$ satisfies the following properties.
\begin{itemize}
    \item[(i)](Convexity) $Z$ is a convex function with a positive minimum at $0$.
    \item[(ii)](Asymptote) There exist a real constant $l=l(\mu_0)\in \mathbb{R}$ and some positive constants $c=c(\mu_0)$ and $K=K(\mu_0)$ such that for $t<0$
\begin{align} \label{Z_asympt}
    \left\vert Z(t) + 2\mu_0 t -l \right\vert \leq K e^{ct}.
\end{align}
\end{itemize}
\end{lemm}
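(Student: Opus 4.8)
The plan is to study the second-order autonomous ODE \eqref{eq:Z} via its conserved energy (Hamiltonian) and a phase-plane analysis. Write $g(Z) := \frac{2}{\langle \Lambda Q, Q\rangle}\int_{\mathbb{R}^2} Q(x+Z,y)\,\partial_x(Q^2)(x,y)\,dxdy$, so that $\ddot Z = g(Z)$. Using \eqref{eq:bound_Q_first_order} (or the sharper Proposition~\ref{propo:Q_K_0}) together with an integration by parts in $x$, one shows that $g(Z)>0$ for all $Z\in\mathbb{R}$, that $g$ is smooth and even, and that $g(Z) \sim c_d\, Z^{-\frac{d-1}{2}} e^{-Z}$ as $Z\to+\infty$ for a positive constant $c_d$; in particular $\int^{+\infty} g < \infty$. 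Since $\langle \Lambda Q,Q\rangle>0$ (this is the $L^2$-subcritical computation, $\langle\Lambda Q,Q\rangle = (1-\frac{d}{4})\|Q\|_{L^2}^2\cdot$const, recorded earlier), the sign of $g$ is well-defined. Define the primitive $G(Z) := -\int_Z^{+\infty} g(\zeta)\,d\zeta$, so $G'=g>0$, $G<0$, and $G(Z)\to 0$ as $Z\to+\infty$. The conserved quantity along trajectories is $H := \frac12 \dot Z^2 - G(Z)$.

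Next I would construct the solution by prescribing its behaviour at $-\infty$. The condition $(Z,\dot Z)\to(+\infty,-2\mu_0)$ forces the energy level to be $H = \frac12(2\mu_0)^2 - \lim_{Z\to+\infty}G(Z) = 2\mu_0^2$. On this level set the trajectory satisfies $\dot Z^2 = 4\mu_0^2 + 2G(Z)$; since $G$ is increasing, decreasing from $0$ at $+\infty$, the right-hand side is positive precisely for $Z > Z_0$ where $Z_0$ is the unique root of $2G(Z_0) = -4\mu_0^2$, i.e. $\int_{Z_0}^{+\infty} g = 2\mu_0^2$. For $\mu_0$ small this $Z_0$ is large (and one reads off $\mu_0^2 \sim \tfrac12 c_d\, Z_0^{-\frac{d-1}{2}}e^{-Z_0}$, matching the claimed relation $\mu_0 \sim Z_0^{-\frac{d-1}{4}}e^{-Z_0/2}$). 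I then define $Z$ as the even extension of the branch solving $\dot Z = -\sqrt{4\mu_0^2 + 2G(Z)}$ on $(-\infty,0]$ with $Z(0)=Z_0$, $\dot Z(0)=0$. Existence and smoothness on $(-\infty,0]$ follow from separation of variables: $t = -\int_{Z_0}^{Z(t)} \frac{d\zeta}{\sqrt{4\mu_0^2+2G(\zeta)}}$, and the integral converges at $+\infty$ because the integrand is $\sim \frac{1}{2\mu_0}$ there, giving $t\to-\infty$ as $Z\to+\infty$; near $Z_0$ the square-root vanishes like $\sqrt{\zeta - Z_0}$, an integrable singularity, and the standard energy argument upgrades the solution to a genuine $C^2$ solution of $\ddot Z = g(Z)$ through $t=0$ (this is where the minimum at $0$ with $\dot Z(0)=0$, $\ddot Z(0)=g(Z_0)>0$ comes from, giving convexity since $\ddot Z = g(Z) > 0$ everywhere). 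The even extension is then automatically $C^2$ and solves the ODE on all of $\mathbb{R}$; uniqueness among even $C^2$ solutions with the prescribed limit follows because the limit pins down the energy level, and on a fixed energy level the autonomous ODE has a unique trajectory through $Z_0$ with $\dot Z = 0$.

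For part (ii), the asymptote, I would analyze the convergence rate as $t\to-\infty$. From $\dot Z = -\sqrt{4\mu_0^2 + 2G(Z)}$ and $G(Z) = -\int_Z^{+\infty}g$, with $g(\zeta) \lesssim \langle\zeta\rangle^{N} e^{-\zeta}$ for large $\zeta$ (hence $|G(Z)| \lesssim \langle Z\rangle^{N} e^{-Z}$), we get
\begin{equation*}
\dot Z(t) = -2\mu_0\sqrt{1 + \frac{G(Z)}{2\mu_0^2}} = -2\mu_0 - \frac{G(Z(t))}{2\mu_0} + O\!\big(G(Z(t))^2\big).
\end{equation*}
Writing $r(t) := Z(t) + 2\mu_0 t$, we have $\dot r = \dot Z + 2\mu_0 = -\frac{G(Z)}{2\mu_0} + O(G(Z)^2)$, which is integrable in $t$ near $-\infty$ since $Z(t) \approx -2\mu_0 t \to +\infty$ and $G$ decays exponentially in $Z$; hence $r(t)$ converges to a limit $l = l(\mu_0)$ as $t\to-\infty$, and
\begin{equation*}
\left| r(t) - l \right| = \left| \int_{-\infty}^{t} \dot r(s)\,ds \right| \lesssim \int_{-\infty}^t \langle Z(s)\rangle^{N} e^{-Z(s)}\,ds \lesssim \langle Z(t)\rangle^{N} e^{-Z(t)} \lesssim e^{ct}
\end{equation*}
for any $c < 2\mu_0$ (absorbing the polynomial factor), using $Z(s) \geq -2\mu_0 s - C$. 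This gives \eqref{Z_asympt}.

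I expect the main obstacle to be the careful treatment of the trajectory through the turning point $Z_0$: verifying that the separated-variables solution, which has $\dot Z$ vanishing there with a square-root singularity in $t\mapsto Z$, extends to a $C^2$ solution of the original ODE and that the even extension glues smoothly. This is standard (it is the classical fact that a conservative ODE trajectory passes regularly through a non-degenerate turning point, here non-degenerate because $\ddot Z(0) = g(Z_0) > 0$), but it requires writing the argument with some care rather than invoking Cauchy–Lipschitz directly at $t=0$. A secondary technical point is establishing the positivity and the precise asymptotics $g(Z)\sim c_d Z^{-(d-1)/2}e^{-Z}$, which relies on Proposition~\ref{propo:Q} (iii) respectively Proposition~\ref{propo:Q_K_0}, an integration by parts to exploit the odd factor $\partial_x(Q^2)$, and a Laplace/dominated-convergence estimate of the resulting integral; the positivity in particular is what makes $G$ monotone and the whole phase-plane picture work.
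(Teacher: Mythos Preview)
Your approach is essentially the same Hamiltonian/phase-plane analysis the paper carries out in Appendix~\ref{app:Z} (Lemma~\ref{lemm:hamiltonian}, Proposition~\ref{propo:phase_portrait}, Corollary~\ref{coro:initial_condition}, Lemma~\ref{lemm:asymp_Z}), and the asymptote argument in (ii) matches the paper's almost line for line. Two corrections are worth making.

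First, $g$ is \emph{odd}, not even, and $g(Z)>0$ only for $Z>0$: the change of variables $x\mapsto -x$ together with $Q$ radial and $\partial_x(Q^2)$ odd in $x$ gives $g(-Z)=-g(Z)$, and in particular $g(0)=0$. This does not break your construction, because the trajectory you build satisfies $Z(t)\ge Z_0>0$ for all $t$ (the bound on $\mu_0$ in the hypothesis is exactly what guarantees $Z_0>0$ exists on this branch); but your convexity claim ``$\ddot Z=g(Z)>0$ everywhere'' then requires first knowing $Z(t)>0$, which follows from the energy-level description rather than from a sign property of $g$ on all of $\mathbb{R}$. The paper establishes this positivity via the phase-portrait argument in Proposition~\ref{propo:phase_portrait}.

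Second, the ``main obstacle'' you flag --- regularity through the turning point $Z_0$ --- is not an obstacle at all, and the paper avoids it entirely. The right-hand side $g$ is smooth and globally Lipschitz (indeed $|g'(Z)|\le \frac{2}{\langle\Lambda Q,Q\rangle}\|\partial_xQ\|_{L^\infty}\|Q\|_{H^1}^2$), so Cauchy--Lipschitz applied directly to the second-order ODE $\ddot Z=g(Z)$ with data $(Z(0),\dot Z(0))=(Z_0,0)$ yields a unique global $C^2$ solution, automatically even by the time-reversal symmetry of the autonomous equation. The square-root singularity you see is an artifact of parametrising by $Z$ rather than by $t$; there is no need to go through the separated-variables formula and then argue about gluing.
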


\begin{proof}
The existence and uniqueness of $Z$ are proved in Corollary \ref{coro:initial_condition} and Lemma \ref{lemm:asymp_Z}. The proof of existence and the value of the asymptote is given in Lemma \ref{lemm:asymp_Z}.
\end{proof}

\begin{lemm}[Uniform estimates on $Z$]\label{lemm:uniform_Z}
There exist $\nu_2^\star>0$ and two positive constants $c$ and $C$ such that for any $0<\mu_0< \nu_2^{\star}$, $Z$ defined as in Lemma \ref{lemm:pointwise_Z}, the following holds true. 
\begin{itemize}
    \item[(i)](Behaviour at $0$) Let $Z_0:=Z(0)$, then $\mu_0 \sim Z_0^{-\frac14}e^{-\frac12Z_0}$. More precisely
    \begin{align} \label{est:mu0_Z0}
        cZ_0^{-\frac12} e^{-Z_0} \leq \mu_0^2 \leq C Z_0^{-\frac12} e^{-Z_0}.
    \end{align}
    \item[(ii)](Large time $T_1$) Let $T_1$ be the unique positive time such that
    \begin{align}\label{defi:T_1}
        Z(T_1)= \rho^{-1} Z_0,
    \end{align}
    where $0<\rho<\frac1{32}$ has been fixed in Section \ref{sec:energy_estimates}. Then
    \begin{align}\label{ineq:dot_Z(T_1)}
        \dot{Z}(T_1) \mu_0^{-1} \geq 1.
    \end{align}
\end{itemize}
\end{lemm}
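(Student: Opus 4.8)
The plan is to derive both statements from the Hamiltonian structure of the ODE \eqref{eq:Z}. First I would introduce the nonlinearity $g(Z) := \frac{2}{\langle \Lambda Q, Q\rangle}\int_{\mathbb R^2} Q(x+Z,y)\,\partial_x(Q^2)(x,y)\,dxdy$, which by \eqref{eq:bound_Q_first_order} (or the refined Proposition~\ref{propo:Q_K_0}) satisfies $g(Z) \sim C_0 Z^{-\frac12}e^{-Z}$ as $Z\to+\infty$ for an explicit positive constant $C_0$, together with matching estimates on $g'$; this is the content of the analysis in Appendix~\ref{app:Z}. Since $g>0$, the ODE is $\Ddot Z = g(Z)$, and multiplying by $\dot Z$ and integrating gives the conserved energy $H := \frac12\dot Z^2 - G(Z)$ where $G'=g$, $G(Z)\to 0$ as $Z\to+\infty$. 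Evaluating $H$ at $t=-\infty$ using the asymptotic condition $(\dot Z, Z)\to(-2\mu_0,+\infty)$ yields $H = 2\mu_0^2$. Evaluating at $t=0$, where $\dot Z(0)=0$ by evenness and convexity (part (i) of Lemma~\ref{lemm:pointwise_Z}), gives $-G(Z_0) = 2\mu_0^2$, i.e. $-G(Z_0) = 2\mu_0^2$.

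For part (i), I would estimate $-G(Z_0) = \int_{Z_0}^{+\infty} g(s)\,ds$. Using the two-sided bound $c_1 s^{-\frac12}e^{-s}\le g(s)\le c_2 s^{-\frac12}e^{-s}$ valid for $s\ge Z_0$ (with $Z_0$ large, guaranteed by choosing $\nu_2^\star$ small), an integration by parts or a direct comparison shows $\int_{Z_0}^{+\infty} s^{-\frac12}e^{-s}\,ds \sim Z_0^{-\frac12}e^{-Z_0}$, up to universal constants, since the integral is dominated by a neighborhood of its lower endpoint. Combining with $2\mu_0^2 = -G(Z_0)$ gives $c\,Z_0^{-\frac12}e^{-Z_0}\le \mu_0^2\le C\,Z_0^{-\frac12}e^{-Z_0}$, which is \eqref{est:mu0_Z0}; the weaker asymptotic $\mu_0\sim Z_0^{-1/4}e^{-Z_0/2}$ follows immediately.

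For part (ii), at time $T_1$ we have $Z(T_1)=\rho^{-1}Z_0$, so the conservation of $H$ gives $\frac12\dot Z(T_1)^2 = 2\mu_0^2 + G(\rho^{-1}Z_0) = 2\mu_0^2 - \int_{\rho^{-1}Z_0}^{+\infty} g(s)\,ds$. The subtracted integral is, by the same estimate as above, of order $(\rho^{-1}Z_0)^{-\frac12}e^{-\rho^{-1}Z_0}$, which is exponentially smaller than $\mu_0^2 \sim Z_0^{-\frac12}e^{-Z_0}$ because $\rho^{-1}>1$ and $Z_0$ is large; more precisely it is bounded by $C\rho^{\frac12}\mu_0^{2/\rho}$ up to lower-order factors, hence is at most, say, $\mu_0^2$ for $\nu_2^\star$ small. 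Therefore $\frac12\dot Z(T_1)^2 \ge 2\mu_0^2 - \mu_0^2 = \mu_0^2$, which gives $\dot Z(T_1)\mu_0^{-1}\ge \sqrt{2} > 1$ (the sign is positive since $Z$ is increasing on $(0,+\infty)$ by convexity and $\dot Z(0)=0$). The main obstacle is purely bookkeeping: one must track the implied constants in the asymptotics of $g$ and of the tail integral $\int_{a}^\infty s^{-1/2}e^{-s}ds$ carefully enough to ensure the correction term at $T_1$ is genuinely absorbed, and to pin down that evenness plus strict convexity force $\dot Z(0)=0$ and $\dot Z>0$ on $(0,\infty)$; both are handled in Appendix~\ref{app:Z} and can be cited here.
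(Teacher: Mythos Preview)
Your proposal is correct and follows essentially the same route as the paper: conserve the Hamiltonian $H=\tfrac12\dot Z^2+\tfrac{2}{\langle\Lambda Q,Q\rangle}\int Q(\cdot+Z)Q^2$, evaluate it at $t=-\infty$ and at $t=0$ for part~(i), and at $t=T_1$ for part~(ii), using the asymptotic $\int Q(\cdot+Z)Q^2\sim cZ^{-1/2}e^{-Z}$ from Appendix~\ref{app:Z}. The only difference is cosmetic: the paper writes the potential directly in the closed form $\tfrac{2}{\langle\Lambda Q,Q\rangle}\int Q(\cdot+Z_0)Q^2$ (which is your $-G(Z_0)$ after one integration by parts), so you can skip the step of estimating the tail integral $\int_{Z_0}^\infty s^{-1/2}e^{-s}\,ds$ altogether.
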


\begin{proof}
The first point is proved in Lemma \ref{lemm:l_1_Z_0}. Second, from \eqref{equation_Z_dot} with $l_1=2\mu_0$, we have
\begin{align*}
    \dot{Z}(T_1)^2 = 4\mu_0^2 - \frac{4}{\langle \Lambda Q, Q \rangle} \int Q(\bx -(Z(T_1),0)) Q^2(\bx) d\bx, 
\end{align*}
so that, by using Proposition \ref{propo:approx_e^-Z}, Lemma \ref{lemm:l_1_Z_0} and $\frac{2}{\rho} >10$,
\begin{align}\label{est:dot_Z_T_1}
    \left\vert \dot{Z}(T_1)^2 - 4\mu_0^2 \right\vert \lesssim e^{-\frac{1}{\rho}Z_0} \lesssim \mu_0^{10}.
\end{align}
Hence, choosing $\nu_2^{\star}$ small enough provides the desired inequality.
\end{proof}

\subsection{Dynamics of  a solution close to a \texorpdfstring{$2$}{2}-solitary waves structure in the collision region}\label{sec:collision_region}

In this subsection, we describe the behaviour of a solution close to the approximate solution $V$ constructed in Section \ref{sec:construction_V} in the collision region. The main result of this section is stated below.

We recall that $0<\rho <\frac{1}{32}$ has been defined in Section \ref{sec:energy_estimates}. For $\mu_0>0$ small, let $Z$ as defined in Lemma \ref{lemm:pointwise_Z} and define $Z_0:=Z(0)$. Recall also from \eqref{est:mu0_Z0} that $\mu_0 \sim Z_0^{-\frac14}e^{-\frac12Z_0}$. 

\begin{toexclude}
We denote by $v(t)$ the unique solution of \eqref{ZK:sym} satisfying
\begin{align*}
    \lim_{t\rightarrow -\infty} \left\| v(t) - \sum_{i=1}^2 Q_{1+(-1)^i\mu_0} \left( \cdot - \frac{(-1)^i}{2} \left(2\mu_0 t +l\right) , \cdot + \frac{(-1)^i}{2} \omega_0 \right) \right\|_{H^1(\mathbb{R}^2)} =0,
\end{align*}
and let $T_1>0$ be such that $Z(T_1)= \rho^{-1} Z_0$ as in Lemma \ref{lemm:uniform_Z}.

\red{Peut-on citer ce theoreme avec $v(T_1)$ proche de $V(T_1)$? Ca permettrait d'avoir une proposition plus generale, que l'on pourrait appliquer a notre solution particuliere $v$, et reutiliser cette proposition dans le Theorem 1.2 pour decrire la stabilite.}
\end{toexclude}

\begin{propo}\label{propo:bootstrap_-T_T}
There exist some constants $C>0$ and $\nu_3^\star$ such that the following holds. Let $0<\mu_0<\nu_3^\star$, $\vert \omega_0 \vert < \mu_0$, $w^1\in H^1(\mathbb{R}^2)$ and $\Gamma^1=(z_1^1,z_2^1,\omega_1^1,\omega_2^1,\mu_1^1,\mu_2^1)$ satisfying 
\begin{equation}\label{propo:bootstrap_-T_T:ini}
\begin{aligned} 
    &  \left\| w^1 - V(\Gamma_1) \right\|_{H^1} + \sum_{i=1}^2 \left\vert \mu_i^1 + \frac{(-1)^i}2 \dot{Z}(-T_1) \right\vert  \leq Z_0^{-4} \mu_0^{\frac74}, \\
    & \sum_{i=1}^2\left\vert z_i^1 + \frac{(-1)^i}2 Z(-T_1) \right\vert \leq Z_0^{-3} \mu_0^{\frac34}, \quad \sum_{i=1}^2 \left\vert \omega_i^1 + \frac{(-1)^i}{2} \omega_0 \right\vert \leq Z_0^{-2} \mu_0^{\frac54}.
\end{aligned}
\end{equation}
By denoting $w$ the solution to \eqref{ZK:sym} with initial condition $w(-T_1)=w^1$, there exists $(\Gamma(t),\epsilon(t))\in \mathcal{C}^1$ such that for any $t\in[-T_1,T_1]$ 
\begin{align}
        & w(t,\bx) = V(t,\bx) + \epsilon (t,\bx), \quad  \Gamma(t) = \left( z_1(t),z_2(t),\omega_1(t),\omega_2(t), \mu_1(t), \mu_2(t) \right), \nonumber \\
        &\begin{aligned}
            & \| \epsilon(t) \|_{H^1} \leq C \mu_0^{\frac74}, && \left\vert \mu(t) - \dot{Z}(t) \right\vert \leq C Z_0 \mu_0^{\frac74}, && \left\vert \bar{\mu}(t) \right\vert \leq C \mu_0^\frac74,  \\
            & \left\vert z(t) - Z(t) \right\vert \leq C Z_0^2 \mu_0^\frac34, && \left\vert \omega(t) - \omega_0 \right\vert + \left\vert \bar{\omega} (t) \right\vert \leq C Z_0 \mu_0^{\frac54},  && \left\vert \bar{z}(t) \right\vert \leq C Z_0 \mu_0^{\frac34}.
    \end{aligned}\label{est:-T_1_T_1}
\end{align}
where $z(t)$, $\Bar{z}(t)$, $\omega(t)$, $\Bar{\omega}(t)$, $\mu(t)$ and $\Bar{\mu}(t)$ are defined in \eqref{defi:z_zbar}, \eqref{defi:omega_omegabar} and \eqref{defi:mu_mubar}.
\end{propo}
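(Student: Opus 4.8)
The plan is to run a bootstrap (continuity) argument on the maximal time interval where the decomposition of Lemma~\ref{propo:modulation} applies and where the bounds in \eqref{est:-T_1_T_1} hold with the constant $C$ replaced by $2C$ (or by some fixed $K$). First I would invoke Lemma~\ref{propo:modulation} at $t=-T_1$: thanks to \eqref{propo:bootstrap_-T_T:ini} and Remark~\ref{rema:modulation_pointwise}, the solution $w$ admits a $C^1$ decomposition $w=V(\Gamma)+\epsilon$ with the orthogonality conditions \eqref{eps:ortho}, and the initial values of $\|\epsilon\|_{H^1}$, $|\mu-\dot Z|$, $|\bar\mu|$, $|z-Z|$, $|\omega-\omega_0|$, $|\bar\omega|$, $|\bar z|$ all satisfy the bounds in \eqref{est:-T_1_T_1} with a strictly smaller constant (e.g. $Z_0^{-j}\mu_0^{\cdot}\le \tfrac{C}{2}\mu_0^{\cdot}$ for $\mu_0$ small). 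Then I would let $T^\star\in(-T_1,T_1]$ be the largest time up to which the ``doubled'' bounds hold, and argue by contradiction that $T^\star=T_1$ by improving each doubled bound to the original one on $[-T_1,T^\star]$.

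The heart of the argument is the set of differential inequalities one gets from Sections~\ref{sec:est_eps} and the ODE analysis of Appendix~\ref{app:Z}. On $[-T_1,T^\star]$ the bootstrap assumptions give $z(t)\sim Z(t)\gtrsim Z_0$, $|\mu_i|\lesssim\mu_0$, so all the smallness hypotheses of Assumption~\ref{hyp:coeff}, Lemma~\ref{propo:modulation}, and Proposition~\ref{theo:V_A} are met. Using Proposition~\ref{propo:functionals}, the functional $\mathcal F_\pm$ (whichever matches the sign of $\mu_1-\mu_2$) controls $\|\epsilon\|_{H^1}^2$ up to $\|S\|_{H^1}^2\lesssim z^{-1}e^{-2z}\lesssim \mu_0^{4}$ (by \eqref{est:Sigma}, \eqref{est:mu0_Z0}), and its time derivative is bounded by $\Theta$ plus terms quadratic in $\|\epsilon\|_{H^1}$ with small coefficients. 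Integrating in time, and using $|t|\le T_1\lesssim Z_0 e^{(\text{something})}$ together with the exponential decay $e^{-z}\lesssim e^{-Z_0}\mu_0^{\cdot}$ gained along the flow, one closes $\|\epsilon(t)\|_{H^1}\le C\mu_0^{7/4}$. For the geometric parameters I would use the modulation bounds \eqref{eq:ortho_translation}--\eqref{eq:ortho_velocity}: these give $|\dot\mu_i+\gamma_i|$ small, hence $\mu=\mu_1-\mu_2$ is governed by $\dot\mu\approx -(\gamma_1-\gamma_2)=2\gamma_1\approx\ddot Z$ (by \eqref{Def:gammai}, \eqref{eq:Z}), so $|\mu(t)-\dot Z(t)|$ stays of size $Z_0\mu_0^{7/4}$; integrating once more controls $|z(t)-Z(t)|\le C Z_0^2\mu_0^{3/4}$, and similarly for $\bar\mu,\bar z$. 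The transverse parameters $\omega,\bar\omega$ obey $\dot\omega\approx\beta_1-\beta_2=0$ (by \eqref{p1p2}), so they remain $O(Z_0\mu_0^{5/4})$-close to their initial values; here one needs the refined transverse quantities $\mathcal K_i$ alluded to in the Introduction to get a sharp enough estimate, which is why the naive modulation bound is supplemented.

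The main obstacle, and the place where the Hamiltonian structure of \eqref{eq:Z} is essential, is closing the estimate on $|z(t)-Z(t)|$ and on $|\dot\mu_i(t)-\gamma_i(t)|$ simultaneously: a crude integration of the ODE error would only give a bound like $Z_0^{-K}$ coming from approximating $\ddot Z$ by $Z^{-1/2}e^{-Z}$, which is not strong enough to feed back into the energy estimate because the error $S$ and the modulation equations couple $z$, $\mu$, and $\epsilon$. The resolution is to compare $z(t)$ directly with the exact solution $Z(t)$ of the full nonlinear ODE, using the conserved Hamiltonian $H$ of \eqref{eq:Z} (Proposition~\ref{propo:H_positive_times} in Appendix~\ref{app:Z}): one shows $|H(t)-2\mu_0^2|$ stays of order $e^{-3z/2}$, hence $|z(t)-Z(t)|$ and $|\dot z(t)-\dot Z(t)|$ remain controlled on the whole collision interval; this then gives $|\mu_i(t)+\tfrac{(-1)^i}{2}\dot Z(t)|$ and $|z_i(t)+\tfrac{(-1)^i}{2}Z(t)|$ of the required sizes. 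Once all seven quantities are strictly improved, the contradiction forces $T^\star=T_1$, and the conclusion \eqref{est:-T_1_T_1} follows. The existence and $C^1$ regularity of $(\Gamma,\epsilon)$ on all of $[-T_1,T_1]$ is then automatic from Lemma~\ref{propo:modulation} since the closed bounds keep $w(t)$ inside the tube $\mathcal U_{Z^0,\mu^0,\sigma}$.
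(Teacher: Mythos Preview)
Your proposal assembles the right ingredients---bootstrap, modulation, the functionals $\mathcal F_\pm$, the Hamiltonian $H$ for the pair $(z,\mu)$, and the localized quantities $\mathcal K_i$ for $\omega_i$---and is in spirit the paper's proof. What is missing, however, is the organizing structure that makes the argument close, and this is not mere bookkeeping. The paper does \emph{not} run a single bootstrap on $[-T_1,T_1]$ with doubled constants; it splits the interval into several pieces $[-T_1,-T_3]$, $[-T_3,-T_2]$, $[-T_2,T_2]$, $[T_2,T_4]$, $[T_4,T_1]$ (with $T_2,T_3,T_4$ defined through $Z$), uses \emph{different} bootstrap constants on each piece, and applies \emph{different} integration devices on each. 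Away from $0$ one has $|\dot Z|\gtrsim\eta\mu_0$, so time integrals of error terms are converted to integrals in $Z$ by inserting $|\dot Z|\mu_0^{-1}\gtrsim1$; on $[-T_2,T_2]$ this fails because $\dot Z$ vanishes, and one uses instead $\ddot Z\gtrsim\mu_0^2$ as the integrating factor. Your ``integrating in time, using $|t|\le T_1\lesssim Z_0 e^{(\text{something})}$'' is both numerically off ($T_1\sim Z_0/\mu_0$) and would not by itself yield strict improvement on each quantity.

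Two further points where the splitting is essential. First, the switch between $\mathcal F_-$ and $\mathcal F_+$ is not automatic: one must show that $\mu(t)$ has a unique zero $t_0\in[-T_2,T_2]$ (the paper does this by proving $\dot\mu\ge c\mu_0^2$ there), use $\mathcal F_-$ up to $t_0$ and $\mathcal F_+$ after, and match the functionals at $t_0$. Second, the Hamiltonian comparison (Propositions~\ref{propo:H_positive_times}--\ref{propo:H_negative_times}) cannot be applied ``on the whole collision interval'' as you suggest: it requires $Z(t)\ge Z(0)+\eta^2$ at the endpoints, so it is invoked only on $[-T_3,-T_2]$ and $[T_2,T_4]$; outside these windows one integrates $|\dot\mu-\ddot Z|$ directly (possible because $Z^{-1/2}e^{-Z}\le Z_0^{-1}\mu_0^2$ there), and on $[-T_2,T_2]$ one integrates against $\ddot Z$. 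The growth of the final constants by factors $Z_0$ and $Z_0^2$ in \eqref{est:-T_1_T_1} reflects precisely the accumulation across these subintervals after the collision. Without this scaffolding your sketch would not close the bootstrap.
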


The rest of this section is dedicated to the proof of this proposition. Let us begin with an overview of the proof. 

\begin{toexclude}
On the time $(-\infty,-T_1]$, we use the qualitative orbital stability of the previous subsection to get that $V( \cdot ; \Gamma)$ is a good approximation of the two solitary waves $v$ (see Corollary \ref{coro:quantified_orbital_stability}). 
\end{toexclude}
The two solitary waves exchange their mass on the time interval $[-T_1,T_1]$. In particular, at the \lq\lq switching time\rq\rq, the solitary wave on the left becomes the smallest one and after the collision heads to the left, whereas the one on the right becomes the highest one and after the collision heads to the right. To describe this whole phenomenon in the region $[-T_1,T_1]$, we split the time interval into three regions based on a fixed time $T_2$ to define later.
\begin{itemize}
    \item In the first region $[-T_1,-T_2]$, the solitary wave on the left is higher than the one on the right. The estimates on the error terms are optimal in the sense that they cannot be improved significantly due to the error of approximation between $w$ and $V(\Gamma)$. 
    
    \item In the second region $[-T_2,T_2]$, a small interval around $0$, the smallest solitary wave becomes the highest. The estimates on the different parameters are based on the smallness of the interval.
    
    \item In the third region $[T_2,T_1]$, we deal with the two solitary waves going away one from another. We thus integrate the accumulated error term on an interval of size $T_1-T_2$, until the two solitary waves are far enough one from another to enter the orbital stability regime.
\end{itemize}

\vspace{0.5cm}

Throughout these parts, we will need our solution to satisfy Assumption \ref{hyp:coeff} and to use Lemma \ref{propo:modulation}, Lemma \ref{lemm:pointwise_Z} and Lemma \ref{lemm:uniform_Z}. Lemmata \ref{propo:H_positive_times} and \ref{propo:H_negative_times} will be used to estimate the parameters controlled by an approximate ODE system. In that context, by using the constant $C$ as defined in Lemma \ref{lemm:l_1_Z_0}, we fix the following constraint
\begin{align}\label{defi:nu_3_star}
    \nu_3^\star \leq \frac12 \min \left( \sigma^\star, \nu_1^\star, C (Z_1^\star)^{-\frac12} e^{-Z_1^\star}, \nu_2^\star, \nu_6^\star \right).
\end{align}

As long as $w(t)$ stays close to the approximation $V$, we introduce the decomposition $(\Gamma(t),\epsilon(t))$ as constructed in Lemma \ref{propo:modulation}. In particular, we have from \eqref{propo:bootstrap_-T_T:ini} at time $t=-T_1$,
\begin{align}
    & \left\| \epsilon(-T_1) \right\|_{H^1} + \left\vert \mu(-T_1) - \dot{Z}(-T_1) \right\vert + \left\vert \bar{\mu}(-T_1) \right\vert \leq Z_0^{-3} \mu_0^{\frac74} \label{eq:mu_T_1} \\
    & \left\vert z(-T_1)- Z(-T_1) \right\vert \leq Z_0^{-2} \mu_0^{\frac34}, \quad \left\vert \bar{z}(-T_1) \right\vert \leq Z_0^{-1} \mu_0^{\frac34}, \quad \left\vert \omega(-T_1) - \omega_0 \right\vert + \left\vert \bar{\omega}(-T_1) \right\vert \leq Z_0^{-1} \mu_0^{\frac54}, \label{eq:z_omega_T_1} \\
    & \mathcal{F}_-(-T_1) \leq Z_0^{-6} \mu_0^{\frac72}. \label{eq:F_-_T_1}
\end{align}

Following the previous discussion on splitting $[-T_1,T_1]$ into different regions, we define, for a parameter $\eta$ small to be fixed later, the time $T_2$ as the unique positive time satisfying
\begin{align}\label{defi:T_2}
    Z(T_2)= Z_0+\eta^2,
\end{align}
and the time $T_3$ as the unique time in $(T_2,T_1)$ such that
\begin{align}\label{defi:T_3}
    \mu_0^2 = Z_0 Z(T_3)^{-\frac12} e^{-Z(T_3)}.
\end{align}

From Lemmata \ref{lemm:uniform_Z} and \ref{lemm:dot_Z_T''} we have
\begin{align}\label{ineq:dot_Z(T_2)}
    \dot{Z}(T_1)\mu_0^{-1} \geq 1 \quad \text{and} \quad \dot{Z}(T_2) \gtrsim \eta \mu_0.
\end{align} 

Let $C_1>1$ to be chosen later, and consider the bootstrap estimates on each of the two regions:
\begin{itemize}
    \item For any $t \in [-T_1,T_2]$,
    \begin{equation}\label{bootstrap_-T_1}
            \| \epsilon(t) \|_{H^1} + \left\vert \bar{\mu}(t)\right\vert \leq \mu_0^{\frac74}, \quad \left\vert \bar{z}(t) \right\vert \leq \mu_0^{\frac34}, \quad
            \left\vert \omega(t) - \omega_0 \right\vert + \left\vert \bar{\omega}(t)\right\vert \leq \mu_0^{\frac54}.
    \end{equation}
    \begin{itemize}
        \item for any $t\in [-T_1, -T_3]$
        \begin{equation}\label{bootstrap_-T_1_-T_3}
            \left\vert \mu(t) - \dot{Z}(t) \right\vert \leq Z_0^{-1} \mu_0^{\frac74}, \quad \left\vert z(t) - Z(t) \right\vert \leq Z_0^{-1} \mu_0^{\frac34} .
        \end{equation}
        \item for any $t\in [-T_3, T_2]$
        \begin{equation}\label{bootstrap_-T_3_T_2}
            \left\vert \mu(t) - \dot{Z}(t) \right\vert \leq \mu_0^{\frac74}, \quad \left\vert z(t) - Z(t) \right\vert \leq \mu_0^{\frac34}.
        \end{equation}
    \end{itemize}
    \item For any $t \in [T_2,T_1]$,
    \begin{equation}\label{bootstrap_T_2}
        \begin{aligned}
            & \| \epsilon(t) \|_{H^1} + \left\vert \bar{\mu}(t) \right\vert \leq C_1 \mu_0^{\frac74}, \quad \left\vert \bar{z}(t) \right\vert \leq C_1 Z_0 \mu_0^{\frac34}, \quad\left\vert \omega(t) - \omega_0 \right\vert + \left\vert \bar{\omega} (t) \right\vert \leq C_1 Z_0 \mu_0^{\frac54} \\
            & \left\vert \mu(t) - \dot{Z}(t) \right\vert \leq C_1 Z_0 \mu_0^{\frac74}, \quad \left\vert z(t) - Z(t) \right\vert \leq C_1 Z_0^2 \mu_0^{\frac34}, .
        \end{aligned}
    \end{equation}
\end{itemize}

We prove the  proposition by a bootstrap argument. Let us define the maximal time $T^\star$ so that the bootstrap estimates are satisfied
\begin{equation*}
    T^\star := \sup \left\{ t \in [-T_1;T_1] \text{ such that }\eqref{est:-T_1_T_1} \text{ and } \eqref{bootstrap_-T_1}-\eqref{bootstrap_T_2} \text{ hold on } [-T_1;t) \right\}.
\end{equation*}

Note first that estimates \eqref{bootstrap_-T_1}-\eqref{bootstrap_-T_1_-T_3} hold at time $-T_1$ by \eqref{eq:mu_T_1}-\eqref{eq:z_omega_T_1}. By continuity of $t\mapsto w(t)$ in $H^1$ and of $\Gamma$, $T^\star$ is well-defined.

Now, we aim at proving that $T^\star=T_1$ by choosing $C_1$ large enough (independent of $\mu_0$) and $\mu_0<\nu_4^\star$ small enough (possibly depending on $C_1$). We argue by contradiction by assuming $T^\star < T_1$. The proof of Proposition \ref{propo:bootstrap_-T_T} is given in the next subsections.

\subsection{Estimates in the bootstrap setting}

Since \eqref{bootstrap_-T_1}-\eqref{bootstrap_T_2} hold on $[-T_1,T^\star)$, it follows from the mean value theorem and by choosing $\nu_4^\star$ small enough that
\begin{align}\label{ineq:z-Z}
    \left\vert z(t)^{-\frac12} e^{-z(t)} - Z(t)^{-\frac12} e^{-Z(t)} \right\vert \lesssim \left\vert z(t)-Z(t) \right\vert Z(t)^{-\frac12} e^{-Z(t)}.
\end{align}
Moreover, as a consequence of Proposition \ref{theo:V_A} we deduce the following inequalities on $t\in [-T_1,T^\star)$
\begin{align}\label{est:S_-T_1_T_1_rough}
    & \left\| S(t) \right\|_{H^2} \lesssim Z(t)^{-\frac12} e^{-Z(t)} + \mu_0 e^{-\frac{15}{16} Z(t)}, \\
    & \left\| T(t) \right\|_{H^1} \lesssim \mu_0 Z(t)^{-\frac12} e^{-Z(t)} + e^{-\frac{15}{16}Z(t)} \sum_{i=1}^2 \vert \dot{z}_i\vert, \label{est:T_-T_1_T_1_rough} \\
    & \sum_{i=1}^2 \left\vert \langle S(t), \MRi(t) \rangle \right\vert \lesssim \mu_0 e^{-\frac{15}{16}Z(t)}, \label{est:S_MRi_-T_1_T_1_rough}
\end{align}

Note that the implicit constants appearing in the former estimates are independent of $C_1$ and $Z_0$. It will be the case for all the estimates in the rest of this section.

Next, we reformulate the estimates on the derivatives of the geometrical parameters obtained in Proposition \ref{propo:dynamical_system} in the bootstrap setting. 

\begin{lemm}[Simplified dynamical system]\label{lemm:sim_dyn_sys}
Recall the notations of $z$, $\bar{z}$, $\omega$, $\bar{\omega}$, $\mu$ and $\bar{\mu}$ in \eqref{defi:z_zbar}-\eqref{defi:mu_mubar}. The following system holds on $[-T_1, T^\star)$
\begin{align}
    & \left\vert \dot{z} - \mu \right\vert + \left\vert \dot{\bar{z}} - \bar{\mu} \right\vert + \left\vert \dot{\omega} \right\vert + \left\vert \dot{\bar{\omega}} \right\vert \lesssim Z^{-\frac12} e^{-Z} +\mu_0 e^{-\frac{15}{16} Z}+ \| \epsilon \|_{L^2}, \label{eq:simplified_z_omega} \\
    & \left\vert \dot{\mu} + 2\gamma_1(z) \right\vert + \left\vert \dot{\bar{\mu}}\right\vert \lesssim \mu_0  e^{-\frac{15}{16}Z}+ \| \epsilon \|_{L^2}^2. \label{eq:simplified_mu}
\end{align}
Furthermore, we have
\begin{align}\label{eq:bound_dot_z_1_dot_z_2_mu_1_mu_2}
      & \left\vert \dot{z}_1 \right\vert + \left\vert \dot{z}_2 \right\vert + \left\vert \mu_1 \right\vert + \left\vert \mu_2 \right\vert + \left\vert \mu \right\vert + \vert \omega_1 \vert + \vert \omega_2 \vert \lesssim \mu_0.
\end{align}
\end{lemm}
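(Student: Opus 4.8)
The plan is to derive \eqref{eq:simplified_z_omega}--\eqref{eq:simplified_mu} by specializing the general modulation estimates of Proposition \ref{propo:dynamical_system} to the bootstrap regime, and then to obtain \eqref{eq:bound_dot_z_1_dot_z_2_mu_1_mu_2} as an immediate consequence. First I would insert the error bounds \eqref{est:S_-T_1_T_1_rough}--\eqref{est:S_MRi_-T_1_T_1_rough}, valid on $[-T_1,T^\star)$, into the right-hand sides of \eqref{est:dot_zi_omegai} and \eqref{est:dot_mui}. For \eqref{eq:simplified_z_omega}, I start from \eqref{eq:ortho_translation}: since $\dot z - \mu = \dot z_1 - \mu_1 - (\dot z_2 - \mu_2)$ and $\dot z_i - \mu_i = (-\dot z_i + \mu_i + \alpha_i) - \alpha_i$ with $|\alpha_i(z)| \lesssim z^{-1/2}e^{-z} \lesssim Z^{-1/2}e^{-Z}$ by \eqref{Est:piqi} and \eqref{ineq:z-Z}, it suffices to bound $\sum_i |-\dot z_i + \mu_i + \alpha_i| + |\dot\omega_i - \beta_i|$ (recall $\beta_i=0$) by $\|\partial_x S\|_{L^2} + \|T\|_{L^2} + \|\epsilon\|_{L^2}$, and then estimate $\|\partial_x S\|_{L^2} \leq \|S\|_{H^1} \lesssim Z^{-1/2}e^{-Z} + \mu_0 e^{-\frac{15}{16}Z}$ and $\|T\|_{L^2} \lesssim \mu_0 Z^{-1/2}e^{-Z} + e^{-\frac{15}{16}Z}\sum_i|\dot z_i|$. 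The bound on $\dot\omega, \dot{\bar\omega}, \dot{\bar z}-\bar\mu$ follows identically by using the $+$ combinations. The only subtlety is the $e^{-\frac{15}{16}Z}\sum_i|\dot z_i|$ term in $\|T\|_{L^2}$: one first gets a rough bound $\sum_i|\dot z_i| \lesssim 1$ from \eqref{est:dot_zi_omegai} (all other terms being small), absorbs $e^{-\frac{15}{16}Z}\sum_i|\dot z_i| \lesssim e^{-\frac{15}{16}Z}$ which is dominated by $\mu_0 e^{-\frac{15}{16}Z}$ up to harmless constants — actually one should keep it as $e^{-\frac{15}{16}Z}$, but since $e^{-\frac{15}{16}Z} \gg \mu_0 e^{-\frac{15}{16}Z}$, one rather re-examines: using $\sum_i|\dot z_i| \lesssim \mu_0 + Z^{-1/2}e^{-Z} + \dots$ from \eqref{est:dot_zi_omegai} combined with $\sum_i|\mu_i| \lesssim \mu_0$, the product $e^{-\frac{15}{16}Z}\sum_i|\dot z_i| \lesssim \mu_0 e^{-\frac{15}{16}Z}$, closing the estimate. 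This bootstrapping of $\sum_i|\dot z_i|$ is exactly the step I expect to be the most delicate bookkeeping.

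For \eqref{eq:simplified_mu}, I start from \eqref{eq:ortho_velocity}, using $\dot\mu + 2\gamma_1(z) = (\dot\mu_1 + \gamma_1) - (\dot\mu_2 + \gamma_2) + \gamma_1 + \gamma_2 + 2\gamma_1 - \gamma_1 + \gamma_2$; but more simply, since $\gamma_2 = -\gamma_1$ by \eqref{p1p2}, we have $\dot\mu + 2\gamma_1 = (\dot\mu_1 + \gamma_1) - (\dot\mu_2 + \gamma_2)$ directly, and $\dot{\bar\mu} = (\dot\mu_1 + \gamma_1) + (\dot\mu_2 + \gamma_2)$. So both are controlled by $\sum_i|\dot\mu_i + \gamma_i| \lesssim \sum_i|\int S\partial_x R_i| + \|T\|_{L^2} + (e^{-\frac{15}{16}z} + \|\epsilon\|_{L^2})\|\epsilon\|_{L^2}$. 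Now $|\int S\partial_x R_i| = |\langle S, \MRi\rangle|$-type terms; more precisely by the structure of $\MRi$, $\int S \partial_x R_i$ is one of the three components of $\langle S, \MRi\rangle$, hence $\lesssim \mu_0 e^{-\frac{15}{16}Z}$ by \eqref{est:S_MRi_-T_1_T_1_rough} (or directly \eqref{est:ortho_S} combined with $\sum_i(|\mu_i|+|\omega_i|)\lesssim\mu_0$ and $e^{-\frac{31}{16}z} \lesssim \mu_0 e^{-\frac{15}{16}z}$ — using \eqref{est:mu0_Z0} which gives $e^{-z} \lesssim \mu_0^2 Z^{1/2}$ roughly). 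Then $\|T\|_{L^2} \lesssim \mu_0 e^{-\frac{15}{16}Z}$ as above, and $e^{-\frac{15}{16}z}\|\epsilon\|_{L^2} \lesssim e^{-\frac{15}{16}Z}\|\epsilon\|_{L^2}$, while in the bootstrap $\|\epsilon\|_{L^2} \lesssim \mu_0^{7/4}$; one checks $e^{-\frac{15}{16}Z}\|\epsilon\|_{L^2}$ is absorbed into $\|\epsilon\|_{L^2}^2 + \mu_0 e^{-\frac{15}{16}Z}$ by Young, since $e^{-\frac{15}{16}Z}\|\epsilon\|_{L^2} \leq \frac12(e^{-2\cdot\frac{15}{16}Z}\mu_0^{-1} + \mu_0\|\epsilon\|_{L^2}^2)$ and $e^{-\frac{15}{16}Z} \lesssim \mu_0$ on $[-T_1,T_1]$ (since $Z \geq Z_0$ and $e^{-\frac{15}{16}Z_0} \ll \mu_0$), giving a clean split. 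Thus $\sum_i|\dot\mu_i+\gamma_i| \lesssim \mu_0 e^{-\frac{15}{16}Z} + \|\epsilon\|_{L^2}^2$, which yields \eqref{eq:simplified_mu}.

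Finally, \eqref{eq:bound_dot_z_1_dot_z_2_mu_1_mu_2} follows by combining the above: from \eqref{bootstrap_-T_1}--\eqref{bootstrap_T_2} we have $|\mu_i| \leq \frac12(|\mu|+|\bar\mu|)$ and $|\mu(t)| \leq |\mu(t)-\dot Z(t)| + |\dot Z(t)| \lesssim Z_0\mu_0^{7/4} + \mu_0 \lesssim \mu_0$ since $|\dot Z(t)| \leq |\dot Z(-\infty)| = 2\mu_0$ by convexity (Lemma \ref{lemm:pointwise_Z}(i), $\dot Z$ increasing from $-2\mu_0$, and $|\dot Z| \leq 2\mu_0$ throughout), together with $|\bar\mu| \lesssim \mu_0^{7/4} \lesssim \mu_0$; hence $|\mu_1|+|\mu_2| \lesssim \mu_0$. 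Similarly $|\omega_i| \lesssim |\omega_0| + Z_0\mu_0^{5/4} \lesssim \mu_0$. For $|\dot z_i|$: from $|\dot z_i - \mu_i| \leq |\dot z - \mu| + |\dot{\bar z} - \bar\mu|$ (up to the combination) and \eqref{eq:simplified_z_omega}, $|\dot z_i| \lesssim |\mu_i| + Z^{-1/2}e^{-Z} + \mu_0 e^{-\frac{15}{16}Z} + \|\epsilon\|_{L^2} \lesssim \mu_0 + \mu_0^2 + \mu_0^{7/4} \lesssim \mu_0$, using \eqref{est:mu0_Z0} to bound $Z^{-1/2}e^{-Z} \leq Z_0^{-1/2}e^{-Z_0} \lesssim \mu_0^2$. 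This completes the proof. The main obstacle, as noted, is the self-referential appearance of $\sum_i|\dot z_i|$ inside $\|T\|_{L^2}$, which is handled by first establishing the rough bound $\sum_i|\dot z_i| \lesssim 1$ and then bootstrapping to $\lesssim \mu_0$.
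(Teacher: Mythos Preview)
Your proposal is correct and follows essentially the same approach as the paper's proof: specialize Proposition~\ref{propo:dynamical_system} using the bootstrap bounds \eqref{est:S_-T_1_T_1_rough}--\eqref{est:S_MRi_-T_1_T_1_rough}, exploit $\gamma_2=-\gamma_1$ to pass to the sum/difference variables, and read off \eqref{eq:bound_dot_z_1_dot_z_2_mu_1_mu_2} from the bootstrap estimates together with $|\dot Z|\le 2\mu_0$.

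Two minor points where the paper is cleaner than your writeup. First, the paper bypasses your Young-inequality splitting of $e^{-\frac{15}{16}z}\|\epsilon\|_{L^2}$: since $\|\epsilon\|_{L^2}\lesssim \mu_0^{7/4}\le \mu_0$ holds directly from the bootstrap \eqref{bootstrap_-T_1}--\eqref{bootstrap_T_2}, one has $e^{-\frac{15}{16}z}\|\epsilon\|_{L^2}\lesssim \mu_0 e^{-\frac{15}{16}Z}$ immediately. Second, the paper establishes $|\mu_i|\lesssim \mu_0$ \emph{first} (from $|\mu-\dot Z|+|\bar\mu|\lesssim \mu_0^{7/4}$ and $|\dot Z|\le 2\mu_0$), and then uses this to control the $\sum_i|\dot z_i|$ contribution in $\|T\|_{L^2}$; this makes the closing of the self-referential term more transparent than your two-step ``rough bound $\lesssim 1$, then bootstrap to $\lesssim\mu_0$'' argument, though both routes are valid. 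Note also that your appeal to \eqref{est:dot_zi_omegai} for the rough bound $\sum_i|\dot z_i|\lesssim 1$ is slightly circular as stated (since $\|T\|_{L^2}$ on its right-hand side itself contains $\sum_i|\dot z_i|$); the actual source is the absorption argument inside the proof of Proposition~\ref{propo:dynamical_system}.
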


\begin{proof}
From Proposition \ref{propo:dynamical_system}, \eqref{Est:piqi}, \eqref{est:mu0_Z0}, \eqref{est:S_-T_1_T_1_rough}-\eqref{est:S_MRi_-T_1_T_1_rough} and $\vert \mu_i \vert \lesssim \vert \dot{Z}(t) \vert + \mu_0 \lesssim \mu_0$ (see Appendix \ref{app:Z}), we have
\begin{align*}
    \sum_{i=1}^2 \left( \left\vert - \dot{z}_i + \mu_i \right\vert + \left\vert \dot{\omega_i} \right\vert \right) \lesssim Z^{-\frac12} e^{-Z} + \mu_0e^{-\frac{15}{16}Z} + \| \epsilon \|_{L^2} \text{ and } \sum_{i=1}^2 \left\vert \dot{\mu}_i + \gamma_i \right\vert \lesssim \mu_0 e^{-\frac{15}{16} Z} + \| \epsilon \|_{L^2}^2.
\end{align*}
The proof of \eqref{eq:simplified_z_omega}-\eqref{eq:simplified_mu} ends with the definition of the parameters in \eqref{defi:z_zbar}-\eqref{defi:mu_mubar} and with \eqref{Def:gammai} that ensures $\gamma_1(z) = -\gamma_2(z)$.

We now continue with the inequality on $\dot{z}_1$. From \eqref{defi:z_zbar}, we have
\begin{align*}
    2\left\vert \dot{z}_1 \right\vert \leq \left\vert \dot{z} \right\vert + \left\vert \dot{\bar{z}} \right\vert \leq \left\vert \dot{z}- \mu \right\vert + \left\vert \mu -\dot{Z} \right\vert + \left\vert \dot{Z} \right\vert + \left\vert \dot{\bar{z}} - \bar{\mu} \right\vert + \left\vert \bar{\mu} \right\vert
\end{align*}
which implies \eqref{eq:bound_dot_z_1_dot_z_2_mu_1_mu_2} by using \eqref{eq:simplified_z_omega}, the estimates on $\mu-\dot{Z}$ and $\bar{\mu}$ in \eqref{bootstrap_-T_1_-T_3}-\eqref{bootstrap_T_2} and $\vert \dot{Z}(t) \vert \lesssim \mu_0$. We proceed similarly to prove the inequalities on $\dot{z}_2$, $\mu_1$, $\mu_2$ and $\mu$ in \eqref{eq:bound_dot_z_1_dot_z_2_mu_1_mu_2}. The estimates on $\omega_i$ follow from \eqref{est:-T_1_T_1} and the assumption that $\vert \omega_0 \vert \leq \mu_0$ in Proposition \ref{propo:bootstrap_-T_T}
\end{proof}

Gathering \eqref{est:dt_S}, \eqref{est:S_-T_1_T_1_rough}-\eqref{est:S_MRi_-T_1_T_1_rough} and Lemma \ref{lemm:sim_dyn_sys}, we obtain simplified estimates on the source terms:
\begin{align}\label{est:S_-T_1_T_1}
    & \left\| S(t) \right\|_{H^2} \lesssim Z(t)^{-\frac12} e^{-Z(t)} + \mu_0 e^{-\frac{15}{16} Z(t)}, \\
    & \left\| T(t) \right\|_{H^1} + \sum_{i=1}^2 \left\vert \langle S(t), \MRi(t) \rangle \right\vert + \left\| \partial_t S(t) \right\|_{H^1} \lesssim \mu_0 e^{-\frac{15}{16}Z(t)}. \label{est:T_-T_1_T_1}
\end{align}

Finally, we derive a refined estimate  for the control of the derivative of the transverse translation parameter $\dot{\omega}_i$.
To this aim, we introduce the quantity 
\begin{equation} \label{def:Kj}
K_i(t,\bx)=\tilde{K}_i(t,\bx)\chi_{\mu_0}(x) \quad \text{with} \quad \widetilde{K}_i(t,\bx)=\int_{-\infty}^x \partial_yR_i(t,\tilde{x},y)d\tilde{x} ,
\end{equation}
where $\chi_{\mu_0}(x)=\chi(\mu_0 x)$ and $\chi \in C^{\infty}(\mathbb R)$ satisfies $0 \le \chi \le 1$, $\chi(x)=1$ for $x \le 1$ and $\chi(x)=0$ for $x \ge 2$. Note in particular that 
\begin{equation} \label{bounds:chimu_0}
 \|{\bf{1}}_{(-\mu_0^{-1},+\infty)}\chi_{\mu_0}\|_{L^2} \lesssim \mu_0^{-\frac12} \quad \text{and} \quad \|\chi_{\mu_0}'\|_{L^2} \lesssim \mu_0^{\frac12} .
\end{equation}
Then, we deduce from the pointwise bounds on $\left| \partial_yQ(\bx) \right| \lesssim e^{-|\bx|}$ (see Proposition \ref{propo:asymp_Q}) and the Cauchy-Schwarz inequality that 
\begin{equation} \label{est:Kj}
\|K_i \|_{L^{\infty}} \lesssim 1 \quad \text{and} \quad \|K_i\|_{L^2} \lesssim \mu_0^{-\frac12} .
\end{equation}

We define the quantity 
\begin{equation} \label{def:calKj}
\mathcal{K}_i(t)=\int \epsilon(t,\bx) K_i(t,\bx) \, d\bx . 
\end{equation}

\begin{propo}
It holds, for all $t \in [-T_1,T^{\star})$ and $j=1,2$, 
\begin{equation} \label{est:calKj} 
\|\mathcal{K}_i\|_{L^2} \lesssim  \mu_0^{-\frac12}\|\epsilon\|_{L^2} 
\end{equation}
and 
\begin{equation} \label{deriv:est:calK1}
\begin{split}
\left|\frac{d}{dt}\mathcal{K}_1-c_{Q}\dot{\omega}_1 \right|&+ \left|\frac{d}{dt}\mathcal{K}_2-c_{Q} \dot{\omega}_1 -2c_{Q} \dot{\omega}_2 \right| \\ &\lesssim  \|\epsilon\|_{L^2}\left(\mu_0^{\frac12}+e^{-\frac{15}{16} z}+\|\epsilon\|_{L^2}\right) +\mu_0^{-\frac12}\|T\|_{L^2}+\mu_0^{\frac12}\|S\|_{L^2}
+\sum_{i=1}^2\left\vert \int S \partial_y R_i \right\vert \\ & \quad +\sum_{i=1}^2\left(\left|-\dot{z}_i+\mu_i+\alpha_i\right|+|\dot{\omega}_i|\right) \left( e^{-\frac{15}{16} z}+\mu_0 \right)+\sum_{i=1}^2|\dot{\mu}_i+\gamma_i| ,
\end{split}
\end{equation}
where $c_{Q}=\frac12\int_y\left(\int_x \partial_yQ(x,y)dx\right)^2dy >0$.
\end{propo}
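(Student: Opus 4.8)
\textbf{Proof strategy for the estimate on $\mathcal{K}_i$.}

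The plan is to prove the two claims in turn: first the size bound \eqref{est:calKj}, which is immediate, and then the differential identity \eqref{deriv:est:calK1}, which is the heart of the statement. For \eqref{est:calKj} I would simply invoke Cauchy--Schwarz together with the bound $\|K_i\|_{L^2} \lesssim \mu_0^{-\frac12}$ from \eqref{est:Kj}; there is nothing more to do. The bulk of the work goes into \eqref{deriv:est:calK1}.

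For the differential identity, I would start from the definition \eqref{def:calKj} and differentiate in time, producing two contributions: $\int (\partial_t\epsilon) K_i$ and $\int \epsilon (\partial_t K_i)$. For the first term I substitute the equation \eqref{eq:epsilon} satisfied by $\epsilon$, namely $\partial_t\epsilon = -\partial_x(\Delta\epsilon-\epsilon+(V+\epsilon)^2-V^2) - \E_V$, and integrate by parts against $K_i$. Since $K_i = \tilde K_i \chi_{\mu_0}$ and $\partial_x \tilde K_i = \partial_y R_i$, the leading linear-in-$\epsilon$ piece coming from $-\partial_x(\cdots)$ will produce, after one integration by parts, a term of the form $\int (\cdots) \partial_x K_i$; the commutator between $\partial_x$ and the cutoff $\chi_{\mu_0}$ contributes an error controlled by $\|\chi_{\mu_0}'\|_{L^2} \lesssim \mu_0^{\frac12}$ from \eqref{bounds:chimu_0}, times $\|\epsilon\|_{L^2}$. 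The dispersive part $\int \partial_x(\Delta\epsilon-\epsilon)\partial_y R_i$, modulo cutoff errors, is handled by integrating by parts back onto $R_i$ and using that $R_i$ solves \eqref{eq:R_i}; this is where the main order cancellations against the nonlinear terms $(V+\epsilon)^2-V^2 = 2V\epsilon+\epsilon^2$ occur. The contribution of $\E_V$ is expanded via \eqref{eq:error_R1_R2_VA} into $\sum_i \mAi\cdot\MRi + T + \partial_x S$; the $\partial_x S$ piece integrated against $K_i$ gives (after integration by parts) the $\langle S,\partial_y R_i\rangle$ terms and cutoff errors controlled by $\mu_0^{\frac12}\|S\|_{L^2}$, the $T$ piece gives $\mu_0^{-\frac12}\|T\|_{L^2}$ by Cauchy--Schwarz with \eqref{est:Kj}, and the $\sum_i \mAi\cdot\MRi$ piece gives the terms involving $|-\dot z_i+\mu_i+\alpha_i|$, $|\dot\omega_i|$, $|\dot\mu_i+\gamma_i|$, again with factors $e^{-\frac{15}{16}z}$ or $\mu_0$ coming from localization of the products $\MRi \cdot K_j$ for $i\neq j$ and from the cutoff near $x\sim\mu_0^{-1}$. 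For the second term $\int\epsilon\,\partial_t K_i$, I would use that $\partial_t K_i = (\partial_t\tilde K_i)\chi_{\mu_0}$ with $\partial_t\tilde K_i$ involving $\dot z_i$, $\dot\omega_i$, $\dot\mu_i$ times antiderivatives of $\partial_x\partial_y R_i$, $\partial_y^2 R_i$, $\partial_y\Lambda R_i$; these are bounded in $L^2$ (with the cutoff) by $C$ times the parameter speeds, hence by $\mu_0$ via \eqref{eq:bound_dot_z_1_dot_z_2_mu_1_mu_2}, contributing $\|\epsilon\|_{L^2}\cdot O(\mu_0)$ to the right-hand side.

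The one structural point that must be tracked carefully is the precise form of the main terms $c_Q\dot\omega_1$ and $c_Q\dot\omega_1+2c_Q\dot\omega_2$: these arise from the leading-order scalar product of $\partial_x(2V\epsilon)$-type and $\mAi\cdot\MRi$-type contributions against $K_i$, after recognizing that $\int_x \partial_y R_i(x,y)\,dx$ is (up to rescaling and recentering) the function $h$ defined in \eqref{def:hl}, and that the plateau of $\tilde K_2$ at $x\to+\infty$ is what forces the combination $\dot\omega_1 + 2\dot\omega_2$ rather than $2\dot\omega_2$ alone — exactly as the analogous plateau of $W$ produced the $\n$ term in \eqref{def:VA2}. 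Concretely, the coefficient $c_Q = \frac12\int_y\big(\int_x\partial_y Q(x,y)\,dx\big)^2\,dy$ emerges from the diagonal term $\int (\partial_y R_i)\,\tilde K_i\, (\text{modulation in }\omega_i)$; one checks it is strictly positive since $\int_x \partial_y Q(x,y)\,dx$ is not identically zero in $y$ (its square integrated in $y$ is positive). I expect the main obstacle to be bookkeeping: there are many terms, and one must be disciplined in separating (a) genuine main terms, (b) cutoff-commutator errors of size $\mu_0^{\frac12}\|\epsilon\|_{L^2}$, (c) off-diagonal interaction terms of size $e^{-\frac{15}{16}z}$, and (d) the parameter-speed-weighted terms, verifying at each step that every error produced falls into one of the categories on the right-hand side of \eqref{deriv:est:calK1}. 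No single estimate is deep; the subtlety is that the non-localized nature of $\tilde K_i$ means one cannot discard ``tail'' contributions as one would for localized directions, which is precisely why the cutoff $\chi_{\mu_0}$ and the bounds \eqref{bounds:chimu_0} are essential.
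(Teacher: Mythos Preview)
Your approach is the same as the paper's: differentiate $\mathcal{K}_i$, substitute \eqref{eq:epsilon}, and extract the main $c_Q\dot\omega_j$ terms from the $\dot\omega_j\partial_y R_j$ components of $\E_V$ paired against $K_i$, with every other piece absorbed into the error via the cutoff bounds \eqref{bounds:chimu_0}, the localization estimates of Lemma~\ref{lemm:R_1_R_2}, and the kernel identity $L_{R_i}\partial_y R_i=0$. Two small corrections to make in execution: the main terms come \emph{only} from the $\mAi\cdot\MRi$ part of $\E_V$ (the $\partial_x(2V\epsilon)$ piece is linear in $\epsilon$ and contributes to the error, not to $c_Q\dot\omega_j$), and the antiderivative terms in $\partial_t\tilde K_i$ carry a plateau in $x$, so $\|\partial_t K_i\|_{L^2}\lesssim(\text{parameter speeds})\cdot\mu_0^{-1/2}$ rather than just the speeds---after \eqref{eq:bound_dot_z_1_dot_z_2_mu_1_mu_2} this still lands on $\mu_0^{1/2}\|\epsilon\|_{L^2}$, which is on the right-hand side.
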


\begin{rema} \label{rema:Ki}The functions $K_i$, used to improve the equation on $\dot{\omega}_i$, are an adaptation of the functions $J_i$ introduced in Section 5.1 of \cite{MM11} to the transverse direction. Note however that here, instead of using the decay on the right of the particular solution $v$ defined in \eqref{coro:quantified_orbital_stability}, we choose to cut the functions $K_i$ on the right, so that the argument applies for a general solution $w$. We refer to \cite{MP17} for a similar idea in the context of the modified Benjamin-Ono equation. 
\end{rema}

\begin{proof} Estimate \eqref{est:calKj} follows directly from \eqref{est:Kj} and the Cauchy-Schwarz inequality. 

We use \eqref{eq:epsilon} and \eqref{def:EV} to compute, for $i=1,2$,
\begin{align*} 
\frac{d}{dt}\mathcal{K}_i & = \int \partial_t\epsilon K_i+\int \epsilon \partial_t K_i \\ 
&=k_1^i+k_2^i+k_3^i+k_4^i+k_5^i+k_6^i+k_7^i+k_8^i+k_9^i+k_{10}^i,
\end{align*}
where 
\begin{equation*} 
\begin{aligned}
k_1^i&:=\int \partial_x \left(-\Delta \epsilon +\epsilon-2R_i\epsilon\right) K_i , 
&k_2^i&:=-\int \partial_x \left(2(R_1+R_2+V_A-R_i) \epsilon+\epsilon^2\right) K_i, \\ 
k_3^i&:=-(-\dot{z}_1+\mu_1+\alpha_1) \int \partial_xR_1 K_i  ,
&k_4^i&:=-(-\dot{z}_2+\mu_2+\alpha_2) \int \partial_xR_2 K_i, \\ 
k_5^i&:=\dot{\omega}_1 \int \partial_yR_1 K_i  ,
&k_6^i&:=\dot{\omega}_2 \int \partial_yR_2 K_i ,\\ 
k_7^i&:=-(\dot{\mu}_1+\gamma_1) \int \Lambda R_1 K_i , 
&k_8^i &:=-(\dot{\mu}_2+\gamma_2) \int \Lambda R_2 K_i ,\\ 
k_9^i&:= -\int (\partial_xS+T) K_i ,
&k_{10}^i&:=\int \epsilon \partial_tK_i.
\end{aligned}
\end{equation*}

\noindent \textit{Estimate for $k_1^i$}. We claim that, for $i=1,2$, 
\begin{equation} \label{est:k1}
|k_1^i| \lesssim \|\epsilon\|_{L^2} \left(\mu_0^{\frac12}+|\mu_i| \right) .
\end{equation}

By integrating by parts and recalling the definition of $L_{R_i}$ in \eqref{def:LRi}, we have 
\begin{align*}
k_1^1
    &=-\int (L_{R_1}\epsilon) \partial_y R_1 \chi_{\mu_0}+\mu_1 \int \epsilon \partial_yR_i \chi_{\mu_0} -\int \left(-\Delta \epsilon +\epsilon-2R_1\epsilon\right) \widetilde{K}_1 \chi_{\mu_0}' \\ 
    &:=k_{1,1}^1+k_{1,2}^1+k_{1,3}^1 .
\end{align*}
More integration by parts, \eqref{bounds:chimu_0} and the identity $L_{R_1}\partial_yR_1=0$ yield
\begin{align*}
|k_{1,1}^1| &\lesssim \left| \int \epsilon \partial^2_{xy}R_1 \chi_{\mu_0}'\right|+\left| \int \epsilon \partial_{y}R_1 \chi_{\mu_0}''\right| \lesssim \|\epsilon\|_{L^2} \mu_0 ; \\ 
|k_{1,2}^1| &\lesssim  \|\epsilon\|_{L^2} |\mu_1| ; \\ 
|k_{1,3}^1| &\lesssim \|\epsilon\|_{L^2}\|\chi_{\mu_0}'\|_{L^2} \lesssim \|\epsilon\|_{L^2} \mu_0^{\frac12} ,
\end{align*}
which concludes the proof of \eqref{est:k1} in the case $i=1$. The proof in the case $i=2$ is similar. 

\smallskip
\noindent \textit{Estimate for $k_2^i$}. We claim that, for $i=1,2$,
\begin{equation} \label{est:k2}
|k_2^i| \lesssim \|\epsilon\|_{L^2} \left(\mu_0^{\frac12} +e^{-\frac{15}{16}z} + \|\epsilon\|_{L^2} \right).
\end{equation} 

Indeed, we observe integrating by parts and using H\"older's inequality that 
\begin{align*}
|k_2^1|
    &\lesssim  \left| \int \left( 2\left(R_2+V_A\right)\epsilon + \epsilon^2 \right) \partial_y R_1 \chi_{\mu_0}\right|+\left|\int \left( 2 \left(R_2+V_A\right)\epsilon + \epsilon^2 \right)  \widetilde{K}_1 \chi_{\mu_0}' \right| \\ 
    &\lesssim \|\epsilon\|_{L^2} \left( \|V_A\|_{L^2}+\|R_1\partial_yR_2\|_{L^2}+\|\chi'_{\mu_0}\|_{L^2} + \|\epsilon\|_{L^2} \right) ,
\end{align*}
which combined to \eqref{est:VA:H2}, \eqref{est:R_1R_2} and \eqref{bounds:chimu_0} yields \eqref{est:k2} in the case $i=1$. The proof in the case $i=2$ is similar.

\smallskip
\noindent \textit{Estimate for $k_3^i$}. By using Lemma \ref{parity}, $k_3^1=0$. We claim that 
\begin{equation} \label{est:k3}
|k_3^2| \lesssim  \left|-\dot{z}_1+\mu_1+\alpha_1\right| \left(\mu_0+e^{-\frac{15}{16} z} \right) .
\end{equation} 

By integrating by parts, using the orthogonality relation $\langle Q ,\partial_y Q \rangle=0$ (see \eqref{eq:id4_Q}), and the pointwise bound $Q(\bx) \lesssim e^{-|\bx|}$, we have
\begin{align*}
|k_3^1|& \le \left|-\dot{z}_1+\mu_1+\alpha_1\right| \left( \left|\int R_1 \partial_yR_1 (1-\chi_{\mu_0})\right|+
\left|\int R_1 \widetilde{K}_1 \chi_{\mu_0}'\right| \right) \\ 
& \lesssim  \left|-\dot{z}_1+\mu_1+\alpha_1\right| \left( e^{-\mu_0^{-1}}+\mu_0 \right) ,
\end{align*}
since, on the time interval $[-T_1,T_1]$,
\begin{equation} \label{est:zi:bootstrap}
|z_1(t)| \le 2|Z(T_1)| =2\rho^{-1}Z_0 \le 8\rho^{-1} \left|\ln (\mu_0)\right| \le \frac1{8\mu_0} .
\end{equation}
On the other hand, we have, after integrating by parts, 
\begin{align*}
|k_3^2|& \le \left|-\dot{z}_1+\mu_1+\alpha_1\right| \left( \left|\int R_1 \partial_yR_2 \chi_{\mu_0}\right|+
\left|\int R_1 \widetilde{K}_2 \chi_{\mu_0}'\right| \right) \\ 
& \lesssim  \left|-\dot{z}_1+\mu_1+\alpha_1\right| \left( \|R_1 \partial_yR_2\|_{L^2}+\mu_0 \right) ,
\end{align*}

Combining these estimates to \eqref{est:R_1R_2} implies \eqref{est:k3}.

\smallskip
\noindent \textit{Estimate for $k_4^i$}. By using Lemma \ref{parity}, $k_4^2=0$. By arguing exactly as for \eqref{est:k3}, we have
\begin{equation} \label{est:k4}
|k_4^1| \lesssim  \left|-\dot{z}_2+\mu_2+\alpha_2\right| \left(\mu_0+e^{-\frac{15}{16} z} \right).
\end{equation} 

\smallskip
\noindent \textit{Estimate for $k_5^1$}. We claim that 
\begin{equation} \label{est:k51}
\left|k_5^1-c_Q \dot{\omega}_1\right| \lesssim  \left|\dot{\omega}_1\right|\left( e^{-\frac1{\mu_0}} + \vert \mu_1 \vert \right),
\end{equation} 
where $c_Q=\frac12\int_y\left(\int_x \partial_yQ(x,y)dx\right)^2dy >0$.

Indeed, 
\begin{align*} 
k_5^1=-\dot{\omega}_1 \int \partial_yR_1 \tilde{K}_1+\dot{\omega}_1 \int \partial_yR_1 \tilde{K}_1 (1-\chi_{\mu_0}) .
\end{align*}
On the one hand, by using \eqref{est:zi:bootstrap}, we bound the second term on the right-hand side of the above inequality by $\left|\dot{\omega}_1\right| e^{-\frac1{\mu_0}}$. On the other hand, a direct computation provides that 
\begin{equation*}
\left|\int \partial_yR_1 \tilde{K}_1-\frac12\int_y\left(\int_x \partial_yQ(x,y)dx\right)^2dy \right| \lesssim |\mu_1|,
\end{equation*}
which implies \eqref{est:k51}.

\smallskip
\noindent \textit{Estimate for $k_5^2$}.  We claim that 
\begin{equation} \label{est:k52}
\left|k_5^2-2c_{Q} \dot{\omega}_1\right| \lesssim  \left|\dot{\omega}_1\right| \left( e^{-\frac{15}{16} z} + \sum_{i=1}^2 \left( \vert \mu_i \vert + \vert \omega_i \vert \right) \right).
\end{equation}

Indeed, we decompose $k_{5}^2$ as 
\begin{align*} 
k_{5}^2
    &=\dot{\omega}_1\int \partial_yR_1\left(\int_{-\infty}^{+\infty} \partial_yR_2 (\tilde{x},y)d\tilde{x}\right) -\dot{\omega}_1\int \partial_yR_1 \left(\int_{x}^{+\infty} \partial_yR_2 (\tilde{x},y) d\tilde{x}\right) \\
    & \quad+ \int \partial_yR_1(x,y)\left(\int_{-\infty}^{x} \partial_yR_2(\tilde{x},y) d\tilde{x}\right) \left(1-\chi_{\mu_0}\right)
\end{align*}
By arguing as in the proofs of Lemmata \ref{lemm:R_1_R_2} and \ref{est:tildeR_NR}, we have 
\begin{equation*} 
\left|\int \partial_yR_1\left(\int_{x}^{+\infty} \partial_yR_2d\tilde{x}\right) \right| \lesssim e^{-\frac{15}{16} z} .
\end{equation*}
Moreover, we infer, by using the pointwise bound $\vert \partial_yQ(\bx) \vert \lesssim e^{-|\bx|}$
\begin{equation*} 
\left|\int \partial_yR_1\left(\int_{-\infty}^{x} \partial_yR_2d\tilde{x}\right) \left(1-\chi_{\mu_0}\right) \right| \lesssim e^{-\mu_0^{-1}} .
\end{equation*}
Finally, observe with Lemma \ref{lemm:decompo_rescaled_Q} that 
\begin{equation*} 
\left|\int \partial_yR_1 (x,y) \left(\int_{-\infty}^{+\infty} \partial_yR_2 (\tilde{x},y)d\tilde{x}\right)dx dy-\int_y \left(\int_x \partial_yQ(x,y)dx\right)^2 dy \right| \lesssim \sum_{i=1}^2 \left( |\mu_i| + \vert \omega_i \vert \right).
\end{equation*}
Gathering the previous estimates yields \eqref{est:k52}.

\noindent \textit{Estimate for $k_6^1$}. We have that 
\begin{equation} \label{est:k6}
|k_6^1| \le  \left|\dot{\omega}_2\right|\left|\int \partial_yR_2K_1\right| \lesssim \left|\dot{\omega}_2\right|e^{-\frac{15}{16} z},
\end{equation} 
where the last estimate is obtained arguing as in the proofs of Lemmata \ref{lemm:R_1_R_2} and \ref{est:tildeR_NR}.

\smallskip
\noindent \textit{Estimate for $k_6^2$}. Arguing exactly as for $k_5^1$, we infer that 
\begin{equation} \label{est:k62}
\left|k_6^2-c_{Q} \dot{\omega}_2\right| \lesssim  \left|\dot{\omega}_2\right| \left(  e^{-\frac1{\mu_0}} 
 + \vert \mu_2 \vert \right).
\end{equation} 

\smallskip
\noindent \textit{Estimate for $k_7^i$ and $k_8^i$}. It follows directly from the fact that $\Lambda Q$ is an integrable function that, for $i=1,2$,
\begin{equation} \label{est:k78}
\sum_{i=1}^2\left(|k_7^i|+|k_8^i|\right) \lesssim  \sum_{i=1}^2\left|\dot{\mu}_i+\gamma_i\right|.
\end{equation}

\smallskip
\noindent \textit{Estimate for $k_9^i$}. We claim that, for $i=1,2$,
\begin{equation} \label{est:k9}
|k_9^i| \lesssim  \left| \int S\partial_yR_i\right|+\|S\|_{L^2}\mu_0^{\frac12}+\|T\|_{L^2}\mu_0^{-\frac12} .
\end{equation} 

Indeed, we find after integrating by parts,
\begin{align*}
 |k_9^i| &\le \left| \int S\partial_yR_i\right|+\left| \int S\partial_yR_i(1-\chi_{\mu_0})\right| +  \left| \int S\widetilde{K}_i \chi_{\mu_0}'\right|+\left| \int T\widetilde{K}_i \chi_{\mu_0}\right| \\
 & \lesssim \left| \int S\partial_yR_i\right|+\|S\|_{L^2} \left(\|\chi_{\mu_0}'\|_{L^2}+e^{-\frac1{\mu_0}} \right)+\|T\|_{L^2}\|\chi_{\mu_0}\|_{L^2},
\end{align*} 
which combined to \eqref{bounds:chimu_0} yields \eqref{est:k9}. 

\smallskip
\noindent \textit{Estimate for $k_{10}^i$}. We claim that 
\begin{equation} \label{est:k10}
|k_{10}^i| \lesssim  \|\epsilon\|_{L^2}\left(|\dot{\mu}_i \vert +|\dot{z}_i|+|\dot{\omega}_i|\right)  \mu_0^{-\frac12} .
\end{equation} 

Indeed, by using $\partial_t\partial_yR_i=\dot{\mu_1}\partial_y\Lambda R_i-\dot{z}_1\partial^2_{xy}R_i-\dot{\omega}_1\partial^2_{yy}R_i$ and the Cauchy-Schwarz inequality, we have that 
\begin{equation*}
|k_{10}^i| \lesssim  \|\epsilon\|_{L^2}\left(|\dot{\mu}_i \vert +|\dot{z}_i|+|\dot{\omega}_i|\right) \|{\bf 1}_{(-\mu_0^{-1},+\infty)}\chi_{\mu_0}\|_{L^2},
\end{equation*}
which implies \eqref{est:k10} in view of \eqref{bounds:chimu_0}. 

\smallskip
Therefore, we conclude the proof of  \eqref{deriv:est:calK1} by combining the previous estimates to the rough bounds \eqref{eq:bound_dot_z_1_dot_z_2_mu_1_mu_2}. 
\end{proof}

Gathering \eqref{deriv:est:calK1}, Proposition \ref{propo:dynamical_system} and \eqref{est:S_-T_1_T_1}-\eqref{est:T_-T_1_T_1} , we obtain the simplified bound
\begin{equation} \label{deriv:est:calK1_simpl}
\left|\frac{d}{dt}\mathcal{K}_1-c_Q\dot{\omega}_1 \right| + \left|\frac{d}{dt}\left( \mathcal{K}_2-2 \mathcal{K}_1 \right) -c_{Q} \dot{\omega}_2 \right| \lesssim \mu_0^{\frac12} e^{-\frac{15}{16}Z} + \|\epsilon\|_{L^2}\left(\mu_0^{\frac12}+e^{-\frac{15}{16} Z} \right) +\|\epsilon\|_{L^2}^2.
\end{equation}

\subsection{Bootstrap of \texorpdfstring{\eqref{bootstrap_-T_1}-\eqref{bootstrap_-T_3_T_2}}{(4.3)-(4.3)} on \texorpdfstring{$[-T_1,-T_2]$}{[-T1,-T2]}.} In this subsection, we prove that $T^\star > -T_2$ by improving \eqref{bootstrap_-T_1}-\eqref{bootstrap_-T_3_T_2} on $[-T_1,T^\star)$. In every step of the proof below, we will take $\mu_0>0$ small enough possibly depending on $\eta$. 

\smallskip
\noindent \textit{Closing the bound on $\|\epsilon\|_{H^1}$ on $[-T_1,T^\star)$}.
From its definition in \eqref{defi:theta}, the following bound on $\Theta$ holds, with \eqref{defi:mAi}, Proposition \ref{propo:dynamical_system}, \eqref{est:S_-T_1_T_1}-\eqref{est:T_-T_1_T_1} and \eqref{bootstrap_-T_1},
\begin{align}\label{eq:bound_theta_-T_1}
    \Theta(t) \lesssim  \mu_0^{\frac{11}{4}} e^{-\frac{15}{16}Z(t)}.
\end{align}
By the estimate of the functional $\mathcal{F}_-$ in Proposition \ref{propo:functionals}, we have, from \eqref{est:mu0_Z0}, \eqref{bootstrap_-T_1}, \eqref{est:S_-T_1_T_1}-\eqref{est:T_-T_1_T_1}, \eqref{eq:bound_theta_-T_1}, \eqref{eq:bound_dot_z_1_dot_z_2_mu_1_mu_2} and by choosing $\mu_0$ small enough,
\begin{align}\label{est:F_-_-T_1}
    \frac{d}{dt} \mathcal{F}_- \lesssim \mu_0^{\frac{11}{4}} e^{-\frac{15}{16}Z}+\mu_0^{\frac{9}{2}} e^{-\rho Z}+\mu_0^{\frac{21}4} . 
\end{align}
On the time interval $[-T_1,-T_2]$, with \eqref{ineq:dot_Z(T_2)}, we have $-\dot{Z}(t) \gtrsim \eta \mu_0$. Thus by multiplying the right-hand side of the previous inequality by $-\dot{Z}(t) \eta^{-1} \mu_0^{-1}$, an integration from $-T_1$ to $t$ provides with \eqref{defi:T_1}
\begin{align}\label{est:F_-_-T_1_-T_2}
    \mathcal{F}_-(t) - \mathcal{F}_-(-T_1) \lesssim \eta^{-1} \left(\mu_0^{\frac{7}{4}} e^{-\frac{15}{16}Z(t)}+\mu_0^{\frac{7}{2}} e^{-\rho Z(t)}+Z_0 \mu_0^{\frac{17}4}  \right) \lesssim Z_0^{-6} \mu_0^{\frac72},
\end{align}
by using \eqref{est:mu0_Z0}. Thus, using \eqref{coercivity_F}, \eqref{est:S_-T_1_T_1} and \eqref{eq:F_-_T_1}, we obtain
\begin{align}\label{eq:eps_-T_1_T_star}
    \forall t \in [-T_1,T^\star), \quad \| \epsilon (t) \|_{H^1}^2 \lesssim Z_0^{-6} \mu_0^{\frac72}.
\end{align}

\smallskip
\noindent \textit{Closing the bounds on $\bar{\mu}$, $\bar{z}$ on $[-T_1,T^\star)$}. From \eqref{eq:simplified_z_omega}-\eqref{eq:simplified_mu} and \eqref{eq:eps_-T_1_T_star}, we deduce
\begin{align*}
    \left\vert \dot{\bar{\mu}} \right\vert \lesssim \mu_0 e^{-\frac{15}{16} Z} + Z_0^{-6} \mu_0^{\frac72}\quad \text{and} \quad \left\vert \dot{\bar{z}} -\bar{\mu} \right\vert \lesssim Z_0^{-3} \mu_0^{\frac74}.
\end{align*}
After multiplying the first inequality by $-\dot{Z}(t) \eta^{-1}\mu_0^{-1}$, an integration from $-T_1$ to $t$ provides, with \eqref{eq:mu_T_1},
\begin{align}\label{eq:bar_mu_-T_1_Tstar}
    \left\vert \bar{\mu}(t) \right\vert \lesssim \left\vert \bar{\mu}(-T_1) \right\vert + \eta^{-1} \left( e^{-\frac{15}{16}Z(t)} + Z_0^{-5}\mu_0^{\frac52} \right) \lesssim Z_0^{-3} \mu_0^{\frac74}.
\end{align}
Arguing similarly for $\bar{z}$, one obtains
\begin{align}\label{eq:bar_z_-T_1_Tstar}
    \left\vert \bar{z}(t) \right\vert \lesssim \left\vert \bar{z}(-T_1) \right\vert +  \eta^{-1} Z_0^{-2} \mu_0^{\frac34} \lesssim Z_0^{-1}\mu_0^{\frac34}.
\end{align}

\smallskip
\noindent \textit{Closing the bounds on $\omega$ and $\bar{\omega}$ on $[-T_1,T^\star)$}. From \eqref{deriv:est:calK1_simpl}, with $c_Q \neq 0$, it holds 
\begin{equation*}
\left|\frac{d}{dt}\mathcal{K}_1-c_Q\dot{\omega}_1 \right| + \left|\frac{d}{dt} \left( \mathcal{K}_2 - 2 \mathcal{K}_1 \right) -c_{Q} \dot{\omega}_2 \right| \lesssim  Z_0^{-3} \mu_0^{\frac94}.
\end{equation*}
After multiplying by $-\dot{Z}(t) \eta^{-1} \mu_0^{-1}$, an integration from $-T_1$ to $t$ gives, with \eqref{est:calKj}, \eqref{eq:eps_-T_1_T_star} and \eqref{eq:z_omega_T_1}
\begin{align}
    \left\vert \omega_1(t) - \frac12 \omega_0 \right\vert + \left\vert \omega_2(t) + \frac12 \omega_0 \right\vert 
    & \lesssim \left\vert \omega_1(-T_1) - \frac12 \omega_0 \right\vert + \left\vert \omega_2(-T_1) + \frac12 \omega_0 \right\vert + Z_0^{-3} \mu_0^{\frac54} + \eta^{-1}  Z_0^{-2} \mu_0^{\frac54} \nonumber \\
    & \lesssim Z_0^{-1} \mu_0^{\frac54}.\label{eq:omega_-T_1}
\end{align}

\smallskip
\noindent \textit{Closing the bounds on $\mu$ and $z$ on $[-T_1,T^\star)$ with $T^{\star} \le -T_3$}. We recall that $T_3$ is defined in \eqref{defi:T_3}. We infer from \eqref{eq:simplified_z_omega}-\eqref{eq:simplified_mu} and \eqref{eq:eps_-T_1_T_star} that
\begin{align}\label{est:mu_dot_z_dot_-T_1_T_star}
    \left\vert \dot{\mu} + 2\gamma_1(z) \right\vert \lesssim \mu_0 e^{-\frac{15}{16} Z} + Z_0^{-6} \mu_0^{\frac72}\quad \text{and} \quad \left\vert \dot{z} - \mu \right\vert \lesssim Z_0^{-3} \mu_0^{\frac74}.
\end{align}
From \eqref{Def:gammai} and \eqref{propo:approx_e^-Z.1} we infer
\begin{align}\label{eq:gamma_1_z_Z}
    \left\vert \gamma_1(z(t)) - \gamma_1(Z(t)) \right\vert \lesssim \vert z(t)-Z(t) \vert \sup_{\vert Y- Z(t) \vert \leq \vert Z(t)-z(t) \vert} \left\vert \gamma_1'(Y) \right\vert \lesssim \vert z(t)-Z(t) \vert \frac{e^{-Z(t)}}{Z(t)^{\frac12}}.
\end{align}
Moreover, $\ddot{Z}=-2\gamma_1(Z)$ (see \eqref{Def:gammai} and \eqref{eq:Z}). Thus, it follows from \eqref{bootstrap_-T_1_-T_3}
\begin{align*}
    \left\vert \dot{\mu} -\ddot{Z} \right\vert \lesssim \left\vert z-Z\right\vert Z^{-\frac12} e^{-Z} + \mu_0 e^{-\frac{15}{16} Z} + Z_0^{-6} \mu_0^{\frac72} \lesssim Z_0^{-1}\mu_0^{\frac34} Z^{-\frac12} e^{-Z} + \mu_0 e^{-\frac{15}{16} Z} + Z_0^{-6} \mu_0^{\frac72}
\end{align*}
By multiplying by $-\dot{Z}(t)\eta^{-1}\mu_0^{-1}$, an integration provides
\begin{align} 
    \left\vert \mu(t) - \dot{Z}(t) \right\vert & \lesssim \left\vert \mu(-T_1) - \dot{Z}(-T_1)  \right\vert + \eta^{-1}\left( Z_0^{-1} \mu_0^{-\frac14} Z(t)^{-\frac12} e^{-Z(t)} + e^{-\frac{15}{16}Z(t)} + Z_0^{-5} \mu_0^{\frac52} \right) \nonumber \\
        & \lesssim \eta^{-1} Z_0^{-2} \mu_0^{\frac{7}{4}}, \label{eq:mu_-T_1_Tstar}
\end{align}
where the last bounds follows by using that $Z(t)^{-\frac12}e^{Z(t)} \le Z_0^{-1}\mu_0^2$ on $[-T_1,-T_3]$ (see the definition of $T_3$ in \eqref{defi:T_3}) and from \eqref{eq:mu_T_1}. Then, we deduce gathering the above estimate, \eqref{eq:mu_T_1} and \eqref{est:mu_dot_z_dot_-T_1_T_star} that
\begin{align*}
    \left\vert \dot{z} -\dot{Z} \right\vert \le  \left\vert \dot{z} -\mu \right\vert+\left\vert \mu -\dot{Z} \right\vert\lesssim Z_0^{-3}\mu_0^{\frac74}+\eta^{-1}\left( Z_0^{-1} \mu_0^{-\frac14} Z(t)^{-\frac12} e^{-Z(t)} + e^{-\frac{15}{16}Z(t)} + Z_0^{-5} \mu_0^{\frac52} \right), 
\end{align*}
which, after a multiplication by $-\dot{Z}(t)\eta^{-1}\mu_0^{-1}$, implies with \eqref{eq:z_omega_T_1} and the definition of $T_3$ in \eqref{defi:T_3}
\begin{align}
    \left\vert z(t) -Z(t) \right\vert 
    & \lesssim \left\vert z(-T_1) - Z(-T_1) \right\vert + \eta^{-1} Z_0^{-2} \mu_0^{\frac34} \nonumber \\
    & \qquad + \eta^{-2}\left( Z_0^{-1} \mu_0^{-\frac54} Z(t)^{-\frac12} e^{-Z(t)} + \mu_0^{-1} e^{-\frac{15}{16}Z(t)} + Z_0^{-4} \mu_0^{\frac32} \right)  \lesssim \eta^{-2} Z_0^{-2} \mu_0^{\frac{3}{4}}. \label{eq:z_-T_1_Tstar} 
\end{align}

\smallskip
By using the bounds \eqref{eq:eps_-T_1_T_star}, \eqref{eq:bar_mu_-T_1_Tstar}, \eqref{eq:bar_z_-T_1_Tstar}, \eqref{eq:omega_-T_1}, \eqref{eq:mu_-T_1_Tstar} and \eqref{eq:z_-T_1_Tstar}, we strictly improve the bounds \eqref{bootstrap_-T_1} and \eqref{bootstrap_-T_1_-T_3} on the time interval $[-T_1,T^\star)$ with $T^{\star} \le -T_3$. This implies that $T^{\star} > -T_3$.

\smallskip
\noindent \textit{Closing the bounds on $\mu$ and $z$ on $[-T_3,T^\star)$}. In order to study the dynamical parameters $\mu$ and $z$, we introduce the approximate Hamiltonian $H$ defined by
\begin{align}\label{defi:H(t)}
    H(t) := \frac{1}{2} \mu(t)^2 + \frac{2}{\langle \Lambda Q, Q \rangle} \int_{\mathbb{R}^2} Q(x+z(t), y) Q^2(x,y)dxdy.
\end{align}

We claim that
\begin{align}\label{eq:H(-T_1)}
    \left\vert H(-T_3) -2\mu_0^2 \right\vert \lesssim \eta^{-1} Z_0^{-1}\mu_0^{\frac34} Z(-T_3)^{-\frac12} e^{- Z(-T_3)}.
\end{align}
Indeed, \eqref{eq:mu_-T_1_Tstar} at time $-T_3$ provides
\begin{align}\label{eq:mu2-dot_z2_-T_1}
    \left\vert \mu(-T_3)^2 - \dot{Z}(-T_3)^2 \right\vert 
    & = \left\vert \left( \mu(-T_3)- \dot{Z}(-T_3) + 2\dot{Z}(-T_3) \right) \left( \mu(-T_3) - \dot{Z}(-T_3)\right) \right\vert \nonumber \\
    & \lesssim \eta^{-1} Z_0^{-2} \mu_0^{\frac{11}{4}}.
\end{align}
With \eqref{propo:approx_e^-Z.1} and \eqref{eq:z_omega_T_1} we have
\begin{align*}
    \left\vert \int \left( Q(\cdot +z(-T_3), \cdot) - Q(\cdot +Z(-T_3),\cdot) \right) Q^2 \right\vert \lesssim Z_0^{-1} \mu_0^{\frac34} Z(-T_3)^{-\frac12} e^{- Z(-T_3)}.
\end{align*}
Gathering those computations with \eqref{est:mu0_Z0}, the definition of $Z$ in Lemma \ref{lemm:pointwise_Z} and the fact that $H(Z(t),\dot{Z(t)})=2\mu_0^2$ (see Lemma \ref{lemm:hamiltonian}) yield \eqref{eq:H(-T_1)}.

On $[-T_3;T^\star)$, we compute the time derivative of $H$ by using \eqref{p1p2} and \eqref{Def:gammai},
\begin{align}
    \left\vert \dot{H}(t) \right\vert 
        & = \left\vert \dot{\mu}(t) \mu(t) + \dot{z}(t) 2 \gamma_1(z(t)) \right\vert \nonumber\\
        & \leq \left\vert \dot{\mu}(t) +2 \gamma_1(z(t)) \right\vert \left\vert \mu(t) \right\vert + 2 \left\vert\gamma_1(z(t)) \right\vert \left\vert \dot{z}(t) - \mu(t) \right\vert. \label{eq:H'-T_1}
\end{align}
From \eqref{eq:simplified_z_omega}-\eqref{eq:bound_dot_z_1_dot_z_2_mu_1_mu_2}, \eqref{eq:eps_-T_1_T_star} and \eqref{bootstrap_-T_1}, it follows
\begin{align}\label{eq:H_-T_1_-T_2}
    \left\vert \dot{H} \right\vert \lesssim \mu_0^2 e^{-\frac{15}{16} Z} + Z_0^{-3} \mu_0^{\frac74} Z^{-\frac12} e^{-Z} \lesssim Z_0^{-3} \mu_0^{\frac74} Z^{-\frac12} e^{-Z},
\end{align}
where we used in the last inequality that $\mu_0^2 \leq Z_0 Z(t)^{-\frac12} e^{-Z(t)} $ for $t \in [-T_3,-T_2]$.
We multiply the right-hand side of \eqref{eq:H_-T_1_-T_2} by $- \dot{Z}(t) \eta^{-1} \mu_0^{-1}$ (recall \eqref{ineq:dot_Z(T_2)}), integrate from $-T_3$ to $t$ and use \eqref{eq:H(-T_1)} to get
\begin{align*}
    \left\vert H(t) - 2\mu_0^2 \right\vert \lesssim \eta^{-1} Z_0^{-1} \mu_0^{\frac34} Z(t)^{-\frac12} e^{-Z(t)}.
\end{align*}
From \eqref{ineq:z-Z} and \eqref{propo:approx_e^-Z.1}, we have 
\begin{align*}
    Z(t)^{-\frac12} e^{-Z(t)} \lesssim \frac{2}{\langle \Lambda Q, Q \rangle} \int_{\mathbb{R}^2} Q(x+z(t),y) Q^2(x,y) dx dy.
\end{align*}
Moreover, we infer from \eqref{eq:simplified_z_omega}, \eqref{eq:eps_-T_1_T_star} and recalling $\vert \dot{Z}(t) \vert \leq 2\mu_0$,
\begin{align*}
    \left\vert \mu(t)^2 -\dot{z}(t)^2 \right\vert
        &\leq \left\vert \mu(t)-\dot{z}(t) \right\vert \left( \left\vert \dot{z}(t)- \mu(t) \right\vert + 2 \left\vert \mu(t) - \dot{Z}(t) \right\vert +2 \vert \dot{Z}(t) \vert \right) \lesssim Z_0^{-2} \mu_0^{\frac34} Z^{-\frac12}(t) e^{-Z(t)}.
\end{align*}
It follows from the three previous inequalities and \eqref{ineq:z-Z} that, by defining  $\nu := C \eta^{-1} Z_0^{-1} \mu_0^{\frac34}$ for a constant $C$ large enough,
\begin{align*}
\MoveEqLeft
    \frac12 \dot{z}(t)^2 + \frac{2(1-\nu)}{\langle \Lambda Q, Q \rangle} \int_{\mathbb{R}^2} Q(x+z(t),y) Q^2(x,y) dx dy \\
    & \qquad \leq 2\mu_0^2 \leq \frac12 \dot{z}(t)^2 + \frac{2(1+\nu)}{\langle \Lambda Q, Q \rangle} \int_{\mathbb{R}^2} Q(x+z(t),y) Q^2(x,y) dx dy.
\end{align*}
We can thus apply Proposition \ref{propo:H_negative_times} with $h=2\mu_0^2$ and $\varepsilon_0 = Z_0^{-1} \mu_0^{\frac34} $ (see \eqref{bootstrap_-T_1_-T_3}) to obtain that for any $t\in[-T_3,T^\star)$,
\begin{align}\label{eq:z_Z_-T_1_Tstar}
    \left\vert z(t) - Z(t) \right\vert \lesssim \left( \nu +\varepsilon_0 \right) \lesssim \eta^{-1} Z_0^{-1} \mu_0^{\frac34}.
\end{align}

To close the bound on $\mu$, we use \eqref{eq:simplified_mu}, with \eqref{eq:z_Z_-T_1_Tstar} in \eqref{eq:gamma_1_z_Z}, \eqref{eq:eps_-T_1_T_star} and $\mu_0$ small so that $\eta^{-1} \mu_0 \leq 1$ in \eqref{eq:eps_-T_1_T_star}, to obtain
\begin{align*}
  \left\vert \dot{\mu} -\ddot{Z} \right\vert=  \left\vert \dot{\mu} + 2\gamma_1(Z) \right\vert \lesssim  \mu_0  e^{-\frac{15}{16}Z}+Z_0^{-6}\mu_0^{\frac72} + \eta^{-1} Z_0^{-1} \mu_0^{\frac34} Z^{-\frac12} e^{-Z} \lesssim \eta^{-1} Z_0^{-1} \mu_0^{\frac34} Z^{-\frac12} e^{-Z}.
\end{align*}
After multiplying the right-hand side by $-\dot{Z}(t) \eta^{-1} \mu_0^{-1} \gtrsim 1$, an integration from $-T_3$ to $t$ provides, with the initial condition \eqref{bootstrap_-T_1_-T_3},
\begin{align}\label{eq:mu_dotZ_-T_1_Tstar}
    \left\vert \mu(t) - \dot{Z}(t) \right\vert \lesssim  \left\vert \mu(-T_3) - \dot{Z}(-T_3) \right\vert + \eta^{-2} Z_0^{-1} \mu_0^{-\frac14} Z^{-\frac12} e^{-Z} \lesssim \eta^{-2} Z_0^{-1} \mu_0^{\frac74}.
\end{align}

By \eqref{eq:eps_-T_1_T_star}, \eqref{eq:bar_mu_-T_1_Tstar}, \eqref{eq:bar_z_-T_1_Tstar}, \eqref{eq:omega_-T_1}, \eqref{eq:z_Z_-T_1_Tstar} and \eqref{eq:mu_dotZ_-T_1_Tstar}, we strictly improve \eqref{bootstrap_-T_1} by taking $Z_0$ large enough depending on $\eta$, and thus by continuity $T^\star >-T_2$. 

\subsection{Bootstrap of \texorpdfstring{\eqref{bootstrap_-T_1}}{(4.19)} and \texorpdfstring{\eqref{bootstrap_-T_3_T_2}}{(4.21)} on \texorpdfstring{$[-T_2,T_2]$}{[-T2;T2]}.} Assuming $T^\star<T_2$, we argue on $[-T_2,T^\star)$ in order to prove by contradiction that $T^\star >T_2$. On this time interval, the inequality $-\dot{Z}(t) \geq c \eta \mu_0$ does not hold anymore. We bypass this issue by using the following inequalities for $t\in [-T_2,T_2]$, deduced from \eqref{est:mu0_Z0}, \eqref{defi:T_2}, \eqref{propo:approx_e^-Z.1} and \eqref{defi:eq_Z},
\begin{align}\label{ineq:Z_T_2}
    \mu_0^2 \lesssim Z(t)^{-\frac12} e^{-Z(t)} \lesssim \ddot{Z}(t).
\end{align}

\smallskip
\noindent \textit{Closing the bounds on $\mu$ and $\bar{\mu}$ on $[-T_2,T^\star)$}. From \eqref{eq:simplified_mu}, \eqref{eq:gamma_1_z_Z}, \eqref{bootstrap_-T_1}, \eqref{bootstrap_-T_3_T_2}, \eqref{ineq:Z_T_2}, we have for $t\in [-T_2,T^\star)$,
\begin{align*}
    \left\vert \dot{\mu} + 2\gamma_1(Z) \right\vert + \left\vert \dot{\bar{\mu}} \right\vert \lesssim  \left\vert z-Z \right\vert Z^{-\frac12} e^{-Z} + \mu_0 e^{-\frac{15}{16}Z} + \mu_0^{\frac72} \lesssim \mu_0^{\frac34} \ddot{Z}.
\end{align*}
From \eqref{eq:bar_mu_-T_1_Tstar} and \eqref{eq:mu_dotZ_-T_1_Tstar}, an integration from $-T_2$ to $t$ provides, with the upper bound $\left\vert \dot{Z}(-T_2) \right\vert \lesssim \eta \mu_0$ (see Lemma \ref{lemm:dot_Z_T''}),
\begin{align}\label{eq:mu_dot_Z_-T_2_T_2}
    \left\vert \mu(t) -\dot{Z}(t) \right\vert + \left\vert \bar{\mu}(t) \right\vert \lesssim \eta^{-2} Z_0^{-1} \mu_0^{\frac74} + \mu_0^{\frac34} \left\vert \dot{Z}(-T_2) \right\vert \lesssim \left( \eta^{-2} Z_0^{-1}+ \eta \right) \mu_0^{\frac74}.
\end{align}

\smallskip
\noindent \textit{Closing the bounds on $z$ and $\bar{z}$ on $[-T_2,T^\star)$}. From \eqref{eq:simplified_z_omega}, \eqref{eq:mu_dot_Z_-T_2_T_2}, \eqref{ineq:Z_T_2} and \eqref{bootstrap_-T_1}, we have for $t\in [-T_2,T^\star)$,
\begin{align*}
    \left\vert \dot{z}(t) -\dot{Z}(t) \right\vert + \left\vert \dot{\bar{z}}(t) \right\vert \lesssim Z^{-\frac12} e^{-Z} +\mu_0 e^{-\frac{15}{16} Z}+ \mu_0^{\frac74} \lesssim \mu_0^{-\frac14} \ddot{Z}(t),
\end{align*}
and an integration from $-T_2$ to $t$ provides with \eqref{eq:z_Z_-T_1_Tstar} and \eqref{eq:bar_z_-T_1_Tstar} at time $-T_2$, 
\begin{align}\label{eq:z_Z_-T_2_T_2}
    \left\vert z(t) -Z(t) \right\vert + \left\vert \bar{z}(t) \right\vert \lesssim \eta^{-1} Z_0^{-1} \mu_0^{\frac34} + \mu_0^{-\frac14} \vert \dot{Z}(-T_2) \vert \lesssim \left( \eta^{-1}Z_0^{-1} +\eta \right) \mu_0^{\frac34}.
\end{align}

\smallskip
\noindent \textit{Closing the bounds on $\|\epsilon\|_{H^1}$ on $[-T_2,T^\star)$}.
On $[-T_2,T^\star)$, since $\left\vert \dot{\mu}(t) - \ddot{Z}(t) \right\vert \leq C \mu_0^{\frac34} \ddot{Z}(t)$, by taking $\mu_0$ small we have $\dot{\mu}(t)> c\mu_0^2$ for some small constant $c$ and $\mu$ is an increasing function. Since $-\dot{Z}(-T_2)= \dot{Z}(T_2)\geq c \eta \mu_0$ and $\left\vert \mu(t) - \dot{Z}(t)\right\vert \leq C \mu_0^{\frac74}$, we have that $\mu(-T_2) \leq -c\eta\mu_0$ and $\mu(T_2) \geq c\eta\mu_0$. Hence, we deduce from the intermediate value theorem, assuming $T^\star$ large enough, that there exists a unique time $t_0$, such that $\mu(t_0)=0$. In particular, it implies $\mu(t) <0$ for $t<t_0$ and $\mu(t)>0$ for $t>t_0$.

To control $\epsilon$ on $[-T_2,T^\star)$, we use the functional $\mathcal{F}_-$ on the time interval $[-T_2,t_0]$ and the functional $\mathcal{F}_+$ on the interval $[t_0,T^\star)$. Arguing as in \eqref{est:F_-_-T_1} and using that $0<\rho<\frac1{32}$, we obtain the following bound
\begin{align*}
    \frac{d}{dt} \mathcal{F}_-(t) \lesssim \mu_0^{\frac{11}{4}} e^{-\frac{15}{16}Z} + \mu_0^{\frac92} e^{-\rho Z} +\mu_0^{\frac{21}{4}} \lesssim \mu_0^{\frac52 + \rho } \ddot{Z}.
\end{align*}
If $T^\star>t_0$, the same bound holds for $\frac{d}{dt}\mathcal{F}_+$ on $[t_0,T^\star)$. In both cases, an integration from $-T_2$ to $T^\star$, the fact that $\left\vert \dot{Z}(-T_2) \right\vert \lesssim \eta \mu_0$ and the estimate at $-T_2$ in \eqref{est:F_-_-T_1_-T_2} give the following bound on $\epsilon$ for $t\in[-T_2,T^\star)$ 
\begin{align}\label{eq:eps_-T_2_T_2}
    \| \epsilon (t) \|_{H^1}^2 \lesssim \eta Z_0^{-4} \mu_0^{\frac72}.
\end{align}

\smallskip
\noindent \textit{Closing the bounds on $\omega$ and $\bar{\omega}$ on $[-T_2,T^\star)$}. From \eqref{deriv:est:calK1_simpl}, with $c_1 \neq 0$, it holds with \eqref{ineq:Z_T_2}
\begin{equation}
\left|\frac{d}{dt}\mathcal{K}_1-c_Q\dot{\omega}_1 \right| + \left|\frac{d}{dt} \left( \mathcal{K}_2 - 2 \mathcal{K}_1 \right) -c_{Q} \dot{\omega}_2 \right|  \lesssim \mu_0^{\frac{9}{4}} \lesssim \mu_0^{\frac{1}{4}} \ddot{Z}.
\end{equation}
By using $\left\vert \dot{Z}(-T_2) \right\vert \lesssim \eta \mu_0$, an integration from $-T_2$ to $t$ gives, with \eqref{est:calKj}, \eqref{eq:eps_-T_2_T_2} and \eqref{eq:omega_-T_1}
\begin{align}
    \left\vert \omega_1(t) - \frac12 \omega_0 \right\vert + \left\vert \omega_2(t) + \frac12 \omega_0 \right\vert & \lesssim \left\vert \omega_1(-T_2) - \frac12 \omega_0 \right\vert + \left\vert \omega_2(-T_2) + \frac12 \omega_0 \right\vert + \eta^{\frac12} Z_0^{-2} \mu_0^{\frac54} + \mu_0^{\frac{1}{4}}\vert \dot{Z}(-T_2)\vert \nonumber \\ 
    & \lesssim \eta \mu_0^{\frac54}.\label{eq:omega_-T_2}
\end{align}

\smallskip
We now choose $\eta$ small enough so that \eqref{eq:mu_dot_Z_-T_2_T_2}, \eqref{eq:z_Z_-T_2_T_2}, \eqref{eq:eps_-T_2_T_2} and \eqref{eq:omega_-T_2} strictly improves \eqref{bootstrap_-T_1} and \eqref{bootstrap_-T_3_T_2} on $[-T_2;T^\star)$, and thus we conclude that $T^\star>T_2$.

\subsection{Bootstrap of \texorpdfstring{\eqref{bootstrap_T_2}}{(4.40)} on \texorpdfstring{$[T_2,T_1]$}{[T2,T1]}} We now prove that $T^\star= T_1$ by improving \eqref{bootstrap_T_2} on $[T_2,T^\star)$. We recall that the constant $\eta$ is fixed. In every step of the proof below, we will take $\mu_0>0$ small enough possibly depending on $C_1$. 

\smallskip
\noindent \textit{Closing the bound on $\|\epsilon\|_{H^1}$ on $[T_2,T^\star)$}. From \eqref{defi:theta}, it holds, with \eqref{defi:mAi}, \eqref{bootstrap_T_2}, \eqref{eq:simplified_z_omega}-\eqref{eq:simplified_mu} and \eqref{est:S_-T_1_T_1}-\eqref{est:T_-T_1_T_1},
\begin{align}\label{eq:bound_theta_T_2}
    \Theta(t) \lesssim C_1 \mu_0^{\frac{11}{4}} e^{-\frac{15}{16}Z(t)}.
\end{align}
By using the function $\mathcal{F}_+$ defined in \eqref{defi:F_+}, we get, with \eqref{eq:estimate_F_+}, \eqref{eq:bound_dot_z_1_dot_z_2_mu_1_mu_2},  \eqref{bootstrap_T_2}, \eqref{est:S_-T_1_T_1}-\eqref{est:T_-T_1_T_1} and \eqref{eq:bound_theta_T_2},
\begin{align*}
    \frac{d}{dt} \mathcal{F}_+(t) \lesssim C_1  \mu_0^{\frac{11}4} e^{-\frac{15}{16} Z(t)} + C_1^2 \mu_0^{\frac92} e^{-\rho Z (t)} + C_1^3 \mu_0^{\frac{21}{4}}.
\end{align*}
Using that $\dot{Z}$ is an increasing function, \eqref{ineq:dot_Z(T_2)} implies that $\dot{Z}(t) \gtrsim \mu_0$ for $t\geq T_2$ (recall that $\eta$ is fixed). By multiplying the previous term by $\dot{Z}(t) \mu_0^{-1} \gtrsim 1$ and integrating over $[T_2,t)$ with $\left\vert \mathcal{F}_+(T_2) \right\vert \lesssim \mu_0^{\frac72}$ (see \eqref{bootstrap_-T_1}), we obtain for any $t \in [T_2,T^\star)$,
\begin{align} \label{eq:eps_T_2_improved}
    \| \epsilon (t) \|_{H^1}^2 \lesssim  \mu_0^{\frac72}+C_1 \mu_0^{\frac74}e^{-\frac{15}{16}Z(T_2)} + C_1^2 \mu_0^{\frac72}e^{-\rho Z(T_2)}+C_1^3 \mu_0^{\frac{17}4} Z(t) \lesssim \mu_0^{\frac72}
\end{align}
by using that $T_1 \geq t$ in \eqref{defi:T_1} so that $Z(t)\leq Z(T_1) \lesssim Z_0$ and $Z(T_2) \ge Z_0$.

\smallskip
\noindent \textit{Closing the bound on $\bar{\mu}$ and $\bar{z}$ on $[T_2,T^\star)$}. 
From \eqref{eq:simplified_z_omega}-\eqref{eq:simplified_mu} and \eqref{eq:eps_T_2_improved}, we have
\begin{align}\label{eq:mu_mu_bar_T_2}
   \left\vert \dot{\bar{\mu}}\right\vert \lesssim \mu_0 e^{-\frac{15}{16} Z} + \mu_0^{\frac72}, \quad \text{and} \quad   \left\vert \dot{\bar{z}} - \bar{\mu} \right\vert \lesssim  Z^{-\frac12}e^{-Z}+\mu_0^{\frac74}.
\end{align}
By multiplying the right-hand side of \eqref{eq:mu_mu_bar_T_2} by $\dot{Z}(t)\mu_0^{-1} \gtrsim 1$, we get with \eqref{bootstrap_-T_1} at time $T_2$, for any $t\in [T_2;T^\star)$,
\begin{align}\label{eq:bar_mu_T_2_T_4}
    \left\vert \bar{\mu}(t) \right\vert \lesssim \mu_0^{\frac74}  \quad \text{and} \quad \left\vert \bar{z}(t)\right\vert \lesssim Z_0 \mu_0^{\frac34}.
\end{align}

\smallskip
\noindent \textit{Closing the bound on $\omega$ and $\bar{\omega}$ on $[T_2,T^\star)$}. 
From \eqref{deriv:est:calK1_simpl}, with $c_Q \neq 0$, it holds with \eqref{eq:eps_T_2_improved}
\begin{equation}
\left|\frac{d}{dt}\mathcal{K}_1-c_Q\dot{\omega}_1 \right| + \left|\frac{d}{dt} \left( \mathcal{K}_2 - 2 \mathcal{K}_1 \right) -c_{Q} \dot{\omega}_2 \right| \lesssim \mu_0^{\frac{9}{4}} .
\end{equation}
By using $\dot{Z}(t) \gtrsim \mu_0$, an integration from $T_2$ to $t$ gives, with \eqref{est:calKj}, \eqref{eq:eps_T_2_improved} and \eqref{eq:omega_-T_2}
\begin{align}\label{eq:omega_T_2}
    \left\vert \omega_1(t) - \frac12 \omega_0 \right\vert + \left\vert \omega_2(t) + \frac12 \omega_0 \right\vert \lesssim   Z_0\mu_0^{\frac54}.
\end{align}

\smallskip
To improve the bounds on $\mu$ and $z$, we introduce the time $T_4 \in (T_2,T_1)$ defined as the unique time such that
\begin{align}\label{defi:T_4}
    \mu_0^2 = Z_0 M Z(T_4)^{-\frac12} e^{-Z(T_4)},
\end{align}
where $M>10$ is a constant to be chosen later. Note that $T_4$ is well-defined thanks to the intermediate value theorem.

\smallskip
\noindent \textit{Closing the bound on $\mu$ and $z$ on $[T_2,T^\star)$ with $T^\star<T_4$}. 
As in the proof of \eqref{bootstrap_-T_3_T_2}, we use the functional $H$ defined in \eqref{defi:H(t)}. We claim that
\begin{align}\label{eq:H(T_2)}
    \left\vert H(T_2) - 2\mu_0^2 \right\vert \lesssim \mu_0^{\frac{11}4}.
\end{align}
Indeed, as in the proof of \eqref{eq:H(-T_1)}, we use \eqref{bootstrap_-T_3_T_2} at time $t=T_2$ to obtain
\begin{align*}
    \left\vert \mu(T_2)^2 - \dot{Z}(T_2)^2 \right\vert \lesssim \mu_0^{\frac{11}4}
\end{align*}
and with \eqref{propo:approx_e^-Z.1} and \eqref{bootstrap_-T_3_T_2} at time $t=T_2$
\begin{align*}
    \left\vert \int \left( Q(\cdot + z(T_2), \cdot) - Q(\cdot + Z(T_2), \cdot) Q^2 \right) \right\vert \lesssim \mu_0^{\frac34}  Z(T_2)^{-\frac12} e^{- Z(T_2)} \lesssim \mu_0^{\frac{11}4},
\end{align*}
which concludes the proof of \eqref{eq:H(T_2)}.

The time derivative of $H$ is bounded, by using \eqref{eq:H'-T_1}, \eqref{eq:simplified_z_omega}-\eqref{eq:simplified_mu}, \eqref{eq:bound_dot_z_1_dot_z_2_mu_1_mu_2} and \eqref{eq:eps_T_2_improved}
\begin{align*}
\left\vert \dot{H} \right\vert \lesssim \mu_0^2 e^{-\frac{15}{16}Z}+\mu_0^{\frac74}Z^{-\frac12}e^{-Z}+\mu_0^{\frac92}.
\end{align*}
After multiplying the right-hand side of the above inequality by $\dot{Z}(t)\mu_0^{-1} \gtrsim 1$, an integration from $T_2$ to $t$ provides, with $Z(t) \leq Z(T_1) \lesssim Z_0$ and \eqref{eq:H(T_2)},
\begin{align*}
    \left\vert H(t) - 2 \mu_0^2 \right\vert \lesssim  \mu_0^{\frac{11}4} \lesssim M Z_0 \mu_0^{\frac34} Z(t)^{-\frac12} e^{-Z(t)},
\end{align*}
where we used the definition of $T_4$ in \eqref{defi:T_4} in the last step. 
From \eqref{ineq:z-Z}, \eqref{bootstrap_T_2} and \eqref{propo:approx_e^-Z.1}, we have 
\begin{align*}
    Z(t)^{-\frac12} e^{-Z(t)} \lesssim \frac{2}{\langle \Lambda Q, Q \rangle} \int_{\mathbb{R}^2} Q(x+z(t),y) Q^2(x,y) dx dy ,
\end{align*}
which, combined to \eqref{eq:simplified_z_omega}, \eqref{eq:eps_T_2_improved}, \eqref{bootstrap_T_2}, \eqref{defi:T_4} and $\dot{Z}(t) \lesssim \mu_0$, yields
\begin{align*}
    \left\vert \mu(t)^2 -\dot{z}(t)^2 \right\vert
        & \leq \left\vert \mu(t)-\dot{z}(t) \right\vert \left( \left\vert \dot{z}(t)- \mu(t) \right\vert + 2 \left\vert \mu(t) - \dot{Z}(t) \right\vert +2 \dot{Z}(T_1) \right) \\
        & \lesssim  \mu_0^{\frac{11}4} \lesssim M Z_0 \mu_0^{\frac34} Z(t)^{-\frac12} e^{-Z(t)}. 
\end{align*}
It follows from the three previous inequalities that, by defining $\nu := C M Z_0 \mu_0^{\frac34} $ for a constant $C$ large enough,
\begin{align*}
\MoveEqLeft
    \frac12 \dot{z}(t)^2 + \frac{2(1-\nu)}{\langle \Lambda Q, Q \rangle} \int_{\mathbb{R}^2} Q(x+z(t),y) Q^2(x,y) dx dy \\
    & \qquad \leq 2\mu_0^2 \leq \frac12 \dot{z}(t)^2 + \frac{2(1+\nu)}{\langle \Lambda Q, Q \rangle} \int_{\mathbb{R}^2} Q(x+z(t),y) Q^2(x,y) dx dy.
\end{align*}
With \eqref{bootstrap_-T_3_T_2} at time $T_2$, we can apply Proposition \ref{propo:H_positive_times} with $\epsilon_0 = C\mu_0^{\frac34}$ to obtain for any $t\in [T_2,T^\star)$,
\begin{align}\label{eq:z_T_2_T_4}
    \left\vert z(t) - Z(t) \right\vert \lesssim M Z_0 \mu_0^{\frac34}.
\end{align}
Thus from \eqref{eq:gamma_1_z_Z}, \eqref{eq:simplified_mu} and \eqref{eq:eps_T_2_improved}, we have
\begin{align*}
   \left\vert \dot{\mu} -\ddot{Z} \right\vert = \left\vert \dot{\mu} + 2\gamma_1(Z) \right\vert \lesssim MZ_0 \mu_0^{\frac34} Z^{-\frac12} e^{-Z} +\mu_0 e^{-\frac{15}{16}Z} + \mu_0^{\frac72}.
\end{align*}
We multiply the right-hand side by $\dot{Z}(t)\mu_0^{-1} \gtrsim 1$, integrate from $T_2$ to $t$ and use  \eqref{bootstrap_-T_3_T_2} to find
\begin{align}\label{eq:mu_T_2_T_4}
    \left\vert \mu(t) - \dot{Z}(t) \right\vert \lesssim M Z_0 \mu_0^{\frac{7}{4}}.
\end{align}
By choosing $C_1>1$ large enough dependent on $M$, using \eqref{eq:eps_T_2_improved}, \eqref{eq:bar_mu_T_2_T_4}, \eqref{eq:omega_T_2}, \eqref{eq:z_T_2_T_4} and \eqref{eq:mu_T_2_T_4}, we strictly improve the bootstrap inequalities \eqref{bootstrap_T_2}. Hence, we conclude that $T^\star > T_4$.

\smallskip
\noindent \textit{Closing the bound on $\mu$ and $z$ on $[T_4,T^\star)$}. 
On this time interval we will need to fix $M$ large enough. From \eqref{eq:gamma_1_z_Z} and \eqref{bootstrap_T_2}, we have
\begin{align*}
    \left\vert \gamma_1(z) - \gamma_1(Z)\right\vert \lesssim \vert z- Z \vert \left\vert \gamma_1'(Z) \right\vert \lesssim C_1 Z_0^2 \mu_0^{\frac34} Z^{-\frac12} e^{-Z},
\end{align*}
and thus from \eqref{eq:simplified_mu} and \eqref{eq:eps_T_2_improved},
\begin{align*}
    \left\vert \dot{\mu} - \ddot{Z} \right\vert =\left\vert \dot{\mu} + 2\gamma_1(Z) \right\vert \lesssim \mu_0e^{-\frac{15}{16}Z}+\mu_0^{\frac72}+C_1 Z_0^2 \mu_0^{\frac34} Z^{-\frac12} e^{-Z}.
\end{align*}
By multiplying the right-hand side by $\dot{Z}(t)\mu_0^{-1} \gtrsim 1$, we integrate from $T_4$ to $t\in [T_4, T^\star)$ with the initial condition \eqref{eq:mu_T_2_T_4} at time $T_4$ to obtain
\begin{align*}
    \left\vert \mu(t) - \dot{Z}(t)\right\vert \lesssim \left( C_1 M^{-1} + M \right) Z_0 \mu_0^\frac74.
\end{align*}
As a consequence, using \eqref{eq:simplified_mu}, we have
\begin{align*}
    \left\vert \dot{z}(t) - \dot{Z}(t) \right\vert \lesssim \left( C_1 M^{-1} + M\right) Z_0 \mu_0^{\frac74}.
\end{align*}
We multiply the right-hand side by $\dot{Z}(t)\mu_0^{-1} \gtrsim 1$, use \eqref{eq:z_T_2_T_4}  and integrate from $T_4$ to $t \in [T_4,T^\star)$ to find
\begin{align*}
    \left\vert z(t) - Z(t) \right\vert \lesssim \left( C_1 M^{-1} + M \right) Z_0^2 \mu_0^{\frac34}.
\end{align*}
Now, we choose $M$ (independent of $C_1$) large enough and then $C_1$ large so that the two following inequalities are satisfied
\begin{align*}
    \left\vert \mu(t) - \dot{Z}(t) \right\vert \leq \frac{C_1}{2} Z_0 \mu_0^{\frac74}, \quad \left\vert z(t) - Z(t) \right\vert \leq \frac{C_1}{2} Z_0^2 \mu_0^{\frac34}.
\end{align*}

Increasing $C_1$ if necessary in \eqref{eq:eps_T_2_improved}, \eqref{eq:bar_mu_T_2_T_4} and \eqref{eq:omega_T_2} strictly improves the bounds of \eqref{bootstrap_T_2} on $[T_4,T^\star)$. As a consequence, $T^\star=T_1$, which concludes the proof of Proposition \ref{propo:bootstrap_-T_T}.

\section{Dynamics of the two-solitary waves structure} \label{sec:dynamics}

The aim of this section is to study the dynamics of the two-solitary waves structure $v$ solution to \eqref{ZK:sym} behaving at time $t=-\infty$ as a sum of two well-prepared symmetric solitary waves (see \eqref{multi_sol:-infty}). 
In the first subsection, we prove a quantitative orbital stability result on a finite time interval as long as the two solitary waves remain far enough. In Subsection \ref{sub:sec:maintheo}, we prove Theorem \ref{maintheo} by describing the evolution of the solution $v$ from $-\infty$ to $+\infty$. Finally, Subsection \ref{sub:sec:theo:stability} is devoted to the proof of Theorem \ref{theo:stability}.

In all this section, for $\mu_0$ small, we use the function $Z$ defined in Lemma \eqref{lemm:pointwise_Z} with the asymptote $2\mu_0t +l$ at $+\infty$ and its properties stated in Subsection \ref{sub:sec:Z}.

\subsection{Orbital stability of the \texorpdfstring{$2$}{2}-solitary waves structure for large negative time}
We state below the main result of this subsection. 

\begin{propo}\label{propo:quantified_orbital_stability}
There exist $\nu_4^\star>0$ and $C>0$ such that the following holds. Let $\mu_0\in (0, \nu_4^\star)$ and $\kappa$ be a continuous increasing function $\kappa : \mathbb{R}_- \rightarrow (0,1)$ satisfying 
\begin{align} \label{def:kappa}
    \lim_{t\rightarrow -\infty} t \kappa(t)=0.
\end{align}
Let $Z$, $T_1$, $l$ and $K$ be defined as in Lemma \ref{lemm:pointwise_Z} and Lemma \ref{lemm:uniform_Z}. There exists a time $\tilde{t}_0$ satisfying
\begin{align}\label{def:t_0}
    -\tilde{t}_0 <\min(-T_1, -\frac{l+K}{\mu_0})
\end{align}
such that, if $t_0> \tilde{t}_0$, $\vert \omega_0 \vert \leq \mu_0$ and $w \in \mathcal{C}([-t_0,-T_1], H^1(\mathbb{R}^2))$ is a solution to \eqref{ZK:sym} satisfying
\begin{align}\label{cond:init_stability}
    \left\| w(-t_0) - \sum_{i=1}^2 Q_{1+(-1)^i\mu_0} \left(\cdot+ \frac{(-1)^i}{2}Z(-t_0),\cdot + \frac{(-1)^i}{2} \omega_0 \right) \right\|_{H^1} \leq \kappa(-t_0) \mu_0,
\end{align}
then there exists a unique function $\Gamma=(z_1,z_2,\omega_1,\omega_2,\mu_1,\mu_2) \in \mathcal{C}^1([-t_0,-T_1] : \mathbb{R}^6)$, such that by defining $\epsilon$ as in \eqref{def:epsilon}, we have, for any $t\in [-t_0,-T_1]$ and for any $i = 1,2$,
    \begin{equation}\label{estimates:stability}
        \begin{aligned}
            & \left\| \epsilon(t) \right\|_{H^1} + \left\vert \mu_i(t) +\frac{(-1)^i}{2} \dot{Z}(t) \right\vert  \leq C \left( \kappa(-t_0) \mu_0 + e^{-\frac{15}{16} Z(t)} \right), \\
            & \left\vert z_i(t) + \frac{(-1)^i}{2}Z(t) \right\vert + \left\vert \omega_i(t) + \frac{(-1)^i}{2} \omega_0 \right\vert \leq C \left(\kappa(-t_0) t_0 + Z(t)^{-\frac14} e^{-\frac12 Z(t)} \right).
        \end{aligned}
    \end{equation}
    Moreover, the following inequality holds, for any $i=1,2$ and $t \in [-t_0,T_1]$
     \begin{align}\label{est:z_i_omega_i_-t_0_-T_1}
         \left\vert \dot{z}_i(t)  - \mu_i(t) \right\vert + \vert \dot{\omega}_i(t) \vert \leq C \left( \kappa(-t_0) \mu_0 + e^{-\frac{15}{16} Z(t)} \right).
     \end{align}
\end{propo}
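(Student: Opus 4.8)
\textbf{Proof strategy for Proposition \ref{propo:quantified_orbital_stability}.} The plan is to run a single bootstrap argument on the time interval $[-t_0,-T_1]$, using the modulation decomposition of Lemma \ref{propo:modulation} together with the monotonicity functional $\mathcal{F}_-$ (or $\mathcal{F}_+$, depending on the sign of $\mu_1-\mu_2$) from Proposition \ref{propo:functionals}, and closing the estimates by integrating backwards from the \emph{final} time $-T_1$ rather than forward from $-t_0$. First I would fix $\tilde t_0$ large enough so that \eqref{def:t_0} holds and so that at $t=-t_0$ the hypothesis \eqref{cond:init_stability} places $w(-t_0)$ inside the tube $\mathcal{U}_{Z^0,\mu^0,\sigma}$ of Lemma \ref{propo:modulation} with $Z^0 \sim Z(-t_0) \geq \rho^{-1}Z_0$ and $\mu^0 \sim \mu_0$; this uses that $\mu_0 < \nu_4^\star$ is small and that $\kappa(-t_0)\mu_0 < \sigma^\star$. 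Applying Lemma \ref{propo:modulation} and Remark \ref{rema:modulation_pointwise}, we get the $C^1$ parameters $\Gamma$ and $\epsilon$ with $\|\epsilon(-t_0)\|_{H^1}\lesssim \kappa(-t_0)\mu_0$ and $\sum_i(|z_i(-t_0)+\tfrac{(-1)^i}2 Z(-t_0)| + |\mu_i(-t_0)+\tfrac{(-1)^i}2\dot Z(-t_0)| + |\omega_i(-t_0)+\tfrac{(-1)^i}2\omega_0|)\lesssim \kappa(-t_0)\mu_0$. Here one must check that the reference profile in \eqref{cond:init_stability}, which uses $Q_{1+(-1)^i\mu_0}$ centered at $\tfrac{(-1)^i}2 Z(-t_0)$, is $O(e^{-cZ(-t_0)})$-close in $H^1$ to the approximate solution $V(\Gamma^0)$ for the natural choice of $\Gamma^0$; the discrepancy is the interaction plateau $V_A$, controlled by \eqref{est:VA:H2}, plus the difference between $\mu_0$ and the ODE velocity $\dot Z(-t_0)$, which is $O(e^{-cZ(-t_0)})$ by \eqref{Z_asympt}.

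The bootstrap assumptions on $[-t_0,T^{\star})$ would be of the form $\|\epsilon(t)\|_{H^1}\le A(\kappa(-t_0)\mu_0 + e^{-\frac{15}{16}Z(t)})$, $\sum_i|\mu_i(t)+\tfrac{(-1)^i}2\dot Z(t)|\le A(\kappa(-t_0)\mu_0 + e^{-\frac{15}{16}Z(t)})$, $\sum_i|z_i(t)+\tfrac{(-1)^i}2 Z(t)|\le A(\kappa(-t_0)t_0 + Z(t)^{-\frac14}e^{-\frac12 Z(t)})$, and the analogous bound for $\omega_i$, for a large constant $A$ to be fixed. Under these, Assumption \ref{hyp:coeff} holds (with $z(t)=z_1(t)-z_2(t)\ge Z(t)-$small $\ge Z_0$ by convexity of $Z$, Lemma \ref{lemm:pointwise_Z}(i)), so Proposition \ref{theo:V_A} and Proposition \ref{propo:dynamical_system} apply. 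Then: (1) the coercivity \eqref{coercivity_F} plus the differential inequality \eqref{eq:estimate_F_-}/\eqref{eq:estimate_F_+} give, after estimating $\Theta$ and the source terms via \eqref{est:Sigma}--\eqref{est:dt_S} and the modulation bounds \eqref{eq:ortho_translation}--\eqref{eq:ortho_velocity}, a bound $\frac{d}{dt}\mathcal{F}_{\pm}(t)\lesssim$ (something like $\mu_0 e^{-\frac{15}{16}Z}\|\epsilon\|_{H^1} + e^{-\frac{15}{16}z}\mu_0^2+\dots$); integrating from $-t_0$ and using $\mathcal{F}_{\pm}(-t_0)\lesssim (\kappa(-t_0)\mu_0)^2$ together with $\int_{-t_0}^{t}e^{-\frac{15}{16}Z(s)}ds \lesssim \mu_0^{-1}e^{-\frac{15}{16}Z(t)}$ (using $-\dot Z\sim \mu_0$ on this region, Lemma \ref{lemm:uniform_Z}) yields $\|\epsilon(t)\|_{H^1}^2\lesssim (\kappa(-t_0)\mu_0)^2 + e^{-\frac{15}{8}Z(t)}$, improving the $\epsilon$-bound. (2) Feeding this into \eqref{eq:simplified_mu}-type estimates — really \eqref{eq:ortho_velocity} with $\gamma_1(z)-\gamma_1(Z)$ controlled by \eqref{propo:approx_e^-Z.1} and $\ddot Z=-2\gamma_1(Z)$ — gives $|\dot\mu_i + \tfrac{(-1)^i}2\ddot Z|\lesssim \mu_0 e^{-\frac{15}{16}Z} + \|\epsilon\|_{L^2}^2 + |z-Z|\,Z^{-1/2}e^{-Z}$; integrating (again against the factor $-\dot Z\mu_0^{-1}$) improves the $\mu_i$-bound. (3) Then $\dot z_i - \mu_i = O(\|\epsilon\|+e^{-\frac{15}{16}z})$ from \eqref{eq:ortho_translation} integrates to improve the $z_i$-bound, where the term $\kappa(-t_0)t_0$ appears because $\int_{-t_0}^t \kappa(-t_0)\mu_0\,ds \le \kappa(-t_0)\mu_0\cdot t_0$ and one absorbs a $\mu_0$; the condition \eqref{def:kappa} ($t\kappa(t)\to 0$) is exactly what guarantees this stays small. (4) For $\omega_i$ one uses the refined quantity $\mathcal{K}_i$ from \eqref{def:calKj}--\eqref{deriv:est:calK1}: the modulation estimate alone only gives $|\dot\omega_i|\lesssim \|\epsilon\|+e^{-\frac{15}{16}z}$ which integrated would produce $\kappa(-t_0)t_0$, too weak for the $\mu_0$-scaling demanded in \eqref{estimates:stability}, whereas $\frac{d}{dt}\mathcal{K}_i - c_Q\dot\omega_i$ is quadratically small, so integrating the $\mathcal{K}_i$ relation and using \eqref{est:calKj} gives $|\omega_i(t)+\tfrac{(-1)^i}2\omega_0|\lesssim \kappa(-t_0)t_0 + \mu_0^{-1/2}\|\epsilon\|$, which with the improved $\epsilon$-bound closes. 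Estimate \eqref{est:z_i_omega_i_-t_0_-T_1} is then just \eqref{eq:ortho_translation}--\eqref{eq:ortho_velocity} re-read with the now-established bounds.

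Closing the bootstrap: choosing $A$ large (independent of $\mu_0,t_0$) and then $\nu_4^\star$ small, each of the four quantities is strictly improved on $[-t_0,T^\star)$, so $T^\star=-T_1$ by continuity, giving \eqref{estimates:stability} on all of $[-t_0,-T_1]$. Uniqueness of $\Gamma$ is inherited from Lemma \ref{propo:modulation}.

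\textbf{Main obstacle.} The delicate point is \emph{not} the energy estimate itself but the bookkeeping of the two different time-scales: the error $\|\epsilon\|$ is forced to be of size $\kappa(-t_0)\mu_0 + e^{-\frac{15}{16}Z}$, i.e. the initial perturbation is amplified at worst by a bounded factor over an interval of length $\sim t_0$, and this is only possible because the source terms decay like $e^{-\frac{15}{16}Z(t)}$ and $-\dot Z\sim\mu_0$, so $\int e^{-\frac{15}{16}Z}\,ds$ converges on a scale $\mu_0^{-1}$ rather than growing like $t_0$. Getting the powers of $\mu_0$ and $Z_0$ to match the statement — in particular producing the sharp translation/scaling bounds with the $Z(t)^{-1/4}e^{-Z(t)/2}\sim\mu_0$-type right-hand side, and handling the transverse parameter $\omega_i$ for which the naive estimate is genuinely insufficient so that the localized quantity $\mathcal{K}_i$ is indispensable — is where all the care goes. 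A secondary technical nuisance is that the sign of $\mu_1-\mu_2$ may change on $[-t_0,-T_1]$ (though since $Z$ is decreasing on $\mathbb{R}_-$ and the waves have not yet collided, one expects $\mu_1<\mu_2$ throughout this region, so only $\mathcal{F}_-$ is needed here — but this must be checked from the bootstrap bounds and the sign of $\dot Z$).
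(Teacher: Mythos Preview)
Your overall strategy matches the paper's: modulate via Lemma \ref{propo:modulation}, run a bootstrap on $[-t_0,T^\star)$, control $\|\epsilon\|$ through $\mathcal{F}_-$ (indeed only $\mathcal{F}_-$ is needed, since $\mu_2>\mu_1$ throughout this pre-collision interval), and control the parameters by integrating the dynamical system \eqref{eq:ortho_translation}--\eqref{eq:ortho_velocity} after multiplying by $-\dot Z(t)\mu_0^{-1}\ge 1$ (cf.\ \eqref{ineq:dot_Z(T_1)}). The opening remark about ``integrating backwards from $-T_1$'' is a slip --- both you and the paper integrate forward from $-t_0$, using the smallness of the data there.

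The one substantive discrepancy is your claim that the direct modulation estimate on $\dot\omega_i$ is ``genuinely insufficient'' and that the functional $\mathcal{K}_i$ is ``indispensable''. This misreads the statement: look again at the target bound in \eqref{estimates:stability} --- the quantity $|\omega_i(t)+\tfrac{(-1)^i}{2}\omega_0|$ is asked to satisfy the \emph{same} bound $C(\kappa(-t_0)t_0+Z(t)^{-1/4}e^{-Z(t)/2})$ as $|z_i(t)+\tfrac{(-1)^i}{2}Z(t)|$, not a sharper $\mu_0^{5/4}$-type bound. The paper therefore treats $\omega_i$ and $z_i$ identically: from \eqref{dynamical_system_z_i_-T_1} one has $|\dot\omega_i|\lesssim C_\star(\kappa(-t_0)\mu_0+e^{-\frac{15}{16}Z})$, and integrating with the $-\dot Z\mu_0^{-1}$ weight produces $C_\star(\kappa(-t_0)Z(-t_0)+\mu_0^{-1}e^{-\frac{15}{16}Z(t)})$; then \eqref{est:Z(-t0)} gives $\kappa(-t_0)Z(-t_0)\lesssim \kappa(-t_0)t_0\,\mu_0$, which for small $\mu_0$ strictly improves the bootstrap. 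The refined quantity $\mathcal{K}_i$ is reserved for the collision region (Proposition \ref{propo:bootstrap_-T_T}), where the target on $\omega$ is the genuinely tighter $Z_0\mu_0^{5/4}$ and direct integration really would be too weak. Invoking $\mathcal{K}_i$ here would not break the argument, but it is unnecessary, and your stated reason for needing it is incorrect.
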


\begin{rema} 
The neighbourhood of the $2$-solitary waves for which the stability result applies is quantified by the function $\kappa$ introduced in Proposition \ref{propo:quantified_orbital_stability}.
\end{rema}

\begin{proof}
For any $t\geq -t_0$, as long as $w(t)$ stays close to a rescaled approximation $V(\Gamma)$, we introduce the decomposition $(\Gamma(t), \epsilon(t))$ of the solution $w(t)$ as constructed in Lemma \ref{propo:modulation}. Let us decompose the solution at time $t=-t_0$. From assumption \eqref{cond:init_stability}, set \begin{align*}
\Gamma_0:=\left( \frac{1}{2}Z(-t_0),-\frac{1}{2}Z(-t_0), \frac{\omega_0}2,-\frac{\omega_0}2, \frac12 \dot{Z}(-t_0),-\frac12 \dot{Z}(-t_0)\right).    
\end{align*}
Thus, by continuity of $w$ in $H^1$, we obtain from \eqref{est:VA:H2} and the triangle inequality that 
\begin{align*}
    w(t) \in \mathcal{U}_{ \frac12 Z(t_0),2\mu_0,\eta}, \quad \text{where} \quad \eta = \kappa(-t_0) \mu_0 + C e^{-\frac{15}{16} Z(-t_0)} ,
\end{align*}
for $t$ in a small neighbourhood of $t_0$. By choosing the parameter $\nu_4^\star$ small enough, one can then apply Lemma \ref{propo:modulation}. As noticed in Remark \ref{rema:modulation_pointwise}, we have 
\begin{align}
    & \| \epsilon (-t_0) \|_{H^1} \lesssim \kappa(-t_0)\mu_0 + e^{- \frac{15}{16} Z(-t_0)}, \nonumber\\
    & \sum_{i=1}^2 \left( \left\vert z_i(-t_0)+ \frac{(-1)^i}{2} Z(-t_0)  \right\vert + \left\vert \omega_i(-t_0)+\frac{(-1)^i}{2}\omega_0 \right\vert  + \left\vert \mu_i(-t_0) + \frac{(-1)^i}{2} \dot{Z}(-t_0)\right\vert \right) \nonumber \\
        & \quad \lesssim \kappa(-t_0)\mu_0 + e^{- \frac{15}{16} Z(-t_0)} . \label{est_bootstrap_mu_i_-t_0}
\end{align}
In particular, by the definition of $\mathcal{F}_-$ in \eqref{defi:F_-}, we have
\begin{align}\label{est_bootstrap_F_-_-t_0}
    \mathcal{F}_- (-t_0) \lesssim \left( \kappa(-t_0)\mu_0 + e^{-\frac{15}{16} Z(-t_0)} \right)^2.
\end{align}

We now prove the rest of the proposition by a bootstrap argument. For a positive constant $C_\star>0$ to be chosen later, we introduce the bootstrap estimates
\begin{align}
     & \left\| \epsilon(t) \right\|_{H^1} + \sum_{i=1}^2 \left\vert \mu_i(t) +\frac{(-1)^i}{2} \dot{Z}(t) \right\vert \leq C_\star \left( \kappa(-t_0) \mu_0 + e^{-\frac{15}{16} Z(t)} \right), \label{estimates:stability_bootstrap} \\
     & \sum_{i=1}^2 \left( \left\vert z_i(t) + \frac{(-1)^i}{2}Z(t) \right\vert + \left\vert \omega_i(t) + \frac{(-1)^i}2 \omega_0 \right\vert \right) \leq C_\star \left( \kappa(-t_0) t_0 + Z(t)^{-\frac14} e^{-\frac12 Z(t)} \right). \label{estimates:stability_bootstrap_z_i}
\end{align}
We define
\begin{equation*}
    T^\star := \sup \left\{ T \in [-t_0,-T_1) \, : \,
        \text{for all }t \in [-t_0,T), \quad w(t) \in \mathcal{U}_{2Z_1^\star, \frac12 \nu_1^\star ,\frac12 \sigma^\star} \text{ and } \eqref{estimates:stability_bootstrap}- \eqref{estimates:stability_bootstrap_z_i} \text{ hold}
     \right\},
\end{equation*}
where $Z_1^\star$, $\nu_1^\star$ and $\sigma^\star$ are the parameters defined in Lemma \ref{propo:modulation}. Observe from \eqref{cond:init_stability} and the continuity of $t\mapsto w(t)$ in $H^1$ that $T^\star$ is well-defined. 

To prove that $T^\star=-T_1$, for $C_\star$ large enough, we argue by contradiction and assume that $T^\star<-T_1$. Then estimates \eqref{estimates:stability_bootstrap}-\eqref{estimates:stability_bootstrap_z_i} hold on $[-t_0,T^\star)$. We will prove that these estimates can be strictly improved on $[-t_0,T^\star)$ to obtain a contradiction.

On the interval $[-t_0,T^\star)$, we have, recalling $z(t)=z_1(t)-z_2(t)$,
\begin{align} \label{estimates:stability_bootstrap_z}
     \left\vert z(t)-Z(t) \right\vert \lesssim C_\star \left( \kappa(-t_0) t_0 + Z(t)^{-\frac14} e^{-\frac12 Z(t)} \right).
\end{align}
Note that by choosing $\tilde{t}_0$ large and $\nu_3^\star$ both dependent on $C_\star$, we obtain $\vert z(t)-Z(t) \vert \leq 1$. Hence, we deduce from the mean value theorem
\begin{align}\label{est:z_Z_-infty}
    \left\vert z(t)^{-\frac12}e^{-z(t)}- Z(t)^{-\frac12} e^{-Z(t)}\right\vert \lesssim \vert z(t)-Z(t) \vert Z(t)^{-\frac12}e^{-Z(t)}.
\end{align}
As a consequence, it follows from Proposition \ref{theo:V_A} (v) and (vi) that, for $t\in [-t_0,T^\star)$,
\begin{align*}
    & \left\| S(t) \right\|_{H^2} \lesssim Z(t)^{-\frac12} e^{-Z(t)}+ e^{-\frac{15}{16} Z(t)} \sum_{i=1}^2 \left(\vert \mu_i(t) \vert+ \vert \omega_i(t) \vert \right) , \\
    & \left\| T(t) \right\|_{H^1} + \left\vert \left\langle S,\partial_x R_i \right\rangle \right\vert + \left\vert \left\langle S,\partial_y R_i \right\rangle \right\vert + \left\vert \left\langle S,\Lambda R_i \right\rangle \right\vert \\
    & \quad \lesssim e^{-\frac{31}{16} Z(t)} + e^{-\frac{15}{16} Z(t)} \sum_{i=1}^2 \left( \vert \mu_i(t) \vert+ \vert \omega_i (t) \vert + \vert \dot{z}_i(t) \vert \right), 
\end{align*}
which provides with \eqref{estimates:stability_bootstrap}, \eqref{estimates:stability_bootstrap_z_i} and recalling that $\vert \dot{Z}(t) \vert \leq 2 \mu_0$ (see Lemma \ref{lemm:pointwise_Z}),
\begin{align}
    & \| S(t) \|_{H^2} \lesssim Z(t)^{-\frac12} e^{-Z(t)} + \left( \mu_0 +C_\star \kappa (-t_0) t_0 \right) e^{-\frac{15}{16} Z(t)}, \label{est:S_-T_T} \\
    & \left\| T(t) \right\|_{H^1} + \left\vert \left\langle S,\partial_x R_i \right\rangle \right\vert + \left\vert \left\langle S,\partial_y R_i \right\rangle \right\vert + \left\vert \left\langle S,\Lambda R_i \right\rangle \right\vert \nonumber \\
    & \quad \lesssim \mu_0  e^{-\frac{15}{16} Z(t)} + C_\star \left( Z(t)^{-\frac12} e^{-\frac{23}{16} Z(t)} + \kappa(-t_0) t_0 Z(t)^{-\frac12} e^{- Z(t)} \right) + e^{-\frac{15}{16} Z(t)} \sum_{i=1}^2 \vert \dot{z}_i (t) \vert. \label{est:T_-T_T}
\end{align}

From \eqref{eq:ortho_translation}-\eqref{eq:ortho_velocity} combined with \eqref{Def:gammai}, \eqref{eq:Z}, \eqref{estimates:stability_bootstrap}, \eqref{estimates:stability_bootstrap_z}-\eqref{est:T_-T_T} and \eqref{propo:approx_e^-Z.1}, we obtain the quantified dynamical system, for $i=1,2$,
\begin{align}
    \left\vert \dot{z}_i(t) + \frac{(-1)^i}{2} \dot{Z}(t) \right\vert & + \left\vert \dot{\omega_i}(t) \right\vert \lesssim C_\star \left( \kappa(-t_0) \mu_0 + e^{-\frac{15}{16} Z(t)} \right), \label{dynamical_system_z_i_-T_1} \\
    \left\vert \dot{\mu}_i(t) + \frac{(-1)^i}{2} \ddot{Z}(t) \right\vert & \lesssim \mu_0 e^{-\frac{15}{16} Z(t)} + C_\star Z(t)^{-\frac12} e^{-\frac{23}{16} Z(t)} + C_\star \kappa(-t_0) \mu_0 e^{-\frac{15}{16} Z(t)} \nonumber \\
        & \quad + C_\star \kappa(-t_0) t_0 Z(t)^{-\frac12} e^{- Z(t)} + C_\star^2 \left( \kappa(-t_0) \mu_0 \right)^2 - \dot{Z}(t) e^{-\frac{15}{16} Z(t)}. \label{dynamical_system_mu_i_-T_1} 
\end{align}

First, we estimate $\mu_i$, for $i=1,2$. By multiplying the three first terms of the right-hand side of \eqref{dynamical_system_mu_i_-T_1} by $-\dot{Z}(t)\mu_0^{-1} \geq 1$ (see \eqref{ineq:dot_Z(T_1)}) and by integrating from $-t_0$ to $t$, we have combining with \eqref{est_bootstrap_mu_i_-t_0},
\begin{align*}
    \left\vert \mu_i(t) + \frac{(-1)^i}{2} \dot{Z}(t) \right\vert
    & \lesssim \kappa(-t_0) \mu_0 + e^{-\frac{15}{16} Z(t)} + C_\star \left( \frac{e^{-\frac{23}{16} Z(t)}}{\mu_0 Z^{\frac12} (t)} + \kappa(-t_0) e^{-\frac{15}{16} Z(t)} + \frac{\kappa(-t_0)t_0 e^{- Z(t)}}{\mu_0 Z(t)^{\frac12}} \right) \\
        & \quad  + C_\star^2 \frac{ Z(-t_0)}{\mu_0} \kappa(-t_0)^2 \mu_0^2.
\end{align*}
Now we have from \eqref{def:t_0} and \eqref{def:kappa} 
\begin{align} \label{est:Z(-t0)}
Z(-t_0) \lesssim t_0 \mu_0, \quad \text{so that} \quad Z(-t_0)\kappa(-t_0) \lesssim \mu_0,
\end{align}
and from  the definition of $T_1$ in \eqref{defi:T_1}, \eqref{est:mu0_Z0} and $0<\rho<\frac1{32}$, for any $t \in [-t_0,T^{\star})$, 
\begin{align} \label{est:e-Z(t)}
e^{-Z(t)} \lesssim e^{-\frac{Z_0}{\rho}} \lesssim \mu_0^{16} .
\end{align}
Hence, we deduce by choosing $\nu_3^\star$ small enough and $C_\star$ large enough that
\begin{align}\label{eq:bound_mu_i_improved}
    \left\vert \mu_i(t) + \frac{(-1)^i}{2} \dot{Z}(t) \right\vert \leq \frac{C_\star}{4} \left( \kappa(-t_0) \mu_0 + e^{-\frac{15}{16} Z(t)} \right).
\end{align}

Now, we turn to the bounds on $z_i$ and $\omega_i$ for $i=1,2$. By multiplying \eqref{dynamical_system_z_i_-T_1} by $- \dot{Z}(t) \mu_0^{-1}  \ge 1$, an integration from $-t_0$ to $t$ yields, using the initial bound \eqref{est_bootstrap_mu_i_-t_0} and \eqref{est:Z(-t0)}-\eqref{est:e-Z(t)},
\begin{align*}
\MoveEqLeft
    \left\vert z_i(t) + \frac{(-1)^i}{2} Z(t) \right\vert + \left\vert \omega_i(t) +\frac{(-1)^i}{2}\omega_0 \right\vert \\
    & \qquad \lesssim \kappa(t_0)\mu_0 + e^{-\frac{15}{16} Z(-t_0)}+ C_\star \left( \kappa(-t_0) Z(-t_0) +  \frac{1}{\mu_0} e^{-\frac{15}{16} Z(t)} \right).
\end{align*}
Therefore, by choosing $ \mu_0 \leq \nu_3^\star$ small enough, we conclude that  
\begin{align*}
    \left\vert z_i(t) + \frac{(-1)^i}{2} Z(t) \right\vert +\left\vert \omega_i(t) - \frac{(-1)^i}{2}\omega_0 \right\vert \leq \frac{C_\star}{4} \left( \kappa(-t_0) t_0 + Z(t)^{-\frac14} e^{-\frac12 Z(t)} \right), 
\end{align*}
which strictly improves the bound \eqref{estimates:stability_bootstrap_z_i}.

To estimate $\epsilon(t)$, we use the functional $\mathcal{F}_-$ defined in \eqref{defi:F_-}. From the definition of $\Theta$ in \eqref{defi:theta}, using that $\kappa(-t_0)t_0$ is uniformly bounded on $\mathbb{R}$, combining \eqref{ineq:dot_Z(T_1)}, \eqref{estimates:stability_bootstrap}, \eqref{est:S_-T_T}-\eqref{est:T_-T_T},  \eqref{est:e-Z(t)} and taking $\mu_0$ small enough, we obtain for any $t \in [-t_0,T_1]$
\begin{align}\label{est:theta_t_0_-T_1} 
    \Theta(t) \lesssim C_\star \mu_0 e^{-\frac{15}{8} Z(t)} + C_\star \kappa(-t_0) \mu_0^2 e^{-\frac{15}{16} Z(t)} + C_\star^2 \kappa(-t_0)^2 \mu_0^2 e^{-\frac{15}{16} Z(t)}.
\end{align}

We obtain combining \eqref{eq:estimate_F_-}, \eqref{estimates:stability_bootstrap}, \eqref{estimates:stability_bootstrap_z}-\eqref{est:z_Z_-infty},\eqref{est:theta_t_0_-T_1}, 
\begin{align*}
    \frac{d}{dt} \mathcal{F}_-(t) 
        & \lesssim C_\star \mu_0 e^{-\frac{15}{8} Z(t)} + C_\star \kappa(-t_0) \mu_0^2 e^{-\frac{15}{16} Z(t)} + C_\star \kappa(-t_0)^2 t_0 \mu_0^2 e^{-\frac{15}{16} Z(t)} \\
        & + C_\star^2 \mu_0 \kappa(-t_0) t_0 e^{-\frac{15}{8} Z(t)} + C_\star^3 \kappa(-t_0)^3 t_0 \mu_0^2 e^{-\frac{15}{16} Z(t)} + C_\star^2 \kappa(-t_0) \mu_0^3 e^{-\rho Z(t)} + C_\star^3 \kappa(-t_0)^3 \mu_0^3
\end{align*}

We integrate from $-t_0$ to $t$ after multiplying the right-hand side of the above inequality by $-\dot{Z}(t) \mu_0^{-1} >1$ to obtain
\begin{align*}
    & \mathcal{F}_-(t) - \mathcal{F}_-(-t_0) \lesssim C_\star e^{-\frac{15}{8} Z(t)} + C_\star \kappa(-t_0) \mu_0 e^{-\frac{15}{16} Z(t)} + C_\star^2 \kappa(-t_0)^2 t_0 \mu_0 e^{-\frac{15}{16} Z(t)} \\
    & \quad + C_\star^2 \kappa(-t_0) t_0 e^{-\frac{15}{8} Z(t)} + C_\star^3 \kappa(-t_0)^3 t_0 \mu_0 e^{-\frac{15}{16} Z(t)} + C_\star^2 \kappa(-t_0)^2 \mu_0^2 e^{-\rho Z(t)} + C_\star^3 \kappa(-t_0)^3 \mu_0^2 Z(-t_0).
\end{align*}

Observe from Young's inequality that 
\begin{align*}
    C_\star^2 \kappa(-t_0)^2 t_0 \mu_0 e^{-\frac{15}{16} Z(t)} + C_\star^2 \kappa(-t_0) t_0 e^{-\frac{15}{8} Z(t)} \lesssim C_\star^2 \kappa(-t_0) t_0 \left( \left( \kappa(-t_0 \mu_0 \right)^2 + e^{-\frac{15}{8} Z(t)} \right).
\end{align*}
Moreover, we have by using \eqref{est:e-Z(t)} by choosing $\mu_0$ small depending on $\rho$ and $C_\star$,
\begin{align*}
     C_\star^2 \kappa(-t_0)^2 \mu_0^2 e^{-\rho Z(t)} \lesssim C_\star^2 \kappa(-t_0)^2 \mu_0^2 \mu_0^{16 \rho} \lesssim C_\star \left( \kappa(t) \mu_0 \right)^2.
\end{align*}
Thus it follows from \eqref{est:Z(-t0)}, \eqref{est:e-Z(t)}, the above estimates, the coercivity of $\mathcal{F}_-$ in \eqref{coercivity_F}, the bound at time $-t_0$ of $\mathcal{F}_-$ in \eqref{est_bootstrap_F_-_-t_0} and taking $\mu_0 \leq \nu_4^\star$ small enough that there exists a positive universal constant $C$ such that
\begin{align*}
    \| \epsilon (t) \|_{H^1}^2 \leq \left( C C_\star + C C_\star^2 \kappa(-t_0)t_0\right) \left(\kappa(-t_0)^2 \mu_0^2 + e^{-\frac{15}{8} Z(t)} \right).
\end{align*}
Therefore, by taking $t_0$ large enough such that $C \kappa(-t_0) t_0 \leq \frac18$ and $C_\star$ large enough such that $C_\star > 8C $, and using \eqref{eq:bound_mu_i_improved} the bound \eqref{estimates:stability_bootstrap} is strictly improved on $[-t_0,T^\star)$.

Thus, by a standard continuity argument, we have just contradicted the definition of $T^\star$, which concludes the proof of the first part of the proposition. Note that \eqref{est:z_i_omega_i_-t_0_-T_1} is a direct consequence of the quantified dynamical system \eqref{dynamical_system_z_i_-T_1} by choosing $C$ large enough.
\end{proof}

\subsection{Proof of Theorem \ref{maintheo}.} \label{sub:sec:maintheo}

Let $v$ be the pure $2$-solitary waves at $-\infty$, \textit{i.e.} $v$ is the unique solution to \eqref{ZK:sym} satisfying \eqref{multi_sol:-infty}. We refer to \cite{Val21} for the existence and uniqueness of the function $v$. In this section, we rely on the approximate solution $V$ constructed in Section \ref{sec:construction_V} to describe the evolution of $v$ from $-\infty$ to $+\infty$. 

Recall that the positive time $T_1$ defined in \eqref{defi:T_1} satisfies $Z(T_1)= \rho^{-1} Z_0$, where $\rho \in (0;\frac{1}{32})$ has been fixed in the proof of Proposition \ref{propo:bootstrap_-T_T}. Notice that, by \eqref{eq:comparison_l_1_e^Z_0} and the bound on $\rho$ in \eqref{defi:T_1}, it holds
\begin{align}
    \forall t \in \mathbb{R}, & \quad Z(t)^{-\frac12} e^{-Z(t)} \leq Z_0^{-\frac12} e^{-Z_0} \lesssim \mu_0^{2}, \label{est:Z(t)_R}\\
    \forall \vert t \vert \geq T_1, & \quad Z(t)^{-\frac12} e^{-Z(t)} \leq \mu_0^{10}.\label{est:Z(t)_T_1_infty}
\end{align}

\begin{coro}\label{coro:quantified_orbital_stability}
Let $\nu_4^\star$ be defined in Proposition \ref{propo:quantified_orbital_stability}. There exists a constant $C>0$ such that the following holds.
Let $\mu_0 <\nu_4^\star$ and $v(t)$ be the unique solution of \eqref{ZK:sym} satisfying \eqref{multi_sol:-infty}.
Then, there exists a function $\Gamma: (-\infty,-T_1] \rightarrow \mathbb{R}^6$ such that the orthogonality conditions \eqref{eps:ortho} hold on $(-\infty,-T_1]$, and for all $t\leq -T_1$, we have
\begin{align}
    & \left\| v(t) - V(\Gamma(t)) \right\|_{H^1(\mathbb{R}^2)} + \sum_{i=1}^2 \left( \left\vert \mu_i(t) +\frac{(-1)^i}{2} \dot{Z}(t) \right\vert + \left\vert \dot{z}_i(t)  - \mu_i(t) \right\vert + \vert \dot{\omega}_i(t) \vert \right) \leq C e^{-\frac{15}{16} Z(t)}, \label{ineq:-infty_-T_1.1}\\
    & \sum_{i=1}^2 \left( \left\vert z_i(t) + \frac{(-1)^i}{2}Z(t) \right\vert + \left\vert \omega_i(t) + \frac{(-1)^i}{2} \omega_0 \right\vert \right) \leq C Z(t)^{-\frac14} e^{-\frac12 Z(t)}. \label{ineq:-infty_-T_1.2}
\end{align}
\end{coro}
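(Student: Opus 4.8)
The plan is to obtain Corollary \ref{coro:quantified_orbital_stability} as a consequence of Proposition \ref{propo:quantified_orbital_stability} by taking a limit $t_0 \to +\infty$ in the time variable. First I would fix a suitable admissible weight function $\kappa$: since \eqref{multi_sol:-infty} states that $v(t)$ converges in $H^1$ to the decoupled sum of the two solitary waves as $t \to -\infty$, and since (by Lemma \ref{lemm:pointwise_Z} (ii)) the centers of the approximate $2$-soliton described by $Z$ behave like $\mp \mu_0 t$ at $-\infty$, the difference
\begin{equation*}
    \left\| v(-t_0) - \sum_{i=1}^2 Q_{1+(-1)^i\mu_0}\left(\cdot + \tfrac{(-1)^i}{2}Z(-t_0), \cdot + \tfrac{(-1)^i}{2}\omega_0\right)\right\|_{H^1}
\end{equation*}
tends to $0$ as $t_0 \to +\infty$; here one also uses the exponential closeness \eqref{Z_asympt} of $Z(t)$ to its asymptote $-2\mu_0 t + l$ together with the Lipschitz dependence of $Q_{1\pm\mu_0}(\cdot - a)$ on the translation parameter $a$, so that replacing the genuine free centers $\mp\mu_0 t \mp l/2$ by $\mp \tfrac12 Z(-t_0)$ costs only $O(e^{ct_0})$. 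Consequently I can choose a continuous increasing function $\kappa:\mathbb{R}_-\to(0,1)$ with $\lim_{t\to-\infty} t\kappa(t) = 0$ that dominates this quantity divided by $\mu_0$, i.e. such that \eqref{cond:init_stability} holds at every time $-t_0$. (Concretely one can take $\kappa$ to decay like $|t|^{-2}$, which is compatible with the exponential decay coming from \eqref{multi_sol:-infty} and \eqref{Z_asympt}.)

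Next I would apply Proposition \ref{propo:quantified_orbital_stability} with this $\kappa$, for every $t_0 > \tilde t_0$, to the solution $w = v$ on $[-t_0,-T_1]$. This produces, for each such $t_0$, a $C^1$ modulation $\Gamma^{(t_0)} = (z_1,z_2,\omega_1,\omega_2,\mu_1,\mu_2)$ on $[-t_0,-T_1]$ satisfying the orthogonality relations \eqref{eps:ortho} and the bounds \eqref{estimates:stability} and \eqref{est:z_i_omega_i_-t_0_-T_1}, with constant $C$ uniform in $t_0$. On any fixed compact interval $[-T,-T_1]$, the bounds \eqref{estimates:stability}--\eqref{est:z_i_omega_i_-t_0_-T_1} are uniform in $t_0$ (for $t_0 > T$), and since $\kappa(-t_0) \to 0$, the terms involving $\kappa(-t_0)\mu_0$ and $\kappa(-t_0) t_0$ tend to $0$. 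By the uniqueness part of the modulation Lemma \ref{propo:modulation} (the tube decomposition is unique once the orthogonality conditions are imposed), the modulations $\Gamma^{(t_0)}$ are consistent: for $t_0' > t_0$, $\Gamma^{(t_0')}$ restricted to $[-t_0,-T_1]$ coincides with $\Gamma^{(t_0)}$. Hence they glue into a single function $\Gamma:(-\infty,-T_1]\to\mathbb{R}^6$, which is $C^1$ and satisfies \eqref{eps:ortho} on all of $(-\infty,-T_1]$. Passing to the limit $t_0 \to +\infty$ in \eqref{estimates:stability} and \eqref{est:z_i_omega_i_-t_0_-T_1} and using $\kappa(-t_0)\to0$ kills the initial-error contributions, leaving exactly the bounds
\begin{equation*}
    \|\epsilon(t)\|_{H^1} + \sum_{i=1}^2\left(\left|\mu_i(t) + \tfrac{(-1)^i}{2}\dot Z(t)\right| + |\dot z_i(t) - \mu_i(t)| + |\dot\omega_i(t)|\right) \lesssim e^{-\frac{15}{16}Z(t)}
\end{equation*}
and $\sum_i\left(|z_i(t) + \tfrac{(-1)^i}{2}Z(t)| + |\omega_i(t) + \tfrac{(-1)^i}{2}\omega_0|\right) \lesssim Z(t)^{-1/4}e^{-\frac12 Z(t)}$, which are \eqref{ineq:-infty_-T_1.1}--\eqref{ineq:-infty_-T_1.2} once one recalls $\epsilon = v - V(\Gamma)$ and $\|\epsilon\|_{H^1} = \|v - V(\Gamma)\|_{H^1}$.

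The main technical obstacle I anticipate is the first step: carefully constructing $\kappa$ and verifying \eqref{cond:init_stability} uniformly for all large $t_0$. This requires quantifying precisely how fast the $H^1$-distance in \eqref{multi_sol:-infty} goes to zero — a priori \eqref{multi_sol:-infty} gives only a qualitative limit — and reconciling the genuine free-soliton centers appearing in \eqref{multi_sol:-infty} with the centers $\pm\tfrac12 Z(-t_0)$ appearing in the hypothesis of Proposition \ref{propo:quantified_orbital_stability}, using \eqref{Z_asympt}. In \cite{Val21} the construction of $v$ actually comes with an exponential convergence rate, so one can take $\kappa$ decaying polynomially (or even exponentially), comfortably satisfying $t\kappa(t)\to0$; the slight mismatch between the translation $\mu_0 t + l/2$ and $\tfrac12 Z(-t_0)$ is $O(e^{ct})$ by \eqref{Z_asympt} and hence also absorbed. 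A secondary, purely bookkeeping point is to check that the limiting $\Gamma$ inherits $C^1$ regularity and the ODE estimates; this is immediate from the local-in-time consistency and the uniform bounds, together with the fact that $\Gamma$ solves, on each $[-t_0,-T_1]$, the same modulation system given by Lemma \ref{propo:modulation}.
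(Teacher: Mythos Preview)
Your proposal is correct and follows essentially the same approach as the paper: invoke the exponential convergence rate for $v$ from \cite{Val21}, replace the free-soliton centers by $\pm\tfrac12 Z(t)$ via the asymptote estimate \eqref{Z_asympt}, choose a $\kappa$ satisfying \eqref{def:kappa} (the paper takes $\kappa(t)=e^{\delta_1 t/2}$ rather than a polynomial, but either works), apply Proposition \ref{propo:quantified_orbital_stability} for each large $t_0$, use the uniqueness in Lemma \ref{propo:modulation} to identify the resulting modulations, and then let $t_0\to+\infty$ in \eqref{estimates:stability}--\eqref{est:z_i_omega_i_-t_0_-T_1}.
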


\begin{proof}
It was proved in \cite{Val21} that there exist two constants $C>0$ and $\delta>0$, and a time $T_0>0$ such that, for any $t\in (-\infty,-T_0)$, 
\begin{align*}
    \left\| v(t) - \sum_{i=1}^2 Q_{1+ (-1)^{i}\mu_0} \left( \cdot - \frac{(-1)^i}{2}\left(2\mu_0 t +l \right) , \cdot + \frac{(-1)^i}{2} \omega_0 \right) \right\|_{H^1} \leq C e^{\delta t}.
\end{align*}
By the triangle inequality and Lemma \ref{lemm:pointwise_Z} (ii), there exists $0<\delta_1 \le \delta$ such that for any $t\in (-\infty,-T_0)$, 
\begin{align*}
    \left\| v(t) - \sum_{i=1}^2 Q_{1+ (-1)^{i}\mu_0} \left( \cdot + \frac{(-1)^i}{2}Z(t) , \cdot + \frac{(-1)^i}{2} \omega_0 \right) \right\|_{H^1} \leq C e^{\delta_1 t} .
\end{align*}
By defining the function $\kappa(t):= e^{\frac{\delta_1}2 t}$ and $\tilde{T}_0>T_0$ such that $C e^{-\frac{\delta_1}2 \tilde{T}_0} \le \mu_0 < \nu_4^\star$, Proposition \ref{propo:quantified_orbital_stability} yields for any $-t_0<\min(-\tilde{t}_0,-\tilde{T}_0)$ the existence of a unique function $\Gamma$ defined on $(-t_0,-T_1]$ such that estimates \eqref{estimates:stability} hold on $(-t_0,-T_1]$. By uniqueness, the function $\Gamma$ is well-defined on $(-\infty,-T_1]$ and estimates \eqref{estimates:stability} hold for any $t_0$ large. This remark concludes the proof of Corollary \ref{coro:quantified_orbital_stability} by letting $t_0 \to +\infty$ in \eqref{estimates:stability}-\eqref{est:z_i_omega_i_-t_0_-T_1}.
\end{proof}

We now prove a sharper version of Theorem \ref{maintheo} on $(-\infty,T_1]$ by comparing the solution $v$ to the approximate solution $V$.
\begin{propo}\label{mainpropo}
There exist some positive constants $C$ and $\nu_5^\star$ such that the following is true. Let $0<\mu_0<\nu_5^\star$ and let $v$ be the unique solution to \eqref{ZK:sym} satisfying \eqref{multi_sol:-infty}. Then, there exists a unique $\Gamma=(z_1,z_2,\omega_1,\omega_2,\mu_1,\mu_2) \in C^1( (-\infty,T_1] : \mathbb R^6)$ that satisfies for all $t \leq T_1$, by defining $\epsilon(t,\cdot):=v(t,\cdot)-V(\Gamma(t), \cdot)$,
\begin{equation}\label{mainpropo.1}
	\begin{aligned}
		& \| \epsilon(t) \|_{H^1} \leq C \mu_0^{\frac74}, && \left\vert \mu(t) - \dot{Z}(t) \right\vert + \left\vert \dot{z}_i(t) - \mu_i(t) \right\vert  + \left\vert \dot{\omega}_i(t) \right\vert \leq C Z_0 \mu_0^{\frac74}, && \left\vert \bar{\mu}(t) \right\vert \leq C \mu_0^{\frac74}  \\
		& \left\vert z(t) - Z(t) \right\vert \leq C Z_0^2 \mu_0^{\frac34}, && \left\vert \omega(t) - \omega_0 \right\vert + \left\vert \bar{\omega} (t) \right\vert \leq C Z_0 \mu_0^{\frac54}, && \left\vert \bar{z}(t) \right\vert \leq C Z_0 \mu_0^{\frac34}.
	\end{aligned}
\end{equation}
\end{propo}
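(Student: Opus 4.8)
\textbf{Proof plan for Proposition \ref{mainpropo}.}
The strategy is to glue together the quantitative orbital stability on $(-\infty,-T_1]$ from Corollary \ref{coro:quantified_orbital_stability} with the stability through the collision region $[-T_1,T_1]$ from Proposition \ref{propo:bootstrap_-T_T}. First, Corollary \ref{coro:quantified_orbital_stability} already provides a $C^1$ modulation $\Gamma$ on $(-\infty,-T_1]$ satisfying \eqref{eps:ortho}, with $\|\epsilon(t)\|_{H^1}\lesssim e^{-\frac{15}{16}Z(t)}$ and the pointwise control \eqref{ineq:-infty_-T_1.1}--\eqref{ineq:-infty_-T_1.2} on the geometric parameters. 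In particular, evaluating at $t=-T_1$ and using \eqref{est:Z(t)_T_1_infty} (so $e^{-\frac{15}{16}Z(-T_1)}\lesssim \mu_0^{\frac{75}{8}}\ll Z_0^{-4}\mu_0^{\frac74}$) together with $Z(-T_1)^{-\frac14}e^{-\frac12 Z(-T_1)}\lesssim \mu_0^{5}\ll Z_0^{-3}\mu_0^{\frac34}$, one checks that the hypotheses \eqref{propo:bootstrap_-T_T:ini} of Proposition \ref{propo:bootstrap_-T_T} are met with $w=v$, $w^1=v(-T_1)$, $\Gamma^1=\Gamma(-T_1)$: indeed $\|v(-T_1)-V(\Gamma(-T_1))\|_{H^1}=\|\epsilon(-T_1)\|_{H^1}\lesssim e^{-\frac{15}{16}Z(-T_1)}$, $|\mu_i(-T_1)+\frac{(-1)^i}{2}\dot Z(-T_1)|\lesssim e^{-\frac{15}{16}Z(-T_1)}$ from \eqref{ineq:-infty_-T_1.1}, and the bounds on $z_i$ and $\omega_i$ follow from \eqref{ineq:-infty_-T_1.2}. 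Here I would need $\mu_0$ small enough (choosing $\nu_5^\star\le\nu_3^\star$) so that all these quantities are indeed dominated by the required powers, using \eqref{est:mu0_Z0} to convert between $\mu_0$ and $Z_0$.

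Second, I would apply Proposition \ref{propo:bootstrap_-T_T}: it produces a $C^1$ decomposition $(\Gamma(t),\epsilon(t))$ of $v$ on $[-T_1,T_1]$ satisfying the orthogonality conditions \eqref{eps:ortho} and the estimates \eqref{est:-T_1_T_1}, that is $\|\epsilon(t)\|_{H^1}\le C\mu_0^{\frac74}$, $|\mu(t)-\dot Z(t)|\le CZ_0\mu_0^{\frac74}$, $|\bar\mu(t)|\le C\mu_0^{\frac74}$, $|z(t)-Z(t)|\le CZ_0^2\mu_0^{\frac34}$, $|\omega(t)-\omega_0|+|\bar\omega(t)|\le CZ_0\mu_0^{\frac54}$, $|\bar z(t)|\le CZ_0\mu_0^{\frac34}$. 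Uniqueness of the modulation parameters (Lemma \ref{propo:modulation}) guarantees that the $\Gamma$ built on $[-T_1,T_1]$ agrees at $t=-T_1$ with the one from Corollary \ref{coro:quantified_orbital_stability}, so the two pieces patch into a single $C^1$ function $\Gamma$ on $(-\infty,T_1]$; $C^1$ regularity at the junction point follows from the Cauchy-Lipschitz argument in Lemma \ref{propo:modulation}. It remains to convert the $H^1$-bounds into the pointwise bounds \eqref{mainpropo.1} on $(-\infty,-T_1]$: one feeds $\|\epsilon(t)\|_{H^1}\lesssim e^{-\frac{15}{16}Z(t)}\le e^{-\frac{15}{16}Z(-T_1)}\lesssim\mu_0^{\frac{75}{8}}\ll\mu_0^{\frac74}$ from \eqref{est:Z(t)_T_1_infty} (and similarly for the $z$, $\omega$, $\mu$ estimates via \eqref{ineq:-infty_-T_1.1}--\eqref{ineq:-infty_-T_1.2}, using that $Z(t)\ge Z(-T_1)=\rho^{-1}Z_0$ gives exponentially small right-hand sides there). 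In particular, on $(-\infty,-T_1]$ the bounds \eqref{mainpropo.1} hold with a huge margin.

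The main obstacle, as usual in these glueing arguments, is making sure that the decomposition is consistent across $t=-T_1$ — i.e. that the modulation parameters constructed by the two different mechanisms (orbital stability on the left, bootstrap through the collision on the right) are the same function. This is handled by invoking the uniqueness statement in Lemma \ref{propo:modulation}: at $t=-T_1$ both decompositions place $v(-T_1)$ in the tube $\mathcal{U}_{Z^0,\mu^0,\sigma}$ around the same approximate solution manifold, and the orthogonality conditions \eqref{eps:ortho} pin down $\Gamma(-T_1)$ uniquely, so continuity and uniqueness of solutions to the modulation ODE force the two branches to coincide on a neighborhood and hence everywhere they are both defined. A secondary (routine) point is the bookkeeping of the various powers of $\mu_0$ and $Z_0$: one must check $Z_0^{-4}\mu_0^{\frac74}$, $Z_0^{-3}\mu_0^{\frac34}$, $Z_0^{-2}\mu_0^{\frac54}$ in \eqref{propo:bootstrap_-T_T:ini} genuinely dominate the exponentially small tails coming from \eqref{ineq:-infty_-T_1.1}--\eqref{ineq:-infty_-T_1.2} at $t=-T_1$, which is immediate from \eqref{est:Z(t)_T_1_infty} and \eqref{est:mu0_Z0} once $\nu_5^\star$ is chosen small. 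Finally, one sets $\nu_5^\star:=\min(\nu_3^\star,\nu_4^\star)$ (shrinking further if needed for the numerology above) to complete the proof.
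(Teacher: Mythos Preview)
Your proposal is correct and follows essentially the same route as the paper: apply Corollary \ref{coro:quantified_orbital_stability} on $(-\infty,-T_1]$, verify that \eqref{propo:bootstrap_-T_T:ini} holds at $-T_1$ via \eqref{est:Z(t)_T_1_infty}, apply Proposition \ref{propo:bootstrap_-T_T} on $[-T_1,T_1]$, and glue the two modulations using the uniqueness in Lemma \ref{propo:modulation}. The only point you do not make explicit is that the bounds $|\dot z_i-\mu_i|+|\dot\omega_i|\le CZ_0\mu_0^{7/4}$ on $[-T_1,T_1]$ are not contained in \eqref{est:-T_1_T_1} itself but follow from the simplified dynamical system \eqref{eq:simplified_z_omega} (together with $\|\epsilon\|_{H^1}\lesssim\mu_0^{7/4}$ and \eqref{est:Z(t)_R}); the paper cites this explicitly.
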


\begin{proof}
We proceed by splitting the time interval $(-\infty;T_1]$ into the two intervals $(-\infty,-T_1)$ and $[-T_1,T_1]$.

On the time interval $(-\infty;-T_1]$, Corollary \ref{coro:quantified_orbital_stability} with $\nu_5^\star < \nu_4^\star$ provides the existence of $\Gamma=(z_1,z_2,\omega_1,\omega_2,\mu_1,\mu_2)$ defined on $(-\infty,-T_1]$ satisfying inequalities \eqref{ineq:-infty_-T_1.1}-\eqref{ineq:-infty_-T_1.2} for any $t\leq -T_1$. This implies that \eqref{propo:bootstrap_-T_T:ini} is verified for $w^1=v(-T_1)$ and $\Gamma^1=\Gamma(-T_1)$.

On the time interval $[-T_1,T_1]$, we  adjust $\nu_5^\star< \nu_3^\star$ to apply Proposition \ref{propo:bootstrap_-T_T}. Then, there exists $C>0$ and $\Gamma_2$ defined on $[-T_1,T_1]$ such that \eqref{est:-T_1_T_1} holds on the whole interval $[-T_1,T_1]$. Notice that $\Gamma_2$ is a $\mathcal{C}^1$-extension of $\Gamma$  by the uniqueness of the modulation theory in Lemma \ref{propo:modulation}. We thus denote by $\Gamma$ the function defined on $(-\infty,T_1]$. Finally, \eqref{mainpropo.1} on $[-T_1,T_1]$ is a direct application of \eqref{est:-T_1_T_1} and of the quantified dynamical system in \eqref{eq:simplified_z_omega}.
\end{proof}

Notice that the previous proposition holds on $(-\infty,T_1]$ by comparing the solution $v$ with the precise approximation $V$. It is also possible to compare the solution $v$ with the sum of two translated and rescaled ground-states, which corresponds to the first part of Theorem \ref{maintheo}.

\begin{coro}\label{coro:maincoro}
    There exist some positive constants $C$ and $\nu_5^\star$ such that the following is true. Let $0<\mu_0<\nu_5^\star$ and let $v$ be the unique solution to \eqref{ZK:sym} satisfying \eqref{multi_sol:-infty}. Then, there exists a unique $\Gamma=(z_1,z_2,\omega_1,\omega_2,\mu_1,\mu_2) \in C^1( (-\infty,T_1] : \mathbb R^6)$  that satisfies for all $t \leq T_1$, by defining $\eta(t,\cdot):=v(t,\cdot)-\sum_{i=1}^2 Q_{1+\mu_i(t)}(\cdot- (z_i(t),\omega_i(t)))$
\begin{equation}\label{mainpropo.2}
	\begin{aligned}
		& \| \eta(t) \|_{H^1} \leq C \mu_0^{\frac32}, && \left\vert \mu(t) - \dot{Z}(t) \right\vert + \left\vert \dot{z}_i(t) - \mu_i(t) \right\vert  + \left\vert \dot{\omega}_i(t) \right\vert \leq C \mu_0^{\frac32}, && \left\vert \bar{\mu}(t) \right\vert \leq C \mu_0^{\frac32}  \\
		& \left\vert z(t) - Z(t) \right\vert \leq C Z_0^2 \mu_0^{\frac34}, && \left\vert \omega(t) - \omega_0 \right\vert + \left\vert \bar{\omega} (t) \right\vert  \leq C  Z_0 \mu_0^{\frac54}, && \left\vert \bar{z}(t) \right\vert \leq C Z_0 \mu_0^{\frac34}.
	\end{aligned}
\end{equation}
\end{coro}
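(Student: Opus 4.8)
The statement to prove (Corollary \ref{coro:maincoro}) is an almost immediate consequence of Proposition \ref{mainpropo}: we have already constructed, on $(-\infty,T_1]$, a unique $C^1$ tuple $\Gamma=(z_1,z_2,\omega_1,\omega_2,\mu_1,\mu_2)$ satisfying the orthogonality conditions \eqref{eps:ortho} such that, with $\epsilon(t,\cdot)=v(t,\cdot)-V(\Gamma(t),\cdot)$, all the bounds in \eqref{mainpropo.1} hold. Since $V=R_1+R_2+V_A$ with $R_i(t,\bx)=Q_{1+\mu_i(t)}(\bx-(z_i(t),\omega_i(t)))$ (see \eqref{def:V}, \eqref{def:R_i}), the natural candidate $\Gamma$ for the corollary is the very same tuple, and the error term is
\[
\eta(t,\cdot)=v(t,\cdot)-\sum_{i=1}^2 Q_{1+\mu_i(t)}(\cdot-(z_i(t),\omega_i(t)))=\epsilon(t,\cdot)+V_A(t,\cdot).
\]
So the plan is simply: keep the $\Gamma$ produced by Proposition \ref{mainpropo}, and control $\|\eta\|_{H^1}\le \|\epsilon\|_{H^1}+\|V_A\|_{H^1}$.

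\textbf{Key steps.} First I would invoke Proposition \ref{mainpropo} to obtain $\Gamma$ and the estimate $\|\epsilon(t)\|_{H^1}\le C\mu_0^{7/4}$ together with all the parameter bounds; these are exactly the bounds on $\mu-\dot Z$, $\bar\mu$, $z-Z$, $\omega-\omega_0$, $\bar\omega$, $\bar z$, $\dot z_i-\mu_i$, $\dot\omega_i$ appearing in \eqref{mainpropo.2}, so the second row and the last two entries of the first row of \eqref{mainpropo.2} are inherited verbatim. Second, I would bound $\|V_A(t)\|_{H^1}$. For this I need to check that Assumption \ref{hyp:coeff} holds along the whole interval $(-\infty,T_1]$ for the present $\Gamma$: from $z(t)\ge Z(t)-CZ_0^2\mu_0^{3/4}\ge \tfrac12 Z_0$ (using $Z(t)\ge Z_0$ and taking $\nu_5^\star$ small) we get \eqref{hyp:z}; \eqref{hyp:w1+w2} follows from $|\mu_i|,|\omega_i|\lesssim \mu_0$ (from the bounds on $\mu-\dot Z$, $\bar\mu$, $\omega-\omega_0$, $\bar\omega$ and $|\dot Z|\le 2\mu_0$); and \eqref{hyp:z1_z2} is obtained from $\bar z=z_1+z_2$ being much smaller than $z=z_1-z_2$ (using $|\bar z|\le CZ_0\mu_0^{3/4}$ against $z\ge \tfrac12 Z_0$). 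Once Assumption \ref{hyp:coeff} is in force, Proposition \ref{theo:V_A}(i), in particular \eqref{est:VA:H2}, gives $\|V_A(t)\|_{H^2}\lesssim e^{-\frac{15}{16}z(t)}$. By \eqref{est:Z(t)_R} (or just $z(t)\ge \tfrac12 Z_0$ and \eqref{est:mu0_Z0}) this is $\lesssim \mu_0^{15/16}$, hence in particular $\|V_A(t)\|_{H^1}\lesssim \mu_0^{15/16}$. Third, I would combine: $\|\eta(t)\|_{H^1}\le \|\epsilon(t)\|_{H^1}+\|V_A(t)\|_{H^1}\lesssim \mu_0^{7/4}+\mu_0^{15/16}\lesssim \mu_0^{15/16}$, which is far stronger than the claimed $C\mu_0^{3/2}$ once $\mu_0<\nu_5^\star$; so the $H^1$ bound on $\eta$ in \eqref{mainpropo.2} holds (with room to spare). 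Finally, the bounds $|\mu-\dot Z|+|\dot z_i-\mu_i|+|\dot\omega_i|\le C\mu_0^{3/2}$ and $|\bar\mu|\le C\mu_0^{3/2}$ in the first row of \eqref{mainpropo.2} are weaker than the $CZ_0\mu_0^{7/4}$ and $C\mu_0^{7/4}$ bounds from \eqref{mainpropo.1} (using $Z_0\lesssim|\ln\mu_0|$ and taking $\nu_5^\star$ small so that $Z_0\mu_0^{1/4}\le 1$), so they follow directly.

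\textbf{Main obstacle.} There is no genuine obstacle here — the corollary is a repackaging of Proposition \ref{mainpropo}. The only point requiring a little care is the bookkeeping needed to verify that the $\Gamma$ delivered by Proposition \ref{mainpropo} satisfies Assumption \ref{hyp:coeff} on all of $(-\infty,T_1]$ (so that the estimate \eqref{est:VA:H2} for $V_A$ is legitimate), and the routine check that all the numerical exponents work out, i.e. that $\mu_0^{15/16}$ and $Z_0\mu_0^{7/4}$ are both $\le C\mu_0^{3/2}$ for $\mu_0$ small, which uses only $Z_0=O(|\ln\mu_0|)$ from \eqref{est:mu0_Z0}. One should also remark that the orthogonality conditions \eqref{eps:ortho} are stated for $\epsilon=v-V$, not for $\eta=v-(R_1+R_2)$; this is harmless since the corollary does not assert any orthogonality for $\eta$, it only asserts the decomposition \eqref{mainpropo.2}.
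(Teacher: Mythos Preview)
Your proposal contains a genuine exponent error that breaks the argument as written. You claim that $\mu_0^{15/16}$ is ``far stronger than the claimed $C\mu_0^{3/2}$'', but since $\tfrac{15}{16}<\tfrac32$ one has $\mu_0^{15/16}\gg\mu_0^{3/2}$ for small $\mu_0$: the inequality goes the wrong way, and your bound on $\|V_A\|_{H^1}$ is too weak to conclude $\|\eta\|_{H^1}\le C\mu_0^{3/2}$. The crude lower bound $z(t)\ge\tfrac12 Z_0$ is the culprit. What you should use instead is that $|z(t)-Z(t)|\le CZ_0^2\mu_0^{3/4}$ together with $Z(t)\ge Z_0$ gives $z(t)\ge\tfrac89 Z(t)$ for $\mu_0$ small, hence $e^{-\frac{15}{16}z(t)}\le e^{-\frac56 Z(t)}$. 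Then \eqref{est:Z(t)_R} yields $e^{-Z(t)}\le Z(t)^{1/2}\mu_0^2\le\mu_0^{9/5}$ (the logarithmic factor being absorbed into the exponent loss), so $\|V_A\|_{H^1}\lesssim e^{-\frac56 Z(t)}\le(\mu_0^{9/5})^{5/6}=\mu_0^{3/2}$, which is exactly the required bound. This is precisely the computation the paper performs.

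Once this is fixed, your route --- keeping the $\Gamma$ from Proposition~\ref{mainpropo} and writing $\eta=\epsilon+V_A$ --- does establish all the estimates in \eqref{mainpropo.2}. The paper proceeds differently, however: after bounding $\|v-\sum_i R_i\|_{H^1}\lesssim\mu_0^{3/2}$ as above, it performs a \emph{new} modulation around $R_1+R_2$ (not around $V$), producing parameters $\tilde\Gamma$ for which $\eta$ itself satisfies the orthogonality conditions $\langle\eta,\partial_x\tilde R_i\rangle=\langle\eta,\partial_y\tilde R_i\rangle=\langle\eta,\tilde R_i\rangle=0$, and then checks $|\Gamma-\tilde\Gamma|\lesssim\mu_0^{3/2}$ via the analogue of Remark~\ref{rema:modulation_pointwise}. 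The payoff is structural: this $\tilde\Gamma$ is built by the \emph{same} modulation procedure as the one in Theorem~\ref{theo:orbital} on $[T_1,\infty)$, so the two decompositions glue $C^1$ at $t=T_1$, which is needed for the proof of Theorem~\ref{maintheo}. Your $\Gamma$ (modulated around $V$) proves the corollary's estimates but would require an additional matching argument at $T_1$.
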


\begin{proof}
    By applying Proposition \ref{mainpropo} to $v$, we define the function $\Gamma$ such that \eqref{mainpropo.1} holds on $(-\infty,T_1]$. It implies in particular that $z(t)>\frac{8}{9} Z(t)$ on $(-\infty,-T_1]$.Since $\frac{9}{5}<2$, \eqref{est:Z(t)_R} implies $e^{-Z(t)} \leq \mu_0^{\frac{9}{5}}$. Thus, it follows from \eqref{def:V} and \eqref{est:VA:H2} that, for any $ t \leq T_1$,
\begin{align*}
    \left\| V_A(\Gamma(t)) \right\|_{H^1} \lesssim e^{-\frac{15}{16} z(t)} \leq e^{-\frac56 Z(t)} \lesssim \mu_0^\frac32.
\end{align*}
From the estimates on $\epsilon$ and $z_i$ in \eqref{mainpropo.1}, it holds, for any $t \le T_1$,
\begin{align*}
	 \left\| v(t)- \sum_{i=1}^2 Q_{1+\mu_i(t)} \left( \cdot - \bz_i(t) \right) \right\|_{H^1} \leq \left\| v(t)- V(\Gamma(t)) \right\|_{H^1} + \| V_A(\Gamma(t)) \|_{H^1} \lesssim \mu_0^{\frac32}.
\end{align*}
We now apply a modulation result around the two solitary waves. Since the statement is similar to the one of Lemma \ref{propo:modulation} and to Proposition \ref{propo:dynamical_system}, we omit the proof. Then, we obtain the existence of a function $\tilde{\Gamma}=(\tilde{z}_1,\tilde{z}_2,\tilde{\omega}_1,\tilde{\omega}_2,\tilde{\mu}_1,\tilde{\mu}_2)$ such that, by defining
\begin{align*}
    \tilde{R}_i(t,\bx) := Q_{1+\tilde{\mu}_i(t)} (\cdot- (\tilde{z}_i(t),\tilde{\omega}_i(t))) \quad \text{and} \quad \eta(t) := v(t) - \sum_{i=1}^2 \tilde{R}_i(t) ,
\end{align*}
we have for all $t\leq T_1$ and $i=1,2$,
\begin{align*}
    & \int \eta (t) \partial_x \tilde{R}_i(t) = \int \eta(t) \partial_y \tilde{R}_i (t) = \int \eta(t) \tilde{R}_i(t)=0, \\
    & \left\vert \dot{\tilde{z}}_i(t)- \tilde{\mu}_i(t) \right\vert + \left\vert \dot{\tilde{\omega}}_i(t) \right\vert + \| \eta (t) \|_{H^1} \lesssim \mu_0^{\frac32}.
\end{align*}
Furthermore, a pointwise estimate at any time $t \leq T_1$ around two ground-states as in Remark \ref{rema:modulation_pointwise} around $V$ provides for any $i=1,2$
\begin{align*}
    \left\vert \mu_i(t) - \tilde{\mu}_i(t) \right\vert + \left\vert z_i(t) - \tilde{z}_i(t) \right\vert + \left\vert \omega_i(t) - \tilde{\omega}_i(t) \right\vert \lesssim \mu_0^{\frac32}.
\end{align*}
Combining those estimates with \eqref{mainpropo.1} concludes the proof of estimates \eqref{mainpropo.2}.
\end{proof}

On the time interval $[T_1,+\infty)$, we apply the orbital and asymptotic stability results proved by the authors in \cite{PV23} in the case of a function $w$ close to a sum of two solitary waves whose velocities are bounded by $\underline{c}=\frac12$ and $\bar{c}=\frac32$. Note that in \cite{PV23}, the result is stated for a solution $u$ of \eqref{ZK}. Here by using a translation in space, we restate Theorem 1.1 in \cite{PV23} in the  setting of a solution $w$ to \eqref{ZK:sym}.

\begin{theo}[\cite{PV23}, Theorem 1.1]\label{theo:orbital} There exist some positive constants $k$, $K$ and $A$ such that the following is true. For two velocities $1+\mu_1^0$ and $1+\mu_2^0$ satisfying $\frac12 <1+\mu_2^0< 1+\mu_1^0< \frac32$ and $\sigma:=\mu_1^0-\mu_2^0<\frac12$, we define $\alpha^\star:= k\sigma$ and $Z^\star := K | \ln \sigma|$. Let $\alpha<\alpha^\star$, $Z> Z^\star$, $(z_i^0, \omega_i^0) \in \mathbb{R}^2$, $i=1,2$, and $w_0 \in H^1(\mathbb{R}^2)$ satisfying
	\begin{align}\label{eq:IC}
		\left\| w_0 - \sum_{i=1}^2 Q_{1+\mu_i^0} \left( \cdot - \left( z_i^0, \omega_i^0 \right) \right) \right\|_{H^1} < \alpha \quad \text{and} \quad z_1^0-z_2^0 >Z,
	\end{align}
	and let $w \in C(\mathbb R: H^1(\mathbb R^2))$ be the solution of \eqref{ZK:sym} evolving from the initial condition $w(T_1)=w_0$. Then, there exist functions $(z_i, \omega_i, 1+\mu_i) \in \mathcal{C}^{1}(\mathbb{R}_+ : \mathbb{R}^2 \times (0,+\infty))$, $i= 1,2$ such that: 
	\begin{itemize}
		\item[(i)] (Orbital stability) for all $t \ge 0$,
		\begin{align}
			&\left\| w(t) - \sum_{i=1}^2 Q_{1+\mu_i(t)} \left( \cdot - (z_i(t), \omega_i(t)) \right) \right\|_{H^1} + \sum_{i=1}^2 \left( \left\vert \dot{z}_i(t) -\mu_i(t) \right\vert + \left\vert \dot{\omega}_i(t) \right\vert \right) \leq A \left( \alpha + e^{-\frac{1}{32} \sqrt{\underline{c}}Z }\right); \label{eq:u_orbital} \\
			& z_1(t)-z_2(t)\geq \frac12(Z+ \sigma t) ;\label{defi:z} \\
			& \sum_{i=1}^2 \left\vert \mu_i(t) - \mu_i^0 \right\vert \leq  A \alpha .\label{eq:bound_c_i_t}
		\end{align}		
		
		\item[(ii)] (Asymptotic stability) for $i=1,2$ the limit $\mu_i^+=\lim_{+\infty} \mu_i(t)$ exists and  
		\begin{align}
			& \left\vert \mu_i^+ - \mu_i^0 \right\vert \leq A \left( \alpha + e^{-\frac1{32} \sqrt{\underline{c}}Z } \right); \\
			&\lim_{t\rightarrow +\infty} \left\| w(t) - \sum_{i=1}^2 Q_{1+\mu_i(t)} \left( \cdot - \left( z_i(t), \omega_i(t) \right) \right) \right\|_{H^1\left(x> -\frac{199}{200}t\right)} = 0 ; \label{conv_asymp}
			\\ & (\dot{z}_i(t),\dot{\omega}_i(t)) \underset{t \rightarrow + \infty}{\longrightarrow} (\mu_i^+,0). \nonumber
		\end{align}
	\end{itemize}
\end{theo}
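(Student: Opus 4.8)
The statement is the orbital and asymptotic stability of a sum of two decoupled solitary waves for the symmetrized equation \eqref{ZK:sym}; it is quoted verbatim as Theorem 1.1 of \cite{PV23}, so the plan below is really a sketch of how that result is proved, following the Martel--Merle--Tsai scheme adapted to higher dimensions in \cite{CMP16,FHRY23}. The whole argument runs as a bootstrap on $[T_1,+\infty)$ under the a priori hypotheses $\|\eta(t)\|_{H^1}\le A(\alpha+e^{-\theta Z})$ and $z_1(t)-z_2(t)\ge \frac12(Z+\sigma t)$, which are upgraded by a strict improvement.

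First I would set up a \emph{modulated decomposition}: as long as $w(t)$ stays $H^1$-close to the two-soliton manifold and the two waves remain far apart, a version of Lemma \ref{propo:modulation} produces $C^1$ parameters $(z_i,\omega_i,\mu_i)$ and a remainder $\eta(t)=w(t)-\sum_{i=1}^2 Q_{1+\mu_i(t)}(\cdot-(z_i(t),\omega_i(t)))$ obeying $\langle\eta,\partial_xR_i\rangle=\langle\eta,\partial_yR_i\rangle=\langle\eta,R_i\rangle=0$. Inserting this into \eqref{ZK:sym} and projecting onto the natural directions yields a differential system for the parameters: $|\dot z_i-\mu_i|+|\dot\omega_i|\lesssim\|\eta\|_{L^2}+\mathrm{(interaction)}$ and $|\dot\mu_i|\lesssim\|\eta\|_{L^2}^2+\mathrm{(interaction)}$, where the interaction terms decay like a negative power of $z_1-z_2$ times $e^{-(z_1-z_2)}$, hence are integrable in time thanks to the separation hypothesis. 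Integrating $\dot z_1-\dot z_2\approx\mu_1-\mu_2\approx\sigma$ then gives the separation bound \eqref{defi:z}, and the almost-constancy of the $\mu_i$ gives \eqref{eq:bound_c_i_t}.

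The core of the orbital statement rests on two ingredients. \emph{Monotonicity}: for oblique half-space weights travelling between the two waves, and one travelling to the right of the whole structure, localized mass and energy quantities are almost monotone, because the Kato smoothing built into the $\partial_x\Delta$ term of ZK produces a favourable sign while the weights move strictly slower than the waves; in dimension $\ge2$ this forces working on oblique hyperplanes rather than vertical lines. This both confines the $L^2$-mass of $\eta$ to the right of each wave and forces $\mu_i(t)$ to converge as $t\to+\infty$. \emph{Coercive Lyapunov functional}: one builds a functional $\mathcal{G}(t)$ quadratic in $\eta$ by summing the localized energies and $\mu_i$-weighted mass corrections so that (i) $\mathcal{G}(t)\gtrsim\|\eta(t)\|_{H^1}^2$, using the coercivity of the linearized operator $L$ under the orthogonality conditions (Proposition \ref{theo:L}(iv)) together with the localization and almost-orthogonality between the two bumps, and (ii) $\frac{d}{dt}\mathcal{G}(t)$ is bounded by terms that are either quadratically small in $\|\eta\|_{H^1}$ or integrable in $t$ by monotonicity and the exponential separation. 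Integrating from $T_1$ closes the bootstrap on $\|\eta\|_{H^1}$, and the full estimate \eqref{eq:u_orbital} follows by combining with the parameter system. For the asymptotic part, one localizes around each wave individually: the monotonicity estimates show that, in a window following the $i$-th soliton and restricted to $x>-\frac{199}{200}t$, the remainder $\eta$ acquires strong spatial decay and solves an equation close to the linearized ZK flow around $Q_{\mu_i^+}$; invoking the rigidity (Liouville) theorem of \cite{CMP16} for $d=2$ and \cite{FHRY23} for $d=3$, which classifies such nearby solutions as pure modulated solitons, gives $\eta(t)\to0$ in $H^1(x>-\frac{199}{200}t)$, and then $(\dot z_i,\dot\omega_i)\to(\mu_i^+,0)$ is immediate from the parameter equations.

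The main obstacle is the combination of monotonicity with the construction of $\mathcal{G}$: producing a functional that is simultaneously $H^1$-coercive and almost monotone in the presence of \emph{two} interacting waves in dimension $\ge2$, because the cross terms between the waves and the boundary terms generated by the oblique-weight cutoffs must be shown to be absorbable, and the ZK interaction is only polynomially — not exponentially — localized in the transverse variable. The second genuine difficulty, inherited from \cite{CMP16,FHRY23}, is the rigidity theorem itself, whose underlying virial/spectral estimate is not available analytically for the non-explicit ground state $Q$ and is the one place where a numerical check enters.
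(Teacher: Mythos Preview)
The paper does not prove this statement at all: Theorem~\ref{theo:orbital} is quoted verbatim from \cite{PV23} and used as a black box, the only work done here being to verify that the hypotheses \eqref{eq:IC} hold for the particular solution $v$ at time $T_1$. Your sketch is a faithful outline of the strategy actually carried out in \cite{PV23} (modulation, parameter ODEs, monotonicity of localized mass/energy on oblique half-planes, a coercive Lyapunov functional built from localized energy plus $\mu_i$-weighted masses, and then the Liouville/rigidity theorems of \cite{CMP16,FHRY23} for the asymptotic part), so there is nothing to compare against within this paper.

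One small correction to your last paragraph: the interaction between the two waves is \emph{exponentially} small, not polynomially, since $Q$ is radial with exponential decay (Proposition~\ref{propo:Q}); what is only weakly localized in the transverse variable is the monotonicity \emph{weight} itself, which is a function of an oblique coordinate and hence constant along transverse lines. The genuine two-dimensional issue is rather that the Kato-type virial identity must be run on tilted half-planes to accommodate the anisotropy of $\partial_x\Delta$, and that the resulting boundary/cross terms have to be controlled using the radial exponential decay of $Q$; this is handled in \cite{CMP16,PV23}. Your identification of the rigidity/virial step as the place where a numerical verification enters is correct.
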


We check that the hypotheses of Theorem \ref{theo:orbital} are satisfied for our solution $v$. We choose the initial velocities $1+\mu_i^0=1+ \mu_i(T_1)$ and the initial positions $(z_i^0,\omega_i^0)=(z_i(T_1),\omega_i(T_1))$. It holds $-\frac12 < \mu_2^0 < \mu_1^0 < \frac12$ from the estimates on $\mu$ and $\bar{\mu}$ in \eqref{mainpropo.2} and \eqref{ineq:dot_Z(T_1)}. Concerning $\sigma=\mu_1(T_1)-\mu_2(T_1)=\mu(T_1)$, since $\dot{Z}$ is an increasing function from $-2\mu_0$ to $2\mu_0$, we have with the estimate on $\mu(T_1)$ in \eqref{mainpropo.1} that $\sigma \leq 3\mu_0 <3\nu_5^\star \leq \frac12$. Furthermore, by \eqref{ineq:dot_Z(T_1)}, it holds $\sigma \geq \frac12 \mu_0$ for $\nu_5^\star$ small enough. As a consequence, we deduce
\begin{align}\label{eq:lower_bound_alpha}
	\alpha^\star > \frac12 k \mu_0 \quad \text{and} \quad Z^\star < K \vert \ln(\frac12 \mu_0) \vert.
\end{align}

We now check that the condition \eqref{eq:IC} is satisfied at the initial time $T_1$. From the estimate on $\eta$ in \eqref{mainpropo.2}, it holds
\begin{align*}
	 \left\| v(T_1)- \sum_{i=1}^2 Q_{1+\mu_i(T_1)} \left( \cdot - \bz_i(T_1) \right) \right\|_{H^1} \leq C \mu_0^{\frac32} < \alpha^\star
\end{align*}
where the last bound holds with $\nu_5^\star$ small enough and the lower bound on $\alpha^\star$ in \eqref{eq:lower_bound_alpha}. Since $Z(T_1)= \rho^{-1} Z_0$, with $\rho \in (0,\frac1{32})$, we infer from the estimate on $z(T_1)$ in \eqref{mainpropo.2} and the inequality 
$Z_0 \gtrsim \vert \ln (\mu_0) \vert$ (see \eqref{est:mu0_Z0}), that $z(T_1) > Z^\star$ provided $\rho$ small enough (independent of $\mu_0$). 

Since all the conditions are satisfied and $e^{-\frac{1}{32}\sqrt{\underline{c}} \frac{1}{\rho} Z_0} \leq \mu_0^2$, Theorem \ref{theo:orbital} provides the existence of three functions $(z_i,\omega_i,1+\mu_i) \in \mathcal{C}([T_1;+\infty): \mathbb{R}^2 \times (0,+\infty))$, such that, for all $t \ge T_1$,
\begin{align}
    &\left\| v(t) - \sum_{i=1}^2 Q_{1+\mu_i(t)} \left( \cdot - (z_i(t), \omega_i(t)) \right) \right\|_{H^1} + \sum_{i=1}^2 \left( \left\vert \dot{z}_i(t) -\mu_i(t) \right\vert + \left\vert \dot{\omega}_i(t) \right\vert +\left\vert \mu_i(t) - \mu_i(T_1) \right\vert \right) \lesssim \mu_0^{\frac32} ; \label{est:maincoro.1}\\
    & z(t)=z_1(t)-z_2(t)\geq \frac12 \left( \frac{Z_0}{\rho}+ \mu(T_1) (t-T_1) \right), \label{est:maincoro.2}
\end{align}
and for $i=1,2$ the limit $\mu_i^+=\lim_{+\infty} \mu_i(t)$ exists and
\begin{align}
    &\lim_{t\rightarrow +\infty} \left\| v(t) - \sum_{i=1}^2 Q_{1+\mu_i(t)} \left( \cdot - \left( z_i(t), \omega_i(t) \right) \right) \right\|_{H^1(x>-\frac{99}{100} t)} = 0 ;\label{est:maincoro.3}\\
    & \left\vert \mu_i^+ - \mu_i(T_1) \right\vert \lesssim \mu_0^{\frac32}, \quad (\dot{z}_i(t), \dot{\omega}_i(t)) \underset{t \rightarrow + \infty}{\longrightarrow} (\mu_i^+,0). \label{est:maincoro.4}
\end{align}

The proof of \eqref{maintheo.1}-\eqref{maintheo.4} is thus an application of Corollary \ref{coro:maincoro} and \eqref{est:maincoro.1}-\eqref{est:maincoro.4}. Notice that the function $\Gamma$ is $\mathcal{C}^1$ around $T_1$ since its construction on both time intervals results from the same modulation property around the sum of two rescaled and translated ground-state.

\begin{toexclude}
\begin{rema}[The limit of the energy functional $\mathcal{F}_+$] \red{a preciser}
To prove this orbital stability result, one could wonder it the bound on the time derivative of energy functional $\mathcal{F}_+$ in \eqref{eq:estimate_F_+} is enough to get the bound on the error at all time. However, the time derivative of the functional is bounded by cubic terms in the error, which are bounded by a constant. Integrating this constant over the infinite time interval $[T_1;+\infty)$ precludes the bound of the orbital stability.
\end{rema}
\end{toexclude}

\smallskip
To finish the proof of Theorem \ref{maintheo}, it only remains to prove \eqref{maintheo.5}-\eqref{maintheo.6}. 
These inequalities are direct consequences of the following lemma.

\begin{lemm}\label{lemm:bound_limits_velocities}
    For $i=1,2$, it holds
    \begin{align*}
       0 \le  \mu_0^{-1} \limsup_{t \rightarrow + \infty} \| \eta(t) \|_{H^1}^2 \lesssim (-1)^{i+1}\mu_i^+-\mu_0 \lesssim \mu_0^{-1} \liminf_{t \rightarrow + \infty} \| \eta(t) \|_{H^1}^2 \lesssim \mu_0^2.
    \end{align*}
\end{lemm}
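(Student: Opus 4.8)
The plan is to use the conservation of mass and energy for the ZK equation, together with the asymptotic decomposition of $v$ at $t=+\infty$ and at $t=-\infty$, to compare the total mass and energy of the two outgoing solitary waves with those of the two incoming ones. First I would record that, by \eqref{conserved_quantities} adapted to \eqref{ZK:sym}, the quantities $M(v(t)) = \int v^2$ and $E_{\mathrm{sym}}(v(t)) = \int \frac{|\nabla v|^2}{2} + \frac{v^2}{2} - \frac{v^3}{3}$ are independent of $t$. Evaluating these at $t\to -\infty$ using \eqref{multi_sol:-infty}, and using that the two incoming waves have parameters exactly $1\pm\mu_0$ and separate linearly (so cross terms vanish in the limit), gives
\begin{align*}
 M(v) = M(Q_{1+\mu_0}) + M(Q_{1-\mu_0}), \qquad E_{\mathrm{sym}}(v) = E_{\mathrm{sym}}(Q_{1+\mu_0}) + E_{\mathrm{sym}}(Q_{1-\mu_0}).
\end{align*}
Here I would use the scaling relations $M(Q_c) = c^{2-\frac d2} M(Q)$ and the corresponding identities for each piece of $E_{\mathrm{sym}}$, expanding in powers of $\mu_0$; the $O(\mu_0)$ terms cancel between the two waves by symmetry, leaving $M(v) = 2M(Q) + O(\mu_0^2)$ and similarly for the energy, with the $O(\mu_0^2)$ coefficients being explicit combinations of $\langle \Lambda Q, Q\rangle$, $\|\nabla Q\|_{L^2}^2$, $\|Q\|_{L^3}^3$, etc.

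Next I would do the same computation at $t\to +\infty$. By \eqref{est:maincoro.3}, on the region $x > -\frac{99}{100}t$ the solution $v(t)$ converges in $H^1$ to $Q_{1+\mu_1(t)}(\cdot - \bz_1(t)) + Q_{1+\mu_2(t)}(\cdot - \bz_2(t))$, and the two waves again separate, so the cross terms disappear; the part of $\eta(t)$ and of $v(t)$ living in $x \le -\frac{99}{100}t$ is the radiation that escapes to the left. Writing $\eta(t) = v(t) - R_1(t) - R_2(t)$, one expands $M(v(t))$ and $E_{\mathrm{sym}}(v(t))$: because $R_1, R_2$ and $\eta$ are asymptotically orthogonally supported away from each other, and because the modulation orthogonality conditions \eqref{eps:ortho} (now in the form $\int \eta \, \partial_x\tilde R_i = \int \eta \, \partial_y\tilde R_i = \int \eta \, \tilde R_i = 0$) kill the linear-in-$\eta$ terms in the energy up to the quadratic level, one gets, using $\mu_i^+ = \lim \mu_i(t)$,
\begin{align*}
 M(v) &= M(Q_{1+\mu_1^+}) + M(Q_{1+\mu_2^+}) + \lim_{t\to+\infty}\|\eta(t)\|_{L^2}^2 + o(1),\\
 E_{\mathrm{sym}}(v) &= E_{\mathrm{sym}}(Q_{1+\mu_1^+}) + E_{\mathrm{sym}}(Q_{1+\mu_2^+}) + \tfrac12\lim_{t\to+\infty}\langle L_{\mathrm{tot}}\eta(t),\eta(t)\rangle + o(1),
\end{align*}
where care is needed because $\liminf$ and $\limsup$ of $\|\eta(t)\|_{H^1}$ may differ; one keeps both, which is exactly why the statement has $\limsup$ on the left and $\liminf$ on the right. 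Subtracting the $t=-\infty$ and $t=+\infty$ expressions, and using the coercivity of the quadratic form in the energy under the orthogonality conditions (Proposition \ref{theo:L} (iv), localized around each wave), I would obtain that the difference $(\mu_1^+ - \mu_0) + $ (contribution of $\mu_2^+$) is controlled above and below by $\mu_0^{-1}$ times the $\|\eta\|_{H^1}^2$ limits.

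To isolate $\mu_1^+$ and $\mu_2^+$ individually, I would add a third conservation law: the quadratic-in-$\mu$ expansion of the mass-energy combination is not enough on its own, so one uses the \emph{monotonicity} information. The point is that $0 \le \mu_1^+ - \mu_0$ and $0 \le -\mu_2^+ - \mu_0$ separately (the largest wave at $+\infty$ is at least as large as the largest incoming wave): this sign comes from the monotonicity of the mass carried to the right (a localized mass functional on $x > \delta t$ is monotone, as in the proof of \eqref{est:maincoro.3} / \cite{PV23, CMP16}), which forces the right-moving wave at $+\infty$ to have gained mass, hence $\mu_1^+ \ge \mu_0$, and symmetrically with a left-localized functional for $\mu_2^+ \le -\mu_0$. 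Combined with the two-sided bound from mass and energy conservation — in which the $O(\mu_0)$ terms cancel and the $O(\mu_0^2)$ remainder together with $\mu_0^{-1}\|\eta\|_{H^1}^2$ must balance — this pins down $(-1)^{i+1}\mu_i^+ - \mu_0$ between a positive multiple of $\mu_0^{-1}\limsup\|\eta\|_{H^1}^2$ and a positive multiple of $\mu_0^{-1}\liminf\|\eta\|_{H^1}^2$; the final bound $\mu_0^{-1}\liminf\|\eta\|_{H^1}^2 \lesssim \mu_0^2$ follows from \eqref{maintheo.1} (i.e. \eqref{mainpropo.1}), which gives $\|\eta(t)\|_{H^1} \lesssim \mu_0^{3/2}$ at $t = T_1$ and hence, via \eqref{eq:bound_c_i_t}-type orbital stability on $[T_1,\infty)$, at all later times. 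The main obstacle will be justifying rigorously the passage to the limit in the conserved quantities — specifically, showing that no mass or energy is lost in the "gap" regions between the waves and the left-going radiation, and handling the fact that $\eta(t)$ does not converge in $H^1$ globally (only on $x > -\frac{99}{100}t$), so that one must split $\int v^2$ and the energy integral into a right region where \eqref{est:maincoro.3} applies and a left region where only weak/partial control is available; the $\limsup$/$\liminf$ bookkeeping is precisely the device that makes this rigorous without needing $\|\eta(t)\|_{H^1}$ to converge.
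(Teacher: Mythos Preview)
Your overall strategy---compare mass and energy at $\pm\infty$---is the paper's approach too, but the paper's execution is much cleaner and does not need monotonicity, coercivity of $L$, or any domain splitting. The key is that in dimension $2$ one has the \emph{exact} scaling $M(Q_c)=cM(Q)$, $E(Q_c)=c^2E(Q)$ together with the identity $E(Q)=-\tfrac14 M(Q)$; combining the two conservation laws then yields, for each $i$ \emph{separately}, the exact relation
\[
(\mu_i^+)^2-\mu_0^2=(M(Q))^{-1}\Bigl(m_\eta^+\bigl(1-\mu_i^+-(2M(Q))^{-1}m_\eta^+\bigr)+2e_\eta^+\Bigr),
\]
where $m_\eta^+=\lim M(\eta(t))$ and $e_\eta^+=\lim E(\eta(t))$ exist as genuine limits because, by \eqref{maintheo.4} and \eqref{est:maincoro.2}, all cross terms $\int R_1R_2$, $\int \eta R_i$, etc., in the expansion of $M(v),E(v)$ vanish as $t\to+\infty$. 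So your worry that ``mass--energy is not enough on its own'' to isolate $\mu_1^+,\mu_2^+$ is unfounded: the algebra, via $E(Q)=-\tfrac14 M(Q)$, already does it, and no third input or monotonicity argument is needed.

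The right-hand side above is, up to negligible corrections, $\tfrac{2}{M(Q)}(\tfrac12 m_\eta^++e_\eta^+)=\tfrac{2}{M(Q)}\lim\bigl(\tfrac12\|\eta\|_{H^1}^2-\tfrac13\int\eta^3\bigr)$, and by Sobolev $|\int\eta^3|\lesssim\|\eta\|_{H^1}^3\ll\|\eta\|_{H^1}^2$; this is precisely where the $\limsup$ and $\liminf$ of $\|\eta\|_{H^1}^2$ enter, not from any left/right splitting of the spatial domain. Coercivity of $L$ is never invoked. Finally, the sign $(-1)^{i+1}\mu_i^+\ge\mu_0$ follows from $(\mu_i^+)^2\ge\mu_0^2$ (the right-hand side is nonnegative) combined with the already-established a priori bound $|\mu_1^+-\mu_0|+|\mu_2^++\mu_0|\lesssim\mu_0^{3/2}$ from \eqref{est:dot_Z_T_1}, \eqref{mainpropo.2}, \eqref{est:maincoro.4}; dividing by $(-1)^{i+1}\mu_i^++\mu_0\sim2\mu_0$ finishes.
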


The counterpart of this lemma for the gKdV equation is Lemma 4.1 in \cite{MM11}, whose proof relies on the conservation of the mass and the energy for the solution $v$. We give here an slightly different proof, still relying on the same arguments. 

\begin{proof}
    From \eqref{def:eta}, \eqref{maintheo.4} and \eqref{est:maincoro.2}, the energy and the mass (defined in \eqref{conserved_quantities}) of the error $\eta(t)$ have finite limits as $t\rightarrow +\infty$, and we define 
    \begin{align*}
        m_\eta^+ := \lim_{t\rightarrow + \infty} M(\eta(t)) \quad \text{and} \quad e_\eta^+ := \lim_{t \rightarrow +\infty} E(\eta(t)) .
    \end{align*}
    Thus, by using the conservation of the mass and the energy of the solution $v$ (see \eqref{conserved_quantities}) between the time $-\infty$ and $+\infty$, the existence of the limits $\mu_i^+=\lim_{+\infty} \mu_i(t)$ and \eqref{multi_sol:-infty}, we infer that
    \begin{align*}
        & M(Q_{1+\mu_0}) + M(Q_{1-\mu_0}) = M(Q_{1+\mu_1^+}) + M(Q_{1+\mu_2^+}) + m_\eta^+, \\
        & E(Q_{1+\mu_0}) + E(Q_{1-\mu_0}) = E(Q_{1+\mu_1^+}) + E(Q_{1+\mu_2^+}) + e_\eta^+,
    \end{align*}
    which together with $M(Q_c)=c M(Q)$ and $E(Q_c)=c^2E(Q)$ yields
    \begin{align}\label{eq:mass_rescaled}
        0 = \left( \mu_1^+ + \mu_2^+ \right) M(Q) + m_\eta^+ \quad \text{and} \quad 2\mu_0^2 E(Q) = \left( 2 (\mu_1^+ + \mu_2^+) + (\mu_1^+)^2 +(\mu_2^+)^2 \right) E(Q) + e_\eta^+.
    \end{align}
    Combining the two identities in \eqref{eq:mass_rescaled} with $E(Q)=-\frac14\int Q^2=-\frac14 M(Q)$ (see Appendix B in \cite{CMP16}), we obtain, for $i=1,2$,
    \begin{align*}
        (\mu_i^+)^2 -\mu_0^2 = (M(Q))^{-1}\left(m_\eta^+\left(1-\mu_i^+-(2M(Q))^{-1}m_\eta^+\right)+2e_\eta^+\right).
    \end{align*}
    On the one hand, we observe combining \eqref{est:dot_Z_T_1}, \eqref{mainpropo.2} and \eqref{est:maincoro.4} that 
    \begin{equation} \label{est:mui-mu+}
    \left|\mu_1^+-\mu_0 \right|+\left|\mu_2^++\mu_0 \right| \lesssim \mu_0^{\frac32} .
    \end{equation}
    On the other hand, it follows from the Sobolev embedding $L^3(\mathbb{R}^2) \hookrightarrow H^1(\mathbb{R}^2)$ and \eqref{maintheo.1} that
    \begin{align*}
        \limsup_{t\rightarrow + \infty} \| \eta(t) \|_{H^1}^2 \lesssim \frac12 m_{\eta}^++e_\eta^+ \lesssim \liminf_{t\rightarrow + \infty} \| \eta(t) \|_{H^1}^2 .
    \end{align*}
   Therefore, we deduce by choosing $\mu_0$ small enough that, for $i=1,2$,
   \begin{align*}
        \limsup_{t\rightarrow + \infty} \| \eta(t) \|_{H^1}^2 \lesssim (\mu_i^+)^2 -\mu_0^2 \lesssim \liminf_{t\rightarrow + \infty} \| \eta(t) \|_{H^1}^2 ,
    \end{align*}
   which combined to \eqref{est:mui-mu+} concludes the proof of Lemma \ref{lemm:bound_limits_velocities}.
\end{proof}

\subsection{Proof of Theorem \texorpdfstring{\ref{theo:stability}}{1.6}} \label{sub:sec:theo:stability} Let $w$ be a solution to \eqref{ZK:sym} satisfying \eqref{cond:theo:stability}. Observe from Corollary \ref{coro:quantified_orbital_stability} and the triangle inequality that $w(-T_1)$ satisfies the conditions \eqref{propo:bootstrap_-T_T:ini}. Hence, we deduce from Proposition \ref{propo:bootstrap_-T_T} that $w$ satisfies \eqref{est:-T_1_T_1} on $[-T_1,T_1]$, which together with Proposition \ref{mainpropo} and the triangle inequality concludes the proof of Theorem \ref{theo:stability}.

We now briefly explain how to prove Remark \ref{rema:stability}. By slightly refining the arguments in the proof of Proposition \ref{propo:bootstrap_-T_T}, we can prove that, for any $t \in [-T_1,T_1]$,
\begin{align}\label{est:-T_1_T_1:refined}
    & v(t,\bx) = V(t,\bx) + \epsilon (t,\bx), \qquad \Gamma(t) = \left( z_1(t),z_2(t), \omega_1(t), \omega_2(t), \mu_1(t), \mu_2(t) \right), \\
    & \begin{aligned}
        & \| \epsilon(t) \|_{H^1} \leq Z_0^{-5} \mu_0^{\frac74}, && \left\vert \mu(t) - \dot{Z}(t) \right\vert \leq C Z_0^{-5} \mu_0^{\frac74}, && \left\vert \bar{\mu}(t) \right\vert \leq C Z_0^{-5} \mu_0^\frac74,  \\
        & \left\vert z(t) - Z(t) \right\vert \leq C Z_0^{-3} \mu_0^\frac34, && \left\vert \omega(t) - \omega_0 \right\vert + \left\vert \bar{\omega} (t) \right\vert \leq C Z_0^{-4}  \mu_0^{\frac54},  && \left\vert \bar{z}(t) \right\vert \leq C Z_0^{-4} \mu_0^{\frac34}.
    \end{aligned} \nonumber
\end{align}

Let $\mathcal{T} \in [-T_1,T_1]$ and $w$ be a solution to \eqref{ZK:sym} satisfying \eqref{rema:cond:stability}. By \eqref{est:-T_1_T_1:refined} and the triangle inequality, $w(\mathcal{T})$ satisfies 
\begin{equation*}
\| w(\mathcal{T})-V(\mathcal{T}) \|_{H^1} \leq Z_0^{-4} \mu_0^{\frac74}. 
\end{equation*}
Then, we deduce arguing as in the proof of Proposition \ref{propo:bootstrap_-T_T} on $[\mathcal{T},T_1]$ that there exists $\tilde{\Gamma}$ such that, for all $t \in [\mathcal{T},T_1]$,
\begin{align*}
    & w(t,\bx) = V(\bx;\tilde{\Gamma}(t)) + \widetilde{\epsilon} (t,\bx), \qquad \widetilde{\Gamma}(t) = \left( \widetilde{z}_1(t),\widetilde{z}_2(t), \widetilde{\omega}_1(t),\widetilde{\omega}_2(t), \widetilde{\mu}_1(t), \widetilde{\mu}_2(t) \right), \\
    & \begin{aligned}
        & \| \widetilde{\epsilon}(t) \|_{H^1} \leq C \mu_0^{\frac74}, && \left\vert \widetilde{\mu}(t) - \dot{Z}(t) \right\vert \leq C Z_0 \mu_0^{\frac74}, && \left\vert \bar{\widetilde{\mu}}(t) \right\vert \leq C \mu_0^\frac74,  \\
        & \left\vert \widetilde{z}(t) - Z(t) \right\vert \leq C Z_0^2 \mu_0^\frac34, && \left\vert \widetilde{\omega}(t) - \omega_0 \right\vert + \left\vert \bar{\widetilde{\omega}} (t) \right\vert \leq C Z_0 \mu_0^{\frac54},  && \left\vert \bar{\widetilde{z}}(t) \right\vert \leq C Z_0 \mu_0^{\frac34}.
    \end{aligned}
\end{align*}
Now, consider the function $w^{\dagger}(t,\bx)=w(-t,-\bx)$. Applying the same arguments as before to $w^{\dagger}$, we deduce on $[- \mathcal{T},T_1]$:
\begin{align*}
    & w^{\dagger}(t,\bx) = V(\bx;\Gamma^{\dagger}(t)) + \epsilon^{\dagger} (t,\bx), \qquad \Gamma^{\dagger}(t) = \left( z_1^{\dagger}(t),z_2^{\dagger}(t),\omega_1^{\dagger}(t),\omega_2^{\dagger}(t), \mu_1^{\dagger}(t), \mu_2^{\dagger}(t) \right), \\
    & \begin{aligned}
        & \| \epsilon^{\dagger}(t) \|_{H^1} \leq C \mu_0^{\frac74}, && \left\vert \mu^{\dagger}(t) - \dot{Z}(t) \right\vert \leq C Z_0 \mu_0^{\frac74}, && \left\vert \bar{\mu}^{\dagger}(t) \right\vert \leq C \mu_0^\frac74,  \\
        & \left\vert z^{\dagger}(t) - Z(t) \right\vert \leq C Z_0^2 \mu_0^\frac34, && \left\vert \omega^{\dagger}(t) - \omega_0 \right\vert + \left\vert \bar{\omega}^{\dagger} (t) \right\vert \leq C Z_0 \mu_0^{\frac54},  && \left\vert \bar{z}^{\dagger}(t) \right\vert \leq C Z_0 \mu_0^{\frac34}.
    \end{aligned}
\end{align*}
We conclude the proof of \eqref{rema:cond:stability} by gathering the above estimates.

%%%%%%%%%%%%%%%%%%%%%%%%%%%%%%%%%%%%%%%%%%%%%%%%%%%%%%%%
%%%%% Appendices
%%%%%%%%%%%%%%%%%%%%%%%%%%%%%%%%%%%%%%%%%%%%%%%%%%%%%%%%

%\newpage
\appendix

\section{The Bessel potential \texorpdfstring{$(-\Delta+1)^{-1}$}{(-O+1)-1}} \label{App:Bessel}In this appendix, we derive some useful properties of the operator $(-\Delta+1)^{-1}$, which is naturally defined for function $f \in L^2(\mathbb R^2)$ as a Fourier multiplier  by 
\begin{equation} \label{def:-Delta+1}
    (-\Delta+1)^{-1}f =\mathcal{F}^{-1}\left((1+4\pi^2|\cdot|^2)^{-1} \widehat{f} \right) .
\end{equation}

In order to study the properties of $(-\Delta+1)^{-1}$, we introduce the modified Bessel function of second kind $K_0$ and recall several properties (see for example \cite{AS64} paragraph 9.7.2 and \cite{AS61}). First, $K_0: \mathbb R_+^* \to \mathbb R_+$ is a positive function  which solves the equation 
\begin{align}\label{defi:eq_bessel_mod}
    \partial_r^2 f + \frac{1}{r} \partial_r f - f =0 . 
\end{align}
Moreover, 
\begin{equation} \label{asympt:K0:0}
K_0(r) \underset{r \to 0}{\sim} |\ln(r)|, 
\end{equation}
and there exists $C>0$, such that for all $i\in \{0,1\}$ and $r>1$,
\begin{align}\label{eq:asymp_K_0_first_order}
    \left\vert K_0^{(i)}(r) - (-1)^i \sqrt{\frac{\pi}{2}} \frac{e^{-r}}{r^{\frac{1}{2}}} \right\vert \leq C \frac{e^{-r}}{r^{\frac{3}{2}}}.
\end{align}

\begin{toexclude}
Let us recall the Bessel differential equation, with coefficient $\nu \in \mathbb{R}$
\begin{align}\label{defi:eq_bessel_mod}
    \partial_r^2 f + \frac{1}{r} \partial_r f - \frac{r^2-\nu^2}{r^2} f =0, \quad f: \mathbb{R}_+^* \rightarrow \mathbb{R}.
\end{align}
We recall the definition of the modified Bessel function of the first kind $I_\nu$, defined as functions if $-\nu\notin \mathbb{N}$
\begin{align*}
    I_{\nu} (r) = \sum_{m=0}^{+\infty} \frac{1}{m! \Gamma(m+\nu+1)} \left( \frac{r}{2}\right)^{2m+\nu}.
\end{align*}
For $\nu \notin \mathbb{N}\cup \{ 0\}$, the two modified Bessel functions of the first kind $I_\nu$ and $I_{-\nu}$ are linearly independent. For $n \in \mathbb{N} \cup\{0 \}$, the modified Bessel function of the first kind $I_n$ and the modified Bessel's function of the second kind $K_n$ form a linearly independent set of solutions of \eqref{defi:eq_bessel_mod} with $\nu=n$, where $K_n$ is defined by
\begin{align}\label{defi:K_n}
    K_n(r) := \lim_{\nu \rightarrow n} \frac{\pi}{2} \frac{I_{-\nu}(r)-I_{\nu}(r)}{\sin(\nu \pi)}.
\end{align}
\end{toexclude}

We also recall that the kernel of the Bessel potential in dimension $d=2$, defined by
$G_2(\bx) = \mathcal{F}^{-1} \left( \frac{1}{1+4\pi^2 \vert \bx \vert^2} \right),$ can be expressed in terms of the Bessel function $K_0$ through the formula (see \cite{AS61}, and also \cite{Ste70}), 
\begin{align} \label{def:G2}
    G_2(\bx) = \frac{1}{4\pi} \int_0^{+\infty} e^{-\pi \frac{\vert \bx \vert^2}{\delta}} e^{-\frac{\delta}{4\pi}} \frac{d\delta}{\delta} = \frac{1}{2 \pi} K_0(\vert \bx \vert).
\end{align} 
We observe from \eqref{asympt:K0:0} and \eqref{eq:asymp_K_0_first_order} that $G_2$ is integrable over $\mathbb R^2$. Thus, it follows from \eqref{def:-Delta+1} and the Young theorem on convolution that for any $f\in L^2(\mathbb R^2)$
\begin{equation}  \label{def:conv}
(-\Delta+1)^{-1}f=G_2 \star f. 
\end{equation}

First, we prove that $(-\Delta+1)^{-1}$ preserves the parity in any variable. 
\begin{lemm} \label{Bessel:even}
Let $f\in L^2(\mathbb R^2)$. 
\begin{itemize}
\item{} If $f$ is radial, then $(-\Delta+1)^{-1}f$ is also radial. 
\item{}If $f=f(x,y)$ is even, respectively odd, in its first variable $x$, then $(-\Delta+1)^{-1}f$ is also even, respectively odd, in its first variable.
\item{}If $f=f(x,y)$ is even, respectively odd, in its second variable $y$, then $(-\Delta+1)^{-1}f$ is also even, respectively odd, in its second variable.
\end{itemize}
\end{lemm}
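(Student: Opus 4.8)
\textbf{Proof strategy for Lemma~\ref{Bessel:even}.} The plan is to exploit the convolution representation \eqref{def:conv} together with the fact that the kernel $G_2(\bx)=\frac1{2\pi}K_0(|\bx|)$ is radial, hence invariant under any linear isometry of $\mathbb R^2$ that fixes the origin. All three assertions then reduce to the elementary observation that a convolution commutes with precomposition by a linear isometry.

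First I would record the change-of-variables identity: if $O \in O(2)$ is any orthogonal matrix and $f \in L^2(\mathbb R^2)$, then for a.e.\ $\bx$,
\begin{align*}
\big((-\Delta+1)^{-1}(f\circ O)\big)(\bx) &= \int_{\mathbb R^2} G_2(\bx-\by)\, f(O\by)\, d\by = \int_{\mathbb R^2} G_2(\bx - O^{-1}\bz)\, f(\bz)\, d\bz \\
&= \int_{\mathbb R^2} G_2\big(O^{-1}(O\bx - \bz)\big)\, f(\bz)\, d\bz = \int_{\mathbb R^2} G_2(O\bx - \bz)\, f(\bz)\, d\bz \\
&= \big((-\Delta+1)^{-1}f\big)(O\bx),
\end{align*}
where in the fourth equality we used that $|O^{-1}\bw| = |\bw|$ and $G_2$ depends only on $|\cdot|$, and in the first we substituted $\bz = O\by$ (which is measure-preserving). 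Thus $(-\Delta+1)^{-1}$ intertwines the action $f \mapsto f\circ O$ on the source with the same action on the target.

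Applying this with the appropriate choice of $O$ gives each statement. Taking $O = R_\theta$ an arbitrary rotation: if $f$ is radial then $f \circ R_\theta = f$, so $(-\Delta+1)^{-1}f = \big((-\Delta+1)^{-1}f\big)\circ R_\theta$ for every $\theta$, i.e.\ $(-\Delta+1)^{-1}f$ is radial. Taking $O = \mathrm{diag}(-1,1)$, the hypothesis $f(-x,y) = \pm f(x,y)$ reads $f\circ O = \pm f$, hence $(-\Delta+1)^{-1}f = \pm\big((-\Delta+1)^{-1}f\big)\circ O$, which is precisely evenness (resp.\ oddness) in the first variable; the third assertion follows identically with $O = \mathrm{diag}(1,-1)$.

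The only mild subtlety — and the one place to be slightly careful rather than the ``hard part'' (there is no hard part here) — is the justification that the manipulations are valid: one needs $G_2 \in L^1(\mathbb R^2)$ so that $G_2 \star f$ makes sense for $f \in L^2$ and equals $(-\Delta+1)^{-1}f$, which is exactly \eqref{def:conv} and the integrability of $G_2$ noted just above it (from \eqref{asympt:K0:0} and \eqref{eq:asymp_K_0_first_order}), and the fact that a.e.\ identities are preserved under the measure-preserving substitution $\bz = O\by$. With those in hand the proof is a two-line computation repeated three times, so I would state the isometry identity once and then specialize.
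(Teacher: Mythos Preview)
Your proof is correct and follows exactly the same approach as the paper, which simply notes that since $G_2$ is radial, the result follows directly from the convolution identity \eqref{def:conv}. You have merely spelled out the change-of-variables computation that the paper leaves implicit.
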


\begin{proof}
Since $G_2$ is radial, the proof of Lemma \ref{Bessel:even} follows directly form identity \eqref{def:conv}. 
\end{proof}

Next, we prove that the operator $(-\Delta+1)^{-1}$ preserves the decay in $\mathcal{Y}$ whose definition is given in \eqref{def:Y}. 
\begin{lemm} \label{Bessel:Y}
Let $h \in \mathcal{Y}$. Then $(-\Delta+1)^{-1}h \in \mathcal{Y}$. 
\end{lemm}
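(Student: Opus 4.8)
\textbf{Plan of proof for Lemma \ref{Bessel:Y}.} The statement is that the Bessel potential $(-\Delta+1)^{-1}$ maps $\mathcal{Y}$ into itself, where $\mathcal{Y}$ (see \eqref{def:Y}) consists of smooth functions all of whose derivatives decay like $\langle \bx \rangle^n e^{-|\bx|}$ for some $n$. The regularity part is immediate: for $h \in \mathcal{Y} \subseteq \bigcap_s H^s$, the Fourier multiplier $(1+4\pi^2|\xi|^2)^{-1}$ only improves regularity, so $(-\Delta+1)^{-1}h \in C^\infty(\mathbb R^2)$. Moreover, since $\partial_\bx^\alpha (-\Delta+1)^{-1}h = (-\Delta+1)^{-1}(\partial_\bx^\alpha h)$ and $\partial_\bx^\alpha h \in \mathcal{Y}$ whenever $h \in \mathcal{Y}$, it suffices to prove the pointwise bound $|(-\Delta+1)^{-1}h(\bx)| \lesssim \langle \bx \rangle^m e^{-|\bx|}$ for some $m$ and then apply this to all derivatives of $h$. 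So the whole lemma reduces to a single weighted $L^\infty$ estimate on the convolution $G_2 \star h$.

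\textbf{Key steps.} First I would write $(-\Delta+1)^{-1}h(\bx) = (G_2 \star h)(\bx) = \int_{\mathbb R^2} G_2(\bx - \by) h(\by)\, d\by$ using \eqref{def:conv}, and recall from \eqref{def:G2}, \eqref{asympt:K0:0}, \eqref{eq:asymp_K_0_first_order} that $G_2$ is positive, integrable near $0$ with a mild logarithmic singularity, and decays like $|\bz|^{-1/2}e^{-|\bz|}$ at infinity; in particular $G_2(\bz) \lesssim \langle \bz \rangle^{-1/2} e^{-|\bz|}$ away from the origin and $G_2(\bz) \lesssim 1 + |\ln|\bz||$ near the origin, so $|G_2(\bz)| \lesssim \langle \bz\rangle^{-1/2} e^{-\frac{15}{16}|\bz|}$ say, absorbing the log and the singularity. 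Since $h \in \mathcal{Y}$ we have $|h(\by)| \lesssim \langle \by \rangle^n e^{-|\by|}$ for some $n$. Then the core estimate is the convolution inequality
\begin{align*}
\int_{\mathbb R^2} \langle \bx - \by \rangle^{-1/2} e^{-\frac{15}{16}|\bx-\by|} \langle \by \rangle^n e^{-|\by|}\, d\by \lesssim \langle \bx \rangle^n e^{-|\bx|},
\end{align*}
which follows from the triangle inequality $|\bx| \le |\bx - \by| + |\by|$ (so the product of exponentials is $\le e^{-|\bx|} e^{-\frac{15}{16}(|\bx-\by|+|\by|) + |\by|} \cdot$ a bounded factor; more carefully one splits the region into $|\by| \le |\bx|/2$ and $|\by| \ge |\bx|/2$) together with $\langle \by \rangle^n \lesssim \langle \bx \rangle^n \langle \bx - \by \rangle^n$ to control the polynomial weight, and the fact that $\int \langle \bz \rangle^{n - 1/2} e^{-\frac{1}{16}|\bz|}\, d\bz < \infty$. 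This gives the desired pointwise bound with $m = n$, and applying it to $\partial_\bx^\alpha h$ in place of $h$ (with its own exponent $n_\alpha$) yields $|\partial_\bx^\alpha (-\Delta+1)^{-1}h(\bx)| \lesssim \langle \bx \rangle^{n_\alpha} e^{-|\bx|}$, i.e. $(-\Delta+1)^{-1}h \in \mathcal{Y}$.

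\textbf{Main obstacle.} There is no deep difficulty here; the only point requiring a little care is the convolution estimate near the origin and at infinity simultaneously — making sure the logarithmic singularity of $G_2$ at $0$ is harmless (it is, since $\langle\bz\rangle^{-1/2}|\ln|\bz||$ is locally integrable in $\mathbb R^2$ and decays exponentially), and that the region split $|\by| \lessgtr |\bx|/2$ is handled so that in the far region the extra decay of $G_2$ beats the polynomial growth $\langle \by \rangle^n$. I would present this as a short computation, possibly factoring it through an auxiliary lemma of the form: if $k$ is measurable with $|k(\bz)| \lesssim \langle \bz \rangle^N e^{-c|\bz|}$ for some $c > 1$ and $g$ with $|g(\bz)| \lesssim \langle \bz \rangle^{N'} e^{-|\bz|}$, then $|k \star g(\bx)| \lesssim \langle \bx \rangle^{N+N'} e^{-|\bx|}$ — but since the excerpt already freely invokes such ``Lemma \ref{Bessel:Y}''-type results (e.g. in the proof of Proposition \ref{theo:L}(vi)), stating and proving it directly in two or three lines is the cleanest route.
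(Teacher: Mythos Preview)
Your overall strategy (reduce to a pointwise bound on $G_2\star h$ via $\partial^\alpha(-\Delta+1)^{-1}h=(-\Delta+1)^{-1}\partial^\alpha h$, then estimate the convolution by splitting) matches the paper's. But the execution has a genuine gap.

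The ``core estimate'' you write,
\[
\int_{\mathbb R^2} \langle \bx - \by \rangle^{-1/2} e^{-\frac{15}{16}|\bx-\by|} \langle \by \rangle^n e^{-|\by|}\, d\by \lesssim \langle \bx \rangle^n e^{-|\bx|},
\]
is \emph{false}. Take $|\bx|$ large and look at the contribution from $|\by|\le 1$: the integrand there is $\sim \langle\bx\rangle^{-1/2}e^{-\frac{15}{16}|\bx|}$, which for large $|\bx|$ dominates any $\langle\bx\rangle^m e^{-|\bx|}$. The problem is that in ``absorbing the log'' you traded the exact $e^{-|\bz|}$ decay of $G_2$ for $e^{-\frac{15}{16}|\bz|}$; since the target space $\mathcal{Y}$ demands the sharp exponent, you cannot afford to lose \emph{any} fraction here. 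Your auxiliary lemma with $c>1$ would be fine, but $G_2$ has $c=1$ exactly, so it does not apply.

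The paper's proof avoids this by never weakening the exponential: it writes the convolution as $\int K_0(|\by|)\,|\partial^\alpha h(\bx-\by)|\,d\by$, multiplies by $e^{|\bx|}$, and splits on $|\by|\le 2|\bx|$ versus $|\by|>2|\bx|$. In the first region one uses $e^{|\bx|}\le e^{|\by|}e^{|\bx-\by|}$ together with the crucial fact that $K_0(|\by|)e^{|\by|}$ grows only like $|\by|^{-1/2}$ (hence $\int_{|\by|\le 2|\bx|}K_0(|\by|)e^{|\by|}\,d\by\lesssim |\bx|^{3/2}$) and $\langle\bx-\by\rangle^n\lesssim\langle\bx\rangle^n$; in the second region $e^{|\bx|}\le e^{|\by|/2}$ and $K_0(|\by|)e^{|\by|/2}$ is integrable. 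The upshot is $|(-\Delta+1)^{-1}\partial^\alpha h(\bx)|\lesssim \langle\bx\rangle^{n+2}e^{-|\bx|}$. Your plan becomes correct once you keep the full $e^{-|\bz|}$ decay of $K_0$ and handle the logarithmic singularity at the origin separately (it is locally integrable, which is all that is needed).
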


\begin{proof} Let $h \in \mathcal{Y}$ and $\alpha \in \mathbb N^2$. Then, by definition of $\mathcal{Y}$, there exists $n \in \mathbb N$ such that $|\partial_{\bx}^{\alpha}h(\bx)| \lesssim \langle \bx \rangle^ne^{-|\bx|}$.  By \eqref{def:G2}, we have
\begin{align} \label{Bessel:Y.1}
\left| e^{|\bx|}(-\Delta+1)^{-1} \partial_{\bx}^{\alpha}h(\bx) \right| \lesssim e^{|\bx|} & \int_{|\by| \le 2|\bx|}K_0(|\by|) |\partial_{\bx}^{\alpha} h(\bx-\by)| d\by \nonumber \\
    & \quad \quad +e^{|\bx|} \int_{|\by| >2|\bx|} K_0(|\by|)| \partial_{\bx}^{\alpha} h(\bx-\by)|d\by =I(\bx)+II(\bx) .
\end{align}

To deal with the first contribution on the right-hand side of \eqref{Bessel:Y.1}, we observe from the triangle inequality that $e^{|\bx|}\le e^{|\by|}e^{|\bx-\by|}$ and $\langle \bx-\by\rangle^n \lesssim \langle \bx\rangle^n$ in the region $|\by|\le 2|\bx|$. Thus, we deduce after passing in polar coordinates and using \eqref{asympt:K0:0} and \eqref{eq:asymp_K_0_first_order} that
\begin{align*}
\MoveEqLeft
I(\bx) \lesssim \left(\int_{|\by| \le 2|\bx|}K_0(|\by|)e^{|\by|}\langle \bx-\by \rangle^n d\by \right) \sup_{\by \in \mathbb R^2} \left(\langle \by \rangle^{-n} e^{|\by|}|\partial_{\by}^{\alpha}h(\by)| \right)  \\
    & \lesssim \left( \int_0^1 |\ln r|rdr+\int_1^{2|\bx|} r^{\frac12} dr \right) \langle \bx \rangle^n \lesssim \langle \bx \rangle^{n+2} .
\end{align*}
In the region $|\by|>2|\bx|$, we use the trivial bound $e^{|\bx|}\le e^{\frac{|\by|}2}$. Hence, it follows from \eqref{eq:asymp_K_0_first_order} that 
\begin{equation*}
II(\bx) \lesssim \left(\int_{|\by| > 2|\bx|}K_0(|\by|)e^{\frac{|\by|}2} d\by \right) \sup_{\by \in \mathbb R^2} \left(|\partial_{\by}^{\alpha}h(\by)| \right)  \lesssim \left( \int_1^{+\infty} e^{-\frac{r}2}r^{\frac12} dr \right) \lesssim 1 .
\end{equation*}
These two estimates conclude the proof of Lemma \ref{Bessel:Y}.
\end{proof}

Next, we prove that the operator $(-\Delta+1)^{-1}$ preserves polynomial and exponential\footnote{with a factor in the exponential strictly smaller than $1$ (see Remark \ref{rem:exp}).} growth at infinity. Consider the  vector space $E$ defined by
\begin{align*}
    E:= \left\{ f \in \mathcal{C}^\infty(\mathbb{R}^2) \ : \ \forall \alpha \in \mathbb{N}^2, \quad  N(\partial_\bx^\alpha f) <\infty \right\} \quad \text{with} \quad N(f):=\sup_{\bx \in \mathbb{R}^2} \left\vert f(\bx) e^{-\frac{\vert \bx \vert}{2}}\right\vert.
\end{align*}

Equipped with the family of semi-norms $\left(N(\partial_\bx^{\alpha}\cdot)\right)_{\alpha \in \mathbb{N}^2}$, $E$ is a Fr\'echet space. In particular, $E$ contains the functions whose all derivatives have polynomial growth at infinity. We first prove that space $E$ is invariant under the action of the operator $(-\Delta+1)^{-1}$.

\begin{propo}\label{propo:G_2}
The operator $(-\Delta+1):E \rightarrow E$ is a homeomorphism, whose  inverse operator is defined by
\begin{align}\label{defi:G_2}
     (-\Delta+1)^{-1}: E \to E, \quad f \mapsto G_2 \star f.
\end{align}
%The restriction of this operator to the Schwartz space coincides/ with the definition given by the Fourier transform.
\end{propo}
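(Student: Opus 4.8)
The plan is to show first that $(-\Delta+1)^{-1}$ as defined by the convolution $f\mapsto G_2\star f$ actually maps $E$ into $E$, and then that it is a two-sided inverse to $(-\Delta+1)$ on $E$, with both maps continuous for the Fréchet topology. The heart of the matter is the mapping property, since once the convolution is shown to land in $E$, the algebraic identity $(-\Delta+1)(G_2\star f)=f=G_2\star((-\Delta+1)f)$ for $f\in E$ is obtained by a density/approximation argument using that it already holds on $L^2$ (or on the Schwartz class).

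First I would check that the convolution $G_2\star f$ is well-defined pointwise for $f\in E$. Since $N(f)<\infty$ means $|f(\bx)|\lesssim e^{|\bx|/2}$, and $G_2(\bx)=\frac1{2\pi}K_0(|\bx|)$ satisfies $G_2(\bx)\sim |\ln|\bx||$ near $0$ (by \eqref{asympt:K0:0}) and $G_2(\bx)\lesssim e^{-|\bx|}|\bx|^{-1/2}$ at infinity (by \eqref{eq:asymp_K_0_first_order}), the integral $\int G_2(\by)f(\bx-\by)\,d\by$ converges absolutely: split into $|\by|\le 1$, where $|\ln|\by||$ is integrable and $f(\bx-\by)$ is bounded on compacts, and $|\by|>1$, where $e^{-|\by|}|\by|^{-1/2}e^{|\bx-\by|/2}\le e^{-|\by|/2+|\bx|/2}|\by|^{-1/2}$ is integrable in $\by$ with the $\bx$-dependence pulled out as $e^{|\bx|/2}$. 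This also immediately gives the bound $N(G_2\star f)\lesssim N(f)$. Next, since $G_2\in L^1(\mathbb R^2)$, differentiation under the integral sign is justified and $\partial_\bx^\alpha(G_2\star f)=G_2\star(\partial_\bx^\alpha f)$; applying the previous estimate to $\partial_\bx^\alpha f\in E$ yields $N(\partial_\bx^\alpha(G_2\star f))\lesssim N(\partial_\bx^\alpha f)$ for every $\alpha$, so $G_2\star f\in E$ and the operator is continuous. Smoothness of $G_2\star f$ follows from the same differentiation-under-the-integral argument.

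Then I would verify that $(-\Delta+1):E\to E$ is well-defined (trivially, since differentiation preserves $E$ and multiplication by $1$ does too, and the polynomial/exponential bounds are stable under $-\Delta$), and that it is inverse to the convolution operator. For the identity $(-\Delta+1)(G_2\star f)=f$, take $f\in E$; one approach is to approximate $f$ by $f\chi_R$ with a smooth cutoff $\chi_R$ equal to $1$ on $B_R$ and supported in $B_{2R}$, so that $f\chi_R\in C_c^\infty\subset \mathcal S$, for which the identity is the standard Fourier-multiplier statement \eqref{def:-Delta+1}; then pass to the limit $R\to\infty$ using the local uniform convergence of $G_2\star(f\chi_R)$ and its derivatives to $G_2\star f$ and its derivatives (the tails are controlled by the integrability estimates above, uniformly on compact sets in $\bx$). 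The reverse identity $G_2\star((-\Delta+1)f)=f$ is handled the same way. Injectivity of $(-\Delta+1)$ on $E$ and surjectivity both follow from the existence of the two-sided inverse, and continuity of the inverse is what was shown in the second paragraph; hence $(-\Delta+1)$ is a homeomorphism of $E$.

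The main obstacle is making the approximation argument for the algebraic identities fully rigorous on $E$: unlike $L^2$ or $\mathcal S$, functions in $E$ can grow exponentially (with rate $<1$), so one cannot invoke Plancherel or Fourier inversion directly, and one must be careful that $(-\Delta+1)$ commutes with the cutoff procedure only up to commutator terms $[\,-\Delta,\chi_R\,]f$ supported in the annulus $R\le|\bx|\le 2R$, which must be shown to contribute negligibly after convolving with $G_2$ — this uses precisely that $G_2$ decays like $e^{-|\bx|}$ while $f$ grows only like $e^{|\bx|/2}$, so the convolution of $G_2$ with something supported far out and of size $e^{|\bx|/2}$ is small on any fixed compact set. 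Everything else is routine once the decay bookkeeping for $K_0$ near $0$ and at $\infty$ is set up.
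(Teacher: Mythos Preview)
Your argument is correct and follows essentially the same route as the paper. The mapping property $N(G_2\star\partial_\bx^\alpha f)\lesssim N(\partial_\bx^\alpha f)$ is obtained in both cases from the decay of $K_0$ at infinity versus the growth rate $e^{|\bx|/2}$ allowed in $E$; the only stylistic difference is in the passage from the Schwartz-class identity to $E$: the paper invokes density of $\mathcal S(\mathbb R^2)$ in $E$ for the slightly weaker seminorms $\tilde N_\beta(f)=\sup|\partial_\bx^\beta f(\bx)|e^{-3|\bx|/4}$ (for which the same continuity estimates hold, and cutoffs of $f$ converge to $f$), whereas you carry out the cutoff argument explicitly and track the commutator $[-\Delta,\chi_R]f$ by hand. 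Your observation that the exponential gap between $e^{-|\by|}$ (decay of $G_2$) and $e^{|\by|/2}$ (growth of $f$) is precisely what makes those commutator terms vanish in the limit is exactly the point behind the paper's choice of the $3/4$ exponent.
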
 

\begin{rema} \label{rem:exp} From the proof, this property remains true for functions growing at infinity at most as $e^{\delta\vert \bx \vert}$, for any $0<\delta<1$.
\end{rema}

\begin{proof}
First, by the definition of $E$, the operator $(-\Delta+1)$ is a continuous map from $E$ into itself. To prove that the application is bijective, we use the explicit definition of the inverse.

Let $f\in E$ and $\alpha \in \mathbb N^2$. We verify that $G_2 \star \partial_{\bx}^{\alpha}f$ is a well-defined function belonging to $E$. Indeed, by using the triangle inequality, we have $e^{-\frac{\vert \bx \vert}{2}} \le e^{-\frac{\vert \bz \vert}{2}} e^{\frac{\vert \bz-\bx \vert}{2}}$, so that from \eqref{def:G2}
\begin{align*}
    N(G_2 \star \partial_{\bx}^{\alpha}f) \le \sup_{\bx \in \mathbb{R}^2} \int_{\mathbb R^2} G_2(\bx- \bz) \vert \partial_{\bx}^{\alpha}f(\bz)\vert e^{-\frac{\vert \bx \vert}{2}}  d\bz \lesssim 
    \left(\int_{\mathbb R^2} K_0(\vert\by \vert)  e^{\frac{\vert \by \vert}{2}}  d\by \right) N(\partial_{\bx}^{\alpha}f) .
\end{align*}
Moreover, it follows from the asymptotics of $K_0$ at $0$ in \eqref{asympt:K0:0} and at infinity in \eqref{eq:asymp_K_0_first_order} that $\int_{\mathbb R^2} K_0(\vert\by \vert)  e^{\frac{\vert \by \vert}{2}}  d\by \lesssim 1$. Therefore, we conclude that
\begin{align*}
N(G_2 \star \partial_{\bx}^{\alpha}f) \lesssim N(\partial_{\bx}^{\alpha}f) .
\end{align*}

One can see from \eqref{def:-Delta+1} and \eqref{def:G2} that, for all $f \in \mathcal{S}(\mathbb R^2)$, 
\begin{align*}
    (-\Delta+1) \left(G_2 \star f \right)=G_2 \star (-\Delta+1)f  = f.
\end{align*}
Since the Schwartz space $\mathcal{S}(\mathbb{R}^2)$ is dense in $E$ for the topology induced by the semi-norms $(\Tilde{N}_\beta)_{\beta\in \mathbb{N}^2}$, where for all $f \in E$, $\Tilde{N}_\beta(f) := \sup_{\bx \in \mathbb{R}^2} \left\vert \partial_\bx^\beta f(\bx) e^{-\frac{3}{4}\vert \bx \vert }\right\vert$, we obtain that the convolution with $G_2$ is the inverse of $(-\Delta+1)$ on $E$. As a consequence, $(-\Delta+1)$ is a homeomorphism from $E$ to itself.
\end{proof}

The next result is a consequence from the fact that the operator $(-\Delta+1)^{-1}$ also preserves the polynomial growth at infinity.

\begin{coro}\label{coro:G_2}
Let $f\in E$ and let $z \gg 1$ be a fixed parameter.
\begin{itemize}
\item[(i)]
Assume that there exists a positive constant $c=c(z)$ and $n \in \mathbb N$ such that, for all $\bx \in \mathbb R^2$, $|f(\bx)| \le c(z)\langle \bx \rangle^n$. Then, for all $\bx \in \mathbb R^2$,
\begin{align} \label{coro:G_2.1}
    \left\vert (-\Delta+1)^{-1}f(\bx) \right\vert \lesssim c(z)\langle \bx \rangle^{n}.
\end{align}
\item[(ii)] Assume that there exist positive constants $c_1=c_1(z)$, $c_2=c_2(z)$, $k=k(z)$ with $k(z) \gg 1$  and $n \in \mathbb N$  such that, for all $\bx \in \mathbb R^2$, $|f(\bx)| \le \langle \bx \rangle^n \left( c_1(z) +c_2(z) \mathbf{1}_{|\bx|>k(z)} \right)$. Then, for all $\bx \in \mathbb R^2$,
\begin{align} \label{coro:G_2.2}
    \left\vert (-\Delta+1)^{-1}f(\bx) \right\vert \lesssim \langle \bx \rangle^{n} \left(c_1(z)+c_2(z)e^{-\frac{k(z)}4} + c_2(z)\mathbf{1}_{|\bx|>\frac{k(z)}2}\right) .
\end{align}
\end{itemize}
\end{coro}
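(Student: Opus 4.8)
The final statement to prove is Corollary \ref{coro:G_2}, which upgrades Proposition \ref{propo:G_2} (preservation of the Fréchet space $E$) to quantitative pointwise bounds respecting polynomial growth. Here is the plan.

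\textbf{Overall approach.} Both parts follow from the integral representation $(-\Delta+1)^{-1}f = G_2 \star f$ established in Proposition \ref{propo:G_2}, combined with the explicit decay of the kernel $G_2(\bx) = \frac1{2\pi}K_0(|\bx|)$, namely the logarithmic singularity at $0$ from \eqref{asympt:K0:0} and the exponential decay $K_0(r) \lesssim r^{-1/2}e^{-r}$ for $r>1$ from \eqref{eq:asymp_K_0_first_order}. In particular $G_2$ and $e^{|\cdot|/4}G_2$ are both integrable on $\mathbb R^2$. The one recurring tool is the triangle-inequality bound $\langle \bx \rangle^n \lesssim \langle \bx - \by \rangle^n \langle \by \rangle^n$, which lets one peel off the polynomial factor from the convolution and absorb the remaining $\langle \by \rangle^n$ against the decay of $G_2(\by)$.

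\textbf{Part (i).} First I would write, for $\bx \in \mathbb R^2$,
\begin{align*}
\left| (-\Delta+1)^{-1}f(\bx) \right| \le \int_{\mathbb R^2} G_2(\by) \, |f(\bx - \by)| \, d\by \lesssim c(z) \int_{\mathbb R^2} G_2(\by) \langle \bx - \by \rangle^n \, d\by .
\end{align*}
Using $\langle \bx - \by \rangle^n \lesssim \langle \bx \rangle^n \langle \by \rangle^n$ and pulling out $\langle \bx \rangle^n$, the claim reduces to the finiteness of $\int_{\mathbb R^2} G_2(\by)\langle \by \rangle^n \, d\by$, which holds by passing to polar coordinates and splitting at $r=1$: near $0$ the integrand is $\lesssim |\ln r|\, r$ by \eqref{asympt:K0:0}, and for $r>1$ it is $\lesssim r^{n+1/2}e^{-r}$ by \eqref{eq:asymp_K_0_first_order}, both integrable. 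This gives \eqref{coro:G_2.1}.

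\textbf{Part (ii).} Here the point is to track how a far-field bump in $f$ (the term $c_2(z)\mathbf 1_{|\bx|>k(z)}$) propagates through the convolution. I would split the $\by$-integral into the regions $|\by| \le \tfrac12 |\bx|$ and $|\by| > \tfrac12|\bx|$ — or more precisely handle the contribution of $c_1$ as in part (i), and for the $c_2$-contribution split according to whether $|\bx - \by|$ can be large. Write $(-\Delta+1)^{-1}f(\bx) = A(\bx) + B(\bx)$ with $A$ coming from the $c_1\langle\cdot\rangle^n$ part (bounded by $\lesssim c_1(z)\langle\bx\rangle^n$ exactly as in (i)) and $B(\bx) \lesssim c_2(z)\int_{|\bx-\by|>k(z)} G_2(\by)\langle \bx-\by\rangle^n \, d\by$. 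On the set $\{|\bx - \by| > k(z)\}$, if $|\bx| \le \tfrac12 k(z)$ then necessarily $|\by| > \tfrac12 k(z)$, so $G_2(\by) \lesssim e^{-|\by|/2}|\by|^{-1/2} \lesssim e^{-k(z)/4} e^{-|\by|/4}|\by|^{-1/2}$, and since $|\bx - \by| \le |\bx| + |\by| \lesssim |\by|$ here, $\langle \bx-\by\rangle^n \lesssim \langle \by\rangle^n$; integrating $e^{-|\by|/4}|\by|^{-1/2}\langle\by\rangle^n$ gives a finite constant, producing a contribution $\lesssim c_2(z) e^{-k(z)/4}$. If instead $|\bx| > \tfrac12 k(z)$, I simply bound $B(\bx) \lesssim c_2(z)\int G_2(\by)\langle\bx-\by\rangle^n d\by \lesssim c_2(z)\langle\bx\rangle^n$ using part (i) with this term carrying the indicator $\mathbf 1_{|\bx|>k(z)/2}$. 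Combining the two cases and with $A$ yields \eqref{coro:G_2.2}.

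\textbf{Main obstacle.} None of the estimates is deep; the only delicate bookkeeping is in part (ii), making sure the splitting of the far-field region is done so that exactly one of the two mechanisms (exponential gain $e^{-k(z)/4}$ versus the indicator $\mathbf 1_{|\bx|>k(z)/2}$) applies, and that the polynomial weights are correctly transferred using $\langle \bx - \by\rangle \lesssim \langle\by\rangle$ on the relevant region rather than the cruder $\langle\bx-\by\rangle^n\lesssim\langle\bx\rangle^n\langle\by\rangle^n$, which would not give a clean constant. I would also note at the end that these bounds are consistent with $(-\Delta+1)^{-1}f \in E$ already known from Proposition \ref{propo:G_2}, so measurability/integrability issues in the convolution are not a concern.
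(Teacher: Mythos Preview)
Your proof is correct and follows essentially the same approach as the paper: convolution with $G_2$, the transfer $\langle \bx-\by\rangle^n \lesssim \langle \bx\rangle^n\langle \by\rangle^n$, and for part (ii) a near/far splitting at scale $k(z)/2$. The only cosmetic difference is that the paper applies the cruder bound $\langle \bx-\by\rangle^n \lesssim \langle \bx\rangle^n\langle \by\rangle^n$ upfront and then splits the $\by$-integral into $|\by|\le k(z)/2$ and $|\by|>k(z)/2$, whereas you split on $|\bx|$; since the statement already carries a factor $\langle \bx\rangle^n$, the cruder bound is perfectly adequate and your concern about needing $\langle\bx-\by\rangle\lesssim\langle\by\rangle$ is unnecessary.
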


\begin{proof}
By using the asymptotic expansions of $K_0$ at $0$ and at infinity (see \eqref{asympt:K0:0} and \eqref{eq:asymp_K_0_first_order}) and the rough estimate $\langle \bx-\by \rangle^{n} \lesssim \langle \bx \rangle^{n}\langle \by \rangle^{n}$, we have that
\begin{align} \label{est:coro:G_2}
   \left\vert (-\Delta+1)^{-1}f(\bx) \right\vert =\left\vert \int_{\mathbb R^2} G_2(\by) f(\bx-\by) d\by \right\vert \lesssim c(z) \left\langle   \bx  \right\rangle^{n} \int_{\mathbb R^2} K_0(\vert \by \vert)\left\langle  \by  \right\rangle^{n}d\by \lesssim c(z)\langle \bx \rangle^{n} .
\end{align}
which concludes the proof of \eqref{coro:G_2.1}.

The proof of \eqref{coro:G_2.2} follows the same strategy as the one of \eqref{coro:G_2.1}. Arguing as above, we have 
\begin{align*}
\left\vert (-\Delta+1)^{-1}f(\bx) \right\vert 
    & \lesssim c_1(z)\left\langle \bx \right\rangle^{n} \int_{\mathbb R^2} K_0(\vert \by \vert)\left\langle  \by  \right\rangle^{n}d\by+c_2(z)\left\langle \bx  \right\rangle^{n} \int_{\mathbb R^2} K_0(\vert \by \vert)\left\langle  \by  \right\rangle^{n} \mathbf{1}_{|\bx-\by|>k(z)} d\by \\
    & =I(\bx)+II(\bx).
\end{align*}
By using the asymptotic expansions of $K_0$ at $0$ and at infinity, we see that $\vert I(\bx) \vert \lesssim c_1(z)\left\langle   \bx  \right\rangle^{n}$. To handle the contribution $II(\bx)$, we split the domain of integration into the regions $|\by| \le k(z)/2$ and $|\by|>k(z)/2$, so that 
\begin{align*}
II(\bx) & =c_2(z)\left\langle   \bx  \right\rangle^{n} \int_{|\by| \le \frac{k(z)}2} K_0(\vert \by \vert)\left\langle  \by  \right\rangle^{n} \mathbf{1}_{|\bx-\by|>k(z)} d\by \\
    & \quad \quad + c_2(z)\left\langle \bx  \right\rangle^{n} \int_{|\by| > \frac{k(z)}2} K_0(\vert \by \vert)\left\langle  \by  \right\rangle^{n} \mathbf{1}_{|\bx-\by|>k(z)} d\by \quad =II_1(\bx)+II_2(\bx) .
\end{align*}
 On the one hand, in the region $|\by| \le k(z)/2$, we have $|\bx| \ge k(z)/2$, so that
\begin{equation*}
II_1(\bx) \lesssim c_2(z)\left\langle   \bx  \right\rangle^{n}\mathbf{1}_{|\bx|>\frac{k(z)}2}  \int_{\mathbb R^2} K_0(\vert \by \vert)\left\langle  \by \right\rangle^{n}d\by \lesssim c_2(z)\left\langle \bx  \right\rangle^{n}\mathbf{1}_{|\bx|>\frac{k(z)}2} .
\end{equation*}
On the other hand, in the region $|\by| > k(z)/2$, we observe from the decay property of  $K_0$ at infinity in  \eqref{eq:asymp_K_0_first_order} that 
\begin{equation*}
II_2(\bx) \lesssim c_2(z)\left\langle   \bx  \right\rangle^{n} \int_{\vert \by \vert > \frac{k(z)}2} e^{-\vert \by \vert} \left\langle  \by  \right\rangle^{n}d\by \lesssim c_2(z) e^{-\frac{k(z)}4}\left\langle   \bx  \right\rangle^{n} .
\end{equation*}
Therefore, we conclude the proof of \eqref{coro:G_2.2} gathering these estimates. 
\end{proof}

Finally, we prove that $(-\Delta+1)^{-1}$ commutes with the operator $\partial_x^{-1}$ defined in \eqref{defi:d_x_-1}.
\begin{lemm} \label{comm:dx-1}
Let $f \in \mathcal{Y}$. Then $\partial_x^{-1}(-\Delta+1)^{-1}f=(-\Delta+1)^{-1}\partial_x^{-1}f$.
\end{lemm}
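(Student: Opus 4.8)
The identity $\partial_x^{-1}(-\Delta+1)^{-1}f=(-\Delta+1)^{-1}\partial_x^{-1}f$ is a statement about commuting two Fourier-type operators, but the subtlety is that $\partial_x^{-1}$ as defined in \eqref{defi:d_x_-1} is \emph{not} a Fourier multiplier on $L^2$ (it is only defined on $\mathcal Y$, where the integral $\int_x^{+\infty}$ converges). So I would not try to pass to Fourier variables directly. Instead, the plan is to work with the operational identities available from the excerpt: by \eqref{prop:d_x_-1}, on $\mathcal Y$ we have $\partial_x(-\partial_x^{-1})=\mathrm{id}$ and $(-\partial_x^{-1})\partial_x=\mathrm{id}$, and I want to leverage the fact that $\partial_x$ genuinely commutes with $(-\Delta+1)^{-1}$ as Fourier multipliers on Schwartz-class (or $L^2$) functions.

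\textbf{Key steps.} First I would check that both sides of the claimed identity are well-defined: given $f\in\mathcal Y$, Lemma \ref{Bessel:Y} gives $(-\Delta+1)^{-1}f\in\mathcal Y$, so $\partial_x^{-1}(-\Delta+1)^{-1}f$ makes sense; and $\partial_x^{-1}f\in\mathcal Z$ with $\partial_x(\partial_x^{-1}f)=-f\in\mathcal Y\subset L^2$, so $(-\Delta+1)^{-1}\partial_x^{-1}f$ makes sense (note $\partial_x^{-1}f$ is bounded, hence one must interpret $(-\Delta+1)^{-1}$ here via convolution with $G_2$ using \eqref{def:conv}/\eqref{defi:G_2}, which is legitimate since $\partial_x^{-1}f$ has at most bounded growth — indeed Proposition \ref{propo:G_2} and Remark \ref{rem:exp} apply). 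Second, set $g:=\partial_x^{-1}(-\Delta+1)^{-1}f-(-\Delta+1)^{-1}\partial_x^{-1}f$; I would show $\partial_x g=0$ and then argue $g\equiv 0$ from decay/boundedness as $x\to+\infty$. To compute $\partial_x g$: apply $\partial_x$ to the first term to get $\partial_x\partial_x^{-1}(-\Delta+1)^{-1}f=-(-\Delta+1)^{-1}f$ by \eqref{prop:d_x_-1}; apply $\partial_x$ to the second term: since $(-\Delta+1)^{-1}$ is convolution with $G_2$ and $\partial_x$ commutes with convolution, $\partial_x(-\Delta+1)^{-1}\partial_x^{-1}f=(-\Delta+1)^{-1}\partial_x(\partial_x^{-1}f)=(-\Delta+1)^{-1}(-f)=-(-\Delta+1)^{-1}f$ by \eqref{prop:d_x_-1} again. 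Hence $\partial_x g=0$, i.e. $g=g(y)$ depends only on $y$. Third, to conclude $g=0$, I would use that $\partial_x^{-1}(-\Delta+1)^{-1}f$ tends to $0$ as $x\to+\infty$ for each fixed $y$ (it lies in $\mathcal Z$ and its definition as $\int_x^{+\infty}(-\Delta+1)^{-1}f\,d\tilde x$ vanishes in that limit), and that $(-\Delta+1)^{-1}\partial_x^{-1}f\to 0$ as $x\to+\infty$ as well — this needs a short argument: $\partial_x^{-1}f(\tilde x,\cdot)\to 0$ as $\tilde x\to+\infty$ (since $f\in\mathcal Y$ decays and $\partial_x^{-1}f\in\mathcal Z$ vanishes to the right), so the convolution with the integrable kernel $G_2$ also vanishes to the right by dominated convergence. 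Thus $g(y)=\lim_{x\to+\infty}g(x,y)=0$ for all $y$, giving the claim.

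\textbf{Main obstacle.} The delicate point is not the formal commutation — that is a one-line Fourier/convolution computation — but rather the rigorous bookkeeping of function spaces: $\partial_x^{-1}f$ is merely bounded (in $\mathcal Z$), so one cannot naively invoke the $L^2$ Fourier-multiplier definition of $(-\Delta+1)^{-1}$, and must instead justify $(-\Delta+1)^{-1}\partial_x^{-1}f = G_2\star\partial_x^{-1}f$ via Proposition \ref{propo:G_2} (valid for at-most-exponential-growth-$e^{\delta|\bx|}$ functions with $\delta<1$, which covers bounded functions), and then verify that $\partial_x$ passes through this convolution and that the $x\to+\infty$ limits are genuinely zero. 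Once the space-theoretic setup is pinned down, the algebraic identity $\partial_x g=0$ plus the vanishing at $+\infty$ closes the argument cleanly.
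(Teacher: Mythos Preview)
Your argument is correct, but the paper's proof is more direct. The paper simply writes both sides as iterated integrals and applies Fubini's theorem: since $(-\Delta+1)^{-1}$ is convolution with the integrable kernel $G_2$ (see \eqref{defi:G_2}) and $-\partial_x^{-1}$ is the integral $\int_x^{+\infty}\,d\tilde x$, one has
\[
\partial_x^{-1}(-\Delta+1)^{-1}f(x,y)=-\int_x^{+\infty}\int_{\mathbb R^2}G_2(\tilde x-x',y-y')f(x',y')\,dx'\,dy'\,d\tilde x,
\]
and swapping the $\tilde x$-integral with the convolution integral (legitimate because $G_2\in L^1$ and $f\in\mathcal Y$ gives absolute integrability) yields exactly $(-\Delta+1)^{-1}\partial_x^{-1}f$. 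This one-line Fubini argument sidesteps all the function-space bookkeeping you carry out.

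Your route --- differentiate the difference $g$, show $\partial_x g=0$, then pin down $g\equiv 0$ via the limit $x\to+\infty$ --- is a valid uniqueness argument and has the merit of making explicit why $(-\Delta+1)^{-1}$ acts on the merely bounded function $\partial_x^{-1}f$ (via Proposition \ref{propo:G_2}). But it is longer, and the dominated-convergence step you invoke for the limit of $G_2\star(\partial_x^{-1}f)$ as $x\to+\infty$ is itself essentially a disguised Fubini-type estimate. In short: both approaches work; the paper's is the shorter path.
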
 

\begin{proof} The proof follows from \eqref{defi:G_2}, the integrability of $G_2$ and Fubini's theorem. 
\end{proof}

\section{Properties of the ground-state \texorpdfstring{$Q$}{Q}}\label{app:asymptotic_Q}%We recall that $Q$ satisfies \eqref{eq:Q}. By a polar change of variables satisfies the following ordinary differential equation on $\mathbb{R}_+=[0,+\infty)$:

\subsection{Basic properties of \texorpdfstring{$Q$}{Q} and related functions}

In this subsection, we derive basic properties of the ground state $Q$ of \eqref{eq:Q}, the function $\Lambda Q$ defined in \eqref{defi:Lambda_Q} and other related functions ($X$, $Y$ and $W$ are defined in \eqref{def:XYW}, while $\Lambda^2Q$ is defined in \eqref{defi:Lambda_Q}).

\begin{lemm} \label{parity} The following properties hold true.
\begin{itemize}
\item[(i)] $Q, \, \Lambda Q, \, \Lambda^2 Q, \, X \in \mathcal{Y}$ and  $Y, \, W \in \mathcal{Z}$.
\item[(ii)] $Q$, $\Lambda Q$ and $X$ are radial function, $W$ is even in its second variable and $Y$ is odd in its second variable.
\end{itemize}
\end{lemm}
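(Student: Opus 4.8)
\textbf{Proof strategy for Lemma \ref{parity}.}

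The plan is to reduce everything to two facts established earlier: the behaviour of the Bessel potential $(-\Delta+1)^{-1}$ on the classes $\mathcal Y$ and $E$ (Lemma \ref{Bessel:Y}, Proposition \ref{propo:G_2}, Corollary \ref{coro:G_2}, Lemma \ref{Bessel:even}) and the action of $-\partial_x^{-1}$ on $\mathcal Y$ (its definition \eqref{defi:d_x_-1} together with \eqref{prop:d_x_-1}), plus the classical decay of $Q$ from Propositions \ref{propo:Q} and \ref{propo:Q_K_0}. Concretely, I would first check $Q\in\mathcal Y$: $Q$ is smooth and radial by Proposition \ref{propo:Q}(i), and every derivative $\partial_\bx^\alpha Q$ decays like $\langle\bx\rangle^n e^{-|\bx|}$ for a suitable $n$, which follows by iterating the elliptic equation \eqref{eq:Q} (rewrite $Q=(-\Delta+1)^{-1}(Q^2)$, so $\partial_\bx^\alpha Q=(-\Delta+1)^{-1}\partial_\bx^\alpha(Q^2)$, and use Lemma \ref{Bessel:Y} together with the first-order asymptotic \eqref{eq:bound_Q_first_order}). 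Then $\Lambda Q=\big((1+\tfrac12\bx\cdot\nabla)Q\big)$ by \eqref{defi:Lambda}, which is a finite combination of $Q$ and $x\partial_x Q,\,y\partial_y Q$; since $\mathcal Y$ is stable under multiplication by polynomials and under differentiation, $\Lambda Q\in\mathcal Y$, and likewise $\Lambda^2 Q\in\mathcal Y$ from \eqref{defi:Lambda_2}. Radiality of $Q$, $\Lambda Q$, $\Lambda^2 Q$ is immediate since the scaling generator commutes with rotations (or directly from the displayed formulas).

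Next, for $X=-(-\Delta+1)^{-1}Q$: since $Q\in\mathcal Y$, Lemma \ref{Bessel:Y} gives $X\in\mathcal Y$ at once, and Lemma \ref{Bessel:even} (first bullet) gives that $X$ is radial because $Q$ is. For $Y=-\partial_x^{-1}(-\Delta+1)^{-1}\partial_y Q$ and $W=-\partial_x^{-1}(-\Delta+1)^{-1}\Lambda Q$, I would argue in two steps. First, $\partial_y Q\in\mathcal Y$ and $\Lambda Q\in\mathcal Y$ (just proved), so $(-\Delta+1)^{-1}\partial_y Q$ and $(-\Delta+1)^{-1}\Lambda Q$ lie in $\mathcal Y$ by Lemma \ref{Bessel:Y}; in particular $\partial_x^{-1}$ is well-defined on them by \eqref{defi:d_x_-1}, and it maps $\mathcal Y$ into $\mathcal Z$ by the very definition of $\mathcal Z$ in \eqref{def:Z} — one must check the two requirements there, namely that $\partial_x Y=-(-\Delta+1)^{-1}\partial_y Q\in\mathcal Y$ (true) and that $Y$ itself is bounded with the stated transverse decay $|\partial_y^k Y(\bx)|\lesssim e^{-\frac{15}{16}|\bx|}$ for $x\ge 0$. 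The latter is the point requiring a small estimate: for $x\ge 0$ one has $Y(x,y)=\int_x^{+\infty}(-\Delta+1)^{-1}\partial_y Q(\tilde x,y)\,d\tilde x$, and since the integrand is in $\mathcal Y$ it is bounded by $\langle(\tilde x,y)\rangle^n e^{-|(\tilde x,y)|}$; integrating in $\tilde x$ from $x\ge0$ to $+\infty$ and absorbing the polynomial factor into a slightly worse exponent (here $\tfrac{15}{16}<1$ leaves room) yields the claim, and the same for $\partial_y^k Y$ and for $W$. Boundedness on all of $\mathbb R^2$ follows because $(-\Delta+1)^{-1}\partial_yQ$ is integrable in $x$ uniformly in $y$ (its $L^1_x$ norm decays in $|y|$), so the tail integral defining $\partial_x^{-1}$ stays bounded.

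Finally, the parity statements in (ii): $Q$ is radial hence even in $y$, so $\partial_y Q$ is odd in $y$ and $\Lambda Q$ is even in $y$; by Lemma \ref{Bessel:even} (third bullet) $(-\Delta+1)^{-1}\partial_y Q$ is odd in $y$ and $(-\Delta+1)^{-1}\Lambda Q$ is even in $y$; the operator $\partial_x^{-1}$ integrates only in $x$ and hence does not affect parity in $y$, so $Y$ is odd in $y$ and $W$ is even in $y$, as claimed. Radiality of $X$ was already noted. The main obstacle, though entirely routine, is the verification that $Y,W\in\mathcal Z$ rather than merely $L^\infty\cap C^\infty$: one has to produce the pointwise transverse bound with the specific constant $\tfrac{15}{16}$ built into $\mathcal Z$, which is why the extra room between $\tfrac{15}{16}$ and $1$ in the exponent matters when absorbing the polynomial prefactors coming from membership in $\mathcal Y$.
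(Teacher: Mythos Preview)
Your proposal is correct and follows essentially the same route as the paper, which simply cites Proposition~\ref{propo:Q} together with Lemmas~\ref{Bessel:Y} and~\ref{Bessel:even}. You supply considerably more detail than the paper does---in particular, your explicit verification that $-\partial_x^{-1}$ sends $\mathcal Y$ into $\mathcal Z$ (the pointwise bound $|\partial_y^k Y(\bx)|\lesssim e^{-\frac{15}{16}|\bx|}$ for $x\ge 0$ via the $\tfrac{1}{16}$-slack argument) is something the paper simply asserts in the definition~\eqref{defi:d_x_-1} without proof.
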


\begin{proof} 
The proof of (i) follows from Proposition \ref{propo:Q} and Lemma \ref{Bessel:even}, while the proof of (ii) follows from Proposition \ref{propo:Q} and Lemma \ref{Bessel:Y}.
\end{proof}

\begin{toexclude}
\begin{propo}[Decay of $Q$ and of associated functions]
The following identities hold:
\begin{align}
    Q, \Lambda Q \in \mathcal{Y}.
\end{align}
\end{propo}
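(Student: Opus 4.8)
The statement to prove is Lemma \ref{parity}: namely, (i) $Q$, $\Lambda Q$, $\Lambda^2 Q$, $X$ all lie in $\mathcal{Y}$ and $Y$, $W$ lie in $\mathcal{Z}$; and (ii) $Q$, $\Lambda Q$, $X$ are radial, $W$ is even in its second variable, $Y$ is odd in its second variable. The entire argument is a bookkeeping exercise assembling facts already established in the excerpt; there is essentially no new analytic content. The plan is to treat the two items in turn, reducing each assertion to either Proposition \ref{propo:Q}, Lemma \ref{Bessel:even}, or Lemma \ref{Bessel:Y}, together with the definitions \eqref{def:XYW} of $X$, $Y$, $W$ and \eqref{defi:Lambda} of the scaling operator.

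First I would handle the membership in $\mathcal{Y}$. By Proposition \ref{propo:Q}(i), $Q$ is smooth, positive, radial and exponentially decaying; combined with the first-order asymptotic \eqref{eq:bound_Q_first_order}, which gives $Q(r) \lesssim r^{-(d-1)/2} e^{-r} \lesssim \langle \bx \rangle^0 e^{-|\bx|}$, and with elliptic regularity applied to \eqref{eq:Q} to control derivatives (each differentiation of the equation $-\Delta Q + Q = Q^2$ shows $\partial^\alpha_\bx Q$ satisfies a similar elliptic equation with exponentially decaying source, hence decays like $\langle \bx\rangle^n e^{-|\bx|}$ for some $n$), one gets $Q \in \mathcal{Y}$. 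Since $\Lambda Q(\bx) = \bigl((1+\tfrac12 \bx\cdot\nabla)Q\bigr)(\bx)$ by \eqref{defi:Lambda} and $\mathcal{Y}$ is stable under multiplication by polynomials and under differentiation, $\Lambda Q \in \mathcal{Y}$; iterating, $\Lambda^2 Q \in \mathcal{Y}$ by \eqref{defi:Lambda_2}. For $X = -(-\Delta+1)^{-1}Q$, since $Q \in \mathcal{Y}$, Lemma \ref{Bessel:Y} gives $X \in \mathcal{Y}$ directly. Next, for $Y = -\partial_x^{-1}(-\Delta+1)^{-1}\partial_y Q$ and $W = -\partial_x^{-1}(-\Delta+1)^{-1}\Lambda Q$: here $\partial_y Q \in \mathcal{Y}$ and $\Lambda Q \in \mathcal{Y}$, so $(-\Delta+1)^{-1}\partial_y Q \in \mathcal{Y}$ and $(-\Delta+1)^{-1}\Lambda Q \in \mathcal{Y}$ by Lemma \ref{Bessel:Y}; then the operator $-\partial_x^{-1} : \mathcal{Y} \to \mathcal{Z}$ defined in \eqref{defi:d_x_-1} maps these into $\mathcal{Z}$, so $Y, W \in \mathcal{Z}$. (One should double-check that the $\mathcal{Z}$-estimates in \eqref{def:Z} — i.e. $\partial_x$ of the function lies in $\mathcal{Y}$, and $\partial_y^k$ decays like $e^{-\frac{15}{16}|\bx|}$ for $x\ge 0$ — hold; but $\partial_x(-\partial_x^{-1})g = -g \in \mathcal{Y}$ by \eqref{prop:d_x_-1}, and for $x \ge 0$ the tail integral $\int_x^{+\infty} g(\tilde x, y)\, d\tilde x$ of an $\mathcal{Y}$-function retains exponential decay in $|\bx|$ with the same rate, which is stronger than $\frac{15}{16}$; this is exactly the content of the definition of the map in \eqref{defi:d_x_-1}.)

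For item (ii), the parity statements: $Q$ is radial by Proposition \ref{propo:Q}(i). Then $\Lambda Q = (1 + \tfrac12 \bx\cdot\nabla)Q$ is radial because $\bx \cdot \nabla$ acting on a radial function is radial. Since $Q$ is radial, Lemma \ref{Bessel:even} (first bullet) gives that $X = -(-\Delta+1)^{-1}Q$ is radial. For $W$: $\Lambda Q$ is radial, in particular even in $y$, so $(-\Delta+1)^{-1}\Lambda Q$ is even in $y$ by Lemma \ref{Bessel:even} (third bullet); the operator $-\partial_x^{-1}$ (integration in the $x$ variable from $x$ to $+\infty$) does not affect the parity in $y$, so $W$ is even in its second variable. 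For $Y$: $\partial_y Q$ is odd in $y$ (since $Q$ is even in $y$), hence $(-\Delta+1)^{-1}\partial_y Q$ is odd in $y$ by Lemma \ref{Bessel:even}, and again $-\partial_x^{-1}$ preserves parity in $y$, so $Y$ is odd in its second variable.

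\textbf{Main obstacle.} There is no real obstacle; the only point requiring minor care is verifying that $Y$ and $W$ genuinely satisfy the second defining condition of $\mathcal{Z}$ in \eqref{def:Z} — the restricted decay $|\partial_y^k f(\bx)| \lesssim e^{-\frac{15}{16}|\bx|}$ for $x \ge 0$ — rather than just being bounded with $\partial_x f \in \mathcal{Y}$. This follows because for $x \ge 0$ the antiderivative $\int_x^{+\infty} g(\tilde x, y)\,d\tilde x$ of $g \in \mathcal{Y}$ inherits a bound of the form $\langle\bx\rangle^n e^{-|\bx|}$ (one splits the integral and uses $|\tilde x| \ge x$ when $x\ge 0$), and $\langle\bx\rangle^n e^{-|\bx|} \lesssim e^{-\frac{15}{16}|\bx|}$; differentiating in $y$ under the integral sign preserves this. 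This is precisely why the target space of $-\partial_x^{-1}$ in \eqref{defi:d_x_-1} was defined to be $\mathcal{Z}$, so the needed statement is already implicit there, and it suffices to cite it.
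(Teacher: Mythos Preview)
Your proposal is correct and follows essentially the same approach as the paper's own proof, which consists of just two sentences citing Proposition \ref{propo:Q}, Lemma \ref{Bessel:even}, and Lemma \ref{Bessel:Y}. You have written out the detailed bookkeeping that the paper leaves implicit, correctly identifying that the $\mathcal{Y}$-membership of $Q$ comes from Proposition \ref{propo:Q}, that $\Lambda Q$ and $\Lambda^2 Q$ inherit this by the polynomial-times-derivative structure of \eqref{defi:Lambda}--\eqref{defi:Lambda_2}, that $X \in \mathcal{Y}$ follows from Lemma \ref{Bessel:Y}, and that $Y, W \in \mathcal{Z}$ follows from the definition of $-\partial_x^{-1}$ in \eqref{defi:d_x_-1}; likewise the parity claims reduce to Lemma \ref{Bessel:even} together with the observation that $-\partial_x^{-1}$ preserves parity in $y$.
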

\end{toexclude}

\begin{lemm}\label{propo:id_Q}
The following identities hold true.
\begin{align}
    \int Q^2 &= \int Q; \label{eq:id1_Q} \\
     \langle \Lambda Q, Q \rangle &=  \frac{1}{2}\int Q;\label{eq:id2_Q} \\
    %& \langle (-\Delta+1)^{-1} Q, Q \rangle >0,  \label{eq:id3_Q} \\
      \langle Q, \partial_x Q \rangle&=\langle Q, \partial_y Q \rangle=\langle \partial_x Q, \partial_y Q \rangle = \langle \partial_x Q, \Lambda Q \rangle = \langle \partial_y Q, \Lambda Q \rangle =0; \label{eq:id4_Q} \\
     \langle X ,\partial_x Q \rangle &= \langle Y ,\partial_x Q \rangle = \langle W ,\partial_y Q \rangle = \langle X ,\partial_y Q \rangle = \langle Y ,\Lambda Q \rangle = 0; \label{eq:id5_Q} \\ 
     \langle 2QX, \partial_x Q \rangle &=  \langle 2QX, \partial_y Q \rangle = 0; \label{eq:ineq1.1_Q} \\
     \langle 2QX, \Lambda Q \rangle &=   - \left\langle Q,\Lambda Q \right\rangle - \left\langle (-\Delta+1)^{-1} Q, Q \right\rangle <0; \label{eq:ineq1_Q}\\
     \langle 2QY, \partial_x Q \rangle &=  \langle 2QY, \Lambda Q \rangle = 0;
     \label{eq:ineq2.1_Q} \\
     \langle 2QY, \partial_y Q \rangle &=-\langle \partial_x^{-1}\partial_y Q, \partial_y Q \rangle >0; \label{eq:ineq2_Q} \\
     \langle 2QW, \partial_y Q \rangle &= 0; \label{eq:ineq3.1_Q} \\ 
      \langle 2QW, \partial_x Q \rangle &=  \frac{1}{2}\int Q >0. \label{eq:ineq3_Q}
\end{align}
\end{lemm}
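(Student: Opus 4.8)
\textbf{Proof plan for Lemma \ref{propo:id_Q}.}
The strategy is to derive all the identities from three basic sources: (a) the elliptic equation $-\Delta Q + Q - Q^2 = 0$ in \eqref{eq:Q}; (b) the scaling relation $L\Lambda Q = -Q$ from Proposition \ref{theo:L}(vii), together with $\Lambda Q = (1+\tfrac12 \bx\cdot\nabla)Q$; and (c) the symmetry/parity properties of $Q$, $\Lambda Q$, $X$, $Y$, $W$ collected in Lemma \ref{parity}. The proofs split naturally into three groups, and I would present them in roughly the order listed in the statement.

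First, the \emph{vanishing scalar products} \eqref{eq:id4_Q} and \eqref{eq:id5_Q}, together with \eqref{eq:ineq1.1_Q}, \eqref{eq:ineq2.1_Q}, \eqref{eq:ineq3.1_Q}. These all follow from parity considerations using Lemma \ref{parity}(ii): $Q$, $\Lambda Q$, $X$ are radial (hence even in each variable), $\partial_x Q$ is odd in $x$ and even in $y$, $\partial_y Q$ is odd in $y$ and even in $x$, $W$ is even in $y$, $Y$ is odd in $y$. For instance $\langle Q,\partial_x Q\rangle = \tfrac12\int \partial_x(Q^2) = 0$; $\langle \partial_x Q,\partial_y Q\rangle$ is odd in $x$ (and in $y$) hence zero; $\langle \partial_x Q,\Lambda Q\rangle$ is odd in $x$ hence zero; $\langle X,\partial_y Q\rangle$, $\langle 2QX,\partial_y Q\rangle$, $\langle 2QW,\partial_y Q\rangle$ are odd in $y$; $\langle Y,\partial_x Q\rangle$, $\langle 2QY,\partial_x Q\rangle$ are odd in $x$; $\langle Y,\Lambda Q\rangle$, $\langle 2QY,\Lambda Q\rangle$ are odd in $y$. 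This disposes of all the ``$=0$'' claims at once.

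Second, the \emph{mass identities} \eqref{eq:id1_Q} and \eqref{eq:id2_Q}. Multiplying \eqref{eq:Q} by $Q$ and integrating by parts gives $\int |\nabla Q|^2 + \int Q^2 = \int Q^3$ (the Pohozaev-type relation), but more directly, multiplying \eqref{eq:Q} by $1$ and integrating gives $\int Q = \int Q^2$ since $\int \Delta Q = 0$; this is \eqref{eq:id1_Q}. For \eqref{eq:id2_Q}, use $\langle \Lambda Q, Q\rangle = \langle L\Lambda Q + 2Q\Lambda Q, Q\rangle$... more cleanly: differentiate $M(Q_c) = cM(Q)$ in $c$ at $c=1$ to get $2\langle \Lambda Q, Q\rangle = M(Q) = \int Q^2 = \int Q$, using \eqref{eq:id1_Q}. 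Alternatively write $\langle \Lambda Q, Q\rangle = \int Q^2 + \tfrac12\int (\bx\cdot\nabla Q)Q = \int Q^2 - \tfrac12\int Q^2\,(\mathrm{div}\,\bx)/1 = \int Q^2 - \tfrac12 \cdot 2 \cdot \tfrac12\int Q^2$ in $d=2$, i.e. $\langle\Lambda Q,Q\rangle = \tfrac12\int Q^2 = \tfrac12\int Q$.

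Third, the \emph{interaction identities with correct signs}: \eqref{eq:ineq1_Q}, \eqref{eq:ineq2_Q}, \eqref{eq:ineq3_Q}. For \eqref{eq:ineq1_Q}, recall $X = -(-\Delta+1)^{-1}Q$, so $(-\Delta+1)X = -Q$, hence $\langle 2QX, \Lambda Q\rangle = 2\langle QX, \Lambda Q\rangle$; I would write $2QX = (-\Delta+1-L)X$ wait — better: from \eqref{eq:Q}, $2Q\Lambda Q = (-\Delta+1)\Lambda Q - L\Lambda Q + \cdots$; the cleanest route is $\langle 2QX,\Lambda Q\rangle = \langle X, 2Q\Lambda Q\rangle$ and $2Q\Lambda Q = \Lambda Q \cdot 2Q$; using $L\Lambda Q = -Q$ i.e. $(-\Delta+1)\Lambda Q - 2Q\Lambda Q = -Q$, we get $2Q\Lambda Q = (-\Delta+1)\Lambda Q + Q$. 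Thus $\langle 2QX,\Lambda Q\rangle = \langle X, (-\Delta+1)\Lambda Q\rangle + \langle X, Q\rangle = \langle (-\Delta+1)X, \Lambda Q\rangle + \langle X, Q\rangle = \langle -Q, \Lambda Q\rangle + \langle -(-\Delta+1)^{-1}Q, Q\rangle = -\langle Q,\Lambda Q\rangle - \langle (-\Delta+1)^{-1}Q,Q\rangle$, which is strictly negative since $(-\Delta+1)^{-1}$ is a positive operator and $\langle Q,\Lambda Q\rangle = \tfrac12\int Q > 0$. For \eqref{eq:ineq3_Q}, $W = -\partial_x^{-1}(-\Delta+1)^{-1}\Lambda Q$, so $\partial_x W = -(-\Delta+1)^{-1}\Lambda Q$ by \eqref{prop:d_x_-1}, and hence $(-\Delta+1)\partial_x W = -\Lambda Q$; then $\langle 2QW,\partial_x Q\rangle = -\langle 2(\partial_x Q)W, Q\rangle - \langle 2Q W, \partial_x Q\rangle$... no: integrate by parts in $x$ differently. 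Write $\langle 2QW,\partial_x Q\rangle = \int 2QW\partial_x Q = \int W\partial_x(Q^2) = -\int (\partial_x W) Q^2 = \int \big((-\Delta+1)^{-1}\Lambda Q\big) Q^2 = \langle \Lambda Q, (-\Delta+1)^{-1}(Q^2)\rangle$. Now using $-\Delta Q + Q = Q^2$, i.e. $Q^2 = (-\Delta+1)Q$, so $(-\Delta+1)^{-1}(Q^2) = Q$, giving $\langle 2QW,\partial_x Q\rangle = \langle \Lambda Q, Q\rangle = \tfrac12\int Q > 0$. For \eqref{eq:ineq2_Q}, $Y = -\partial_x^{-1}(-\Delta+1)^{-1}\partial_y Q$, so $\partial_x Y = -(-\Delta+1)^{-1}\partial_y Q$; then $\langle 2QY,\partial_y Q\rangle = \int Y\partial_y(Q^2) = \int Y(-\Delta+1)(\partial_y Q)$ (using $Q^2 = (-\Delta+1)Q$ so $\partial_y(Q^2) = (-\Delta+1)\partial_y Q$) $= \int (-\Delta+1)Y \cdot \partial_y Q = \int (-\partial_x^{-1}\partial_y Q)\partial_y Q = -\langle \partial_x^{-1}\partial_y Q, \partial_y Q\rangle$; positivity follows from the Fourier representation: $\partial_x^{-1}\partial_y Q$ has symbol $(\xi_2/\xi_1)\hat Q$ — hmm, one must be careful, but $-\langle\partial_x^{-1}\partial_y Q,\partial_y Q\rangle = \int \frac{\xi_2^2}{\xi_1}|\hat Q|^2$ is not obviously signed; instead use the real-space formula $-\partial_x^{-1}f = \int_x^\infty f$, and $\langle -\partial_x^{-1}\partial_y Q, -\partial_y(-\partial_x^{-1}Q)\rangle$... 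Actually the sign comes from $-\partial_x^{-1}\partial_y Q = \partial_y(-\partial_x^{-1}Q)$ modulo the fact that $-\partial_x^{-1}$ and $\partial_y$ commute, and $\int (\partial_y g)(\partial_y \partial_x^{-1}... )$; I would instead note $-\langle \partial_x^{-1}\partial_y Q,\partial_y Q\rangle = -\langle \partial_y \partial_x^{-1}Q, \partial_y Q\rangle$ and, writing $g = -\partial_x^{-1}Q \in \mathcal Z$ (which tends to $\tfrac12\int Q$ as $x\to-\infty$ but has $\partial_y g$ decaying), $= \langle \partial_y g, \partial_y Q\rangle = \langle \partial_y g, \partial_y(-\partial_x g)\rangle = \langle \partial_y g, -\partial_x\partial_y g\rangle = \tfrac12\int \partial_x\big((\partial_y g)^2\big) \cdot(-1)$ — wait that's zero. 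Let me use instead $\partial_y Q = -\partial_x(\partial_y g) = \partial_x(-\partial_y g)$, hmm.

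The main obstacle, and the step I would spend the most care on, is precisely pinning down the \emph{sign} in \eqref{eq:ineq2_Q}: one needs $\langle \partial_x^{-1}\partial_y Q,\partial_y Q\rangle < 0$, equivalently $\langle -\partial_x^{-1}\partial_y Q, \partial_y Q\rangle > 0$. The clean argument: set $g := -\partial_x^{-1}\partial_y Q = \int_x^{+\infty}\partial_y Q(\tilde x, y)\,d\tilde x \in \mathcal Z$, so $\partial_x g = -\partial_y Q$ by \eqref{prop:d_x_-1}. Then $\langle -\partial_x^{-1}\partial_y Q,\partial_y Q\rangle = \langle g, -\partial_x g\rangle = -\int g\,\partial_x g = -\tfrac12\int \partial_x(g^2) = \tfrac12\big[g^2\big]_{x=-\infty}$ (boundary term at $x=+\infty$ vanishes since $g\to 0$). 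Moreover $g(-\infty,y) = \int_{-\infty}^{+\infty}\partial_y Q(\tilde x,y)\,d\tilde x = \partial_y\big(\int_{-\infty}^{+\infty} Q(\tilde x, y)\,d\tilde x\big) = \partial_y\,\varphi(y)$ where $\varphi(y) := \int Q(\tilde x,y)d\tilde x$; this is generally nonzero, and $\tfrac12\int_y g(-\infty,y)^2\,dy = \tfrac12\int_y \varphi'(y)^2\,dy > 0$ since $\varphi$ is nonconstant (as $Q$ has compact... well, decaying profile with $\varphi(y)\to 0$ as $|y|\to\infty$ but $\varphi(0) > 0$). This gives both the identity $\langle 2QY,\partial_y Q\rangle = -\langle \partial_x^{-1}\partial_y Q,\partial_y Q\rangle$ and its strict positivity. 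Finally, I would double-check all integration-by-parts steps are justified by the decay properties in $\mathcal Y$ and $\mathcal Z$ (Lemma \ref{parity}), and that the boundary terms at $x = \pm\infty$ in the $\partial_x^{-1}$ manipulations vanish or are correctly accounted for (they vanish for functions in $\mathcal Y$, and for functions in $\mathcal Z$ only the $x\to -\infty$ limit of $Y,W$ survives, which is exactly what produces the nonzero right-hand sides in \eqref{eq:ineq2_Q} and \eqref{eq:ineq3_Q}).
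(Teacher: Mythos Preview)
Your proof is correct and follows essentially the same route as the paper: parity from Lemma \ref{parity}(ii) for all the vanishing scalar products, the scaling identity $\frac{d}{dc}M(Q_c)|_{c=1}$ for \eqref{eq:id2_Q}, the relation $2Q\Lambda Q = (-\Delta+1)\Lambda Q + Q$ for \eqref{eq:ineq1_Q}, and the identity $Q^2 = (-\Delta+1)Q$ together with the boundary-term computation $-\langle \partial_x^{-1}\partial_y Q,\partial_y Q\rangle = \tfrac12\int_{\mathbb R}\big(\int_{\mathbb R}\partial_y Q\,dx\big)^2 dy$ for \eqref{eq:ineq2_Q} and \eqref{eq:ineq3_Q}. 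One small slip: $\langle Y,\partial_x Q\rangle$ and $\langle 2QY,\partial_x Q\rangle$ vanish because the integrand is odd in $y$ (since $Y$ is odd in $y$ while $\partial_x Q$ is even in $y$), not odd in $x$ --- $Y\in\mathcal Z$ has no definite parity in $x$.
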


\begin{rema}
We were not able to compute explicitely the value of $\langle 2Q W, \Lambda Q \rangle$. The value of this coefficient (positive, negative or null) does not impact the rest of the computations, see for instance Lemma \ref{lemm:antecedent_2}. 
\end{rema}

\begin{proof}
The identity \eqref{eq:id1_Q} is a consequence of the equation of $Q$ in \eqref{eq:Q} and integration by parts. For \eqref{eq:id2_Q}, with the definition of $\Lambda Q$ in \eqref{defi:Lambda_Q} and \eqref{eq:id1_Q}:
\begin{align*}
    \langle \Lambda Q, Q \rangle = \int \Lambda Q Q = \left( \frac{d}{dc} \int \frac{Q_c^2}{2} \right)_{c=1} = \frac{1}{2}\int Q^2= \frac{1}{2}\int Q.
\end{align*}

The proofs of \eqref{eq:id4_Q}, \eqref{eq:id5_Q}, \eqref{eq:ineq1.1_Q}, \eqref{eq:ineq2.1_Q} and \eqref{eq:ineq3.1_Q} follow from directly from Lemma \ref{parity} (ii). 

To prove \eqref{eq:ineq1_Q}, we observe from Proposition \ref{theo:L} (vii) that  $2Q\Lambda Q  =(-\Delta +1) \Lambda Q +Q$. Hence,
\begin{align*}
    \left\langle 2QX, \Lambda Q \right\rangle = -\left\langle (-\Delta+1)^{-1} Q, (-\Delta+1) \Lambda Q +Q\right\rangle=- \left\langle Q,\Lambda Q \right\rangle - \left\langle (-\Delta+1)^{-1} Q, Q \right\rangle <0 ,
\end{align*}
thanks to \eqref{eq:id2_Q} and Plancerel's identity. 

Now, we prove \eqref{eq:ineq2_Q}. We deduce from the identity $(-\Delta+1)Q=Q^2$ and Lemma \ref{comm:dx-1} that
\begin{align*}
    \left\langle 2Q Y, \partial_y Q \right\rangle  = - \left\langle \partial_x^{-1} (-\Delta+1)^{-1} \partial_y Q,  \left( - \Delta+1 \right) \partial_yQ \right\rangle =-  \left\langle \partial_x^{-1} \partial_y Q, \partial_y Q \right\rangle.
\end{align*}
Using an integration by parts in $x$ and that $y\mapsto \partial_y Q (x,y)$ is odd, we compute
\begin{align*}
    - \langle \partial_x^{-1} \partial_y Q, \partial_y Q \rangle = \frac{1}{2}\int_{\mathbb{R}} \left( \int_{\mathbb{R}} \partial_y Q (x,y) dx \right)^2 dy=\int_0^{+\infty}\left( \int_{\mathbb{R}} \partial_y Q (x,y) dx \right)^2 dy >0,
\end{align*}
since $\partial_y Q (x,y)<0$ for all $x \in \mathbb R$, $y >0$.

Finally, we use the identity $(-\Delta+1)Q=Q^2$, Lemma \ref{comm:dx-1} and integration by parts to deduce that
\begin{align*}
    \left\langle 2Q W, \partial_x Q \right\rangle  =  -\left\langle \partial_x^{-1} (-\Delta+1)^{-1} \Lambda Q,  \left( - \Delta+1 \right) \partial_xQ \right\rangle =  \left\langle \Lambda Q,  Q \right\rangle,
\end{align*}
which combined with \eqref{eq:id2_Q} concludes the proof of \eqref{eq:ineq3_Q}.
\end{proof}

\begin{lemm}\label{lemm:decompo_rescaled_Q}
    Let $0<\nu<\frac1{32}$. Let $\mu>0$, $(z,\omega) \in \mathbb R^2$ such that $0< \mu  \leq \nu$ and $\vert (z,\omega) \vert \leq \nu$. Then, for any $\bx=(x,y) \in \mathbb R^2$,
    \begin{align*}
        \left\vert Q_{1+\mu}(\bx+(z,\omega)) - Q(\bx) - \mu \Lambda Q(\bx) - z \partial_x Q(\bx) -\omega \partial_y Q(\bx) \right\vert \lesssim \left( \mu^2 +z^2 + \omega^2 \right) e^{-\frac{31}{32} \vert \bx \vert}.
    \end{align*}
\end{lemm}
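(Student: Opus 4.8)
\textbf{Proof plan for Lemma \ref{lemm:decompo_rescaled_Q}.}

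The statement is a quantitative second-order Taylor expansion of the three-parameter family $(\mu,z,\omega)\mapsto Q_{1+\mu}(\bx+(z,\omega))$ around $(0,0,0)$, with a decay weight on the remainder. The plan is to treat the translation variables $(z,\omega)$ and the scaling variable $\mu$ together by Taylor's theorem with integral remainder applied to the smooth map
\begin{align*}
G:(\mu,z,\omega)\longmapsto Q_{1+\mu}(\bx+(z,\omega)),
\end{align*}
and then to control the second derivatives of $G$ pointwise with the right exponential weight. First I would record the first-order derivatives at the origin: $\partial_z G|_0=\partial_xQ(\bx)$, $\partial_\omega G|_0=\partial_yQ(\bx)$, and $\partial_\mu G|_0=\Lambda Q(\bx)$, the last one being exactly the definition \eqref{defi:Lambda} of the scaling generator (note $Q_{1+\mu}(\bx)=(1+\mu)Q(\sqrt{1+\mu}\,\bx)$, so $\partial_\mu|_0$ of this is $\Lambda Q(\bx)$). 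Then Taylor's formula with integral remainder gives
\begin{align*}
G(\mu,z,\omega)-G(0,0,0)-\mu\,\partial_\mu G|_0-z\,\partial_zG|_0-\omega\,\partial_\omega G|_0
= \int_0^1 (1-s)\,D^2G(s\mu,sz,s\omega)\big[(\mu,z,\omega),(\mu,z,\omega)\big]\,ds,
\end{align*}
so the left-hand side is bounded by $(\mu^2+z^2+\omega^2)$ times $\sup_{0\le s\le 1}\big\|D^2G(s\mu,sz,s\omega)\big\|$ evaluated at $\bx$.

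The core of the argument is therefore the pointwise bound on the second derivatives of $G$. Each such derivative is, up to constants, one of the functions $\partial_x^2 Q$, $\partial_{xy}^2Q$, $\partial_y^2Q$, $\partial_x\Lambda_cQ$, $\partial_y\Lambda_cQ$, $\Lambda_c^2Q$, evaluated at the shifted, rescaled point $\sqrt{1+s\mu}\,(\bx+(sz,s\omega))$. By Proposition \ref{propo:Q} (exponential decay of $Q$ and all its derivatives, together with the polynomial-times-$e^{-r}$ asymptotics \eqref{eq:bound_Q_first_order}) and the explicit formulas \eqref{defi:Lambda}--\eqref{defi:Lambda_2} for $\Lambda_c$ and $\Lambda_c^2$ (which involve $\bx\cdot\nabla$ acting on $Q$, hence produce at most an extra polynomial factor $\langle\bx\rangle$), every one of these functions lies in the class of smooth functions bounded by $\langle\bx\rangle^N e^{-|\bx|}$ uniformly for $|s\mu|\le\nu<\tfrac1{32}$, i.e. in a bounded subset of $\mathcal{Y}$. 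Since $|s z|,|s\omega|\le\nu$ and $\sqrt{1+s\mu}\ge\sqrt{1-\nu}$, the triangle inequality gives
\begin{align*}
\sqrt{1+s\mu}\,\big|\bx+(sz,s\omega)\big| \ge \sqrt{1-\nu}\,\big(|\bx|-\sqrt2\,\nu\big),
\end{align*}
and with $\nu<\tfrac1{32}$ one checks that $\sqrt{1-\nu}\ge \tfrac{31}{32}\cdot\tfrac{1}{1-\sqrt2\nu/|\bx|}$ for $|\bx|$ bounded below, while for $|\bx|$ small the claimed inequality is trivial because the left-hand side is $O(\mu^2+z^2+\omega^2)$ and $e^{-\frac{31}{32}|\bx|}$ is bounded below. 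Absorbing the polynomial factor $\langle\bx\rangle^N$ into the slack between the decay rate $1$ (coming from $\mathcal{Y}$) and the target rate $\tfrac{31}{32}$ (which costs $e^{-\frac1{32}|\bx|}$, more than enough to kill any polynomial), we obtain $\|D^2G(s\mu,sz,s\omega)\|\big|_{\bx}\lesssim e^{-\frac{31}{32}|\bx|}$ uniformly in $s\in[0,1]$.

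Combining the two displays finishes the proof. The only mildly delicate point — the one I would be most careful about — is the bookkeeping in the last step: making sure that the shift of the exponential-decay center by $(sz,s\omega)$ and the rescaling factor $\sqrt{1+s\mu}$ together degrade the decay rate from $1$ only down to $\tfrac{31}{32}$ (and not below), and that the polynomial prefactors from $\Lambda_c$, $\Lambda_c^2$ and from \eqref{eq:bound_Q_first_order} are genuinely absorbed by the remaining $e^{-\frac1{32}|\bx|}$; this is where the hypothesis $\nu<\tfrac1{32}$ is used quantitatively. Everything else is a routine application of Taylor's theorem and the decay estimates already established in Proposition \ref{propo:Q} and the Notations section.
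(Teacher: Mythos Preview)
Your proof is correct and follows essentially the same strategy as the paper: Taylor expansion with integral remainder combined with the pointwise exponential decay of $Q$, $\Lambda Q$, $\Lambda^2 Q$ and their derivatives, with the hypothesis $\nu<\tfrac1{32}$ used to keep the decay rate above $\tfrac{31}{32}$ after the small shift and rescaling. The only organizational difference is that the paper separates the scaling and translation variables---expanding $Q_{1+\mu}(\cdot)$ in $\mu$ and $Q(\cdot+(z,\omega))$ in $(z,\omega)$ independently, then handling the cross term $\mu\big(\Lambda Q(\bx+(z,\omega))-\Lambda Q(\bx)\big)$ by a first-order estimate---whereas you do a single joint three-variable Taylor expansion and bound the full Hessian of $G$; both routes yield the same estimate.
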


\begin{proof}
First, observe from the Taylor formula that 
\begin{align}
     & \begin{aligned}
        \left\vert Q_{1+\mu}(\bx) - Q(\bx) -\mu \Lambda Q(\bx) \right\vert
        & = \left\vert \mu^2 \int_{0}^1(1-s) \frac{d^2}{dc^2} \left( Q_{c}(\bx)\right)_{\vert c= 1+s\mu} ds \right\vert \\
            & \qquad\lesssim \mu^2 \sup_{\vert \Tilde{\mu} \vert \leq \delta} \left\vert \Lambda^2_{1+\Tilde{\mu}} Q (\bx) \right\vert \lesssim \mu^2 e^{-(1-\delta) \vert \bx \vert} ;
        \end{aligned}
    \nonumber \\
    & \begin{aligned}
        \left\vert Q(\bx+(z,\omega))- Q(\bx) - (z,\omega) \cdot \nabla Q(\bx) \right\vert 
        & = \left\vert \int_{0}^1 (1-s) \frac{d^2}{ds^2} Q(\bx+s (z, \omega)) ds \right\vert \\
            & \qquad \lesssim \left(z^2 +\omega^2 \right) e^{- \vert \bx\vert} ; 
        \end{aligned}
    \label{est:DL_Q_x_direction}\\ 
    & \vert\mu \vert \left\vert \Lambda Q(\bx) - \Lambda Q(\bx+(0,\omega)) \right\vert = \left\vert \mu \omega \int_{0}^1 (1-s) \partial_y \Lambda Q (\bx+(0, s\omega)) ds \right\vert \lesssim \vert \mu \omega \vert e^{-\vert \bx \vert}. \nonumber
\end{align}
Therefore, the proof of Lemma \ref{lemm:decompo_rescaled_Q} follows by choosing $0<\nu<\frac1{32}$ and using the triangle inequality. 
%Applying the same ideas for the scaling operation, we have
%\begin{align*}
%\MoveEqLeft
 %   \left\vert Q_{1+ \mu} \left(x, y+\omega \right) -  Q(x,y) - \omega \partial_y Q(x,y) - \mu \Lambda Q(x,y)  \right\vert \\
 %   & \leq \left\vert  Q (x,y+\omega) - Q(x,y) -\omega \partial_y Q(x,y) \right\vert + \left\vert Q_{1+\mu}(x,y+\omega) - Q(x,y+\omega) - \mu \Lambda Q(x,y+\omega) \right\vert \\
 %       & \quad + \mu \left\vert \Lambda Q (x,y) -\Lambda Q(x,y+\omega) \right\vert 
%\end{align*}
\end{proof}

\subsection{Asymptotic expansions of \texorpdfstring{$Q$}{Q} and \texorpdfstring{$\chi_0$}{X0}}
In this sub-section, we give the asymptotic development of $Q$ and its derivatives at infinity at any finite order $k$.

To define the asymptotic expansion of $Q$, we introduce the functions $L_k$
\begin{align}\label{defi:L_k}
L_k(r) := \frac{e^{-r}}{\sqrt{r}} \sum_{l=0}^k \frac{a_l}{r^l}, \quad \text{ with } a_0:=\sqrt{\frac{\pi}{2}} \text{ and for }l \geq 1, \quad a_l:= \sqrt{\frac{\pi}{2}} \frac{(-1)^l}{8^l l!}\prod_{j=1}^l(2j-1)^2.
\end{align}

The main result is stated in the following proposition.

\begin{propo}\label{propo:asymp_Q}
There exists a constant $\kappa>0$, so that for any $k\in \mathbb{N}$, there exists a constant $C=C_k>0$ such that
\begin{align*}
    \forall i \in \{0,1,2\}, \forall r \geq 1, \quad \left\vert Q^{(i)}(r) -\kappa L_k^{(i)}(r)\right\vert \leq C \frac{e^{-r}}{r^{k+\frac{3}{2}}}.
\end{align*}
\end{propo}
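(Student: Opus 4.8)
\textbf{Proof strategy for Proposition \ref{propo:asymp_Q}.}

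The plan is to exploit the fact that $Q$ solves the radial ODE $Q'' + \frac{1}{r}Q' - Q + Q^2 = 0$ and that the companion modified Bessel function $K_0$ solves the linear part $f'' + \frac{1}{r}f' - f = 0$, whose asymptotic expansion is classical. Since $Q$ decays exponentially (Proposition \ref{propo:Q}), the nonlinear term $Q^2$ is of order $e^{-2r}$, far smaller than the scale $e^{-r}/r^{k+\frac32}$ at which we want accuracy, so morally $Q$ should behave like a multiple of $K_0$ up to exponentially smaller corrections. First I would record that the functions $L_k$ are precisely the truncations of the classical asymptotic series of $\sqrt{2/\pi}\,K_0$: by \eqref{eq:asymp_K_0_first_order} and the standard WKB/Poincar\'e asymptotic expansion for $K_0$ (see \cite{AS64,AS61}), one has for every $k$ a constant $C_k$ with $|K_0^{(i)}(r) - \sqrt{2/\pi}\,L_k^{(i)}(r)| \le C_k\, e^{-r} r^{-k-3/2}$ for $i\in\{0,1,2\}$ and $r\ge 1$; this follows from the fact that $\sqrt{2/\pi}\,L_k$ is an approximate solution of the Bessel equation \eqref{defi:eq_bessel_mod} with remainder $O(e^{-r}r^{-k-5/2})$ and a Gronwall-type estimate on the difference, plus differentiating the equation to control derivatives. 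Hence it suffices to prove the sharper statement $|Q^{(i)}(r) - \kappa K_0^{(i)}(r)| \lesssim e^{-2r}/r$ for a suitable $\kappa>0$, which is exactly Proposition \ref{propo:Q_K_0} in dimension $d=2$ (whose proof is announced for this appendix); combining the two displays with the triangle inequality and relabeling $C_k$ yields the claim, since $e^{-2r}/r \le e^{-r} r^{-k-3/2}$ for $r$ large, and one absorbs the range $1\le r\le R_k$ into the constant.

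More concretely, to prove $|Q(r) - \kappa K_0(r)| \lesssim e^{-2r}/r$ I would set $g = Q - \kappa K_0$, which solves the linear inhomogeneous equation $g'' + \frac{1}{r} g' - g = -Q^2$ with $|Q^2(r)| \lesssim e^{-2r}/r$ by Proposition \ref{propo:Q} (iii). The two-dimensional modified Bessel equation has fundamental system $\{I_0, K_0\}$ with Wronskian $W(I_0,K_0)(r) = -1/r$; using variation of parameters,
\begin{align*}
g(r) = A\, K_0(r) + B\, I_0(r) + K_0(r)\int_{r_0}^{r} s\, I_0(s)\, Q^2(s)\, ds - I_0(r) \int_{r}^{\infty} s\, K_0(s)\, Q^2(s)\, ds,
\end{align*}
and since $g$ decays while $I_0$ grows like $e^{r}/\sqrt r$, the coefficient $B$ together with the $K_0\int_{r_0}^r$ term must conspire: one chooses $\kappa$ so that the $I_0$-component of $g$ vanishes at infinity, i.e. $B = \int_{r_0}^{\infty} s I_0(s) Q^2(s)\, ds < \infty$ (finite because $I_0(s)Q^2(s) \sim s^{-3/2} e^{-s}$), leaving $g(r) = A K_0(r) + I_0(r)\int_r^{\infty} s K_0(s) Q^2(s) ds - K_0(r)\int_r^{\infty} s I_0(s) Q^2(s) ds$; estimating $I_0(r)\int_r^{\infty} s K_0(s) Q^2(s)\,ds \lesssim \frac{e^r}{\sqrt r}\int_r^\infty s^{-\frac32} e^{-3s}\,ds \lesssim e^{-2r}/r^2$ and similarly for the other integral term, while $A K_0(r) \lesssim e^{-r}/\sqrt r$. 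The remaining point is to show $A = 0$: this is forced by the uniqueness in Proposition \ref{propo:Q} together with the first-order asymptotic \eqref{eq:bound_Q_first_order}, which pins down the leading coefficient $\kappa = \mu \sqrt{2/\pi}$ and hence kills the leftover multiple of $K_0$; concretely, if $A \ne 0$ then $g$ would still behave like $A K_0$ at leading order, contradicting that $Q$ and $\kappa K_0$ already agree at order $e^{-r}/\sqrt r$ by the choice of $\kappa$. The derivative estimates for $i=1,2$ follow by differentiating the integral representation (the $i=2$ case using the equation itself to trade second derivatives for lower-order terms plus $Q^2$), at the cost of harmless extra powers of $r$ that are still dominated by the stated bound.

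The main obstacle is the derivative bounds combined with the passage to arbitrary order $k$: one must check that the \emph{same} constant $\kappa$ works uniformly in $k$ (it does, since $\kappa$ is determined once and for all by the leading asymptotics of $Q$, independent of how many terms of the Bessel series one peels off), and that the classical asymptotic expansion of $K_0$ and its first two derivatives has rigorously controlled remainders at each order — this is where one either cites the WKB expansion from \cite{AS64} directly or runs the elementary induction: writing $K_0(r) = e^{-r} r^{-1/2} u(r)$, the function $u$ solves a first-order-type equation forcing $u(r) = \sum_{l=0}^{k} a_l r^{-l} + O(r^{-k-1})$ with the $a_l$ as in \eqref{defi:L_k}, and the remainder and its derivatives are estimated by Gronwall. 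Everything else is bookkeeping with exponential integrals, and the reduction to Proposition \ref{propo:Q_K_0} means no genuinely new analytic input is needed beyond that proposition and the standard Bessel asymptotics.
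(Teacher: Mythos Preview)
Your strategy matches the paper's exactly at the top level: split into $|Q^{(i)} - \kappa K_0^{(i)}| \lesssim e^{-2r}/r$ (the paper's Lemma~\ref{lemm:closeness_Q_K_0}) and $|K_0^{(i)} - L_k^{(i)}| \lesssim e^{-r}/r^{k+3/2}$ (Lemma~\ref{lemm:asymp_K_0}), then combine by the triangle inequality. The tactics for the two sub-results differ. For $K_0 \approx L_k$ you invoke classical Bessel asymptotics, whereas the paper gives a short self-contained argument via the quotient $K_0/L_k$ and an integrating-factor trick. For $Q \approx \kappa K_0$ you use variation of parameters with the fundamental system $\{I_0,K_0\}$, relying on the leading-order decay of $Q$ from Proposition~\ref{propo:Q}(iii) to estimate the integral terms; the paper instead studies the quotient $Q/K_0$, first proving a preliminary bound $Q(r) \lesssim r^{-1/4}e^{-r}$ (Lemma~\ref{lemm:improved_decay_Q}) and then bootstrapping. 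Both routes are valid; yours is more direct if one grants (iii) as a black box, while the paper's is fully self-contained from the rough bound \eqref{rough:bound}.

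Two small corrections. First, the factor $\sqrt{2/\pi}$ in your display is spurious: since $a_0=\sqrt{\pi/2}$, the $L_k$ are already normalized so that $K_0 - L_k$ (not $K_0 - \sqrt{2/\pi}\,L_k$) is what vanishes to the stated order. Second, your discussion around ``choosing $\kappa$ so that the $I_0$-component vanishes'' and then separately arguing ``$A=0$'' is tangled: $\kappa$ multiplies $K_0$, so it cannot kill an $I_0$ contribution. The clean formulation is: write $Q = \alpha K_0 + \beta I_0 + y_p$ with $y_p$ the decaying particular solution built from the two integrals with basepoints at $+\infty$; since $Q$ and $y_p$ decay while $I_0$ grows, $\beta=0$ is forced, and one then \emph{defines} $\kappa:=\alpha$, so that $g = Q-\kappa K_0 = y_p$ automatically and no separate ``$A=0$'' step is needed.
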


\begin{rema}
    By choosing $i=0$ and $k=0$, we recover the first order asymptotic expansion of $Q$ in \eqref{eq:bound_Q_first_order}.
\end{rema}
\begin{rema}
    The radius of convergence of $\sum a_l r^l$ is equal to $0$. For this reason, the approximation of $Q$ in Proposition \ref{propo:asymp_Q} can only hold at a finite order $k$.
\end{rema}
\begin{rema}
    The proof of the asymptotic expansion of $Q$ in dimension $3$ given in \eqref{asymp_Q_d3} relies on the exact same arguments as the ones in dimension $2$.
\end{rema}

The goal of this subsection is to give a proof of Proposition \ref{propo:asymp_Q}. It is divided into several lemmas. 

We rely on the rough bound of the ground-state proved in \cite{BL83}
\begin{equation} \label{rough:bound}
    \exists \delta \in (0,1), \exists C>0, \forall r>1, \quad Q(r) \leq C e^{-\delta r}.
\end{equation}
Let us first improve the decay of $Q$ in \eqref{rough:bound} by using the same arguments as in \cite{Str77,BL83}.
\begin{lemm}\label{lemm:improved_decay_Q}
    There exists $R_0>0$ $C>0$ such that
    \begin{align*}
        \exists C>0, \forall r>R_0, \quad Q(r) \leq C\frac{e^{-r}}{r^{\frac{1}{4}}}.
    \end{align*}
\end{lemm}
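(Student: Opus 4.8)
The statement is the improvement of the crude bound \eqref{rough:bound} from an arbitrary exponential rate $\delta<1$ to the sharp rate $1$ (with a polynomial correction). The plan is to rely on the maximum principle for the radial Helmholtz operator $-\Delta + 1 = -\partial_r^2 - \frac{1}{r}\partial_r + 1$ acting on radial functions in dimension $2$, treating $Q^2$ as a known source term that decays like $e^{-2\delta r}$. Concretely, from the equation \eqref{eq:Q} one has $(-\Delta+1)Q = Q^2$, and by \eqref{rough:bound} the right-hand side satisfies $Q^2(r) \leq C^2 e^{-2\delta r}$ for $r>1$. I would first record that $K_0(r)$ is (up to the constant in \eqref{def:G2}) the fundamental solution of $-\Delta+1$ on radial functions and satisfies, by \eqref{eq:asymp_K_0_first_order}, $K_0(r) \sim \sqrt{\pi/2}\, r^{-1/2} e^{-r}$, hence is itself a supersolution-type barrier with the target decay. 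The heart of the argument is a bootstrap: if $Q(r) \lesssim e^{-\delta r}$ with $\delta < 1$, then $Q^2 \lesssim e^{-2\delta r}$, and one constructs an explicit radial supersolution of the form $A\, r^{m} e^{-\min(2\delta,\,1)\, r}$ (or, when $2\delta$ gets close to $1$, of the form $A\, K_0(r) + $ lower order) dominating $Q$ on $\{r = R\}$ for $R$ large and at infinity; the maximum principle on the exterior domain $\{r > R\}$ then forces $Q(r)$ to be bounded by that supersolution. Iterating, the rate $\delta$ is replaced by $\min(2\delta, 1)$ at each step, so after finitely many steps we reach rate arbitrarily close to $1$, and one final comparison against a multiple of $K_0(r)$ (using that the source $Q^2$ now decays strictly faster than $e^{-r}$, namely like $e^{-2\delta r}$ with $2\delta$ close to $2 > 1$) yields $Q(r) \leq C K_0(r) \lesssim r^{-1/4} e^{-r}$ for $r > R_0$; here I keep the weaker exponent $-1/4$ rather than $-1/2$ to leave room for the error term coming from the inhomogeneity, which is exactly what the statement asks.

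More precisely, I would carry out the steps in this order. \emph{Step 1:} set up the exterior Dirichlet comparison principle: if $v$ is radial, smooth on $\{r \geq R\}$, tends to $0$ at infinity, satisfies $(-\Delta+1)v \geq 0$ there and $v(R) \geq Q(R)$, then $v \geq Q$ on $\{r \geq R\}$ — this follows because $(-\Delta+1)(v-Q) = (-\Delta+1)v - Q^2 \geq -Q^2$, which is not immediately nonnegative, so one actually compares with $v$ chosen so that $(-\Delta+1)v \geq Q^2$. \emph{Step 2:} for the bootstrap, given $Q(r)\le C_j e^{-\delta_j r}$ on $\{r > 1\}$ with $2\delta_j \neq 1$, check that $v_j(r) := A_j r^{-1/2} e^{-\sigma_j r}$ with $\sigma_j = \min(2\delta_j, 1)$ and $A_j$ large enough satisfies $(-\Delta+1)v_j(r) \geq c\, r^{-1/2} e^{-\sigma_j r} \geq Q^2(r)$ for $r$ large (a direct computation: $(-\Delta+1)(r^{-1/2}e^{-\sigma r}) = (1-\sigma^2) r^{-1/2} e^{-\sigma r} + \text{lower order}$, and $1 - \sigma^2 > 0$ as long as $\sigma < 1$; at $\sigma = 1$ one uses instead $K_0$ itself or $r^{\alpha} e^{-r}$ with suitable $\alpha$, exploiting that $(-\Delta+1)(r^\alpha e^{-r})$ has the non-vanishing leading term $-(2\alpha+1) r^{\alpha - 1} e^{-r}$). \emph{Step 3:} apply Step 1 with $v = v_j$, get $Q(r) \le v_j(r) \lesssim r^{-1/2} e^{-\sigma_j r}$ on $\{r \geq R_j\}$; absorbing the region $1 < r < R_j$ into the constant gives $Q(r) \lesssim e^{-\delta_{j+1} r}$ with $\delta_{j+1} = \sigma_j = \min(2\delta_j,1)$. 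Since $\delta_1 = \delta > 0$, after $N \sim \log_2(1/\delta)$ iterations one has $\delta_N$ as close to $1$ as desired, say $\delta_N \in (15/16, 1)$. \emph{Step 4:} with $Q^2(r) \lesssim e^{-2\delta_N r}$ and $2\delta_N > 1$, construct the final supersolution $v := A K_0(r) + B r^{-1/2} e^{-2\delta_N r}$: by \eqref{defi:eq_bessel_mod}, $(-\Delta+1)K_0 = 0$ on $\{r>0\}$, and $(-\Delta+1)(r^{-1/2}e^{-2\delta_N r}) \gtrsim r^{-1/2} e^{-2\delta_N r} \gtrsim Q^2$ for $r$ large since $1 - 4\delta_N^2 < 0$ — wait, here one must be careful: with $\sigma = 2\delta_N > 1$ the coefficient $1 - \sigma^2$ is negative, so a pure $r^{-1/2}e^{-\sigma r}$ is a subsolution, not a supersolution. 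The correct fix is to dominate $Q^2 \lesssim e^{-\sigma r} \le e^{-r}$ and use the barrier $v := A K_0(r)$ alone together with a small perturbation of the form $B (K_0(r) - K_0(2r))$ or simply note that $Q^2 = o(K_0)$ and invoke a Phragmén–Lindelöf argument; alternatively, and more robustly, write $Q = (-\Delta+1)^{-1}(Q^2) = G_2 \star Q^2$ via \eqref{def:conv}, and estimate this convolution directly using $G_2 = \frac{1}{2\pi} K_0$ together with the rate $Q^2 \lesssim e^{-2\delta_N r}$ — this integral estimate (splitting the convolution domain near $0$, near $\bx$, and far, exactly as in the proof of Lemma~\ref{Bessel:Y}) gives $Q(r) = (G_2 \star Q^2)(r) \lesssim r^{-1/4} e^{-r}$ once $2\delta_N > 1$, because the convolution of $K_0$ (rate $1$) with something of faster exponential rate is governed by the slower rate $1$, losing at most a polynomial power — and a power $r^{-1/4}$ is comfortably attainable. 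This last convolution route is cleaner and I would use it for Step 4.

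\textbf{Main obstacle.} The delicate point is Step 4 — more precisely, making sure the loss in the polynomial power when passing through the convolution (or the comparison argument) is controlled, and correctly handling the borderline case where the bootstrap rate $2\delta_j$ lands exactly on $1$ (which one simply avoids by choosing the $\delta_j$ to skip $1/2$, or by taking $\delta_j$ slightly sub-dyadic). The computation $(-\Delta+1)(r^\alpha e^{-\sigma r})$ and the bookkeeping of which exponents give super- versus sub-solutions is where sign errors are easy; doing Step 4 via the explicit representation $Q = G_2 \star Q^2$ and the pointwise bounds on $K_0$ from \eqref{asympt:K0:0}–\eqref{eq:asymp_K_0_first_order} sidesteps most of this, reducing it to an integral estimate of the same flavor as Lemma~\ref{Bessel:Y}, which is routine. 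The bootstrap in Steps 2–3 is conceptually standard (this is the classical Strauss/Berestycki–Lions exterior barrier argument, as the text itself signals by citing \cite{Str77,BL83}), so the write-up should be short.
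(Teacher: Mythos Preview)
Your approach is correct in outline and would go through, but it is genuinely different from what the paper does. You run the classical Strauss/Berestycki--Lions bootstrap: iterate the comparison $Q = G_2 \star Q^2$ (or equivalently exterior barriers for $-\Delta+1$) to push the rate $\delta$ of \eqref{rough:bound} up toward $1$, then do one final convolution estimate to extract the polynomial prefactor. Two small technical points you would need to clean up: in Step~2 the barrier $A_j r^{-1/2}e^{-\sigma_j r}$ with $\sigma_j = 2\delta_j$ does \emph{not} dominate $Q^2 \lesssim e^{-2\delta_j r}$ for large $r$ (the $r^{-1/2}$ factor goes the wrong way), so you must either drop the $r^{-1/2}$ or take $\sigma_j$ strictly less than $2\delta_j$---which you already flag in the ``slightly sub-dyadic'' remark; and in Step~4 the convolution split has to be done a bit more carefully than Lemma~\ref{Bessel:Y} (the region $|\by|\sim r$ is where the $r^{-1/2}$ comes from), but once $2\delta_N>1$ a three-region split does yield $(G_2\star Q^2)(r)\lesssim r^{-1/2}e^{-r}$, comfortably giving $r^{-1/4}$.

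The paper's proof is entirely different and more elementary: it never bootstraps and never touches the convolution kernel. Instead it works directly with the radial ODE $Q''+\tfrac1r Q'-Q+Q^2=0$ and the sign information $Q>0$, $Q'<0$. In three steps it shows successively that (i) a certain auxiliary combination $-3Q'/r - Q/r^2 - 4Q^2 - Q/r$ is positive for $r$ large, (ii) hence $Q/r + 4Q + 4Q' < 0$ for $r$ large, and (iii) therefore $\partial_r\bigl(r^{1/2}e^{2r}Q^2\bigr) = \tfrac12 r^{1/2}e^{2r}Q\bigl(Q/r+4Q+4Q'\bigr)<0$, so $r^{1/2}e^{2r}Q^2$ is decreasing and bounded, which is exactly the statement. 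This is a slick differential-inequality trick tailored to the specific equation: it is shorter (no iteration, no kernel estimates) and gives the constant in one shot, but it relies on spotting the right monotone quantity. Your route is longer and has more moving parts, but it is the standard, portable argument and does not require any cleverness beyond the barrier/convolution machinery already set up in Appendix~\ref{App:Bessel}.
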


\begin{proof}
    \underline{\emph{First step:}} We prove that $f(r):=-3 \frac{Q'}{r}- \frac{Q}{r^2}-4Q^2 - \frac{Q}{r}<0$, for $r$ large enough. Indeed, taking the derivative of this function gives
    \begin{align*}
        f'(r)
            &= Q' \left( -\frac{1}{r} + \frac{5}{r^2} -8Q \right) + Q \left(-\frac{3}{r} + \frac{1}{r^2} + \frac{2}{r^3} +3\frac{Q}{r} \right) \\
        \partial_r( e^{-r} f)
            & = e^{-r} \left( Q' \left( \frac{2}{r} +\frac{5}{r^2} -8Q \right) + Q \left( -\frac{2}{r} +\frac{2}{r^2} + \frac{2}{r^3} + 4Q + 3 \frac{Q}{r} \right) \right) < 0,
    \end{align*}
    
    where the last inequality holds for $r \geq R_0$ with $R_0$ large enough, by the signs of $Q>0$ and of the derivative $Q'<0$ and the exponential decay in \eqref{eq:bound_Q_first_order}. Since $f$ tends to $0$ at $+\infty$, an integration from $r>R_0$ to $+\infty$ yields
    \begin{align*}
        \forall r >R_0, \quad f(r) > 0.
    \end{align*}
    
    \underline{\emph{Second step:}} We prove that $\frac{Q}{r} + 4Q+4Q'<0$, for $r$ large enough. As before, the derivative gives
    \begin{align*}
        \partial_r \left( \frac{Q}{r} + 4 Q +4Q'\right) & = - 3\frac{Q'}{r} - \frac{Q}{r^2} -4 Q^2 + 4Q + 4Q' \\
        \partial_r \left( e^{-r} \left( \frac{Q}{r} + 4 Q +4Q' \right) \right) & = e^{-r} \left( -3 \frac{Q'}{r} - \frac{Q}{r^2} -4Q^2 - \frac{Q}{r} \right)=e^{-r}f(r) > 0,
    \end{align*}
    where the last inequality comes from the first step. With a null limit at infinity, an integration from $r>R_0$ to $\infty$ yields
    \begin{align*}
        \forall r \geq R_0, \quad \frac{Q}{r} + 4Q +4Q' < 0.
    \end{align*}
    
    \underline{\emph{Third step:}} Bound on the required term. By direct computation with the previous step
    \begin{align*}
        \partial_r \left( r^{\frac{1}{2}}e^{2r}Q^2 \right) = \frac{r^{\frac{1}{2}}}{2} e^{2r} Q \left( \frac{Q}{r} +4Q+ 4Q' \right)<0.
    \end{align*}
    Thus the positive function $r^{\frac{1}{2}}e^{2r}Q^2(r)$ decays at infinity, and is bounded on $\mathbb{R}_+^*$ by a constant $c>0$, which concludes the proof of the lemma.
\end{proof}

With the same proof, we can obtain that $Q(r) \leq c r^{-\alpha}e^{-r}$, for any $\alpha \in (0, \frac{1}{2})$. However, Lemma \ref{lemm:improved_decay_Q} is sufficient in the rest of the appendix to obtain a precise asymptotic expansion of $Q$.

We prove that the asymptotic behaviour of the function $Q$ is related to the one of the modified Bessel function $K_0$ in \eqref{eq:asymp_K_0_first_order}. 
We precise the asymptotic behaviour of $K_0$ at $+\infty$ by the functions $L_k$ defined in \eqref{defi:L_k}, inspired by \cite{AS64, Bow58}.
\begin{lemm}[Asymptotic behaviour of $K_0$]\label{lemm:asymp_K_0}
    For any $k\in \mathbb{N}$ and $i \in \{ 0 \} \cup \mathbb{N}$, there exists a constant $C=C_{k,i}>0$ such that
    \begin{align*}
        \forall r \geq 1, \quad \left\vert K_0^{(i)}(r) -L_k^{(i)}(r) \right\vert \leq C \frac{e^{-r}}{r^{k+\frac{3}{2}}}.
    \end{align*}
\end{lemm}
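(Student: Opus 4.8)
\textbf{Plan for the proof of Lemma \ref{lemm:asymp_K_0}.}
The statement is a classical asymptotic expansion of the modified Bessel function $K_0$, and the plan is to derive it from the defining ODE \eqref{defi:eq_bessel_mod} together with the first-order asymptotics \eqref{eq:asymp_K_0_first_order}, rather than from an integral representation. First I would recall the standard WKB-type ansatz: writing $K_0(r)=e^{-r}r^{-1/2}u(r)$ and plugging into $\partial_r^2 f+\frac1r\partial_r f-f=0$, one finds that $u$ satisfies a first-order-looking second-order ODE whose formal power series solution in $1/r$ has coefficients $a_l$ given exactly by the recursion encoded in \eqref{defi:L_k} (the product $\prod_{j=1}^l(2j-1)^2$ and the $8^l l!$ denominator are precisely what the recursion $8l\,a_l=-(2l-1)^2 a_{l-1}$ produces). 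So the functions $L_k$ are, by construction, the truncations of the formal asymptotic series, and $L_k$ satisfies the Bessel equation \eqref{defi:eq_bessel_mod} up to a remainder of size $O(e^{-r}r^{-k-5/2})$.

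The key steps, in order, would be: (i) verify by direct substitution that $g_k:=e^{-r}r^{-1/2}\sum_{l=0}^k a_l r^{-l}$ satisfies $\partial_r^2 g_k+\frac1r\partial_r g_k-g_k=\varepsilon_k(r)$ with $|\varepsilon_k(r)|+|\varepsilon_k'(r)|\lesssim e^{-r}r^{-k-5/2}$ — this is a routine but slightly tedious computation using the coefficient recursion; (ii) set $w_k:=K_0-g_k=K_0-L_k$, which solves the inhomogeneous Bessel equation $\partial_r^2 w_k+\frac1r\partial_r w_k-w_k=-\varepsilon_k(r)$; (iii) establish the $i=0$ bound $|w_k(r)|\lesssim e^{-r}r^{-k-3/2}$. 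For step (iii) the cleanest route is an induction on $k$: the base case $k=0$ is exactly \eqref{eq:asymp_K_0_first_order}, and for the inductive step one uses that $K_0-L_{k}=(K_0-L_{k-1})-a_k e^{-r}r^{-k-1/2}$, where the inductive hypothesis controls $K_0-L_{k-1}$ by $e^{-r}r^{-k-1/2}$; to gain the extra power of $r^{-1}$ one must show the leading $r^{-k-1/2}$ term of $K_0-L_{k-1}$ has coefficient exactly $a_k$. This is done by feeding $K_0-L_{k-1}=c\,e^{-r}r^{-k-1/2}+o(e^{-r}r^{-k-1/2})$ into the inhomogeneous equation and matching the leading order, which forces $c=a_k$ by the same recursion; alternatively, one can use a variation-of-parameters / Duhamel representation for $w_k$ against the two solutions $K_0$ and $I_0$ of the homogeneous equation, bounding the resulting integral of $\varepsilon_k$ against the Wronskian. (iv) Finally, the bounds on the derivatives $K_0^{(i)}-L_k^{(i)}$ follow from the ODE itself: $w_k'' = w_k - \frac1r w_k' - \varepsilon_k$, so once $|w_k|\lesssim e^{-r}r^{-k-3/2}$ is known, $w_k'$ is controlled by integrating the first-order relation (again Duhamel, since $w_k'$ solves a linear first-order ODE with forcing built from $w_k$ and $\varepsilon_k$), giving $|w_k'(r)|\lesssim e^{-r}r^{-k-3/2}$, and then $|w_k''(r)|\lesssim e^{-r}r^{-k-3/2}$ directly; higher $i$ proceed by differentiating the Bessel equation and iterating.

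The main obstacle is step (iii), specifically pinning down that the leading-order error term of $K_0-L_{k-1}$ has coefficient precisely $a_k$ — i.e. that the ``next'' coefficient in the genuine asymptotic expansion of $K_0$ matches the formally-computed $a_k$. The danger is a circular argument: one cannot simply assume $K_0$ has a full asymptotic expansion with those coefficients, since that is essentially what is being proved. The way to break the circularity is to work purely with the finite truncations $L_k$ (which are explicit and manifestly solve the equation up to $\varepsilon_k$) and with the a priori bound \eqref{eq:asymp_K_0_first_order}, and to extract each successive coefficient by the matching argument in the inhomogeneous ODE rather than positing the series. Everything else — the coefficient recursion computation in step (i), and the Duhamel estimates in steps (iii)–(iv) — is mechanical once the bootstrap structure is set up. The improved decay Lemma \ref{lemm:improved_decay_Q} is not needed here (it is used for $Q$), but the analogous bootstrap philosophy carries over to Proposition \ref{propo:asymp_Q}.
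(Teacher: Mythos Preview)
Your overall strategy is reasonable, and the Duhamel/variation-of-parameters alternative you mention would indeed work. However, the primary inductive route you sketch has a genuine gap at step (iii): from the inductive hypothesis $|K_0-L_{k-1}|\lesssim e^{-r}r^{-k-1/2}$ you cannot simply write $K_0-L_{k-1}=c\,e^{-r}r^{-k-1/2}+o(e^{-r}r^{-k-1/2})$ and ``match coefficients'' in the inhomogeneous ODE. A bound is not an asymptotic form; establishing that $w_{k-1}$ has a well-defined leading coefficient is exactly the content you are trying to prove, so the matching argument as stated is circular. You correctly flag this danger, but your proposed resolution --- ``extract each successive coefficient by the matching argument in the inhomogeneous ODE'' --- does not escape it. To make the induction close you would have to fall back on the Duhamel representation against $I_0,K_0$ (with Wronskian $-1/r$) that you mention only in passing; that route is valid but you have not actually carried it out.

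The paper takes a quite different and cleaner path that avoids both induction on $k$ and any coefficient-matching. It works directly with the ratio $K_0/L_k$: one checks by a telescoping computation on the finite sum that $-L_k''-\tfrac{1}{r}L_k'+L_k$ is a \emph{single} explicit term of size $e^{-r}r^{-k-5/2}$, then observes that $v:=K_0/L_k$ satisfies $v''+(\tfrac{1}{r}+2\tfrac{L_k'}{L_k})v' = (K_0/L_k^2)(-L_k''-\tfrac{1}{r}L_k'+L_k)$. Writing this with the integrating factor $rL_k^2\sim e^{-2r}$ and integrating twice from $r$ to $\infty$ --- using only that $K_0/L_k\to 1$ and $rL_k^2\,\partial_r(K_0/L_k)\to 0$, both immediate from \eqref{eq:asymp_K_0_first_order} --- gives $|K_0/L_k-1|\lesssim r^{-(k+1)}$ directly, hence $|K_0-L_k|\lesssim e^{-r}r^{-k-3/2}$. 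The derivative bounds then follow as you describe. The advantage of the ratio trick is that the limit $K_0/L_k\to 1$ is known from the outset for every $k$, so there is no unknown constant to pin down and no bootstrap needed; your approach, by contrast, has to manufacture each successive $a_k$ as the proof proceeds.
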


\begin{proof}
By computation, we notice that
\begin{align}\label{eq:K_over_L_k}
\partial_r^2 \left( \frac{K_0}{L_k} \right) + \left( \frac{1}{r} + 2 \frac{L_k'}{L_k} \right) \partial_r \left( \frac{K_0}{L_k} \right) = \frac{K_0}{L_k^2} \left( - L_k'' - \frac{L_k'}{r} + L_k \right).
\end{align}
A computation on the finite sums gives us
\begin{align}\label{eq:L_k_error}
- L_k'' -\frac{L_k'}{r} +L_k= \frac{e^{-r}}{\sqrt{r}}\left( \sum_{n=1}^{k+1} - \frac{a_{n-1}}{r^n}(2n-2) + \sum_{n=2}^{k+2}- \frac{a_{n-2}}{r^n}\left( n- \frac{3}{2}\right)^2 \right)= \frac{e^{-r}}{\sqrt{r}} \frac{-a_k}{r^{k+2}}\left( k+ \frac{1}{2} \right)^2.
\end{align}
By using this development in \eqref{eq:K_over_L_k}, the bound \eqref{eq:asymp_K_0_first_order} and \eqref{defi:L_k}, and implicit constants independent of $k$ and $r$
\begin{align}
\left\vert \partial_r \left( e^{\ln(r)+2 \ln(L_k)} \partial_r \left( \frac{K_0}{L_k} \right) \right) \right\vert = \left\vert K_0 \frac{e^{-r}}{r^{k+ \frac{3}{2}}} a_k \left( k+ \frac{1}{2}\right)^2 \right\vert \lesssim \frac{e^{-2r}}{r^{k+2}}. \label{eq:bound_to_improve}
\end{align}

By integrating from $r$ to $R>r$
\begin{align}\label{eq:derivative_K_Lk}
\left\vert r L_k^2(r)\partial_r \left( \frac{K_0(r)}{L_k(r)} \right) +C(R) \right\vert \lesssim \frac{e^{-2r}}{r^{k+2}}.
\end{align}
Notice that $C(R) \rightarrow 0$ as $R\rightarrow +\infty$, by the asymptotic behaviours in \eqref{eq:asymp_K_0_first_order} and \eqref{defi:L_k}:
\begin{align*}
\vert C(R) \vert = \left\vert R \left(K_0'(R) L_k(R)- K_0(R) L_k'(R) \right) \right\vert \leq R (\vert K_0'(R) \vert +\vert L_k'(R)\vert )( \vert L_k(R) \vert + K_0(R)) \underset{ R \rightarrow + \infty}{\rightarrow} 0.
\end{align*}

Since $\displaystyle \lim_{r\rightarrow +\infty} \frac{K_0(r)}{L_k(r)}=1$, a second integration gives
\begin{align*}
    \left\vert K_0(r) - L_k(r)\right\vert \lesssim \frac{\vert L_k(r) \vert }{r^{k+1}} \lesssim \frac{e^{-r}}{r^{k+\frac{3}{2}}}.
\end{align*}

For the asymptotic behaviour of the derivative of $K_0$, it suffices to develop \eqref{eq:derivative_K_Lk} to get the asymptotic development of $K_0'$ up to order $k+\frac{3}{2}$. For the higher order derivative, we proceed by induction on the equation satisfied by $K_0$ and the derivatives of the equation satisfied by $L_k$ in \eqref{eq:L_k_error}.
\end{proof}

\begin{lemm}[Approximation of $Q$ by $K_0$]\label{lemm:closeness_Q_K_0}
There exists a constant $\kappa>0$ and a constant $c>0$, such that
\begin{align*}
\forall i \in \{0,1,2\}, \ \forall r>1, \quad \vert Q^{(i)}(r)-\kappa K_0^{(i)}(r) \vert \leq c \frac{e^{-2r}}{r}.
\end{align*}
\end{lemm}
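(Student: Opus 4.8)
The plan is to view $q := Q - \kappa K_0$ (for a suitable constant $\kappa > 0$ yet to be determined) as a solution of a linear ODE whose forcing term is \emph{quadratically small} in the sense that it decays like $e^{-2r}$, and then to use a variation-of-constants / comparison argument to transfer that decay to $q$ itself. Both $Q$ and $K_0$ solve, in the radial variable, second order ODEs that agree to leading order: $Q$ satisfies $-Q'' - \frac1r Q' + Q = Q^2$ (from \eqref{eq:Q} in polar coordinates with $d=2$), while $K_0$ satisfies $-K_0'' - \frac1r K_0' + K_0 = 0$ (this is \eqref{defi:eq_bessel_mod}). Subtracting, $q$ satisfies
\begin{align*}
  -q'' - \frac1r q' + q = Q^2 .
\end{align*}
By Lemma \ref{lemm:improved_decay_Q} (and the first order asymptotics \eqref{eq:bound_Q_first_order}), the right-hand side $Q^2$ is $O(r^{-1/2} e^{-2r})$, which is the source of the $e^{-2r}/r$ gain we are after.

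\textbf{Key steps.} First I would fix the normalising constant $\kappa$. The two-dimensional analogue of the homogeneous equation $-f'' - \frac1r f' + f = 0$ has, up to scalar multiples, a unique solution that decays at infinity, namely $K_0$; the other independent solution $I_0$ grows like $e^{r}/\sqrt r$. Since $Q$ decays exponentially (Proposition \ref{propo:Q}(i)) and $Q^2$ is integrable against the relevant Green kernel, a standard asymptotic analysis of the ODE shows $Q(r) \sim \kappa\, e^{-r} r^{-1/2}$ with $\kappa = \mu \sqrt{2/\pi}$, where $\mu$ is the constant from \eqref{eq:bound_Q_first_order} and the factor $\sqrt{2/\pi}$ comes from \eqref{eq:asymp_K_0_first_order}; this is exactly the $\kappa$ making $Q - \kappa K_0 = o(e^{-r} r^{-1/2})$.

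Next I would set up the variation-of-constants representation for $q$ on $[1,\infty)$ using the homogeneous solutions $K_0$ (decaying) and $I_0$ (growing), with Wronskian $W(K_0, I_0)(r) = \frac1r$ (the standard normalisation; concretely $K_0 I_0' - K_0' I_0 = 1/r$). Writing
\begin{align*}
  q(r) = A\, K_0(r) + B\, I_0(r) + K_0(r)\int_1^r s\, I_0(s) Q^2(s)\, ds - I_0(r)\int_1^r s\, K_0(s) Q^2(s)\, ds ,
\end{align*}
the requirement that $q$ decay (together with $\int_1^\infty s\, K_0(s) Q^2(s)\, ds < \infty$, which holds since $Q^2 \lesssim e^{-2r}/\sqrt r$ and $K_0 \lesssim e^{-r}/\sqrt r$) forces $B$ to be the value that cancels the growing part, i.e. $B = \int_1^\infty s\, K_0(s) Q^2(s)\, ds$; and the choice of $\kappa$ above forces the coefficient of $K_0$ to vanish as well, i.e. effectively $A$ is absorbed. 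One is then left with
\begin{align*}
  q(r) = K_0(r)\int_1^r s\, I_0(s) Q^2(s)\, ds + I_0(r)\int_r^{\infty} s\, K_0(s) Q^2(s)\, ds .
\end{align*}
Now estimate each term using $K_0(s) \lesssim e^{-s} s^{-1/2}$, $I_0(s) \lesssim e^{s} s^{-1/2}$ and $Q^2(s) \lesssim e^{-2s} s^{-1/2}$: in the first integral, $s\, I_0(s) Q^2(s) \lesssim e^{-s} s^{-1/2}$, so the integral is $O(e^{-r} r^{-1/2})$ and multiplying by $K_0(r) \lesssim e^{-r} r^{-1/2}$ gives $O(e^{-2r} r^{-1})$; in the second, $\int_r^\infty s\, K_0(s) Q^2(s)\, ds \lesssim \int_r^\infty e^{-3s} s^{-1/2}\, ds \lesssim e^{-3r} r^{-1/2}$, and multiplying by $I_0(r) \lesssim e^{r} r^{-1/2}$ again gives $O(e^{-2r} r^{-1})$. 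This yields the $i=0$ case $|Q(r) - \kappa K_0(r)| \lesssim e^{-2r}/r$. For $i = 1, 2$ I would differentiate the integral representation of $q$ (the boundary terms from differentiating the integrals cancel by construction, and $K_0', I_0'$ have the same exponential orders as $K_0, I_0$ by \eqref{eq:asymp_K_0_first_order}), obtaining the same bound for $q'$; then $q'' = q + \frac1r q' - Q^2$ from the ODE gives the bound for $q''$ directly, using $Q^2 \lesssim e^{-2r}/r$.

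\textbf{Main obstacle.} The delicate point is the \emph{identification of the constant} $\kappa$ and the simultaneous verification that, with that choice, the coefficient of the \emph{decaying} homogeneous solution $K_0$ in the representation of $q$ is forced to zero — i.e. that there is no residual $c\, K_0$ term of size $e^{-r} r^{-1/2}$ left in $q$. Pinning this down rigorously requires a careful asymptotic matching: one must know $Q(r) = \kappa e^{-r} r^{-1/2}(1 + o(1))$ with \emph{this precise} $\kappa$, which is essentially a sharpening of \eqref{eq:bound_Q_first_order}, and then argue that $q = Q - \kappa K_0$ inherits the decay $o(e^{-r} r^{-1/2})$, which combined with the bound on the particular solution closes the argument. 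An alternative, perhaps cleaner, route to this step is to define $\kappa$ directly by $\kappa := \lim_{r\to\infty} Q(r)/K_0(r)$ (which exists and is positive by \eqref{eq:bound_Q_first_order} and \eqref{eq:asymp_K_0_first_order}), observe that $q/K_0 \to 0$, and feed this into \eqref{eq:K_over_L_k}-type manipulations or the integral equation to get the quantitative rate; this avoids ever computing $\kappa$ explicitly in terms of $\mu$. The remaining estimates are then routine integrations of exponentials against the Green kernel.
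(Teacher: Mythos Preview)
Your variation-of-constants approach is sound in principle and is a genuine alternative to the paper's proof, which instead works with the quotient $Q/K_0$: the paper derives the ODE for $Q/K_0$, rewrites it as $\partial_r\bigl(rK_0^2\,\partial_r(Q/K_0)\bigr)=-rK_0Q^2$, integrates twice to show that $Q/K_0$ is monotone and bounded with finite positive limit $\kappa$, and then bootstraps the resulting bound $Q\lesssim r^{-1/2}e^{-r}$ back into the forcing. The two methods exploit the same mechanism ($Q^2$ is the only obstruction to $Q$ being a multiple of $K_0$), but the quotient formulation has the advantage that $\kappa$ and the cancellation of all homogeneous pieces emerge automatically, with no bookkeeping of integration constants.

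There is, however, a genuine error in your execution. After fixing $B=\int_1^\infty sK_0Q^2\,ds$ and declaring that the choice of $\kappa$ ``absorbs $A$'', you display
\[
q(r)=K_0(r)\int_1^r sI_0(s)Q^2(s)\,ds+I_0(r)\int_r^\infty sK_0(s)Q^2(s)\,ds.
\]
But $\int_1^r sI_0Q^2\,ds\to C_\infty:=\int_1^\infty sI_0Q^2\,ds>0$, so the first term behaves like $C_\infty K_0(r)\sim e^{-r}/\sqrt r$, \emph{not} like $e^{-2r}/r$. The choice of $\kappa$ making $q=o(K_0)$ must cancel this limit as well, i.e.\ force $A=-C_\infty$; carrying this through turns the first term into $-K_0(r)\int_r^\infty sI_0Q^2\,ds$, with \emph{both} integrals from $r$ to $\infty$. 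Your subsequent claim ``$\int_1^r e^{-s}s^{-1/2}\,ds=O(e^{-r}r^{-1/2})$'' is plainly false (the integral converges to a positive constant), which is a symptom of the same oversight. With the corrected limits and only $Q\lesssim r^{-1/4}e^{-r}$ from Lemma~\ref{lemm:improved_decay_Q} (so $sI_0Q^2\lesssim e^{-s}$, not $e^{-s}s^{-1/2}$ as you wrote), one gets $|q|\lesssim e^{-2r}/\sqrt r$ on the first pass; a bootstrap step---feeding $Q\lesssim r^{-1/2}e^{-r}$ back into $Q^2$---is then required to reach $e^{-2r}/r$, exactly as the paper does but which you omit.
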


\begin{proof}
The proof relies on the same arguments as the previous proof. By direct computations, 
\begin{align}\label{eq:Q_over_L_k}
\partial_r^2 \left( \frac{Q}{K_0} \right) + \left( \frac{1}{r} + 2 \frac{K_0'}{K_0} \right) \partial_r \left( \frac{Q}{K_0} \right) = - \frac{Q^2}{K_0}.
\end{align}
By using the asymptotic bound in Lemma \ref{lemm:improved_decay_Q} and the asymptotic equivalent in Lemma \ref{lemm:asymp_K_0}, we find that
\begin{align}\label{eq:closeness_Q_K0_1}
\partial_r \left( e^{\ln(r)+2 \ln(K_0)} \partial_r \left( \frac{Q}{K_0} \right) \right) = -Q^2 r K_0 , \quad \text{with} \quad 0 \leq Q^2 r K_0 \lesssim e^{-3r}.
\end{align}

By \eqref{eq:closeness_Q_K0_1}, the function $rK_0^2 \partial_r \left( \frac{Q}{K_0}\right)$ is a decreasing and bounded function for $r$ large enough, and so it has a finite limit when $r \to +\infty$, denoted by $A_{\infty}$. Hence, integrating \eqref{eq:closeness_Q_K0_1} from $r$ to $+\infty$ and using \eqref{eq:asymp_K_0_first_order} yield
\begin{align}\label{eq:derivative_Q_K_0}
-e^{-3r} \lesssim  A_\infty - r K_0^2(r)\partial_r \left( \frac{Q(r)}{K_0(r)} \right) \leq 0 \quad  \Leftrightarrow \quad -e^{-r} \lesssim \frac{ A_\infty}{rK_0^2(r)} -\partial_r \left( \frac{Q(r)}{K_0(r)} \right) \leq 0.
\end{align}
Another integration from $r$ to $R$ yields
\begin{align}\label{eq:estimate_Q_K_0}
    -e^{-r} \lesssim \int_r^R \frac{A_\infty}{r' K_0(r')^2} dr' + \frac{Q(r)}{K_0(r)} - \frac{Q(R)}{K_0(R)} \leq 0.
\end{align}

From Lemma \ref{lemm:improved_decay_Q} and \ref{lemm:asymp_K_0}, $\frac{Q(R)}{K_0(R)}=O(\sqrt{R})$ and  $\frac{1}{r'K_0(r')^2} \sim e^{2r}$ at $\infty$, so that by \eqref{eq:estimate_Q_K_0}, it must be the case that $A_\infty=0$. Thus by \eqref{eq:derivative_Q_K_0}, the function $\frac{Q}{K_0}$ increases at infinity, and by \eqref{eq:estimate_Q_K_0}, it is bounded, so it has a finite limit denoted $\kappa$ at infinity. Notice that by the signs of $Q$ and $K_0$, we have $\kappa>0$. So far we have obtained
\begin{align*}
    -\frac{e^{-{2r}}}{\sqrt{r}} \lesssim Q(r) -\kappa K_0(r) \leq 0 .
\end{align*}

This result almost corresponds to the one of the lemma. To improve it, it suffices to notice that the bound in $Q$ in Lemma \ref{lemm:improved_decay_Q} can be improved with the last inequality by $Q(r)\lesssim r^{-\frac{1}{2}} e^{-r}$. By redoing the computations from the beginning of the proof with this new bound, we obtain the result of the lemma.

Now we prove the equivalent of the derivative. By \eqref{eq:derivative_Q_K_0}
\begin{align*}
& \left\vert Q'(r) K_0(r) - Q(r) K_0'(r) \right\vert \lesssim \frac{e^{-3r}}{r^{\frac{3}{2}}}, \\
& \left\vert Q'(r)- K_0'(r) \right\vert \lesssim \frac{\left\vert \left( Q(r)-K_0(r)\right) K_0'(r)\right\vert}{K_0(r)} + \frac{e^{-3r}}{r^{\frac{3}{2}}K_0(r)} \lesssim \frac{e^{-2r}}{r}.
\end{align*}
To obtain the next order derivative, it suffices to use the equation satisfied by $Q$ and the previous estimates.
\end{proof}

\begin{proof}[Proof of Proposition \ref{propo:asymp_Q}]
The proof is given by the triangle inequality with Lemma \ref{lemm:asymp_K_0} and Lemma \ref{lemm:closeness_Q_K_0}.
\end{proof}

\begin{proof}[Proof of \eqref{eq:chi_0}]
The proof that $\chi_0$ is close to a rescaled version of $K_0$ is similar to the one of Lemma \ref{lemm:closeness_Q_K_0}. Indeed, the rotationally invariant function $\chi_0$ satisfies the following equations, with $\lambda_0>0$:
\begin{align*}
    & -\partial_r^2 \chi_0(r) - \frac{1}{r} \partial_r \chi_0(r) + \chi_0(r) -2Q(r) \chi_0(r) +\lambda_0 \chi_0(r) =0, \\ 
    & -\partial_r^2 \left( \chi_0 \left( \frac{r}{\sqrt{1+\lambda_0}} \right) \right) - \left( \frac{1}{r} \partial_r \right) \left( \chi_0 \left( \frac{r}{\sqrt{1+\lambda_0}} \right)\right) \\
        & \qquad + \chi_0 \left( \frac{r}{\sqrt{1+\lambda_0}} \right) -\frac{2}{\sqrt{1+\lambda_0}}Q\left(\frac{r}{\sqrt{1+\lambda_0}}\right) \chi_0\left(\frac{r}{\sqrt{1+\lambda_0}}\right) =0.
\end{align*}
With the same computations as in Lemma \ref{lemm:improved_decay_Q}, we obtain the exponential decay of $\chi_0(\frac{\cdot}{\sqrt{1+\lambda_0}})$. As Lemma \ref{lemm:closeness_Q_K_0}, we obtain that:
\begin{align*}
    \exists \kappa_0>0, \exists c>0, \forall r>1,\quad \left\vert \chi_0\left(\frac{r}{\sqrt{1+\lambda_0}} \right) - \kappa_0 K_0 (r) \right\vert \leq c \frac{e^{-2r}}{r}.
\end{align*}
A last change of variable concludes the approximation \eqref{eq:chi_0}.
\end{proof}

\subsection{Asymptotic expansion of a translated ground-state}\label{sec:Q_n_app}

We now continue with the asymptotic expansion of a ground-state translated in the first direction. We recall that $Q$ is the ground state associated to \eqref{eq:Q}.

\begin{propo}\label{propo:Q_n_app_gen}
    There exist polynomials $\left(P_q(\bx)\right)_{q\in \mathbb{N}^{\ast}}$ even in $y$ and of degree at most $2q$ such that the following holds. Let us define the functions
    \begin{align*}
        \forall n\in \mathbb{N}^{\ast}, \forall z>1, \forall \bx \in \mathbb{R}^2, \quad  Q_{app}^n(\bx,z):= \kappa \frac{e^{-z}}{z^{\frac{1}{2}}} e^{-x} \left( a_0 + \sum_{1\leq q \leq n} \frac{P_q(\bx)}{z^q}\right) ,
    \end{align*}
    where $\kappa>0$ is given by Lemma \ref{lemm:closeness_Q_K_0} and $a_0=\sqrt{\frac{\pi}2}$.
    Then, for any $n \in \mathbb N^{\star}$, there exists $C=C_n>0$, such that, for any $\bj=(j_x,j_y)$ in $\{0,1,2\}^2$,
    \begin{align*}
        \forall \beta \in (0,\frac{1}{2}), \forall z>4, \forall \vert \bx \vert \leq z^{\beta}, \quad \left\vert \partial_\bx^{\bj} Q(\bx+ (z,0))- \partial_{\bx}^{\bj}Q^n_{app} (\bx,z) \right\vert \leq C \frac{e^{-z-x}}{z^{\frac{1}{2}+(n+1)(1-2\beta)}}.
    \end{align*}
\end{propo}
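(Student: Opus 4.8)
The plan is to expand $Q(\bx+(z,0))$ using the radial asymptotic expansion of $Q$ from Proposition \ref{propo:asymp_Q} with $r = |\bx + (z,0)| = (z^2 + 2zx + |\bx|^2)^{1/2}$, and then to Taylor-expand each term of that expansion in powers of $1/z$ in the regime $|\bx| \le z^\beta$. First I would write $r = z\left(1 + \frac{2x}{z} + \frac{|\bx|^2}{z^2}\right)^{1/2}$ and set $u := \frac{2x}{z} + \frac{|\bx|^2}{z^2}$, so that $|u| \lesssim z^{\beta - 1}$ is small. The key identity is that $e^{-r} = e^{-z} e^{-z(\sqrt{1+u} - 1)}$ and $z(\sqrt{1+u}-1) = x + \frac{|\bx|^2 - x^2}{2z} + \cdots$ admits, for each fixed truncation order, an expansion $z(\sqrt{1+u}-1) = x + \sum_{k\ge 1} \frac{R_k(\bx)}{z^k}$ where $R_k$ is a polynomial in $\bx$ (even in $y$, since $u$ is even in $y$) of controlled degree. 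Substituting into $e^{-r}$, expanding the exponential of the tail $\sum_{k \ge 1} R_k(\bx) z^{-k}$, and multiplying by the polynomial-in-$1/\sqrt{r}$ prefactor $\frac{1}{\sqrt r}\sum_{l} a_l r^{-l}$ (again expanded via $r^{-1/2 - l} = z^{-1/2-l}(1+u)^{-1/4 - l/2}$), one collects all contributions up to order $z^{-n}$ relative to the leading term $\kappa z^{-1/2} e^{-z} e^{-x} a_0$. This defines the polynomials $P_q(\bx)$; they are even in $y$ because every building block ($u$, hence $R_k$, hence the binomial expansions) is even in $y$, and a degree count through the nested expansions gives $\deg P_q \le 2q$.

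\textbf{Order of steps.} (1) Record the elementary estimate: for $|u| \le \tfrac12$ and any $N$, $\left|(1+u)^{1/2} - \sum_{k=0}^{N} \binom{1/2}{k} u^k\right| \lesssim |u|^{N+1}$, and likewise for $(1+u)^{-1/4 - l/2}$; then translate $|u| \lesssim z^{\beta - 1}$ into remainder bounds of size $z^{-(N+1)(1-2\beta)}$ after accounting for $|u|^{N+1} \lesssim z^{(N+1)(\beta - 1)} \cdot z^{(N+1)\beta}$ coming from the $|\bx|^2/z^2$ part, which is why the gain per order is $1 - 2\beta$ and not $1$. (2) Apply Proposition \ref{propo:asymp_Q} with $i = 0$ and a truncation order $k = k(n)$ large enough (depending on $n$ and the fact that each power $r^{-l}$ already carries a $z^{-l}$), so that $\left|Q(r) - \kappa L_k(r)\right| \lesssim e^{-r} r^{-k-3/2} \lesssim e^{-z-x} z^{-k-3/2 + \text{(lower order)}}$, which is absorbed into the claimed remainder for $k$ large. (3) Expand $\kappa L_k(r)$ as above, collect terms of order $\le n$ in $1/z$ to define $Q^n_{app}$, and bound the collected terms of order $> n$ by $e^{-z-x} z^{-1/2 - (n+1)(1-2\beta)}$. (4) For the derivatives $\partial_\bx^{\bj}$ with $|\bj| \le $ the required range (here $\bj \in \{0,1,2\}^2$), differentiate $Q(\bx + (z,0))$ directly and use Proposition \ref{propo:asymp_Q} for $Q^{(i)}$, $i \le 2$ — more conveniently, since $\partial_\bx^{\bj} Q(\bx+(z,0)) = (\partial^{\bj} Q)(\bx+(z,0))$ and $\partial^{\bj} Q$ is again a combination of $Q^{(i)}(r)$ times rational functions of the coordinates, one reruns steps (1)–(3) with $Q^{(i)}$ in place of $Q$; alternatively differentiate the final expansion $Q^n_{app}$ term by term, which is legitimate because the error terms, being of the form $e^{-z-x} \times (\text{polynomial in } \bx)/z^{\text{power}}$, have derivatives of the same shape in the region $|\bx| \le z^\beta$. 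I would present the $\bj = (0,0)$ case in detail and indicate that the derivative cases are identical, using that $\partial_x$ acting on $e^{-z(\sqrt{1+u}-1)}$ brings down $\partial_x(z(\sqrt{1+u}-1)) = 1 + O(z^{\beta-1})$, preserving the structure.

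\textbf{Main obstacle.} The delicate point is the careful bookkeeping of orders: one must verify that truncating the three nested expansions (the $\sqrt{1+u}$ in the exponent, the exponential of the resulting tail, and the $(1+u)^{-1/4-l/2}$ in the amplitude) each at the right level produces a total error of exactly the advertised size $z^{-1/2 - (n+1)(1-2\beta)} e^{-z-x}$, uniformly in $\bx$ with $|\bx| \le z^\beta$ and uniformly in $\beta \in (0,1/2)$ — the uniformity in $\beta$ near $1/2$ is what forces the $1 - 2\beta$ gain and must be tracked honestly, since a naive $|\bx| \le z^\beta$ substitution into a degree-$d$ polynomial costs $z^{d\beta}$. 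The second, more mechanical, obstacle is proving the structural claims on $P_q$ (parity in $y$, degree $\le 2q$): this requires an induction showing that the $k$-th coefficient $R_k(\bx)$ in $z(\sqrt{1+u}-1) = x + \sum R_k(\bx) z^{-k}$ is a polynomial even in $y$ of degree $\le 2k + ?$ — one checks $R_1(\bx) = \tfrac12(|\bx|^2 - x^2) = \tfrac12 y^2$ has degree $2$, and the recursion from the binomial series multiplies degrees additively against powers of $z$ that are divided out, so an explicit degree-vs-order induction closes it; combining with the amplitude expansion (whose coefficients are polynomials in $\bx$ of degree $2l + 2q'$ appearing at order $q' + l$) gives $\deg P_q \le 2q$. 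Neither obstacle is conceptually hard, but both demand patience; the proof is essentially a long but routine computation once the three expansions and their remainder estimates are set up with explicit dependence on $n$ and $\beta$.
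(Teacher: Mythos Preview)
Your proposal is correct and follows essentially the same route as the paper: apply Proposition~\ref{propo:asymp_Q} to $Q(|\bx+(z,0)|)$ and then Taylor-expand $L_k(r)$ in powers of $1/z$ in the region $|\bx|\le z^\beta$. The only cosmetic difference is the choice of expansion variables: where you work with the single quantity $u=\tfrac{2x}{z}+\tfrac{|\bx|^2}{z^2}$, the paper separates into $X=\tfrac{x}{z}$, $Y=\tfrac{y}{z}$, and crucially $\tilde Y=\tfrac{y^2}{z}$, which makes the degree bound $\deg P_q\le 2q$ fall out of a simple index count (each power of $\tilde Y$ contributes degree $2$ and order $1$ simultaneously) rather than the recursion you sketch for the $R_k$.
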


\begin{proof}
By the condition $z>4$, we have $z- z^\beta>1$ for $\beta <\frac{1}{2}$. Thus Proposition \ref{propo:asymp_Q} applies to $Q(\bx +(z,0))$\footnote{Since $Q$ is radial, we recall the identifications $Q(\bx) = Q(\vert \bx \vert)$ in the statement of Proposition \eqref{propo:asymp_Q}}, and we have for any $k\in \mathbb{N}$
\begin{align} \label{propo:Q_n_app_gen.1}
    \left\vert Q(\bx +(z,0)) - \kappa L_k(\vert (x+z,y)\vert ) \right\vert \leq C_k \frac{e^{-\vert (x+z,y) \vert}}{\vert (x+z,y) \vert^{k+ \frac{3}{2}}} \leq C_k\frac{e^{-z-x}}{z^{k+ \frac{3}{2}}},
\end{align}
where for the last inequality, we have used $\vert x+ z \vert = x+z \geq z-z^\beta > 1$ for $\vert x \vert \leq z^\beta$.

It suffices to develop the function $L_k(\vert (x+z,y)\vert )$, for some $k \in \mathbb{N}$ large enough, to conclude the proof of the proposition. We introduce the rescaled variables $X:= \frac{x}{z}$, $Y:= \frac{y}{z}$ and $\Tilde{Y}:= \frac{y^2}{z}$. Note that $\vert X \vert <1$, $\vert Y \vert<1$ and $\vert \Tilde{Y}\vert <1$ for $\vert \bx \vert <z^\beta$ under the condition $\beta<\frac{1}{2}$. Let us first begin with the power terms in $L_k$. We define
\begin{align*}
    S_l(X,Y):= \frac{a_l}{z^{\frac{1}{2}+l}} \frac{1}{(1+X)^{\frac{1}{2}+l}} \left( 1+ \frac{Y^2}{(1+X)^2}\right)^{-(\frac{1}{2}+l)}
\end{align*}
so that
\begin{align*}
    L_k(\vert \bx+ (z,0) \vert)= \exp(- \left\vert \bx +(z,0)\right\vert)\sum_{l=0}^k S_l(X,Y).
\end{align*}
The power series of the functions $S_l$ can be written for $(X,Y)\in (-1,1)^2$ in the form
\begin{align}\label{eq:S_l_decomposition}
    S_l(X,Y) = \frac{1}{z^{\frac{1}{2}+l}} \sum_{(s,t)\in \mathbb{N}^2} b_{l,s,t} X^sY^{2t},
\end{align}
for some real coefficients $b_{l,s,t}$ with $b_{l,0,0}= a_l$, for any $l \in \mathbb N$. We continue with the expansion of the exponential term in $L_k$. By using the same variables, we rewrite the exponential term as
\begin{align*}
    \exp \left(- \vert \bx +(z,0) \vert \right) = \exp\left(-
    \left( z+x \right) R(X,Y) \right),
\end{align*}
with
\begin{align*}
    R(X,Y) := \left( 1+ \frac{Y^2}{(1+X)^2} \right)^{\frac{1}{2}}.
\end{align*}
We use the power series expansion of $R$ for $(X,Y)\in (-1,1)^2$, with real coefficients $c_{j,m}$,
\begin{align*}
    R(X,Y) = 1+ \sum_{j =1}^{+\infty} Y^{2j} \sum_{m=0}^{+\infty} c_{j,m} X^{m}.
\end{align*}
By using this expansion, we have
\begin{align*}
    \exp \left( - \vert \bx +(z,0) \vert \right) & = e^{-z-x} \exp \left( - z \left( 1 + \frac{x}{z}\right) \left( R(X,Y)-1\right)\right) \\
    & = e^{-z-x} \exp \left( -(1+X) \Tilde{Y}\left(\sum_{j=0}^{+\infty} Y^{2j} \sum_{m=0}^{+\infty} c_{j+1,m} X^m \right) \right).
\end{align*}
Notice that the function defined by the double sum in the previous term converges for $(X,Y)\in (-1,1)^2$. We deduce the power series expansion for any $(X,Y,\Tilde{Y}) \in (-1,1)^3$, for some real coefficients $d_{j,m,p}$ with $d_{0,0,0}=1$,
\begin{align}\label{eq:exp_decomposition}
    \exp \left( - \vert \bx +(z,0) \vert \right) = e^{-z-x} \sum_{(j,m,p) \in \mathbb{N}^3} d_{j,m,p} Y^{2j} X^m \Tilde{Y}^p.
\end{align}
With \eqref{eq:S_l_decomposition} and \eqref{eq:exp_decomposition} in hands, we define
\begin{align*}
    I_q:=\left\{ \left( l,s,t,j,m,p\right) \in \mathbb{N}^6; \quad l + s +2t+ 2j+m+p =q \right\},
\end{align*}
and the polynomials $P_q$ by
\begin{align*}
    \forall q \in \mathbb{N}, \quad P_q(\bx) := \sum_{\left( l,s,t,j,m,p\right) \in I_q} b_{l,s,t} d_{j,m,p} x^{s+m}y^{2t+2j+2p}.
\end{align*}
The polynomials $P_q$ are even in $y$, at most of order $2k$, and the first polynomial $P_0$ is given by $P_0(\bx)=a_0$. 

For a given $n>0$, we choose $k>n$ such that $z^{-(k+\frac32)} \lesssim z^{-(\frac12+(n+1)(1-2\beta))}$. Hence, it follows from the above identities that
\begin{align} \label{propo:Q_n_app_gen.2}
    \left\vert L_k \left( \vert \bx +(z,0)\vert \right) - \frac{e^{-z-x}}{z^{\frac{1}{2}}} \left(a_0+\sum_{q=1}^n \frac{P_q(x,y)}{z^q} \right)\right\vert \lesssim \frac{e^{-z-x}}{z^{\frac{1}{2}+(n+1)}} \left( x^{n+1} +y^{2(n+1)} \right)\lesssim \frac{e^{-z-x}}{z^{\frac{1}{2} +(n+1)(1-2\beta)}},
\end{align}
%\begin{align*}
%    \left\vert L_k \left( \vert \bx +(z,0)\vert \right) - \frac{e^{-z-x}}{z^{\frac{1}{2}}} \sum_{\substack{l\leq k \\ l +2j+m+p \leq n}} \frac{e_{l,j,m,p}}{z^{l}} \frac{y^{2j}}{z^{2j}} \frac{x^m}{z^m} \frac{y^{2p}}{z^p}\right\vert \lesssim \frac{e^{-z-x}}{z^{\frac{1}{2}+(n+1)}} \left( 1+ y^{n+1} + x^{n+1} +y^{2(n+1)} \right)\lesssim \frac{e^{-z-x}}{z^{\frac{1}{2} +(n+1)(1-2\beta)}},
%\end{align*}
under the conditions $|(x,y)| \le z^{\beta}$ and $\beta<\frac{1}{2}$.

Therefore, we conclude the proof of Proposition \ref{propo:Q_n_app_gen} in the case $\bj=(0,0)$ by gathering \eqref{propo:Q_n_app_gen.1} and \eqref{propo:Q_n_app_gen.2}.
The proof for $|\bj| \ge 1$ is similar.
\end{proof}

\section{Estimates involving exponentially decaying functions} \label{Append:Est}This appendix is dedicated to derive some useful estimates on exponentially decaying functions. 

\begin{lemm} \label{lemm:fg}
Let $f, \, g \in \mathcal{Y}$ and let $Z^{*} \ge 1 $ be chosen large enough. Assume that  $z>Z^{*}$ and recall the notation $\tilde{\bz}=(z,0)$. Then, the following estimates hold. 
\begin{itemize} 
\item[(i)] \emph{Pointwise estimate.}
 There exists $n \in \mathbb N$ such that,
for all $\bx=(x,y) \in \mathbb R^2$, 
 \begin{align} 
   \left| f\left(\bx+\frac{\tilde{\bz}}2\right)g\left(\bx-\frac{\tilde{\bz}}2\right)\right| \lesssim \min\left\{ e^{-z} \left( z^n+\langle \bx \rangle^n \right) ,e^{-\frac{31}{32}z} \left(e^{-\frac{|y|}{64}}{\bf 1}_{-\frac{z}2<x<\frac{z}2} +e^{-\frac1{64} \left\vert \left(x, y\right) \right\vert}  \mathbf{1}_{|x|>\frac{z}{2}}\right) \right\} . \label{est:fg:point}
\end{align}
\item[(ii)] \emph{$L^p$-estimate.} For any $1 \le p < \infty$,
\begin{align} 
   \left\| f\left(\cdot+\frac{\tilde{\bz}}2\right)g\left(\cdot-\frac{\tilde{\bz}}2\right)\right\|_{L^p} \lesssim e^{-\frac{15}{16}z}  . \label{est:fg:L2}
\end{align}
\end{itemize}
\end{lemm}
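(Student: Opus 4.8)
\textbf{Proof strategy for Lemma~\ref{lemm:fg}.}
The plan is to exploit the membership $f,g\in\mathcal{Y}$, which by definition \eqref{def:Y} gives, for each multi-index (in particular for $f$ and $g$ themselves), an exponent $n\in\mathbb{N}$ with $|f(\bx)|\lesssim\langle\bx\rangle^n e^{-|\bx|}$ and $|g(\bx)|\lesssim\langle\bx\rangle^n e^{-|\bx|}$; we may take the same $n$ for both by enlarging it. The whole lemma is then a matter of bookkeeping on the elementary inequality $|f(\bx+\tfrac{\tilde\bz}{2})\,g(\bx-\tfrac{\tilde\bz}{2})|\lesssim \langle\bx+\tfrac{\tilde\bz}{2}\rangle^n\langle\bx-\tfrac{\tilde\bz}{2}\rangle^n\,e^{-|\bx+\frac{\tilde\bz}{2}|-|\bx-\frac{\tilde\bz}{2}|}$, combined with the triangle inequality $|\bx+\tfrac{\tilde\bz}{2}|+|\bx-\tfrac{\tilde\bz}{2}|\ge |\tilde\bz|=z$ and with the sharper fact that this sum of distances exceeds $z$ by a definite amount once $\bx$ leaves a neighbourhood of the segment joining $\pm\tfrac{\tilde\bz}{2}$.

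For part~(i), first estimate: the product of the two Japanese brackets is $\lesssim \langle\bx+\tfrac{\tilde\bz}{2}\rangle^n\langle\bx-\tfrac{\tilde\bz}{2}\rangle^n\lesssim (z^n+\langle\bx\rangle^n)^2\lesssim z^{2n}+\langle\bx\rangle^{2n}$ (relabel $2n$ as $n$), so with $e^{-|\bx+\frac{\tilde\bz}{2}|-|\bx-\frac{\tilde\bz}{2}|}\le e^{-z}$ we get the first bound $e^{-z}(z^n+\langle\bx\rangle^n)$. For the second bound I would distinguish two regions. If $|x|\le z/2$, write $x+\tfrac z2\ge 0$ and $\tfrac z2-x\ge 0$, so $|x+\tfrac z2|+|x-\tfrac z2|=z$; meanwhile $|\bx+\tfrac{\tilde\bz}{2}|\ge\sqrt{(x+z/2)^2+y^2}\ge (x+z/2)\vee|y|$ and similarly for the other factor, which after using $\sqrt{a^2+b^2}\ge\frac12(a+b)$ gives an extra gain $|\bx+\tfrac{\tilde\bz}{2}|+|\bx-\tfrac{\tilde\bz}{2}|\ge z+c|y|$ for a small $c>0$; combined with absorbing the polynomial factors $z^{2n}\langle\bx\rangle^{2n}$ into the constant exponential losses $e^{-\frac1{32}z}$ and $e^{-\frac{|y|}{64}}$ (valid for $z>Z^*$ large and by $\langle\bx\rangle^{2n}e^{-\frac{|y|}{2}}\lesssim e^{-\frac{|y|}{64}}$ modulo a $z$-power when $|x|\le z/2$, which is reabsorbed likewise), this yields $e^{-\frac{31}{32}z}e^{-\frac{|y|}{64}}\mathbf{1}_{-z/2<x<z/2}$. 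If instead $|x|>z/2$, say $x>z/2$, then $|\bx-\tfrac{\tilde\bz}{2}|\ge |x-z/2|$ while $|\bx+\tfrac{\tilde\bz}{2}|\ge\frac12\sqrt{(x+z/2)^2+y^2}\ge\frac14(|x|+|y|)+\frac z4$... more simply, $|\bx+\tfrac{\tilde\bz}{2}|\ge z/2$ and $|\bx+\tfrac{\tilde\bz}{2}|\ge\frac12|\bx|$, so $|\bx+\tfrac{\tilde\bz}{2}|+|\bx-\tfrac{\tilde\bz}{2}|\ge \tfrac{31}{32}z+\tfrac1{32}|\bx|$ after splitting the two lower bounds; again absorb the polynomial weights into $e^{-\frac1{64}|\bx|}$ and the loss $e^{-\frac1{32}z}$, giving $e^{-\frac{31}{32}z}e^{-\frac1{64}|(x,y)|}\mathbf{1}_{|x|>z/2}$. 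The case $x<-z/2$ is symmetric. Taking the minimum of the two displayed bounds completes~(i).

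For part~(ii) I would integrate the second pointwise bound of~(i) in $L^p$. On the strip $-z/2<x<z/2$ the contribution is $\lesssim e^{-\frac{31}{32}pz}\int_{-z/2}^{z/2}\!\!\int_{\mathbb{R}} e^{-\frac{p|y|}{64}}\,dy\,dx\lesssim z\,e^{-\frac{31}{32}pz}$, and on $|x|>z/2$ it is $\lesssim e^{-\frac{31}{32}pz}\int_{\mathbb{R}^2}e^{-\frac{p}{64}|\bx|}\,d\bx\lesssim e^{-\frac{31}{32}pz}$; in both cases, for $z>Z^*$ large enough the factor $z\,e^{-\frac{31}{32}pz}$ (resp.\ $e^{-\frac{31}{32}pz}$) is $\lesssim e^{-\frac{15}{16}pz}$ since $\frac{31}{32}>\frac{15}{16}$ absorbs the polynomial $z$. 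Taking $p$-th roots gives \eqref{est:fg:L2}. The only mildly delicate point — and the one I would write out carefully — is quantifying the "gain beyond $z$" in the sum of the two distances, i.e.\ the inequality $|\bx+\tfrac{\tilde\bz}{2}|+|\bx-\tfrac{\tilde\bz}{2}|\ge z+c(|y|\wedge\text{stuff})$ uniformly in the relevant region, and making sure the polynomial weights $z^n\langle\bx\rangle^n$ are genuinely dominated by the exponential gains for $z$ large; everything else is routine.
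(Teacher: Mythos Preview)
Your approach is essentially the same as the paper's: split into the regions $|x|\le z/2$ and $|x|>z/2$, use the $\mathcal{Y}$ bounds, and absorb the polynomial weights into the slack between $e^{-z}$ and $e^{-\frac{31}{32}z}$. One small slip to fix when you write it out: in the strip $|x|\le z/2$, the inequality $\sqrt{a^2+b^2}\ge\tfrac12(a+b)$ applied to each factor only gives $|\bx+\tfrac{\tilde\bz}{2}|+|\bx-\tfrac{\tilde\bz}{2}|\ge\tfrac{z}{2}+|y|$, which is \emph{not} $\ge z+c|y|$; instead use the convex split of the max bound you already noted, namely $|\bx\pm\tfrac{\tilde\bz}{2}|\ge(1-\epsilon)|x\pm\tfrac z2|+\epsilon|y|$ for small $\epsilon$ (the paper takes $\epsilon=\tfrac1{32}$ after first absorbing the polynomial into $e^{-\frac{63}{64}|\cdot|}$), which sums to $(1-\epsilon)z+2\epsilon|y|$ and gives exactly the claimed $e^{-\frac{31}{32}z}e^{-\frac{|y|}{64}}$.
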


\begin{proof}We start by proving \eqref{est:fg:point}. In the case where $x>\frac{z}2$, we have $|\bx + \frac{\tilde{\bz}}{2} | \ge |x+\frac{z}2|>z$ and  $|\bx + \frac{\tilde{\bz}}{2}| \ge \frac{1}{\sqrt{2}}(|x|+|y|) \ge \frac1{\sqrt{2}}|(x,y)|$. Thus, it follows from the definition of $\mathcal{Y}$ in \eqref{def:Y} that, for some $n \in \mathbb N$,
\begin{equation*} 
\left| f\left(\bx+\frac{\tilde{\bz}}2\right)g\left(\bx-\frac{\tilde{\bz}}2\right)\right| 
\lesssim e^{-\left\vert \bx + \frac{\tilde{\bz}}{2} \right\vert} \left\langle\left\vert \bx + \frac{\tilde{\bz}}{2} \right\vert \right\rangle^n 
\lesssim \min\left\{ e^{-z} \left(z^n+ \langle \bx \rangle^n \right), e^{-\frac{31}{32}z}e^{-\frac1{64} \left\vert \left(x, y\right) \right\vert} \right\} .
\end{equation*}
The case where $x<-\frac{z}2$ is treated in a similar way. In the case where $-\frac{z}2 \le x \le \frac{z}2$, we have $|\bx + \frac{\tilde{\bz}}{2}| \ge |x+\frac{z}2|=x+\frac{z}2$ and $|\bx - \frac{\tilde{\bz}}{2}| \ge |x-\frac{z}2|=\frac{z}2-x$, so that it follows for some $n \in \mathbb N$,
\begin{equation*} 
\begin{split}
\left| f\left(\bx+\frac{\tilde{\bz}}2\right)g\left(\bx-\frac{\tilde{\bz}}2\right)\right| 
%&\lesssim e^{-\left\vert \left(x+\frac{z}{2}, y\right) \right\vert} \left\langle\left\vert \left(x+\frac{z}{2}, y\right) \right\vert \right\rangle^n e^{-\left\vert \left(x-\frac{z}{2}, y\right) \right\vert} \left\langle\left\vert \left(x-\frac{z}{2}, y\right) \right\vert \right\rangle^n \\ &
 &\lesssim \min\left\{ e^{-z} \left(z^n+ \langle \bx \rangle^n \right),e^{-\frac{31}{32}(x+\frac{z}2)}e^{-\frac1{64}|y|} e^{-\frac{31}{32}(\frac{z}2-x)}e^{-\frac1{64}|y|} \right\} \\ & 
\lesssim \min\left\{ e^{-z} \left(z^n+ \langle \bx \rangle^n \right),e^{-\frac{31}{32}z}e^{-\frac1{64} \left\vert y \right\vert} \right\} .
\end{split}
\end{equation*}
The proof of \eqref{est:fg:point} follows then gathering these estimates, while the proof of \eqref{est:fg:L2} is a direct consequence of \eqref{est:fg:point} and integration over $\mathbb R^2$. 
\end{proof}

\begin{rema} \label{rema:QzQ}By arguing as in the proof of Lemma \ref{lemm:fg} (i) and using Proposition \ref{propo:asymp_Q}, we deduce that for any $z>Z^{\star}$, $\bj$ and $\bf{k}$ in $\mathbb N^2$ with $|\bj|, |\bf{k}| \le 2$, and for all $\bx=(x,y) \in \mathbb R^2$, 
\begin{equation} \label{est.decay.QzQ}
\left| \partial^{\bj}Q\left(\bx+\tilde{\bz}\right)\partial^{\bf{k}}Q(\bx)\right| \lesssim e^{-z}z^{-\frac12} .
\end{equation}
\end{rema}

\begin{rema} \label{rema:weakfg}
Estimate \eqref{est:fg:L2} still holds under the weaker assumption that $f$, $g \in C^{\infty}(\mathbb R^2)$ satisfy $|f(\bx)|+|g(\bx)| \lesssim e^{-\frac{31}{32}|\bx|}$. The proof is identical to the one of \eqref{est:fg:L2}. 
\end{rema}

\begin{lemm}\label{lemm:R_1_R_2}
There exist $0<\nu^{*} \le 1$ small enough and $Z^{*} \ge 1$ large enough such that the following holds. Let $R_i(\bx) :=Q_{1+\mu_i}\left( x-z_i, y-\omega_i \right)$ and $\Rti(\bx) :=Q\left( x-z_i, y\right)$, with 
$\vert \mu_i \vert \le \nu^{*}$, $\vert \omega_i \vert \le \nu^{*}$  and $\vert z\vert > Z^{*}$, where $z=z_1-z_2$. Then, for $i=1,2$,
\begin{align} 
&\left|R_i(\bx)- \Rti(\bx)-\mu_i\Lambda \Rti(\bx)-\omega_i\partial_y \Rti(\bx) \right| \lesssim e^{-\frac{31}{32}|\bx-\tilde{\bz}_i|}\left(\mu_i^2+\omega_i^2\right) ; \label{exp:Ri_point} \\ 
&\left\|R_i- \Rti-\mu_i\Lambda \Rti-\omega_i\partial_y \Rti \right\|_{H^1} \lesssim 
\mu_i^2+\omega_i^2  .\label{exp:Ri_H1}  
\end{align}
We also have, 
\begin{equation}\label{exp:MRi_H1}
\left\|\partial_xR_i- \partial_x\Rti \right\|_{H^1}
+\left\|\partial_yR_i- \partial_y\Rti \right\|_{H^1}
+\left\|\Lambda R_i- \Lambda \Rti \right\|_{H^1}\lesssim |\mu_i|+|\omega_i| .
\end{equation}
Moreover, for any $\bj$ and $\bf{k}$ in $\mathbb N^2$ with $|\bj|, |\bf{k}| \le 2$, $1 \le p <\infty$
\begin{align} 
   &\left\|  \partial^\bj R_1 \partial^{\bf{k}} R_2 \right\|_{L^p} + \left\|  \partial^\bj R_1 \partial^{\bf{k}} \Lambda R_2 \right\|_{L^p} + \left\|  \partial^\bj \Lambda R_1 \partial^{\bf{k}} R_2 \right\|_{L^p} +\left\|  \partial^\bj \Lambda R_1 \partial^{\bf{k}} \Lambda R_2 \right\|_{L^p} \lesssim e^{-\frac{15}{16}z} ;\label{est:R_1R_2} \\ 
   & \left\|R_1R_2-\Rtone\Rttwo-\left(\mu_1\Lambda \Rtone+\omega_1\partial_y \Rtone\right)\Rttwo-\Rtone\left(\mu_2\Lambda \Rttwo+\omega_2\partial_y \Rttwo\right)\right\|_{H^1}\lesssim e^{-\frac{15}{16}z} \sum_{j=1}^2  \left( \mu_j^2+\omega_j^2 \right) . \label{est:R1R2.2}
\end{align}
\end{lemm}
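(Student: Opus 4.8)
\textbf{Plan of proof for Lemma \ref{lemm:R_1_R_2}.} The strategy is to reduce every statement to the two pointwise decay facts already available: the exponential asymptotics of $Q$ and its derivatives (Proposition \ref{propo:asymp_Q}, giving in particular $|\partial^{\bj}Q(\bx)| \lesssim \langle \bx \rangle^n e^{-|\bx|}$, so that $Q, \Lambda Q, \Lambda^2 Q \in \mathcal{Y}$ by Lemma \ref{parity}), and the product estimates of Lemma \ref{lemm:fg} together with Remarks \ref{rema:QzQ} and \ref{rema:weakfg}. Note first that by translation invariance it suffices to prove \eqref{exp:Ri_point}--\eqref{exp:MRi_H1} with $z_i$ replaced by $0$ (for the single-soliton statements) and to keep track only of the centers $\tilde{\bz}_i$ for the interaction statements \eqref{est:R_1R_2}--\eqref{est:R1R2.2}.

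\emph{Single-soliton expansions.} For \eqref{exp:Ri_point}, I would write $R_i(\bx) - \Rti(\bx) - \mu_i \Lambda \Rti(\bx) - \omega_i \partial_y \Rti(\bx)$ as in Lemma \ref{lemm:decompo_rescaled_Q}: this is exactly the content of that lemma (with $z=0$, $\omega = \omega_i$, $\mu = \mu_i$ there), which gives the bound $(\mu_i^2+\omega_i^2) e^{-\frac{31}{32}|\bx - \tilde{\bz}_i|}$ after recentering; one only needs $\nu^\star < \frac1{32}$ so that the exponent $1-\delta$ produced there is $\geq \frac{31}{32}$. Then \eqref{exp:Ri_H1} follows by squaring, adding the analogous bound for the gradient (obtained by the same Taylor argument applied to $\nabla$, using that all relevant derivatives of $Q$, $\Lambda Q$, $\Lambda^2 Q$ lie in $\mathcal{Y}$), and integrating $e^{-\frac{31}{16}|\bx|}$ over $\mathbb{R}^2$, which is finite. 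For \eqref{exp:MRi_H1} one uses a first-order Taylor expansion instead: $\partial_x R_i - \partial_x \Rti$, $\partial_y R_i - \partial_y \Rti$, $\Lambda R_i - \Lambda \Rti$ are each $O(|\mu_i| + |\omega_i|)$ pointwise with an $e^{-\frac{31}{32}|\bx - \tilde{\bz}_i|}$-type weight, because differentiating the equation for $Q_c$ in $c$ and in $\by$ keeps everything inside $\mathcal{Y}$; the $H^1$ bound follows by integration.

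\emph{Interaction estimates.} For \eqref{est:R_1R_2}, each factor $\partial^{\bj} R_1$ (resp.\ $\partial^{\bf k} R_2$, and their $\Lambda$-versions) is, after recentering, of the form $f(\bx + \tfrac{\tilde{\bz}}2)$ (resp.\ $g(\bx - \tfrac{\tilde{\bz}}2)$) with $f,g \in \mathcal{Y}$ — using that $\Lambda_{1+\mu_i} Q$ and its derivatives still satisfy $\lesssim \langle \cdot \rangle^n e^{-|\cdot|}$ uniformly for $|\mu_i|\le\nu^\star$, which is where Remark \ref{rema:weakfg} is convenient since the rescaling by $\sqrt{1+\mu_i}$ only changes $e^{-|\bx|}$ into $e^{-\sqrt{1+\mu_i}|\bx|} \le e^{-\frac{31}{32}|\bx|}$ for $\nu^\star$ small. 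Applying Lemma \ref{lemm:fg}(ii) (or Remark \ref{rema:weakfg}) with $p$ fixed gives the common bound $e^{-\frac{15}{16}z}$. Finally \eqref{est:R1R2.2} is obtained by decomposing
\begin{align*}
R_1 R_2 - \Rtone\Rttwo - (\mu_1\Lambda\Rtone + \omega_1\partial_y\Rtone)\Rttwo - \Rtone(\mu_2\Lambda\Rttwo+\omega_2\partial_y\Rttwo)
&= \big(R_1 - \Rtone - \mu_1\Lambda\Rtone - \omega_1\partial_y\Rtone\big) R_2 \\
&\quad + \Rtone\big(R_2 - \Rttwo - \mu_2\Lambda\Rttwo - \omega_2\partial_y\Rttwo\big) \\
&\quad + \big(\mu_1\Lambda\Rtone + \omega_1\partial_y\Rtone\big)\big(R_2 - \Rttwo\big) \\
&\quad + \big(R_1 - \Rtone\big)\big(\mu_2\Lambda\Rttwo + \omega_2\partial_y\Rttwo\big),
\end{align*}
and estimating each of the four products in $H^1$. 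The first two use \eqref{exp:Ri_point} for one factor (contributing $\mu_j^2+\omega_j^2$) and the $\mathcal{Y}$-membership of the other factor, so the product lands in the Lemma \ref{lemm:fg} framework and yields $e^{-\frac{15}{16}z}(\mu_j^2+\omega_j^2)$; the last two use the first-order bound $|R_j - \Rtj| \lesssim (|\mu_j|+|\omega_j|)\langle\cdot\rangle^n e^{-\frac{31}{32}|\cdot - \tilde{\bz}_j|}$ times the factor $(|\mu_{j'}|+|\omega_{j'}|)$ from the other term, again producing $(|\mu_1|+|\omega_1|)(|\mu_2|+|\omega_2|) e^{-\frac{15}{16}z} \lesssim e^{-\frac{15}{16}z}\sum_j(\mu_j^2+\omega_j^2)$ by the AM–GM inequality.

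\emph{Main obstacle.} No single step is deep; the only thing requiring care is the \emph{uniformity in $\mu_i, \omega_i$} of all the $\mathcal{Y}$-type bounds, i.e.\ checking that the rescaling $Q \mapsto Q_{1+\mu_i}(\cdot - \bz_i)$ and the $\by$-translation do not degrade the exponential rate below $\tfrac{31}{32}$ (used throughout Lemma \ref{lemm:fg}) and that the implicit constants in the Taylor remainders can be taken independent of $\mu_i, \omega_i$ in the range $|\mu_i|, |\omega_i| \le \nu^\star$. This is handled by fixing $\nu^\star$ small enough at the outset (so that $\sqrt{1+\nu^\star} \ge \frac{31}{32}$ and $1 - \nu^\star \ge \frac{31}{32}$) and invoking the uniform smoothness and decay of $Q_c$ and its $c$- and $\by$-derivatives on the compact parameter range; the rest is bookkeeping of the product decay via Lemma \ref{lemm:fg}.
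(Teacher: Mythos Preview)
Your approach is essentially identical to the paper's: \eqref{exp:Ri_point} via Lemma~\ref{lemm:decompo_rescaled_Q}, \eqref{exp:Ri_H1} and \eqref{exp:MRi_H1} by integrating the pointwise Taylor remainders, and \eqref{est:R_1R_2} via Lemma~\ref{lemm:fg}/Remark~\ref{rema:weakfg} after noting that the rescaled/translated solitons still decay like $e^{-\frac{31}{32}|\cdot|}$ uniformly in the parameters.

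There is, however, an algebraic slip in your decomposition for \eqref{est:R1R2.2}: the four terms you wrote do \emph{not} add up to the left-hand side. Writing $L_j=\mu_j\Lambda\Rti[j]+\omega_j\partial_y\Rti[j]$ and $A_j=R_j-\Rti[j]-L_j$, your first three terms already give
\[
A_1R_2+\Rtone A_2+L_1(R_2-\Rttwo)=R_1R_2-\Rtone\Rttwo-L_1\Rttwo-\Rtone L_2,
\]
so the fourth term $(R_1-\Rtone)L_2$ is spurious (equivalently, the cross term $L_1L_2$ gets counted twice). Simply drop that fourth term; your estimates for the remaining three are correct and give the claim. The paper uses the alternative correct splitting
\[
A_1\Rttwo+\Rtone A_2+(R_1-\Rtone)(R_2-\Rttwo),
\]
which is equally easy to bound.
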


\begin{proof}
Estimate \eqref{exp:Ri_point} is a direct application of Lemma \ref{lemm:decompo_rescaled_Q}. Then it follows by integrating \eqref{exp:Ri_point} over $\mathbb R^2$ that
\begin{equation*}
\left\|R_i- \Rti-\mu_i\Lambda \Rti-\omega_i\partial_y \Rti \right\|_{L^2} \lesssim 
\mu_i^2+\omega_i^2  .
\end{equation*}
The estimates for the other pieces of the left-hand term in \eqref{exp:Ri_H1} are obtained arguing similarly.  

Estimate \eqref{exp:MRi_H1} is proved arguing as in the proof of estimate \eqref{exp:Ri_H1} after deriving expansions for $\partial_xR_i$, $\partial_yR_i$ and $\Lambda R_i$ similar to the one for $R_i$ in \eqref{exp:Ri_point}.

We explain how to deal with  the estimate for $\left\|\int R_1 R_2\right\|_{L^p}$ in \eqref{est:R_1R_2}. The estimates for the other terms in \eqref{est:R_1R_2} are similar.  By using the decomposition of $R_i$ in \eqref{exp:Ri_point} and estimate \eqref{est:fg:L2} together with Remark 
\ref{rema:weakfg}, we have
\begin{align*}
    \left\| \int R_1 R_2  - \int \left( \Rtone - \mu_1 \Lambda \Rtone - \omega_1 \partial_y \Rtone \right) \left( \Rttwo- \mu_2 \Lambda \Rttwo -\omega_2 \partial_y \Rttwo \right) \right\|_{L^p} \lesssim e^{-\frac{15}{16} z}\sum_{i=1}^2\left(\mu_i^2 + \omega_i^2\right).
\end{align*}
Therefore, we conclude the proof of \ref{est:R_1R_2} by using \eqref{est:fg:L2}.

Now we prove \eqref{est:R1R2.2}. By the triangle inequality, 
\begin{align*}
    &\left\|R_1R_2-\Rtone\Rttwo- \left(\mu_1\Lambda \Rtone+\omega_1 \partial_y \Rtone\right) \Rttwo-\Rtone \left(\mu_2\Lambda \Rttwo+\omega_2 \partial_y \Rttwo\right)\right\|_{H^1} \\ 
    & \lesssim \left\|\left(R_1-\Rtone-\mu_1\Lambda \Rtone-\omega_1\partial_y \Rtone\right) \Rttwo\right\|_{H^1} +\left\|\Rtone \left(R_2-\Rttwo-\mu_2\Lambda \Rttwo-\omega_2\partial_y \Rttwo\right)\right\|_{H^1} \\
        & \qquad +\left\|\left(R_1-\Rtone\right)\left(R_2-\Rttwo\right) \right\|_{H^1} .
\end{align*}
Then, we conclude the proof of \eqref{est:R1R2.2} by using \eqref{est:fg:L2} together with Remark \ref{rema:weakfg}.
\end{proof}

\begin{lemm} \label{est:tildeR_NR}
 Let $Z^{*} \ge 1 $ be chosen large enough. Assume that  $z=z_1-z_2>Z^{*}$, $-z \le z_2 \le -\frac14 z$, $\frac14z \le z_1 \le z$, and recall the definition of $X_i$, $W_i$ and $l$ in \eqref{def:XYW:i} and \eqref{def:hl}. Then, the following estimates hold. 
\begin{align}
&\left\|\Rtone X_2\right\|_{H^1}+ \left\|\Rttwo X_1\right\|_{H^1} \lesssim e^{-\frac{15}{16} z} ; \label{est:tildeR_X}\\
& \left\|\Rtone W_2\right\|_{H^1}+ \left\|\Rtone Y_2\right\|_{H^1}\lesssim e^{-\frac{15}{16} z} ;\label{est:tildeR1_W2} \\
& \left\|\Rttwo\left(W_1-l\right) 
\right\|_{H^1}+\left\|\Rttwo\left(Y_1-h\right) 
\right\|_{H^1} \lesssim e^{-\frac{15}{16} z} . \label{est:tildeR2_W1-l}
\end{align}
\end{lemm}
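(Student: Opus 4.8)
The three estimates in Lemma \ref{est:tildeR_NR} all have the same structure: a ground state $\Rti$ centered at $\tilde{\bz}_i$ multiplied by one of the auxiliary functions $X_j$, $W_j-l$ or $Y_j-h$ centered at $\tilde{\bz}_j$ with $j \neq i$. The key observation is that each factor decays exponentially away from its own center: $\Rti = Q(\cdot - \tilde{\bz}_i) \in \mathcal{Y}$ decays like $e^{-|\bx - \tilde{\bz}_i|}$ by Proposition \ref{propo:Q} and Lemma \ref{parity}; $X_j = -(-\Delta+1)^{-1}\tilde{R}_j \in \mathcal{Y}$ decays like $e^{-|\bx - \tilde{\bz}_j|}$ by Lemma \ref{Bessel:Y}; and while $W_j, Y_j \in \mathcal{Z}$ do \emph{not} decay as $x \to -\infty$, their ``renormalized'' versions $W_j - l$ and $Y_j - h$ do. Indeed $W_j(\bx) - l(y) = -\partial_x^{-1}(-\Delta+1)^{-1}\Lambda \tilde R_j - \int_{-\infty}^{+\infty}(-\Delta+1)^{-1}\Lambda Q(\tilde x, y)d\tilde x$, which by definition of $-\partial_x^{-1}$ in \eqref{defi:d_x_-1} and translation equals $-\int_{-\infty}^{x-z_j}(-\Delta+1)^{-1}\Lambda Q(\tilde x, y)\, d\tilde x$; since $(-\Delta+1)^{-1}\Lambda Q \in \mathcal{Y}$ (Lemma \ref{Bessel:Y}), this integral is controlled by $e^{-\frac12|y|}\int_{-\infty}^{x-z_j}e^{-\frac12|\tilde x|}d\tilde x \lesssim e^{-\frac12|y|}e^{-\frac12|x-z_j|}$ when $x < z_j$, and is bounded (also by an exponentially small quantity in $|x-z_j|$) when $x \geq z_j$. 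Hence $W_j - l$ and $Y_j - h$ satisfy a pointwise bound of the form $|\,(W_j-l)(\bx)\,| \lesssim e^{-\frac12|\bx - \tilde{\bz}_j|}$ (possibly with a polynomial prefactor, which is harmless), the same type of decay as a function in $\mathcal{Y}$ up to the constant $\tfrac12$ in the exponent.

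\textbf{Key steps.} First, I would record the renormalization identities for $W_j - l$ and $Y_j - h$ and derive their pointwise exponential decay around $\tilde{\bz}_j$, as sketched above; this parallels exactly the computation of the plateau $P$ in Claim \ref{claim:P}. Second, for each product $\Rti \cdot (\text{auxiliary}_j)$ I would invoke the separation estimate: both factors are (up to polynomial weights) bounded by $e^{-c_1|\bx - \tilde{\bz}_i|}$ and $e^{-c_2|\bx - \tilde{\bz}_j|}$ respectively, with $c_1 = 1$ and $c_2 \geq \tfrac12$, and $|\tilde{\bz}_i - \tilde{\bz}_j| = |z_i - z_j|$. For $i,j \in \{1,2\}$ with $j \neq i$ this distance is comparable to $z = z_1 - z_2$ under the hypotheses $-z \leq z_2 \leq -\tfrac14 z$, $\tfrac14 z \leq z_1 \leq z$ (in fact $|z_1 - z_2| = z$ exactly when $i=1,j=2$ or vice versa, so $|\tilde{\bz}_1 - \tilde{\bz}_2| = z$). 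The product of two such exponentials is bounded, by the triangle inequality, by $e^{-\frac{c_1 c_2}{c_1 + c_2}|\tilde{\bz}_i - \tilde{\bz}_j|}$ times a function that is integrable in $L^p$ and localized; the worst constant is $\frac{1 \cdot \frac12}{1 + \frac12} = \frac13$, which is far better than $\tfrac{15}{16}$ would require — wait, that is the wrong direction, so one must be more careful. The correct device is exactly Lemma \ref{lemm:fg}(ii) (and Remark \ref{rema:weakfg}): for functions decaying like $e^{-\frac{31}{32}|\cdot|}$ around each of two centers separated by $z$, the $L^p$ norm of the product is $\lesssim e^{-\frac{15}{16}z}$. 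So the real task is to check that each auxiliary factor decays at least like $e^{-\frac{31}{32}|\bx - \tilde{\bz}_j|}$ up to polynomial corrections — which holds for $X_j \in \mathcal{Y}$ trivially, and for $W_j - l$, $Y_j - h$ one needs the decay rate to be at least $\tfrac{31}{32}$, not merely $\tfrac12$. This forces a slightly sharper analysis: using the precise asymptotics $(-\Delta+1)^{-1}\Lambda Q(\bx) \lesssim e^{-\frac{15}{16}|\bx|}$ available from Lemma \ref{Bessel:Y} combined with Proposition \ref{propo:asymp_Q} (any rate $<1$ is attainable), the integral $\int_{-\infty}^{x-z_j} e^{-\frac{15}{16}|(\tilde x, y)|}d\tilde x \lesssim e^{-\frac{15}{16}|\bx - \tilde{\bz}_j|/\sqrt{2}}$-type bound is not quite enough, so one instead keeps track of the two-dimensional decay $e^{-\frac{15}{16}\sqrt{(x-z_j)^2 + y^2}}$ and integrates in the $x$-variable from $-\infty$, which loses only a polynomial factor and preserves the decay rate in $|\bx - \tilde{\bz}_j|$ up to an arbitrarily small loss. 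Then Remark \ref{rema:weakfg} applies with $\tfrac{31}{32}$, giving the claimed $e^{-\frac{15}{16}z}$.

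\textbf{Main obstacle.} The delicate point is precisely the decay of $W_j - l$ and $Y_j - h$ as $x \to -\infty$: these functions tend to zero but only because of a cancellation against the asymptotic constants $l$ and $h$, and one must verify that the renormalized function decays \emph{in the full two-dimensional distance} to $\tilde{\bz}_j$ at a rate close enough to $1$ (specifically $\geq \tfrac{31}{32}$ after absorbing polynomial prefactors) for the separation lemma to deliver the target exponent $\tfrac{15}{16}$. This requires using the sharp asymptotics of $(-\Delta+1)^{-1}\Lambda Q$ from Appendix \ref{App:Bessel} and Appendix \ref{app:asymptotic_Q} rather than the crude $e^{-\frac12|\cdot|}$ bound, and carefully estimating the one-dimensional antiderivative $-\partial_x^{-1}$ of an exponentially decaying two-dimensional function. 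Once this pointwise bound is in hand, the remaining arguments are routine applications of Lemma \ref{lemm:fg}(ii), Remark \ref{rema:weakfg} and the boundedness of $(-\Delta+1)^{-1}: L^2 \to H^2$, exactly as in the proof of Claim \ref{claim:P} and Lemma \ref{lemm:R_1_R_2}.
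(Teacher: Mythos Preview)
Your treatment of \eqref{est:tildeR_X} is correct and matches the paper: since $X_j = -(-\Delta+1)^{-1}Q(\cdot - \tilde{\bz}_j)$ with $(-\Delta+1)^{-1}Q \in \mathcal{Y}$, Lemma~\ref{lemm:fg}(ii) applies directly after a translation.

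There is, however, a genuine gap in your argument for \eqref{est:tildeR1_W2} and \eqref{est:tildeR2_W1-l}. The pointwise bound you claim for $W_j-l$ (and implicitly for $W_2$, $Y_2$, $Y_j-h$) is \emph{false on one side}. From $W_1(\bx)-l(y) = -\int_{-\infty}^{x-z_1}(-\Delta+1)^{-1}\Lambda Q(\tilde x,y)\,d\tilde x$ one sees that as $x \to +\infty$ this tends to $-l(y) \not\equiv 0$; your parenthetical ``also by an exponentially small quantity in $|x-z_j|$ when $x \ge z_j$'' is simply wrong. Likewise $W_2 \in \mathcal{Z}$ decays only for $x \ge z_2$ (see \eqref{def:Z}) and is merely bounded for $x \ll z_2$. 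Since Remark~\ref{rema:weakfg} requires two-sided decay $\lesssim e^{-\frac{31}{32}|\cdot|}$ of \emph{both} factors around their respective centers, your route through it cannot close.

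The paper does not try to force these terms into the framework of Lemma~\ref{lemm:fg}. It bounds the product pointwise via a three-region case analysis in $x$. For $\Rtone W_2$ one writes
\[
|\Rtone W_2(\bx)| \lesssim e^{-|(x-z_1,y)|}\int_{x-z_2}^{+\infty}e^{-\frac{15}{16}|\tilde x|}\,d\tilde x
\]
and splits into $x>z_1$, $z_2<x<z_1$, $x<z_2$: in the first region $x-z_2>z$ so the integral already contributes $e^{-\frac{15}{16}z}$; in the third $|x-z_1|>z$ so $\Rtone$ contributes it; in the middle region $z_1-x$ and $x-z_2$ are both positive and sum to $z$, and one combines the two exponentials. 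In every case enough residual decay in $(x,y)$ survives to integrate in $L^2$. The estimate for $\Rttwo(W_1-l)$ follows from the mirrored analysis using $W_1-l=-\int_{-\infty}^{x-z_1}(\cdots)\,d\tilde x$. The structural point you missed is that the one-sided decay of $W_2$ (resp.\ $W_1-l$) always points \emph{toward} the center of $\Rtone$ (resp.\ $\Rttwo$); this asymmetry is exactly what makes the product small, but it cannot be captured by the symmetric separation Lemma~\ref{lemm:fg}.
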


\begin{proof}
Observe that $X_i(x,y)=(-\Delta+1)^{-1}Q(x-z_i,y)$ with $(-\Delta+1)^{-1}Q \in \mathcal{Y}$ (see Lemma \ref{Bessel:Y}). Thus, estimate \eqref{est:tildeR_X} follows combining \eqref{est:fg:L2} with a change of variable. 

Next, we prove estimate \eqref{est:tildeR1_W2}. We have from the definition of $W_2$ and $Y_2$ in \eqref{def:XYW:i}, the one of $\Lambda Q$ in \eqref{defi:Lambda_Q} and lemma \ref{Bessel:Y} that 
\begin{equation*}
\left|\Rtone W_2 (x,y) \right|+\left|\Rtone Y_2 (x,y) \right| \lesssim e^{-|(x-z_1,y)|} \int_{x-z_2}^{+\infty}e^{-\frac{15}{16}|\tilde{x}|}d\tilde{x}  .
\end{equation*}
In the case where $x>z_1$, we have $|x-z_2|=x-z_2 >z$, so that  
\begin{equation*}
\left|\Rtone W_2 (x,y) \right|+\left|\Rtone Y_2 (x,y) \right| \lesssim  e^{-\frac{15}{16}z}e^{-|(x-z_1,y)|}  .
\end{equation*}
In the case where $z_2<x<z_1$, we use that $|(x-z_1,y)| \ge \frac{31}{32}(z_1-x)+\frac1{32}|y|$. Hence, 
\begin{equation*}
\left|\Rtone W_2 (x,y) \right| +\left|\Rtone Y_2 (x,y) \right|\lesssim  e^{-\frac{31}{32}z}e^{-\frac1{32}|y|}{\bf 1}_{z_2 <x<z_1}  .
\end{equation*}
In the case where $x<z_2$, we have $|x-z_1|=z_1-x>z$, so that 
\begin{equation*}
\left|\Rtone W_2 (x,y) \right|+\left|\Rtone Y_2 (x,y) \right| \lesssim  e^{-\frac{15}{16}z}e^{-\frac1{16}|(x-z_1,y)|} .
\end{equation*}
We deduce combining these estimates and integrating over $\mathbb R^2$ that 
\begin{equation*}
\left\|\Rtone W_2\right\|_{L^2}+\left\|\Rtone Y_2\right\|_{L^2} \lesssim e^{-\frac{15}{16} z} .
\end{equation*}
The  estimate  
\begin{equation*}
\left\|\nabla \left(\Rtone W_2\right)\right\|_{L^2}+\left\|\nabla \left(\Rtone Y_2\right)\right\|_{L^2} \lesssim e^{-\frac{15}{16} z}
\end{equation*}
is proved in a similar way. This concludes the proof of \eqref{est:tildeR1_W2}. 

Finally, we observe from the definitions of $W_1$, $Y_1$  in \eqref{def:XYW:i} and  $l$ and $h$ \eqref{def:hl} that 
\begin{align*}
&\left|\left(\Rttwo\left(W_1-l\right) \right)(x,y)\right|=\left|Q(x-z_2,y)\int_{-\infty}^{x-z_1}\left(-\Delta+1\right)^{-1}\Lambda Q(\tilde{x},y) d\tilde{x} \right| \\ 
&\left|\left(\Rttwo\left(Y_1-h\right) \right)(x,y)\right|=\left|Q(x-z_2,y)\int_{-\infty}^{x-z_1}\left(-\Delta+1\right)^{-1}\partial_yQ(\tilde{x},y) d\tilde{x} \right| 
\end{align*}
and thus
\begin{align*}
   \left|\left(\Rttwo\left(W_1-l\right) \right)(x,y)\right| + \left|\left(\Rttwo\left(Y_1-h\right) \right)(x,y)\right| \lesssim e^{-|(x-z_2,y)|} \int_{-\infty}^{x-z_1}e^{-\frac{15}{16}|\tilde{x}|} d\tilde{x}.
\end{align*}
Hence, the proof of \eqref{est:tildeR2_W1-l} follows arguing as in the proof of \eqref{est:tildeR1_W2}. 
\end{proof}

\begin{rema} \label{est:tildeR_NR_refined}
Under the assumptions of Lemmata \ref{lemm:R_1_R_2} and \ref{est:tildeR_NR}, we have
\begin{align*}
\MoveEqLeft
\left\|(R_1-\Rtone) X_2\right\|_{H^1}+ \left\|(R_2-\Rttwo) X_1\right\|_{H^1} + \left\|(R_1-\Rtone) W_2\right\|_{H^1}+  \left\|(R_2-\Rttwo)\left(W_1-l\right) \right\|_{H^1} \\
& \qquad + \left\|(R_1-\Rtone) Y_2\right\|_{H^1} + \left\|(R_2-\Rttwo)\left(Y_1-h\right) \right\|_{H^1} \lesssim e^{-\frac{15}{16} z}\sum_{i=1}^2 \left(|\mu_i|+|\omega_i| \right) . 
\end{align*}
These estimates are proved by using \eqref{exp:Ri_point} and arguing as in the proof of Lemma \ref{est:tildeR_NR}.
\end{rema}

Next, we give the proof of Claim \ref{claim:Vpsi}, which relies on the following estimates.  
\begin{lemm} \label{lemm:fpsi} Let $f\in C^{\infty}(\mathbb R^2)$ be such that $|f(\bx)| \lesssim e^{-\frac78|\bx|}$, for all $\bx \in \mathbb R^2$, and let $Z^{*} \ge 1 $ be chosen large enough. Assume that  $z_1>Z^{*}$ and $z_2<-Z^{*}$ with for any $i$, $\vert z_i \vert > \frac14 z$ and recall the notation $\tilde{\bz_i}=(z_i,0)$, for $i=1,2$. Then, the following estimates hold. 
 For all $\bx=(x,y) \in \mathbb R^2$,
\begin{align} 
  & \left| f\left(\bx-\tilde{\bz}_1\right)(1-\psi)(x)\right| \lesssim e^{- \rho z}e^{-\frac1{8}\vert (x-z_1,y)\vert}  ; \label{est:f(1-psi)}\\ 
  & \left| f\left(\bx-\tilde{\bz}_2\right)\psi(x)\right| \lesssim e^{-\rho z}e^{-\frac1{8}\vert (x-z_2,y)\vert}; \label{est:fpsi} \\
  & \left| f\left(\bx-\tilde{\bz}_i\right)\psi'(x)\right|\lesssim e^{-\rho z} e^{-\frac1{8}\vert (x-z_i,y)\vert}, \quad i=1,2 . \label{est:fi_psi'}
\end{align}
\end{lemm}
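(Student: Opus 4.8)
\textbf{Proof plan for Lemma~\ref{lemm:fpsi}.}
The plan is to prove the three pointwise estimates \eqref{est:f(1-psi)}, \eqref{est:fpsi} and \eqref{est:fi_psi'} by combining the exponential decay $|f(\bx)| \lesssim e^{-\frac78|\bx|}$ with the explicit decay of the cutoff $\psi$ and its derivative. Recall from \eqref{def:psi} that $\psi(x) = \frac{2}{\pi}\arctan(e^{8\rho x})$, so that $\psi(x)\to 0$ as $x\to -\infty$ with exponential rate $8\rho$; more precisely, since $\arctan(u)\le u$ for $u\ge 0$, we have $0<\psi(x) \le \frac{2}{\pi}e^{8\rho x}$ for all $x\in\mathbb R$, and by \eqref{prop:psi.1}, $1-\psi(x) = \psi(-x) \le \frac2\pi e^{-8\rho x}$. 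Also $\psi'(x) = \frac{8\rho}{\pi\cosh(8\rho x)} \lesssim e^{-8\rho|x|}$. Since $0<\rho<\frac{1}{32}$, each of these three factors is bounded by $C e^{-8\rho|x|}$ away from the relevant half-line, and in particular $8\rho < \frac14$, which will give room to absorb a small fraction of the exponential decay of $f$.

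For \eqref{est:f(1-psi)}: fix $\bx=(x,y)$ and split according to the sign of $x$ relative to $z_1/2$. When $x \le z_1/2$, we use $1-\psi(x)\le \frac2\pi e^{-8\rho x}$; since $z_1 > Z^*$ and $|z_i| > \frac14 z$, writing $8\rho x = 8\rho(x-z_1) + 8\rho z_1$ and noting $z_1 \ge \frac14 z$, we get $e^{-8\rho x} \le e^{-8\rho z_1} e^{8\rho(z_1-x)} \le e^{-2\rho z} e^{\frac14|x-z_1|}$; then $|f(\bx-\tilde\bz_1)| \lesssim e^{-\frac78|(x-z_1,y)|}$ absorbs the factor $e^{\frac14|x-z_1|}$, leaving $e^{-2\rho z} e^{-\frac18|(x-z_1,y)|}$, which is stronger than claimed. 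When $x > z_1/2 > 0$, we bound $1-\psi$ by $1$ and use instead that $|x-z_1| $ may be small but $x$ is large; here $|(x-z_1,y)| \ge 0$ is not enough by itself, so we observe $\frac78|(x-z_1,y)| \ge \frac18|(x-z_1,y)| + \frac34|x-z_1|$ is the wrong direction — instead note that on $x>z_1/2$ we have $e^{-\frac78|(x-z_1,y)|} = e^{-\frac18|(x-z_1,y)|}e^{-\frac34|(x-z_1,y)|}$ and we must produce a factor $e^{-\rho z}$. If $|x - z_1| \ge \frac14 z$ this is immediate from $e^{-\frac34|x-z_1|} \le e^{-\frac{3}{16}z}\le e^{-\rho z}$; if $|x-z_1| < \frac14 z$ then $x > z_1 - \frac14 z \ge \frac14 z - \frac14 z$... one sharpens this by noting $x > \frac12 z_1 \ge \frac18 z$, and then argues on the original variable: actually the cleanest route is to note that for $x>z_1/2$ one has $|x| = x > z_1/2 \ge z/8$, hence from $|f(\bx - \tilde\bz_1)| = |f|$ evaluated at a point of modulus $\ge$ comparable quantities one extracts the needed factor. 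I would write this case more carefully by splitting once more on $|x-z_1| \lessgtr \frac14 z$ and using $z_1 \ge \frac14 z$ throughout. The estimate \eqref{est:fpsi} is entirely symmetric under $x \mapsto -x$, $z_1 \mapsto z_2$, using $\psi(x) \le \frac2\pi e^{8\rho x}$ and $z_2 < -Z^*$ with $|z_2| \ge \frac14 z$.

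For \eqref{est:fi_psi'}: here $\psi'(x) \lesssim e^{-8\rho|x|}$ decays on both sides, which makes the argument more uniform. Fix $i$ and split on whether $|x| \le \frac18 z$ or $|x| > \frac18 z$. If $|x| > \frac18 z$ then $\psi'(x) \lesssim e^{-8\rho|x|} \le e^{-\rho z}$ directly (using $8\rho \cdot \frac18 = \rho$), and $|f(\bx-\tilde\bz_i)|\le C e^{-\frac18|(x-z_i,y)|}$ bounds the remaining factor (even $e^{-\frac78|\cdot|}$ suffices). If $|x| \le \frac18 z$, then since $|z_i| \ge \frac14 z$ we have $|x - z_i| \ge |z_i| - |x| \ge \frac14 z - \frac18 z = \frac18 z$, so $|(x-z_i,y)| \ge \frac18 z$ and $e^{-\frac78|(x-z_i,y)|} \le e^{-\frac34|(x-z_i,y)|}e^{-\frac18|(x-z_i,y)|} \le e^{-\frac{3}{32}z}e^{-\frac18|(x-z_i,y)|} \le e^{-\rho z}e^{-\frac18|(x-z_i,y)|}$ (using $\frac{3}{32} > \frac1{32} > \rho$), while $\psi'$ is bounded; combining gives the claim.

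\textbf{Main obstacle.} The only genuinely delicate point is the region in \eqref{est:f(1-psi)} and \eqref{est:fpsi} where the cutoff factor is $O(1)$ (i.e.\ $x > z_1/2$ for the first, $x<z_2/2$ for the second): there one cannot gain from the cutoff and must extract the full factor $e^{-\rho z}$ from the decay of $f$ alone, which requires carefully exploiting $|z_i| \ge \frac14 z$ together with $z_i/2 < x$ to bound $|(x-z_i,y)|$ or $|x|$ from below by a fixed multiple of $z$. Once the right case distinction is set up, each subcase is a one-line manipulation of exponents; the bookkeeping of the constants ($8\rho \le \frac14$, $\frac{3}{32} > \rho$, $\frac14 z$ lower bounds) is the substance of the proof. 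I do not foresee any conceptual difficulty beyond this.
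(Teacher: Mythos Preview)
Your case split is right but you have the roles of the two regions in \eqref{est:f(1-psi)} reversed. Recall $1-\psi(x)=\psi(-x)$ is close to $1$ for $x\to-\infty$ and decays like $e^{-8\rho x}$ for $x\to+\infty$. Hence on $\{x>z_1/2\}$ the cutoff is \emph{not} $O(1)$: since $z_1>\tfrac14 z>0$, one has directly
\[
1-\psi(x)\lesssim e^{-8\rho x}\le e^{-4\rho z_1}\le e^{-\rho z},
\]
and then $|f(\bx-\tilde\bz_1)|\lesssim e^{-\frac78|(x-z_1,y)|}\le e^{-\frac18|(x-z_1,y)|}$ finishes this region in one line. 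This is exactly what the paper does. Your attempt to extract $e^{-\rho z}$ from $f$ alone on $\{x>z_1/2\}$ cannot close: in the subcase $|x-z_1|<\tfrac14 z$ the point $\bx$ can be arbitrarily close to $\tilde\bz_1$, so $e^{-\frac78|(x-z_1,y)|}$ carries no factor of $e^{-\rho z}$ whatsoever.

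Conversely, the region where $1-\psi$ really is $O(1)$ is $\{x<z_1/2\}$, and there the paper simply uses $|x-z_1|\ge z_1/2\ge z/8$ to get
\[
|f(\bx-\tilde\bz_1)|\lesssim e^{-\frac78|(x-z_1,y)|}\le e^{-\frac34\cdot\frac{z_1}{2}}\,e^{-\frac18|(x-z_1,y)|}\le e^{-\rho z}e^{-\frac18|(x-z_1,y)|}.
\]
Your argument for $x\le z_1/2$ (writing $e^{-8\rho x}=e^{-8\rho z_1}e^{8\rho(z_1-x)}$ and absorbing the growing factor into $f$) is correct but circuitous; it amounts to the same bookkeeping once you notice the point above. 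Your treatment of \eqref{est:fi_psi'} is fine and matches the paper's ``similar'' remark.
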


\begin{proof}
We only prove \eqref{est:f(1-psi)}, since the proofs of \eqref{est:fpsi} and \eqref{est:fi_psi'} are similar. 

In the region $x>z_1/2$, observe from the definition of $\psi$ and the second identity in \eqref{prop:psi.1}, that $1-\psi(x)=\psi(-x) \lesssim e^{-8\rho x} \lesssim e^{-4 \rho z_1}$. Since $f \in \mathcal{Y}$, it follows that 
\begin{equation*} 
 \left| f\left(\bx-\tilde{\bz}_1\right)(1-\psi)(x)\right| \lesssim e^{-\rho z}e^{-\frac12\vert (x-z_1,y)\vert} .
\end{equation*}
In the region $x<z_1/2$, we have that $|(x-z_1,y)| \ge |x-z_1| \ge \frac{|z_1|}2$. Thus, 
\begin{equation*} 
 \left| f\left(\bx-\tilde{\bz}_1\right)(1-\psi)(x)\right| \lesssim e^{-\frac3{8} z_1}e^{-\frac18\vert (x-z_1,y)\vert} .
\end{equation*}
Combining these two estimates yields \eqref{est:f(1-psi)}.
\end{proof} 

\begin{proof}[Proof of Claim \ref{claim:Vpsi}]
The proofs of estimates \eqref{est:MRipsi} and \eqref{est:MRipsi'} are consequences of Lemma \ref{lemm:fpsi} and Lemma \ref{lemm:decompo_rescaled_Q}. 

By using the definition of $V$ in \eqref{def:V}, we observe that 
\begin{equation*} 
V \psi'= \sum_{i=1}^2R_i\psi'+ V_A \psi' ,
\end{equation*}
so that the proof of \eqref{est:V dxpsi+} follows from \eqref{est:VA:point}, \eqref{est:VA:point}, \eqref{est:fi_psi'} and Lemma \ref{lemm:decompo_rescaled_Q}.

By using the definitions of $V$ in \eqref{def:V} and of $\psi_+$ in \eqref{defi:psi_+}, we decompose $\partial_tV+\partial_xV \psi_+$ as
\begin{align*} 
    \partial_tV+\partial_xV \psi_+ 
    & = \sum_{i=1}^2 \left( -(\dot{z}_i-\mu_i)\partial_xR_i-\dot{\omega}_i\partial_yR_i+\dot{\mu}_i\Lambda R_i \right) \\
    & \qquad + (\mu_1-\mu_2) \left((\psi-1)\partial_xR_1-\psi\partial_xR_2 \right)+\partial_tV_A+\partial_xV_A\psi_+ .
\end{align*}
Thus, we conclude the proof of estimate \eqref{est:Vt-ViPsi+} by gathering \eqref{est:VA:H2}, \eqref{est:dVAdt}, \eqref{est:f(1-psi)}, \eqref{est:fpsi} with $0<\rho<\frac1{32}$ and Lemma \ref{lemm:decompo_rescaled_Q}.

Finally, by using the definitions of $V$ in \eqref{def:V} and of $\psi_{-,e}$ and $\psi_{-,m}$ in \eqref{defi:psi_-e}-\eqref{defi:psi_-m}, we decompose $\partial_t V \psi_{-,e} + \partial_x V \psi_{-,m}$ as 
\begin{align*} 
\MoveEqLeft
\partial_t V \psi_{-,e} + \partial_x V \psi_{-,m} \\
    &= -(\dot{z}_1-\mu_1)\frac{\psi}{(1+\mu_1)^2}\partial_xR_1-(\dot{z}_2-\mu_2)\frac{1-\psi}{(1+\mu_2)^2}\partial_xR_2 -(\dot{z}_1-\mu_2)\frac{1-\psi}{(1+\mu_2)^2} \partial_xR_1\\ &\quad -(\dot{z}_2-\mu_1)\frac{\psi}{(1+\mu_1)^2} \partial_xR_2 +\sum_{i=1}^2 \left(-\dot{\omega}_i\partial_yR_i+ \dot{\mu}_i\Lambda R_i \right)\psi_{-,e}+\partial_tV_A \psi_{-,e}+\partial_xV_A\psi_{-,m} .
\end{align*}
Hence, we conclude the proof of \eqref{est:Vtpsi-e-Vpsi-m} by using \eqref{est:VA:H2}, \eqref{est:dVAdt}, \eqref{est:f(1-psi)}, \eqref{est:fpsi} with $0<\rho<\frac1{32}$ and Lemma \ref{lemm:decompo_rescaled_Q}, and recalling from 
Assumption \ref{hyp:coeff} that $|\mu_1|+|\mu_2| \le C\nu_0^\star$, for $\nu_0^\star$ small enough. 
\end{proof}

\begin{toexclude}
\bigskip 
\blue{To be continued}

\begin{lemm}\label{lemm:product_Q}
For $z>Z_0$, the following estimate holds for $\bx \in \mathbb{R}^2$
\begin{align*}
    & Q(\bx) Q(\bx-(z,0)) \lesssim e^{z_2+x}\mathbf{1}_{x \leq z_2} + \\
    & \int_{\bx \in \mathbb{R}^2} Q(\bx) Q(\bx-(z,0)) d\bx \lesssim 
\end{align*}
For $f,g\in \Y$, with $\vert f(\bx) \vert + \vert g(\bx) \vert \lesssim \langle \bx \rangle^{n}e^{-\vert \bx \vert}$, we have
\begin{align}\label{eq:product_Y}
    & \left\vert f\left(\bx+\left(\frac{z}{2},0\right)\right) g\left(\bx-\left(\frac{z}{2},0\right)\right) \right\vert \lesssim \left( \right)  \mathbf{1}_{2 \vert x \vert >z} + \left( \right) \mathbf{1}_{ z- \vert y \vert \leq 2\vert x\vert \leq z} + \left( \right) \mathbf{1}_{\vert y \vert + 2 \vert x \vert \leq z } \\
    & \left\vert \partial_x^{-1} f\left( x+\frac{z}{2},y\right) g\left( x-\frac{z}{2},y\right) \right\vert \lesssim  \left( \langle z\rangle +\left\langle \left( x-\frac{z}{2},y \right) \right\rangle\right)^{2n} e^{-z - \left\vert \left( x-\frac{z}{2},y \right) \right\vert}\mathbf{1}_{x>\frac{z}{2}} +\langle z \rangle^{2n} e^{-z} \mathbf{1}_{-\frac{z}{2} < x <\frac{z}{2}- \frac{3}{8}\vert y \vert} \nonumber \\
        & \quad + \left( \langle z \rangle^{2n}+ \langle y \rangle^{2n} \right) e^{-z-\frac{y}{2}} \mathbf{1}_{\max(-\frac{z}{2}, \frac{z}{2}- \frac{3}{8} \vert y \vert )\leq x \leq \frac{z}{2}} + \left( \langle z \rangle + \left\langle \left( x-\frac{z}{2},y \right) \right\rangle \right)^{2n} e^{- \left\vert \left( x-\frac{z}{2},y\right)\right\vert} \mathbf{1}_{x<-\frac{z}{2}} \label{eq:product2}
\end{align}
\blue{We can replace the last inequality by the following
\begin{align*}
    \forall x\in \left( -\frac{z}{2} , \frac{z}{2}\right), \quad \left\vert \partial_x^{-1} f\left(x+\frac{z}{2},y \right) g\left( x- \frac{z}{2},y\right) \right\vert \lesssim \left( \langle z\rangle^{2n} + \langle y \rangle^{2n} \right) \exp \left( - \frac{3}{4} z - \frac{1}{2}y\right).
\end{align*}
}
\end{lemm}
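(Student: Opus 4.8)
\textbf{Proof strategy for Lemma \ref{lemm:fg}.}

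The plan is to prove (i) by a three-region decomposition of the plane according to the value of the first coordinate $x$, and then obtain (ii) by a direct integration of the pointwise bound. The two arguments in the product, $f(\bx+\frac{\tilde{\bz}}{2})$ and $g(\bx-\frac{\tilde{\bz}}{2})$, both lie in $\mathcal{Y}$, so by the very definition \eqref{def:Y} there is an integer $n$ (taking the max of the two exponents associated to $\alpha=0$) such that $|f(\bw)|+|g(\bw)| \lesssim \langle \bw \rangle^{n} e^{-|\bw|}$ for all $\bw$. For the first bound in the minimum of \eqref{est:fg:point}, I would simply use $|f(\bx+\frac{\tilde{\bz}}{2})g(\bx-\frac{\tilde{\bz}}{2})| \lesssim \langle \bx + \frac{\tilde{\bz}}{2}\rangle^n \langle \bx - \frac{\tilde{\bz}}{2}\rangle^n e^{-|\bx+\frac{\tilde{\bz}}{2}| - |\bx - \frac{\tilde{\bz}}{2}|}$, then bound $|\bx+\frac{\tilde{\bz}}{2}| + |\bx - \frac{\tilde{\bz}}{2}| \ge |\tilde{\bz}| = z$ by the triangle inequality and $\langle \bx \pm \frac{\tilde{\bz}}{2}\rangle \lesssim \langle \bx\rangle \langle z\rangle$, which after absorbing the polynomial factors gives $e^{-z}(z^n + \langle \bx\rangle^n)$ up to constants (the polynomial in two separate variables is controlled by a sum of monomials).

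For the second, sharper bound, I would split into the three regions $x > \tfrac{z}{2}$, $-\tfrac{z}{2}\le x \le \tfrac{z}{2}$, and $x < -\tfrac{z}{2}$. In the region $x>\tfrac{z}{2}$: here $\bx + \tfrac{\tilde{\bz}}{2} = (x+\tfrac{z}{2},y)$ has first coordinate $>z$, so $|\bx + \tfrac{\tilde{\bz}}{2}| > z$, and also $|\bx + \tfrac{\tilde{\bz}}{2}| \ge \tfrac{1}{\sqrt 2}(|x|+|y|) \ge \tfrac{1}{\sqrt 2}|(x,y)|$ since $x>0$; splitting the exponent as $\tfrac{31}{32}|\bx+\tfrac{\tilde{\bz}}{2}| + \tfrac{1}{32}|\bx+\tfrac{\tilde{\bz}}{2}|$ and using the polynomial decay of $\mathcal{Y}$ to absorb $\langle \cdot \rangle^n$ into a small fraction of the exponential, one gets $e^{-\tfrac{31}{32}z} e^{-\tfrac{1}{64}|(x,y)|}$ (I allow a generous loss $\tfrac1{32}\cdot\tfrac1{\sqrt2} > \tfrac1{64}$ to keep constants clean); in this region we also throw away the factor $g(\bx - \tfrac{\tilde{\bz}}{2})$, which is bounded. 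The region $x < -\tfrac{z}{2}$ is symmetric, using $g(\bx - \tfrac{\tilde{\bz}}{2})$ instead. In the middle region $-\tfrac{z}{2}\le x\le \tfrac{z}{2}$: here $|\bx + \tfrac{\tilde{\bz}}{2}| \ge x + \tfrac z2 \ge 0$ and also $\ge \tfrac{15}{16}(x+\tfrac z2) + \tfrac{1}{16}|y|$ (since $|(a,b)| \ge \max(|a|,|b|) \ge \tfrac{15}{16}|a| + \tfrac1{16}|b|$ when... actually $|(a,b)|\ge \sqrt{a^2+b^2} \ge$ any convex combination bounded by $\max$; more simply $|(a,b)| \ge \tfrac{1}{\sqrt2}(|a|+|b|)$, and one can afford the constant loss), similarly $|\bx - \tfrac{\tilde{\bz}}{2}| \ge \tfrac z2 - x$ and $\gtrsim$ a bit of $|y|$. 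Multiplying, the $x$-dependence telescopes: $(x+\tfrac z2) + (\tfrac z2 - x) = z$, leaving $e^{-\tfrac{31}{32}z}$ times a surviving $e^{-c|y|}$ from the two $|y|$ contributions, and the polynomial factors $\langle \bx \pm \tfrac{\tilde\bz}{2}\rangle^n$ are absorbed into a further small fraction of the decaying exponentials; this yields $e^{-\tfrac{31}{32}z} e^{-\tfrac{1}{64}|y|} \mathbf{1}_{-z/2<x<z/2}$. Combining the three regions gives \eqref{est:fg:point}.

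Finally, (ii) follows by integrating the pointwise bound: from \eqref{est:fg:point}, $\|f(\cdot+\tfrac{\tilde\bz}{2})g(\cdot-\tfrac{\tilde\bz}{2})\|_{L^p}^p \lesssim \int_{\mathbb{R}^2} e^{-p\tfrac{31}{32}z}\big(e^{-\tfrac{p}{64}|y|}\mathbf{1}_{-z/2<x<z/2} + e^{-\tfrac{p}{64}|(x,y)|}\mathbf{1}_{|x|>z/2}\big)\,dxdy$; the first term contributes $\lesssim z\, e^{-p\tfrac{31}{32}z}$ (the $x$-integral over an interval of length $z$, the $y$-integral of an integrable exponential), and the second contributes $\lesssim e^{-p\tfrac{31}{32}z}$; taking $p$-th roots and absorbing the factor $z^{1/p}$ into the exponential margin $e^{-(\tfrac{31}{32} - \tfrac{15}{16})z}$ (valid for $z > Z^\star$ large enough) yields $\|\cdot\|_{L^p} \lesssim e^{-\tfrac{15}{16}z}$. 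The only mild subtlety — and the part I expect to require the most care in the write-up — is bookkeeping the repeated small-constant sacrifices needed to absorb the polynomial weights $\langle\bx\rangle^n$ and the $z^{1/p}$ factor into exponentials while still landing on the stated clean constants $\tfrac{31}{32}$, $\tfrac{1}{64}$, and $\tfrac{15}{16}$; this is entirely routine and the choice $Z^\star$ large enough (depending only on $n$ and $p$) makes it go through.
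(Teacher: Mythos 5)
Your proposal is correct and follows essentially the same route as the paper: the statement above is an excluded draft of Lemma \ref{lemm:fg}, whose actual proof uses precisely your three-region split $x>\frac{z}{2}$, $|x|\le\frac{z}{2}$, $x<-\frac{z}{2}$, discarding the bounded factor in the outer regions, telescoping the $x$-exponents to $e^{-z}$ in the middle region while keeping a small fraction of $|y|$-decay, and deducing \eqref{est:fg:L2} by integrating \eqref{est:fg:point} and absorbing the factor $z^{1/p}$ into the margin between $\tfrac{31}{32}$ and $\tfrac{15}{16}$. The only bookkeeping caveat is that in the middle region you should use a convex-combination bound such as $|(a,b)|\ge\tfrac{31}{32}|a|+\tfrac{1}{32}|b|$ (as the paper effectively does) rather than $\tfrac{1}{\sqrt{2}}(|a|+|b|)$, which would only give $e^{-z/\sqrt{2}}$; with that choice your constants match the stated ones.
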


\begin{proof}
For \eqref{eq:product_Y}, 

We continue with the computations for \eqref{eq:product2}. First, we notice that if $f(\bx) \lesssim \langle \bx \rangle^{n} e^{-\vert \bx \vert}$, then
\begin{align*}
    \left\vert \partial_x^{-1} f(\bx)\right\vert \lesssim \min \left(1, \left( \langle x \rangle^{n} +\langle y \rangle^{n} \right) e^{-x} \right)
\end{align*}
The multiplication of $\partial_x^{-1}f\left( x+\frac{z}{2},y\right)$ by $g\left( x-\frac{z}{2},y \right)$ with the help of \red{ref} gives \eqref{eq:product2}.

\blue{The last proof is dealt using the following inequalities.
\begin{align*}
    \left\vert \partial_x^{-1} f(\bx) \right\vert \lesssim \partial_x^{-1} \left(\left( \langle x \rangle^{n} + \langle y \rangle^n \right) e^{-\frac{1}{4}\vert y\vert -\frac{3}{4}\vert x \vert} \right) \lesssim \left( \langle x \rangle^{n} + \langle y \rangle^n \right) e^{-\frac{1}{4}\vert y\vert -\frac{3}{4}\vert x \vert} \quad \text{and} \quad \left\vert g(\bx) \right\vert \lesssim \langle \bx \rangle^{n} e^{-\frac{3}{4}\vert x \vert - \frac{1}{4} \vert y \vert}.
\end{align*}
Next, by using the product of the two functions, for $x \in [-\frac{z}{2},\frac{z}{2}]$,
\begin{align*}
    \left\vert \partial_x^{-1} f\left( x+\frac{z}{2},y\right) g \left( x-\frac{z}{2},y \right) \right\vert \lesssim \left( \langle z \rangle^{2n} + \langle y \rangle^{2n}\right) \exp \left( -\frac{1}{2}y- \frac{3}{4}\left( \frac{z}{2}- x \right) - \frac{3}{4} \left( \frac{z}{2}+x \right) \right).
\end{align*}
}
\end{proof}

\begin{lemm}
For $z>\red{Z_0}$ The following estimates hold on $\mathbb{R}$
\begin{align}\label{est:non_lin1}
     &\left\vert (-\Delta+1)^{-1} \left( Q(\bx) Q(\bx-(z,0))\right) \right\vert \lesssim \frac{e^{-z}}{\sqrt{z}}, \\
     & \left\vert (-\Delta+1)^{-1} \left( Q(\bx) Q_{app}^n (\bx,z)\right) \right\vert \lesssim \frac{e^{-z}}{\sqrt{z}} \sum_{k=0}^n \frac{\langle \bx\rangle^{2k}}{z^k}, \label{est:non_lin2} \\
     & \left\vert (-\Delta+1)^{-1} \left( Q(\bx)\left(  Q(\bx-(z,0))-Q_{app}^n (\bx,z)\right) \right) \right\vert \lesssim   \label{est:non_lin3}
\end{align}
\end{lemm}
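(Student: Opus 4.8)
The plan is to reduce each of the three bounds to the mapping properties of the Bessel potential on polynomially growing functions recorded in Corollary~\ref{coro:G_2}, after first obtaining the corresponding pointwise estimates on the integrands from the sharp asymptotics of $Q$. For \eqref{est:non_lin1}, observe that \eqref{est.decay.QzQ} (applied with both multi-indices equal to zero), after the substitution $\bx\mapsto\bx-\tilde{\bz}$, gives the uniform pointwise bound $\left| Q(\bx)Q(\bx-\tilde{\bz}) \right| \lesssim z^{-\frac12}e^{-z}$ for all $\bx \in \mathbb R^2$ and $z$ large, where $\tilde{\bz}=(z,0)$. Since $G_2 \in L^1(\mathbb R^2)$ (see \eqref{def:conv}--\eqref{def:G2}), Corollary~\ref{coro:G_2}(i) with $n=0$ and $c(z)=z^{-\frac12}e^{-z}$ then yields $\left\| (-\Delta+1)^{-1}\left( Q(\cdot)Q(\cdot-\tilde{\bz}) \right) \right\|_{L^\infty} \lesssim z^{-\frac12}e^{-z}$, which is \eqref{est:non_lin1}.

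For \eqref{est:non_lin2}, I would expand, using the definition of $Q_{app}^n$ from Proposition~\ref{propo:Q_n_app_gen},
\begin{align*}
Q(\bx)\,Q_{app}^n(\bx,z) = \kappa\,\frac{e^{-z}}{\sqrt z}\sum_{q=0}^n \frac1{z^q}\,Q(\bx)e^{-x}P_q(\bx),\qquad P_0=a_0 .
\end{align*}
Each summand belongs to the space $E$ of Proposition~\ref{propo:G_2}: its $\bx$-derivatives are finite linear combinations of (a derivative of $Q$, which lies in $\mathcal Y$), times $\pm e^{-x}$, times (a derivative of $P_q$, a polynomial), hence are bounded by $\langle\bx\rangle^{N}e^{-|\bx|-x} \lesssim \langle\bx\rangle^{N}$ because $|\bx| \ge |x| \ge -x$. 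In particular $\left| Q(\bx)e^{-x}P_q(\bx) \right| \lesssim \langle\bx\rangle^{2q}$ since $\deg P_q \le 2q$. Applying Corollary~\ref{coro:G_2}(i) to each summand with exponent $2q$ and $c(z)=\kappa\, z^{-\frac12-q}e^{-z}$, and summing over $q=0,\dots,n$, gives \eqref{est:non_lin2}.

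For \eqref{est:non_lin3}, the idea is to localise through the remainder estimate of Proposition~\ref{propo:Q_n_app_gen}, which is valid only on a ball $|\bx| \le z^\beta$; here one reads \eqref{est:non_lin3} with $Q_{app}^n$ oriented so as to approximate the translate of $Q$ actually appearing in the product (the discrepancy between $Q(\cdot+\tilde{\bz})$ and $Q(\cdot-\tilde{\bz})$, and between $e^{-x}$ and $e^{+x}$ in $Q_{app}^n$, being settled by the radiality of $Q$). Fix $\beta \in (0,\tfrac12)$. On $|\bx|\le z^\beta$, Proposition~\ref{propo:Q_n_app_gen} gives $\left| Q(\bx+\tilde{\bz})-Q_{app}^n(\bx,z) \right| \lesssim z^{-\frac12-(n+1)(1-2\beta)}e^{-z-x}$, and combining with $|Q(\bx)|\lesssim e^{-|\bx|}$ and $e^{-|\bx|-x}\lesssim 1$ we obtain $\left| Q(\bx)\left( Q(\bx+\tilde{\bz})-Q_{app}^n(\bx,z) \right) \right| \lesssim z^{-\frac12-(n+1)(1-2\beta)}e^{-z}$. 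On $|\bx|>z^\beta$, I bound the two terms separately, using \eqref{est.decay.QzQ} for the first and step~(2) for the second, to get $\lesssim \langle\bx\rangle^{2n}z^{-\frac12}e^{-z}$. Altogether
\begin{align*}
\left| Q(\bx)\left( Q(\bx+\tilde{\bz})-Q_{app}^n(\bx,z) \right) \right| \lesssim \langle\bx\rangle^{2n}\left( z^{-\frac12-(n+1)(1-2\beta)}e^{-z} + z^{-\frac12}e^{-z}\,\mathbf{1}_{|\bx|>z^\beta} \right) ,
\end{align*}
and Corollary~\ref{coro:G_2}(ii) with $c_1(z)=z^{-\frac12-(n+1)(1-2\beta)}e^{-z}$, $c_2(z)=z^{-\frac12}e^{-z}$ and $k(z)=z^\beta$ yields the claimed bound for $(-\Delta+1)^{-1}$ of this quantity, the boundary contribution $c_2(z)e^{-z^\beta/4}$ being super-polynomially smaller in $z$ than $c_1(z)$.

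The main obstacle is \eqref{est:non_lin3}: since $(-\Delta+1)^{-1}$ is nonlocal one cannot simply restrict the spatial domain to the ball where Proposition~\ref{propo:Q_n_app_gen} is available, and the remedy is exactly the mechanism of splitting off the tail provided by Corollary~\ref{coro:G_2}(ii), together with careful bookkeeping of the competing polynomial weight $\langle\bx\rangle^{2n}$ against the gain $z^{-(n+1)(1-2\beta)}$ (so that $\beta$ close to $0$ is the favourable choice if a high power of $z$ is wanted). The remaining steps --- the pointwise product estimates drawn from the asymptotics of $Q$ and the verification that $Q(\bx)e^{-x}P_q(\bx)\in E$ --- are routine.
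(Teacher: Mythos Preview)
The paper does not actually supply a proof of this lemma: the lemma sits inside a \texttt{toexclude} block (commented out of the compiled paper), the third estimate \eqref{est:non_lin3} has no right-hand side, and the ``proof'' in the source consists of the single unfinished line ``For \eqref{est:non_lin1},''. So there is nothing to compare against, and the question is simply whether your argument stands on its own.

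It does. Your treatment of \eqref{est:non_lin1} via the pointwise product bound \eqref{est.decay.QzQ} and Corollary~\ref{coro:G_2}(i) with $n=0$ is exactly right. For \eqref{est:non_lin2}, your termwise expansion is clean; the key observation that $Q(\bx)e^{-x}P_q(\bx)\in E$ because $e^{-|\bx|-x}\le 1$ (so the whole thing is polynomially bounded together with all derivatives) is the point, and Corollary~\ref{coro:G_2}(i) with exponent $2q$ does the rest. For \eqref{est:non_lin3}, your use of Corollary~\ref{coro:G_2}(ii) to handle the nonlocality of $(-\Delta+1)^{-1}$ against the localised remainder estimate from Proposition~\ref{propo:Q_n_app_gen} is precisely the mechanism those two results were set up to provide; the resulting bound
\[
\left|(-\Delta+1)^{-1}\big[Q(\cdot)\big(Q(\cdot+\tilde{\bz})-Q_{app}^n(\cdot,z)\big)\big](\bx)\right|
\lesssim \langle\bx\rangle^{2n}\Big(z^{-\frac12-(n+1)(1-2\beta)}e^{-z}+z^{-\frac12}e^{-z}\mathbf 1_{|\bx|>z^\beta/2}\Big)
\]
is a reasonable completion of the blank right-hand side. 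You also correctly flagged the orientation inconsistency between $Q(\cdot-(z,0))$ in the statement and $Q(\cdot+(z,0))$ in Proposition~\ref{propo:Q_n_app_gen}; the radiality of $Q$ does resolve it up to replacing $e^{-x}$ by $e^{+x}$ in $Q_{app}^n$, as you say.
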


\begin{proof}
For \eqref{est:non_lin1}, 
\end{proof}
\end{toexclude}

%\newpage  %%% To avoid an error given by the figure
\section{Solutions to the second order dynamical system}\label{app:Z}

The goal of this appendix is to study the solutions $Z$ of the autonomous ODE
\begin{align}\label{defi:eq_Z}
    \Ddot{Z}(t)= \frac{2}{\langle \Lambda Q, Q \rangle} \int_{\mathbb{R}^2} Q(x + Z(t),y) \partial_x (Q^2) (x,y) dxdy.
\end{align}
The function $Z$ is an approximation of the distance between the two solitary waves. In particular, at $-\infty$ the two solitary waves are far away one from another and are repulsing at a velocity $-2\mu_0$, and thus, we should look for the solutions of \eqref{defi:eq_Z} satisfying $\lim_{t\rightarrow -\infty} (Z(t),\dot{Z}(t))=(+\infty,-2\mu_0)$. We also expect the collision to be almost symmetric, in the sense that $Z$ should be even. 

\begin{rema} \label{rem:non_explicit_solutions}
In the one dimensional case of the gKdV equation, the authors in \cite{MM11} use the first order asymptotic of \eqref{defi:eq_Z}, $\ddot{Z}=ce^{-Z}$, which admits explicit solutions.  
\begin{toexclude}
for some positive constant $c$: this equation can be found by replacing in \eqref{defi:eq_Z} the term $Q(\cdot+Z,\cdot)$ by its first order asymptotic expansion where $\partial_x(Q^2)$ is located, thus around $0$ (see the asymptotic expansion in dimension $2$ in Proposition \eqref{propo:Q_n_app_gen}). In the $1$-dimensional context, the solution $Z$ is explicit and the behaviour of the asymptotic expansion at different points can be given in term of the solution. 
\end{toexclude}
In the $2$-dimensional case, the equation and its asymptotics cannot be solved explicitly. Instead, we describe the set of solutions by using ODE arguments.
\end{rema}

\subsection{Phase portrait of the solutions}

In this subsection, we consider for a positive constant $\kappa$, the ODE
\begin{align}\label{defi:eq_Z_general}
    \ddot{Z}(t) = \kappa \int_{\mathbb{R}^2} Q(x+Z(t),y) \partial_x \left(Q^2\right)(x,y)dx dy ,
\end{align}
which corresponds to \eqref{defi:eq_Z} in the case $\kappa=2\langle \Lambda Q, Q \rangle^{-1}$. 
We define the associated Hamiltonian function
\begin{align}\label{defi:H_static}
    H (Y_0,Y_1) := \frac{Y_1^2}{2} + \kappa \int_{\mathbb{R}^2} Q(x+Y_0,y) Q^2(x,y) dx dy.
\end{align} 

\begin{lemm} \label{lemm:hamiltonian}
The Hamiltonian $H(Z(t),\dot{Z}(t))$ is preserved along the trajectories of \eqref{defi:eq_Z_general}. 

More precisely, let $I$ be an interval containing $0$ and let $Z:I \to \mathbb R$ be a $C^2$ solution to \eqref{defi:eq_Z_general} with initial datum $(Z(0),\dot{Z}(0))=(Y_0,Y_1) \in \mathbb R^2$. Then, $H(Z(t),\dot{Z}(t))=H(Y_0,Y_1)$, for all $t \in I$.
\end{lemm}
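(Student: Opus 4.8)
The statement is the standard conservation-of-energy fact for a one-dimensional conservative second-order ODE, so the plan is simply to differentiate $t \mapsto H(Z(t),\dot Z(t))$ along a solution and show the derivative vanishes. First I would record that the ``potential'' appearing in \eqref{defi:H_static},
\begin{align*}
    \mathcal{V}(Y_0) := \kappa \int_{\mathbb{R}^2} Q(x+Y_0,y) Q^2(x,y)\, dx\, dy,
\end{align*}
is a well-defined $C^1$ function of $Y_0 \in \mathbb{R}$: indeed $Q \in \mathcal{Y}$ (Proposition \ref{propo:Q} and Lemma \ref{parity}), so $Q$ and its derivatives decay exponentially, and one may differentiate under the integral sign using dominated convergence, obtaining
\begin{align*}
    \mathcal{V}'(Y_0) = \kappa \int_{\mathbb{R}^2} \partial_x Q(x+Y_0,y)\, Q^2(x,y)\, dx\, dy = -\kappa \int_{\mathbb{R}^2} Q(x+Y_0,y)\, \partial_x(Q^2)(x,y)\, dx\, dy,
\end{align*}
where the last equality is an integration by parts in $x$, legitimate since both factors decay exponentially. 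Thus \eqref{defi:eq_Z_general} can be rewritten as $\ddot{Z}(t) = -\mathcal{V}'(Z(t))$.

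Next, since $Z \in C^2(I)$ by hypothesis, the map $g(t) := H(Z(t),\dot Z(t)) = \tfrac12 \dot Z(t)^2 + \mathcal{V}(Z(t))$ is $C^1$ on $I$, and the chain rule gives
\begin{align*}
    g'(t) = \dot Z(t)\, \ddot Z(t) + \mathcal{V}'(Z(t))\, \dot Z(t) = \dot Z(t)\bigl( \ddot Z(t) + \mathcal{V}'(Z(t)) \bigr) = 0
\end{align*}
for every $t \in I$, using the rewritten form of the equation. Hence $g$ is constant on the interval $I$, and evaluating at $t=0$ yields $H(Z(t),\dot Z(t)) = g(t) = g(0) = H(Y_0,Y_1)$ for all $t \in I$, which is the claim.

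There is no real obstacle here; the only point requiring a word of care is the justification for differentiating $\mathcal{V}$ under the integral sign and for the integration by parts, both of which are immediate from the exponential decay of $Q$ and its first derivatives established in Proposition \ref{propo:Q}(i) and Lemma \ref{parity}. (In the application to \eqref{defi:eq_Z} one takes $\kappa = 2\langle \Lambda Q, Q\rangle^{-1} > 0$, which is positive by \eqref{eq:id2_Q}, but positivity of $\kappa$ plays no role in the conservation statement itself.)
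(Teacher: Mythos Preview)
Your proof is correct and follows exactly the same approach as the paper: differentiate $t\mapsto H(Z(t),\dot Z(t))$ along a solution, observe the derivative vanishes by the equation, and integrate. The paper's proof is much terser (two sentences), while you spell out the justification for differentiating under the integral and the integration by parts, but these are routine and the underlying argument is identical.
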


\begin{proof}
We observe that for any solution $Z$ of \eqref{defi:eq_Z_general}, $\frac{d}{dt}H(Z(t),\dot{Z}(t))=0$. Hence, we conclude the proof by integrating between $0$ and $t$.
\end{proof}

\begin{propo}\label{propo:phase_portrait}
Let $(Y_0,Y_1)\in \mathbb{R}^2$. Then, there exists a unique solution $Z \in C^2(\mathbb R : \mathbb R)$ of \eqref{defi:eq_Z_general} satisfying $\left(Z(0),\dot{Z}(0)\right)=(Y_0,Y_1)$.  
Furthermore, the phase portrait of \eqref{defi:eq_Z_general} is given by Figure \ref{phase_portrait}.
\end{propo}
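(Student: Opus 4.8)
The plan is to analyze \eqref{defi:eq_Z_general} as a one-dimensional conservative mechanical system with potential
\[
U(Y_0) := \kappa \int_{\mathbb{R}^2} Q(x+Y_0,y)\, Q^2(x,y)\, dx\, dy,
\]
so that $H(Y_0,Y_1) = \tfrac12 Y_1^2 + U(Y_0)$ and \eqref{defi:eq_Z_general} reads $\ddot Z = -U'(Z)$. First I would record the qualitative properties of $U$: it is smooth (differentiation under the integral is justified by the exponential decay of $Q$ and its derivatives from Proposition \ref{propo:Q}), it is strictly positive, and by the asymptotic expansion of $Q$ in Proposition \ref{propo:asymp_Q} (or the first-order form \eqref{eq:bound_Q_first_order}) one has $U(Y_0) \to 0$ as $Y_0 \to +\infty$ with $U(Y_0) \sim c\, Y_0^{-1/2} e^{-Y_0}$, while $U(Y_0) \to +\infty$ as $Y_0 \to -\infty$ (since $Q$ is positive and radial, $\int Q(x+Y_0,y) Q^2(x,y)$ grows; one can make this quantitative by the same asymptotics applied on the side where $x+Y_0$ is bounded). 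Crucially $U' < 0$ everywhere: writing $U'(Y_0) = \kappa \int \partial_x Q(x+Y_0,y) Q^2(x,y)\,dxdy = -\kappa \int Q(x+Y_0,y)\,\partial_x(Q^2)(x,y)\,dxdy$ and using that $Q$ is radial and decreasing, a symmetrization/rearrangement argument in the $x$-variable (pairing $x$ with $-x$) shows the integrand has a definite sign. Hence $U$ is strictly decreasing, $\ddot Z = -U'(Z) > 0$ always, so every solution is strictly convex, and there are no equilibria.

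Next I would establish global existence and uniqueness. Uniqueness and local existence are immediate from Cauchy–Lipschitz since the right-hand side of \eqref{defi:eq_Z_general} is a smooth (indeed locally Lipschitz) function of $Z$ alone. For global existence, the conserved quantity $H$ from Lemma \ref{lemm:hamiltonian} gives the a priori bound $\tfrac12 \dot Z(t)^2 = H(Y_0,Y_1) - U(Z(t)) \le H(Y_0,Y_1)$, since $U \ge 0$; thus $|\dot Z|$ is bounded on any finite interval, so $Z$ cannot blow up in finite time and the maximal solution is defined on all of $\mathbb{R}$. This proves the first assertion of Proposition \ref{propo:phase_portrait}.

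For the phase portrait, the level sets $\{H = h\}$ are the trajectories (no fixed points, so each connected component of a level set is a single orbit traversed monotonically in time because $\dot Z$ changes sign at most once, at a point where $\ddot Z > 0$, i.e. a strict minimum of $Z$). I would distinguish cases by the value $h = H(Y_0,Y_1)$: since $\inf U = \lim_{Y_0 \to +\infty} U(Y_0) = 0$ is not attained, for every $h > 0$ the equation $U(Y_0) = h$ has a unique solution $Y_0 = \zeta(h)$ (because $U$ is a strictly decreasing bijection from $\mathbb{R}$ onto $(0,+\infty)$), and the orbit $\{H = h\}$ is the graph $Y_1 = \pm\sqrt{2(h - U(Y_0))}$ over $Y_0 \ge \zeta(h)$: a single convex curve opening to the right, symmetric about the $Y_0$-axis, meeting it orthogonally at $(\zeta(h),0)$ and with $Y_1 \to \pm\sqrt{2h}$ as $Y_0 \to +\infty$ (the asymptote). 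Every solution with $h>0$ therefore comes in from $Y_0 = +\infty$ with velocity $-\sqrt{2h}$, decreases to the turning point $\zeta(h)$ at its unique minimum, and returns to $+\infty$ with velocity $+\sqrt{2h}$; one checks the travel time to/from infinity is infinite (the integral $\int^{+\infty} dY_0/\sqrt{2(h-U(Y_0))}$ diverges since $U \to 0$), so indeed $Z(t) \to +\infty$ as $t \to \pm\infty$. Taking $Y_1 = -2\mu_0$ and $Y_0 \to +\infty$ along this family picks out the distinguished solution of Lemma \ref{lemm:pointwise_Z}, which is even because the equation is autonomous and reversible and the orbit is symmetric about $Y_1 = 0$. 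The degenerate level $h = 0$ is empty. This exhausts all orbits and yields Figure \ref{phase_portrait}.

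\textbf{Main obstacle.} The routine parts are existence/uniqueness and the level-set description; the one genuinely substantive point is proving the strict monotonicity of $U$ (equivalently the sign of $U'$, which also underlies the repulsive nature of the interaction) and the precise asymptotics $U(Y_0) \sim c\,Y_0^{-1/2}e^{-Y_0}$ as $Y_0 \to +\infty$ together with $U(Y_0) \to +\infty$ as $Y_0 \to -\infty$. The sign of $U'$ should follow from a reflection argument exploiting that $Q$ is radially symmetric and strictly decreasing (so $\partial_x Q(x+Y_0,y)$ and $\partial_x(Q^2)(x,y)$ correlate negatively after integrating in $x$); the asymptotics require feeding the expansion of Proposition \ref{propo:Q_n_app_gen} into the integral and controlling the tail, which is the same type of estimate already used elsewhere in the paper (e.g. Proposition \ref{propo:approx_e^-Z} referenced in Lemma \ref{lemm:uniform_Z}). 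I would isolate these as separate lemmas before assembling the phase portrait.
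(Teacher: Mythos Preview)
Your analysis contains a genuine error in the shape of the potential $U$. Because $Q$ is radial (hence even in $x$), the change of variables $x\mapsto -x$ shows that $U(-Y_0)=U(Y_0)$: the potential is \emph{even}, not monotone. In particular $U'(0)=0$, so $(0,0)$ \emph{is} an equilibrium (this is the red point in Figure~\ref{phase_portrait}, and the paper explicitly records the constant solution $Z\equiv 0$); your claim that there are no equilibria is false. Moreover $U$ is bounded (by $\kappa\|Q\|_{L^\infty}\|Q\|_{L^2}^2$), and in fact $U(Y_0)\to 0$ as $Y_0\to -\infty$ by the same asymptotics you cite for $Y_0\to +\infty$; your claim that $U(Y_0)\to+\infty$ at $-\infty$ is wrong. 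The potential is a symmetric bump with global maximum $U(0)=\kappa\|Q\|_{L^3}^3$.

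This changes the phase portrait substantially. Your description (every orbit opens to the right, turns once, and returns to $+\infty$) is correct only for energies $0<h<U(0)$ restricted to the right half-plane; by evenness there is a mirror family in the left half-plane. At the critical level $h=U(0)$ there are four separatrix branches asymptotic to the equilibrium (the blue curves in Figure~\ref{phase_portrait}), and for $h>U(0)$ orbits cross the $\dot Z$-axis and connect $-\infty$ to $+\infty$ (or vice versa). The paper's proof handles this by first establishing $\ddot Z>0$ for $Z>0$ via the symmetrization you sketched (which gives $U'<0$ only on $(0,+\infty)$), analysing the quadrant $\mathbb{R}_+^*\times\mathbb{R}_+^*$, treating the boundary cases (including the equilibrium) separately, and then invoking the $(Y_0,Y_1)\mapsto(-Y_0,-Y_1)$ symmetry. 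Your framework is fine once you correct the shape of $U$; the level-set description then has to be split into the three regimes $h\lessgtr U(0)$, exactly as Corollary~\ref{coro:initial_condition} does.
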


\begin{figure}
    \centering
    \begin{tikzpicture}
        %%%% axis and coordinates
        \draw [->] (-7,0) -- (7,0);
        \draw [->] (0,-2.75) -- (0,2.75);
        \draw (7,0) node[right] {$Z$};
        \draw (0,2.75) node[right] {$\dot{Z}$};
    
        %%%% Solution with a hamiltonian =0
        \draw[red] (0,0) node {$\bullet$};
    
        %%%% Solution with a hamiltonian = l
            %%% upper right
            \draw[blue,->,>=latex,domain=0.3:1,samples=\Num] plot [variable=\t] (\t*\t,{rad(atan(\t))*1.1}); 
            \draw[blue,domain=1:2.6,samples=\Num] plot [variable=\t] (\t*\t,{rad(atan(\t))*1.1});
            
            %%% lower right
            \draw[blue,domain=0.3:1,samples=\Num] plot [variable=\t] (\t*\t,{-rad(atan(\t))*1.1}); 
            \draw[blue,->,>=latex,domain=2.6:1,samples=\Num] plot [variable=\t] (\t*\t,{-rad(atan(\t))*1.1});
            
            %%% upper left
            \draw[blue,domain=0.3:1,samples=\Num] plot [variable=\t] (-\t*\t,{rad(atan(\t))*1.1}); 
            \draw[blue,->,>=latex,domain=2.6:1,samples=\Num] plot [variable=\t] (-\t*\t,{rad(atan(\t))*1.1});
            
            %%% lower left
            \draw[blue,->,>=latex,domain=0.3:1,samples=\Num] plot [variable=\t] (-\t*\t,{-rad(atan(\t))*1.1}); 
            \draw[blue,domain=1:2.6,samples=\Num] plot [variable=\t] (-\t*\t,{-rad(atan(\t))*1.1});
    
        %%%% Solutions with a hamiltonian < l
            %%%% Z>0
            \draw [->,>=latex,domain=-2.4:1,samples=\Num] plot [variable=\t] (1+\t*\t,{rad(atan(\t))});
            \draw [domain=1:2.4,samples=\Num] plot [variable=\t] (1+\t*\t,{rad(atan(\t))});
            
            \draw [->,>=latex,domain=-2.2:1,samples=\Num] plot [variable=\t] (2+\t*\t,{rad(atan(\t))/2});
            \draw [domain=1:2.2,samples=\Num] plot [variable=\t] (2+\t*\t,{rad(atan(\t))/2});
            
            \draw [->,>=latex,domain=-1.7:1,samples=\Num] plot [variable=\t] (4+\t*\t,{rad(atan(\t))/3});
            \draw [domain=1:1.7,samples=\Num] plot [variable=\t] (4+\t*\t,{rad(atan(\t))/3});
            
            %%%% Z<0
            \draw [->,>=latex,domain=-2.4:1,samples=\Num] plot [variable=\t] (-1-\t*\t,{-rad(atan(\t))});
            \draw [domain=1:2.4,samples=\Num] plot [variable=\t] (-1-\t*\t,{-rad(atan(\t))});
            
            \draw [->,>=latex,domain=-2.2:1,samples=\Num] plot [variable=\t] (-2-\t*\t,{-rad(atan(\t))/2});
            \draw [domain=1:2.2,samples=\Num] plot [variable=\t] (-2-\t*\t,{-rad(atan(\t))/2});
            
            \draw [->,>=latex,domain=-1.7:1,samples=\Num] plot [variable=\t] (-4-\t*\t,{-rad(atan(\t))/3});
            \draw [domain=1:1.7,samples=\Num] plot [variable=\t] (-4-\t*\t,{-rad(atan(\t))/3});

        %%%% Solutions with a hamiltonian > l
            %%%% \dot{Z}>0
            \draw [->,>=latex,domain=-6.9:1,samples=\Num] plot [variable=\t] (\t,{1.5*(\t*\t+1)/(\t*\t+2)});
            \draw [domain=1:6.9,samples=\Num] plot [variable=\t] (\t,{1.5*(\t*\t+1)/(\t*\t+2)});
            
            \draw [->,>=latex,domain=-6.9:1,samples=\Num] plot [variable=\t] (\t,{2*(\t*\t+1)/(\t*\t+2)});
            \draw [domain=1:6.9,samples=\Num] plot [variable=\t] (\t,{2*(\t*\t+1)/(\t*\t+2)});
            
            \draw [->,>=latex,domain=-6.9:1,samples=\Num] plot [variable=\t] (\t,{0.25+2.5*(\t*\t+2)/(\t*\t+4)});
            \draw [domain=1:6.9,samples=\Num] plot [variable=\t] (\t,{0.25+2.5*(\t*\t+2)/(\t*\t+4)});
            
            %%%% \dot{Z}<0
            \draw [->,>=latex,domain=-6.9:1,samples=\Num] plot [variable=\t] (-\t,{-1.5*(\t*\t+1)/(\t*\t+2)});
            \draw [domain=1:6.9,samples=\Num] plot [variable=\t] (-\t,{-1.5*(\t*\t+1)/(\t*\t+2)});
            
            \draw [->,>=latex,domain=-6.9:1,samples=\Num] plot [variable=\t] (-\t,{-2*(\t*\t+1)/(\t*\t+2)});
            \draw [domain=1:6.9,samples=\Num] plot [variable=\t] (-\t,{-2*(\t*\t+1)/(\t*\t+2)});
            
            \draw [->,>=latex,domain=-6.9:1,samples=\Num] plot [variable=\t] (-\t,{-0.25-2.5*(\t*\t+2)/(\t*\t+4)});
            \draw [domain=1:6.9,samples=\Num] plot [variable=\t] (-\t,{-0.25-2.5*(\t*\t+2)/(\t*\t+4)});
    \end{tikzpicture}
    \caption{Phase portrait of equation \eqref{defi:eq_Z_general}. In red: the constant solution. In blue: the four solutions defined on $\mathbb{R}$ ending at $(0,0)$ when $t$ goes to $-\infty$ or $+\infty$. In black: all the other trajectories.}\label{phase_portrait}
\end{figure}
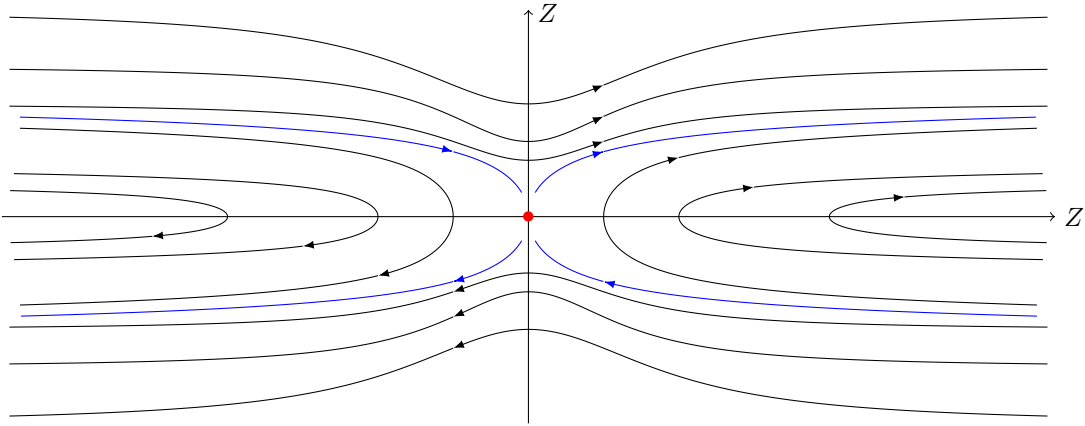

\begin{proof}
First, notice for $\tilde{Y}_0, Y_0 \in \mathbb R$ that
\begin{toexclude}
\begin{align*}
    \left\vert Q(x+Y_0,y)- Q(x+\tilde{Y}_0,y) \right\vert \leq \int_{\tilde{Y}_0}^{Y_0} \left\vert \partial_x Q(x+s,y) \right\vert ds \leq \left\vert Y_0 - \tilde{Y}_0 \right\vert \| \partial_x Q \|_{L^\infty},
\end{align*}
and thus
\end{toexclude}
\begin{align*}
    \left\vert \int Q(x+Y_0,y)\partial_x Q^2(x,y)dxdy - \int Q(x+\tilde{Y}_0,y) \partial_x Q^2(x,y)dxdy \right\vert \leq \| \partial_x Q \|_{L^\infty} \| Q \|_{H^1}^2 \left\vert Y_0 -\tilde{Y}_0 \right\vert.
\end{align*}

Therefore, for any initial condition $(Y_0,Y_1)\in \mathbb{R}^2$, we deduce from the Cauchy-Lipschitz-Picard theorem the existence of a unique globally defined solution.

We now describe the phase portrait of the solutions. Let us first focus on initial conditions $(Y_0,Y_1) \in \mathbb{R}_+^*\times \mathbb{R}_+^*$. For such initial conditions, we claim that the curve $\left\{ (Z(t), \dot{Z}(t)) : t \in \mathbb{R}_+\right\}$ remains in the domain $\mathbb{R}_+^*\times \mathbb{R}_+^*$ and that $(Z(t),\dot{Z}(t)) \to (+\infty,l_1)$ when $t \to+\infty$, for some positive $l_1$. 

Indeed, it follows form \eqref{defi:eq_Z_general} that 
\begin{align*}
 \ddot{Z}(t) = \kappa \int_{y \in \mathbb{R}} \int_{x \ge 0} \left(Q(x+Z(t),y)-Q(-x+Z(t),y) \right) \partial_x \left(Q^2\right)(x,y)dx dy >0
\end{align*}
since $\partial_x(Q^2)$ is odd in $x$ and $Q$ is positive and decreasing for $x \ge 0$. Thus $Z$ is convex and $\dot{Z}(t)>Y_1>0$ on $\mathbb R_+$. This proves that the curve remains in this domain. Furthermore, $Z$ is bounded by below by $Y_1t$, so that $Z(t)$ tends to $+\infty$ at $+\infty$. 

To study the limit of $\dot{Z}$, we use the Hamiltonian defined in \eqref{defi:H_static}. Observe that for a fixed $Y_1$ (respectively $Y_0$), the function $H(\cdot, Y_1)$ is strictly decreasing (resp. $H(Y_0,\cdot)$ is strictly increasing). We deduce from Lemma \ref{lemm:hamiltonian} and \eqref{est.decay.QzQ} that
\begin{align}\label{limit_hamiltonian}
    \lim_{t\rightarrow +\infty} \dot{Z}(t)^2 = \lim_{t\rightarrow +\infty} 2H(Z(t), \dot{Z}(t)) = 2H(Y_0,Y_1) .
\end{align}

Observe that each curve of the phase portrait in the region $\mathbb{R}_+^\star \times\mathbb{R}_+^\star$ corresponds to a different Hamiltonian. Indeed, if two initial conditions $(Y_0,Y_1)$ and $(\tilde{Y}_0,\tilde{Y}_1)$ are associated to the same Hamiltonian $H(Y_0,Y_1)=H(\tilde{Y}_0,\tilde{Y}_1)$, let us denote by $Z$ and $\tilde{Z}$ the two corresponding solutions. Assume without loss of generality that $Y_0 < \tilde{Y}_0$. Then, $Z$ is bijective from $[0,+\infty)$ to $[Y_0,+\infty)$, and thus $Z(t_1)=\tilde{Y}_0$ for some $t_1>0$. Since $H(Z(t_1),\dot{Z}(t_1))=H(Y_0,Y_1)=H(\tilde{Z}(0),\dot{\tilde{Z}}(0))$ and $Y_1 \mapsto H(\tilde{Y}_0,Y_1)$ is strictly increasing on $\mathbb{R}_+$, we have $\dot{\tilde{Z}}(0)=\tilde{Y}_1=\dot{Z}(t_1)$. By uniqueness of the Cauchy-Lipschitz-Picard theorem, we conclude that the two solutions are the same up to a time translation.

By \eqref{limit_hamiltonian}, we conclude that each curve of the phase portrait has a different limit for $\dot{Z}$ at infinity. 

We continue by describing the set of solutions on the boundary of the domain of initial conditions $(Z(0),\dot{Z}(0))=(Y_0,Y_1)\in \mathbb{R}_+^*\times \mathbb{R}_+^*$. First, notice that if $(Y_0,Y_1)=(0,0)$, then the solution is constant $Z(t)=0$. In the case where $(Y_0,Y_1)=(Z_0,0)$ with $Z_0>0$, then by strict convexity of $Z$, the graph of $(Z(t),\dot{Z}(t))$ for any time $t>0$ remains in $\mathbb{R}_+^*\times \mathbb{R}_+^*$. In the case $(Y_0,Y_1)=(0,\mu_1)$ with $\mu_1>0$, we have $Z(t)>0$ close to $0$ and thus the graph remains in the same domain.

We deduce the description of whole set of solutions by symmetry from the set of solutions corresponding to initial conditions in $\mathbb{R}_+\times \mathbb{R}_+$. This concludes the proof of the proposition.
\end{proof}

%%%%%%%%%%%%%%%%%%%%%%%%%%%%%%%%
%%%%% An alternative proof for the study of Z with ODE arguments
%%%%%%%%%%%%%%%%%%%%%%%%%%%%%%%%

\begin{toexclude}
\begin{rema}
By similar computations, we can also prove that for $Z>Z_0^*$, we have
\begin{align}\label{eq:monotonic}
    \int_{\mathbb{R}^2} \partial_x Q(x+Z,y) \partial_x Q^2(x,y) dxdy \leq -\frac{\kappa a_0}{2} \frac{e^{-Z}}{\sqrt{Z}}\int_{\mathbb{R}^2} e^{-x} \partial_x(Q^2)(x,y) dxdy <0.
\end{align}
\end{rema}
\begin{proof}[Alternative proof for a solution with an initial condition at $-\infty$]
With the initial conditions at $-\infty$, by multiplying the equation \eqref{defi:eq_Z} by $\dot{Z}$ and integrating, we have
\begin{align}\label{equation_Z_dot}
    2 \mu_0^2 - \frac{1}{2} \dot{Z}(t)^2 = \frac{2}{\langle \Lambda Q, Q \rangle} \int_{\mathbb{R}^2} Q(x+Z(t),y) Q^{2}(x,y) dxdy.
\end{align}

We first prove the existence of a solution for $\mu_0$ small enough. By defining $Z_0^*$ as in the remark above, we define $\mu_0^*$ by the relation \eqref{defi:Z_0} with $Z_0^*$ instead of $Z_0$. We now fix $\mu_0\leq \mu_0^*$ and define $Z_0$ satisfying relation \eqref{defi:Z_0}, and we obtain $Z_0>Z_0^*$. We claim that there exists a unique even solution of \eqref{defi:eq_Z} with the initial condition $(Z,\dot{Z})(0)=(Z_0,0)$ and that this function is also a solution to the equation of the lemma.

Notice that for $Z>Z_0^*$ the function $Z \mapsto \int Q(x+Z,y) \partial_x (Q^2(x,y))dxdy$ is positive because, by \eqref{eq:monotonic}, we have 
\begin{align*}
    \frac{d}{dZ} \int_{\mathbb{R}^2} Q(x+Z,y) \partial_x Q^2(x,y) dxdy <0, \quad \lim_{Z\rightarrow +\infty} \int_{\mathbb{R}^2} Q(x+Z,y) \partial_x Q^2(x,y) dxdy =0.
\end{align*}

%Notice that for $Z>0$ the function $Z \mapsto \int Q(x+Z,y) \partial_x (Q^2(x,y))dxdy$ is positive. Indeed, the function $\partial_xQ^2(x,y)$ is odd in $x$ and negative for $x>0$. For $Z>0$, since for a fixed $y\in \mathbb{R}$, the function $x\mapsto Q(x+Z,y)$ is strictly decreasing on $(-Z;+\infty)$, we have for any $0<x_0$, $0<-Q(x_0+Z,y)\partial_x(Q^2(x_0,y))< Q(-x_0+Z,y)\partial_x(Q^2(-x_0,y))$. By integrating over all $x_0\in \mathbb{R}$, we obtain the positivity of the function.

Since $\Ddot{Z}>0$, the function $Z$ is strictly convex and the minimum is achieved at $0$. Since $\dot{Z}$ is bounded by \eqref{equation_Z_dot}, the solution $Z$ is globally defined. Note that by local uniqueness, $Z$ is even in $t$. To check the conditions at $-\infty$, we notice that since $Z$ decreases on $\mathbb{R}_-$, for $\epsilon$ small enough we have
\begin{align*}
    0<2\mu_0^2- \frac{2}{\langle \Lambda Q, Q \rangle} \int_{\mathbb{R}^2} Q(x+Z_0+\epsilon,y) Q^2(x,y) dx dy < \frac{1}{2} \dot{Z}(t)^2,
\end{align*}
and thus $Z(t)$ goes to $+\infty$ at $-\infty$. Plugging in this condition in the equation satisfied by $\dot{Z}$, we obtain
\begin{align*}
    \lim_{t\rightarrow -\infty} \dot{Z}(t)=-2\mu_0.
\end{align*}

As a consequence, we obtain $Z(t)\leq - C t$ for some positive constant $C$ for $t\leq -T_0$ for $T_0$ large enough. We also have $\left( 2\mu_0 -\dot{Z}(t) \right) \left( 2\mu_0 +\dot{Z}(t)\right)>0$, and thus $2\mu_0+\dot{Z}(t)$ is positive on an interval $(-\infty,-T_0)$ increasing $T_0$ if necessary. For the next order term, by using the equation satisfied by $\dot{Z}$, we have
\begin{align*}
\left\vert \left( 2\mu_0 -\dot{Z}(t) \right) \left( 2\mu_0 +\dot{Z}(t)\right) \right\vert \lesssim e^{C t},
\end{align*}
and thus $\dot{Z}(t) +2\mu_0 = O_{-\infty}(e^{Ct})$. An integration from $t<-T_0$ to $-T_0$ for $T_0$ large enough provides
\begin{align*}
    \left\vert Z(t) +2\mu_0t -\left( Z(-T_0) +2\mu_0 T_0 \right) \right\vert \lesssim e^{CT_0}.
\end{align*}
By monotonicity of $t\mapsto Z(t)+2\mu_0 t$ close to $-\infty$, we obtain that $Z(t)-2\mu_0t$ converges to a finite limit $l$. Notice that by convexity, $Z$ is above its asymptote at $-\infty$. We obtain the condition of the lemma
\begin{align*}
    \lim_{t\rightarrow -\infty} \left( Z(t) + 2\mu_0 t -l \right)=0.
\end{align*}

We now prove the uniqueness of the equation of the lemma for values of $\mu_0$ lower that $\frac{1}{2} \mu_0^*$. The following arguments are due the interpretation of a phase portrait. Suppose that for $\mu_0 < \frac{1}{2} \mu_0^*$, there exists another even solution $Y$ to \eqref{defi:eq_Z} with the condition
\begin{align*}
    \lim_{t\rightarrow -\infty} (Y(t),\dot{Y}(t)) = (+\infty, -2\mu_0).
\end{align*}
By continuity of $Y$ and $\dot{Y}$, there exists $t_0$ close to $-\infty$ such that $\dot{Y}(t_0) \in (-\mu_0^*,0)$ and $Y(t_0)$ large enough such that
\begin{align*}
    \frac{1}{\langle \Lambda Q, Q \rangle} \int_{\mathbb{R}^2} Q(x + Y(t_0),y) Q^2(x,y) dxdy \leq\frac{1}{4} (\mu_0^*)^2.
\end{align*}
Now consider the following final velocity
\begin{align*}
    \tilde{\mu}^2_0 := \frac{\dot{Y}(t_0)^2}{4} + \frac{1}{\langle \Lambda Q, Q \rangle} \int_{\mathbb{R}^2} Q(x+Y(t_0),y) Q^2(x,y) dx dy \leq \frac{1}{2} (\mu_0^*)^2 < (\mu_0^*)^2,
\end{align*}
and denote by $\tilde{Y}$ the solution constructed at the previous step associated to the velocity $\tilde{\mu}_0$. In particular, since $\dot{\tilde{Y}}$ strictly increases on $(-\infty;0]$ from $-2\tilde{\mu}_0^2$ to $0$, there exists a unique time $t_1$ such that $\dot{\tilde{Y}}(t_1)=\dot{Y}(t_0)$. By the relation \eqref{equation_Z_dot}, we also have $\tilde{Y}(t_1)=Y(t_0)$. Since the equation is autonomous, by uniqueness around one initial condition, $\tilde{Y}$ is a time-translated version of $Y$. Since the two functions are convex and even, they are equal. Thus $Y$ is one of the solution constructed at the previous step, and by the uniqueness of the asymptotic development, we have $\tilde{\mu}_0=\mu_0$. It concludes the proof of the uniqueness of the even solution.
\end{proof}
\end{toexclude}

\begin{coro}\label{coro:initial_condition}
Let $l_1\in \mathbb{R}_+^*$. There exists a unique solution, up to time translation, to \eqref{defi:eq_Z_general} with the condition
\begin{align} \label{coro:initial_condition.1}
    \lim_{t\rightarrow -\infty} ( Z(t), \dot{Z}(t) ) = (+\infty,-l_1).
\end{align}
\end{coro}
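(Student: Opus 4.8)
The plan is to reduce Corollary \ref{coro:initial_condition} to the phase portrait description in Proposition \ref{propo:phase_portrait}, which already classifies all solutions of \eqref{defi:eq_Z_general}. First I would observe that a solution satisfying \eqref{coro:initial_condition.1} must, for $t$ sufficiently negative, have $\dot{Z}(t)<0$ and $Z(t)$ large; in particular the trajectory $(Z(t),\dot{Z}(t))$ lies, for $t$ close to $-\infty$, in the open fourth quadrant $\mathbb{R}_+^\star\times(-l_1,0)$ (using that $\dot Z \to -l_1$ monotonically, which I show below). By the computation in the proof of Proposition \ref{propo:phase_portrait}, $\ddot{Z}(t)>0$ whenever $Z(t)>0$, so after applying the symmetry $(Z,\dot Z)\mapsto(-Z,-\dot Z)$ (which maps solutions to solutions) we are exactly on one of the trajectories in the second quadrant described in the phase portrait, i.e. a curve on which $\dot Z$ is negative, $Z$ is positive, and which emanates from the neighbourhood of $(+\infty,-l_1)$ as $t\to-\infty$.

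The key existence step is to exhibit one such solution with the prescribed asymptote $-l_1$. Here I would invoke Lemma \ref{lemm:hamiltonian}: along any trajectory, $H(Z(t),\dot Z(t))=\tfrac12\dot Z(t)^2+\kappa\int Q(x+Z(t),y)Q^2(x,y)\,dxdy$ is constant, and by \eqref{est.decay.QzQ} the integral term tends to $0$ as $Z\to+\infty$. So I would start from an initial condition $(Z(0),\dot Z(0))=(Z_0,0)$ with $Z_0$ large (chosen precisely so that $\tfrac{2}{\kappa}\int Q(x+Z_0,y)Q^2\,dxdy$ equals $\tfrac12 l_1^2$ in the relevant normalization, using monotonicity of $Z_0\mapsto\int Q(x+Z_0,y)Q^2$ for $Z_0$ large — which follows from the asymptotic expansion in Proposition \ref{propo:Q_n_app_gen}, or alternatively from the sign computation $\tfrac{d}{dZ_0}\int Q(x+Z_0,y)\partial_x(Q^2)<0$). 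Strict convexity of $Z$ for $Z>0$ gives that $Z$ is globally defined, even in $t$ (by uniqueness with $\dot Z(0)=0$), decreasing on $(-\infty,0]$, and $Z(t)\to+\infty$ as $t\to-\infty$; plugging $Z(t)\to+\infty$ into the conserved Hamiltonian forces $\dot Z(t)^2\to l_1^2$, hence $\dot Z(t)\to-l_1$ on $(-\infty,0]$.

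For uniqueness up to time translation I would argue exactly as in the second half of the proof of Proposition \ref{propo:phase_portrait}: two solutions $Z,\tilde Z$ both satisfying \eqref{coro:initial_condition.1} have, near $-\infty$, trajectories in the second quadrant with the same conserved Hamiltonian value $\tfrac12 l_1^2$ (by passing to the limit using \eqref{est.decay.QzQ}); since on that quadrant $Z\mapsto H(Z,\dot Z)$ separates distinct trajectories (the argument: $\tilde Z$ is bijective onto a half-line, so $\tilde Z(t_1)=Z(0)$ for some $t_1$, and then monotonicity of $Y_1\mapsto H(Z(0),Y_1)$ on the appropriate sign range forces $\dot{\tilde Z}(t_1)=\dot Z(0)$), the Cauchy--Lipschitz uniqueness theorem gives that $\tilde Z$ is a time translate of $Z$. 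Finally I would record, as in the present excerpt, the existence of the asymptote constant $l$ of Lemma \ref{lemm:pointwise_Z}(ii) by integrating the estimate $\dot Z(t)+l_1=O(e^{ct})$ near $-\infty$ — this comes from the conserved Hamiltonian and the exponential decay $\left|\int Q(x+Z(t),y)Q^2(x,y)\,dxdy\right|\lesssim e^{-Z(t)}\lesssim e^{ct}$.

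\textbf{Main obstacle.} The delicate point is not the ODE theory but pinning down that the map $Z_0 \mapsto \int Q(x+Z_0,y)Q^2(x,y)\,dxdy$ is a decreasing bijection from $(Z_0^\star,+\infty)$ onto $(0,c_0)$ for some $c_0>0$, so that every target velocity $l_1$ small enough is attained by exactly one initial point $(Z_0,0)$ on the convex branch; this requires the monotonicity statement (via $\tfrac{d}{dZ_0}$ and the sign of $\int\partial_x Q(x+Z_0,y)\partial_x(Q^2)\,dxdy$, established from the asymptotics of $Q$) together with the limit from \eqref{est.decay.QzQ}. Everything else is a direct transcription of the phase-portrait analysis already carried out in Proposition \ref{propo:phase_portrait}.
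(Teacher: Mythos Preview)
Your approach is the same as the paper's --- use the conserved Hamiltonian to label trajectories and then read off existence/uniqueness from the phase portrait of Proposition~\ref{propo:phase_portrait} --- but your execution has a genuine gap and an unnecessary detour.

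\textbf{The gap.} You only establish existence for $l_1$ small: you look for an initial condition of the form $(Z_0,0)$ with $Z_0$ large and claim ``every target velocity $l_1$ small enough is attained''. But the statement is for \emph{every} $l_1\in\mathbb{R}_+^*$. The map $Z_0\mapsto \kappa\int Q(x+Z_0,y)Q^2\,dxdy$ is a bijection from $(0,+\infty)$ onto $(0,\kappa\|Q\|_{L^3}^3)$, so initial data of the form $(Z_0,0)$ only reach the range $l_1^2<2\kappa\|Q\|_{L^3}^3$. The paper handles the remaining cases by starting instead from $(0,Y_1)$ with $Y_1<0$ when $l_1^2>2\kappa\|Q\|_{L^3}^3$ (then $H(0,Y_1)=\tfrac{Y_1^2}{2}+\kappa\|Q\|_{L^3}^3=\tfrac{l_1^2}{2}$), and by identifying the separatrix (blue curve) when $l_1^2=2\kappa\|Q\|_{L^3}^3$. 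Your phase-portrait reasoning (``the trajectory lies, for $t$ close to $-\infty$, in $\mathbb{R}_+^\star\times(-l_1,0)$'') also silently assumes the first case: for large $l_1$ the trajectory crosses $Z=0$.

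\textbf{The unnecessary detour.} Your ``main obstacle'' --- proving monotonicity of $Z_0\mapsto\int Q(x+Z_0,y)Q^2\,dxdy$ via asymptotics --- is not needed. The symmetry computation already carried out in the proof of Proposition~\ref{propo:phase_portrait} (split the $x$-integral at $0$, use that $\partial_x(Q^2)$ is odd in $x$ and $Q$ is radially decreasing) shows $\ddot Z>0$ whenever $Z>0$, which is exactly $\tfrac{d}{dZ_0}\int Q(x+Z_0,y)Q^2\,dxdy<0$ on all of $(0,+\infty)$, not just for $Z_0$ large. This gives you the bijection onto $(0,\kappa\|Q\|_{L^3}^3)$ directly and makes the appeal to Proposition~\ref{propo:Q_n_app_gen} superfluous. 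Your uniqueness argument via the Hamiltonian is correct and matches the paper's.
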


\begin{proof}
Let us split the different possibilities of $l_1$, and compare with the hamiltonian \eqref{defi:H_static}. We begin with the existence of a solution.
\begin{itemize}
    \item $l_1^2 < 2\kappa \| Q \|_{L^3}^3$. In this case, define $Y_0$ as the unique positive real coefficient such that
    \begin{align}\label{defi:Z_0_l_1}
        l_1^2 = 2\kappa \int_{\mathbb{R}^2} Q(x+Y_0,y) Q^2(x,y) dxdy.
    \end{align}
    The solution $Z$ associated to the initial condition $(Y_0,0)$ has the hamiltonian $H(Y_0,0)=\frac{l_1^2}{2}$, and thus is a solution to the problem by \eqref{limit_hamiltonian}.
    \item $l_1^2 > 2\kappa \| Q \|_{L^3}^3$. In this case, define $Y_1$ as the unique negative real coefficient such that to
    \begin{align*}
         Y_1^2 =l_1^2 -2\kappa \| Q\|_{L^3}^3.
    \end{align*}
    The solution $Z$ associated to the initial condition $(0,Y_1)$ has the hamiltonian $H(0,Y_1)=\frac{l_1^2}{2}$, and is a solution to the problem by \eqref{limit_hamiltonian}. 
    \item $l_1^2 = 2\kappa \| Q \|_{L^3}^3$. This case corresponds to the blue curve on the phase portrait, ending at $(+\infty,-l_1)$ as $t\rightarrow -\infty$ and to $(0,0)$ as $t\rightarrow+\infty$.
\end{itemize}
By the previous discussion on the phase portrait, each of the solution is uniquely determined by the hamiltonian, thus the uniqueness of the solution up to time translation.
\end{proof}

\begin{lemm}[Asymptote]\label{lemm:asymp_Z}
Let $0<l_1$ with $l_1^2 < 2\kappa \| Q \|_{L^3}^3$. Then the unique solution $Z$ of \eqref{defi:eq_Z_general} with the initial condition $(Z(0),\dot{Z}(0))=(Y_0,0)$, with $Y_0$ defined in \eqref{defi:Z_0_l_1}, is even and has an asymptote at $+\infty$. More precisely, there exist constants $l = l(l_1)\in \mathbb{R}$ and $c=c(l_1)>0$ such that
\begin{align*}
    \left\vert Z(t) -l_1 t -l \right\vert \lesssim e^{-ct}.
\end{align*}
\end{lemm}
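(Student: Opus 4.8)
The plan is to analyze the solution $Z$ through its Hamiltonian structure, following the same ODE-phase-portrait philosophy used for Proposition~\ref{propo:phase_portrait} and Corollary~\ref{coro:initial_condition}. First I would recall that by Proposition~\ref{propo:phase_portrait} there is a unique global $C^2$ solution $Z$ with $(Z(0),\dot Z(0))=(Y_0,0)$, and that since $\ddot Z>0$ on $\mathbb{R}$ (shown in the proof of Proposition~\ref{propo:phase_portrait} using that $\partial_x(Q^2)$ is odd in $x$ and $Q$ is positive and radially decreasing), $Z$ is strictly convex with its global minimum $Y_0$ attained at $t=0$; by local uniqueness of the Cauchy--Lipschitz flow applied to the reflection $t\mapsto Z(-t)$, which solves the same autonomous ODE with the same initial data, $Z$ is even. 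The key input from Corollary~\ref{coro:initial_condition} (applied with $-l_1$, noting $l_1^2<2\kappa\|Q\|_{L^3}^3$) is that this $Z$ satisfies $\lim_{t\to-\infty}(Z(t),\dot Z(t))=(+\infty,-l_1)$; by evenness this gives $\lim_{t\to+\infty}(Z(t),\dot Z(t))=(+\infty,l_1)$ as well.

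Next I would quantify the rate of convergence of $\dot Z$ to $l_1$. From Lemma~\ref{lemm:hamiltonian}, $H(Z(t),\dot Z(t))=H(Y_0,0)=\tfrac12 l_1^2$ for all $t$, i.e.
\begin{align*}
\tfrac12 \dot Z(t)^2 = \tfrac12 l_1^2 - \kappa \int_{\mathbb{R}^2} Q(x+Z(t),y)\,Q^2(x,y)\,dx\,dy .
\end{align*}
Hence $\bigl(l_1-\dot Z(t)\bigr)\bigl(l_1+\dot Z(t)\bigr) = 2\kappa \int Q(x+Z(t),y)Q^2(x,y)\,dx\,dy$, and since for large $t$ we have $l_1+\dot Z(t)\to 2l_1>0$, while the right-hand side is $\lesssim Z(t)^{-1/2}e^{-Z(t)}$ by \eqref{est.decay.QzQ} (Remark~\ref{rema:QzQ}), this yields $0\le l_1-\dot Z(t)\lesssim Z(t)^{-1/2}e^{-Z(t)}$. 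The sign ($\dot Z(t)\le l_1$) follows from the convexity of $Z$: $\dot Z$ is increasing and tends to $l_1$, so it stays below. Moreover $Z$ grows at least linearly for large $t$: from $\dot Z(t)\ge \tfrac12 l_1$ on $[T_0,+\infty)$ for $T_0$ large (again by monotonicity of $\dot Z$), we get $Z(t)\ge \tfrac12 l_1 t + O(1)$, so $Z(t)^{-1/2}e^{-Z(t)}\lesssim e^{-\frac{l_1}{2} t}$, hence $0\le l_1-\dot Z(t)\lesssim e^{-ct}$ with $c=l_1/2$ (say).

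Finally I would integrate. Writing $g(t):=l_1-\dot Z(t)\ge 0$ with $g(t)\lesssim e^{-ct}$, the function $t\mapsto Z(t)-l_1 t$ has derivative $-g(t)$, which is integrable at $+\infty$; therefore $Z(t)-l_1 t$ is monotone decreasing and converges to a finite limit $l:=\lim_{t\to+\infty}\bigl(Z(t)-l_1 t\bigr)$. Then
\begin{align*}
\bigl| Z(t)-l_1 t - l \bigr| = \Bigl| \int_t^{+\infty} g(s)\,ds \Bigr| \le \int_t^{+\infty} C e^{-cs}\,ds = \frac{C}{c}\, e^{-ct},
\end{align*}
which is the claimed estimate (after relabeling the constant $c$ and $C$; one may have to shrink $c$ slightly, e.g. to any value below $l_1/2$, and take $t\ge T_0$, then extend to all $t$ by adjusting the multiplicative constant on the compact piece). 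The main obstacle here is purely bookkeeping: ensuring the exponential rate $c$ extracted from $Z(t)^{-1/2}e^{-Z(t)}$ is genuinely positive and uniform, which requires first establishing the linear lower bound on $Z$ — and that lower bound itself relies on the convexity/monotonicity of $\dot Z$ together with the already-established limit $\dot Z(t)\to l_1$. There is no real analytic difficulty beyond carefully chaining these monotonicity facts; the Hamiltonian identity does all the heavy lifting, exactly as in the excerpt's treatment of the $t\to-\infty$ asymptote.
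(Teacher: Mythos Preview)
Your proof is correct and follows essentially the same approach as the paper: evenness via uniqueness of the autonomous Cauchy problem, the Hamiltonian identity to express $l_1^2-\dot Z(t)^2$ as $2\kappa\int Q(\cdot+Z(t),\cdot)Q^2$, a linear lower bound on $Z$ (you get it from $\dot Z(t)\ge l_1/2$ for large $t$, the paper from $Z(t)>\dot Z(t_0)(t-t_0)$ by convexity---same idea), exponential smallness of $l_1-\dot Z(t)$, and integration to extract the asymptote. The only cosmetic difference is that you invoke Corollary~\ref{coro:initial_condition} to pin down $\dot Z(t)\to l_1$ before quantifying, whereas the paper works directly from the Hamiltonian and convexity; both are fine and logically equivalent.
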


\begin{proof}
If $Z(t)$ is a solution to \eqref{defi:eq_Z_general} with the initial condition $(Y_0,0)$, then $t\mapsto Z(-t)$ is also a solution, and by uniqueness we have $Z(t)=Z(-t)$. We can thus focus on the case $t\geq 0$.

Since $Z(t)>0$, for all $t >0$ (from Proposition \ref{propo:phase_portrait}), 
\begin{align*}
    \ddot{Z}(t) \geq \kappa \int_{\mathbb{R}^2} Q(x+Z,y) \partial_x (Q^2)(x,y) dx dy>0 .
\end{align*}
Hence, for a fixed $t_0>0$ and any $t>t_0$, we have $Z(t)>\dot{Z}(t_0)(t-t_0)$ with $\dot{Z}(t_0)>0$. Moreover, it follows from Lemma \ref{lemm:hamiltonian} that
\begin{align}\label{equation_Z_dot}
    \frac{l_1^2}{2} = H(Z(t),\dot{Z}(t)) = \frac{\dot{Z}(t)^2}{2} + \kappa \int_{\mathbb{R}^2} Q(x+Z(t),y) Q^2(x,y) dx dy.
\end{align}
As underlined in the proof of Proposition \ref{propo:phase_portrait}, the function $Z \mapsto \int Q(x+Z,y) Q^2(x,y)dx dy$ is strictly decreasing on $\mathbb{R}_+$. Thus for $t\geq t_0$
\begin{align*}
    l_1^2 - 2\kappa \int_{\mathbb{R}^2} Q(x+\dot{Z}(t_0)(t-t_0),y) Q^2(x,y) dx dy \leq \dot{Z}(t)^2 .
\end{align*}
From Lemma \ref{lemm:fg}, there exist two positive constants $c$ and $C$ such that, for $t$ large enough,
$ l_1^2- Ce^{-ct} \leq \dot{Z}(t)^2$, and thus $  0<l_1-\dot{Z}(t) \leq \frac{C}{l_1}e^{-ct}$.
An integration from $t$ to $+\infty$ yields
\begin{align*}
    0 \leq \lim_{s\rightarrow +\infty} \left( l_1 s -Z(s)\right) - l_1t +Z(t) \lesssim e^{-ct}. 
\end{align*}
Since $l_1 t-Z(t)$ is concave and bounded on $\mathbb{R}_+$, it has a finite limit $l$ at $+\infty$, which concludes the proof of the lemma.
\end{proof}

The following proposition gives an approximation of the right-hand-side of \eqref{defi:eq_Z_general} for $Z$ large.

\begin{propo}\label{propo:approx_e^-Z}
There exist two positive constants $Z^\star$ and $c$ such that it holds, for any $Z>Z^\star$ and $i\in \{0,1,2\}$,
\begin{align}
    & \left\vert \int_{\mathbb{R}^2} Q (x+Z,y) \partial_x^i(Q^2)(x,y) dx dy -(-1)^i c\frac{e^{-Z}}{\sqrt{Z}} \right\vert \lesssim \frac{e^{-Z}}{Z},  \label{propo:approx_e^-Z.1}\\
    & \left\vert \int_{\mathbb{R}^2} \left( \partial_x Q + Q \right)(x+Z,y) Q^2(x,y) dx dy \right\vert \lesssim \frac{e^{-Z}}{Z}. \label{propo:approx_e^-Z.2}
\end{align}
\end{propo}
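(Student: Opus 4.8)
The statement to prove is Proposition~\ref{propo:approx_e^-Z}, which asserts that for $Z$ large, the integral $\int Q(x+Z,y)\partial_x^i(Q^2)(x,y)\,dxdy$ is, up to $O(e^{-Z}/Z)$, a constant multiple of $(-1)^i e^{-Z}/\sqrt{Z}$, and that the ``$\partial_x Q + Q$'' combination against $Q^2$ decays faster, like $e^{-Z}/Z$.

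\textbf{Approach.} The plan is to split the region of integration according to whether $|\bx|$ is small compared to $Z$ or not, and to use the finite-order asymptotic expansion of the translated ground state provided by Proposition~\ref{propo:Q_n_app_gen}. First I would fix $\beta\in(0,\tfrac12)$, say $\beta=\tfrac14$, and write the integral as the contribution over $\{|\bx|\le Z^\beta\}$ plus the contribution over $\{|\bx|>Z^\beta\}$. On the far region, I use that $Q^2$ and its $x$-derivatives lie in $\mathcal Y$ (Lemma~\ref{parity}), so $|\partial_x^i(Q^2)(\bx)|\lesssim \langle\bx\rangle^n e^{-2|\bx|}$; combined with the crude pointwise bound $Q(x+Z,y)\lesssim e^{-(Z-|\bx|)}$ valid when $|\bx|<Z$ (and exponential decay when $|\bx|\ge Z$), this yields a bound like $e^{-Z}\int_{|\bx|>Z^\beta}\langle\bx\rangle^n e^{-|\bx|}\,d\bx \lesssim e^{-Z} e^{-\frac12 Z^\beta}$, which is $o(e^{-Z}/Z)$ and hence absorbed into the error. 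So only the near region $\{|\bx|\le Z^\beta\}$ matters.

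\textbf{Main computation on the near region.} Here I apply Proposition~\ref{propo:Q_n_app_gen} with $\bj=(0,0)$ (for $i=0$) or $\bj=(i,0)$ after integrating by parts (for $i=1,2$; note $\int Q(x+Z,y)\partial_x^i(Q^2)\,dxdy = \int \partial_x^i Q(x+Z,y)\,Q^2\,dxdy$ up to sign $(-1)^i$, since $\partial_x^i$ acting on $Q(x+Z,y)$ equals $\partial_x^i$ in the $x$-variable). Proposition~\ref{propo:Q_n_app_gen} gives, on $\{|\bx|\le Z^\beta\}$,
\begin{align*}
\partial_x^{\bj} Q(x+Z,y) = \kappa\,\partial_x^{\bj}\!\left(\frac{e^{-Z}}{\sqrt Z}e^{-x}\Bigl(a_0+\sum_{1\le q\le n}\frac{P_q(\bx)}{Z^q}\Bigr)\right) + O\!\left(\frac{e^{-Z-x}}{Z^{1/2+(n+1)(1-2\beta)}}\right).
\end{align*}
Choosing $n=1$ (with $\beta=\tfrac14$ the error exponent is $\tfrac12+2\cdot\tfrac12=\tfrac32$, beating $1$), the leading term is $\kappa a_0 (-1)^i e^{-Z}Z^{-1/2}e^{-x}$ and the next term is $O(e^{-Z}Z^{-3/2}\langle\bx\rangle^2 e^{-x})$. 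Multiplying by $Q^2(\bx)$ and integrating over $\{|\bx|\le Z^\beta\}$: the leading term integrates to $\kappa a_0(-1)^i e^{-Z}Z^{-1/2}\int_{\mathbb R^2}e^{-x}Q^2(x,y)\,dxdy$ plus the tail $\int_{|\bx|>Z^\beta}e^{-x}Q^2\lesssim e^{-Z^\beta}$, and $\int e^{-x}Q^2\,dxdy$ converges since $Q^2\in\mathcal Y$. This identifies $c = \kappa a_0\int_{\mathbb R^2}e^{-x}Q^2(x,y)\,dxdy$, which is positive. The $Z^{-3/2}$ correction term and the $Z^{-3/2}$ explicit error both integrate against $\langle\bx\rangle^2 e^{-x}Q^2\in L^1$ to give $O(e^{-Z}Z^{-3/2})\subseteq O(e^{-Z}/Z)$. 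This establishes \eqref{propo:approx_e^-Z.1}.

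\textbf{The cancellation estimate \eqref{propo:approx_e^-Z.2}.} For the last bound, observe that $\partial_x Q + Q$, when evaluated at $(x+Z,y)$ and expanded via Proposition~\ref{propo:Q_n_app_gen}, has its leading $Z^{-1/2}e^{-Z}e^{-x}$ terms cancel: indeed $\partial_x(e^{-x}) = -e^{-x}$, so $(\partial_x Q + Q)(x+Z,y)$ picks up $\kappa\frac{e^{-Z}}{\sqrt Z}(\partial_x + 1)(a_0 e^{-x}) = 0$ at leading order, and the surviving contributions are $O(e^{-Z}Z^{-3/2})$ from the $P_1$ term plus the $O(e^{-Z}Z^{-3/2})$ explicit remainder. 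Integrating against $Q^2\in L^1$ (with the usual far-region tail controlled as above) gives $O(e^{-Z}Z^{-3/2})$, which is even better than the claimed $e^{-Z}/Z$. One must be slightly careful that the $P_1$-term contribution indeed survives with only $Z^{-3/2}$ size and not something larger — but since $P_1$ has degree $\le 2$ and the prefactor is exactly $Z^{-1}$ multiplying $Z^{-1/2}e^{-Z}$, this is immediate.

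\textbf{Expected main obstacle.} The only genuinely delicate point is bookkeeping the error terms in Proposition~\ref{propo:Q_n_app_gen} uniformly: one must check that the choice of $\beta$ and truncation order $n$ simultaneously makes (a) the far-region tail $e^{-Z}e^{-\frac12 Z^\beta}$ smaller than $e^{-Z}/Z$, and (b) the Proposition's remainder $e^{-Z-x}Z^{-1/2-(n+1)(1-2\beta)}$ integrable and of size $o(e^{-Z}/Z)$ after multiplying by the polynomial weights from $P_q$ and by $Q^2$. With $\beta=\tfrac14$ and $n=1$ both conditions hold comfortably, so this is routine but needs to be written carefully. Handling the derivatives $i=1,2$ requires only that one either differentiate $Q(x+Z,y)$ directly in $Z$ (legitimate since the integrand is smooth and rapidly decaying, allowing differentiation under the integral) or integrate by parts in $x$; either route reduces everything to the $i=0$ analysis with an extra sign $(-1)^i$, which accounts for the alternating sign in \eqref{propo:approx_e^-Z.1}.
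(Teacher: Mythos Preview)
Your approach is correct and essentially identical to the paper's: split at $|\bx|=Z^{1/4}$, discard the far region by the pointwise decay of $Q(\cdot+Z,\cdot)Q$, and on the near region replace $Q(x+Z,y)$ by its leading asymptotic $\kappa a_0\,e^{-Z}Z^{-1/2}e^{-x}$ from Proposition~\ref{propo:Q_n_app_gen}; the paper argues (D.9) by combining the $i=0$ and $i=1$ cases after one integration by parts, which is the same cancellation you exhibit via $(\partial_x+1)e^{-x}=0$. One bookkeeping remark: for $i\ge 1$ the factor $(-1)^i$ from integrating by parts cancels against the $(-1)^i$ from $\partial_x^i e^{-x}$, so the leading term of $\int Q(x+Z,y)\partial_x^i(Q^2)\,dxdy$ is $+c\,e^{-Z}/\sqrt Z$ for every $i$ --- this is precisely the mechanism that makes \eqref{propo:approx_e^-Z.2} work, and indicates that the $(-1)^i$ in the displayed statement \eqref{propo:approx_e^-Z.1} is a typo rather than something your argument should reproduce.
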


\begin{rema}
Proposition \ref{propo:approx_e^-Z} echoes Remark \ref{rem:non_explicit_solutions}. Indeed, it follows from \eqref{propo:approx_e^-Z.1} that the asymptotic of \eqref{defi:eq_Z} has the form $\ddot{Z}=ce^{-Z}Z^{-\frac12}$ which does not possess explicit solutions, unlike its $1$-dimensional equivalent $\ddot{Z}=ce^{-Z}$ (see \cite{MM11}). 
\end{rema}
\begin{toexclude}
Notice that this proposition echoes Remark \ref{rem:non_explicit_solutions} with the equation $\ddot{Z}=ce^{-Z}$ found in \cite{MM11}. Indeed, the assumption to have $Z$ large makes sense in the situation that the two solitary waves remain far one from another. The differences between the $1$ and the $2$-dimensional cases are the constant coefficient, but also the term $Z^{-\frac{1}{2}}$ that comes from the dimension $2$.
\end{toexclude}

\begin{proof}
We only prove estimate \eqref{propo:approx_e^-Z.1} in the case $i=0$, since the proof for the other cases is similar. On the one hand, it follows by using \eqref{est.decay.QzQ} and taking $Z^\star$ large enough that
\begin{align*}
\MoveEqLeft
    \int_{\vert(x,y)\vert \geq Z^\frac14} Q(x+ Z,y) Q^2(x,y)dxdy \\
    & \leq C \sup_{(x,y)} \left(Q(x+Z,y) Q(x,y)\right) \int_{\vert(x,y)\vert>Z^\frac14} \vert Q(x,y) \vert dx dy \lesssim e^{- Z-Z^\frac14}.
\end{align*}
On the other hand, we infer from the asymptotic expansion of Proposition \ref{propo:Q_n_app_gen} that
\begin{align*}
    \left\vert \int_{\vert (x,y)\vert \leq Z^\frac14} Q(x+Z,y) Q^2(x,y) dx dy - \frac{e^{-Z}}{Z^\frac{1}{2}} \left( \kappa a_0 \int_{\vert (x,y)\vert \leq Z^\frac14} e^{-x} Q^2(x,y) dx dy \right) \right\vert \lesssim \frac{e^{-Z}}{Z}.
\end{align*}
Therefore, we conclude the proof of \eqref{propo:approx_e^-Z.1} in the case $i=0$ by gathering these two estimates. 

Estimate \eqref{propo:approx_e^-Z.2} follows by combining \eqref{propo:approx_e^-Z.1} in the case $i=0$ with \eqref{propo:approx_e^-Z.1} in the case $i=1$ and observing that, after integration by parts, the terms of order $e^{-Z}Z^{-\frac12}$ cancel each other out. Note that it is important that the constant $c$ appearing in \eqref{propo:approx_e^-Z.1} is independent of $i \in \{0,1\}$. 
\end{proof}

\subsection{Quantified evolution of a function \texorpdfstring{$z$}{z} close to \texorpdfstring{$Z$}{Z}}

While the previous subsection dealt with the general equation \eqref{defi:eq_Z_general} for any arbitrary positive constant $\kappa$, this subsection focuses on the ODEs \eqref{defi:eq_Z_general} which are close to \eqref{defi:eq_Z}. More precisely, for a parameter $\nu\in(-\frac12,\frac12)$, we are interested in the following equation and its associated Hamiltonian
\begin{align}\label{defi:eq_Z_nu}
    & \Ddot{Z}(t)= \frac{2(1+\nu)}{\langle \Lambda Q, Q \rangle} \int_{\mathbb{R}^2} Q(x + Z(t),y) \partial_x (Q^2) (x,y) dxdy \\
    & H_\nu (Y_0,Y_1) := \frac{Y_1^2}{2} + \frac{2(1+\nu)}{\langle \Lambda Q, Q \rangle} \int_{\mathbb{R}^2} Q(x+Y_0,y) Q^2(x,y) dx dy. \label{defi:H_nu}
\end{align}
Recall from \eqref{eq:id2_Q} that $\langle \Lambda Q, Q \rangle>0$. The case $\nu=0$ corresponds to \eqref{defi:eq_Z}, where $Z$ is a good approximation of the distance $z$ between the $2$ solitary waves during the collision. However, due to the error terms, $z$ is not an exact solution of \eqref{defi:eq_Z}. This is why we study the set of equations \eqref{defi:eq_Z_nu} with a small parameter $\nu$.

\begin{lemm}[Equivalent of $l_1$ for $Y_0$ large]\label{lemm:l_1_Z_0}
There exist $Y^\star>0$ large enough and two positive constants $c$ and $C$ such that the following holds. For any $\nu \in (-\frac12,\frac12)$ and $Y_0>Y^\star$, define $l_1>0$ by $H_\nu(Y_0,0) = \frac{l_1^2}2$. Then the following holds
\begin{align}\label{eq:comparison_l_1_e^Z_0}
    c\frac{e^{-Y_0}}{\sqrt{Y_0}} \leq l_1^2 \leq C \frac{e^{-Y_0}}{\sqrt{Y_0}}.
\end{align}
\end{lemm}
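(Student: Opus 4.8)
The statement is an immediate consequence of the asymptotic estimate \eqref{propo:approx_e^-Z.1} in Proposition \ref{propo:approx_e^-Z}, combined with the definition of $l_1$ via the Hamiltonian $H_\nu$. The strategy is therefore: first translate the defining relation $H_\nu(Y_0,0)=\tfrac{l_1^2}{2}$ into an explicit integral formula for $l_1^2$; then bound that integral from above and below using Proposition \ref{propo:approx_e^-Z} (taking $Y^\star \ge Z^\star$, where $Z^\star$ is the threshold supplied by that proposition); and finally observe that the factor $2(1+\nu)/\langle \Lambda Q, Q\rangle$ is bounded above and below by positive constants uniformly in $\nu \in (-\tfrac12,\tfrac12)$, so it can be absorbed into the implicit constants.

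\textbf{Step 1: explicit formula for $l_1^2$.} From \eqref{defi:H_nu} with $Y_1 = 0$, the relation $H_\nu(Y_0,0)=\tfrac{l_1^2}{2}$ reads
\begin{align*}
    l_1^2 = \frac{4(1+\nu)}{\langle \Lambda Q, Q \rangle} \int_{\mathbb{R}^2} Q(x+Y_0,y) Q^2(x,y)\, dx\, dy.
\end{align*}
Since $Q$ is positive, the integral on the right is strictly positive, so $l_1$ is well-defined as a positive number.

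\textbf{Step 2: apply Proposition \ref{propo:approx_e^-Z}.} Choose $Y^\star \ge Z^\star$ and let $Y_0 > Y^\star$. Estimate \eqref{propo:approx_e^-Z.1} with $i=0$ gives a constant $c_0 > 0$ (independent of $Y_0$) such that
\begin{align*}
    \left\vert \int_{\mathbb{R}^2} Q(x+Y_0,y) Q^2(x,y)\, dx\, dy - c_0 \frac{e^{-Y_0}}{\sqrt{Y_0}} \right\vert \lesssim \frac{e^{-Y_0}}{Y_0}.
\end{align*}
Hence, by the triangle inequality and by taking $Y^\star$ large enough so that the error term $C' e^{-Y_0}/Y_0$ is, say, at most $\tfrac12 c_0 e^{-Y_0}/\sqrt{Y_0}$ (which is possible since $Y_0^{-1} = o(Y_0^{-1/2})$ as $Y_0 \to +\infty$), we get
\begin{align*}
    \frac{c_0}{2}\frac{e^{-Y_0}}{\sqrt{Y_0}} \leq \int_{\mathbb{R}^2} Q(x+Y_0,y) Q^2(x,y)\, dx\, dy \leq \frac{3c_0}{2}\frac{e^{-Y_0}}{\sqrt{Y_0}}.
\end{align*}
Multiplying through by $\tfrac{4(1+\nu)}{\langle \Lambda Q, Q\rangle}$ and using that $\tfrac12 < 1+\nu < \tfrac32$ together with $\langle \Lambda Q, Q\rangle = \tfrac12 \int Q > 0$ (see \eqref{eq:id2_Q}), the prefactor lies between $\tfrac{2}{\langle \Lambda Q, Q\rangle}$ and $\tfrac{6}{\langle \Lambda Q, Q\rangle}$, both positive and independent of $\nu$. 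Combining with Step 1 yields \eqref{eq:comparison_l_1_e^Z_0} with $c := \tfrac{c_0}{\langle \Lambda Q, Q\rangle}$ and $C := \tfrac{9 c_0}{\langle \Lambda Q, Q\rangle}$ (up to harmless adjustment of the numerical constants).

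\textbf{Main obstacle.} There is essentially no obstacle here: the entire content is packaged in Proposition \ref{propo:approx_e^-Z}, whose proof already does the real work (splitting the integral into a ball of radius $Y_0^{1/4}$ and its complement, using the exponential decay bound \eqref{est.decay.QzQ} on the complement and the sharp asymptotic expansion of Proposition \ref{propo:Q_n_app_gen} inside the ball). The only point requiring a little care is the uniformity in $\nu$, which is trivial once one notes $1+\nu \in (\tfrac12,\tfrac32)$, and the choice of $Y^\star$ large enough that the $O(e^{-Y_0}/Y_0)$ remainder is dominated by the $e^{-Y_0}/\sqrt{Y_0}$ main term; this is where the threshold $Y^\star$ is fixed.
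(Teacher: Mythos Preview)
Your proof is correct and follows exactly the same approach as the paper: write $l_1^2$ explicitly from the definition of $H_\nu$, then apply \eqref{propo:approx_e^-Z.1} with $i=0$ and absorb the factor $4(1+\nu)/\langle \Lambda Q,Q\rangle$ into the constants uniformly in $\nu\in(-\tfrac12,\tfrac12)$. The paper's proof is just the two-line version of what you wrote.
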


\begin{proof}
From the definition of $H_\nu$ in \eqref{defi:H_nu}, we have 
\begin{align*} 
l_1^2=\frac{4(1+\nu)}{\langle \Lambda Q, Q \rangle} \int_{\mathbb{R}^2} Q(x+Y_0,y) Q^2(x,y) dx dy ,
\end{align*}
which implies \eqref{eq:comparison_l_1_e^Z_0} by using \eqref{propo:approx_e^-Z.1} with $i=0$ and choosing $Y^\star$ large enough. Note that the constants $c$ and $C$ can be chosen uniformly in $\nu \in (-\frac12,\frac12)$.
\end{proof}

\begin{lemm}[Uniform lower bound of $\dot{Z}$ close to $0$]\label{lemm:dot_Z_T''}
There exist $Y^\star>0$, $\eta^\star>0$ small and positive constants $c$ and $C$ such that the following holds. For any $Y_0>Y^\star$, $\nu \in (-\frac12, \frac12)$ and $\eta\in (0, \eta^\star)$, consider $Z$ the unique even $\mathcal{C}^2$-function solving $H_\nu(Z,\dot{Z})=H_\nu(Y_0,0)$ and $T$ the unique positive time such that $Z(T) = Y_0+\eta^2$. Denote $l_1$ as in \eqref{defi:Z_0_l_1}. Then
\begin{align*}
    c \eta l_1 \leq \dot{Z}(T) \leq C \eta l_1.
\end{align*}
\end{lemm}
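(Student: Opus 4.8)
The strategy is to use the conservation of the Hamiltonian $H_\nu$ together with the asymptotic estimate \eqref{propo:approx_e^-Z.1} to control $\dot{Z}(T)^2$. Since $Z$ is even and solves \eqref{defi:eq_Z_nu} with $H_\nu(Z,\dot{Z})\equiv H_\nu(Y_0,0)=\frac{l_1^2}{2}$, we have for all $t$
\begin{align*}
    \dot{Z}(t)^2 = l_1^2 - \frac{4(1+\nu)}{\langle \Lambda Q, Q \rangle} \int_{\mathbb{R}^2} Q(x+Z(t),y) Q^2(x,y)\, dx\, dy .
\end{align*}
Evaluating at $t=T$, where $Z(T)=Y_0+\eta^2$, and recalling the definition of $l_1$ via $l_1^2 = \frac{4(1+\nu)}{\langle \Lambda Q,Q\rangle}\int Q(x+Y_0,y)Q^2(x,y)\,dx\,dy$, the first step is to write
\begin{align*}
    \dot{Z}(T)^2 = \frac{4(1+\nu)}{\langle \Lambda Q, Q \rangle} \int_{\mathbb{R}^2} \left( Q(x+Y_0,y) - Q(x+Y_0+\eta^2,y) \right) Q^2(x,y)\, dx\, dy .
\end{align*}
So the whole point reduces to estimating this difference of two nearby translates of $Q$, integrated against $Q^2$.

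\textbf{Main step: the difference estimate.} The plan is to introduce the function $g(s) := \int_{\mathbb{R}^2} Q(x+s,y) Q^2(x,y)\, dx\, dy$ for $s>Y^\star$, so that $\dot{Z}(T)^2 = \frac{4(1+\nu)}{\langle \Lambda Q,Q\rangle}\big(g(Y_0) - g(Y_0+\eta^2)\big)$. By Proposition \ref{propo:approx_e^-Z} (estimate \eqref{propo:approx_e^-Z.1} with $i=0$, and its derivative form with $i=1$), for $s>Y^\star$ large one has $g(s) = c\frac{e^{-s}}{\sqrt{s}} + O\!\left(\frac{e^{-s}}{s}\right)$ and $g'(s) = -\int Q(x+s,y)\partial_x(Q^2)(x,y)\,dx\,dy \cdot$ — more precisely, differentiating under the integral, $g'(s) = \int \partial_x Q(x+s,y) Q^2(x,y)\,dx\,dy = -c\frac{e^{-s}}{\sqrt{s}} + O\!\left(\frac{e^{-s}}{s}\right)$ by \eqref{propo:approx_e^-Z.1} with $i=1$. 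Hence $g(Y_0) - g(Y_0+\eta^2) = -\eta^2 g'(\xi)$ for some $\xi \in (Y_0, Y_0+\eta^2)$ by the mean value theorem, and $-g'(\xi) = c\frac{e^{-\xi}}{\sqrt{\xi}}\big(1 + O(\xi^{-1/2})\big)$, which for $\eta<\eta^\star$ small and $Y_0>Y^\star$ large is comparable to $\frac{e^{-Y_0}}{\sqrt{Y_0}}$ up to a universal factor between, say, $\frac12$ and $2$. Combining with Lemma \ref{lemm:l_1_Z_0}, which gives $l_1^2 \sim \frac{e^{-Y_0}}{\sqrt{Y_0}}$, we obtain $\dot{Z}(T)^2 \sim \eta^2 l_1^2$, that is $c\eta l_1 \le \dot{Z}(T) \le C\eta l_1$, using that $\dot{Z}(T)>0$ (since $Z$ is even, convex by Proposition \ref{propo:phase_portrait} applied with $\kappa = \frac{2(1+\nu)}{\langle\Lambda Q,Q\rangle}$, and strictly increasing for $t>0$).

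\textbf{Obstacle and bookkeeping.} There are two mild technical points to handle carefully. First, one must ensure $T$ is well-defined and that $Z$ remains in the regime $Z \ge Y_0 > Y^\star$ on $[0,T]$: this follows from convexity of $Z$ (minimum at $0$, value $Y_0$ there) and the intermediate value theorem, since $Z$ is increasing from $Y_0$ to $+\infty$ on $\mathbb{R}_+$. Second — and this is the main place to be cautious — one needs the constants in Proposition \ref{propo:approx_e^-Z} and in Lemma \ref{lemm:l_1_Z_0} to be \emph{uniform in} $\nu \in (-\tfrac12,\tfrac12)$; this is immediate because $\nu$ enters only through the prefactor $2(1+\nu)/\langle\Lambda Q,Q\rangle \in \big(\langle\Lambda Q,Q\rangle^{-1}, 3\langle\Lambda Q,Q\rangle^{-1}\big)$, which is bounded above and below, so it can be absorbed into the implicit constants. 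One also must choose $Y^\star$ large enough (for the asymptotic expansions to apply with $\xi > Y_0 - $ nothing, i.e. $\xi > Y^\star$) and $\eta^\star$ small enough that the $O(\xi^{-1/2})$ error in $-g'(\xi)$ is, say, at most $\frac12$ in absolute value relative to the main term. No genuinely hard estimate is needed beyond what Proposition \ref{propo:approx_e^-Z} already provides; the proof is essentially a mean-value-theorem argument dressed with the conserved quantity $H_\nu$.
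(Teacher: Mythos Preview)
Your proof is correct and follows essentially the same approach as the paper: both use the conservation of $H_\nu$ to express $\dot{Z}(T)^2$ as the difference $g(Y_0)-g(Y_0+\eta^2)$ with $g(s)=\int Q(\cdot+s,\cdot)Q^2$, then invoke the asymptotics of Proposition~\ref{propo:approx_e^-Z} and Lemma~\ref{lemm:l_1_Z_0} to show this difference is $\sim \eta^2 l_1^2$. The only cosmetic difference is that you apply the mean value theorem directly to $g$, whereas the paper writes $g(Y_0)-g(Y_0+\eta^2)=-\int_0^{\eta^2}g'(Y_0+s)\,ds$, approximates $\partial_xQ(\cdot+Y_0+s,\cdot)$ by $\partial_xQ(\cdot+Y_0,\cdot)$ with an $O(\eta^4 e^{-Y_0}/\sqrt{Y_0})$ error, and then uses \eqref{propo:approx_e^-Z.2} rather than \eqref{propo:approx_e^-Z.1} with $i=1$; the resulting error bookkeeping is equivalent.
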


\begin{proof}
Recall that the existence and uniqueness of the even function $Z$ are given in Proposition \ref{propo:phase_portrait} and Corollary \ref{coro:initial_condition}. The conservation of the Hamiltonian in \eqref{defi:H_nu} associated to $Z$ implies that
\begin{align*}
\dot{Z}(T)^2 = -\frac{4(1+\nu)}{\langle \Lambda Q, Q \rangle} \int_{0}^{Z(T)-Y_0} \left(\int_{\mathbb{R}^2} \partial_x Q(x+Y_0+s,y)  Q^2(x,y) dx dy \right) ds.
\end{align*}
Let us find an equivalent on the inner integral for $Y_0$ large enough and a fixed value $s>0$.

Observe that $0 \le Z(T)-Y_0 = \eta^2$. Inspired from the computation of \eqref{est:DL_Q_x_direction}, we adjust $\eta^\star$ so that for $s\leq \eta^2 <(\eta^\star)^2$, it holds on $\mathbb{R}^2$
\begin{align*}
    \left\vert \partial_x Q(x+ Y_0+s,y)- \partial_x Q (x+Y_0,y) \right\vert \lesssim s \frac{e^{-\left\vert (x+Y_0,y)\right\vert}}{\langle (x+Y_0,y) \rangle^\frac{1}{2}} .
\end{align*}
Thus, it follows arguing as in \eqref{est.decay.QzQ} that
\begin{align*}
\MoveEqLeft
    \left\vert \int_{s=0}^{Z(T)-Y_0} \int_{\mathbb{R}^2} \left( \partial_x Q \left( x+ Y_0+ s,y\right) - \partial_x Q(x+ Y_0,y) \right) Q^2(x,y) dxdy ds \right\vert \\
    & \lesssim \eta^4 \sup \left( \frac{e^{-\left\vert (x+Y_0,y) \right\vert }}{\langle (x+Y_0,y) \rangle^\frac12} Q(x,y)\right) \| Q \|_{L^1} \lesssim \eta^4 \frac{e^{-Y_0}}{Y_0^\frac12}.
\end{align*}
Then, we deduce from \eqref{propo:approx_e^-Z.2} that
\begin{align*}
    \left\vert \dot{Z}(T) ^2 - \frac{4(1+\nu)}{\langle \Lambda Q, Q \rangle} \int_{s=0}^{Z(T)-Y_0} \int_{\mathbb{R}^2} Q(x+ Y_0 ,y) Q^2(x,y) dxdy ds \right\vert  \leq \int_{s=0}^{Z(T)-Y_0} C \frac{e^{-Y_0}}{Y_0}  ds + C\eta^4 \frac{e^{-Y_0}}{Y_0^\frac12},
\end{align*}
which combined with \eqref{defi:Z_0_l_1} yields 
\begin{align*}
    \left\vert \dot{Z}(T)^2 - \eta^2 l_1^2 \right\vert
    \leq \eta^2 \left( C \frac{e^{-Y_0}}{Y_0} +C \eta^2 \frac{e^{-Y_0}}{Y_0^\frac12} \right). 
\end{align*}
By using Lemma \ref{lemm:l_1_Z_0}, where the constants can be chosen uniformly in $\nu$, we obtain that for $l_1$ small enough
\begin{align*}
   \eta^2 l_1^2 \left( 1- \frac{C}{\vert \ln(l_1) \vert^\frac12 } -C \eta^2\right) \leq \dot{Z}(T)^2 \leq \eta^2 l_1^2 \left( 1+ \frac{C}{\vert \ln(l_1) \vert^\frac12 } +C \eta^2\right) .
\end{align*}
We conclude the proof of the lemma by choosing $l_1^\star$ and $\eta^\star$ smaller if necessary and by taking the square root of the previous estimate.
\end{proof}

The next proposition quantifies the defect of closeness of a function to a solution to the equation \eqref{defi:eq_Z_nu}. More precisely, let us denote by $Z$ the even solution to \eqref{defi:eq_Z_nu} with $\nu=0$. If a function $z$ defined on a time interval $[t_0,t_1]$ is close to $Z$ at time $t_0$ and the flows $H_{\pm \nu}(z,\dot{z})$ are well controlled along the time, then $z$ is close to $Z$ at any time $t\in [t_0,t_1]$.

\begin{propo}[Evolution of $(z,\dot{z})$ with $H_0(z,\dot{z})$ almost constant]\label{propo:H_positive_times}
Let $\eta\in (0,1)$. There exist some constants $C>0$, $\varepsilon_0^\star$ and $\nu_6^\star \leq \frac{1}{2}$ such that the following holds. Let $\nu\in (0, \nu_6^\star)$ and $\varepsilon_0 \in (0, \varepsilon_0^\star)$. Let us define a constant $h < H_{-\nu_6^\star}(0,0)$ small and the three hamiltonian $H:=H_0$, $H_+:= H_\nu$ and $H_-:=H_{-\nu}$. We set $Z$ the unique even $\mathcal{C}^1$-function such that $H(Z,\dot{Z})=h$ and $t_0>0$ satisfying $Z(t_0)\geq Z(0)+\eta^2$. Let a $\mathcal{C}^1$-function $z$ defined on a time interval $[t_0,t_1]$ that satisfies the initial condition
\begin{align*}
    \left\vert z(t_0) -Z(t_0) \right\vert \leq \varepsilon_0,
\end{align*}
and for any $t\in [t_0,t_1]$
\begin{align}\label{condition:H_+(z)_H_-(z)}
    H_-(z(t),\dot{z}(t)) \leq h \leq H_+(z(t),\dot{z}(t)).
\end{align}
Then for any $t$ in $[t_0,t_1]$ we have
\begin{align*}
    \left\vert z(t) -Z(t) \right\vert \leq C \nu + C\varepsilon_0.
\end{align*}
\end{propo}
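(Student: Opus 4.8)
\textbf{Proof plan for Proposition \ref{propo:H_positive_times}.}

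The plan is to exploit the fact that the solution curves of \eqref{defi:eq_Z_nu} in the region $\{Z > 0,\ \dot Z > 0\}$ are precisely the level sets of the Hamiltonian $H_\nu$, that on this region the map $Y_0 \mapsto \frac{2(1+\nu)}{\langle\Lambda Q,Q\rangle}\int Q(x+Y_0,y)Q^2(x,y)\,dxdy$ is strictly positive and strictly decreasing (shown in the proof of Proposition \ref{propo:phase_portrait}), and that the three Hamiltonians $H_-, H, H_+$ differ only by the multiplicative factor $1\mp\nu$ in front of the same positive integral term. First I would record that, since $Z(t_0) \ge Z(0)+\eta^2$ and $Z$ is even, convex and increasing on $\mathbb R_+$, the trajectory $(Z(t),\dot Z(t))$ for $t\ge t_0$ lies in $\{Z>Y^\star,\ \dot Z \ge c\eta l_1\}$ for $t$ near $t_0$ by Lemma \ref{lemm:dot_Z_T''}, and stays in the region $\{Z>0,\ \dot Z>0\}$ for all $t\ge t_0$; moreover by taking $\nu_6^\star$ small and $h$ correspondingly small (equivalently $Y^\star = Y^\star(h)$ large) Lemma \ref{lemm:l_1_Z_0} applies and gives $l_1^2 \sim Z(0)^{-1/2}e^{-Z(0)}$.

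The key step is a monotonicity/ordering argument. Along the curve $z$, I define for each $t$ the quantity $\dot z(t)^2 = 2\big(H_\pm(z(t),\dot z(t)) - \tfrac{2(1\pm\nu)}{\langle\Lambda Q,Q\rangle}\int Q(x+z(t),y)Q^2\,dxdy\big)$ and compare it with the analogous identity for $Z$ at the \emph{same} abscissa value. Concretely: assumption \eqref{condition:H_+(z)_H_-(z)} together with the positivity of the integral term implies
\begin{align*}
\tfrac12\dot z(t)^2 + \tfrac{2(1-\nu)}{\langle\Lambda Q,Q\rangle}\int Q(x+z(t),y)Q^2 \le h \le \tfrac12\dot z(t)^2 + \tfrac{2(1+\nu)}{\langle\Lambda Q,Q\rangle}\int Q(x+z(t),y)Q^2,
\end{align*}
so $z$ satisfies $H(z,\dot z) = h + O(\nu)\int Q(x+z,y)Q^2\,dxdy$, i.e. $H(z(t),\dot z(t)) = h + O(\nu\, Z(t)^{-1/2}e^{-Z(t)})$ by Proposition \ref{propo:approx_e^-Z}. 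Since $Z$ is defined by $H(Z,\dot Z)=h$ exactly and $Z$ has $\dot Z = \sqrt{2(h - G(Z))}$ with $G(Y_0):=\frac{2}{\langle\Lambda Q,Q\rangle}\int Q(x+Y_0,y)Q^2$ strictly decreasing, I would show the abscissa $z(t)$ is trapped between the two solutions $Z_\pm$ of \eqref{defi:eq_Z_nu} with $\nu$ replaced by $\pm\nu$, both passing through (a time translate of) the same point; then a Gronwall-type comparison of $z$ against $Z$ controls $|z(t)-Z(t)|$. The error is fed by two sources: the initial gap $\varepsilon_0$ at $t_0$, which propagates boundedly because in the region $\dot Z \gtrsim \eta l_1$ one can change variables to use $Z$ (equivalently $t$) as parameter and the vector field is Lipschitz; and the $O(\nu)$ defect in the Hamiltonian, which by the lower bound $\dot Z(t) \gtrsim \mu_0 \sim l_1$ on $[t_0,t_1]$ translates into an $O(\nu)$ error on $z(t)-Z(t)$ after dividing the Hamiltonian defect $O(\nu Z^{-1/2}e^{-Z}) = O(\nu \dot Z^2)$ by $\dot Z \gtrsim \dot Z$. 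Summing the two contributions gives $|z(t)-Z(t)| \lesssim \nu + \varepsilon_0$.

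The main obstacle I anticipate is making the ``division by $\dot Z$'' rigorous near the endpoint behaviour and uniformly in $\nu$: one must verify that $\dot Z(t)$ (and hence $\dot z(t)$) stays bounded below by $c\eta l_1 \gtrsim l_1$ for \emph{all} $t\in[t_0,t_1]$, not merely near $t_0$ — this follows from convexity of $Z$ ($\ddot Z>0$) so $\dot Z$ is increasing on $\mathbb R_+$, but for $z$ it requires combining \eqref{condition:H_+(z)_H_-(z)} with the fact that the corrective integral term is bounded by $G(Z(t_0)) \ll h$ once $Y^\star$ is large, so $\dot z(t)^2 \ge 2(h - (1+\nu)G(z(t))) \ge 2(h - (1+\nu)G(Z(t_0)-\varepsilon_0)) \gtrsim h \gtrsim l_1^2$. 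A secondary technical point is that the comparison must be done after reparametrising both $z$ and $Z$ by their common monotone abscissa rather than by $t$, since $z$ and $Z$ need not reach a given height at the same time; this is the analogue of the uniqueness argument in Proposition \ref{propo:phase_portrait} and Corollary \ref{coro:initial_condition}, and the bounds there, made quantitative via the strict monotonicity and Lipschitz continuity of $G$ on $[Y^\star,\infty)$, furnish the required stability estimate. All constants can be chosen uniformly in $\nu\in(0,\nu_6^\star)$ and $\varepsilon_0\in(0,\varepsilon_0^\star)$ because the relevant integrals and their derivatives in $Y_0$ are controlled uniformly by Proposition \ref{propo:approx_e^-Z} and \eqref{est.decay.QzQ}.
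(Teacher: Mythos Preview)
Your overall route is the same as the paper's: introduce the two comparison solutions $Z_\pm$ with $H_{\pm\nu}(Z_\pm,\dot Z_\pm)=h$, use the Hamiltonian inequalities \eqref{condition:H_+(z)_H_-(z)} to sandwich $\dot z$ at a given height between $\dot Z_+\circ Z_+^{-1}$ and $\dot Z_-\circ Z_-^{-1}$, and then reparametrise by abscissa. The paper makes your ``reparametrise by abscissa'' step concrete by differentiating the quantities $Z_\pm^{-1}\!\circ z - Z_\pm^{-1}\!\circ Z$, bounding the derivative by $C\nu\,\ddot Z/\dot Z^2$, integrating directly (no Gronwall is needed), and then converting back via the mean value theorem; this is wrapped in an explicit bootstrap on $|z(t)-Z(t)|\le C_\star(\nu+\varepsilon_0)$, which is what guarantees that $z(t)$ stays in the domain of $Z_\pm^{-1}$.

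There is one genuine slip in your lower-bound argument for $\dot z$: the claim ``$G(Z(t_0))\ll h$ once $Y^\star$ is large'' is false. Since $H(Z(0),0)=h$ one has $G(Z(0))=h$ exactly, and $Z(t_0)=Z(0)+\eta^2$ gives only $G(Z(t_0))=(1-c\eta^2)h$ (this is precisely Lemma \ref{lemm:dot_Z_T''}). Consequently $\dot z^2\ge 2(h-(1+\nu)G(z))$ yields $\dot z\gtrsim \eta\, l_1$, not $\dot z\gtrsim l_1$; the $\eta$-loss is harmless but must be tracked. Relatedly, your inequality $G(z(t))\le G(Z(t_0)-\varepsilon_0)$ presupposes $z(t)\ge Z(t_0)-\varepsilon_0$, which is not given for $t>t_0$; you need either the bootstrap (as in the paper) or to first establish $\dot z>0$ by continuity from $t_0$, which again requires knowing $z$ has not dipped below $Z(0)+c\eta^2$. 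The paper's explicit bootstrap with the restriction $C_\star(\nu_6^\star+\varepsilon_0^\star)\le \eta^2/4$ is exactly what closes this loop.
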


\begin{figure}[h]
    \centering
    \begin{tikzpicture}[scale=0.85]
        %%% Left picture : comparison with Z_-
            %%%% axis
            \draw[->,,samples=\Num] (-8,0) -- (-4,0);
            \draw[->,samples=\Num] (-7.5,-1.5) -- (-7.5,1.5);
            \draw (-3.7,0) node[below] {$Z_-$};
            \draw (-7.5,1.5) node[left] {$\dot{Z}_-$};
            
            %%% three solutions of the phase portrait
            \draw [->,>=latex,domain=-1.6:1,samples=\Num] plot [variable=\t] (-6.6+\t*\t,{rad(atan(\t))/1.5});
            \draw [domain=1:1.6,samples=\Num] plot [variable=\t] (-6.6+\t*\t,{rad(atan(\t))/1.5});
            \draw (-4,0.75) node[right] {$h_0$};
            
            \draw [->,blue,>=latex,domain=-1.75:1,samples=\Num] plot [variable=\t] (-7.1+\t*\t,{rad(atan(\t))*1.2});
            \draw [blue,domain=1:1.75,samples=\Num] plot [variable=\t] (-7.1+\t*\t,{rad(atan(\t))*1.2});
            \draw (-4,1.25) node[right] {$h$};
            
            \draw [->,>=latex,domain=-1.85:1,samples=\Num] plot [variable=\t] (-7.4+\t*\t,{rad(atan(\t))*1.5});
            \draw [domain=1:1.85,samples=\Num] plot [variable=\t] (-7.4+\t*\t,{rad(atan(\t))*1.5});
            \draw (-4,1.75) node[right] {$h_1$};
            
            %%%% Function (z,\dot{z})
            \draw [red,domain=1.2:1.75,samples=\Num] plot [variable=\t] (-7.25+\t*\t,{rad(atan(\t))*(1+\t *sin(\t*1200)/15});

        %%% middle picture : comparison with Z
            %%%% axis
            \draw[->,samples=\Num] (-2,0) -- (2,0);
            \draw[->,samples=\Num] (-1.5,-1.5) -- (-1.5,1.5);
            \draw (2.3,0) node[below] {$Z$};
            \draw (-1.5,1.5) node[left] {$\dot{Z}$};
            
            %%% three solutions of the phase portrait
            \draw [->,>=latex,domain=-1.5:1,samples=\Num] plot [variable=\t] (-0.3+\t*\t,{rad(atan(\t))/2});
            \draw [domain=1:1.5,samples=\Num] plot [variable=\t] (-0.3+\t*\t,{rad(atan(\t))/2});
            \draw (2,0.5) node[right] {$h_0$};
            
            \draw [->,>=latex,domain=-1.65:1,samples=\Num] plot [variable=\t] (-0.75+\t*\t,{rad(atan(\t))});
            \draw [domain=1:1.65,samples=\Num] plot [variable=\t] (-0.75+\t*\t,{rad(atan(\t))});
            \draw (2,1) node[right] {$h$};
            
            \draw [->,>=latex,domain=-1.75:1,samples=\Num] plot [variable=\t] (-1.1+\t*\t,{rad(atan(\t))*1.4});
            \draw [domain=1:1.75,samples=\Num] plot [variable=\t] (-1.1+\t*\t,{rad(atan(\t))*1.4});
            \draw (2,1.5) node[right] {$h_1$};
            
            %%%% Function (z,\dot{z})
            \draw [red,domain=1.2:1.75,samples=\Num] plot [variable=\t] (-1.25+\t*\t,{rad(atan(\t))*(1+\t *sin(\t*1200)/15});
        
        %%% right picture : comparison with Z_+
        %%%% axis
            \draw[->,samples=\Num] (4,0) -- (8,0);
            \draw[->,samples=\Num] (4.5,-1.5) -- (4.5,1.5);
            \draw (8.5,0) node[below] {$Z_+$};
            \draw (4.5,1.5) node[left] {$\dot{Z}_+$};
            
            %%% three solutions of the phase portrait
            \draw [->,>=latex,domain=-1.45:1,samples=\Num] plot [variable=\t] (5.9+\t*\t,{rad(atan(\t))/2.5});
            \draw [domain=1:1.45,samples=\Num] plot [variable=\t] (5.9+\t*\t,{rad(atan(\t))/2.5});
            \draw (8,0.25) node[right] {$h_0$};
            
            \draw [->,blue,>=latex,domain=-1.6:1,samples=\Num] plot [variable=\t] (5.5+\t*\t,{rad(atan(\t))*0.8});
            \draw [blue,domain=1:1.6,samples=\Num] plot [variable=\t] (5.5+\t*\t,{rad(atan(\t))*0.8});
            \draw (8,0.75) node[right] {$h$};
            
            \draw [->,>=latex,domain=-1.75:1,samples=\Num] plot [variable=\t] (5+\t*\t,{rad(atan(\t))*1.3});
            \draw [domain=1:1.75,samples=\Num] plot [variable=\t] (5+\t*\t,{rad(atan(\t))*1.3});
            \draw (8,1.25) node[right] {$h_1$};
            
            %%%% Function (z,\dot{z})
            \draw [red,domain=1.2:1.75,samples=\Num] plot [variable=\t] (4.75+\t*\t,{rad(atan(\t))*(1+\t *sin(\t*1200)/15});
        
    \end{tikzpicture}
    \caption{Comparison of a trajectory $(z,\dot{z})$ in red with the phase portrait associated to the equations related to $H_-$ (left), $H$ (middle) and $H_+$ (right). On each graph, the three lines are associated to three values $h_0<h<h_1$.}
    \label{fig:z_Z_-_Z_+}
\end{figure}
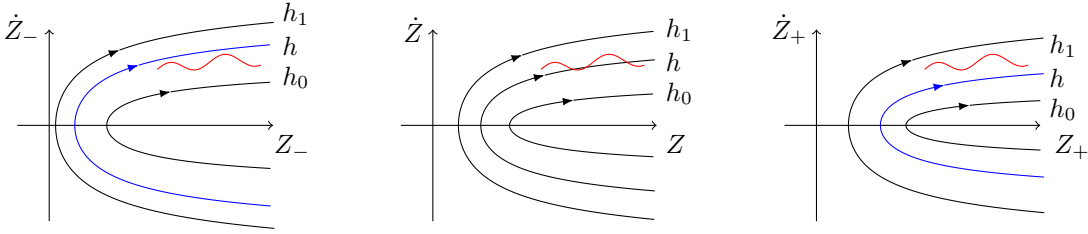

\begin{rema}
    In the proof of the collision, we will set $h=2\mu_0^2$.
\end{rema}

\begin{proof} For a constant $C_\star$ to define later, define the inequality
\begin{align}\label{bootstrap_estimate_z_Z}
    \left\vert z(t) -Z(t) \right\vert \leq C_\star \nu + C_\star \varepsilon_0.
\end{align}
The coefficients $\nu_6^\star$ and $\varepsilon_0^\star$ are adjusted so that 
\begin{align}\label{restriction_e_0_nu}
    C_\star \nu_6^\star + C_\star \varepsilon_0^\star \leq \frac{\eta^2}{4}.
\end{align}
Define the time
\begin{align*}
    T^\star := \sup \left\{ t \in [t_0;t_1]; \, \forall t \in [t_0;T^\star), \text{ estimate } \eqref{bootstrap_estimate_z_Z} \text{ holds} \right\}.
\end{align*}

$T^\star$ is well-defined by assumption and continuity of $z$ and $Z$. By a bootstrap argument, we claim that $T^\star=t_1$. We suppose that $T^\star<t_1$ and show that this statement leads to a contradiction. To do so, we compare the two functions $z$ and $Z$ along the flow of two well-chosen functions $Z_-$ and $Z_+$ associated to the two hamiltonian $H_+$ and $H_-$. We thus define $Z_-$ and $Z_+$ as the unique even functions satisfying
\begin{align}\label{defi:Z_-_Z_+}
    H_-(Z_-,\dot{Z}_-)=h, \quad \text{and} \quad H_+(Z_+,\dot{Z}_+)=h.
\end{align}
Those two functions are represented in blue in Figure \ref{fig:z_Z_-_Z_+}. Remark that the constraint $h<H_{-\nu_6^\star}(0,0)$ justifies that the function $Z_-$ is associated to an initial condition under the form $(Y_0,0)$ for $Y_0>0$, and not under the form $(0,Y_1)$ as explained in Proposition \ref{propo:phase_portrait}. $Z$ (respectively $Z_-$, $Z_+$) goes from $\mathbb{R}$ to $(Z(0);+\infty)$ (resp. $Z_-(0)$, $Z_+(0)$). By abuse of notations, we denote by $Z^{-1}$ (resp. $Z_-^{-1}$, $Z_+^{-1}$) the bijection from $(Z(0);+\infty)$ (resp. $Z_-(0)$, $Z_+(0)$) to $\mathbb{R}_+^*$.

We will make use of the following claims, whose proofs are postponed.
\begin{claim}[Comparison of the three functions]\label{claim:comparison_Z_Z_+_Z_-}
At the initial time, we have
\begin{align}\label{eq:Z_Z_+_Z_-(0)}
    Z_-(0)<Z(0)<Z_+(0).
\end{align}
In particular, the function $\nu \mapsto Z_+(0)$ for $\nu$ close to $0$ is a decreasing function and
\begin{align}\label{eq:limit_nu}
    \lim_{\nu \rightarrow 0} Z_+(0) = Z(0).
\end{align}
For any $s>Z_+(0)$ it holds
\begin{align}\label{eq:Z_Z_+_Z_-(s)}
    \dot{Z}_+\circ Z_+^{-1}(s) \leq \dot{Z} \circ Z^{-1}(s) \leq \dot{Z}_-\circ Z_-^{-1} (s).
\end{align}
\end{claim}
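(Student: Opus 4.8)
The statement to prove is Claim~\ref{claim:comparison_Z_Z_+_Z_-}, which compares the three even solutions $Z_-$, $Z$, $Z_+$ of the ODEs \eqref{defi:eq_Z_nu} associated respectively to the Hamiltonians $H_-=H_{-\nu}$, $H=H_0$, $H_+=H_{\nu}$, all at the same energy level $h$. The whole argument rests on the monotonicity of the Hamiltonian $H_\nu$ in each of its three arguments: for fixed $Y_1$, the map $Y_0 \mapsto H_\nu(Y_0,Y_1)$ is \emph{strictly decreasing} (since $\int Q(\cdot+Y_0,\cdot)Q^2 >0$ is strictly decreasing in $Y_0$ for $Y_0$ large, by the computation in the proof of Proposition~\ref{propo:phase_portrait} together with \eqref{propo:approx_e^-Z.1}), for fixed $Y_0$, the map $Y_1 \mapsto H_\nu(Y_0,Y_1)$ is strictly increasing in $|Y_1|$, and for fixed $(Y_0,Y_1)$ with $Y_0$ large so that the integral term is positive, the map $\nu \mapsto H_\nu(Y_0,Y_1)$ is strictly increasing. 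I will use these monotonicities systematically; note that $\nu_6^\star$ is assumed small enough and $h$ small enough (i.e. $h<H_{-\nu_6^\star}(0,0)$) that all three solutions are of the type described in Corollary~\ref{coro:initial_condition}, namely even functions with initial data $(Z_\bullet(0),0)$, $Z_\bullet(0)>0$ large, to which Lemma~\ref{lemm:l_1_Z_0} and Proposition~\ref{propo:approx_e^-Z} apply.

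\textbf{Step 1: the ordering at $t=0$.} Each of $Z_-$, $Z$, $Z_+$ attains its minimum at $t=0$ with zero derivative there, so $H_-(Z_-(0),0)=H(Z(0),0)=H_+(Z_+(0),0)=h$. Since $H_+(Y_0,0)=H(Y_0,0)+\frac{2\nu}{\langle\Lambda Q,Q\rangle}\int Q(\cdot+Y_0,\cdot)Q^2$ with the integral strictly positive, we get $H_+(Z(0),0)>H(Z(0),0)=h=H_+(Z_+(0),0)$; because $Y_0\mapsto H_+(Y_0,0)$ is strictly decreasing, this forces $Z_+(0)>Z(0)$. The same comparison with $H_-$ in place of $H_+$ gives $H_-(Z(0),0)<h=H_-(Z_-(0),0)$, hence $Z_-(0)<Z(0)$, which is \eqref{eq:Z_Z_+_Z_-(0)}. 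For the monotonicity of $\nu\mapsto Z_+(0)$ and the limit \eqref{eq:limit_nu}: from $H(Z_+(0),0)+\frac{2\nu}{\langle\Lambda Q,Q\rangle}\int Q(\cdot+Z_+(0),\cdot)Q^2=h$ and $\int Q(\cdot+Z_+(0),\cdot)Q^2 \asymp Z_+(0)^{-1/2}e^{-Z_+(0)}>0$ (Proposition~\ref{propo:approx_e^-Z}), one reads that increasing $\nu$ forces $H(Z_+(0),0)$ to decrease, hence (strict decrease of $Y_0\mapsto H(Y_0,0)$) forces $Z_+(0)$ to increase; and as $\nu\to 0^+$ the perturbing term $\frac{2\nu}{\langle\Lambda Q,Q\rangle}\int Q(\cdot+Z_+(0),\cdot)Q^2\to 0$ (it is bounded since $Z_+(0)$ stays $\ge Z(0)$), so $H(Z_+(0),0)\to h=H(Z(0),0)$ and by strict monotonicity and continuity $Z_+(0)\to Z(0)$.

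\textbf{Step 2: the ordering of the velocities for $s>Z_+(0)$.} By Lemma~\ref{lemm:hamiltonian} the Hamiltonian is conserved along each trajectory, so for any $s$ in the image interval of the corresponding solution,
\[
\tfrac12\big(\dot Z_{\pm}\circ Z_{\pm}^{-1}(s)\big)^2 = h - \tfrac{2(1\pm\nu)}{\langle\Lambda Q,Q\rangle}\int_{\mathbb R^2} Q(x+s,y)Q^2(x,y)\,dx\,dy,
\]
and similarly with $\nu=0$ for $Z$. Fix $s>Z_+(0)$; then $s>Z_+(0)>Z(0)>Z_-(0)$, so all three quantities $\dot Z_+\circ Z_+^{-1}(s)$, $\dot Z\circ Z^{-1}(s)$, $\dot Z_-\circ Z_-^{-1}(s)$ are well-defined and, being velocities on the increasing (right) branch of the respective even convex solutions, are nonnegative. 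Since $\int Q(\cdot+s,\cdot)Q^2>0$, increasing the factor $(1+\nu)$ strictly decreases the right-hand side above; hence
\[
\big(\dot Z_+\circ Z_+^{-1}(s)\big)^2 \le \big(\dot Z\circ Z^{-1}(s)\big)^2 \le \big(\dot Z_-\circ Z_-^{-1}(s)\big)^2,
\]
and taking square roots of these nonnegative quantities yields \eqref{eq:Z_Z_+_Z_-(s)}. The only subtlety worth flagging is making sure $s$ lies in the domain of all three inverse functions, which is exactly why the hypothesis $s>Z_+(0)$ (the largest of the three minima) is imposed; the positivity of the integral term and of the velocities on the right branch is the substance, and it comes directly from the phase-portrait analysis of Proposition~\ref{propo:phase_portrait} together with the asymptotics of Proposition~\ref{propo:approx_e^-Z}. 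This completes the proof of the claim; the main (mild) obstacle is bookkeeping the strict monotonicities of $H_\nu$ and verifying the integral term stays positive on the relevant range of $Y_0$, for which one invokes \eqref{propo:approx_e^-Z.1} with a sufficiently large lower bound $Z^\star$ on $Z(0)$ (guaranteed by choosing $h$ small, via Lemma~\ref{lemm:l_1_Z_0}).
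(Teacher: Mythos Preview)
Your proof is correct and follows essentially the same route as the paper's: both use the conserved Hamiltonians $H_{\pm\nu}(Z_\pm(0),0)=H_0(Z(0),0)=h$ together with the strict monotonicity of $Y_0\mapsto\int Q(\cdot+Y_0,\cdot)Q^2$ to get \eqref{eq:Z_Z_+_Z_-(0)}, and then the explicit formula $\dot Z_\bullet\circ Z_\bullet^{-1}(s)^2=2h-\frac{4(1+\bullet\,\nu)}{\langle\Lambda Q,Q\rangle}\int Q(\cdot+s,\cdot)Q^2$ to get \eqref{eq:Z_Z_+_Z_-(s)}. One remark: your computation shows that $\nu\mapsto Z_+(0)$ is \emph{increasing} (since $\int Q(\cdot+Z_+(0),\cdot)Q^2=\frac{h\langle\Lambda Q,Q\rangle}{2(1+\nu)}$ decreases with $\nu$, forcing $Z_+(0)$ up), whereas the claim as stated says ``decreasing''; your derivation is the correct one, and in any case only the limit \eqref{eq:limit_nu} is used downstream.
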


\begin{claim}[Comparison of the derivatives of the inverse functions]\label{claim:comparison_inverse_Z_Z_+_Z_-}
The two following inequalities hold
\begin{align*}
    & \frac{1}{\dot{Z} \circ Z^{-1}(s)} - \frac{1}{\dot{Z}_- \circ Z_-^{-1}(s)} \leq \frac{2\nu}{\langle \Lambda Q, Q \rangle} \frac{\int Q(x+s,y) Q^2(x,y)dx dy}{ \left( \dot{Z} \circ Z^{-1}(s)\right)^3}, \\
    & \frac{1}{\dot{Z} \circ Z^{-1}(s)} - \frac{1}{\dot{Z}_+\circ Z_+^{-1}(s)} \geq -\frac{2\nu}{\langle \Lambda Q,Q \rangle} \frac{\int Q(x+s,y)Q^2(x,y)dxdy}{\left(\dot{Z}_+ \circ Z_+^{-1} (s)\right)^3}.
\end{align*}
\end{claim}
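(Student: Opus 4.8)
The plan is to reparametrize the three trajectories by their common value $s$ and reduce the claim to an elementary algebraic inequality between the reciprocals of their velocities. First I would record that, by Proposition~\ref{propo:phase_portrait} and Corollary~\ref{coro:initial_condition}, each of $Z$, $Z_-$, $Z_+$ is an even, strictly convex $\mathcal{C}^2$ function, hence strictly increasing on $\{t>0\}$, and that $Z_-(0)<Z(0)<Z_+(0)$ by \eqref{eq:Z_Z_+_Z_-(0)}; thus for $s>Z_+(0)$ the three inverse branches $Z^{-1}(s)$, $Z_-^{-1}(s)$, $Z_+^{-1}(s)$ are well defined and all velocities are positive there. Next I would use the conservation of the corresponding Hamiltonians (Lemma~\ref{lemm:hamiltonian}), the normalizations $H(Z,\dot Z)=H_-(Z_-,\dot Z_-)=H_+(Z_+,\dot Z_+)=h$, and the definition \eqref{defi:H_nu}, evaluated at $t=Z^{-1}(s)$, $t=Z_-^{-1}(s)$, $t=Z_+^{-1}(s)$, to obtain the explicit formulas, valid for $s>Z_+(0)$,
\begin{align*}
\bigl(\dot Z\circ Z^{-1}(s)\bigr)^2 &= 2h-2I(s), \\
\bigl(\dot Z_-\circ Z_-^{-1}(s)\bigr)^2 &= \bigl(\dot Z\circ Z^{-1}(s)\bigr)^2+2\nu I(s), \\
\bigl(\dot Z_+\circ Z_+^{-1}(s)\bigr)^2 &= \bigl(\dot Z\circ Z^{-1}(s)\bigr)^2-2\nu I(s),
\end{align*}
where $I(s):=\tfrac{2}{\langle\Lambda Q,Q\rangle}\int_{\mathbb R^2}Q(x+s,y)Q^2(x,y)\,dxdy>0$, using $\langle\Lambda Q,Q\rangle>0$ from \eqref{eq:id2_Q} and the positivity of $Q$.

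Writing $a:=\dot Z\circ Z^{-1}(s)$, $b:=\dot Z_-\circ Z_-^{-1}(s)$, $c:=\dot Z_+\circ Z_+^{-1}(s)$, these formulas read $b^2=a^2+2\nu I(s)$ and $c^2=a^2-2\nu I(s)$, hence $0<c\le a\le b$ on this range (which, incidentally, reproves \eqref{eq:Z_Z_+_Z_-(s)}). From here the two bounds are immediate. For the first I would write $\tfrac1a-\tfrac1b=\tfrac{b^2-a^2}{ab(a+b)}=\tfrac{2\nu I(s)}{ab(a+b)}$ and bound the denominator below by $2a^3$ using $b\ge a$, which gives $\tfrac1a-\tfrac1b\le\tfrac{\nu I(s)}{a^3}$; substituting the definition of $I(s)$ and of $a$ yields exactly the first claimed inequality. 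For the second, the numerator $c^2-a^2=-2\nu I(s)$ is negative, and dividing it by the larger positive quantity $ac(a+c)\ge 2c^3$ (using $a\ge c$) only increases the value: $\tfrac1a-\tfrac1c=\tfrac{-2\nu I(s)}{ac(a+c)}\ge-\tfrac{\nu I(s)}{c^3}$, which is the second claimed inequality.

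I do not expect a genuine obstacle: the computation is purely algebraic once the explicit velocity formulas are written down. The only two points deserving care are (i) that the parametrization is legitimate on $\{s>Z_+(0)\}$ — there all three functions sit on their strictly increasing branch so $\dot Z,\dot Z_-,\dot Z_+>0$, and indeed $c^2=2h-2(1+\nu)I(s)>0$ there because $s\mapsto I(s)$ is strictly decreasing on $\mathbb R_+$ (as noted in the proof of Proposition~\ref{propo:phase_portrait}) with $I(Z_+(0))=h/(1+\nu)$ — and (ii) that every sign assertion above relies only on $\langle\Lambda Q,Q\rangle>0$ and $Q>0$. Note that the comparison is carried out entirely after reparametrizing by the value $s$, so no relation between the time variables of $Z$, $Z_-$ and $Z_+$ is needed.
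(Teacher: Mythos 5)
Your proposal is correct and follows essentially the same route as the paper: both use conservation of the three Hamiltonians at a common position $s$ to write $\frac{1}{\dot Z}-\frac{1}{\dot Z_\pm}$ as $\frac{\pm 4\nu}{\langle\Lambda Q,Q\rangle}\int Q(x+s,y)Q^2\,dxdy$ divided by $\dot Z\,\dot Z_\pm(\dot Z+\dot Z_\pm)$, and then bound the denominator using the ordering of velocities from \eqref{eq:Z_Z_+_Z_-(s)}. The only cosmetic difference is that you re-derive that ordering from the explicit velocity formulas, whereas the paper simply invokes Claim \ref{claim:comparison_Z_Z_+_Z_-}.
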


To continue the proof of \eqref{bootstrap_estimate_z_Z}, we split the inequality into the lower and the upper bound.

\textit{Upper bound of \eqref{bootstrap_estimate_z_Z}}. We prove in this part an upper bound of the quantity $z(t)-Z(t)$ in terms of the initial condition $z(t_0)-Z(t_0)$ and $\nu$. Notice first that $Z_-$ is a bijection from $(0;+\infty)$ to $(Z_-(0);+\infty)$. From \eqref{eq:Z_Z_+_Z_-(0)}, the bootstrap assumption \eqref{bootstrap_estimate_z_Z} and the restriction \eqref{restriction_e_0_nu} on $\epsilon_0$ and $\nu$, we have for any $t\in [t_0,T^\star)$ and the assumption on $t_0$
\begin{align*}
    Z_-(0) < Z(0) < Z(0) +\frac34 \eta^2 \leq z(t).
\end{align*}

The functions $Z_-^{-1}\circ z$ and $Z_-^{-1}\circ Z$ are well-defined on $[t_0;T^\star)$.
We now study the derivative of the difference of those functions for $t\in[t_0,T^\star)$. The assumption \eqref{condition:H_+(z)_H_-(z)} implies the bound
\begin{align*}
    \dot{Z}_- \circ Z_-^{-1} \circ z(t) = \left( 2h - \frac{4(1-\nu)}{\langle \Lambda Q, Q \rangle} \int_{\mathbb{R}^2} Q(x+z(t),y) Q^2(x,y) dx dy \right)^{\frac12} \geq \dot{z}(t)
\end{align*}
and thus
\begin{align}\label{est:Z_-^-1}
    \frac{d}{dt} \left( Z_-^{-1} \circ z(t) - Z_-^{-1}\circ Z(t) \right) \leq \dot{Z}(t) \left( \frac{1}{\dot{Z}\circ Z^{-1}} - \frac{1}{\dot{Z}_- \circ Z_-^{-1}}\right) \circ Z(t).
\end{align}

Plugging the estimate of Claim \ref{claim:comparison_inverse_Z_Z_+_Z_-} in \eqref{est:Z_-^-1} with Proposition \ref{propo:approx_e^-Z} with the assumption of $h$ small, we obtain
\begin{align*}
    \frac{d}{dt} \left( Z_-^{-1}\circ z(t) - Z_-^{-1} \circ Z(t) \right) \leq \frac{4\nu}{\dot{Z}(t)^2} \ddot{Z}(t).
\end{align*}
An integration from $t_0$ to $t\in (t_0,T^\star)$ with the lower bound provided by Lemma \ref{lemm:dot_Z_T''} yields
\begin{align*}
    \left( Z_-^{-1} \circ z(t) - Z_-^{-1} \circ Z(t) \right) - \left( Z_-^{-1} \circ z(t_0) - Z_-^{-1} \circ Z (t_0) \right) \leq \frac{4\nu}{c\eta l_1}.
\end{align*}
We continue using the mean value theorem which states that for any $t\in [t_0,T^\star)$ there exists a coefficient $\lambda_t \in \left[ Z(t)- C_\star \left( \nu+ \varepsilon_0\right),  Z(t)+ C_\star \left( \nu+ \varepsilon_0\right)\right]$ such that
\begin{align*}
    Z_-\circ z(t) - Z_-^{-1} \circ Z(t)= \frac{z(t)-Z(t)}{\dot{Z}_- \circ Z_-^{-1}(\lambda_t)}.
\end{align*}
Notice that the previous quantity is well-defined since $\lambda_t >Z_0 +\frac34 \eta^2 >Z_-(0)$, and we have
\begin{align*}
    z(t)-Z(t) \leq \frac{\dot{Z}_-\circ Z_-^{-1}(\lambda_t)}{\dot{Z}_-\circ Z_-^{-1}(\lambda_{t_0})} \left( z(t_0) -Z(t_0) \right) + \frac{4\nu}{c\eta l_1} \dot{Z}_-\circ Z_-^{-1}(\lambda_t).
\end{align*}
In particular, Lemma \ref{lemm:dot_Z_T''} applied to $\lambda_t >Z_-(0) +\frac12 \eta^2$ yields $\dot{Z}_-\circ Z_-^{-1}(\lambda_t) \geq c2^{-\frac12}\eta l_1$. We thus obtain with $\dot{Z}_-\leq l_1$
\begin{align}\label{est:z_Z_upper_bound}
    z(t)-Z(t) \leq \frac{\dot{Z}_-\circ Z_-^{-1}(\lambda_t)}{\dot{Z}_-\circ Z_-^{-1}(\lambda_{t_0})} \left( z(t_0) -Z(t_0) \right) + \frac{4\nu}{c\eta} \quad \text{with} \quad \frac{\dot{Z}_-\circ Z_-^{-1}(\lambda_t)}{\dot{Z}_-\circ Z_-^{-1}(\lambda_{t_0})} \in \left[ \frac{c}{\sqrt{2}} , \frac{\sqrt{2}}{c\eta}\right].
\end{align}

\textit{Lower bound of \eqref{bootstrap_estimate_z_Z}} The proof of the lower bound on $z(t)-Z(t)$ follows similar ideas, let us emphasize the main differences. First, the functions $Z_+^{-1}\circ Z$ and $Z_+^{-1}\circ Z$ are well-defined on $[t_0,T^\star)$. Indeed, by the limit in \eqref{eq:limit_nu}, decreasing $\nu^\star$ if necessary and the assumption \eqref{restriction_e_0_nu}, we have
\begin{align*}
    Z(0) < Z_+(0) < Z(0)+\frac{\eta^2}{4} < Z(0)+\frac34 \eta^2 <z(t).
\end{align*}
These inequalities justify the functions $Z_+^{-1}\circ Z$ and $Z_+^{-1}\circ z$. Since Proposition \ref{propo:approx_e^-Z} provides the inequality
\begin{align*}
    \frac{2 \nu}{\langle \Lambda Q, Q \rangle} \int Q(x+Z(t),y) Q^2(x,y) dx dy \leq 2 \nu \ddot{Z}_+ \circ Z_+^{-1} \circ Z(t)
\end{align*}
we can compute, as for $Z_-$, the time derivative of the difference of the functions and using assumption \eqref{condition:H_+(z)_H_-(z)} and Claim \eqref{claim:comparison_inverse_Z_Z_+_Z_-}
\begin{align*}
    \frac{d}{dt} \left( Z_+^{-1}\circ z- Z_+^{-1} \circ Z \right) (t) \geq -2 \nu \frac{\ddot{Z}_+\circ Z_+^{-1} \circ Z(t)}{\left( \dot{Z}_+ \circ Z_+^{-1} \circ Z (t) \right)^3} \dot{Z}(t). 
\end{align*}

An integration from $t_0$ to $t$ yields, by the change of variable $s'=Z_+^{-1}\circ Z(s)$
\begin{align*}
\MoveEqLeft
    \left(Z_+^{-1} \circ z - Z_+^{-1} \circ Z \right) (t) - \left( Z_+^{-1} \circ z - Z_+^{-1} \circ Z \right) (t_0) \\
        & \qquad \geq 2 \nu \left( \frac{1}{\dot{Z}_+\circ Z_+^{-1} \circ Z(t)}- \frac{1}{\dot{Z}_+ \circ Z_+^{-1} \circ Z(t_0)} \right).
\end{align*}

By the bootstrap assumption \eqref{bootstrap_estimate_z_Z} and the mean value theorem, for any $t\in [t_0, T^\star)$, there exists $\lambda_t$ such that $\left\vert \lambda_t - Z(t) \right\vert \leq C_\star \nu + C_\star \varepsilon_0$ and
\begin{align*}
    Z_+^{-1} \circ z(t) - Z_+^{-1} \circ Z (t) = \frac{1}{\dot{Z}_+ \circ Z_+^{-1}(\lambda_t)} (z(t)-Z(t)).
\end{align*}
Since $Z_+$ increases on $[0,+\infty)$, we obtain the inequality
\begin{align*}
    z(t)-Z(t) \geq \frac{\dot{Z}_+\circ Z_+^{-1}(\lambda_t)}{\dot{Z}_+\circ Z_+^{-1}(\lambda_{t_0})} (z(t_0)-Z(T_0)) -2 \nu \frac{\dot{Z}_+\circ Z_+^{-1}(\lambda_t)}{\dot{Z}_+ \circ Z_+^{-1}\circ Z(t_0)}.
\end{align*}

Since $Z(t)>Z(0)+\frac34 \eta^2 >Z_+(0)+\frac12 \eta^2$ for any $t\in [t_0,T^\star)$, by applying Lemma \ref{lemm:dot_Z_T''}, we have $\dot{Z}_+ \circ Z_+^{-1} \circ Z(t_0) \geq c\eta l_1$. We thus obtain the lower bound
\begin{align}\label{est:z_Z_lower_bound}
    z(t)-Z(t) \geq \frac{\dot{Z}_+\circ Z_+^{-1}(\lambda_t)}{\dot{Z}_+\circ Z_+^{-1}(\lambda_{t_0})} (z(t_0)-Z(T_0)) -\frac{2 \nu}{c\eta} \quad \text{with} \quad \frac{\dot{Z}_+\circ Z_+^{-1}(\lambda_t)}{\dot{Z}_+\circ Z_+^{-1}(\lambda_{t_0})} \in \left[ \frac{c}{\sqrt{2}}, \frac{\sqrt{2}}{c\eta} \right].
\end{align}

Gathering the upper bound \eqref{est:z_Z_upper_bound} and lower bound \eqref{est:z_Z_lower_bound}, we obtain that
\begin{align*}
    \left\vert z(t)-Z(t) \right\vert \leq \frac{\sqrt{2}}{c\eta} \varepsilon_0 + \frac{4}{c\eta} \nu.
\end{align*}
By choosing $C_\star > 4(c\eta)^{-1}$, we strictly improve the bound on $z(t)-Z(t)$, which contradicts the maximality of $T^\star$. Thus $T^\star=t_1$, and the proposition is proved.
\end{proof}

\begin{proof}[Proof of Claim \ref{claim:comparison_Z_Z_+_Z_-}]
By the definition of the Hamiltonian we have
\begin{align*}
    \frac{2}{\langle \Lambda Q, Q\rangle} \int Q(\cdot +Z(0),\cdot) Q^2 & = \frac{2(1-\nu)}{\langle \Lambda Q,Q \rangle} \int Q (\cdot +Z_-(0),\cdot )Q^2= \frac{2(1+\nu)}{\langle \Lambda Q,Q \rangle} \int Q (\cdot +Z_+(0),\cdot )Q^2\\
    & =h.
\end{align*}
Since $\nu$ is positive, we obtain \ref{eq:Z_Z_+_Z_-(0)}. Under the previous form, the statement on the limit of $\nu$ is straightforward.

Concerning the second inequality, all the derivatives are positive and by the definition of the Hamiltonian
\begin{align*}
    \frac{1}{1-\nu}\left( 2h -\dot{Z}_- \circ Z_-^{-1}(s)^2 \right)= 2h -\dot{Z} \circ Z^{-1}(s)^2 = \frac{1}{1+\nu} \left(2h - \dot{Z}_+ \circ Z_+^{-1}(s)^2\right)>0 ,
\end{align*}
which proves \eqref{eq:Z_Z_+_Z_-(s)} since $\nu>0$.
\end{proof}

\begin{proof}[Proof of Claim \ref{claim:comparison_inverse_Z_Z_+_Z_-}]
Let us begin with the proof of the inequality with $Z_-$. By developing, we have
\begin{align*}
    \frac{1}{\dot{Z} \circ Z^{-1}(s)} - \frac{1}{\dot{Z}_- \circ Z_-^{-1}(s)} = \frac{4\nu}{\langle \Lambda Q, Q \rangle} \frac{\int Q(x+s,y) Q^2(x,y)dx dy}{\dot{Z}\circ Z^{-1}(s) \dot{Z}_-\circ Z_-^{-1}(s) \left( \dot{Z}_- \circ Z_-^{-1}(s) + \dot{Z}\circ Z^{-1}(s)\right)}.
\end{align*}
Then, estimate \eqref{eq:Z_Z_+_Z_-(s)} concludes the proof of the first inequality. The second inequality holds with the same computations with the other side of \eqref{eq:Z_Z_+_Z_-(s)}.
\end{proof}

In the previous proposition, the constraint on the time interval $[t_0,t_1]$ is that the time interval is far enough from $0$ so that $Z(t_0) > Z(0) + \eta^2$. The coefficient $\epsilon_0^\star$ and $\nu_6^\star$ are thus adapted in terms of $\eta$. A similar statement thus holds on a time interval $t_0<t_1<0$. 

\begin{propo}\label{propo:H_negative_times}
With the notations of Proposition \ref{propo:H_positive_times}, consider the same assumptions and definitions on $\eta$, $C$, $\nu_6^\star$, $\varepsilon_0^\star$, $h$, $H$, $H_+$, $H_-$ and $Z$. Let $t_1<0$ satisfying $Z(t_1) \geq Z(0)+\eta^2$. Let $z$ be a $\mathcal{C}^1$-function defined on a time interval $[t_0,t_1]$ that satisfies the conditions
\begin{align*}
    \left\vert z(t_0) -Z(t_0) \right\vert\leq \varepsilon_0,
\end{align*}
and, for any $t\in [t_0,t_1]$,
\begin{align*}
    H_-(z(t),\dot{z}(t)) \leq h \leq H_+(z(t),\dot{z}(t)).
\end{align*}
Then for any $t\in [t_0,t_1]$, we have
\begin{align*}
    \left\vert z(t)-Z(t) \right\vert \leq C\nu + C\varepsilon_0.
\end{align*}
\end{propo}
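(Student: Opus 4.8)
The plan is to observe that Proposition \ref{propo:H_negative_times} is nothing but Proposition \ref{propo:H_positive_times} read backwards in time, so the proof should reduce to the previous one via a time-reversal change of variables. First I would introduce the reflected functions $\hat{z}(t):=z(-t)$ and $\hat{Z}(t):=Z(-t)$, defined on the reflected interval $[-t_1,-t_0]$. Since $Z$ is even, $\hat{Z}=Z$; but more importantly, because the ODE \eqref{defi:eq_Z_nu} is autonomous and second order, if $Z$ solves it then so does $t\mapsto Z(-t)$ (the map $t\mapsto -t$ leaves $\ddot{Z}$ invariant). Likewise $\hat{z}$ inherits the Hamiltonian sandwich condition: since $H_\pm$ depends only on $(z,\dot{z})$ through $z$ and $\dot{z}^2$, and $\dot{\hat{z}}(t)=-\dot{z}(-t)$, we have $H_\pm(\hat{z}(t),\dot{\hat{z}}(t))=H_\pm(z(-t),\dot{z}(-t))$, so the inequality $H_-(\hat z,\dot{\hat z})\le h\le H_+(\hat z,\dot{\hat z})$ holds on $[-t_1,-t_0]$. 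The initial-time condition $|z(t_0)-Z(t_0)|\le\varepsilon_0$ becomes $|\hat z(-t_0)-\hat Z(-t_0)|\le\varepsilon_0$; note that $-t_0$ is now the \emph{right} endpoint of the reflected interval, so I would run the argument of Proposition \ref{propo:H_positive_times} "from the right" rather than "from the left", which is completely symmetric.

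Concretely, the key steps in order are: (i) check that $\hat Z$ is the unique even $C^1$ solution of the same equation with the same Hamiltonian level $h$ and that $\hat Z(-t_1)=Z(t_1)\ge Z(0)+\eta^2$, so the hypothesis $Z(\text{endpoint far from }0)\ge Z(0)+\eta^2$ of Proposition \ref{propo:H_positive_times} is met with $t_0$ there replaced by $-t_1$; (ii) verify that $-t_1>0$ and that the reflected data satisfy the hypotheses of Proposition \ref{propo:H_positive_times} with the roles of initial/terminal time swapped; (iii) apply (a right-endpoint version of) Proposition \ref{propo:H_positive_times} to $\hat z$, $\hat Z$ to obtain $|\hat z(t)-\hat Z(t)|\le C\nu+C\varepsilon_0$ on $[-t_1,-t_0]$; (iv) undo the substitution $t\mapsto -t$ to conclude $|z(t)-Z(t)|\le C\nu+C\varepsilon_0$ on $[t_0,t_1]$. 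The constants $C$, $\nu_6^\star$, $\varepsilon_0^\star$ are the same because they depend only on $\eta$ (and universal quantities), and time reversal does not change $\eta$.

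The one genuine subtlety — and what I expect to be the main obstacle — is that Proposition \ref{propo:H_positive_times} as literally stated uses $t_0$ as the \emph{left} endpoint where the smallness is assumed and then propagates forward via Gronwall-type comparison of the inverse flows $Z_\pm^{-1}\circ z$. After reflection, the smallness is at the \emph{right} endpoint, so one must rerun the comparison argument integrating from the right endpoint $-t_0$ backwards to a generic $t\in[-t_1,-t_0]$. This is not automatic from the statement, but it is automatic from the \emph{proof}: the bootstrap set-up, the comparison functions $Z_\pm$, Claims \ref{claim:comparison_Z_Z_+_Z_-}–\ref{claim:comparison_inverse_Z_Z_+_Z_-}, and the mean-value-theorem step all only use monotonicity of $Z_\pm$ on $[0,\infty)$ and the lower bound on $\dot Z_\pm$ from Lemma \ref{lemm:dot_Z_T''}, none of which cares about the direction of integration; only the signs in the two one-sided integration steps flip, and they flip consistently so that the final two-sided estimate $|z(t)-Z(t)|\le \frac{\sqrt2}{c\eta}\varepsilon_0+\frac{4}{c\eta}\nu$ is recovered verbatim. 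I would therefore phrase the proof as: "Apply Proposition \ref{propo:H_positive_times} to $\hat z(t)=z(-t)$, $\hat Z(t)=Z(-t)$; the only modification is that in the proof of Proposition \ref{propo:H_positive_times} one integrates from the right endpoint instead of the left, which changes no sign in the final estimate," and spell out the three or four verifications (i)–(iv) above. An alternative, slightly more self-contained route would be to observe that $t\mapsto -t$ conjugates the flow of \eqref{defi:eq_Z_nu} to itself, note $z^\dagger(t):=z(-t)$ and redo verbatim the bootstrap of Proposition \ref{propo:H_positive_times}; I would pick whichever is shorter to write, but both rest on the same symmetry.
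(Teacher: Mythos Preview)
Your proposal is correct and essentially matches the paper's approach: the paper simply declares that the proof is identical to that of Proposition \ref{propo:H_positive_times} and singles out the one check that matters, namely that at the endpoint $t_1$ closest to $0$ one still has $z(t_1)\ge Z(t_1)-C_\star(\nu_6^\star+\varepsilon_0^\star)\ge Z(0)+\tfrac34\eta^2$, so $z$ remains in the domain of $Z_\pm^{-1}$ throughout the bootstrap. Your time-reversal detour is valid but unnecessary---the paper just reruns the same bootstrap directly on the negative-time interval, which avoids the ``initial condition at the wrong endpoint'' issue you flag.
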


\begin{proof}
The proof is exactly the same as the one of Proposition \ref{propo:H_positive_times}. We emphasize that at the final time $t_1$, $z(t_1)$ and $Z(t_1)$ have to be far enough from $Z(0)$. From \eqref{restriction_e_0_nu}, we indeed have
\begin{align*}
    z(t_1) \geq Z(t_1) - C_\star \left( \nu_6^\star + \varepsilon_0^\star \right) \geq Z(0) + \frac{3}{4} \eta^2,
\end{align*}
and the rest of the arguments are similar.
\end{proof}

\section*{Acknowledgements} 

The authors were supported by a Trond Mohn Forskningsstiftelse (TMF) grant. The authors would like to warmly thank Yvan Martel for many insightful conversations and for his constant encouragements. 

%%%%%%%%%%%%%%%%%%%%%%%%%%%%%%%%%%%%%%%%%%%%%%%%%%%%%%%%
%%%%% Bibliography
%%%%%%%%%%%%%%%%%%%%%%%%%%%%%%%%%%%%%%%%%%%%%%%%%%%%%%%%

%\newpage
\bibliographystyle{plain}
\bibliography{biblio}

\end{document}